\documentclass{amsart}
%%% Packages

\usepackage[utf8]{inputenc}
\usepackage{lmodern}
\usepackage{microtype}
\usepackage[T1]{fontenc}
\usepackage[english]{babel}
\usepackage{amssymb,amsfonts,amsmath,latexsym,amsthm}
\usepackage{xcolor}
\definecolor{darkblue}{rgb}{0,0,0.6}
\usepackage{graphicx}
\usepackage[all,cmtip]{xy}
\usepackage[ocgcolorlinks,colorlinks=true, citecolor=darkblue, filecolor=darkblue, linkcolor=darkblue, urlcolor=darkblue]{hyperref}
\usepackage[nameinlink, capitalise]{cleveref}

\hypersetup{bookmarksopen=true}

\usepackage{array}
\usepackage{mathrsfs}
\usepackage{bm} % for bold symbols in math mode \boldsymbol{}

%%%% Environments for theorem etc, including 'end' symbols for some non-proof environments

\newtheorem{theorem}[equation]{Theorem} 
\newtheorem{prop}[equation]{Proposition}
\newtheorem{lem}[equation]{Lemma}
\newtheorem{kor}[equation]{Corollary}
\newtheorem{con-alt}[equation]{Conjecture}
\newtheorem{constr-alt}[equation]{Construction}
\theoremstyle{definition}
\newtheorem{ddd-alt}[equation]{Definition}
\newtheorem{ass-alt}[equation]{Assumption}
\newtheorem{prob-alt}[equation]{Problem}

\newenvironment{ddd}    % this is the environment name for the input
{%
	\pushQED{\qed}\begin{ddd-alt}}
	{\popQED\end{ddd-alt}}

% this is the environment name for the input
{%
	\pushQED{\qed}\begin{ass-alt}}
	{\popQED\end{ass-alt}}

% this is the environment name for the input
{%
	\pushQED{\qed}\begin{con-alt}}
	{\popQED\end{con-alt}}

% this is the environment name for the input
{%
	\pushQED{\qed}\begin{constr-alt}}
	{\popQED\end{constr-alt}}

% this is the environment name for the input
{%
	\pushQED{\qed}\begin{prob-alt}}
	{\popQED\end{prob-alt}}

\theoremstyle{remark}

\theoremstyle{definition}
\newtheorem{ex-alt}[equation]{Example}
\newtheorem{rem-alt}[equation]{Remark}

\newenvironment{ex}    % this is the environment name for the input
{%
	\pushQED{\qed}\begin{ex-alt}}
	{\popQED\end{ex-alt}}

\newenvironment{rem}    % this is the environment name for the input
{%
	\pushQED{\qed}\begin{rem-alt}}
	{\popQED\end{rem-alt}}

% If we wish to reference several statement simultaneously, the plural form for the respective environment has to be specified here.
\crefname{theorem}{Theorem}{Theorems}
\crefname{lem}{Lemma}{Lemmas}
\crefname{prop}{Proposition}{Propositions}
\crefname{section}{Section}{Sections}
\crefname{ex-alt}{Example}{Examples}
\crefname{ddd-alt}{Definition}{Definitions}
\crefname{kor}{Corollary}{Corollaries}

%%%% Counter settings
\numberwithin{equation}{subsection}
\setcounter{secnumdepth}{3}
%\counterwithout{footnote}{section}
\setcounter{tocdepth}{3}

%%% Macros

%% Comments

% Uli's comments

% Denis' comments

% Daniel's comments

% Christoph' comments

%% individual characters in various fonts

% bold characters
\newcommand{\bA}{\mathbf{A}}
\newcommand{\bB}{\mathbf{B}}
\newcommand{\bC}{\mathbf{C}}

\newcommand{\bD}{\mathbf{D}}
\newcommand{\bE}{\mathbf{E}}
\newcommand{\bF}{\mathbf{F}}

\newcommand{\bI}{\mathbf{I}}
\newcommand{\bJ}{\mathbf{J}}

\newcommand{\bM}{\mathbf{M}}

\newcommand{\bP}{\mathbf{P}}
\newcommand{\bQ}{\mathbf{Q}}
\newcommand{\bR}{\mathbf{R}}

\newcommand{\bT}{\mathbf{T}}

\newcommand{\bV}{\mathbf{V}}

% blackboard characters
\newcommand{\A}{\mathbb{A}}

\newcommand{\G}{\mathbb{G}}

\newcommand{\nat}{\mathbb{N}}

\newcommand{\Q}{\mathbb{Q}}
\newcommand{\R}{\mathbb{R}}

\newcommand{\Z}{\mathbb{Z}}

% calligraphic characters
\newcommand{\cA}{\mathcal{A}}
\newcommand{\cB}{\mathcal{B}}
\newcommand{\cC}{\mathcal{C}}
\newcommand{\cD}{\mathcal{D}}

\newcommand{\cF}{\mathcal{F}}

\newcommand{\cM}{\mathcal{M}}

\newcommand{\cP}{\mathcal{P}}

\newcommand{\cS}{\mathcal{S}}

\newcommand{\cU}{\mathcal{U}}

\newcommand{\cX}{\mathcal{X}}
\newcommand{\cY}{\mathcal{Y}}

% fraktur characters

% typewriter characters

%% math operators
\DeclareMathOperator{\burn}{\mathbf{A}^{\mathrm{eff}}}

\DeclareMathOperator{\Cofib}{Cofib}
\DeclareMathOperator{\unit}{unit}
\DeclareMathOperator{\Coind}{Coind}
\DeclareMathOperator{\coind}{coind}

\DeclareMathOperator*{\colim}{colim}

\DeclareMathOperator{\diag}{diag}

\DeclareMathOperator{\End}{End}
\DeclareMathOperator{\ev}{ev}

\DeclareMathOperator{\Fib}{Fib}

\DeclareMathOperator{\Fun}{{\mathbf{Fun}}}

\DeclareMathOperator{\ho}{ho}

\DeclareMathOperator{\Hom}{Hom}
\DeclareMathOperator{\Homol}{Hg}
\DeclareMathOperator{\id}{id}
\DeclareMathOperator{\Idem}{Idem}

\DeclareMathOperator{\incl}{incl}
\DeclareMathOperator{\Ind}{Ind}
\DeclareMathOperator{\indd}{ind}
\DeclareMathOperator{\Li}{\Li}
\DeclareMathOperator{\Map}{Map}
\DeclareMathOperator{\map}{map}
\DeclareMathOperator{\Nerve}{N}
\DeclareMathOperator{\pr}{pr}
\DeclareMathOperator{\Pro}{Pro}

\DeclareMathOperator{\Res}{Res}
\DeclareMathOperator{\res}{res}

\DeclareMathOperator{\Sing}{Sing}

\DeclareMathOperator{\tr}{tr}
\DeclareMathOperator{\tw}{Tw}

\DeclareMathOperator{\Yo}{Yo}
\DeclareMathOperator{\yo}{yo}

\renewcommand{\emptyset}{\varnothing}

%% names of categories

\newcommand{\Ab}{{\mathbf{Ab}}}
\newcommand{\Add}{\mathbf{Add}}
\newcommand{\Alg}{{\mathbf{Alg}}}
\newcommand{\Born}{\mathbf{Born}}
\newcommand{\BC}{\mathbf{BC}}
\newcommand{\Cat}{\mathbf{Cat}}
\newcommand{\CAT}{\mathbf{CAT}}
\newcommand{\Coarse}{\mathbf{Coarse}}

\newcommand{\Ch}{{\mathbf{Ch}}}

\newcommand{\Mod}{{\mathbf{Mod}}}

\newcommand{\PSh}{{\mathbf{PSh}}}

\newcommand{\Set}{{\mathbf{Set}}}
\newcommand{\Sh}{{\mathbf{Sh}}}
\newcommand{\Sp}{\mathbf{Sp}}
\newcommand{\Spc}{\mathbf{Spc}}
\newcommand{\sSet}{{\mathbf{sSet}}}
\newcommand{\Top}{{\mathbf{Top}}}

\newcommand{\gbct}[1]{#1\mathbf{BC}^{\mathrm{born}}}

\newcommand{\homot}[1]{\Ch^b(#1)_\infty}
\newcommand{\SW}{\mathbf{SW}}

%% various macros

\newcommand*\cocolon{%
	\nobreak
	\mskip6mu plus1mu
	\mathpunct{}%
	\nonscript
	\mkern-\thinmuskip
	{:}%
	\mskip2mu
	\relax
}

\newcommand{\bEnd}{\mathbf{End}}
\newcommand{\cp}{\mathbf{CP}}
\newcommand{\FDC}{\mathbf{FDC}}
\newcommand{\Fl}{\mathbf{Fl}}
\newcommand{\preFl}{\mathbf{Fl}^{\mathrm{pre}}}
\newcommand{\Tw}{\mathbf{Tw}}
\newcommand{\Vcyc}{\mathbf{Vcyc}}

\DeclareMathOperator{\Aut}{Aut}

\newcommand{\As}{\mathrm{Ass}}
\newcommand{\cFDC}{\mathbf{FDC^{cp}}}

\newcommand{\ad}{\mathrm{ad}}
\newcommand{\free}{\mathrm{free}}

\newcommand{\UK}{\mathrm{UK}}
\newcommand{\Rel}{\mathbf{Rel}}

\newcommand{\All}{\mathbf{All}}

\newcommand{\Orb}{\mathbf{Orb}}

\newcommand{\Fin}{\mathbf{Fin}}

\newcommand{\Loc}{{\mathrm{ Loc}}}

\renewcommand{\hat}{\widehat}
\renewcommand{\tilde}{\widetilde}

\newcommand{\Clp}{\Cat^{\mathrm{Lex,perf}}_{\infty,*}}
\newcommand{\Cle}{\Cat^{\mathrm{Lex}}_{\infty,*}}
\newcommand{\Cre}{\Cat^{\mathrm{Rex}}_{\infty,*}}
\newcommand{\Crp}{\Cat^{\mathrm{Rex,perf}}_{\infty,*}}
\newcommand{\CL}{\mathbf{coPr}_{\omega,*}^{\mathrm{R}}}
\newcommand{\CLL}{\mathbf{CAT}^{\mathrm{Lex}}_{\infty,*}}
\newcommand{\CLLL}{\mathbf{CAT}^{\mathrm{cplt}}_{\infty,*}}
\newcommand{\bd}{\mathrm{bd}}
\newcommand{\op}{\mathrm{op}}
\newcommand{\CATi}{\mathbf{CAT_{\infty}}}
\newcommand{\Cati}{\mathbf{Cat_{\infty}}}
\newcommand{\eqsm}{\mathrm{eqsm}}
\newcommand{\Bc}{\mathbf{BC}}
\newcommand{\fl}{\mathrm{fl}}
\newcommand{\Flrm}{\mathrm{Fl}}
\newcommand{\pre}{\mathrm{pre}}

\newcommand{\mb}{\mathrm{mb}}
\newcommand{\stCat}{\mathbf{Cat}^{\mathrm{ex}}_{\infty}}

\newcommand{\perf}{\mathrm{perf}}
\newcommand{\sub}{\mathrm{sub}}
\newcommand{\Clep}{\Cat^{\mathrm{Lex,perf}}_{\infty,*}}
\newcommand{\Catlex}{\mathbf{Cat}^{\mathrm{Lex}}_{\infty}}
\newcommand{\Prl}{\mathbf{Pr}^{\mathrm{L}}_{\omega}}
\newcommand{\Prr}{\mathbf{Pr}^{\mathrm{R}}_{\omega}}
\newcommand{\Prlp}{\mathbf{Pr}^{\mathrm{L}}_{\omega,*}}

\newcommand{\cop}{\mathrm{cp}}

\newcommand{\Cex}{\mathbf{Cat}_{\infty}^{\mathrm{ex}}}

\newcommand{\add}{\mathrm{add}}
\newcommand{\Cadd}{\Cat^{\add}_\infty}

\hyphenation{Gro-then-dieck}

 \begin{document}

%%%% Metadata and author data

%%% Title and short title
\title[Controlled objects in $\infty$-categories and the Novikov conjecture]{Controlled objects in left-exact $\infty$-categories and the Novikov conjecture}

%%% Authors and address data via individual metadata fields
%%% Does not work with all article styles (eg KOMA script)
\author[U.~Bunke]{Ulrich Bunke}
\address{Fakult{\"a}t f{\"u}r Mathematik,
	Universit{\"a}t Regensburg,
	93040 Regensburg,
	Germany}
\email{ulrich.bunke@mathematik.uni-regensburg.de}

\author[D.-C.~Cisinski]{Denis-Charles Cisinski}
\address{Fakult{\"a}t f{\"u}r Mathematik,
	Universit{\"a}t Regensburg,
	93040 Regensburg,
	Germany}
\email{denis-charles.cisinski@mathematik.uni-regensburg.de}

\author[D.~Kasprowski]{Daniel Kasprowski}
\address{School of Mathematical Sciences, University of Southampton, \newline\indent Southampton SO17 1BJ, United Kingdom}
\email{d.kasprowski@soton.ac.uk}

\author[C.~Winges]{Christoph Winges}
\address{Fakult{\"a}t f{\"u}r Mathematik,
	Universit{\"a}t Regensburg,
	93040 Regensburg,
	Germany}
\email{christoph.winges@ur.de}

\date{\today}

\begin{abstract}
	We associate to every $G$-bornological coarse space $X$ and every left-exact $\infty$-category with $G$-action a left-exact infinity-category of equivariant $X$-controlled objects.
	Postcomposing
	with algebraic K-theory leads to new equivariant coarse homology theories.
	This allows us to apply the injectivity results for assembly maps by Bunke, Engel, Kasprowski and Winges to the algebraic K-theory of left-exact $\infty$-categories.
  \end{abstract}
\maketitle
\tableofcontents

%%%%%% Body of the document goes here. %%%%%%	
\section{Introduction}\label{georgergregregregg}

This paper concerns
the construction of $G$-equivariant coarse homology theories  {in the sense of \cite[Def.~3.10]{equicoarse}}.
Given a left-exact $\infty$-category with $G$-action, we first construct a functor which associates to every $G$-bornological coarse space a new left-exact $\infty$-category  
of equivariant controlled objects.   
The coarse homology theory is then obtained by composing this functor with a localising invariant from left-exact $\infty$-categories to some  target  stable $\infty$-category.
We employ these equivariant coarse homology theories in order to study properties of assembly maps.

Any equivariant coarse homology theory can be restricted (see \cref{wrtohpwrtgrgrgregwr})
to a functor, denoted by $M \colon G\Orb\to \bM$ for the moment, on the orbit category $G\Orb$.
We then consider the assembly map 
\begin{equation}\label{rgoijgoiergregregwgregwgregrefwwrfwer1fff}
 \As_{\Fin,M}\colon \colim_{G_{\Fin}\Orb} M\to M(*)\ ,
\end{equation}
which
 {approximates} the value $M(*)$ of $M$ on the final object of $G\Orb$ by its values on 
 the subcategory $G_{\Fin}\Orb$ of orbits with finite stabilisers.
The word {\em Novikov conjecture} from the title refers to the assertion that this assembly map is split injective under certain conditions. We describe the history of this term in greater detail in \cref{wtgijowgerfwerfewrf}.

The relevance of coarse homology theories for the verification of split injectivity of the assembly map  \eqref{rgoijgoiergregregwgregwgregrefwwrfwer1fff} 
stems from the axiomatic approach to this question developed in  \cite{desc}, which builds on a long tradition of proofs using similar methods \cite{calped,br:asymptoticdim,gty:novikov,kasprowski:fdc}
The essential assumption on the functor $M$ is the  CP-condition
 {which we recall in} \cref{bioregrvdfb}.
It requires that $M$ arises from an equivariant coarse homology theory as  in \cref{wrtohpwrtgrgrgregwr} and that this coarse homology theory has various additional properties.

 We verify that the equivariant coarse homology theory constructed from 
a left-exact $\infty$-category  with $G$-action $\bD$ and algebraic $K$-theory in place of the localising invariant has the required properties to ensure that the resulting functor, denoted
by $K\bD_{G} \colon G\Orb\to \bM$ in \eqref{fwqewwedeqdwedqwdwdedqwdwedwd}, is a CP-functor. 
This approach
subsumes various previously known cases, but also adds new examples of functors on the orbit category which are
 therefore known to satisfy the CP-condition.

In \cref{sgiojeriosgegsesfefe}, we start with a more detailed discussion of the construction of functors on the orbit category from left-exact $\infty$-categories with $G$-action and localising invariants.
In particular, we
explain how some of the classical examples  {of functors on the orbit category} can be considered as special cases of our general construction.
In \cref{rgigjwoiegerggwregwerg},  we
provide a sample split injectivity result for the assembly map derived by combining \cite{desc} with the results of the present paper.

In \cref{contr} we
give a detailed overview on the construction of coarse homology theories from 
left-exact $\infty$-categories with $G$-action. The technical details of this construction account for the main body of this paper.

\subsection{Functors on the orbit category and split injectivity of assembly maps} \label{sgiojeriosgegsesfefe}
 {Let $G$ be a group.}
The orbit category $G\Orb$ is the category of transitive $G$-sets and equivariant maps.
For a family $\cF$ of subgroups of $G$ (\cref{etghoiwrththrheh}), let $G_{\cF}\Orb$ denote the full subcategory of the orbit category  {consisting} of $G$-sets with stabilisers in $\cF$ (\cref{ithowthwergreggwgergwrgwreg}).

We consider a functor
  $M\colon G\Orb\to \bM$    with a cocomplete target $\infty$-category.
 For any pair 
 of  families $\cF^{\prime}$ and $\cF$ of subgroups of $G$ such that $\cF^{\prime}\subseteq \cF$    we then  have a relative  assembly map (see \cref{ergoiegererg})
\begin{equation}\label{rgoijgoiergregregwgregwgregrefwwrfwerf}
 \As_{\cF^{\prime},M}^{\cF}\colon \colim_{G_{\cF^{\prime}}\Orb} M\to \colim_{G_{\cF}\Orb}M\ .
\end{equation}
It is a morphism between objects of $\bM$ and
induced by the inclusion of the index categories of the colimits in \eqref{rgoijgoiergregregwgregwgregrefwwrfwerf}.

 A natural question about the assembly map is whether it is an equivalence or at least split injective. The split injectivity question has been studied axiomatically in \cite{desc}
In this approach, the main assumption on the functor $M$ is that it is a CP-functor.

 As said above, being a CP-functor requires that $ {M}$ 
 extends to an equivariant coarse homology theory in a particular way, and that this equivariant coarse homology theory has various additional properties.
 Our main contribution in this direction is \cref{giejrgoergwergwergwergwergwrg} below stating that $K {\bD}_{G}$ is a CP-functor. We start with  {a} precise description of this functor.

A left-exact 
$\infty$-category  is an $\infty$-category which  contains a zero object and admits all finite limits. 
A functor between left exact $\infty$-categories is called left-exact if it preserves finite limits.
 We let $\Cle$ denote the large  {$\infty$-category}
 of small left-exact $\infty$-categories and left-exact functors, see \cref{whgiowgergrewgrgwgergwerg}.

Small left-exact $\infty$-categories with $G$-actions are objects of the functor category $\Fun(BG,\Cle)$.
For the following, we fix a small left-exact $\infty$-category with $G$-action $\bD$.

  The group $G$ considered as a $G$-set with the $G$-action by left translations is an object of $G\Orb$. 
  Its group of automorphisms is
  $G$ acting by right translations. By $BG$ we denote the groupoid
  consisting of a single object with group of automorphisms $G$.  
 {Sending the unique object in $BG$ to the free orbit $G$}
  provides an embedding
\begin{equation}\label{rewflkjmo34gergwegrge}
 j\colon BG\to G\Orb \ .
\end{equation}
  Since $\Cle$ admits all small 
  colimits (\cref{ioerjgoiegergwegergwrgwegrwerg}),
  we can form the left Kan extension
 \begin{equation}\label{egwkenkjergreggwergwergerwgeg}
 {j_!}\bD \colon G\Orb \to \Cle
\end{equation}
of $\bD$ along $j$.
We further compose $j_{!}\bD$ with the algebraic K-theory functor $K \colon \Cle \to \Sp$ in order to define the functor
\begin{equation}\label{fwqewwedeqdwedqwdwdedqwdwedwd}
K\bD_G := K \circ j_!\bD\colon G\Orb\to \Sp\ ,
\end{equation}
see \cref{ergoiergergergegerwgerggrerg43252} and \cref{regiowergerregeggregw} for details.

 {We refer to \cref{regiojergewergegergweggegregrwegreg} for a precise definition of the term ``hereditary CP-functor''}.
\begin{theorem}[\cref{rgiuhreiguhgwergergrwegwergreg}]\label{giejrgoergwergwergwergwergwrg}
The functor $K\bD_{G}$ is a hereditary CP-functor.
\end{theorem}

As said above, the CP-condition on $K\bD_{G}$ allows us to apply the axiomatic
approach to injectivity results for assembly maps developed in \cite{desc}.
The following theorem describes a typical example of such an application.

\begin{theorem}\label{rgigjwoiegerggwregwerg}
 Assume:
 \begin{enumerate}
 \item $G$ admits a finite-dimensional $CW$-model for the classifying space $E_{\Fin}G$.
 \item  \label{rgiogerervweeververvwevrvrevgerreferferefrgwerg} $G$ is a finitely generated subgroup of a linear group over a commutative ring with unit or of a virtually connected Lie group.
  \end{enumerate}
 Then the assembly map
 \[ \As_{\Fin,K\bD_{G}}^{\All} {\colon \colim_{G_\Fin\Orb} K\bD_G \to  \colim_{G \Orb} K\bD_G  } \]
 admits a left inverse. 
\end{theorem}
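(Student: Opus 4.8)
The plan is to deduce \cref{rgigjwoiegerggwregwerg} from the general split injectivity results of \cite{desc} by verifying that the functor $K\bC_G\colon G\Orb\to\Sp$ fits into the axiomatic framework developed there; concretely, that it is (or is naturally the restriction to the orbit category of) a \emph{CP-functor} in the sense recalled in \cref{wreuigheiugerergegwggreg}. The two hypotheses in the theorem are exactly the geometric inputs that the abstract descent theorem of \cite{desc} needs: condition (1), the existence of a finite-dimensional $CW$-model for $E_{\Fin}G$, supplies the finite asymptotic dimension / finite-dimensionality hypothesis on the relevant classifying space, while condition (2), that $G$ is a finitely generated subgroup of a linear group over a commutative unital ring or of a virtually connected Lie group, is the source of the required bound on a suitable notion of (homotopy) finite decomposition complexity or the analogous large-scale geometric regularity used in \cite{bekw}-type arguments. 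So the real content to be checked is purely about the functor $K\bC_G$ and not about $G$.

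First I would recall the construction of $\bC_G$: to the left-exact $\infty$-category with $G$-action $\bC$ one associates, for every $G$-bornological coarse space $X$, the left-exact $\infty$-category $\bC_G(X)$ of equivariant $X$-controlled objects, as set up in the body of the paper; this is functorial in $X$ and the composite $X\mapsto K(\bC_G(X))$, with $K$ the (nonconnective or connective, as appropriate) algebraic K-theory of left-exact $\infty$-categories, is an equivariant coarse homology theory $K\bC_G$ in the sense of \cite{equicoarse}. The functor on $G\Orb$ appearing in the statement is the restriction of this equivariant coarse homology theory along the canonical functor $G\Orb\to G\BC$ sending $S$ to $S_{\min,\max}$ (or the appropriate variant), so that the assembly map $\As^{\All}_{\Fin,K\bC_G}$ in the statement is identified with the forget-control / assembly map for this equivariant coarse homology theory.

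Next, the key step is to verify the CP-conditions for $K\bC_G$. These include: (i) that $K\bC_G$ is a \emph{strong} and \emph{continuous} equivariant coarse homology theory (continuity and strongness of algebraic K-theory of left-exact $\infty$-categories, plus the fact that the controlled-objects construction commutes with the relevant (co)limits/flasqueness conditions); (ii) excisiveness and a vanishing-on-flasques property, which follow from the corresponding properties of algebraic K-theory together with the coarse excision established for $\bC_G$ in the paper; (iii) the compatibility with the transfers / the existence of the required multiplicative or module structure over the sphere (or over a suitable coarse homology theory) that makes the induction arguments of \cite{desc} go through; and (iv) a suitable finiteness or compact-generation hypothesis, which again reduces to a property of $K$ of left-exact $\infty$-categories and of the controlled categories $\bC_G(X)$. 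Once $K\bC_G$ is known to be a CP-functor, one invokes the main theorem of \cite{desc} (as stated in \cref{wreuigheiugerergegwggreg}) with the families $\cF'=\Fin$ and $\cF=\All$: under hypotheses (1) and (2) that theorem produces a left inverse to $\As^{\All}_{\Fin,K\bC_G}$, which is exactly the claim.

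The main obstacle I expect is step (ii)/(iii): checking that the equivariant coarse homology theory $K\bC_G$ genuinely satisfies \emph{all} the axioms bundled into the notion of a CP-functor — in particular the continuity/strongness and the transfer-compatibility — rather than just being an equivariant coarse homology theory. Continuity of algebraic K-theory of left-exact $\infty$-categories, and the interaction of the controlled-objects functor $\bC\mapsto\bC_G(-)$ with filtered colimits of bornological coarse spaces and with the coarsely flasque objects, is where the left-exact (as opposed to stable) setting requires genuine care; this is presumably the technical heart of the paper and the reason the controlled-objects construction is set up the way it is. Everything else — the identification of the assembly map, the reduction of the geometric hypotheses to those of \cite{desc}, and the final invocation of the descent theorem — is then formal.
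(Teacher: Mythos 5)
Your strategy coincides with the paper's: establish that $K\bC_{G}$ satisfies the CP-axioms of \cite{desc} and then quote the corresponding descent theorem, so the overall route is the right one. There is, however, one concrete gap. Among the results recalled in \cref{wreuigheiugerergegwggreg}, the only one whose hypotheses are exactly (1) and (2) of the statement is \cref{oijfoifjoewfwejfoijoiewfqwefqwefqwefwefewfffqef}, and it requires $M$ to be a \emph{hereditary} CP-functor (\cref{regiojergewergegergweggegregrwegreg}), i.e.\ $M\circ\Res_{\phi}$ must be a CP-functor for every surjective homomorphism $\phi\colon G\to Q$; plain CP-ness only feeds the results with more restrictive conclusions (e.g.\ the $\Vcyc$-statement or the version assuming $G_{\Fin}$-FDC directly), not the $\cF=\All$ statement under the linearity/Lie hypothesis (2). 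The paper supplies exactly this extra step in \cref{rgiuhreiguhgwergergrwegwergreg} via \cref{rigjoqergergqregergqergreg}, where hereditariness is reduced, through the natural equivalence $K\bC_{G}\circ\Res_{\phi}\simeq K(B\phi_{!}\bC)_{Q}$, to the CP property with induced coefficients; your outline needs this (formal but necessary) addition before the citation of \cite{desc} applies.

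Two smaller imprecisions about the CP-axioms themselves are worth flagging. \cref{bioregrvdfb} does not merely ask that $K\bC_{G}$ be the restriction of an equivariant coarse homology theory along $S\mapsto S_{min,max}$; it asks for a natural equivalence $K\bC_{G}(S)\simeq E(G_{can,min}\otimes S_{min,max})$ for an $E$ which is strongly additive, continuous and admits transfers. Producing this twisted identification is precisely the content of \cref{efioqjwefoweweqfewfqwefqwefe} and \cref{cor:coeffs-orbits}, comparing the two theories $K\bC\cX_{G}$ and $K\bC\cX^{G}$, and it, together with strong additivity (which hinges on $K$ preserving products, \cref{rwqioqewrfeeffqefe}) and the transfer structure built through the effective Burnside category, constitutes the technical heart of the verification; continuity is comparatively cheap, being built into the definition by forcing it. Also note that neither strongness nor any module structure over the sphere is part of \cref{bioregrvdfb}, so these items in your checklist can be dropped.
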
Using that $*$ is the final object of $G\Orb$, we can identify the target of this assembly map with $K\bD_{G}(*)$ in order to get the version \eqref{rgoijgoiergregregwgregwgregrefwwrfwer1fff}.
\cref{giejrgoergwergwergwergwergwrg}
 exhibits \cref{rgigjwoiegerggwregwerg} as a consequence of \cref{oijfoifjoewfwejfoijoiewfqwefqwefqwefwefewfffqef}. 
 For a detailed review of the general results of \cite{desc} involving more complicated assumptions on $G$ (e.g.~the condition of finite decomposition complexity),
 we refer to \cref{wreuigheiugerergegwggreg}.

The assumptions
required in the theorems listed in   \cref{wreuigheiugerergegwggreg} can be separated into assumptions on the group  $G$ and the families $\cF^{\prime},\cF$ on the one hand, and the assumption on the functor $M$ being a CP-functor, see \cref{bioregrvdfb}, on the other hand. The present paper contributes to  the latter.  In particular, we make  no attempt to  enlarge the class of groups for which injectivity results are known.

 The Farrell--Jones conjecture predicts that the assembly map $\As_{\Vcyc,K\bD_G}^\All$ for the family of virtually cyclic subgroups $\Vcyc$ is an equivalence. 
Generalising work of Bartels \cite{bartels:domain}, we show  in  \cref{kor:vcyc-bartels} that the relative assembly map $\As_{\Fin,K\bD_G}^\Vcyc$ is always split injective. Therefore,
\cref{rgigjwoiegerggwregwerg} can also be read as providing evidence towards the Farrell--Jones conjecture.
In fact, the coarse homology theories constructed in the present paper are
used crucially in \cite{fvvsdfvsdfvfvsdfv} in order to extend
proofs of the Farrell--Jones conjecture from the linear case (see \cref{reoifjuoiewccwec}) to the version stated above.

 We now explain the relation between the functor \eqref{fwqewwedeqdwedqwdwdedqwdwedwd}
 and examples of functors whose assembly maps have been classically considered. We start with recalling their constructions.

\begin{ex} \label{reoifjuoiewccwec} 
The motivating and guiding example for our approach is  the equivariant algebraic $K$-theory functor
\[ K\bA_{G}\colon G\Orb\to \Sp \]
associated to an additive category $\bA$ 
with  a strict $G$-action.
We refer to this case as the linear case as opposed to the derived case.
 The functor $K\bA_{G}$ has first been constructed in \cite{davis_lueck}.
 
  In the following, we give a quick alternative construction of $ K\bA_{G}$   
which is analogous to the construction of the functor in \eqref{fwqewwedeqdwedqwdwdedqwdwedwd} above. We consider the   large $\infty$-category $\Add_{\infty}$   of small additive categories 
obtained from the category of small additive categories and additive functors by inverting equivalences. We then interpret $\bA$  as an object
 $\bA_{\infty}$  of $\Fun(BG,\Add_{\infty})$. We
  denote the left Kan extension of $\bA_{\infty}$ along $j$ by
\[ {j_!}\bA_{\infty}\colon G\Orb\to \Add_{\infty} \ .\]
 Finally, we let
 \begin{equation}\label{g54245g45g8uijo5gergergerg}
K^{\mathrm{Add}}\colon \Add_{\infty}\to \Sp
\end{equation}
be the non-connective $K$-theory functor for additive categories (constructed by  Pedersen--Weibel \cite{MR802790} and Schlichting \cite{MR2079996}). 
We then define the composed functor
\begin{equation}\label{sdavjoiwqejffvvsdvasdvadsvadsvasdvdsv}
 K\bA_{G}:=K^{\mathrm{Add}}\circ  {j_!}\bA_{\infty}\colon G\Orb\to \Sp
\end{equation} 
(compare with \cref{ergoiergergergegerwgerggrerg43252}).

\begin{theorem}[{\cite[Ex.~1.10]{desc},\cite[Ex.~2.6]{desc}}]\label{uiihfqwefwefffqfewf}
The functor $K\bA_{G}$ is a  hereditary CP-functor.
\end{theorem}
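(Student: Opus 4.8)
The plan is to unwind the definition of a (hereditary) CP-functor and reduce the statement to the verification of a list of now-standard properties of the equivariant coarse homology theory underlying $K\bA_{G}$. Being a CP-functor requires that the functor arises, via a fixed construction, from an equivariant coarse homology theory $E\colon G\mathbf{BornCoarse}\to\Sp$ in the sense of \cite{equicoarse}, and that $E$ enjoys various additional properties — in particular that it is additive (compatible with arbitrary free unions), continuous, and admits transfers along morphisms with finite fibres; the hereditary variant asks moreover that all of this structure be compatible with restriction along subgroups. So first I would exhibit a candidate $E$, then check the coarse-homology-theory axioms, then check the additional properties, and finally address heredity.

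To build $E$: following the construction of \cite{equicoarse}, to a $G$-bornological coarse space $X$ one associates the additive category of equivariant $X$-controlled $\bA$-objects and applies the non-connective $K$-theory functor $K^{\mathrm{Add}}$ of \eqref{g54245g45g8uijo5gergergerg}, obtaining $E\colon G\mathbf{BornCoarse}\to\Sp$. One then verifies that $E$ is an equivariant coarse homology theory: coarse invariance and vanishing on flasques come from Eilenberg-swindle arguments on controlled objects, excision from the behaviour of controlled categories over complementary pairs, and $u$-continuity from a cofinality argument over entourages. Next one identifies the orbit functor produced from $E$ by the CP-construction (the ``cone at infinity'' comparison of \cite{equicoarse} between equivariant coarse homology theories and homology theories on $G\Orb$) with $K\bA_{G}$ of \eqref{sdavjoiwqejffvvsdvasdvadsvadsvasdvdsv}: over a transitive $G$-set $S$, an equivariant $S$-controlled $\bA$-object reduces to an $\bA$-object carrying stabiliser-equivariance data, so the controlled category is equivalent to $\Ind^{G}(\bA_{\infty})(S)$; this equivalence is natural in $S$, hence compatible with left Kan extension along $j$ from \eqref{rewflkjmo34gergwegrge}, and it is intertwined by $K^{\mathrm{Add}}$.

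It remains to verify the additional properties in the definition of a CP-functor. Additivity holds because the controlled category over a free union is the product of the controlled categories and $K^{\mathrm{Add}}$ preserves such products; continuity follows by writing $X$ as the filtered colimit of its ``locally finite'' subspaces and using that controlled objects have locally finite support. The most delicate ingredient is the construction of transfers: for a map of $G$-sets with finite fibres one must produce a wrong-way functor on controlled categories by summing over the fibre, verify that it is additive and control-preserving, and check the compatibilities (functoriality, projection-type formulas) demanded by the descent machinery of \cite{desc}. For the hereditary statement one notes that restricting the $G$-action to a subgroup $H$ carries the $G$-equivariant controlled category to the $H$-equivariant one compatibly with coarse structure, free unions and transfers, so the CP-structure on $K\bA_{G}$ restricts to a CP-structure on $K\bA_{H}$. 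I expect the construction and compatibility of the transfer maps to be the main obstacle; the identification with $K\bA_{G}$ and the basic axioms, although requiring some care with the bornological conditions on controlled objects, are largely bookkeeping.
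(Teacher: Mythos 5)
Your overall route — build the linear controlled-object coarse homology theory $K\bA\cX^{G}$, check the axioms plus continuity, strong additivity and transfers, and identify the associated orbit functor with $K\bA_{G}$ — is essentially the argument of the cited references \cite{desc}, \cite{equicoarse}, \cite{coarsetrans}, which is exactly how the paper sources this theorem; the paper itself instead offers an independent derivation by rewriting $K\bA_{G}\simeq K\Ch^{b}(\bA)_{\infty,G}$ (Gillet--Waldhausen together with \cref{eiohjgwergerqgergergdfbsd}) and invoking \cref{giejrgoergwergwergwergwergwrg}, which lets one avoid redoing the linear transfer machinery. Two points in your sketch need attention. First, and this is the genuine gap: your heredity argument misreads the definition. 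Being \emph{hereditary} (\cref{regiojergewergegergweggegregrwegreg}) does not ask for compatibility with restriction to subgroups $H\le G$; it asks that $K\bA_{G}\circ\Res_{\phi}\colon Q\Orb\to\Sp$ be a CP-functor for every \emph{surjective} group homomorphism $\phi\colon G\to Q$. Restricting the action to a subgroup produces nothing of this shape, so "the CP-structure on $K\bA_{G}$ restricts to a CP-structure on $K\bA_{H}$" does not address the required condition at all. The correct verification (compare the proof of \cref{rigjoqergergqregergqergreg}) identifies $K\bA_{G}\circ\Res_{\phi}$ with $K\bA'_{Q}$ for the induced coefficient category $\bA'=B\phi_{!}\bA$ (a left Kan extension along $B\phi$, i.e.\ a colimit over the kernel), via a cofinality comparison of the slice categories computing the two Kan extensions along $BG\to G\Orb$ and $BQ\to Q\Orb$; only then can one quote the (non-hereditary) CP statement for $Q$ and $\bA'$.

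Second, a smaller inaccuracy in the comparison step: the CP-condition (\cref{bioregrvdfb}) requires $K\bA_{G}(S)\simeq E(G_{can,min}\otimes S_{min,max})$, i.e.\ the twist by $G_{can,min}$ is part of the data. Your identification argues with equivariant objects controlled over $S$ alone; the equivalence of the controlled category over $G_{can,min}\otimes S_{min,max}$ (up to idempotent completion, hence after $K$-theory) with $\Ind^{G}(\bA_{\infty})(S)$ is precisely the nontrivial computation — in the present paper it occupies \cref{efioqjwefoweweqfewfqwefqwefe} and \cref{cor:coeffs-orbits}, and in the linear case it is the corresponding material of \cite{equicoarse}, \cite{coarsetrans} — so it should not be presented as a reduction to "stabiliser-equivariance data" over $S$. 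With the heredity step repaired along the lines above and the twisted identification stated correctly, your plan matches the cited proof.
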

The argument for this result given in \cite{desc} is short but heavily uses results from \cite{equicoarse} and the quite  technical paper \cite{coarsetrans}.
 \cref{uiihfqwefwefffqfewf} allows us to apply the split injectivity results  for assembly maps from \cite{desc} to $K\bA_G$.
\end{ex}

\begin{ex}\label{ex:Atheory}
In \cite{Bunke:aa}, we studied the {nonconnective} equivariant Waldhausen $A$-theory functor
\[ \bA_{P}\colon G\Orb\to \Sp\ , \quad S\mapsto A(P\times_{G}S) \]
associated to a $G$-principal bundle with total space $P$,  {which is defined in terms of the algebraic K-theory of certain Waldhausen categories of retractive spaces, and proved the following.}
\begin{theorem}[{\cite[Thm.~5.17]{Bunke:aa}}] \label{regioregewgwergergew}
	$\bA_{P}$ is a hereditary $CP$-functor.
\end{theorem}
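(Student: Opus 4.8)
The plan is to exhibit $\bA_{P}$ as an instance of the functor $K\bC_{G}$ and then quote \cref{giejrgoergwergwergwergwergwrg}. The input $\bC$ will be the stable $\infty$-category $\Sp_{P}^{\omega}$ of compact objects in the $\infty$-category $\Sp_{P}:=\Fun(P,\Sp)$ of local systems of spectra on $P$ (equivalently, of finite parametrised spectra over $P$, i.e.\ of perfect modules over the spherical group rings of the path components of $P$), equipped with the $G$-action induced by the action of $G$ on $P$. Since a stable $\infty$-category is idempotent complete, $\bC=\Sp_{P}^{\omega}$ with this action is an object of $\Fun(BG,\Clp)\subseteq\Fun(BG,\Cle)$ (\cref{thioertherhthet}), so it is legitimate input for the theory, and \eqref{egwkenkjergreggwergwergerwgeg} produces $\Ind^{G}(\bC)$.

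The core of the argument is a natural equivalence $\bA_{P}\simeq K\bC_{G}$ of functors $G\Orb\to\Sp$, which I would obtain in two steps. First, I would recall the now-standard reformulation of Waldhausen $A$-theory: for a space $Y$ there is a natural equivalence $A(Y)\simeq K^{\mathrm{st}}(\Sp_{Y}^{\omega})$, where $A$ is made functorial in $Y$ via the left adjoints $f_{!}\colon\Sp_{Y}\to\Sp_{Y'}$; the discrepancy between ``finite'' and ``finitely dominated'' retractive objects (i.e.\ idempotent completion) is invisible to $K^{\mathrm{st}}$ because $K$ is a finitary localising invariant (\cref{qrevoiqrjoirqfcwqecq}). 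Second, and this is the heart of the matter, I would identify the functor $\Ind^{G}(\bC)$ with $S\mapsto\Sp_{P\times_{G}S}^{\omega}$. Here one uses that $S\mapsto P\times_{G}S$ is colimit-preserving from $G$-spaces to spaces, and that $Y\mapsto\Sp_{Y}^{\omega}$ is colimit-preserving from spaces to idempotent-complete stable $\infty$-categories (because $Y\mapsto\Sp_{Y}$, with the covariant $f_{!}$-functoriality, is the colimit-preserving composite of $Y\mapsto\PSh(Y)$ and $-\otimes\Sp$, and $f_{!}$ preserves compact objects since $f^{*}$ preserves filtered colimits). Thus $S\mapsto\Sp_{P\times_{G}S}^{\omega}$ is a colimit-preserving functor $\Fun(BG,\Spc)\to\stCat$ sending the free orbit $j(*)=G$ to $\bC=\Sp_{P}^{\omega}$ with its $G$-action; since $\Fun(BG,\Spc)$ is the free cocompletion of $BG$ and $BG\xrightarrow{\ j\ }G\Orb\hookrightarrow\Fun(BG,\Spc)$ is the Yoneda embedding, this functor is the left Kan extension of $\bC$ along $BG\hookrightarrow\Fun(BG,\Spc)$, so that its restriction along $j$ from \eqref{rewflkjmo34gergwegrge} agrees with $\Ind^{G}(\bC)$ — up to idempotent completion, which $K$ ignores, and up to stabilisation, which is harmless because $\bC$ is already stable and $\tilde\Sp$, being a left adjoint (\cref{foiwregergegregreg}), commutes with the left Kan extension $\Ind^{G}$. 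Composing with $K$ and using \eqref{qrewfeoifhjiofewfqwefqwefewf}, \eqref{rgjergkjlkewfjdewfwed32e} and $P\times_{G}(G/H)\cong P/H$, this yields on each orbit a natural equivalence $K\bC_{G}(G/H)\simeq K^{\mathrm{st}}(\Sp_{P/H}^{\omega})\simeq A(P/H)=\bA_{P}(G/H)$, hence $\bA_{P}\simeq K\bC_{G}$ as functors on $G\Orb$.

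Granting the equivalence $\bA_{P}\simeq K\bC_{G}$, \cref{giejrgoergwergwergwergwergwrg} applies verbatim with $\bC=\Sp_{P}^{\omega}$ and shows that $\bA_{P}$ is a hereditary CP-functor; this reproves \cite[Thm.~5.17]{Bunke:aa} entirely within the present framework, in parallel with the new proof of \cref{uiihfqwefwefffqfewf} obtained in \cref{gwegljrgkrewgergreg}.

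The main obstacle is the second step: matching the left Kan extension $\Ind^{G}(\bC)$ — a colimit- (``homotopy orbit''-) type construction over the comma categories $j\!\downarrow\!(G/H)\simeq BH$ — with parametrised spectra over $P\times_{G}(G/H)=P/H$, which by descent along the covering $P\to P/H$ is a limit- (``homotopy fixed point''-) type construction $\Sp_{P/H}^{\omega}\simeq((\Sp_{P})^{hH})^{\omega}$. The reconciliation is the standard passage between $\mathbf{Pr}^{\mathrm{L}}$ and $\mathbf{Pr}^{\mathrm{R}}$ together with the fact that a group action by equivalences is, after precomposition with inversion, its own $\mathbf{Pr}^{\mathrm{R}}$-dual, so that the $\mathbf{Pr}^{\mathrm{L}}$-colimit over $BH$ of such an action computes its homotopy fixed points; this is exactly the mechanism behind the computation $\Ind^{G}(\Mod_{\omega}(R)^{\op,\omega})(G/H)\simeq\Mod_{\omega}(R[H])^{\op,\omega}$ recorded in \cref{werogijergegwegergrewref}, the new feature being that, since $P$ is a free $G$-space, one must run it for a genuinely nontrivial action rather than a trivial one. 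The remaining points — keeping $\tilde\Sp$, idempotent completion, and the finite-versus-finitely-dominated distinction from interfering — are routine bookkeeping, harmless because the relevant categories are already stable and $K$ is a finitary localising invariant that preserves products (\cref{rwqioqewrfeeffqefe}).
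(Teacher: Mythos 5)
Your proposal is correct in outline and follows the same skeleton as the paper: exhibit $\bA_{P}$ as $K\bC_{G}$ for a suitable $\bC$ in $\Fun(BG,\Clep)$ and quote \cref{giejrgoergwergwergwergwergwrg} (more precisely \cref{rgiuhreiguhgwergergrwegwergreg}). The difference lies entirely in how the identification $\bA_{P}\simeq K\bC_{G}$ is produced. The paper (\cref{ex:Atheory} together with \cref{sec:A-coeffs}) takes the \emph{unstable} coefficient $\bC=\ell(P)\otimes\Spc_{*}^{\op,\omega}$ and proves the comparison from scratch: \cref{prop:ret-and-tensor} identifies $\bR_{\mathrm{fd}}(-)[h^{-1}]$ with $\ell(-)\otimes\Spc_{*}^{\cop}$ by an explicit model-categorical/unstraightening argument, \cref{cor:A-and-tensor} then gives $\bA(-)\simeq K((\ell(-)\otimes\Spc_{*}^{\cop})^{\op})$ with the stabilisation absorbed into the definition of $K$ on $\Cle$, and \cref{cor:A-coeffs} uses $j_{!}\ell(P)\simeq\ell(P\times_{G}-)$ (principality of $P$) plus colimit-preservation of $-\otimes\Spc_{*}^{\cop}$ to conclude. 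You instead take the stable coefficient $\Sp_{P}^{\omega}$ and import the equivalence $A(Y)\simeq K^{\mathrm{st}}(\Sp_{Y}^{\omega})$, natural in $Y$ for the $f_{!}$-functoriality, as a known black box; your Kan-extension argument for identifying $\Ind^{G}(\Sp_{P}^{\omega})$ with $S\mapsto\Sp_{P\times_{G}S}^{\omega}$ (colimit-preservation of $Y\mapsto\Sp^{\omega}_{Y}$ and of $W\mapsto\ell(P)\times^{h}_{G}W$, plus freeness of the $G$-action) is the same mechanism the paper uses, and your bookkeeping with $\tilde\Sp$ and idempotent completion via Morita invariance of $K$ is correct — it even makes your closing homotopy-fixed-point discussion redundant, since once colimit-preservation is established the orbitwise values follow from $P\times_{G}(G/H)\cong P/H$. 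What the paper's route buys is self-containedness (the ``standard reformulation'' of $A$-theory you invoke is essentially equivalent to \cref{prop:ret-and-tensor} after stabilisation, and the paper prefers to prove it rather than cite it); what your route buys is brevity if one is willing to quote that theorem. One small correction: your justification ``a stable $\infty$-category is idempotent complete'' is false in general; what saves you is that $\Sp_{P}^{\omega}$, being the subcategory of compact objects of a compactly generated category, is idempotent complete, so it does lie in $\Fun(BG,\Clep)$ as required by \cref{rgiuhreiguhgwergergrwegwergreg}.
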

 {Again, this allows us to apply the split injectivity theorems about assembly maps in \cite{desc} to $\bA_P$.}
\end{ex}

In the following two examples we  show that  \cref{uiihfqwefwefffqfewf} and
\cref{regioregewgwergergew} can be viewed as special cases of \cref{giejrgoergwergwergwergwergwrg}.
 
\begin{ex}\label{gwegljrgkrewgergreg}
 Let $\bA$ be  {an additive category with strict $G$-action.}
 Then we can form the category of bounded chain complexes $\Ch^{b}(\bA)$ in $\bA$.
 By \cite[Prop.~2.7]{Bunke:2017aa}, the localisation $\Ch^{b}(\bA)_{\infty}$ of $\Ch^{b}(\bA)$ at the chain  homotopy equivalences is a stable $\infty$-category.
 Taking the $G$-action into account, we obtain 
 {a left-exact $\infty$-category with $G$-action} $\Ch^{b}(\bA)_{\infty}$  which can serve as input
 for our theory. 
  
 As we will show in \cref{eiohjgwergerqgergergdfbsdd},  there is 
 an equivalence 
 \[ K\Ch^{b}(\bA)_{\infty,G} \simeq K\bA_{G} \]
  of functors $G\Orb\to \Sp$, so \cref{giejrgoergwergwergwergwergwrg} strictly generalises \cref{uiihfqwefwefffqfewf}.
\end{ex}

\begin{ex}\label{ex:spacesop}
Consider the presentable $\infty$-category of pointed spaces $\Spc_{*}$. The full subcategory $\Spc_{*}^{\op,\omega}$ of cocompact objects in its opposite belongs to $\Cle$. 
This is a non-stable example which can serve as input for our theory.

Let $\ell \colon \Top \to \Spc$ be the canonical localisation functor.
Every topological $G$-space $P$ (i.e.~an object of $\Fun(BG,\Top)$) gives rise to an object $\ell(P)$ in $\Fun(BG,\Spc)$.
Since $\Cle$ is cocomplete, it is tensored over spaces
and we can form  {the left-exact $\infty$-category with $G$-action}
$\ell(P) \otimes \Spc_*^{\op,\omega}$. 
 By \cref{cor:A-coeffs} and \eqref{trbebertoijboireberb},
  there is an equivalence of functors
\[ K(\ell(P) \otimes \Spc_*^{\op,\omega})_G \simeq \bA_P \]
for every principal $G$-bundle $P$.
  Hence \cref{regioregewgwergergew} is also a special case of \cref{giejrgoergwergwergwergwergwrg}.
\end{ex}

 {The setting of left-exact $\infty$-categories, which includes all stable $\infty$-categories, allows us to consider a number of further examples.}

\begin{ex}  \label{werogijergegwegergrewref}
 Let $R$ in $\Alg(\Sp)$ be an associative ring spectrum, and let $\Mod(R)$ be its stable $\infty$-category of right modules.
 The $\infty$-category $\Mod(R)$ is  {compactly generated} presentable, and its subcategory $\Mod(R)^{\perf}$ of compact objects is an essentially small, idempotent complete stable $\infty$-category.
 {For technical reasons, one should consider its opposite $\Mod(R)^{\perf,\op}$, which is equivalently the full subcategory $\Mod(R)^{\op,\omega}$ of cocompact objects in $\Mod(R)^\op$, as input for our machinery.}
 
 {We equip $\Mod(R)^{\op,\omega}$ with the trivial $G$-action.
 In this case, one can check that
 \begin{equation}\label{rgoih1io3joir3gregwregqrg}
  j_!(\Mod(R)^{\op,\omega})(G/H) \simeq  \Mod(R[H])^{\op,\omega}\ ,
 \end{equation}
 where $R[H]$ is the group ring of $H$ with coefficients in $R$.}
\end{ex}
 
We conclude this introduction with an indication how \cref{giejrgoergwergwergwergwergwrg} is shown.
The first condition for a CP-functor (\cref{bioregrvdfb}) is that its target $\infty$-category is stable, complete, cocomplete, and compactly generated.
Note that the $\infty$-category of spectra $\Sp$ has all these properties. 

The remaining conditions for $K\bD_{G}$ being a CP-functor require that it extends (in a particular way, see below) to an equivariant coarse homology theory $E$, and that this equivariant coarse homology theory is continuous, strongly additive, and admits transfers.
It is the realisation of this condition which connects the study of CP-functors with the construction of equivariant coarse homology theories using controlled object functors.  
In the following, we need the category $G\BC$ of $G$-bornological coarse spaces (see \cref{regioergrgfewgwgregrwg}) and the notion of an equivariant coarse homology theory $E\colon G\BC\to \Sp$ (\cref{erguiheriwgregregwgrwegwg1}).
These notions were introduced in \cite{equicoarse} in the precise form needed.

The phrase ``extends in a particular way''  means that  for every $S$ in $G\Orb$
there is a natural equivalence
\begin{equation}\label{dvkjhienjckdwcadcdcdsacasdc}
K\bD_{G}(S)\simeq E(G_{can,min}\otimes S_{min,max})\ ,
\end{equation}
where we refer to \cref{qrgioqjrgoqrqfewfeqfqewfe} and \cref{etwgokergpoergegregegwergrg} for the notation appearing in the argument of $E$.

We show, as a consequence of \cref{efioqjwefoweweqfewfqwefqwefe}, that the functor $K\bD_{G}$ is the restriction of the equivariant coarse homology theory
\[ K\bC\cX_{G}\colon G\BC\to \Sp \]
defined in \cref{gfoiqerjgoergrgegwregrgregwre} with $\bC:=\Pro_{\omega}(\bD)$  {being the pro-completion of $\bD$.} 
More precisely, for every  {transitive $G$-set} $S$ 
there is a natural equivalence
\begin{equation}\label{ztnpokropgrtgwgwregwe}
K\bD_{G}(S)\simeq K\bC\cX_{G}(S_{min,max})\ .
\end{equation}
But this is not yet the correct ``particular way'' of extending as indicated in \eqref{dvkjhienjckdwcadcdcdsacasdc}.  
The correct equivariant coarse homology theory which has to be taken for $E$ is the coarse algebraic $K$-homology
\[ K\bC\cX^{G}\colon G\BC\to \Sp \]
with coefficients in $\bC$ which is defined in \cref{ergoierwjgowregregrwegwregwregw}.
By \cref{cor:coeffs-orbits}, we get a natural equivalence
\[ K\bC\cX_{G}(S_{min,max})\simeq K\bC\cX^{G}(G_{can,min}\otimes S_{min,max})\ ,\]
which together with \eqref{ztnpokropgrtgwgwregwe} gives the required natural equivalence
\[ K\bD_{G}(S)\simeq K\bC\cX^{G}(G_{can,min}\otimes S_{min,max})\ .\]
We now have to see that the equivariant coarse homology theory $K\bC\cX^{G}$ is strongly additive, continuous,  and admits transfers.

Continuity is built into the definition of $K\bC\cX^{G}$, see \cref{gioegwgwrgrgreggwregwrg}. 

The condition of strong additivity (\cref{wergiugewrgergergerwgwregw}) heavily depends on the fact that the $K$-theory functor  preserves products (\cref{rwqioqewrfeeffqefe}).
At the moment, we do not know any other non-trivial finitary localising invariant  (\cref{qrevoiqrjoirqfcwqecq}) with this property. 

Finally, the existence of transfers   depends on the construction of $K\bC\cX^{G}$ via categories of controlled objects.
We refer to \cref{tiowtwrtbtwtbewbtw} for the details.
This section is the derived analogue of the paper \cite{coarsetrans} covering the linear case.

 \begin{rem}\label{wtgijowgerfwerfewrf} 
 In its original formulation \cite[Section~11]{novikov:hermK2}, the Novikov conjecture asserts the homotopy invariance of higher signatures: given an oriented, closed and connected $n$-manifold $M$ with fundamental group $G$, every cohomology class $x$ in $H^{n-4k}(BG;\Q)$ gives rise to a higher signature
\[ \mathrm{sign}_x(M) := \langle \mathcal{L}_k(M) \cup c^*x, [M] \rangle \in \Q\ ,\]
where $\mathcal{L}_k(M) $ in $H^{4k}(M)$ is the $k$-th Hirzebruch polynomial and $c \colon M \to BG$ denotes the classifying map of the universal cover of $M$.
Novikov conjectured that these higher signatures are invariant under orientation-preserving homotopy equivalences.
See \cite{frr:novikov, kl:novikov} for detailed surveys on this question.
Surgery theory translated this conjecture into entirely homotopy-theoretic terms by showing that it is equivalent to the rational injectivity of the L-theoretic assembly map $BG \otimes \mathbb{L}(\Z) \to \mathbb{L}(\Z[G])$, see \cite[Ch.~17H]{wall} and \cite[Proposition~24.5]{ranicki:algL}.
For this reason, the analogous statement in algebraic K-theory also acquired the name {\em Novikov conjecture}. The latter has been proved for all groups with a classifying space of finite type \cite{bhm:trace}.
Over time, it   {has} become custom to use this name for a variety of injectivity results concerning assembly maps in K- and L-theory.
 
 Most prominently, this concerns the split injectivity of the integral assembly map for torsionfree groups considered by Carlsson and Pedersen \cite{calped}, Bartels and Rosenthal \cite{br:asymptoticdim} and Guentner, Tessera and Yu \cite{gty:novikov}.
These theorems were generalised to groups with torsion by Kasprowski \cite{kasprowski:fdc}.
To accomodate groups with torsion, one replaces the domain of the assembly map by a more general object which is best explained in terms of functors on the orbit category of a group as in \eqref{rgoijgoiergregregwgregwgregrefwwrfwer1fff}.
\end{rem}

\subsection{Controlled objects and coarse homology theories} \label{contr} 
 
Coarse geometry was invented by J.~Roe \cite{roe_lectures_coarse_geometry} \cite{roe_index_1}, \cite{MR1147350}.
Partially motivated by the study of assembly maps, controlled topology has been developed e.g.~in  
\cite{calped}, \cite{MR2030590},\cite{blr}, \cite{Bartels:2011fk} as a parallel branch.
Eventually, it has been observed in \cite{higson_pedersen_roe}, \cite{MR1834777}, \cite{ass} and other places  that one can interpret controlled topology as a part of coarse geometry via the cone construction.   
   
 In \cite{buen} (the non-equivariant case) and  \cite{equicoarse} (the equivariant case),  we provided a formal framework for coarse geometry and axiomatised the notion of a (equivariant) coarse homology theory
 (see \cref{erguiheriwgregregwgrwegwg1}).
 This framework subsumes the proper metric spaces studied in classical coarse geometry and the cones considered in controlled topology, but also allows more general constructions.
   
 More specifically, in \cite{equicoarse} we introduced the category of $G$-bornological coarse spaces $G\BC$ (\cref{gjwerogijwoergwergwergweg}).
 These are $G$-sets equipped with a compatible  $G$-bornology and $G$-coarse structure, see \cref{wthoiwhthwgreggwregwgr,trbertheheht,rgejqieogjrgoij1o4trqq}.
 The bornology is used to encode local finiteness conditions, while the coarse structure captures the large-scale geometry.
 While in the classical definition by Roe the bounded sets are determined by the coarse structure, in the case of $G$-bornological spaces there is much more freedom for the choice of the  bornology.
  
 Recall that in homotopy theory one studies topological spaces (or simplicial sets) up to weak equivalence.
 Analogously, the homotopy theory of bornological coarse spaces studies bornological coarse spaces up to
coarse equivalence (\cref{rewkgowegrerfrewfwr}) and flasques (\cref{rgiojgogregrgregre}).
 Homotopical invariants of $G$-bornological coarse spaces {which in addition satisfy an appropriate version of excision} are given by the evaluation of equivariant coarse homology theories
 \begin{equation}\label{gwergewg25tergregwg}
E\colon G\BC\to \bM\ ,
\end{equation}
where  $\bM$ is a cocomplete stable $\infty$-category, e.g., the category of spectra.
In \cite{equicoarse},  we  constructed the universal equivariant coarse homology theory
\[ \Yo^{s}\colon G\BC\to G\Sp\cX \]
which takes values in the stable $\infty$-category $G\Sp\cX$ of equivariant coarse motivic spectra. 

For another attempt to axiomatise coarse homology theories we refer to \cite{MR1834777}.
The examples of coarse homology theories prior to \cite{buen} (the non-equivariant case) and  \cite{equicoarse} (the equivariant case) were constructed under more restrictive assumptions on the spaces, and often only as group-valued functors satisfying a weaker set of axioms.
The most relevant properties  were coarse invariance and versions of excision.
Some versions of the vanishing on flasques property was considered as a particular property of the example.
This applies for example to the coarse ordinary homology and coarse topological $K$-homology which were defined as group-valued functors on the category of proper metric spaces and proper controlled maps \cite{MR1147350}, \cite{roe_index_coarse}.
The algebraic $K$-theory functors in controlled topology were usually defined on spaces which are cones over topological spaces \cite{MR1880196},\cite{blr}, but sometimes also for general metric space as in \cite{MR802790}. 

It turned out that the construction of all these examples could be modified in order to fit our notion of coarse homology theory.
We refer to \cite{buen}, \cite{equicoarse}, \cite{Bunke:2017aa} for the cases of ordinary coarse homology, topological coarse $K$-homology, and coarse algebraic $K$-homology with coefficients in an additive category.

In the present paper, we construct functors
\[ \bV\colon G\BC\to \Cle \]
which associate to  {every $G$-bornological coarse space} $X$ a left-exact $\infty$-category of $X$-controlled objects in a (previously chosen)  {compactly generated presentable $\infty$-category.}
These constructions are designed such that if
\[ \Homol\colon \Cle\to \bM \]
is a homological functor (\cref{qrevoiqrjoirqfcwqecq}), e.g., the composition of a finitary localising invariant on stable $\infty$-categories with the stabilisation functor, then the composition
\begin{equation}\label{qwefoijfoifjioejfioewfweqdwdwedqwed}
\Homol\circ \bV\colon G\BC\to \bM
\end{equation}
is an equivariant coarse homology theory.
 
 The idea to use controlled objects to produce coarse homology theories is natural and has been used in previous examples. The  first case is probably the use of controlled  Alexander chains in Roe's construction of ordinary coarse homology. Controlled objects in an additive category were used to construct the controlled or coarse versions of algebraic $K$-theory of additive categories, see e.g.~\cite{MR802790}, \cite{MR1880196}, \cite{blr}, \cite{MR2030590}, \cite{equicoarse}.  In an analogous fashion, coarse topological $K$-homology has been constructed using controlled objects in $C^{*}$-categories, see \cite{buen}, \cite{Bunke:ae},  \cite{Bunke:ad}.
Non-linear versions of categories of controlled objects, namely $X$-controlled retractive spaces over some auxiliary space, have been used to construct controlled $A$-theory 
\cite{MR1880196}, \cite{Ullmann:2015aa}, and   an equivariant  coarse homology theory extending equivariant $A$-theory in  \cite{Bunke:aa}. 

In all these examples, the coefficient category $\bC$ is an ordinary category.
In the present paper, we start with  {the opposite $\bC$ of a compactly generated presentable $\infty$-category with $G$-action.}
In the following, we explain how we associate  {to a $G$-bornological coarse space} $X$ a category of $X$-controlled objects in $\bC$.

Let $X$ be a $G$-set.
In the first step, consider the $\infty$-category
\[ \PSh_{\bC}(X):=\Fun(\cP_{X}^{\op},\bC)\]
(see \eqref{qewfoi1jo4irfrefqfef}) of contravariant functors from the poset $\cP_{X}$ of subsets of $X$ to $\bC$.
The group $G$ acts on $X$ (and hence on $\cP_{X}$) as well as on $\bC$,  so $\PSh_{\bC}(X)$ carries an induced $G$-action by conjugation.
 The $\infty$-category $\PSh_{\bC}(X)$ is again the opposite of a compactly generated presentable $\infty$-category.
We then set
\[ \PSh_{\bC}^{G}(X):=\lim_{BG} \PSh_{\bC}(X)\ .\]
The construction depends functorially on  {the $G$-set $X$.}
Using the forgetful functor  {which sends a $G$-coarse space to its underlying $G$-set,} we can view $\PSh_{\bC}^{G}$ as a functor defined on the category $G\Coarse$ of $G$-coarse spaces (\cref{thiowhwfgwrgwergwreg}). If $X$ is  {a $G$-coarse space},
we then use the coarse structure $\cC_{X}$ of $X$ in order to define a subcategory $\Sh^G_\bC(X)$ of sheaves in
$\PSh_{\bC}^{G}(X)$.
For every invariant entourage $U$ in $\cC^{G}_{X}$,
we consider the Grothendieck topology $\tau^{U}$ generated by $U$-covering families (\cref{rgiqjrgioqfweewfqewfqewf}) and let
\[ \Sh^{U}_{\bC}(X)\subseteq  \PSh_{\bC}(X) \]
be the full subcategory of $\tau^{U}$-sheaves.
 It is a complete, pointed, large $\infty$-category with $G$-action (\cref{erioghweirghwrejigrwefreferfw}).
Taking the union
\[ \Sh_{\bC}(X) :=\bigcup_{U\in \cC_{X}^{G}} \Sh^{U}_{\bC}(X) \]
 over all invariant entourages produces a finitely complete, pointed, large $\infty$-category with $G$-action (\cref{wfqwoifjqiofeqfe32rqewfqewf}).
By applying $\lim_{BG}$, we get the objects
\[ \Sh^{U,G}_{\bC}(X) :=\lim_{BG} \Sh_{\bC}^{U}(X) \quad \text{and} \quad \Sh^{G}_{\bC}(X) :=\lim_{BG}\Sh_{\bC}(X)\ .\]
 The  excision property of sheaves 
 leads to
  the Glueing Lemma (\cref{wgkwkgrewrgrg}).
 The construction of $\Sh_{\bC}^{G}(X)$ depends functorially on the $G$-coarse space $X$ and thus produces a functor
 \[ \Sh_{\bC}^{G} \colon G\Coarse\to \CLL\ .\]
 Morphisms between sheaves are natural transformations, so
 the functor $\Sh_{\bC}^{G}$ on $G\Coarse$ is far from being coarsely invariant.
 
We will introduce morphisms which propagate in the $X$-direction by performing  {an appropriate localisation} in \cref{sec:localization}.
If $V$ is an invariant entourage of $X$ containing the diagonal, then we can define a $G$-equivariant functor of posets (see \eqref{qwefew14rqefeqwf} for details) and a natural transformation  
\[ V(-)\colon \cP_{X}\to \cP_{X}  \ , \quad  V(-)\to \id\ .\]
The induced functor $V_{*}$ on presheaves 
preserves sheaves and descends to an endofunctor $V_{*}^{G}$ on $\Sh^{G}_{\bC}(X)$.
Denoting by $W_X$ the collection of all comparison morphisms $M \to V_*^GM$ with $M$ in $\Sh^{G}_{\bC}(X)$, we consider the Dwyer--Kan localisation
\[ \hat \bV_{\bC}^{G}(X):=\Sh^{G}_{\bC}(X)[W_{X}^{-1}]\ . \]
Some effort is needed to show that this construction {produces left-exact $\infty$-categories and} is covariantly functorial {in $X$ and $\bC$}.
It is important to observe that the construction also has a contravariant functoriality for a restricted class of morphisms {of $G$-coarse spaces} called coarse coverings (\cref{wefgihjwiegwergrwrg} and \cref{iqerfjrfqwuef98weufeqwfqfe}). 
{The functor $\hat \bV^{G}_{\bC}$} is excisive (in an appropriate sense) and coarsely invariant, but its values are still large. 

Using the forgetful functor $G\BC\to G\Coarse$, we can view $\hat \bV^{G}_{\bC}$ as a functor on $G\BC$ (\cref{gjwerogijwoergwergwergweg}).
For  {a $G$-bornological coarse space} $X$, we now use the bornology $\cB_{X}$ on $X$ in order to define a full subcategory
\[ \bV_{\bC}^{G}(X)\subseteq \hat \bV_{\bC}^{G}(X) \]
of objects represented by equivariantly small sheaves.  
In the non-equivariant case, the natural condition on a sheaf to be  small is that it sends  the bounded subsets of $X$ (i.e., the elements of the bornology  $\cB_{X}$) to cocompact objects in $\bC$. 
This assumption is not sufficient in the equivariant setting.
For example, we would like $\bV^G_\bC(G_{can,min})$ to be equivalent, at least up to idempotent completion, to the category  {$\bC^{G,\omega}$ of cocompact objects in the fixed points of $\bC$} via the global sections functor.
Unless we explicitly require that the evaluation of a sheaf on a $G$-bounded subset, i.e., the $G$-orbit of a bounded subset, is cocompact in $\bC^G$, the image of the global sections functor will not even be contained in $\bC^{G,\omega}$.
Since the condition must also be compatible with the contravariant functoriality for coverings, we are forced to require that an equivariant small sheaf evaluates to cocompact objects in $\bC^H$ on $H$-bounded subsets for all subgroups $H$ of $G$ (see \cref{prop:transfer-coind} in particular).
This construction finally leads to the functor
\[ \bV^{G}_{\bC}\colon G\BC\to \Cle\ ,\] 
see \eqref{bojoijgoi3jg3g34f}.  

We now obtain our first version of a functor of equivariant $X$-controlled objects in $\bC$
\[ \bV^{G,c}_{\bC}\colon G\BC\to \Cle \]
by forcing continuity (\cref{gerklgjerlgergergergergerg}) on $\bV^{G}_{\bC}$, see \cref{iowergergwegr}. Essentially, this means that we force the value of the functor on a $G$-bornological coarse space $X$ to be determined by its values on locally finite subsets of $X$.

In order to get the second version, we first apply the construction above to the trivial group leading to $\bV^{c}_{\bC}$. 
If we apply  {this functor to a $G$-bornological coarse space $X$}, then by functoriality we get  {left-exact $\infty$-category with $G$-action} $\bV^{c}_{\bC}(X)$ and set
\[ \bV_{\bC,G}^{c}(X) := \colim_{BG} \bV_{\bC}^{c}(X)\ .\]
This yields a functor
\[ \bV^{c}_{\bC,G}\colon G\BC\to \Cle\ .\]
The properties of both functors $\bV^{G,c}_{\bC}$ and $\bV_{\bC,G}^{c}$
  are stated in \cref{regwergergergrgwgregwregwregwergwreg,wthgiowergergwergwerglabel}.
Upon composition with the algebraic $K$-theory functor, they yield equivariant coarse homology theories
\[ K\bC\cX^{G}:=K\circ \bV_{\bC}^{G,c}\colon G\BC\to \Sp\ ,\]
see \cref{rgowekgprgrwgwrgwrgwregwerg}, and
\[ K\bC\cX_{G}:=K\circ \bV^{c}_{\bC,G}\colon G\BC\to \Sp\ ,\]
see \cref{ergijeogergergwergergwregegw}. Each of these theories features some additional properties which for example enable us to prove \cref{giejrgoergwergwergwergwergwrg}.

\subsection*{Acknowledgements}
	U.B.~thanks Thomas Nikolaus for a discussion about an early stage of this paper. The authors furthermore thank Arthur Bartels for suggesting the possibility of the linear analogue of the construction of $K\bC\cX_{G}^{c}$.
The authors thank the referees for their valuable feedback which led to a considerable improvement of the presentation.

 {U.B., D.-C.C. and C.W.~were supported by the SFB 1085 (Higher Invariants) funded by the Deutsche Forschungsgemeinschaft (DFG).}

	C.W.~acknowledges support by the Max Planck Society and Wolfgang L\"uck's ERC Advanced Grant ``KL2MG-interactions'' (no.~662400). D.K.~and C.W. were funded by the Deutsche Forschungsgemeinschaft (DFG, German Research Foundation) under Germany's Excellence Strategy - GZ 2047/1, Projekt-ID 390685813.

 \section{\texorpdfstring{$\infty$-category}{infinity-category} background}
 
 This aim of this preliminary section is to introduce notation and to collect a number of technical statements which we include for ease of reference.
\cref{egihweogergwreggwrgr} introduces the categories in which our constructions will take place, records some of their basic properties, and fixes notation we will use throughout the rest of the article.
\cref{sec:fractions} discusses mapping spaces in Dwyer--Kan localisations. Together with \cref{rgiojreogiergregregreg}, this material will be used in \cref{sec:localization} to perform the final step in our construction of controlled objects over a bornological coarse space.
\cref{sec:stab} elaborates on the relation between stable and left-exact $\infty$-categories, and how the notion of a localising invariant generalises from the former to the latter.
The contents of this section will only be used in \cref{roijqreoeffqewfewfqewfefq}.

\subsection{Left-exact \texorpdfstring{$\infty$-categories}{infinity-categories}}\label{egihweogergwreggwrgr}
We let $\Cati$ denote the large $\infty$-category of small $\infty$-categories.
 {We begin by giving names to those subcategories of $\Cati$ that will be important for us.
For ease of reference, we describe the members of the chain of inclusions}
\begin{equation}\label{weriughwergfgrefwrfwerfwref}
\Clep\subseteq \Cle \subseteq \Catlex
\subseteq \Cati\ .\end{equation} 
in the following collection of examples.
\begin{ex}
  $\Catlex$ is the subcategory of small $\infty$-categories admitting finite limits and finite limit preserving functors. A typical object of $\Catlex$ is the opposite of the $\infty$-category $\Spc^\cop$ of compact spaces.
\end{ex}

\begin{ex}\label{whgiowgergrewgrgwgergwerg}
 {For us, a left-exact $\infty$-category is a pointed $\infty$-category admitting finite limits.}
We denote by $\Cle$ the full subcategory of $\Catlex$ of left-exact $\infty$-categories.
 A typical object of $\Cle$ is the opposite
 of the $\infty$-category $\Spc_*^{\cop}$ of compact pointed spaces.  
 \end{ex}

 \begin{ex}\label{ex:stCat}
 The $\infty$-category $\stCat$ of small stable $\infty$-categories is a full subcategory of $\Cle$.  Both the $\infty$-category $\Sp^{\cop}$ of compact spectra and its opposite  $\Sp^{\cop,\op}$ are stable.
 See \cref{werogijergegwegergrewref} for more examples.  
\end{ex}

\begin{ex}\label{thioertherhthet}
 $\Clep$ is the full subcategory of $\Cle$ of  idempotent complete {left-exact $\infty$-categories.}
 The  $\infty$-categories  $\Spc_*^{\cop,\op}$ and  $\Sp^{\cop,\op}$  are idempotent complete.
\end{ex}

 {We let $\CATi$ denote the very large $\infty$-category of large $\infty$-categories.}
We will also consider the chain
\[ \CL\subseteq \CLLL \subseteq \CLL\subseteq \CATi \]
of subcategories described  in the following list of examples.

\begin{ex}\label{wfqwoifjqiofeqfe32rqewfqewf}
 $\CLL$ is the subcategory of large pointed $\infty$-categories admitting finite limits and finite limit preserving functors.
 It is the large analogue of $\Cle$   in \cref{whgiowgergrewgrgwgergwerg}.
\end{ex}

\begin{ex}\label{erioghweirghwrejigrwefreferfw}
 $\CLLL$ is the subcategory of $\CLL$ of $\infty$-categories which admit all small limits and limit preserving functors. 
\end{ex}

If $\bC$ is in $\CLLL$, then we can  consider the notion of a cocompact object $C$  in $\bC$, {which is equivalent to $C$ being compact in $\bC^\op$. Explicitly, this means the following:}

\begin{ddd}\label{rgiuehgieugegewgergwergwerg}
$C$ is cocompact if  the natural morphism \begin{equation}\label{qoirfjwiofjqoiwe} \colim_{\bI^{\op}}\Map_{\bC}(T,C)\xrightarrow{\simeq} \Map_{\bC}(\lim_{\bI}T,C)
\end{equation}
is an equivalence for every cofiltered diagram $T\colon \bI\to \bC$.
\end{ddd}

The $\infty$-category $\bC^{\omega}$ of cocompact objects of $\bC$ is an object of $\CLL$.

\begin{ex}\label{rgqeroqjkopqwefewfefqef}
 $\CL$ is the subcategory of $\CLLL$ whose opposites are pointed,  {compactly generated presentable $\infty$-categories.} 
 The morphisms in $\CL$ are right adjoint functors which preserve cocompact objects (equivalently, their left adjoints preserve cofiltered limits \cite[Prop.~5.5.7.2]{htt}).
 In other words, by definition we have an equivalence 
 \begin{equation}\label{wrfqewdewdqfewfqfwefqfewf}
  (-)^{\op}\colon \Prlp\xrightarrow{\simeq} \CL\ . 
 \end{equation} 
 If $\bC$ is in $\CL$, then its full subcategory  $\bC^{\omega}$ of cocompact objects belongs to  {$\Clep$}.
 Note that $\bC^{\omega}\simeq ((\bC^{\op})^{\cop})^{\op}$.
\end{ex}

 {Next, we consider the (co)completeness of the categories we have introduced, and describe which limits and colimits can be computed on the level of underlying $\infty$-categories.
Most importantly, we show that $\Cle$ is complete (\cref{prop:catex finitely complete}) and cocomplete (\cref{ioerjgoiegergwegergwrgwegrwerg}), and that the forgetful functor $\Cle \to \Cati$ preserves limits and filtered colimits.
Along the way, we discuss various constructions which preserve fully faithful functors.}

 \begin{prop}\label{prop:catex finitely complete} 
 	The $\infty$-category $\Cle$ has small limits as well
 	as small filtered colimits. Furthermore, the  inclusion  
 	$\Cle\to \Cati$ preserves
 	small limits and  small filtered colimits.
 \end{prop}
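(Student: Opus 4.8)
The statement asserts that $\Cle$ admits small limits and small filtered colimits and that the inclusion $\Cle \hookrightarrow \Cati$ preserves both. The plan is to treat limits and filtered colimits separately, in each case reducing to known structure on $\Cati$ (which is complete and cocomplete, being the $\infty$-category of small $\infty$-categories) together with a closure argument: a diagram of left-exact $\infty$-categories and left-exact functors, formed in $\Cati$, already lands back in $\Cle$ and has the correct universal property there.

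\textbf{Limits.} First I would recall that $\Cati$ has all small limits, computed as limits of the underlying diagram of $\infty$-categories (e.g.\ a limit of a diagram indexed by $\bI$ is the $\infty$-category of sections of the associated cocartesian fibration over $\bI$, or more concretely for a cofiltered/arbitrary diagram one can use the straightening-unstraightening description). Given a diagram $F \colon \bI \to \Cle$, form $\bC := \lim_{\bI} F$ in $\Cati$. The key point is that $\bC$ is again left-exact: finite limits in $\bC$ can be computed pointwise in each $F(i)$ because each $F(i)$ admits finite limits and each transition functor $F(\alpha)$ is left-exact hence preserves them, so a finite diagram in $\bC$ has a limit whose image in each $F(i)$ is the corresponding limit, and this is compatible with the transition functors; similarly the pointwise zero objects assemble to a zero object of $\bC$. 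Moreover the projection functors $\bC \to F(i)$ are left-exact by construction. To see this is the limit \emph{in $\Cle$}, one checks that for $\bD$ in $\Cle$, a left-exact functor $\bD \to \bC$ is the same as a compatible family of left-exact functors $\bD \to F(i)$: this follows because a functor into a limit in $\Cati$ is a compatible family of functors into the $F(i)$, and such a functor is left-exact iff each component is (again since finite limits in $\bC$ are pointwise). This simultaneously proves $\Cle$ has the limit and that the inclusion into $\Cati$ preserves it.

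\textbf{Filtered colimits.} Here I would use the dual idea: $\Cati$ has filtered colimits, and for a filtered diagram $F \colon \bI \to \Cle$ the colimit $\bC := \colim_{\bI} F$ in $\Cati$ can be computed so that every object and morphism of $\bC$ comes from some $F(i)$, and a finite diagram in $\bC$, being finite, factors through some $F(i)$ for $\bI$ filtered; since $F(i)$ admits finite limits and the structure functor $F(i) \to \bC$ is left-exact, the required finite limit exists in $\bC$ and is preserved. Likewise the zero object of any $F(i)$ maps to a zero object of $\bC$. Thus $\bC \in \Cle$ and the canonical functors $F(i) \to \bC$ are left-exact. For the universal property in $\Cle$: a left-exact functor $\bC \to \bD$ restricts to a compatible family $F(i) \to \bD$ of left-exact functors; conversely such a family induces $\bC \to \bD$ in $\Cati$, and this functor is left-exact because any finite diagram in $\bC$ comes from some $F(i)$ where left-exactness of the $i$-th component applies. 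Hence $\Cle$ has filtered colimits and the inclusion preserves them.

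\textbf{Main obstacle.} The routine parts are the universal-property verifications; the genuine technical point is the claim that finite limits (resp.\ finite diagrams) in the limit (resp.\ filtered colimit) computed in $\Cati$ can be controlled pointwise / can be lifted to a single stage. For limits this rests on the explicit model of limits in $\Cati$ and the fact that the evaluation functors at each $i$ jointly detect finite limits; for filtered colimits it rests on the fact that a filtered colimit of $\infty$-categories is a ``directed union up to equivalence'' — precisely, that the canonical functors $F(i) \to \bC$ are jointly essentially surjective on objects and on morphisms, and that any finite simplicial set mapping to $\bC$ factors through some $F(i)$ — which is standard but should be cited (it is the $\infty$-categorical analogue of the corresponding fact for ordinary categories, and follows from compactness of finite simplicial sets in $\sSet$ with the Joyal model structure). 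Once this lifting principle is in hand, the rest is formal. I would therefore structure the proof as: (1) recall completeness/cocompleteness of $\Cati$ and the relevant explicit descriptions; (2) the lifting/pointwise lemma; (3) closure of $\Cle$ under the two operations; (4) the universal properties, which also give preservation by the inclusion.
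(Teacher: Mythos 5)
Your overall strategy is viable, but it takes a genuinely different route from the paper. The paper does not re-verify anything about diagrams of $\infty$-categories with finite limits: it quotes \cite[Prop.~5.5.7.6, 5.5.7.10 and 5.5.7.11]{htt} for the corresponding statements about $\Catlex$, and then reduces the pointed case to the unpointed one via the coslice localisation $(-)_{*/}\colon \Catlex\rightleftarrows\Cle$. Filtered colimits are then immediate because $C\mapsto C_{*/}$ commutes with them, and for limits the inclusion $\Cle\to\Catlex$ is a right adjoint, so one only has to check (using \cite[Prop.~4.4.2.6]{htt} to reduce to products and pullbacks, and \cite[Lem.~5.4.5.5]{htt}) that pointedness is stable under products and pullbacks. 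Your proposal instead re-proves the $\Catlex$-level facts directly, with pointedness built in; that is more self-contained in spirit, but the two "standard facts" you defer to (joint detection/construction of finite limits under evaluations in a limit of $\infty$-categories; factorisation of finite diagrams through a stage of a filtered colimit) are exactly what the cited HTT propositions package, so in practice you would end up citing the same sources the paper does.

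There is, however, a genuine gap in your filtered-colimit half as written: you argue that a finite diagram in $\bC=\colim_{\bI}F$ factors through some $F(i)$ and then conclude that "since $F(i)$ admits finite limits and the structure functor $F(i)\to\bC$ is left-exact, the required finite limit exists in $\bC$", but the left-exactness of the canonical functors $F(i)\to\bC$ is precisely what you are in the middle of proving, so the argument is circular; the same unproved claim is reused in your verification of the universal property and in the assertion that the zero object of $F(i)$ maps to a zero object of $\bC$. The missing ingredient is not the factorisation principle but the statement that a limit cone (resp.\ a zero object) at a finite stage remains a limit cone (resp.\ a zero object) in the colimit. This follows from computing mapping spaces in a filtered colimit of $\infty$-categories as filtered colimits of mapping spaces at the stages (using that the transition functors $F(i)\to F(j)$ are left-exact, so the image cone is a limit cone at every later stage) together with the fact that finite limits commute with filtered colimits in $\Spc$; alternatively one can simply invoke \cite[Prop.~5.5.7.11]{htt} as the paper does. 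Once this lemma is supplied, your argument goes through, and the pointedness checks (the compatible family of zero objects is initial and terminal in $\lim_{\bI}F$, the image of a zero object is a zero object in $\colim_{\bI}F$) follow from the same mapping-space formulas.
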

 \begin{proof} 
If we fix a small category $K$,
the forgetful functor from the $\infty$-category of small $\infty$-categories with $K$-shaped colimits (and functors preserving them) to the $\infty$-category of $\infty$-categories commutes with small limits: commutation with products is an easy exercise,
and the case of pullbacks follows from \cite[Lem.~5.4.5.5]{htt}.
Since the assignment $\bC \mapsto \bC^{\op}$ is an equivalence,
the same is true replacing $K$-shaped colimits by $K$-shaped limits:
the forgetful functor from the $\infty$-category of small $\infty$-categories with $K$-shaped limits (and functors preserving them) to the $\infty$-category of $\infty$-categories commutes with small limits. For filtered colimits, we have analogous compatibility properties, except that we need $K$ to be finite,
by \cite[Prop.~5.5.7.11]{htt} and its dual version.

In particular, if we let $K$ range over all finite indexing categories, it follows that $\Catlex$ has small limits as well as small filtered colimits,
and the  functor $\Catlex\to \Cati$ preserves them.
Moreover, the property of having an initial or terminal object is preserved
under small limits as long as the transition maps in the limit diagram
preserve initial or terminal objects, respectively (take $K=\varnothing$
in the discussion above). Similarly, since $\varnothing$ is finite,
the same holds for filtered colimits. One concludes that
the property of having a zero object is closed under small limits
as well as under small filtered colimits in $\Catlex$.
 \end{proof}
 
 \begin{lem}\label{lem:CL-complete}
 The $\infty$-category $\CL$ has small limits. Furthermore, the inclusion $\CL\to \CATi$ preserves small limits.
 \end{lem}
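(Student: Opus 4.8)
\textbf{Proof plan for \cref{lem:CL-complete}.}
The plan is to transport the statement across the equivalence $(-)^{\op}\colon \Prlp\xrightarrow{\simeq}\CL$ from \eqref{wrfqewdewdqfewfqfwefqfewf} and reduce it to known facts about presentable $\infty$-categories with $\omega$-accessible right adjoints. Concretely, since $\CL$ is by definition the image of $\Prlp$ under $(-)^{\op}$, a small limit diagram in $\CL$ corresponds to a small \emph{colimit} diagram in $\Prlp=\mathbf{Pr}^{\mathrm{L}}_{\omega,*}$, i.e. a diagram of pointed $\omega$-presentable $\infty$-categories and left adjoints preserving $\omega$-compact objects. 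So first I would recall (from Lurie, \cite[Sec.~5.5.3 and 5.5.7]{htt}) that $\mathbf{Pr}^{\mathrm{L}}$ admits all small colimits and that the inclusion $\mathbf{Pr}^{\mathrm{L}}\to\CATi$ sends a colimit in $\mathbf{Pr}^{\mathrm{L}}$ to the corresponding limit computed via the right adjoints; equivalently, small colimits in $\mathbf{Pr}^{\mathrm{L}}$ are computed as small limits in $\mathbf{Pr}^{\mathrm{R}}$, and the forgetful functor $\mathbf{Pr}^{\mathrm{R}}\to\CATi$ preserves them (\cite[Cor.~5.5.3.4, Thm.~5.5.3.18]{htt}). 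Passing back through $(-)^{\op}$, this already shows that $\CL$ has small limits and that $\CL\to\CATi$ preserves them, \emph{provided} one checks that the relevant constructions stay inside the subcategory $\CL$ rather than just $\CATi$.

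The second step, which is the real content, is exactly that closure check: one must verify that a small limit of a diagram in $\CL$, computed in $\CATi$, is again $\omega$-presentable-on-opposites and that the projection functors are morphisms of $\CL$ (right adjoints whose left adjoints preserve cofiltered limits, equivalently preserve cocompact objects, per \cref{rgqeroqjkopqwefewfefqef}). For products this is immediate since a product of $\omega$-presentable categories is $\omega$-presentable and the projections manifestly have the required adjoints. For pullbacks (and hence, with products, all small limits by \cite[Prop.~4.4.2.6]{htt}) one invokes the fact that $\mathbf{Pr}^{\mathrm{R}}_{\omega}$ is closed under small limits in $\mathbf{Pr}^{\mathrm{R}}$: a limit of $\omega$-accessible localizations along $\omega$-accessible right adjoints is again $\omega$-accessible, by the accessibility of limits of accessible categories (\cite[Sec.~5.4.6]{htt}, in particular \cite[Prop.~5.4.6.6]{htt} and the discussion of $\kappa$-accessibility there), combined with presentability being preserved. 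The pointedness condition is preserved under small limits exactly as in the proof of \cref{prop:catex finitely complete}: being pointed is a degenerate instance of \cite[Lem.~5.4.5.5]{htt}, or one observes that the zero object in the limit is given componentwise. Assembling these observations gives that the limit taken in $\CATi$ lands in $\CL$ and carries the universal property.

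The main obstacle is the accessibility bookkeeping in the pullback case — ensuring not merely that the underlying $\infty$-category is presentable but that it is $\omega$-presentable, i.e. compactly generated with the \emph{correct} index $\omega$, and that the structure maps preserve $\omega$-compact objects so that their adjoints commute with cofiltered limits (this is the precise meaning of a morphism in $\CL$). Everything else is either formal transport along $(-)^{\op}$ or a direct repetition of the argument already given for \cref{prop:catex finitely complete}. I would therefore structure the written proof as: (i) reduce to $\Prlp$ via \eqref{wrfqewdewdqfewfqfwefqfewf}; (ii) cite that small colimits in $\mathbf{Pr}^{\mathrm{L}}$ exist and are detected by $\mathbf{Pr}^{\mathrm{R}}\to\CATi$; (iii) handle products trivially; (iv) handle pullbacks using closure of $\mathbf{Pr}^{\mathrm{R}}_{\omega}$ under limits plus the pointedness argument from \cref{prop:catex finitely complete}; (v) conclude.
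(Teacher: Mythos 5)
There is a genuine gap, and it sits at the very first step of your plan. The equivalence $(-)^{\op}\colon \Prlp\xrightarrow{\simeq}\CL$ of \eqref{wrfqewdewdqfewfqfwefqfewf} is a \emph{covariant} equivalence: a morphism $f\colon \bP\to\bP'$ in $\Prlp$ (a left adjoint preserving compact objects) is sent to $f^{\op}\colon \bP^{\op}\to\bP'^{\op}$, which is a right adjoint whose left adjoint preserves cofiltered limits — i.e.\ a morphism of $\CL$ \emph{in the same direction}. Taking opposites reverses arrows inside each category, not the functors between them. Consequently a small limit diagram in $\CL$ corresponds to a small \emph{limit} diagram in $\Prlp$, not a colimit diagram. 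Your reduction ``limits in $\CL$ = colimits in $\Prlp$'' is therefore false, and everything built on it — computing colimits of $\mathbf{Pr}^{\mathrm{L}}$ via right adjoints as limits in $\mathbf{Pr}^{\mathrm{R}}$, the closure of $\mathbf{Pr}^{\mathrm{R}}_{\omega}$ under limits, the accessibility bookkeeping — is machinery for the wrong statement. (It would, if carried out, reprove that $\CL$ has small \emph{colimits}, which is the content of \cref{ergiowergerwgergwergwreg} via $\Clep$, not of \cref{lem:CL-complete}.) In particular your route cannot yield the second assertion either: colimits in $\mathbf{Pr}^{\mathrm{L}}$ are not computed in $\CATi$, so no argument of that shape shows that the inclusion $\CL\to\CATi$ preserves the limits in question.

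The correct argument is much shorter and is essentially the paper's: the pointed version of \cite[Prop.~5.5.7.6]{htt} (derived from the unpointed one exactly as in the proof of \cref{prop:catex finitely complete}) says that $\Prlp$ has small limits and that the inclusion $\Prlp\to\CATi$ preserves them; since $(-)^{\op}$ is a (covariant) autoequivalence of $\CATi$, it preserves limits of diagrams of $\infty$-categories, so these limits transport to $\CL$ and are still computed in $\CATi$. Note also that your worry about $\omega$-accessibility of pullbacks is unnecessary once you quote the correct statement of \cite[Prop.~5.5.7.6]{htt}, which already handles the index $\omega$ and the preservation of compact objects by the structure maps.
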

 \begin{proof}
  The $\infty$-category $\Prlp$ has small limits by a pointed version (see the proof of \cref{prop:catex finitely complete} for the derivation of the pointed from the unpointed version) of \cite[Prop.~5.5.7.6]{htt}. Hence  $\CL$   also has small limits. Since $(-)^{\op}$ and
the inclusion $\Prlp\to \Cati$   preserves small limits, the  inclusion $\CL\to \CATi$ preserves limits.
 \end{proof}

 In the following we discuss limits in $\Cati$ and $\Cle$.  There are analogous results for the large cases $\CATi$ and $\CLL$.
 
  We will use underlines to indicate constant diagrams.
 Let $\cC$ be any of the above categories, and let $\bI$ be a small category.
  Then we consider a diagram $\bC \colon \bI\to \cC$
  and assume that
 \begin{equation}\label{gtterpojgpergerg}
  \eta\colon \underline{\lim_{\bI} \bC}\to \bC
 \end{equation}
 presents an object $\lim_{\bI}\bC$  as the limit of the diagram in $\cC$.   
 
 For every object $i$ in $\bI$ we have an evaluation functor
 \begin{equation}\label{werwregfwreggregwrweggwergwerg}
\ev_{i}\colon \Fun(\bI,\cC)\to \cC\ .
\end{equation}
Applied to $\bC$ we get the underlying  object $\ev_{i}(\bC)\simeq \bC(i)$ in $\cC$. If we apply $\ev_{i}$ to $\eta$ from \eqref{gtterpojgpergerg} and
use the canonical equivalence $\ev_{i}(\underline{(-)})\simeq \id$,  then we get the evaluation morphism \begin{equation}\label{geroij4oi33rgg3g34}
	e_{i}\colon \lim_{\bI} \bC \to  \bC(i)
\end{equation}
in $\cC$.

If $\phi\colon \bC\to \bD$ is a {natural transformation of such diagrams},
then the following diagram commutes for every $i$ in $\bI$:
\begin{equation}\label{ergregrg}
\xymatrix{
 \lim_{\bI} \bC\ar[r]^{e_{i}}\ar[d]_{\lim_{\bI}\phi} & \bC(i)\ar[d]^{\phi(i)} \\
 \lim_{\bI} \bD\ar[r]^{e_{i}} & \bD(i)}
\end{equation}

\begin{lem}\label{greoigergreeg}
The collection of functors $(e_{i})_{i\in \bI}$ detects equivalences.
\end{lem}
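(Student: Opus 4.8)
The plan is to reduce to the two universal cases $\cC=\Cati$ and $\cC=\CATi$, and there to exploit the description of a limit of $\infty$-categories as an $\infty$-category of coCartesian sections of the associated coCartesian fibration.

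First I would record that for every $\infty$-category $\cC$ occurring in the lemma, the inclusion $U$ of $\cC$ into $\Cati$ (for $\Clep,\Cle,\Catlex$) or into $\CATi$ (for $\CL,\CLLL,\CLL$) preserves small limits and is conservative. Preservation of limits is part of \cref{prop:catex finitely complete} and \cref{lem:CL-complete} together with their large analogues, using in addition that the inclusions $\Clep\hookrightarrow\Cle$ and $\CLLL\hookrightarrow\CLL$ are right adjoints and hence preserve limits; conservativity holds because in each of these cases the inverse of an equivalence between the underlying $\infty$-categories is automatically again a morphism in $\cC$ (it is pointed, left-exact, preserves cocompact objects, etc.). Consequently $U$ carries the limit cone \eqref{gtterpojgpergerg} in $\cC$ to a limit cone in $\Cati$ resp.\ $\CATi$, so that $U(e_i)$ are precisely the evaluation morphisms \eqref{geroij4oi33rgg3g34} for $\lim_{\bI}(U\bC)$, and $U$ detects equivalences. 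Hence it suffices to prove the lemma for $\cC=\Cati$ and for $\cC=\CATi$; the two arguments are verbatim the same, so I describe the small case.

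So let $\bC\colon\bI\to\Cati$ and let $\pi\colon\bE\to\bI$ be the coCartesian fibration obtained by (covariant) unstraightening of $\bC$, so $\bE_i\simeq\bC(i)$. By the standard computation of limits of $\infty$-categories there is a canonical equivalence between $\lim_{\bI}\bC$ and the full subcategory $\Gamma^{\mathrm{cocart}}(\bI,\bE)$ of the $\infty$-category $\Gamma(\bI,\bE)$ of sections of $\pi$ spanned by those sections sending every morphism of $\bI$ to a $\pi$-coCartesian morphism; under this equivalence $e_i$ corresponds to the functor $s\mapsto s(i)\in\bE_i\simeq\bC(i)$. Now let $\alpha\colon s\to s'$ be a morphism of $\lim_{\bI}\bC$ with $e_i(\alpha)\simeq\alpha(i)$ an equivalence for all $i$. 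Since $\Gamma^{\mathrm{cocart}}(\bI,\bE)$ is a full subcategory of $\Gamma(\bI,\bE)$, it is enough to show $\alpha$ is an equivalence in $\Gamma(\bI,\bE)$. The $\infty$-category $\Gamma(\bI,\bE)$ is the fibre of $\Fun(\bI,\bE)\to\Fun(\bI,\bI)$ over $\id_{\bI}$, and the inclusion of a fibre of a functor into its total $\infty$-category is conservative; so it suffices to check that the image of $\alpha$ in $\Fun(\bI,\bE)$ is an equivalence. But equivalences in a functor $\infty$-category are detected pointwise, and the pointwise components of $\alpha$ are exactly the $e_i(\alpha)$, which are equivalences by hypothesis. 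Hence $\alpha$ is an equivalence, completing the proof.

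The argument is essentially formal; the only points requiring care are the identification of $\lim_{\bI}\bC$ with $\Gamma^{\mathrm{cocart}}(\bI,\bE)$ in a way that matches "evaluate at $i$" with $e_i$, and the bookkeeping of the first paragraph needed to handle all of $\Clep,\Cle,\Catlex,\Cati,\CL,\CLLL,\CLL,\CATi$ uniformly. I expect the former — pinning down the fibrational description compatibly with the evaluation morphisms — to be the main thing to justify precisely. One may also sidestep the fibrational picture entirely by first establishing the formula $\Map_{\lim_{\bI}\bC}(X,Y)\simeq\lim_{i\in\bI}\Map_{\bC(i)}(e_iX,e_iY)$ (using that $\Fun(\Delta^1,-)$ and pullbacks commute with limits, and that the limit over $\bI$ of the constant diagram on a point is a point), and then deducing from $e_i(f)$ being equivalences that $f_*\colon\Map(C,X)\to\Map(C,Y)$ is an equivalence for every object $C$, whence $f$ is an equivalence by conservativity of the Yoneda embedding.
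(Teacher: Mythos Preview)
Your proposal is correct. The main route you take --- modelling $\lim_{\bI}\bC$ as the $\infty$-category of coCartesian sections of the unstraightened fibration and then using that equivalences in $\Fun(\bI,\bE)$ are detected pointwise --- is genuinely different from the paper's argument. The paper instead establishes directly the mapping-space formula
\[
\Map_{\lim_{\bI}\bC}(X,Y)\;\simeq\;\lim_{\bI} M(X,Y)
\]
by writing $\Map_{\bD}(D,D')$ as the pullback $\Delta^{0}\times_{\bD\times\bD}\Fun(\Delta^{1},\bD)$, applying this both to $\lim_{\bI}\bC$ and objectwise to $\bC$, and using that $\lim_{\bI}$ commutes with $\Fun(\Delta^{1},-)$ and pullbacks; it then concludes via Yoneda. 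This is precisely the alternative you sketch in your final sentence. Your fibrational approach is more geometric and shorter once the section model is granted, but it relies on the unstraightening equivalence together with the identification of the evaluation functors; the paper's approach is more self-contained, needing only that mapping spaces are a finite limit and that $\lim_{\bI}$ on $\Cati$ preserves finite limits. Both yield the same mapping-space formula that is reused later (e.g.\ in \cref{tieqorgfgregwegergw}).
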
 
\begin{proof}
If $\bD$ is  in $\Cati$   with objects $D,D^{\prime}$, then we can present the mapping space by the pullback in $\Cati$
\[\xymatrix{\Map_{\bD}(D,D^{\prime})\ar[r]\ar[d]&\Fun(\Delta^{1},\bD)\ar[d]\\\Delta^{0}\times \Delta^{0}\ar[r]^{(D,D^{\prime})}&\bD\times \bD}\ ,\]
where the right vertical map is given by the evaluations at the two boundaries of $\Delta^{1}$.
Thus for  a pair of objects  $X,Y$  of $\lim_{\bI} \bC$ we have a pullback in $\Cati$
\begin{equation}\label{2eoifh2ioufhd1}
\xymatrix{\Map_{\lim_{\bI} \bC}(X,Y)\ar[r]\ar[d]&\Fun(\Delta^{1},\lim_{\bI} \bC) \ar[d]\\ \Delta^{0}\times \Delta^{0} \ar[r]^-{(X,Y)}&\lim_{\bI} \bC\times \lim_{\bI} \bC}\ .
\end{equation}
We  now define $M(X,Y)$ in $\Fun(\bI,\Cat_{\infty})$ by the pullback square
\[\xymatrix{M(X,Y)\ar[r]\ar[d]&\Fun(\Delta^{1},\bC)\ar[d]\\\underline{\Delta^{0}}\times \underline{\Delta^{0}}\ar[r]&\bC\times \bC}\ ,\]
where the lower horizontal map corresponds to $(X,Y)$ under the
  $(\underline{-},\lim_{\bI})$-adjunction.   

We apply $\lim_{\bI}$ to this diagram.
Since the functor $\lim_{\bI}$ preserves pullbacks, $\lim_{\bI}\underline{\Delta^{0}}\simeq \Delta^{0}$, and $\lim_{\bI} \Fun(\Delta^{1},\bC)\simeq \Fun(\Delta^{1},\lim_{\bI} \bC)$ we get the pullback diagram \eqref{2eoifh2ioufhd1}. In other words, we have an equivalence
\begin{equation}\label{gkjrgnkjgnjk23d2d2d}
\Map_{\lim_{\bI} \bC}(X,Y)\simeq \lim_{\bI} M(X,Y)\ .
\end{equation}
On the other hand, for $i$ in $\bI$  the functor
$\ev_{i}\colon \Fun(\bI,\Cati)\to \Cati$  preserves pullback diagrams. In view of the definition of the functor $e_{i}$ und using the equivalences $\ev_{i}(\underline{\Delta^{0}})\simeq \Delta^{0}$ and $\Fun(\Delta^{1},\bC(i))\simeq \ev_{i}(\Fun(\Delta^{1},\bC))$   we get the pullback diagram
\[\xymatrix{\ev_{i}(M(X,Y))\ar[r]\ar[d]& \Fun(\Delta^{1},\bC(i))\ar[d] \\ \Delta^{0}\times \Delta^{0} \ar[r]^{ (e_{i}(X),e_{i}(Y))}&\bC(i)}\ ,\]
i.e., the equivalence
\[ \ev_{i}(M(X,Y))\simeq \Map_{\bC(i)}(e_{i}(X),e_{i}(Y))\ .\]
Let now $f\colon Y\to Y^{\prime}$ be a morphism in $\lim_{\bI} \bC$ such that $e_{i}(f)$ is an equivalence for every $i$ in $\bI$. Then for every $X$ in $\lim_{\bI} \bC$ the induced morphism
\[ \Map_{\bC(i)}(e_{i}(X),e_{i}(Y))\to \Map_{\bC(i)}(e_{i}(X),e_{i}(Y^{\prime})) \]
is an equivalence.
Hence the induced map  
$\ev_{i}(M(X,Y))\to \ev_{i}(M(X,Y^{\prime}))$ is an equivalence for all $i$ in $\bI$ . 
We conclude that the morphism $M(X,Y)\to M(X,Y^{\prime})$  induces an equivalence after applying $\lim_{\bI}$. In view of the equivalence \eqref{gkjrgnkjgnjk23d2d2d}, we thus have shown that
the induced morphism $\Map_{\lim_{\bI} \bC}(X,Y )\to \Map_{\lim_{\bI} \bC}(X,Y^{\prime})$ is an equivalence.
Since $X$ is arbitrary, we conclude that $f\colon Y\to Y^{\prime}$ is an equivalence.
\end{proof}

 {Let $\phi \colon \bC\to \bD$ be a natural transformation of functors $\bI \to \Cati$.}
\begin{lem}\label{tieqorgfgregwegergw}
 Assume that for every $i$ in $\bI$ the morphism ${\phi(i) \colon} \bC(i)\to \bD(i)$ is fully faithful.
Then we have the following assertions:
\begin{enumerate}
\item \label{reigowergwrgrgwgrg} The morphism ${\lim_{\bI} \phi \colon} \lim_{\bI}\bC\to \lim_{\bI}\bD$ is fully faithful.
\item\label{jfqiwoefojeqwfqewfefqewf} The essential image of the morphism in \eqref{reigowergwrgrgwgrg} consists of those objects  $D$ of $\lim_{\bI}\bD$ whose evaluation $e_{i}(D)$ belongs to the essential image of ${\phi(i) \colon} \bC(i)\to \bD(i)$ for all $i$ in $\bI$.
\end{enumerate}
\end{lem}
 \begin{proof}
  \eqref{reigowergwrgrgwgrg} is well-known, but also follows from the discussion of mapping spaces in the proof of \cref{greoigergreeg}. In order to see \eqref{jfqiwoefojeqwfqewfefqewf}, one easily checks that
  the indicated subcategory of $\lim_{\bI} \bD$ has the required universal property.
 \end{proof}
   
We consider a diagram {$\bC \colon \bI \to \Cle$.}
 \begin{lem}\label{rgeoiergegegregvsdvsdvvdfverrf3erfer}
 The collection of functors $(e_{i})_{i\in \bI}$ detects finite limits.
 \end{lem}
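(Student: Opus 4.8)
The plan is to reduce the statement about $\Cle$ to the already-established statement about $\Cati$, namely \cref{greoigergreeg}, by exploiting the fact that the inclusion $\Cle \to \Cati$ preserves finite limits. Concretely, let $\bI$ be small and let $\bC \colon \bI \to \Cle$ be a diagram; let $\lim_{\bI}\bC$ denote its limit computed in $\Cle$. By \cref{prop:catex finitely complete}, the inclusion $\Cle \to \Cati$ preserves small limits, so $\lim_{\bI}\bC$ computed in $\Cle$ agrees with the limit computed in $\Cati$, and the evaluation functors $e_i \colon \lim_{\bI}\bC \to \bC(i)$ are the same whether interpreted in $\Cle$ or in $\Cati$.

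Now suppose $K \colon \bJ \to \lim_{\bI}\bC$ is a finite diagram and $c \in \lim_{\bI}\bC$ is an object equipped with a cone $c \to K$ such that for every $i$ in $\bI$ the image cone $e_i(c) \to e_i \circ K$ exhibits $e_i(c)$ as a limit of $e_i \circ K$ in $\bC(i)$. I want to show that $c \to K$ is a limit cone in $\lim_{\bI}\bC$. First I would form the actual limit $\lim_{\bJ} K$ inside $\lim_{\bI}\bC$ (which exists since $\lim_{\bI}\bC$ lies in $\Cle$ and hence admits finite limits), giving a comparison morphism $\phi \colon c \to \lim_{\bJ} K$ in $\lim_{\bI}\bC$. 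It suffices to prove $\phi$ is an equivalence. By \cref{greoigergreeg}, the collection $(e_i)_{i \in \bI}$ detects equivalences, so it is enough to check that $e_i(\phi)$ is an equivalence for each $i$. Here I would use that each evaluation functor $e_i \colon \lim_{\bI}\bC \to \bC(i)$ preserves finite limits: this is because $e_i$ is obtained by applying the limit-preserving functor $\ev_i \colon \Fun(\bI,\Cle) \to \Cle$ (a right adjoint, evaluation at an object) together with the equivalence $\ev_i(\underline{(-)}) \simeq \id$, exactly as in the paragraph preceding \cref{geroij4oi33rgg3g34}; alternatively, $e_i$ factors through the forgetful functor and limits in $\Cati$ along $\bI$ are computed pointwise. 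Consequently $e_i(\phi) \colon e_i(c) \to e_i(\lim_{\bJ} K) \simeq \lim_{\bJ}(e_i \circ K)$ is, up to the canonical identification, the comparison map for the cone $e_i(c) \to e_i \circ K$, which is an equivalence by hypothesis.

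Having shown $e_i(\phi)$ is an equivalence for all $i$, \cref{greoigergreeg} gives that $\phi$ is an equivalence, hence $c \to K$ is a limit cone. Conversely, if $c \to K$ is a limit cone in $\lim_{\bI}\bC$, then applying each $e_i$ (which preserves finite limits) shows each $e_i(c) \to e_i \circ K$ is a limit cone, so the two conditions are equivalent, which is precisely the assertion that $(e_i)_{i \in \bI}$ detects finite limits.

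The only delicate point — and the place I would be most careful — is the verification that each $e_i$ preserves finite limits in $\Cle$, not merely in $\Cati$: one must know that the finite limit of the diagram $K$ formed in $\lim_{\bI}\bC$ and then evaluated agrees with the finite limit formed pointwise in $\bC(i)$. This follows once one observes that the square \eqref{ergregrg} commutes (naturality of evaluation) and that limits over $\bI$ in $\Fun(\bI,\Cle)$ restrict compatibly, but it hinges essentially on \cref{prop:catex finitely complete} ensuring that the forgetful functors $\Cle \to \Cati$ and $\Catlex \to \Cati$ preserve the relevant limits, so that ``finite limit in $\bC(i)$'' is unambiguous. Everything else is a formal consequence of \cref{greoigergreeg} and the universal property of limits, so I expect the write-up to be short.
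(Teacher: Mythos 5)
Your argument is correct and is essentially the paper's own proof: form the finite limit of the diagram inside $\lim_{\bI}\bC$, compare via the canonical map, use left-exactness of the evaluations $e_{i}$ to identify $e_{i}$ of that limit with the pointwise limit, and conclude by equivalence-detection from \cref{greoigergreeg}. The extra care you take in justifying that each $e_{i}$ is left-exact (via \cref{prop:catex finitely complete}) is exactly what the paper's phrase ``since $e_{i}$ is left-exact'' silently relies on.
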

\begin{proof}
Let $\bJ$ be a finite category and  {let $X \colon \bJ \to \lim_\bI \bC$ be a diagram.}
Let furthermore $Y$ be an object of $\lim_{\bI} \bC$ and $\iota\colon Y\to \lim_{\bJ}X$ be a morphism.
We want to show that $\iota$ is an equivalence provided the induced functor
\[ e_{i}(Y)\xrightarrow{e_{i}(\iota)} e_{i}(\lim_{\bJ}X)\xrightarrow{!}\lim_{\bJ}e_{i}(X) \]
is an equivalence for all $i$ in $\bI$. Since $e_{i}$ is left-exact, the marked morphism is an equivalence. 
Hence
{the lemma} follows from  the fact that the collection $(e_{i})_{i\in \bI}$ detects equivalences (\cref{greoigergreeg}).
\end{proof}

 We consider  a diagram  {$\bC \colon \bI \to \CLLL$.}
 \begin{lem}\label{rgeoiergegegregvsdvsdvvdfverrf3erfer1}
 The collection of functors $(e_{i})_{i\in \bI}$ detects  limits.
 \end{lem}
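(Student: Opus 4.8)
The plan is to run the same argument as in the proof of \cref{rgeoiergegegregvsdvsdvvdfverrf3erfer}, but for arbitrary small index categories $\bJ$ in place of finite ones, using that the evaluation functors $e_i$ now preserve \emph{all} small limits rather than merely finite ones. The essential input is \cref{greoigergreeg}, which already shows that the collection $(e_i)_{i\in\bI}$ detects equivalences in $\Cati$ (hence in $\CLLL$, since these are full subcategories stable under the relevant limits by \cref{prop:catex finitely complete} and its large analogue). So the only thing that changes is the justification that each $e_i$ commutes with the limits we care about.

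First I would recall that for a diagram $\bC$ in $\Fun(\bI,\CLLL)$, the limit $\lim_{\bI}\bC$ is computed in $\CATi$ (equivalently in $\Cati$ up to size), since $\CLLL\to\CATi$ preserves small limits; this is the large analogue of the fact that $\Cle\to\Cati$ preserves small limits from \cref{prop:catex finitely complete}. Second, each object $\bC(i)$ lies in $\CLLL$, so it admits all small limits; and the evaluation morphism $e_i\colon \lim_{\bI}\bC\to\bC(i)$ is a morphism in $\CLLL$, hence preserves all small limits by definition of $\CLLL$ (\cref{erioghweirghwrejigrwefreferfw}). Now let $\bJ$ be any small category, let $X\colon\bJ\to\lim_{\bI}\bC$ be a diagram, let $Y$ be an object of $\lim_{\bI}\bC$, and let $\iota\colon Y\to\lim_{\bJ}X$ be a morphism such that the composite
\[ e_i(Y)\xrightarrow{e_i(\iota)} e_i(\lim_{\bJ}X)\xrightarrow{\simeq}\lim_{\bJ}e_i(X) \]
is an equivalence for every $i$ in $\bI$; here the second arrow is an equivalence precisely because $e_i$ preserves the limit $\lim_{\bJ}$. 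Then $e_i(\iota)$ is an equivalence for all $i$, and by \cref{greoigergreeg} this forces $\iota$ to be an equivalence. Hence $(e_i)_{i\in\bI}$ detects small limits.

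The argument is entirely routine; there is no real obstacle, since the only modification relative to \cref{rgeoiergegegregvsdvsdvvdfverrf3erfer} is replacing "left-exact, hence commutes with finite limits" by "morphism in $\CLLL$, hence commutes with all small limits". One should just be a little careful about size issues, but these are handled exactly as elsewhere in the paper by passing to $\CATi$ and using the large analogue of \cref{greoigergreeg}, whose proof goes through verbatim since it only uses pullbacks and cotensors by $\Delta^1$, which exist in $\CATi$.
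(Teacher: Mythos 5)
Your proposal is correct and follows essentially the same route as the paper: the paper's proof likewise just reruns the argument of \cref{rgeoiergegegregvsdvsdvvdfverrf3erfer} with the finiteness assumption on $\bJ$ dropped, citing that the evaluations, being morphisms in $\CLLL$, preserve all small limits, and then invoking detection of equivalences as in \cref{greoigergreeg}. Your extra remarks on size and on the large analogue of the equivalence-detection lemma are consistent with the paper's implicit handling of these points.
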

 \begin{proof} The argument is the same as for  \cref{rgeoiergegegregvsdvsdvvdfverrf3erfer}. We just drop all finiteness assumptions on $\bJ$ and use that in this case the evaluations (being morphisms in $\CLLL$) preserve small limits.   \end{proof}
 
In the following, we consider colimits of diagrams of left-exact $\infty$-categories. 
We will show that $\Clp$ and $\Cle$ are  cocomplete (the cases of limits and filtered colimits have already been settled in \cref{prop:catex finitely complete}).  
We further study instances where colimits
preserve fully faithfulness of transformations. 

We have a chain of inclusions
\[ \Crp \subseteq \Cre\subseteq \Cati \]
with the following description:
\begin{enumerate}
\item $\Cre$ is the subcategory of small pointed $\infty$-categories admitting finite colimits, and finite colimit-preserving functors  \cite[{Not.~5.5.7.7}]{htt}.
\item $\Crp$ is the full  subcategory of $\Cre$ of idempotent complete  {right-exact $\infty$-categories} \cite[Sec.~4.4.5]{htt}.
\end{enumerate}

As above,  we let $\Prlp$ denote the very large $\infty$-category of  {pointed, compactly generated presentable $\infty$-categories}
and left adjoint functors which preserve compact objects.
We have the $\Ind$-completion functor
\[ \Ind_{\omega}\colon \Cre \to \Prlp \]
 {which freely adjoins filtered colimits.}
Its restriction to idempotent complete right-exact $\infty$-categories
  is an equivalence $\Ind_{\omega}\colon \Crp\xrightarrow{\simeq} \Prlp$ whose inverse is the functor
  \[ (-)^{\cop}\colon  \Prlp\to \Crp \] sending 
a presentable $\infty$-category to its full subcategory of compact objects. 
We have the idempotent completion functor (the pointed version of \cite[Prop.~5.5.7.10]{htt}) fitting into an adjunction
\[ \Idem:=(-)^{\cop}\circ \Ind_{\omega}\colon  \Cre \leftrightarrows  \Crp \cocolon \incl\ .\]

 Left-exact and right-exact $\infty$-categories are connected by the equivalence
  \begin{equation}\label{wefkjbwejkfnkwefdewfwf1eeeded}
\Cre\xrightarrow{\simeq} \Cle\ , \quad \bC\mapsto \bC^{\op}\ .
\end{equation}
 We then have a $\Pro$-completion $\Pro_{\omega}\colon \Cle\to \CL$ functor defined such that
 \[\xymatrix{\Cle \ar[d]^{\op}_{\simeq}\ar[r]^{\Pro_{\omega}}& \CL\ar[d]_{\simeq}^{\op}\\ \Cre \ar[r]^{\Ind_{\omega}}&\Prlp}\]
 commutes. It induces an equivalence
 \begin{equation}\label{adewfqfewfqewfq}
\Pro_{\omega}\colon \Clp\xrightarrow{\simeq} \CL\ .
\end{equation}
 The inverse of the functor  \eqref{adewfqfewfqewfq} is the functor \begin{equation}\label{134g98hj134iof13frfqerfrf}
(-)^{\omega}\colon \CL\to \Clep
\end{equation} taking the full subcategory of cocompact objects (\cref{rgiuehgieugegewgergwergwerg}). 
Finally, we have an adjunction \begin{equation}\label{ewfbjhbfjhqerfqfewfqefe}
\Idem:=(-)^{\omega}\circ \Pro_{\omega}\colon \Cle\leftrightarrows  \Clp\cocolon \incl\ .
\end{equation}
 
\begin{lem}\label{iqjoigrgwegerwgwg}
The functor $\Idem$ preserves fully faithfulness.
\end{lem}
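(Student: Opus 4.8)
The statement to prove is \cref{iqjoigrgwegerwgwg}: the idempotent completion functor $\Idem \colon \Cle \to \Clp$ preserves fully faithfulness.

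The plan is to reduce the claim to the corresponding statement about presentable $\infty$-categories via the various equivalences set up just before the lemma. Recall that $\Idem = (-)^{\omega} \circ \Pro_{\omega}$, that $\Pro_{\omega} \colon \Cle \to \CL$ is defined so that applying $(-)^{\op}$ turns it into $\Ind_{\omega} \colon \Cre \to \Prlp$, and that $(-)^{\omega} \colon \CL \to \Clp$ is the inverse of the equivalence $\Pro_{\omega} \colon \Clp \xrightarrow{\simeq} \CL$. Since $(-)^{\op}$ is an equivalence $\Cle \simeq \Cre$ (\eqref{wefkjbwejkfnkwefdewfwf1eeeded}) and it obviously preserves and reflects fully faithfulness, it suffices to show that $\Ind_{\omega} \colon \Cre \to \Prlp$ preserves fully faithfulness, where $\Prlp$ is regarded as a subcategory of $\CATi$ (so that fully faithful means fully faithful as an ordinary functor between $\infty$-categories, which is the relevant notion once one unwinds the definition of $\Idem$ landing in $\Clp$ and then includes into $\Cle$). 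Concretely: if $\phi \colon \bC \to \bD$ is fully faithful in $\Cre$, then $\Ind_{\omega}(\phi) \colon \Ind_{\omega}(\bC) \to \Ind_{\omega}(\bD)$ is fully faithful, hence its restriction to compact objects $\phi^{\cop} \colon \bC^{\cop}\text{-completed} \to \dots$, equivalently the induced functor on idempotent completions, is fully faithful.

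First I would recall the standard fact that $\Ind_{\omega}$ of a fully faithful functor is fully faithful. The argument: for $c, c'$ in $\bC$, one has $\Map_{\Ind_{\omega}(\bC)}(j(c), j(c')) \simeq \Map_{\bC}(c,c')$ since $j \colon \bC \to \Ind_{\omega}(\bC)$ is fully faithful (Yoneda), and likewise in $\bD$; then since $\phi$ is fully faithful the map $\Map_{\bC}(c,c') \to \Map_{\bD}(\phi c, \phi c')$ is an equivalence, and these mapping spaces compute the mapping spaces between $\Ind_{\omega}(\phi)(jc)$ and $\Ind_{\omega}(\phi)(jc')$. For general objects of $\Ind_{\omega}(\bC)$, written as filtered colimits of objects of $\bC$, one uses that $\Ind_{\omega}(\phi)$ preserves filtered colimits and that the objects $j(c)$ are compact, so the mapping spaces out of filtered colimits of compacts are controlled, giving full faithfulness everywhere. (This is essentially \cite[Prop.~5.3.5.11]{htt} or a small variant; it can also be cited directly.) Then $\Idem(\phi)$ is obtained by restricting $\Ind_{\omega}(\phi)$ to compact objects, and a restriction of a fully faithful functor to any full subcategory (on both sides, compatibly — here compact objects are sent to compact objects since left adjoints that preserve compacts do so, and $\phi^{\cop}$ lands in compacts) is again fully faithful. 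Tracing back through $(-)^{\op}$, this gives that $\Idem \colon \Cle \to \Clp$ preserves fully faithfulness.

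The only real point requiring care — and the step I would expect to be the mild obstacle — is bookkeeping the precise notion of "fully faithful" in each of the categories involved: in $\Cle$, $\CL$, $\Clp$, $\Prlp$ the morphisms are somewhat special (left-exact functors, right adjoints preserving cocompacts, etc.), but "fully faithful" always refers to fully faithfulness of the underlying functor of $\infty$-categories, and one must check this is stable under the equivalences $(-)^{\op}$ and under passing to the subcategory of (co)compact objects. Once one fixes that $\Idem$ factors as $\Cle \xrightarrow{\Pro_{\omega}} \CL \xrightarrow{(-)^{\omega}} \Clp$ and that $(-)^{\omega}$ is (the inverse of) an equivalence hence preserves fully faithfulness trivially, the whole claim collapses to: $\Pro_{\omega}$ preserves fully faithfulness, which by \eqref{wefkjbwejkfnkwefdewfwf1eeeded} is exactly the statement that $\Ind_{\omega} \colon \Cre \to \Prlp$ preserves fully faithfulness, handled above. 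I would write the proof in three short sentences invoking these reductions plus a citation to \cite{htt} for the $\Ind_{\omega}$ fact.
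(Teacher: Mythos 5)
Your argument is correct and is essentially the paper's own proof, which simply observes in one line that the operations $\Ind_{\omega}$, $(-)^{\op}$ and $(-)^{\omega}$ entering the definition of $\Idem$ each preserve fully faithfulness; you have just spelled out the standard Yoneda/compactness argument for $\Ind_{\omega}$ and the restriction-to-compacts step. (The aside that $(-)^{\omega}$ preserves fully faithfulness ``because it is the inverse of an equivalence'' is not by itself a justification, but you immediately give the correct reason, namely that a fully faithful functor restricts to a fully faithful functor on the full subcategories of (co)compact objects, which are preserved by morphisms of $\CL$.)
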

\begin{proof}
The operations $\Ind_{\omega}$, $\op$ and $(-)^{\omega}$ going into the definition of $\Pro_{\omega}$ preserve fully faithfulness  {\cite[Prop.~5.3.5.11]{htt}}.
\end{proof}

\begin{lem}\label{ergiowergerwgergwergwreg}
The $\infty$-category $\Clp$ admits small colimits.
\end{lem}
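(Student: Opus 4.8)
The plan is to transport the assertion across the equivalences already at our disposal and reduce it to the cocompleteness of a standard $\infty$-category. By \eqref{wefkjbwejkfnkwefdewfwf1eeeded} the functor $\bC\mapsto \bC^{\op}$ restricts to an equivalence $\Crp \xrightarrow{\simeq}\Clp$, and by \eqref{adewfqfewfqewfq} the functor $\Pro_{\omega}$ is an equivalence $\Clp\xrightarrow{\simeq}\CL$; composing the latter with the equivalence $(-)^{\op}\colon \Prlp\xrightarrow{\simeq}\CL$ of \cref{rgqeroqjkopqwefewfefqef} (equivalently, composing the former with $\Ind_{\omega}\colon \Crp\xrightarrow{\simeq}\Prlp$) yields an equivalence $\Clp\simeq \Prlp$. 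Since an equivalence of $\infty$-categories preserves and reflects all small colimits, it suffices to prove that $\Prlp$ admits small colimits, and then to transport these colimits back along $\Clp\simeq\Crp\simeq\Prlp$.

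For the remaining point I would invoke that $\mathbf{Pr}^{L}$ admits all small colimits (\cite{htt}; one route is the equivalence $\mathbf{Pr}^{L}\simeq(\mathbf{Pr}^{R})^{\op}$ together with the fact that $\mathbf{Pr}^{R}$ has small limits, computed in $\widehat{\Cat}_{\infty}$), so that its pointed variant $\mathbf{Pr}^{L}_{*}$ does too, and then check that the full subcategory $\Prlp\subseteq\mathbf{Pr}^{L}_{*}$ of $\omega$-presentable pointed $\infty$-categories with compact-object-preserving left adjoints is closed under these colimits and that the morphisms of the colimit cocone preserve compact objects. An equivalent and bookkeeping-lighter way to organise this is to use $\Ind_{\omega}\colon \Crp\xrightarrow{\simeq}\Prlp$ together with the reflective localisation $\Idem\colon \Cre\leftrightarrows\Crp\cocolon\incl$ (the pointed analogue of \eqref{ewfbjhbfjhqerfqfewfqefe}): a reflective subcategory of a cocomplete $\infty$-category is again cocomplete, with colimits obtained by applying the reflector to the ambient colimit, so it would be enough that $\Cre$ admit small colimits, which is standard.

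The hard part will be exactly this external input — confirming that $\mathbf{Pr}^{L}_{*}$ admits small colimits with $\Prlp$ closed under them (or, in the other formulation, that $\Cre$ is cocomplete) — since it is the only ingredient not already recorded in the excerpt; verifying the closure of $\Prlp$ inside $\mathbf{Pr}^{L}_{*}$ (both the colimit object being $\omega$-presentable and the cocone legs preserving compact objects) is the step that requires a little care. Both facts are available from \cite{htt}, and the same right-exact/left-exact package is also developed in \cite{Bunke:aa}; granting them, the reduction in the first paragraph delivers the lemma immediately.
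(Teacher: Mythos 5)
Your reduction is the paper's: both arguments transport the statement along $\Clp\xrightarrow{\Pro_{\omega}}\CL\xrightarrow{(-)^{\op}}\Prlp$ (equivalences \eqref{adewfqfewfqewfq} and \eqref{wrfqewdewdqfewfqfwefqfewf}) and then produce colimits in the presentable world by flipping to right adjoints. The difference is where the $\omega$-bookkeeping is done, and that is exactly the point you leave open. The paper never works in the undecorated $\mathbf{Pr}^{\mathrm{L}}$ and then checks closure of $\Prlp$; it applies the adjoint-flip equivalence $\ad\colon\Prl\xrightarrow{\simeq}\Prr^{\op}$ directly to the $\omega$-decorated categories and quotes \cite[Prop.~5.5.7.6]{htt}, which says precisely that $\Prr$ (compactly generated categories and filtered-colimit-preserving right adjoints) has small limits, computed in $\CATi$. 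That single citation is what your ``hard part'' amounts to: a colimit in $\mathbf{Pr}^{\mathrm{L}}_{*}$ of $\omega$-presentable categories along compact-object-preserving left adjoints is computed as a limit of the right adjoints, which preserve filtered colimits, and the cited proposition guarantees the limit is again compactly generated with the structure maps as required. So your first route is correct in outline, but as written it defers the only nontrivial verification; replace the appeal to cocompleteness of $\mathbf{Pr}^{\mathrm{L}}$ plus an unproved closure check by the $\kappa=\omega$ version of the limit statement and the argument closes, becoming the paper's proof.

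Your fallback route has a genuine problem in the context of this paper: you propose to deduce cocompleteness of $\Crp$ from cocompleteness of $\Cre$ via the reflective localisation $\Idem$, calling the latter ``standard''. The reflective-subcategory step is fine, but inside this paper the cocompleteness of $\Cre\simeq\Cle$ is \emph{not} available at this point -- it is proved only later, in \cref{ioerjgoiegergwegergwrgwegrwerg}, and that proof uses the present lemma (colimits in $\Cle$ are constructed as full subcategories of colimits taken in $\Clep$). So this alternative is circular unless you import cocompleteness of $\Cat_{\infty}^{\mathrm{Rex}}$ from an external source independent of the idempotent-complete case; if you want to keep it, you must say which external result you are invoking.
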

\begin{proof}
We let $\Prr$ denote the $\infty$-category of presentable $\infty$-categories and right adjoint functors  {which preserve filtered colimits}.
By \cite[Prop.~5.5.7.6]{htt}, this $\infty$-category admits small limits which are preserved by the inclusion $\Prr\to  {\CATi}$.
We have an equivalence
\[\ad \colon \Prl \xrightarrow{\simeq} {\Prr}^{\op}\ ,\]
which is the identity on objects and replaces morphisms by their right adjoints.
Consequently, $\Prl$ admits all small colimits. This implies that $\Prlp$ also admits small colimits. In view of equivalence 
\eqref{wrfqewdewdqfewfqfwefqfewf}, $\CL$ admits small colimits. Finally,
$\Clep$ also  admits small colimits by the equivalence \eqref{adewfqfewfqewfq}.
\end{proof}

If $\bI$ is a groupoid, then we have an  equivalence $\iota\colon \bI^{\op}\to \bI$.

 Let $\bC\colon \bI\to \CL$ be a diagram.   The colimit in the following lemma is interpreted  in $\Clep$.
 \begin{lem}\label{rwigowefqewffqwfwfqwef}
Assume that $\bI$ is a groupoid.
\begin{enumerate}
\item There is an equivalence \begin{equation}\label{wqefqoijfqoiwffqefefewfqefqe}
\colim_{\bI} \bC^{\omega}\simeq \left(\lim_{\bI^{\op} } \iota^{*} \bC\right)^{\omega}\ .
\end{equation}
\item \label{tjborbjeorpbrberbertbertbrtb} For every object $i$ in $\bI$ the diagram
\[\xymatrix{ {\bC(i)^\omega}\ar[d]_{\mathrm{can}(i)}\ar[rr]& &\bC(i)\ar[d]^{e_{i,*}}\\  \colim_\bI  \bC^\omega   \ar[r]^-{\eqref{wqefqoijfqoiwffqefefewfqefqe}}_-{\simeq} & (\lim_{\bI^{\op}}\iota^{*}\bC)^{\omega} \ar[r]& \lim_{\bI^{\op}}\iota^{*}\bC}  \]
commutes, where  the arrow   $e_{i,*}$ is the right adjoint of the canonical morphism $e_{i} \colon \lim_{\bI^{\op}}\iota^{*}\bC \to \bC(i)$, and the two unmarked horizontal arrows are the inclusions of the full subcategories of cocompact objects.
 \item  \label{tjborbjeorpbrberbertbertbrtb1} 
 The  essential images of the functors $\mathrm{can}(i)$ for all $i$ in $\bI$ generate $\colim_\bI \bC^\omega$ under finite limits and retracts.
 \end{enumerate}
\end{lem}
\begin{proof}
By assumption, we have an equivalence $\iota\colon \bI^{\op}\to \bI$ and an equivalence of diagrams $\ad(\bC^{\op})\simeq \iota^{*}\bC^{\op}$ in $\Fun(\bI^{\op},\CATi)$. This gives, using the description of colimits in $\Clep$ provided by the proof of \cref{ergiowergerwgergwergwreg} in the first step,
\begin{align*}
\colim_{\bI} \bC^{\omega}
&\simeq \left((\ad^{-1}\lim_{\bI^{\op}} \ad(\bC^{\op}))^{\op}\right)^{\omega} \\
&\simeq \left(( \lim_{\bI^{\op}} \iota^{*}\bC^{\op})^{\op}\right)^{\omega} \\
&\simeq \left( \lim_{\bI^{\op}} \iota^{*}\bC\right)^{\omega}\ .
 \end{align*}
Since $\iota^*\bC^\op$ defines diagrams in both $\Prl$ and $\Prr$, and since both inclusions $\Prl \to \Cati$ and $\Prr \to \Cati$ preserve limits, for every $i$ in $\bI$ the canonical transformation $e_{i} \colon  \lim_{\bI^\op} \bC   \to \bC(i)$ admits both adjoints. By the proof of \cref{ergiowergerwgergwergwreg}, its right adjoint corresponds to the canonical transformation
\[ e_{i,*} \colon \bC(i) \to  \colim_\bI \bC\ ,\]
which evidently restricts to the subcategories of cocompact objects yielding $\mathrm{can}(i)$. 
 
 We are left with showing that the essential images    of the transformations  $\mathrm{can}(i)$  for all $i$ in $\bI$ generate  the target under finite limits and retracts. 
 Since $\colim_\bI \bC$ has a set of cocompact generators,
  it suffices to check that a morphism $f \colon x \to y$ in $\colim_\bI \bC$ is an equivalence whenever
\[ f^* \colon \Map_{\colim_\bI \bC}(x,e_{i,*}(c)) \to \Map_{\colim_\bI \bC}(y,e_{i,*}(c)) \]
is an equivalence for all $i$ in $\bI$ and $c$ in $\bC(i)$ (The proof of \cite[Thm.~1.11]{AR} carries over to the setting of $\infty$-categories.). Since
\[ \Map_{\colim_\bI \bC}(x,e_{i,*}(c)) \simeq \Map_{\bC(i)}(e_i(x),c)\ ,\]
such a morphism has the property that $e_i(f)$ is an equivalence for all $i$ in $\bI$. So it is indeed an equivalence by \cref{greoigergreeg}.
\end{proof}

\begin{prop}\label{ioerjgoiegergwegergwrgwegrwerg}
	The $\infty$-category $\Cle$ admits small colimits.
\end{prop}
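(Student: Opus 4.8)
By \cref{prop:catex finitely complete} the $\infty$-category $\Cle$ already admits small limits and small filtered colimits, so it suffices to produce arbitrary small colimits. The plan is to build them inside the idempotent completion, using that $\Clp$ is cocomplete by \cref{ergiowergerwgergwergwreg}: given a small diagram $\bC\colon \bI\to\Cle$, I will take the colimit of $\Idem\circ\bC$ in $\Clp$ and then cut it down to a left-exact $\infty$-category by closing under finite limits the subcategory generated by the images of the $\bC(i)$. (If one prefers, one may instead only treat pushouts and the zero object as the initial object, which exists since $\Cle$ is pointed, and then invoke that an $\infty$-category with finite colimits and filtered colimits has all small colimits; but the one-step construction is cleaner and avoids that reduction.)

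\textbf{The construction.} Apply the left adjoint $\Idem$ from \eqref{ewfbjhbfjhqerfqfewfqefe} to obtain a diagram $\Idem\circ\bC\colon\bI\to\Clp$, and let $\widehat\bP:=\colim_{\bI}(\Idem\circ\bC)$ in $\Clp$, which exists by \cref{ergiowergerwgergwergwreg}. Precomposing the canonical cocone $\mathrm{can}_i\colon\Idem(\bC(i))\to\widehat\bP$ with the unit embeddings $\bC(i)\to\Idem(\bC(i))$ (which are natural in $i$) yields a cocone $\iota_i\colon\bC(i)\to\widehat\bP$, natural in $i$. Let $\bP$ be the smallest full subcategory of $\widehat\bP$ containing the essential images of all $\iota_i$ and closed under finite limits; concretely, $\bP$ is the union of the increasing $\omega$-chain obtained by repeatedly adjoining finite limits, starting from the full subcategory on $\bigcup_{i}\mathrm{ess.im}(\iota_i)\cup\{0\}$. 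Since each stage is essentially small, $\bP$ is essentially small; it is pointed because $0\in\bP$; and it admits finite limits, computed as in $\widehat\bP$, so the inclusion $\bP\hookrightarrow\widehat\bP$ is left-exact and $\bP$ is an object of $\Cle$. The cocone $(\iota_i)$ factors through left-exact functors $\bar\iota_i\colon\bC(i)\to\bP$ forming a cocone over $\bC$ in $\Cle$.

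\textbf{Universal property.} Let $\bE$ be in $\Cle$ and let $(u_i\colon\bC(i)\to\bE)_{i\in\bI}$ be a cocone of left-exact functors. By the universal property of idempotent completion each $u_i$ extends essentially uniquely to a left-exact functor $\hat u_i\colon\Idem(\bC(i))\to\Idem(\bE)$, and these form a cocone over $\Idem\circ\bC$; the universal property of $\widehat\bP$ in $\Clp$ then produces a left-exact functor $\hat w\colon\widehat\bP\to\Idem(\bE)$ with $\hat w\circ\mathrm{can}_i\simeq\hat u_i$. The canonical fully faithful embedding $\bE\to\Idem(\bE)$ preserves all limits which exist in $\bE$, so $\bE$ is closed under finite limits inside $\Idem(\bE)$. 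As $\hat w\circ\iota_i\simeq\hat u_i|_{\bC(i)}\simeq u_i$ takes values in $\bE$, the restriction $\hat w|_{\bP}$ sends all the generators of $\bP$ into $\bE$, hence, being left-exact, sends all of $\bP$ into $\bE$; this gives a left-exact $w\colon\bP\to\bE$ with $w\circ\bar\iota_i\simeq u_i$. Uniqueness (and contractibility of the space of choices) follows from the fact that a left-exact functor out of $\bP$ is determined, essentially uniquely, by its restriction to the generators $\bigcup_i\mathrm{ess.im}(\bar\iota_i)$, proved by induction along the chain defining $\bP$ using that left-exact functors preserve the finite limits adjoined at each stage. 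Therefore $(\bP,(\bar\iota_i))$ is a colimit of $\bC$ in $\Cle$.

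\textbf{Main obstacle.} The only substantive input, cocompleteness of $\Clp$, is already available; the real care lies in the bookkeeping around idempotent completion: that $\Idem$ carries left-exact functors to left-exact functors and restricts compatibly (here one also uses \cref{iqjoigrgwegerwgwg}), that $\bE$ is finite-limit-closed in $\Idem(\bE)$, and, most delicately, the inductive argument that a left-exact functor on the finite-limit closure $\bP$ is determined together with a contractible space of extensions by its values on the generators. This last point is the step I expect to require the most attention to make fully precise.
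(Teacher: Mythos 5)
Your construction is the same as the paper's: there, too, one forms $\colim_{\bI}\Idem(\bC)$ in $\Clp$ (using \cref{ergiowergerwgergwergwreg}) and takes as candidate colimit the full left-exact subcategory $\bD$ generated by the images of the composites $\bC(i)\to\Idem(\bC(i))\to\colim_{\bI}\Idem(\bC)$; your existence argument for factoring a cocone through $\bP$ (the generators land in $\bE$, $\bE$ is closed under finite limits in $\Idem(\bE)$, and the restricted functor is left-exact) is also in the spirit of the paper. The genuine gap is your uniqueness/contractibility step. You assert that a left-exact functor out of $\bP$ is determined, with a contractible space of choices, by its restriction to the union of the essential images of the $\bar\iota_i$, ``proved by induction along the chain''. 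This is not a routine induction: an equivalence given on the generators does not obviously propagate through the finite-limit closure, because morphisms and homotopies between objects adjoined at a given stage need not arise from morphisms of the limit diagrams used to adjoin them, and a pointwise right Kan extension along the inclusion of the generators is not available since $\bE$ only has finite limits. Moreover, to exhibit $\bP$ as the colimit you ultimately need the space-level statement that $\Map_{\Cle}(\bP,\bE)\to\Map_{\Fun(\bI,\Cle)}(\bC,\underline{\bE})$ is an equivalence, naturally in $\bE$; an object-by-object factorisation together with the unproven rigidity claim does not yet deliver this.

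The paper's proof avoids exactly this step. Instead of comparing functors on generators, it identifies $\Map_{\Cle}(\bD,\Idem(\bT))$ with $\Map_{\Cle}(\colim_{\bI}\Idem(\bC),\Idem(\bT))$ — using that $\Idem(\bD)\to\colim_{\bI}\Idem(\bC)$ is an equivalence by the construction of $\bD$ (cf.\ \cref{iqjoigrgwegerwgwg}) — and then with $\Map_{\Fun(\bI,\Cle)}(\bC,\underline{\Idem(\bT)})$ via the universal property of the colimit in $\Clp$ and of the unit of the adjunction \eqref{ewfbjhbfjhqerfqfewfqefe}. The spaces $\Map_{\Cle}(\bD,\bT)$ and $\Map_{\Fun(\bI,\Cle)}(\bC,\underline{\bT})$ sit inside these as collections of connected components (via the fully faithful $\bT\to\Idem(\bT)$), and the fact that $\bD$ is generated by the images is precisely what makes the two collections of components correspond: a left-exact functor $\bD\to\Idem(\bT)$ lands in $\bT$ if and only if its restriction to the generators does. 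If you replace your induction by this mapping-space argument, your proof closes the gap and coincides with the paper's.
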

\begin{proof}
Let $\bC\colon \bI\to \Cle$ be a diagram.
Then for every $i$ in $\bI$ we have a canonical morphism \[ \iota_{i}\colon  \bC(i)\to \Idem (\bC(i))  \xrightarrow{e_{i,*}}  \colim_{\bI} \Idem(\bC)\ ,\]
where the colimit is interpreted in $\Clep$. 
We let $\bD$ be the full left-exact subcategory of  $\colim_{\bI} \Idem(\bC)$ generated by the images of the functors $\iota_{i}$ for all $i$ in $\bI$.
For every $\bT$   in $\Cle$  we then have the following commutative diagram:
\[\xymatrix{ \Map_{\Fun(\bI,\Cle)}(\Idem(\bC),\underline{\Idem(\bT)})\ar[d]_{!}^{\simeq} &\ar[l]_{\simeq}\Map_{\Cle}(\colim_{\bI}\Idem(\bC) ,\Idem(\bT))\ar[d]^{!!}_{\simeq}\\ \Map_{\Fun(\bI,\Cle)}(\bC,\underline{\Idem(\bT)}) &\Map_{\Cle}(\bD ,\Idem(\bT))\\\Map_{\Fun(\bI,\Cle)}(\bC,\underline{\bT})\ar[u]^{!!!} &\ar@{..>}[l]_{\simeq}\Map_{\Cle}(\bD ,\bT)\ar[u]_{!!!!}}\ .\]
The morphism $!$ is induced by the  morphism $\bC\to \Idem(\bC)$ and is an  equivalence by the universal property of the latter.
The morphism  $!!$ is induced by the inclusion $\bD\to \colim_{\bI} \Idem(\bC)$ and is an equivalence by a similar reason since the induced morphism $\Idem(\bD)\to 
 \colim_{\bI} \Idem(\bC)$ is an equivalence by construction of $\bD$ and \cref{rwigowefqewffqwfwfqwef}.\ref{tjborbjeorpbrberbertbertbrtb1}.
 The morphisms marked by $!!!$ and $!!!!$ are  induced by the fully faithful functor $\bT\to \Idem(\bT)$. They  are inclusions of  collections of components.  The $\infty$-category $\bD$ is constructed exactly such that the  dotted arrow exists and is a bijection on $\pi_{0}$. Since it is natural in $\bT$ we can conclude that the colimit of the diagram $\bC$ in $\Cle$ exists and is represented by $\bD$. \end{proof}

Let $\bI$ be a small $\infty$-category, and let {$\phi \colon \bC \to \bD$ be a natural transformation of functors $\bI \to \Cle$.}
\begin{lem} \label{ewfweijor23r23r23r32r23r23}
Assume:
\begin{enumerate}
\item $\bI$ is a groupoid.
\item The functor $ {\phi(i) \colon} \bC(i)\to \bD(i)$ is fully faithful for all $i$ in $\bI$.
\end{enumerate}
Then the functor $ {\colim_\bI \phi \colon} \colim_{\bI}\bC\to \colim_{\bI}\bD$ is fully faithful.
\end{lem}
\begin{proof}
We first consider the analogous assertion for diagrams in $\Clep$. 
In this case, we can apply the idea of the proof of \cref{rwigowefqewffqwfwfqwef}.
We use the formula 
\[ \colim_{\bI} \bC\simeq \left((\ad^{-1}\lim_{\bI^{\op}} \ad( \Pro_{\omega}(\bC)^{\op}))^{\op}\right)^{\omega}\ .\]
for the colimit given in the proof of \cref{ergiowergerwgergwergwreg}.
 As the operations $\Pro_{\omega}$,  $(-)^{\omega}$ and $(-)^{\op}$ preserve fully faithfulness  {\cite[Prop.~5.3.5.11]{htt}},
we must show the following assertion. 

 Assume that $f\colon \bP\to \bQ$ is a morphism in $\Fun(\bI,\Prl)$ such that $f(i)$ is fully faithful for every $i$ in $\bI$. Then  the functor $(\ad^{-1}  \lim_{\bI^{\op}}\ad)(f)$ is fully faithful. 
 
  We have a morphism $\ad(f)\colon \ad(\bQ)\to \ad(\bP)$ in $\Fun(\bI^{\op}, {\CATi})$. 
The functor $\ad$ (which replaces functors by their right adjoints) does not preserve fully faithfulness. 
To overcome this problem we use that the fully faithful left adjoints $f(i)$ of the functors $\ad(f(i))$ for all $i$ in $\bI$ assemble to a natural transformation in the opposite direction. 
Since  $\bI$ is a groupoid, we have an equivalence $\iota\colon \bI^{\op}\to \bI $.  We get the diagram in $\Fun(\bI^{\op},{\CATi})$ \[\xymatrix{\ad(\bP)\ar@/_{0.5cm}/[r]_{\tilde f}\ar[d]^{\simeq}&\ar@/_{0.5cm}/[l]_{\ad(f)}\ad(\bQ)\ar[d]^{\simeq}\\ \iota^{*}\bP\ar[r]^{\iota^{*}f}&\iota^{*} \bQ}\ ,\]
where $\tilde f$  is defined by commutativity of the lower square. By construction, $\tilde f(i)$ is the left adjoint $f(i)$ of $\ad(f(i))$.
 We now apply $\lim_{\bI^{\op}}$ to the upper line and get an adjunction
\[\xymatrix{\lim_{\bI^{\op}} \ad(\bP)\ar@/_{0.5cm}/[r]_{\lim_{\bI^{\op}}\tilde f} &\ar@/_{0.5cm}/[l]_{\lim_{\bI^{\op}}\ad(f)}\lim_{\bI^{\op}}\ad(\bQ) }\ .\]
In particular, we have an equivalence
\[\lim_{\bI^{\op}}\tilde f\simeq (\ad^{-1}\lim_{\bI^{\op}}\ad)(f)\ .\]
Since  a limit of a diagram of fully faithful functors in  {$\CATi$} is again fully faithful  {by \cref{tieqorgfgregwegergw}},
we can conclude that
$\lim_{\bI^{op}}\tilde f$  and hence $(\ad^{-1}\lim_{\bI^{op}}\ad)(f)$ are fully faithful.   
This finishes the case of diagrams with values in $\Clp$.

Assume now that $f\colon \bC\to \bD$ is a morphism of $\bI$-indexed diagrams with values in $\Cle$ which is objectwise  fully faithful.
Then $\Idem(f)\colon \Idem(\bC)\to \Idem(\bD)$ is such a diagram in $\Clp$ with the same property by \cref{iqjoigrgwegerwgwg}.
 We just have shown that the lower line in the square in $\Cati$
 \[\xymatrix@C=4.5em{\colim_{\bI}\bC\ar[r]^-{\colim_{\bI}f}\ar[d]&\colim_{\bI}\bD\ar[d]\\\colim_{\bI}\Idem(\bC)\ar[r]^-{\colim_{\bI}\Idem(f)}&\colim_{\bI} \Idem(\bD)}\]
 is fully faithful. In the proof of \cref{ioerjgoiegergwegergwrgwegrwerg}, we have presented the colimits (in $\Cle$) in the upper line as full subcategories of the colimits (in $\Clp$) in the lower line. Consequently, the vertical arrows are fully faithful.
 This implies that also the upper horizontal arrow is fully faithful.
\end{proof}

In general, if $\cC$ is a pointed $\infty$-category admitting finite products and coproducts, then for every finite  set $F$ and family $(\bC_{f})_{f\in F}$ in $\cC$ we have a natural morphism
\begin{equation}\label{ecwcknwlcwecwec}
q\colon \coprod_{f\in F}\bC_{f}\to \prod_{f^{\prime}\in F} \bC_{f^{\prime}}\ .
\end{equation}
This morphism is classified by the collection of morphisms
\begin{equation}\label{wvwvknekwlwecwecewcwcwecewc}
(q_{f^{\prime}}\colon \coprod_{f \in F}\bC_{f }\to \bC_{f^{\prime}})_{f^{\prime}\in F}\ ,
\end{equation}
where $q_{f^{\prime}}$ itself is classified by the collection of morphisms
\begin{equation}\label{rvwrvwevwevewcewcwecwec}
(q_{f ,f^{\prime}}\colon \bC_{f }\to \bC_{f^{\prime}})_{f \in F}
\end{equation}
such that $q_{f ,f^{\prime}}$ is zero for $f \not= f^{\prime}$ and $\id_{\bC_{f}}$ for $f=f^{\prime}$.

\begin{ddd}[{\cite[Def.~6.1.6.13]{HA}}]\label{ghiowjgoergwergwergregwg}
	The $\infty$-category $\cC$ is called semi-additive, if it is pointed, admits finite products and coproducts, and
	if the morphism \eqref{ecwcknwlcwecwec} is an equivalence for every finite set $F$ and family $(\bC_{f})_{f\in F}$ of objects in $\cC$.
\end{ddd}

\begin{lem}\label{girgjowegfewfw9ef}
	The $\infty$-category $\Cle$ is semi-additive.
\end{lem}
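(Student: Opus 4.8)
The plan is to reduce, by an induction on the cardinality of the finite set $F$ (the cases $|F|\le 1$ being immediate, since $\Cle$ is pointed with zero object $\Delta^{0}$ and $\coprod_{\emptyset}\simeq\prod_{\emptyset}\simeq 0$), to the statement that for any two objects $\bC,\bD$ of $\Cle$ the canonical morphism $q\colon\bC\sqcup\bD\to\bC\times\bD$ of \eqref{ecwcknwlcwecwec} is an equivalence. Here $\bC\times\bD$ is the product in $\Cle$, computed as the product of underlying $\infty$-categories by \cref{prop:catex finitely complete}, and $\bC\sqcup\bD$ exists by \cref{ioerjgoiegergwegergwrgwegrwerg}. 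It then suffices to exhibit $\bC\times\bD$, together with the functors $\iota_{\bC}\colon C\mapsto(C,0)$ and $\iota_{\bD}\colon D\mapsto(0,D)$, as a coproduct of $\bC$ and $\bD$ in $\Cle$, and to observe that under the resulting identification the morphism $q$ becomes the identity.

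First I would check that $\iota_{\bC}$ and $\iota_{\bD}$ are morphisms in $\Cle$: they preserve the zero object by construction, and they are left-exact because finite limits in $\bC\times\bD$ are formed componentwise and $0$ is terminal in each factor. Next, for $\bT$ in $\Cle$ I would produce a natural equivalence
\[ \Map_{\Cle}(\bC\times\bD,\bT)\xrightarrow{\ \simeq\ }\Map_{\Cle}(\bC,\bT)\times\Map_{\Cle}(\bD,\bT) \]
given by restriction along $\iota_{\bC},\iota_{\bD}$. An inverse sends $(F,G)$ to the composite $\bC\times\bD\xrightarrow{F\times G}\bT\times\bT\xrightarrow{\times}\bT$, i.e. $(C,D)\mapsto F(C)\times G(D)$; this is left-exact because the product functor $\times\colon\bT\times\bT\to\bT$, being right adjoint to the diagonal, preserves finite limits, and it is pointed because $F$ and $G$ are. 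One round-trip is the identity because $F(0)\simeq0\simeq G(0)$, so $\iota_{\bC}^{*}$ of $(C,D)\mapsto F(C)\times G(D)$ is $C\mapsto F(C)\times0\simeq F(C)$; the other is the identity because, products in $\bC\times\bD$ being componentwise and $0$ being terminal, $(C,D)\simeq(C,0)\times(0,D)$, whence $H(C,D)\simeq H\iota_{\bC}(C)\times H\iota_{\bD}(D)$ for any left-exact $H$. This identifies $\bC\times\bD$ with the coproduct, the inclusions being $\iota_{\bC},\iota_{\bD}$.

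Finally, recall that $q$ is classified by the pair $q_{\bC}\colon\bC\sqcup\bD\to\bC$, $q_{\bD}\colon\bC\sqcup\bD\to\bD$ determined by $q_{\bC}\iota_{\bC}\simeq\id_{\bC}$, $q_{\bC}\iota_{\bD}\simeq0$ and symmetrically. Under the above identification the projections $\pr_{\bC},\pr_{\bD}$ satisfy precisely these equations, so $q$ corresponds to $(\pr_{\bC},\pr_{\bD})=\id_{\bC\times\bD}$; in particular $q$ is an equivalence, and the induction finishes the proof. The point requiring the most attention is the construction of the inverse equivalence on mapping spaces together with the coherent (not merely $\pi_{0}$-level) verification of the two round-trips; once the product functor of a left-exact $\infty$-category is seen to be left-exact, the rest is formal.
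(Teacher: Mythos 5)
Your proof is correct, but it takes a different route from the paper's. You identify the product $\bC\times\bD$ (with the pointed inclusions $C\mapsto (C,0)$, $D\mapsto (0,D)$) as a coproduct in $\Cle$ by computing mapping spaces, and then match the canonical map $q$ with the resulting comparison equivalence, running an induction over $|F|$ to handle general finite families. The paper instead works directly with the abstract coproduct $\coprod_{f\in F}\bC_{f}$ and writes down an explicit two-sided inverse to $q$, namely $p:=\times_{f''\in F}\,(\iota_{f''}\circ\pi_{f''})$, the objectwise product of functors formed inside the left-exact category $\coprod_{f\in F}\bC_{f}$; the two composites $p\circ q$ and $q\circ p$ are then checked to be equivalent to the identities by testing against the coproduct inclusions and the product projections, respectively. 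Both arguments hinge on the same key point you isolate: objectwise finite products of pointed left-exact functors are again pointed and left-exact (equivalently, $\times_{\bT}\colon \bT\times\bT\to\bT$, being right adjoint to the diagonal, is a morphism in $\Cle$). What the paper's route buys is brevity: no induction, and no need to establish the universal property of $\bC\times\bD$ as a coproduct, so the only homotopies required are the object-level checks against $\iota_{f}$ and $\pi_{f'}$, which are licensed directly by the universal properties of $\coprod$ and $\prod$. What your route buys is slightly more information — an explicit biproduct presentation of the coproduct — at the cost of the coherence bookkeeping for the inverse equivalence of mapping spaces, which you correctly flag as the delicate step and which does need to be carried out (e.g.\ by observing that both round trips are given by pre-/post-composition with fixed functors and whiskering with explicit objectwise-invertible natural transformations such as $\id_{\bC\times\bD}\Rightarrow(\iota_{\bC}\pr_{\bC})\times(\iota_{\bD}\pr_{\bD})$), but this is routine and does not constitute a gap.
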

\begin{proof}
	The $\infty$-category $\Cle$ is complete by  \cref{prop:catex finitely complete} and therefore admits products.
	 $\Cle$ also admits coproducts by \cref{ergiowergerwgergwergwreg}. Finally,
	  $\Cle$ is pointed by the one-point category $*$.
	
	We now fix a finite set $F$ and a family $(\bC_{f})_{f\in F}$ in $\Cle$.  For $f$ in $F$
	let $\iota_{f}\colon \bC_{f^{\prime}}\to \coprod_{f\in F}\bC_{f}$ be the canonical inclusion and $\pi_{f}\colon \prod_{f^{\prime}\in F}\bC_{f^{\prime}}\to \bC_{f}$ be the canonical projection.
	For every $f''$ in $F$ we  have a morphism
	\[ p_{f''}:=\iota_{f''}\circ \pi_{f''}\colon \prod_{f^{\prime}\in F}\bC_{f^{\prime}}\to \coprod_{f\in F}\bC_{f}\ .\]
	Their product\footnote{This product of functors  can formally be  understood as a  right Kan extension along the functor of discrete categories $F\to *$. It exists since $F$ is finite and $ \coprod_{f\in F}\bC_{f}$ being left-exact admits finite products.}
	in the left-exact $\infty$-category $\coprod_{f\in F}\bC_{f}$ is a morphism
	\[ p:=\times_{ f''\in F} p_{f''}\colon \prod_{f^{\prime}\in F}\bC_{f^{\prime}}\to \coprod_{f\in F}\bC_{f}\ .\]
	We claim that $p$ and the morphism $q$ from \eqref{ecwcknwlcwecwec} are mutually inverse equivalences. 
	In order to show that $p\circ q\simeq \id_{ \coprod_{f\in F}\bC_{f}}$ it suffices to provide equivalences  $p\circ q\circ \iota_{f} \simeq \iota_{f}$ for all $f$ in $F$. They are  given by the following chains of equivalences:
	\begin{eqnarray*}
		p\circ q\circ \iota_{f}&\simeq&(\times_{  f''\in F}\iota_{f''}\circ \pi_{f''})\circ q\circ \iota_{f}\\
		&\simeq&\times_{  f''\in F}(\iota_{f''}\circ  \pi_{f''}\circ q\circ \iota_{f})\\
		&\simeq&\times_{  f''\in F}\iota_{f''}\circ  q_{f''}\circ \iota_{f}\\&\simeq&\times_{  f''\in F}(\iota_{f''}\circ q_{f'',f}) \\&\simeq&
		\iota_{f }  \ , 
	\end{eqnarray*}
	where we use the notation from \eqref{wvwvknekwlwecwecewcwcwecewc} and \eqref{rvwrvwevwevewcewcwecwec}.
	Similarly, in order to show that $q\circ p\simeq \id_{\prod_{f^{\prime}\in F}\bC_{f^{\prime}}}$ it suffices to provide an equivalence
	$\pi_{f^{\prime}}\circ q\circ p\simeq \pi_{f^{\prime}}$ for every $f^{\prime}$ in $F$.
	They are given by the following chains of equivalences
	\begin{eqnarray*}
		\pi_{f^{\prime}}\circ q\circ p&\simeq&\pi_{f^{\prime}}\circ  q\circ (\times_{  f''\in F}\iota_{f''}\circ \pi_{f''})\\& \simeq& q_{f'}\circ (\times_{  f''\in F}\iota_{f''}\circ \pi_{f''})
		\\& \stackrel{!}{\simeq}& \times_{  f''\in F}
		q_{f'}\circ    \iota_{f''}\circ \pi_{f''})\\&\simeq&
		\times_{  f''\in F}q_{f',f''}\circ   \pi_{f''}\\&\simeq&
		\pi_{f'}\ ,
	\end{eqnarray*}
	where at the marked equivalence we use that $q_{f^{\prime}}$ preserves finite products. 
\end{proof}

 In the remainder of this subsection, we consider a situation where a limit and a colimit can be interchanged.     
  
   \begin{rem}\label{lem:factoring-transformations}
We consider a  small category $\bI$, functors {$\bT , \bC,\bD \colon \bI \to \Cati$},
and a natural transformation $\phi \colon \bC \to \bD$.
We consider  the subspace $\Map'_{\Fun(\bI,\Cati)}(\bT,\bD)$ consisting of the components of  $\Map_{\Fun(\bI,\Cati)}(\bT,\bD)$  of those natural transformations $\psi \colon \bT\to \bD$ such that for every $i$ in $\bI$ the functor $\psi(i) \colon \bT(i)\to \bD(i)$ takes values in the essential image of $\phi(i) \colon \bC(i)\to \bD(i)$.
 If $\phi$ is objectwise fully faithful, then the canonical map induces an equivalence
 \[\Map_{\Fun(\bJ,\Cat_\infty)}(\bT,\bC)\xrightarrow{\simeq} \Map'_{\Fun(\bJ,\Cati)}(\bT,\bD)\ .\qedhere\]
\end{rem}

Let $\bI$ and $\bJ$ be small categories and let $\bC \colon \bI \times \bJ \to \Cat_\infty$ be a functor.

\begin{lem}\label{lem:lim-colim-commute}
	Assume:
	\begin{enumerate}
		\item $\bI$ is filtered.
		\item $\bJ$ has only finitely many objects.
		\item For every morphism $i \to i'$ in $\bI$ and every  object $j$ in $\bJ$ the functor $\bC(i,j) \to \bC(i',j)$ is fully faithful.
	\end{enumerate}
	Then the natural functor \begin{equation}\label{ververkj34f34f34f3f}
 \colim_\bI \lim_\bJ \bC \to \lim_\bJ \colim_\bI \bC
\end{equation}	 
	is an equivalence.
\end{lem}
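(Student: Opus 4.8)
The plan is to reduce the statement to a purely formal fact about mapping spaces, exploiting the hypothesis that the transition functors are fully faithful. First I would observe that both sides of \eqref{ververkj34f34f34f3f} are computed in $\Cati$, so it suffices to check that the comparison functor is fully faithful and essentially surjective. For essential surjectivity, I would use that $\bJ$ has only finitely many objects together with the fully faithfulness hypothesis: an object of $\lim_\bJ \colim_\bI \bC$ is a compatible family of objects $x_j \in \colim_\bI \bC(-,j)$, and since $\bI$ is filtered and there are only finitely many $j$, one can find a single index $i$ in $\bI$ that "realises" all the $x_j$ simultaneously, together with the finitely many coherence morphisms of $\bJ$; here the fully faithfulness of the transition functors guarantees that once the morphisms are realised at some stage $i$, the realisation is essentially unique, so no further compatibility data needs to be tracked at higher stages. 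This is where I expect to invoke a $\Map'$-type argument in the spirit of \cref{lem:factoring-transformations}, applied with $\bT$ a finite diagram.

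For fully faithfulness I would compute mapping spaces. Given two objects of $\colim_\bI \lim_\bJ \bC$, we may assume (after passing far enough in the filtered category $\bI$, using finiteness of $\bJ$ and fully faithfulness) that both are images of objects $X, Y \in \lim_\bJ \bC(i,-)$ for a common $i$. Then $\Map_{\colim_\bI \lim_\bJ \bC}(X,Y) \simeq \colim_{i' \in \bI_{i/}} \Map_{\lim_\bJ \bC(i',-)}(X_{i'}, Y_{i'})$, and using the formula \eqref{gkjrgnkjgnjk23d2d2d} for mapping spaces in a limit of $\infty$-categories (from the proof of \cref{greoigergreeg}), together with the fact that $\bJ$ has finitely many objects so that $\lim_\bJ$ of the relevant $M(X,Y)$-diagram is a finite limit, and the fact that filtered colimits in spaces commute with finite limits, one can interchange $\colim_{\bI_{i/}}$ past $\lim_\bJ$. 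On the other side, $\Map_{\lim_\bJ \colim_\bI \bC}(-,-)$ is the same finite limit over $\bJ$ of the mapping spaces $\Map_{\colim_\bI \bC(-,j)}(-,-) \simeq \colim_{\bI_{i/}} \Map_{\bC(i',j)}(X_{i'}(j), Y_{i'}(j))$. Matching the two expressions gives the desired equivalence on mapping spaces.

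The main obstacle, and the step that needs genuine care rather than a slogan, is the reduction "we may assume both objects come from a common stage $i$." For objects this is the standard fact that a filtered colimit of $\infty$-categories has every object in the image of some structure functor; but to make the mapping-space computation legitimate one also needs that a morphism in the colimit between two such objects is, after further advancing in $\bI$, the image of a morphism at a finite stage, and moreover that for the coherence data indexed by the finitely many morphisms of $\bJ$ this can be done compatibly. Here the fully faithfulness hypothesis is doing real work: it ensures that the spaces of such realisations are either empty or contractible, so that the finitely many choices (one per object and one per generating morphism of $\bJ$, if $\bJ$ is in addition finite; in general one argues with the diagram $\bC(-,-) \colon \bI \times \bJ \to \Cati$ directly and uses that $\mathrm{Fun}(\bJ, -)$ commutes with filtered colimits along fully faithful transitions when $\bJ$ has finitely many objects) can be assembled. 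I would phrase this last point as: the functor $\colim_\bI \mathrm{Fun}(\bJ, \bC(i,-)) \to \mathrm{Fun}(\bJ, \colim_\bI \bC)$ is fully faithful and, on the subcategory cut out by the fully-faithfulness condition, essentially surjective, which is exactly a relative version of \cref{tieqorgfgregwegergw} combined with \cref{lem:factoring-transformations}. Once this bookkeeping lemma is in place, the interchange of $\colim$ and the finite $\lim_\bJ$ on both objects and morphisms is routine.
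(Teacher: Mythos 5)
Your overall strategy (fully faithful plus essentially surjective) and your essential–surjectivity step are in substance the paper's argument: lift the values of an object of $\lim_\bJ\colim_\bI\bC$ at the finitely many objects of $\bJ$ to a common stage $i_0$ using filteredness, and then let \cref{lem:factoring-transformations} (objectwise fully faithfulness) produce the lift of the whole $\bJ$-diagram, so that no morphism-level data of $\bJ$ has to be tracked. Be careful with your phrase ``the finitely many coherence morphisms of $\bJ$'': the hypothesis is only that $\bJ$ has finitely many \emph{objects}, and in the intended application (\cref{prop:u-continuous}) one takes $\bJ=BG$, which has a single object but in general infinitely many morphisms. Your parenthetical caveat points in the right direction, but the clean statement is exactly the one the paper uses: only the objectwise lifts need to be arranged at a common stage; the factoring lemma handles all morphisms of $\bJ$ at once.

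The genuine gap is in your fully-faithfulness step. You justify interchanging $\colim_{\bI_{i/}}$ with $\lim_\bJ$ of the mapping-space diagrams by claiming that, since $\bJ$ has finitely many objects, $\lim_\bJ$ is a \emph{finite} limit and hence commutes with filtered colimits of spaces. This is false: a limit over a category with finitely many objects need not be a finite limit (again $\bJ=BG$ with $G$ nontrivial is the relevant counterexample --- homotopy fixed points do not commute with filtered colimits in $\Spc$), so as stated this step would fail precisely in the case the lemma is designed for. The interchange you want is nevertheless valid here, but for a different reason which makes the computation unnecessary: since each $\bC(i,j)\to\bC(i',j)$ is fully faithful, so are the induced functors $\lim_\bJ\bC(i,-)\to\lim_\bJ\bC(i',-)$ (a limit of fully faithful functors is fully faithful), hence all transition maps in your $\bI_{i/}$-indexed diagrams of mapping spaces are equivalences and the filtered colimits are essentially constant. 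The paper exploits this directly and avoids mapping spaces altogether: both canonical functors $\lim_\bJ\bC(i,-)\to\colim_\bI\lim_\bJ\bC$ and $\lim_\bJ\bC(i,-)\to\lim_\bJ\colim_\bI\bC$ are fully faithful (the first by filteredness and fully faithful transitions, the second because each $\bC(i,j)\to\colim_\bI\bC(-,j)$ is fully faithful and limits preserve this), and since every pair of objects of $\colim_\bI\lim_\bJ\bC$ comes from some common stage and the evident triangle commutes, the comparison functor is fully faithful. Replacing your ``finite limit'' argument by this observation repairs the proof.
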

\begin{proof}
	By assumption, the transformation $\bC(i,-) \to \bC(i',-)$ of diagrams $\bJ \to \Cat_\infty$ is objectwise fully faithful for every morphism $i \to i'$ in $\bI$.
	 The induced functor $\lim_\bJ \bC(i,-) \to \lim_\bJ \bC(i',-)$ is fully faithful  {by \cref{tieqorgfgregwegergw}}.
	Since $\bI$ is filtered, it follows that the functor
	\[ \lim_\bJ \bC(i,-) \to \colim_\bI \lim_\bJ \bC \]
	is fully faithful for every $i$ in $\bI$.
	
	Similarly, the canonical functor $\bC(i,j) \to \colim_\bI \bC(-,j)$ is fully faithful for all $i$ in $\bI$ and $j$ in $\bJ$.
	We conclude that the induced map
	\[ \lim_\bJ \bC(i,-) \to \lim_\bJ \colim_\bI \bC \]
	is fully faithful for every $i$ in $\bI$.
	
	Since $\bI$ is filtered and since we have a commutative diagram
	\[\xymatrix{
		\lim_\bJ \bC(i,-)\ar[r]\ar[d] & \lim_\bJ \colim_\bI \bC \\
		\colim_\bI \lim_\bJ \bC\ar[ru] &
	}\]
	for every $i$ in $\bI$, it follows that $\colim_\bI \lim_\bJ \bC \to \lim_\bJ \colim_\bI \bC$ is fully faithful.
	
	We are left with showing essential surjectivity  {of the functor \eqref{ververkj34f34f34f3f}}. Since $\lim_\bJ$ is right adjoint to the functor $\underline{-}$ taking constant {$\bJ$-}diagrams, an object {$A$} in $\lim_\bJ \colim_\bI \bC$ corresponds to a natural {transformation}
	\[ \underline{\Delta^0} \to \colim_\bI \bC \]
	of diagrams $\bJ \to \Cat_\infty$.
	Since $\Delta^0$ is compact and since $\bJ$ has only finitely many objects, there exists some $i_0$ in $\bI$ such that $\underline{\Delta^0}(j) = \Delta^0 \to \colim_\bI \bC(-,j)$ factors for every $j$ in $\bJ$ through the canonical map $\bC(i_0,j) \to \colim_\bI \bC(-,j)$.
	Applying \cref{lem:factoring-transformations} yields a transformation $\underline{\Delta^0} \to \bC(i_0,-)$ which fits into a commutative triangle
	\[\xymatrix{
		\underline{\Delta^0}\ar[r]\ar[d] & \colim_\bI \bC \\
		\bC(i_0,-)\ar[ru] &
	}\ .\]
	Hence, we obtain an object in $\lim_\bJ \bC(i_0,-)$ whose image under the canonical map $\lim_\bJ \bC(i_0,-) \to \colim_\bI \lim_\bJ \bC$ provides the required preimage of $A$.
\end{proof}

 {For future reference, let us also recall what it means for filtered colimits to distribute over products.}
Let $\bM$ be an  $\infty$-category admitting small filtered colimits and small products.
\begin{ddd}\label{wergewgregwergw}
 We say that filtered colimits distribute over products in $\bM$ if for any family of small filtered categories  $(\bF_{i})_{i\in I}$ and family of functors
 $(E_{i} \colon \bF_{i}\to \bM)_{i\in I}$ the canonical morphism
 \[ \colim_{(F_{i})_{i}\in \prod_{i\in I}\bF_{i}} \prod_{i\in I} E_{i}(F_{i}) \to \prod_{i\in I}\colim_{F_{i}\in \bF_{i}} E_{i}(F_{i}) \]
 is an equivalence.
\end{ddd}

\begin{ex} \label{rewgoiu0erogwegwergwrgw}
 Examples of $\infty$-categories  {in which filtered colimits distribute over products}
 are $\Spc$, $\Cati$, and hence also $\Cle$ by \cref{prop:catex finitely complete}. 
\end{ex}

\subsection{The calculus of fractions formula}\label{sec:fractions}

 {Our goal in this section is to establish conditions under which there exists an explicit formula to compute mapping spaces in Dwyer--Kan localisations.}

\begin{ddd}\label{oijoiwegfwefgewfwe}
 A  {relative $\infty$-category} $(\bC,W)$ is an object $\bC$ of $\Cati$ together with a subcategory $W$ containing  {all identity morphisms}.
\end{ddd}

If $(\bC,W)$ is a  relative $\infty$-category,  its Dwyer-Kan localisation
\begin{equation}\label{qfweqkjfnkjewfewfqefqefqf}
\ell\colon \bC\to \bC[W^{-1}]
\end{equation}
 satisfies the universal property that for every $\bD$ in $\Cati$ the functor $\ell$ induces an equivalence  
 \[\Fun (\bC[W^{-1}],\bD)\to  \Fun^W(\bC,\bD)\ ,\]
 where the right-hand side is the subcategory of  functors which send  {morphisms in} $W$ to equivalences.
 
 As explained in \cite[Sec.~1]{Barwick:Ktheory}, one can define a large $\infty$-category  $\Rel_\infty$ of relative $\infty$-categories and functors preserving the chosen subcategories as a subcategory of the arrow category $ {\Fun(\Delta^1,\Cati)}$.
 The universal property of the Dwyer--Kan localisation implies that 
  {there exists a functor}
 \begin{equation}\label{rghvfigvsfgfdsg}
  \Loc \colon \Rel_{\infty}\to \Cati
 \end{equation}
 sending a relative $\infty$-category $(\bC,W)$ to $\bC[W^{-1}]$  {which is left adjoint} to the functor $\Cati \to \Rel_\infty$ which sends an $\infty$-category $\bC$ to the relative $\infty$-category $(\bC,\bC^{\simeq})$, where $\bC^{\simeq}$ denotes the groupoid core of $\bC$.

 {In the following, we summarise the main results of \cite[Sec.~7.2]{Cisinski:2017}.}
We consider a  {relative $\infty$-category} $(\bC,W)$, and we let $A$ be an object of $\bC$.

\begin{ddd}[{\cite[Def.~7.2.2]{Cisinski:2017}}]
 {A putative calculus of fractions at $A$ is a functor $\pi \colon W(A) \to \bC$ with the following properties:
 \begin{enumerate}
  \item $W(A)$ has a final object $A_0$ with $\pi(A_0) \simeq A$;
  \item the image of every morphism $B \to A_0$ in $W(A)$ lies in $W$.\qedhere
 \end{enumerate}}
\end{ddd}

 {\begin{ex}
The canonical example of a putative calculus of fractions is given by the full subcategory $W(A)$ of $\bC_{/A}$ spanned by the morphisms in $W$, together with the projection
\[ \pi \colon W(A) \to \bC_{/A} \to \bC\ .\qedhere\]
\end{ex}}

 Let $\pi \colon W(A) \to \bC$ be a putative calculus of fractions.
  {The colimit of the diagram
  \[ W(A)^\op \xrightarrow{\pi^\op} \bC^\op \xrightarrow{\yo}\Fun(\bC,\Spc) \]
  defines a functor
  yields a functor
 \begin{equation}\label{eq:fraction-formula}
   \cM_A \colon \bC \to \Spc,\quad B \mapsto \colim_{ A'    \in W(A)^\op} \Map_\bC(\pi(A'),B)\ .
 \end{equation}

\begin{prop}\label{prop:charactrightcalculus}
 {The following are equivalent:
 \begin{enumerate}
  \item The functor $\cM_A$ inverts all morphisms in $W$.
   \item The canonical map
   \begin{equation}\label{54joriesgergw3r}
   \colim_{A^{\prime} \in W(A)^{\op}} \Map_{\bC}(\pi(A^{\prime}),B) \to \Map_{\bC[W^{-1}]}(\ell(A),\ell(B)) \end{equation}
  is an equivalence of spaces for every object $B$ of $\bC$.
  \end{enumerate}}
 \end{prop}
\begin{proof}
 {In the language of \cite{Cisinski:2017}, $\cM_A$ inverting all morphisms in $W$ means that $(W(A),\pi)$ is a right calculus of fractions.
 Hence \cite[Thm.~7.2.7]{Cisinski:2017} implies that the morphism \eqref{54joriesgergw3r} is an equivalence for every object $B$ of $\bC$.
Conversely, if \eqref{54joriesgergw3r} is an equivalence for every object $B$ of $\bC$, then $\cM_A$ sends all morphisms in $W$ to equivalences.}
\end{proof}

\subsection{Localisations of left-exact \texorpdfstring{$\infty$}{infinity}-categories} \label{rgiojreogiergregregreg}
 
 We will be primarily interested in localisations of left-exact $\infty$-categories.
 
   \begin{ddd}\label{gjierogjeoigerg3554656}
  	A relative left-exact $\infty$-category $(\bC,W)$ is a  {relative $\infty$-category} such that $\bC$ is {left-exact}.
    \end{ddd}
 
 In general, there is no reason to expect that the Dwyer--Kan localisation of a relative left-exact $\infty$-category is also left-exact.
 The following records sufficient conditions that guarantee this does in fact happen.
 
 Let $(\bC,W)$ be a  {relative $\infty$-category}.

\begin{ddd}\label{ioejregergrgrereg}
We say that  $W$ is preserved by pullbacks if every diagram
\[\xymatrix{&B^{\prime}\ar[d]^{f}\\A\ar[r]&B} \]
in $\bC$  with $f$ in $W$ can be extended to a  {pullback} diagram 
\[\xymatrix{A'\ar[r]\ar[d]_{g}&B^{\prime}\ar[d]^{f}\\A\ar[r]&B}\]
in $\bC$ with $g$ in $W$.
\end{ddd}

Let $(\bC,W)$ be a  {relative $\infty$-category}.  
\begin{lem}\label{efiweofwefewfewf} Assume:
 \begin{enumerate}
 \item $W$ is preserved by pullbacks.
 \item $W$  has the two-out-of-three property.
 \item  $\bC$ admits finite limits.
 \end{enumerate} 
 {Then $\ell \colon \bC\to \bC[W^{-1}]$ preserves finite limits.}
 \end{lem}
\begin{proof}
  Since $\bC$ has finite limits, it may be considered
  as a category with weak equivalences and fibrations in the sense of \cite[Def.~7.4.12]{Cisinski:2017}, where the weak equivalences are the
  elements of $W$, and the fibrations are all maps in $\bC$. Hence we can apply \cite[Prop.~7.5.6]{Cisinski:2017}.
  
 Alternatively, one can combine \cite[Thm.~7.2.16]{Cisinski:2017} with \cref{prop:charactrightcalculus} to see that the mapping spaces in the localisation are given by filtered colimits,
  and use this directly to show the lemma.
\end{proof}
  
 {Let $(\bC,W)$ be a relative left-exact $\infty$-category, and let $\ell:\bC\to \bC[W^{-1}]$ be the Dwyer--Kan localisation.
 
\begin{ddd}\label{wegioegfvsfggdfgs}
We say that  $\ell$ is a localisation among left-exact $\infty$-categories if
$\bC[W^{-1}]$ and $\ell$ are left-exact and if for every left-exact $\infty$-category $\bD$
 {the restriction functor}
\[ \ell^*\colon \Fun_{\Cle}(\bC[W^{-1}],\bD)\to\Fun^{W}_{\Cle}(\bC,\bD) \]
 {is an equivalence}, where $\Fun^{W}_{\Cle}(\bC,\bD)$  is the full subcategory of $\Fun_{\Cle}(\bC,\bD)$ on functors which send the morphisms in $W$ to equivalences.
\end{ddd}

\begin{prop}\label{ih43iugu34g34g3}
If $\ell\colon \bC\to \bC[W^{-1}]$ preserves finite limits, then
 $\ell$ is a  localisation among left-exact $\infty$-categories.
Furthermore, the induced functor 
  $\bC^{\Delta^{1}}\to \bC[W^{-1}]^{\Delta^{1}}$ is essentially surjective.
\end{prop}
\begin{proof}
Let $\tilde W$ be the subcategory of $\bC$ given by the morphisms which are sent to equivalence by $\ell$. 
Then $\tilde W$ satisfies the two-out-of-three property, $W\subseteq \tilde W$, \ {and $\tilde W$ is closed under pullbacks since $\ell$ preserves finite limits.}

 {Moreover, let $\overline W$ denote the smallest subcategory of $\bC$ which contains $W$, satisfies the two-out-of-three property, and is preserved by pullbacks.
Then $W \subseteq \overline W \subseteq \tilde W$, so the Dwyer--Kan localisations at each of these three subcategories agree.
\cite[Prop.~7.5.11]{Cisinski:2017} applies to the localisation at $\overline W$ to show that $\bC[W^{-1}]$ is left-exact and has the correct universal property.}

 {By \cite[Thm.~7.2.16]{Cisinski:2017}, \cref{prop:charactrightcalculus} provides a formula for the mapping spaces in $\bC[\overline{W}^{-1}]$ which shows in particular that}
 any map $A\to B$ in $\bC[\overline W^{-1}]$ may be written up to  equivalence as a composition $f s^{-1}$, where $f \colon A'\to B$ is a map in $\bC$, while $s \colon A'\to A$ is a map in $\overline W$. 
\end{proof}

  We let $\Rel^\mathrm{Lex}_{\infty,*}$  denote the subcategory of $\Rel_{\infty}$ of pairs $(\bC,W)$ where $\bC$ is left-exact
and $\bC\to \bC[W^{-1}]$ is left-exact, and left-exact functors. Then the localisation functor from \eqref{rghvfigvsfgfdsg} restricts to a functor \begin{equation}\label{fqwepofjkqweopfdqewdq}
\Loc\colon\Rel^\mathrm{Lex}_{\infty,*}\to \Cle\ .
\end{equation}

\subsection{Stabilisation and cofibres}\label{sec:stab}
 In this section, we describe the process of stabilisation of left-exact $\infty$-categories
 and exhibit a class of sequences in $\Cle$ which give rise to cofibre sequences in $\stCat$ upon stabilisation.

We have a functor
\begin{equation}\label{vervoiehjeoirverveve}
\hat \Sp \colon \Cle\to \Cle\ , \quad  \hat \Sp(\bC):= \colim (\bC\xrightarrow{\Omega} \bC\xrightarrow{\Omega} \bC\xrightarrow{\Omega}\dots)
\end{equation}   
 and a natural transformation
$\hat \Omega^{\infty}\colon \id\to \hat \Sp$.
 {Note that this construction corresponds to Spanier--Whitehead stabilisation under the identification $\Cle \simeq \Cre$  {given by  {$\bC\mapsto \bC^{\op}$}.}
Recall that $\stCat$ denotes the full subcategory of $\Cle$ of stable $\infty$-categories.
\begin{lem}\label{griooergergergreg}
 The functor $\hat \Sp$ has an essentially unique factorisation
\[ \xymatrix{&\stCat \ar[d]\\\Cle\ar@{..>}[ur]^{\tilde \Sp}\ar[r]^{\hat \Sp}& \Cle} \]
which fits into an adjunction
\[ \tilde \Sp \colon\Cle\leftrightarrows \stCat \cocolon \incl\ .\]
 \end{lem}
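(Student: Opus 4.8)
The plan is to establish \cref{griooergergergreg} by combining the explicit construction of the stabilisation endofunctor $\hat\Sp$ on $\Cle$ with the general recognition principle for reflective subcategories. First I would observe that for every $\bC$ in $\Cle$ the $\infty$-category $\hat\Sp(\bC)$ is stable: it is defined as a sequential colimit in $\Cle$ along the loop functor $\Omega$, and by \cref{prop:catex finitely complete} (or rather \cref{ioerjgoiegergwegergwrgwegrwerg}) such colimits exist in $\Cle$ and the inclusion $\Cle \to \Cati$ preserves filtered colimits; since the transition functors $\Omega$ become equivalences in the colimit, $\Omega$ is invertible on $\hat\Sp(\bC)$, which together with pointedness and finite completeness gives stability. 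This yields the desired factorisation of $\hat\Sp$ through $\stCat \hookrightarrow \Cle$ at the level of objects, and the universal property of the colimit makes the factorisation essentially unique as a functor $\tilde\Sp\colon \Cle \to \stCat$.

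Next I would verify the adjunction $\tilde\Sp \dashv \incl$. The unit is the transformation $\hat\Omega^\infty\colon \id_{\Cle} \to \hat\Sp = \incl\circ\tilde\Sp$. To check the triangle identities, the key input is that the loop functor $\Omega$ is already an equivalence on a stable $\infty$-category, so for $\bD$ in $\stCat$ the canonical map $\bD \to \hat\Sp(\bD)$ is an equivalence; hence $\tilde\Sp\circ\incl \simeq \id_{\stCat}$, and the counit is an equivalence. It then remains to show that for $\bC$ in $\Cle$ and $\bD$ in $\stCat$ the map
\[
\Fun_{\Cle}(\hat\Sp(\bC),\bD) \to \Fun_{\Cle}(\bC,\bD)
\]
induced by $\hat\Omega^\infty$ is an equivalence. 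Unwinding the colimit definition, $\Fun_{\Cle}(\hat\Sp(\bC),\bD)$ is the limit of the tower $\cdots \to \Fun_{\Cle}(\bC,\bD) \xrightarrow{\Omega^*} \Fun_{\Cle}(\bC,\bD)$, and since $\Omega$ is an equivalence on $\bD$, precomposition with $\Omega$ on the source corresponds to postcomposition with an equivalence on the target, so every transition map in this tower is an equivalence; the limit is therefore $\Fun_{\Cle}(\bC,\bD)$, as required.

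I would round this off by noting that essential uniqueness of the factorisation $\tilde\Sp$, and of its promotion to a right adjoint's left adjoint, follows from uniqueness of adjoints. The main obstacle I anticipate is purely bookkeeping rather than conceptual: one must be careful that the colimit in \eqref{vervoiehjeoirverveve} is taken in $\Cle$ (not in $\Cati$), that left-exact functors out of it are correctly identified with towers of left-exact functors out of $\bC$, and that $\Omega$ genuinely is an equivalence — not merely an essentially surjective and fully faithful functor in some weaker sense — on a stable $\infty$-category, so that the tower of mapping $\infty$-categories has invertible transition maps. All of these are standard facts about stabilisation (the construction being the left-exact analogue of the usual $\Sp(\cC) = \lim(\cdots \to \cC_* \xrightarrow{\Omega} \cC_*)$), and the verification amounts to transporting them across the equivalence \eqref{wefkjbwejkfnkwefdewfwf1eeeded} with the right-exact picture, where $\tilde\Sp$ corresponds to the more familiar stabilisation of right-exact $\infty$-categories.
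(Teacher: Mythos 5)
Your proposal is correct and follows essentially the same route as the paper: one first notes that $\Omega$ is invertible on $\hat\Sp(\bC)$, hence $\hat\Sp(\bC)$ is stable and the factorisation exists, and then one identifies maps out of $\hat\Sp(\bC)$ into a stable $\bD$ with the limit of the tower of maps out of $\bC$ along $\Omega^{*}$, whose transition maps are equivalences because left-exact functors intertwine $\Omega$ with the invertible $\Omega_{\bD}$ (the paper phrases this as the tower of mapping spaces being essentially constant since $\bD$ is stable). The only cosmetic difference is that you argue with functor $\infty$-categories and triangle identities where the paper checks the mapping-space equivalence directly, which amounts to the same verification.
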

\begin{proof}
 Under conjugation with the equivalence $(-)^\op \colon \Cle \xrightarrow{\simeq} \Cre$, the functor $\hat\Sp$ corresponds to the Spanier--Whitehead stabilisation, so the lemma follows from \cite[Prop.~C.1.1.7]{SAG}.
\end{proof}

\begin{ddd}\label{foiwregergegregreg}
We call $\tilde \Sp \colon \Cle\to \stCat $ the stabilisation functor.
\end{ddd}

\begin{lem}\label{geroigergergre}
The stabilisation functor $\tilde \Sp$ preserves fully faithfulness.
 \end{lem}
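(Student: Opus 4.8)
The plan is to exploit the explicit description of the stabilisation functor as a filtered colimit. Recall from the proof of \cref{griooergergergreg} that $\tilde\Sp(\bC)$ is the $\infty$-category $\hat\Sp(\bC)\simeq\colim_{\nat}\widetilde\bC$, the colimit of the tower \eqref{ewrgwiugheriugwergwregrf} formed in $\Cati$, and that $\tilde\Sp(\phi)$ is the induced map on these colimits. Given a fully faithful morphism $\phi\colon\bC\to\bD$ in $\Cle$, I would first note that $\phi$, being left-exact, preserves the zero object and pullbacks and therefore commutes with $\Omega$; hence $\phi$ induces a transformation of towers $\widetilde\phi\colon\widetilde\bC\to\widetilde\bD$ whose component at every $n\in\nat$ is $\phi$, so that $\tilde\Sp(\phi)\simeq\colim_{\nat}\widetilde\phi$ is a filtered colimit of objectwise fully faithful functors.

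Next I would use that mapping spaces in a filtered colimit of $\infty$-categories are computed as the corresponding filtered colimits of mapping spaces. Concretely, for objects $X,Y$ of $\hat\Sp(\bC)$, which — since $\nat$ is filtered — I may assume to be defined at some common stage $m$, one has a natural equivalence
\[ \Map_{\hat\Sp(\bC)}(X,Y)\simeq\colim_{n\ge m}\Map_{\bC}(\Omega^{n-m}X,\Omega^{n-m}Y)\ , \]
and likewise for $\hat\Sp(\bD)$. Under these identifications the map induced by $\tilde\Sp(\phi)$ on mapping spaces is the filtered colimit of the maps
\[ \Map_{\bC}(\Omega^{n-m}X,\Omega^{n-m}Y)\longrightarrow\Map_{\bD}(\Omega^{n-m}\phi X,\Omega^{n-m}\phi Y)\ , \]
using $\phi\circ\Omega\simeq\Omega\circ\phi$, and each of these is an equivalence because $\phi$ is fully faithful. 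A filtered colimit of equivalences is an equivalence, so $\tilde\Sp(\phi)$ induces equivalences on all mapping spaces, i.e.\ it is fully faithful; this is the assertion.

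The only point that is not purely formal is the compatibility of mapping spaces with filtered colimits of $\infty$-categories. I would justify this using that a filtered colimit in $\Cati$ may be computed strictly at the level of quasi-categories — a degreewise colimit of quasi-categories along a filtered diagram is again a quasi-category, since inner horns and their fillers are finite simplicial sets — and that, for the same finiteness reason, every simplex of a mapping complex (and every pair of vertices) already lives at a finite stage of the colimit. This is the step I expect to carry the actual content; granting it, the identification $\tilde\Sp(\bC)\simeq\hat\Sp(\bC)$, the commutation of $\phi$ with $\Omega$, and the indexing bookkeeping are all immediate from the construction in \cref{griooergergergreg}.
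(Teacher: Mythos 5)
Your argument is correct and follows essentially the same route as the paper: both express $\tilde\Sp(\bC)$ as the filtered colimit of the tower $\bC\xrightarrow{\Omega}\bC\xrightarrow{\Omega}\cdots$ (computed in $\Cati$, which the paper justifies via \cref{prop:catex finitely complete}) and then invoke the fact that a filtered colimit of fully faithful functors in $\Cati$ is fully faithful — you merely spell out that standard fact via the colimit formula for mapping spaces, which the paper takes as known.
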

\begin{proof} 
The functor $\hat \Sp$ from \eqref{vervoiehjeoirverveve} is given by a filtered colimit in $\Cle$. Note that filtered colimits in $\Cle$ can be calculated in $\Cati$ by \cref{prop:catex finitely complete}. Furthermore a filtered colimit of fully faithful functors  in $\Cati$  is fully faithful. This implies the assertion.
\end{proof}

Let $\phi \colon \bC \to \bD$ be a morphism in $\Cle$.
\begin{ddd}\label{rgioergieorfrgergergergergerge}
We define the stable cofibre of $\phi$ to be the stable $\infty$-category
\[ \Cofib^{s}(\phi) := \Cofib(\tilde \Sp(\phi))\ .\qedhere \]
\end{ddd}

 {If $\phi$ is fully faithful, \cref{geroigergergre} implies by \cite[Prop.~A.{3.7.iii)}]{thenine2} that
\[ \tilde\Sp(\bD) \xrightarrow{\tilde\Sp(\phi)} \tilde\Sp(\bC) \to \Cofib^{s}(\phi) \]
is a  Karoubi 
sequence in the sense of \cite[Def.~A.{3.5}]{thenine2}.
In the following we will
show that $\Cofib^s(\phi)$ admits an explicit model in terms of a Dwyer--Kan localisation of $\bC$.}

Let $\bC$ be in $\Cle$, and let $f \colon C\to C^{\prime}$ be a morphism  in $\bC$.
Using the existence of finite limits and a zero object in $\bC$,
the fibre of $f$ can be defined by
\[ \Fib(f):= 0\times_{C^{\prime}}C\ .\]
If $\phi \colon \bD \to \bC$ is a morphism in $\Cle$, then we define the {relative} left-exact $\infty$-category $(\bC,W_{\phi})$
such that $W_{\phi}$ is  {the smallest subcategory containing those morphisms whose fibre lies in the essential image of $\phi$, which satisfies the two-out-of-three-property and which is closed under pullbacks.
Then the Dwyer--Kan localisation is left-exact by \cref{efiweofwefewfewf} and \cref{ih43iugu34g34g3}.}

\begin{lem}\label{rgioowfwefwefw}
 {The sequence of functors
\[ \tilde\Sp(\bD) \to \tilde\Sp(\bC) \to \tilde\Sp(\bC[W_\phi^{-1}]) \]
 is a cofibre sequence
  in $\stCat$.}
\end{lem}
\begin{proof}
 {For every stable $\infty$-category $\bE$, we have a commutative square
 \[\xymatrix{
  \Fun_{\stCat}(\tilde\Sp(\bC[W_\phi^{-1}]),\bE)\ar[r]\ar[d]_{\simeq} & \Fun_{\stCat}(\tilde\Sp(\bC),\bE)\ar[d]^{\simeq} \\
  \Fun_{\Cle}(\bC[W_\phi^{-1}],\bE)\ar[r] & \Fun_{\Cle}(\bC,\bE)
  }\]
  in which all morphisms are given by restriction functors.
  As indicated, the vertical morphisms are equivalences by \cref{griooergergergreg}.}
  
 {Since $\bE$ is stable, a left-exact functor $\bC \to \bE$ vanishes on $\bD$ if and only if it inverts all morphisms whose fibre lies in $\bD$.
  Moreover, the subcategory of morphisms that get inverted by a left-exact functor $\bC \to \bE$ is closed under pullbacks and satisfies the two-out-of-three-property.
  Hence such a functor vanishes on $\bD$ if and only if it inverts all morphisms in $W_\phi$.
  It follows from \cref{ih43iugu34g34g3} that the essential image of the bottom horizontal functor given precisely by the functors vanishing on $\bD$.}
  
 {Moreover, the restriction of a functor $\tilde\Sp(\bC) \to \bE$ vanishing on $\tilde\Sp(\bD)$ along the unit map $\bC \to \tilde\Sp(\bC)$ also vanishes on $\bD$.
  Conversely, the functor $\tilde\Sp(\bC) \to \bE$ induced by a left-exact functor $\bC \to \bE$ vanishing on $\bD$ is trivial on $\tilde\Sp(\bD)$ since any object in $\tilde\Sp(\bD)$ lies in $\bD$ after finitely many applications of $\Omega$.
  Hence the right vertical functor restricts to an equivalence between the full subcategories given by functors which vanish on $\tilde\Sp(\bD)$ and $\bD$, respectively, which proves that $\tilde\Sp(\bC[W_\phi^{-1}]$ has the desired universal property.}
\end{proof}

\begin{rem}\label{gieorgegergregerg}
 {Let $\phi \colon \bD \to \bC$ be an exact functor between stable $\infty$-categories.
 Since the essential image of $\phi$ is a full stable subcategory of $\bC$, the subcategory $W_\phi$ is given precisely by the collection of morphisms whose fibre lies in the essential image of $\phi$.
 Again by stability, $W_\phi$ is equivalently given by the collection of morphisms whose cofibre lies in the essential image of $\phi$.
 Hence \cref{efiweofwefewfewf} and its dual imply that the localisation functor $\ell \colon \bC \to \bC[W_\phi^{-1}]$ preserves both finite limits and colimits, and it follows that $\bC[W_\phi^{-1}]$ is stable.
 So the unit map $\bC[W_\phi^{-1}] \to \tilde\Sp(\bC[W_\phi^{-1}])$ is an equivalence in this case, and \cref{rgioowfwefwefw} recovers the well-known fact that cofibres of stable $\infty$-categories are given by Dwyer--Kan localisations.}
 
  {In particular, we have for every morphism $\phi \colon \bD \to \bC$ in $\Cle$ an equivalence
 \[ \tilde \Sp(\bC)[W_{\tilde \Sp(\phi)}^{-1}] \xrightarrow{\simeq} \tilde \Sp(\bC[W_{\phi}^{-1}])\ .\qedhere\]}
\end{rem}

\subsection{Excisive squares in \texorpdfstring{$\Cle$}{CatLex}  {and homological functors}} \label{sec:excisivesquares} 
 Any localising invariant on stable $\infty$-categories induces an invariant on left-exact $\infty$-categories by precomposition with the stabilisation functor $\tilde\Sp \colon \Cle \to \stCat$.
\cref{geroigergergre} implies that a localising invariant sends
a commutative square
\begin{equation}\label{gioegegergreg}
\xymatrix{\bD \ar[r]^{\phi }\ar[d]_{\psi^{\prime} }&\bC \ar[d]^{\psi }\\\bD^{\prime} \ar[r]^{\phi^{\prime} }&\bC^{\prime} }
\end{equation}
 {of left-exact $\infty$-categories}
to a pushout
if it is excisive in the sense of
the following definition.
 
\begin{ddd}\label{ugioerguoerug}
 A square \eqref{gioegegergreg} in $\Cle$ is called excisive if:
 \begin{enumerate}
 \item The functors $\phi \colon \bD\to \bC $ and $\phi^{\prime} \colon \bD^{\prime}\to \bC^{\prime}$
are fully faithful.
 \item The induced functor {on stable cofibres}
 $\bar \psi \colon \Cofib^{s}(\phi) \to \Cofib^{s}(\phi^{\prime} )$ is an equivalence. \qedhere
\end{enumerate}
\end{ddd}  

Under some conditions a colimit of a diagram of excisive squares is again excisive.
 Let $\bI$ be a small $\infty$-category, and consider an $\bI$-indexed diagram of squares of the shape \eqref{gioegegergreg}.
\begin{lem}\label{rfiorjgfoqfwewfefewfqfe} Assume:
	\begin{enumerate}
		\item One of the following holds:
		\begin{enumerate}
			\item $\bI$ is filtered.
			\item $\bI$ is a groupoid.
		\end{enumerate}
\item  The evaluation of  the diagram at every object of $\bI$ is an excisive square in $\Cle$.
\end{enumerate}
 Then
 \begin{equation}\label{gioegegergreg111}
\xymatrix@C=3em{\colim_{\bI}\bD \ar[r]^{\colim_{\bI}\phi }\ar[d]_{\colim_{\bI}\psi^{\prime} }&\colim_{\bI}\bC  \ar[d]^{\colim_{\bI}\psi }\\\colim_{\bI}\bD^{\prime} \ar[r]^{\colim_{\bI}\phi^{\prime} }&\colim_{\bI}\bC^{\prime}  } 
\end{equation}
is an excisive square in $\Cle$.
 \end{lem}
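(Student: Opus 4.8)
The plan is to verify the two defining conditions of an excisive square in $\Cle$ (\cref{ugioerguoerug}) for the square \eqref{gioegegergreg111}: first, that $\colim_{\bI}\phi$ and $\colim_{\bI}\phi^{\prime}$ are fully faithful; second, that the functor induced on stable cofibres is an equivalence. Write $\phi_{i}$, $\phi^{\prime}_{i}$, $\psi_{i}$, $\psi^{\prime}_{i}$ for the evaluation at $i$ in $\bI$ of the respective natural transformations, so that the square at $i$ is \eqref{gioegegergreg} for this data, and is excisive by hypothesis.

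For full faithfulness I would treat the two cases separately. If $\bI$ is filtered, then by \cref{prop:catex finitely complete} the colimit over $\bI$ in $\Cle$ is computed in $\Cati$, and a filtered colimit in $\Cati$ of fully faithful functors is fully faithful; since each $\phi_{i}$ and each $\phi^{\prime}_{i}$ is fully faithful (part of excisiveness), so are $\colim_{\bI}\phi$ and $\colim_{\bI}\phi^{\prime}$. If $\bI$ is a groupoid, the same conclusion is exactly \cref{ewfweijor23r23r23r32r23r23}. In particular $\colim_{\bI}\phi$ and $\colim_{\bI}\phi^{\prime}$ are fully faithful, so $\Cofib^{s}$ of the top and bottom arrows of \eqref{gioegegergreg111} is defined and the square induces a functor $\overline{\colim_{\bI}\psi}$ on stable cofibres via \cref{retgiojoergergrege}.

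For the second condition I would pass to Verdier quotients. Since $\phi_{i}$ is fully faithful, \cref{gierjgeorijgeroigjioreg} identifies $\Cofib^{s}(\phi_{i})$ with the Verdier quotient $\tilde\Sp(\bC)(i)/\tilde\Sp(\bD)(i)$ in $\stCat$, naturally in $i$ and in the square, and under this identification $\bar\psi_{i}$ becomes the functor induced on Verdier quotients by $\tilde\Sp$ of the square at $i$. Now two observations combine: $\tilde\Sp$ is a left adjoint by \cref{griooergergergreg}, hence preserves the colimit over $\bI$ of the diagrams $\bC$, $\bD$, $\bC^{\prime}$, $\bD^{\prime}$; and the Verdier quotient of a fully faithful exact functor is the pushout against the zero object in $\stCat$, hence commutes with $\bI$-indexed colimits by interchange of colimits (the relevant colimits in $\stCat$ exist — e.g.\ because $\tilde\Sp$ lands in $\stCat$ out of the cocomplete $\Cle$, \cref{ioerjgoiegergwegergwrgwegrwerg}, and $\stCat$ admits the pertinent quotients). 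Therefore $\Cofib^{s}(\colim_{\bI}\phi)\simeq\colim_{\bI}\Cofib^{s}(\phi_{i})$ and $\Cofib^{s}(\colim_{\bI}\phi^{\prime})\simeq\colim_{\bI}\Cofib^{s}(\phi^{\prime}_{i})$, compatibly with the maps $\bar\psi_{i}$, so $\overline{\colim_{\bI}\psi}\simeq\colim_{\bI}\bar\psi_{i}$. Each $\bar\psi_{i}$ is an equivalence since the square at $i$ is excisive, and a colimit of equivalences is an equivalence; hence \eqref{gioegegergreg111} is excisive.

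The step I expect to demand the most care is the naturality bookkeeping in the previous paragraph: one must confirm that the identification $\Cofib^{s}(\phi_{i})\simeq\tilde\Sp(\bC)(i)/\tilde\Sp(\bD)(i)$ coming from the proof of \cref{gierjgeorijgeroigjioreg} (which runs through \cref{gieorgegergregerg}) is functorial in the square, so that "$\colim_{\bI}$ of the induced maps on Verdier quotients" genuinely computes "the map induced on the colimit", and that the rearrangement $\colim_{\bI}\bigl(\tilde\Sp(\bC)(i)\sqcup_{\tilde\Sp(\bD)(i)}0\bigr)\simeq\bigl(\colim_{\bI}\tilde\Sp(\bC)(i)\bigr)\sqcup_{\colim_{\bI}\tilde\Sp(\bD)(i)}0$ is legitimate. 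Both are formal once the cited statements are in place; everything else in the argument is a direct application of \cref{prop:catex finitely complete,ewfweijor23r23r23r32r23r23,gierjgeorijgeroigjioreg,griooergergergreg,geroigergergre}.
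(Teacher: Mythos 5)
Your proposal is correct and follows essentially the same route as the paper's proof: full faithfulness via the filtered-colimit argument in $\Cati$ (\cref{prop:catex finitely complete}) respectively \cref{ewfweijor23r23r23r32r23r23}, then the identification $\Cofib^{s}(\colim_{\bI}\phi)\simeq\colim_{\bI}\Cofib^{s}(\phi)$ using \cref{gierjgeorijgeroigjioreg}, the colimit-preservation of $\tilde\Sp$, and the compatibility of Verdier quotients with colimits. The naturality bookkeeping you flag is exactly what the paper leaves implicit ("Since $\tilde\Sp$ preserves colimits and colimits preserve quotients"), so no gap remains.
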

\begin{proof}
 If $\bI$ is filtered, then the functors $\colim_{\bI}\phi$ and $\colim_{\bI}\phi^{\prime}$ are fully faithful
 since a filtered colimit of fully faithful functors in $\Cle$ (which can be calculated in $\Cat_{\infty}$, see  \cref{prop:catex finitely complete}) is again fully faithful.
In the other case, i.e., when $\bI$ is a groupoid, we use \cref{ewfweijor23r23r23r32r23r23} to conclude fully faithfulness.

 Since $\tilde\Sp$ and taking cofibres preserve colimits,
  it follows that $\Cofib^s(\colim_\bI \phi) \to \Cofib^s(\colim_\bI \phi')$ is an equivalence.
\end{proof}

As indicated above, we are mostly interested in localising invariants defined on left-exact $\infty$-categories. To conclude the discussion in this section, we introduce some related terminoloy and explain the relation between localising invariants in our sense and localising invariants on stable $\infty$-categories.
 
 We consider a functor $\Homol \colon \Cle\to \bM$.
 We say that $\Homol$ inverts Morita equivalences if it sends the morphism $\bC\to \Idem(\bC)$ (the unit of adjunction \eqref{ewfbjhbfjhqerfqfewfqefe}) to an equivalence for every $\bC$ in $\Cle$.
 
 \begin{ddd}\label{qrevoiqrjoirqfcwqecq}
  The functor $\Homol$ is called homological if it has the following properties:
 \begin{enumerate}
 \item\label{ergqfqewffedqdeqdqwed} $\bM$ is stable and cocomplete.
 \item $\Homol$ preserves filtered colimits.
 \item $\Homol$ sends excisive squares  in $\Cle$  (\cref{ugioerguoerug})  to pushout squares.
 \end{enumerate}
  The functor $\Homol$ will be called a finitary localising invariant if it in addition inverts Morita equivalences.
\end{ddd}

Following \cite[Def.~8.1]{MR3070515}, a functor $\stCat\to \bM$ is called a stable finitary localising invariant\footnote{We added the adjective {\em stable} in order to distinguish this notion from the one introduced in \cref{qrevoiqrjoirqfcwqecq}. We further added the word {\em finitary} in order to highlight
that the functor preserves filtered colimits, as one might want to drop this assumption in certain applications.} if $\bM$ is stable and cocomplete, and the functor  inverts Morita equivalences, preserves filtered colimits, and sends  {Karoubi} sequences \cite[Def.~A.{3.5}]{thenine2}  to  {cofibre} sequences. 

 {By \cref{griooergergergreg}}, we have an adjunction
\begin{equation}\label{qwefqwefewdqdqedqed}
\tilde \Sp\colon\Cle\leftrightarrows   \stCat\cocolon \incl\ .
\end{equation} 
 If $\Homol$ is a finitary localising invariant, then $\Homol\circ \incl$ is clearly a stable    finitary localising invariant.
 The following lemma justifies our terminology and shows that  finitary localising invariants correspond to stable finitary localising invariants
 by precomposition with the stabilisation functor $\tilde \Sp$.
 \begin{lem}\label{lem:loc-vs-stloc}
 	\ \begin{enumerate}
 		\item If $\Homol$ is a
		 homological functor, then the natural transformation
		\[  {\Homol \to \Homol\circ \incl \circ \tilde\Sp} \] (induced by the unit of adjunction \eqref{qwefqwefewdqdqedqed}) is an equivalence.
		\item \label{eri9gqergrqgrgreqerggqrg} If $L$ is a stable finitary localising invariant, then $L \circ \tilde\Sp$ is a finitary localising invariant and the transformation
		\[ L \circ \tilde\Sp \circ \incl \to L \]
		(induced by the counit of adjunction \eqref{qwefqwefewdqdqedqed}) is an equivalence.
	\end{enumerate}
 \end{lem}
 \begin{proof}
 Let $\bC$ be in $\Cle$. Then we have the following excisive square in $\Cle$:
 \begin{equation*}
\xymatrix{0\ar[d] \ar[r] &\bC\ar[d]\\0\ar[r]&\tilde \Sp(\bC)}\ .
\end{equation*} 
Indeed, the horizontal morphisms are fully faithful, and the induced morphism on stable cofibres is the identity of $\tilde \Sp(\bC)$.
The functor $\Homol$ sends this square to a pushout square in $\bM$. Since $\Homol(0) \simeq 0$, we conclude that $\Homol(\bC)\to \Homol(\tilde \Sp(\bC))$ is an equivalence.

 For the second assertion, we observe that $L \circ \tilde\Sp$ is a finitary localising invariant since $\tilde\Sp$ commutes with filtered colimits and idempotent completion {(since filtered colimits of idempotent complete $\infty$-categories are idempotent complete \cite[Cor.~4.4.5.21]{htt})},
  and preserves fully faithful functors by \cref{geroigergergre}.
 Moreover, the counit $\tilde\Sp \circ \incl \to \id$ is an equivalence.
 \end{proof}

\begin{ex}
 We let
 \begin{equation}\label{v4toi3hfio3f3f34f3f}
\cU_{loc} \colon \stCat \to \cM_{loc}
\end{equation}
denote the universal (stable finitary) localising invariant of  Blumberg--Gepner--Tabuada \cite[Thm.~8.7]{MR3070515}. 
 The target $\cM_{loc}$ is a presentable stable $\infty$-category.
      The composition
      \begin{equation}\label{ihgoihjerigojgerggregre}
\UK \colon \Cle\xrightarrow{\tilde \Sp} \stCat \xrightarrow{\cU_{loc}} \cM_{loc}
\end{equation}
is a  finitary localising invariant by \cref{lem:loc-vs-stloc}.
\end{ex}

Let $\Homol \colon \Cle\to \bM$ be a functor.
\begin{lem}\label{roijewrogwergregwregwregw}
	If  $\Homol$ is homological, then it preserves coproducts.
 \end{lem}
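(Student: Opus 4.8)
The plan is to reduce the statement about arbitrary coproducts to the three defining properties of a homological functor: stability and cocompleteness of $\bM$, preservation of filtered colimits, and sending excisive squares to pushout squares. The key observation is that a coproduct $\coprod_{i \in I} \bC_i$ in $\Cle$ can be built up as a filtered colimit of finite coproducts $\coprod_{i \in F} \bC_i$ indexed by the finite subsets $F$ of $I$. Since $\Homol$ preserves filtered colimits and coproducts in $\bM$ commute with filtered colimits (as $\bM$ is cocomplete and stable, hence its coproducts are given by the relevant colimit which commutes with the filtered one), it therefore suffices to treat the case of a finite coproduct, and then by induction the case of a coproduct of two objects $\bC_0 \sqcup \bC_1$.

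For the binary case, I would exhibit the square
\[
\xymatrix{
0 \ar[r] \ar[d] & \bC_0 \ar[d] \\
\bC_1 \ar[r] & \bC_0 \sqcup \bC_1
}
\]
and argue that it is an excisive square in $\Cle$ in the sense of \cref{ugioerguoerug}: the two maps $\bC_0 \to \bC_0 \sqcup \bC_1$ and $\bC_1 \to \bC_0 \sqcup \bC_1$ are fully faithful (this uses that $\Cle$ is semi-additive by \cref{girgjowegfewfw9ef}, so that the coproduct is also a product and the inclusions of the factors are split monomorphisms, hence fully faithful), and the induced map on stable cofibres is an equivalence. Concretely, the stable cofibre of $\bC_0 \hookrightarrow \bC_0 \sqcup \bC_1$ should be identified with $\tilde\Sp(\bC_1)$, matching the stable cofibre of $0 \hookrightarrow \bC_1$; this is again a consequence of semi-additivity, since quotienting out the $\bC_0$-summand leaves the $\bC_1$-summand. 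Applying $\Homol$ to this excisive square yields a pushout square in $\bM$, and since $\Homol(0) \simeq 0$ (which follows because the square with all four corners built from the zero object is excisive and sent to a pushout, forcing $\Homol(0)$ to be initial, hence zero in the stable category $\bM$), the pushout square degenerates to an equivalence $\Homol(\bC_0 \sqcup \bC_1) \simeq \Homol(\bC_0) \oplus \Homol(\bC_1)$, which in a stable $\infty$-category is the coproduct.

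Finally I would assemble these pieces: for a general index set $I$, write $\coprod_{i \in I} \bC_i \simeq \colim_{F \subseteq I \text{ finite}} \coprod_{i \in F} \bC_i$, apply $\Homol$, use preservation of filtered colimits to pull $\Homol$ inside, use the finite case (by induction, built from the binary case) to get $\Homol(\coprod_{i \in F}\bC_i) \simeq \bigoplus_{i \in F} \Homol(\bC_i)$, and then recognize $\colim_{F} \bigoplus_{i \in F}\Homol(\bC_i) \simeq \coprod_{i \in I}\Homol(\bC_i)$ in $\bM$. I expect the main obstacle to be the careful verification that the displayed square is genuinely excisive in $\Cle$ — in particular pinning down the identification of the stable cofibre with the stabilisation of the complementary summand and checking that the comparison map between the stable cofibres of the two rows is an equivalence — since this is where the semi-additive structure of $\Cle$ and the precise definition of excisive square in \cref{ugioerguoerug} must be used with care. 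The filtered-colimit bookkeeping at the end is routine by comparison.
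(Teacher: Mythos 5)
Your binary case is, in essence, the paper's own proof: the paper takes exactly the square with corners $0,\bC,\bD,\bC\sqcup\bD$, asserts that the horizontal functors are fully faithful and that the induced morphism of stable cofibres is the identity of $\tilde\Sp(\bC)$, and then applies $\Homol$. Your additional observations (full faithfulness of the summand inclusions via semi-additivity of $\Cle$, identification of the stable cofibre with the stabilisation of the complementary summand) just fill in what the paper asserts without comment, and your reduction of arbitrary coproducts to finite ones through filtered colimits is a sound supplement, since the paper's proof as written only treats the binary case while the statement speaks of coproducts in general. One cosmetic remark on orientation: with the square as you draw it, \cref{ugioerguoerug} asks for an equivalence between $\Cofib^{s}(0\to\bC_0)$ and $\Cofib^{s}(\bC_1\to\bC_0\sqcup\bC_1)$, whereas you compare the cofibres of the vertical maps; this only means you are verifying excisiveness of the transposed square, which is harmless because pushout squares are invariant under transposition.

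The one step that does not work as stated is your justification of $\Homol(0)\simeq 0$. The all-zero square consists entirely of identity morphisms, and any square of identities is automatically a pushout, so applying $\Homol$ to it imposes no condition on $\Homol(0)$ whatsoever. In fact reducedness cannot be deduced from \cref{qrevoiqrjoirqfcwqecq} alone: a constant functor with arbitrary nonzero value sends every square to a square of identities and preserves filtered colimits, yet does not preserve coproducts. The paper does not prove $\Homol(0)\simeq 0$ either — after applying $\Homol$ it simply writes $0$ in the corner — so this has to be read as an implicit normalisation, satisfied by every invariant the paper actually uses (e.g.\ $K$ and $\UK$, which visibly annihilate the zero category). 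Apart from this shared implicit step, your argument is correct and follows the paper's route.
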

\begin{proof}
For $\bC,\bD$  in $ \Cle$ 
 {the commutative square
\[\xymatrix{0\ar[r]\ar[d]&\bC\ar[d]\\\bD\ar[r]&\bC {\oplus} \bD} \]
is excisive, so it becomes a pushout upon application of $\Homol$.
Since $\Homol(0) \simeq 0$, the induced square exhibits $\Homol(\bC \oplus \bD)$ as a pushout of $\Homol(\bC)$ and $\Homol(\bD)$.}
\end{proof}

\begin{rem}
As a consequence of  \cref{roijewrogwergregwregwregw}, a homological functor is additive. More precisely, let 
$F,G \colon \bC\to \bD$ be two morphisms in $\Cle$ between the same objects. Then we have an equivalence \begin{equation}\label{rfoijqwfoiejfojoiewfqwefqfqewfqewf}
\Homol(F+G)\simeq \Homol(F)+\Homol(G)\end{equation}
of morphisms from $\Homol(\bC)$ to $\Homol(\bD)$.
\end{rem}

\section{Sheaves on bornological coarse spaces}
 {As explained in \cref{contr}, our main goal is the construction of a functor
\[ \bV\colon G\BC\to \Cle \]
which associates to every $G$-bornological coarse space $X$ a left-exact $\infty$-category of $X$-controlled objects.
This construction proceeds in several steps, each of which makes use of some more structure encoded in the notion of a $G$-bornological coarse space.
Our presentation does not assume previous knowledge of $G$-bornological coarse spaces.
Instead, we will introduce $G$-bornological coarse spaces step by step as our constructions require.}

 {Throughout this and the following sections, we will freely use the notation introduced in \cref{egihweogergwreggwrgr}. In particular}, $\CL$ denotes the very large $\infty$-category of opposites of pointed, compactly generated presentable $\infty$-categories and right adjoint functors preserving cocompact objects, and $\Cle$ denotes the large $\infty$-category of small, pointed, left-exact $\infty$-categories and left-exact functors.

\subsection{Presheaves}\label{wiwoerthgwrtgwregergwrg}
 {In the first step of our construction, we simply assign to every set $X$ and $\infty$-category $\bC$ the $\infty$-category of functors from the power set of $X$ to $\bC$, and record the functoriality of this construction.
More importantly, we also introduce the notion of an entourage in \cref{def:ent} and define the associated thickening and thinning functors in \eqref{V-thick} and \eqref{V-thin}.}

Let $X$ be a set. By $\cP_{X}$ we denote the poset of subsets of $X$ with the inclusion relation.  For $\bC$ in $\CL$ 
we consider the functor category
\begin{equation}\label{qewfoi1jo4irfrefqfef}
\PSh_{\bC}(X):=\Fun(\cP_{X}^{\op},\bC)
\end{equation}
called the $\infty$-category of $\bC$-valued presheaves on $X$. It is
  again an object of $\CL$. 

A map of sets $f\colon X\to X^{\prime}$  gives rise to the inverse image map $f^{-1}(-)\colon \cP_{X^{\prime}}\to \cP_{X}$ of posets. By precomposition it induces a morphism  
\[ \hat f_{*}\colon\PSh_{\bC}(X)\to \PSh_{\bC}(X^{\prime}) \]
in $\CL$ between the presheaf categories.

A morphism $\phi \colon \bC\to \bC^{\prime}$  in $\CL$ gives rise to a morphism
\begin{equation}\label{wterbwkobwrevwerv}
\hat \phi_{*}\colon\PSh_{\bC}(X)\to \PSh_{\bC^{\prime}}(X )
\end{equation}  in $\CL$ by postcomposition with $\phi$.
These constructions can be turned into a functor \begin{equation}\label{qewfoijoi23rfgervfwevwvre}
\PSh\colon\Set\times \CL\to \CL\ .
\end{equation}
By taking images, the map $f$ also induces the morphism of posets $f(-) \colon \cP_{X}\to \cP_{X^{\prime}}$. The relations
$f(f^{-1}(Y^{\prime}))\subseteq Y^{\prime}$ for all $Y^{\prime}$ in $\cP_{X^{\prime}}$ and $Y\subseteq f^{-1}(f(Y))$  for all $Y$ in $\cP_{X}$  {provide} the counit and the unit of an adjunction \begin{equation}\label{qweflkqnmwfklmqwfwefqefqfewf}
f(-)\colon\cP_{X}\leftrightarrows \cP_{X^{\prime}}\cocolon f^{-1}(-)
\end{equation}  between poset morphisms.
We get an induced adjunction
\begin{equation}\label{ad-hat-f}\hat f^{*}\colon  \PSh_{\bC}(X^{\prime})\leftrightarrows  \PSh_{\bC}(X)\cocolon \hat f_{*} \end{equation}  
 of functors between the  presheaf categories,  where $\hat f^{*}$ is given by precomposition with $f(-)$. It is also   a morphism in $\CL$.

If $\phi\colon \bC\to \bC^{\prime}$ is a morphism in $\CL$, then $\hat \phi_{*}$ is a morphism in $\CL$ and therefore fits into an adjunction
\begin{equation}\label{ererggrefqwfewfeqwfqef} \hat \phi^{*}\colon \PSh_{\bC^{\prime}}(X)\leftrightarrows \PSh_{\bC}(X)\cocolon  \hat \phi_{*}\ ,\end{equation}
where $\hat \phi^{*}$ preserves cofiltered limits (\cref{rgqeroqjkopqwefewfefqef}).

\begin{ddd}\label{def:ent}
 {An entourage on $X$ is a subset of $X \times X$, i.e., a relation on $X$.}
\end{ddd}

 {Consider an entourage $V$ on $X$ such that $\diag(X) \subseteq V$.}
 {For every subset $Y$ of $X$} we define the $V$-thickening
\begin{equation}\label{V-thick} V[Y]:=\{x \in X\:|\: (\exists y\in Y\:|\: (x,y)\in V)\}\end{equation}
and the $V$-thinning
\begin{equation}\label{V-thin} V(Y):=\{x\in X\:|\: V[\{x\}]\subseteq Y\}\ .\end{equation}
 {The reflexivity of $V$ guarantees that $Y \subseteq V[Y]$ and $V(Y) \subseteq Y$ for every subset $Y$ of $X$.}

\begin{ex}\label{ex:ent}
 {\ \begin{enumerate}
  \item On every set $X$, we can consider the entourage $\diag(X)$. Both $\diag(X)$-thickening and $\diag(X)$-thinning are given by the identity functor on $\cP_X$.
  \item At the other extreme, $X \times X$ is also an entourage containing $\diag(X)$.
   The $(X \times X)$-thickening of a non-empty subset is the entirety of $X$, while $(X \times X)[\emptyset] = \emptyset$.
   Analogously, the $(X \times X)$-thinning of any proper subset of $X$ is empty, while $(X \times X)(X) = X$.
  \item Suppose that $d$ is a metric on $X$. Then for $r \geq 0$
 \begin{equation}\label{qwefwokjkqowdewdeqd}  V_r := \{ (x,y) \in X \times X \mid d(x,y) \leq r \}  
\end{equation} 
  is an entourage on $X$. For a subset $Y$ of $X$, the $V_r$-thickening $V_r[Y]$ is the union over all closed $r$-balls with center in $Y$, and the $V_r$-thinning $V_r(Y)$ is the set of those points $y$ in $Y$ such that the closed $r$-ball around $y$ is entirely contained in $Y$.\qedhere
 \end{enumerate}}
\end{ex}

Thickening and thinning induce  morphisms of posets
\begin{equation}\label{qwefew14rqefeqwf}
V[-], V(-)\colon\cP_{X}\to \cP_{X}\ . 
\end{equation}
We define the  morphisms
\[ V^{*},V_{*}\colon \PSh_{\bC}(X)\to \PSh_{\bC}(X) \]
in $\CL$ through precomposition with $V[-]$ and $V(-)$, respectively.
 The relations
$Y\subseteq V(V[Y])$ and $V[V(Y)]\subseteq Y$ for all $Y$ in $\cP_{X}$ provide the unit and counit of an adjunction \begin{equation}\label{etkjnkjenkwervwerbververvwv}
V[-]\colon \cP_{X}\leftrightarrows \cP_{X}\cocolon V(-)
\end{equation} between endofunctors of $\cP_{X}$.
We therefore get an induced adjunction
\begin{equation}\label{ad-hat-V}
 V^{*}\colon  \PSh_{\bC}(X)\leftrightarrows  \PSh_{\bC}(X)\cocolon  V_{*}
\end{equation}   
between the presheaf categories.

Let $G$ be a group and let $G\Set:=\Fun(BG,\Set)$ be the category of $G$-sets.
The functor $\PSh$ from \eqref{qewfoijoi23rfgervfwevwvre} induces a functor $\PSh^{G}$ {which sends a pair $(X,\bC)$ {in $G\Set \times \Fun(BG,\CL)$} to the fixed point category $\lim_{BG} \PSh_\bC(X)$ with respect to the conjugation action, and is formally} defined as the following composition:
\begin{align}\label{equiv-psh}\PSh^{G}\colon G\Set \times \Fun(BG,\CL)&\xrightarrow{\PSh} \Fun(BG\times BG,\CL)\\&\xrightarrow{\diag_{BG}^{*}} \Fun(BG,\CL)\nonumber\\&
\xrightarrow{\lim_{BG}}\CL\ .\nonumber
\end{align}
 The first morphism is  given by postcomposition with
$\PSh$, the functor $\diag_{BG} \colon BG\to BG\times BG$ is the diagonal embedding, and the limit over $BG$ exists since $\CL$ is complete by \cref{lem:CL-complete}. We write
\[ \PSh^{G}_{\bC}\colon G\Set\to \CL \]
for the specialisation  of the functor $\PSh^G$ from \eqref{equiv-psh} at $\bC$ in $ \Fun(BG,\CL)$.
 
In the following, we repeatedly use that a $G$-equivariant adjunction induces an adjunction after passing to the limit over $BG$. 
 
If $f\colon X\to X'$ is a morphism in $G\Set$,  and if $\bC$ is in $\Fun(BG,\CL)$, then by passing to the limit over $BG$
the adjunction \eqref{ad-hat-f} induces an adjunction 
\[ \hat f^{*,G}\colon  \PSh^{G}_{\bC}(X^{\prime})\leftrightarrows  \PSh^{G}_{\bC}(X)\cocolon \hat f^{G}_{*}\ .\]
 Similarly, if $X$ is a $G$-set
  and the  entourage $V$ is $G$-invariant, then the adjunction \eqref{ad-hat-V}
induces an adjunction
\begin{equation}\label{v-adj-ggg}
V^{*,G}  \colon  \PSh^{G}_{\bC}(X)\leftrightarrows  \PSh^{G}_{\bC}(X)\cocolon  V^{G}_{*}\ .
\end{equation} 
Finally, if $\phi \colon \bC\to \bC^{\prime}$ is a morphism in $\Fun(BG,\CL)$, then the adjunction  \eqref{ererggrefqwfewfeqwfqef} induces an adjunction 
\begin{equation}\label{phi-g-g}
\hat \phi^{*,G} \colon\PSh_{\bC^{\prime}}^{G}(X)\leftrightarrows \PSh_{\bC }^{G}(X)\cocolon \hat \phi^{G}_{*}\ .
\end{equation}
The right adjoints $\hat f^{G}_{*}$, $V^{G}_{*}$, and $\hat \phi_{*}^{G}$ in these adjunctions are morphisms in $\CL$, and 
their left adjoints $\hat f^{*,G}$, $V^{*,G}$ and $\hat \phi^{*,G}$ preserve  {co}filtered limits.

\subsection{Sheaves} \label{rhoijrthhgrewerggwerg}
 {In the second step, we restrict our attention to presheaves which are completely determined by their values on sufficiently small subsets, where smallness is encoded in the notion of a $U$-bounded subset (\cref{def:bdd}).
Saying that a presheaf is determined by its values on $U$-bounded subsets amounts to a rather simple sheaf condition, and we observe that the adjunctions recorded in \cref{wiwoerthgwrtgwregergwrg} descend to the level of sheaves.
Moreover, we dicuss the functoriality of the full subcategory of $U$-sheaves with respect to $U$, and prove a glueing formula for $U$-sheaves.}

Let $X$ be a set with an entourage $U$ which contains the diagonal of $X$.
\begin{ddd}\label{def:bdd}
A subset $B$ of $X$ is called $U$-bounded if $B\times B\subseteq U$.
\end{ddd}

\begin{ex}\label{ex:bdd}
 {We continue \cref{ex:ent}.
 \begin{enumerate}
  \item The $\diag(X)$-bounded subsets of $X$ are precisely the sets with at most one element.
  \item Every subset of $X$ is $(X \times X)$-bounded.
  \item If $X$ carries a metric, a subset is $V_r$-bounded if and only if its diameter is at most $r$.\qedhere
 \end{enumerate}}
\end{ex}

 {Let $Y$ be a subset of $X$},
and let $\cY$ be a family of subsets of $Y$.
\begin{ddd}  \label{rgiqjrgioqfweewfqewfqewf}
	The family $\cY$ is a $U$-covering family of $Y$ if for every {non-empty} $U$-bounded subset $B$ of  {$Y$} there exists a member  {$Y'$} of $\cY$ such that $B\subseteq {Y'}$.
\end{ddd}

\begin{ex}\label{ex-excisive-pair}
 {If $Y$ and $Z$ are subsets of $X$ satisfying $Y \cup Z = X$, then $U[Y]$ and $Z$ form a $U$-covering of $X$.}
\end{ex}

The collections of $U$-covering families of 
 {subsets of $X$} determine a Grothendieck topology $\tau^{U}$ on $\cP_{X}$.  
For  $\bC$  in $\CL$ 
we let $\Sh^{U}_{\bC}(X)$ denote the full subcategory of $\PSh_{\bC}(X)$ 
of $\tau^{U}$-sheaves,
 {which we also call $U$-sheaves.}

\begin{rem} \label{wrgwr2twrgwergw}
 {Let $Y$ be a subset of $X$ and let}
$\cY$ be a $U$-covering family.
Then we consider the associated sieve $S_{\cY}$.
It is the full subcategory of $(\cP_{X})_{/Y}$ consisting of objects $Z\to Y$ such that $Z$ is contained in a member of $\cY$.

An object  $M$ in $\PSh_{\bC}(X)$ is a $U$-sheaf if and only if the canonical morphism \begin{equation}\label{wergwergwegregwegwergergergwegrg}
M(Y)\to  \lim_{(Z\to Y)\in (S_{\cY})^{\op}}  M(Z)
\end{equation}
  is an equivalence
for all $Y$ in $\cP_{X}$ and all $U$-covering families $\cY$ of $Y$.
 {Note that the empty family is a $U$-covering of the empty set, so the sheaf condition forces $M(\emptyset) \simeq 0$.} 
\end{rem}

\begin{ex}
 {We continue \cref{ex:bdd}.
 \begin{enumerate}
  \item A $\diag(X)$-sheaf is the same as a function associating an object of $\bC$ to each point in $X$.
  \item An $(X \times X)$-sheaf is a pointed functor $\cP_X^\op \to \bC$, i.e., a functor sending the empty set to a zero object of $\bC$.
  \item If $X$ carries a metric, being a $V_r$-sheaf is a non-trivial notion. In general, we do not know of a better description than the one provided by \cref{qrgioergrewwergwgregrweg} below.\qedhere
 \end{enumerate}}
\end{ex}

The covering family of  {a non-empty subset} $Y$ consisting of all $U$-bounded subsets refines every other $U$-covering family.
As a consequence, it is easy to construct the $U$-sheafification functor. 

Let $i\colon \cP_{X}^{U\bd}\to \cP_{X}$ be the inclusion of the sub-poset of $\cP_{X}$ of  {non-empty} $U$-bounded elements. Then we have an adjunction
\begin{equation}\label{i-adj}
 i^{*}\colon \PSh_{\bC}(X) {\leftrightarrows} \Fun(\cP_{X}^{U\bd,\op},\bC)\colon i_{*}\ ,
\end{equation}
where $i^{*}$ is the restriction, and $i_{*}$ is the right Kan extension functor along $i$ (which  exists since $\bC$ is complete).
We set
\begin{equation}\label{ludef}
L^{U}:=i_{*}i^{*}\ .
\end{equation}

\begin{lem}\label{qrgioergrewwergwgregrweg}
We have an adjunction  \begin{equation}\label{qwefwihjoiwqejfoiqwefqwefqewfwfq}
L^{U}\colon \PSh_{\bC}(X)\leftrightarrows \Sh^{U}_{\bC}(X)\cocolon \incl\ .  \end{equation}
 Moreover, $L^{U}$ preserves small limits and $\Sh^{U}_{\bC}(X)\in  \CL$.
\end{lem}
\begin{proof}
 {Since $i$ is fully faithful, the counit of the adjunction \eqref{i-adj} is an equivalence $i^{*}i_{*}\simeq \id$. Therefore, it suffices to show that $i_*$ identifies $\Fun(\cP_{X}^{U\bd,\op},\bC)$ with the full subcategory of $U$-sheaves. This in particular implies that $\Sh^{U}_{\bC}(X)\in \CL$.}

 {Let $M \colon \cP_{X}^{U\bd,\op} \to \bC$ be a functor. We must show that $i_*M$ is a $U$-sheaf.
The pointwise formula for Kan extensions implies $i_*M(\emptyset) \simeq 0$, so we only need to consider the evaluation of $i_*M$ on non-empty subsets of $X$.}
Since the covering family by $U$-bounded subsets refines every other $U$-covering family, it suffices to
check the sheaf condition for the sieves associated to this family.
Thus let $Y$ be  {a non-empty subset of $X$.}
The sieve associated to the covering family of all $U$-bounded subsets
is the slice category $(\cP^{U\bd}_{X})_{/Y}$ (\cref{wrgwr2twrgwergw}).
We must show that
\begin{equation}\label{sheaf-desire}
( {i_*M})(Y)\to \lim_{B\in  ((\cP^{U\bd}_{X})_{/Y})^{\op}} ( {i_*M})(B)
 \end{equation}
  is an equivalence\footnote{In order to shorten the notation we  denote the objects of $(\cP^{U\bd}_{X})_{/Y}$ by $B$ instead of $B\to Y$.}.
 Since $\cP^{U\bd}_X$ is a full subcategory of $\cP_X$, we have $(i_*M)(B) \simeq M(B)$ for every non-empty $U$-bounded subset $B$ of $X$. So the pointwise formula for the right Kan extension shows that the morphism \eqref{sheaf-desire} is an equivalence.

We now consider a $U$-sheaf $M$ and show that $M\to L^{U}M$ is an equivalence.
Indeed, the evaluation of this morphism at  {a non-empty subset $Y$ of $X$}
is equivalent to
\[ M(Y)\to \lim_{B\in ((\cP^{U\bd}_{X})_{/Y})^{\op}}M(B) \]
which is an equivalence by the sheaf condition.

Since the functors $i^{*}$ and $i_{*}$ preserve small limits, so does $L^{U}$.
\end{proof}

  Let $\phi\colon \bC\to \bC^{\prime}$ be a morphism in $\CL$. 
  \begin{lem}
  The functor $\hat \phi_{*}$ from \eqref{ererggrefqwfewfeqwfqef} preserves $U$-sheaves, and we have an adjunction \begin{equation}\label{rqeoijqoerbqrgqrgqgqqqrg}
  L^{U}\hat \phi^{*}\colon \Sh^{U}_{\bC'}(X)\leftrightarrows \Sh^{U}_{\bC}(X)\cocolon\hat \phi_{*}\ .
  \end{equation}
  Moreover, $L^{U}\hat \phi^{*}$ preserves small cofiltered  limits.
  \end{lem}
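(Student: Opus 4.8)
The plan is to deduce all three assertions from two limit-preservation facts: the functor $\phi\colon\bC\to\bC^{\prime}$ is a right adjoint (being a morphism of $\CL$) and hence preserves all small limits, so that $\hat\phi_{*}=\phi\circ(-)$ preserves all small limits, these being computed pointwise in the presheaf categories; and $\hat\phi^{*}$ preserves cofiltered limits, as recorded after \eqref{ererggrefqwfewfeqwfqef}.

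First I would check that $\hat\phi_{*}$ preserves $U$-sheaves. Given a $U$-sheaf $M$ in $\PSh_{\bC}(X)$, an element $Y$ of $\cP_{X}$, and a $U$-covering family $\cY$ of $Y$, the canonical morphism of $\hat\phi_{*}M$ attached to the sieve $S_{\cY}$ factors as
\[
\phi(M(Y))\xrightarrow{\ \phi(\mathrm{can})\ }\phi\Bigl(\lim_{(Z\to Y)\in(S_{\cY})^{\op}}M(Z)\Bigr)\xrightarrow{\ \sim\ }\lim_{(Z\to Y)\in(S_{\cY})^{\op}}\phi(M(Z))\ ,
\]
where the first map is $\phi$ applied to the canonical morphism of $M$, an equivalence since $M$ is a $U$-sheaf, and the second is the comparison equivalence coming from $\phi$ preserving this limit. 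By \cref{wrgwr2twrgwergw}, $\hat\phi_{*}M$ is therefore a $U$-sheaf, so $\hat\phi_{*}$ restricts to a functor $\Sh^{U}_{\bC}(X)\to\Sh^{U}_{\bC^{\prime}}(X)$.

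Next I would produce the adjunction \eqref{rqeoijqoerbqrgqrgqgqqqrg} by composing the localisation adjunction $L^{U}\dashv\incl$ of \cref{qrgioergrewwergwgregrweg} for $\bC$ with the adjunction $\hat\phi^{*}\dashv\hat\phi_{*}$ of \eqref{ererggrefqwfewfeqwfqef}: for $M$ in $\Sh^{U}_{\bC^{\prime}}(X)$ and $N$ in $\Sh^{U}_{\bC}(X)$ one has natural equivalences
\begin{align*}
\Map_{\Sh^{U}_{\bC}(X)}(L^{U}\hat\phi^{*}M,N)&\simeq\Map_{\PSh_{\bC}(X)}(\hat\phi^{*}M,\incl N)\\
&\simeq\Map_{\PSh_{\bC^{\prime}}(X)}(\incl M,\hat\phi_{*}\incl N)\\
&\simeq\Map_{\Sh^{U}_{\bC^{\prime}}(X)}(M,\hat\phi_{*}N)\ ,
\end{align*}
the last step using full faithfulness of $\Sh^{U}_{\bC^{\prime}}(X)\subseteq\PSh_{\bC^{\prime}}(X)$ together with the identification $\hat\phi_{*}\incl N\simeq\incl(\hat\phi_{*}N)$ from the previous step. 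Thus $L^{U}\hat\phi^{*}$, understood as $L^{U}\circ\hat\phi^{*}\circ\incl$, is left adjoint to the restriction of $\hat\phi_{*}$.

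Finally, for the cofiltered-limit claim I would chase the composite $L^{U}\circ\hat\phi^{*}\circ\incl$: the inclusion $\incl\colon\Sh^{U}_{\bC^{\prime}}(X)\hookrightarrow\PSh_{\bC^{\prime}}(X)$ preserves small limits because $\Sh^{U}_{\bC^{\prime}}(X)$ is closed under small limits in $\PSh_{\bC^{\prime}}(X)$; $\hat\phi^{*}$ preserves cofiltered limits; and $L^{U}$ preserves small limits by \cref{qrgioergrewwergwgregrweg}. Hence $L^{U}\hat\phi^{*}$ preserves small cofiltered limits. I do not anticipate a real obstacle: the argument is entirely formal once one has the two limit-preservation inputs above, the localisation adjunction for $L^{U}$, and the fact that sheaves are closed under small limits in presheaves. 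The only place demanding a little care is recognising that the first step is precisely what makes the final equivalence in the adjunction computation go through, and keeping track of the implicit inclusion $\incl$ hidden in the notation $L^{U}\hat\phi^{*}$.
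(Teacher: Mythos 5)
Your proposal is correct and follows essentially the same route as the paper: $\phi$ preserves small limits so $\hat\phi_{*}$ preserves $U$-sheaves, the adjunction \eqref{rqeoijqoerbqrgqrgqgqqqrg} is obtained by composing \eqref{ererggrefqwfewfeqwfqef} with the sheafification adjunction of \cref{qrgioergrewwergwgregrweg}, and the cofiltered-limit statement follows from the formula for the left adjoint together with limit preservation of $L^{U}$ and $\hat\phi^{*}$. You merely spell out the mapping-space computation and the sheaf-condition check that the paper leaves implicit.
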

\begin{proof}
Since $\phi$ preserves small limits, it is clear that $\hat \phi_{*}$ preserves $U$-sheaves.
It then follows from \cref{qrgioergrewwergwgregrweg} and the adjunction \eqref{ererggrefqwfewfeqwfqef} that the left adjoint is given by the claimed formula. The last assertion follows from this formula and the  fact that
  $L^{U}$ and $\hat \phi^{*}$ preserve small  {co}filtered limits.
\end{proof}

 Consider a map of sets $f\colon X\to X^{\prime}$.   Let $U^{\prime}$ be an entourage of $X^{\prime}$ such that $f(U)\subseteq U^{\prime}$ (where $f(U)$ abbreviates $(f\times f)(U)$).
\begin{lem}\label{fklrgbwgergwergwrgr}
The functor $\hat f_{*}$ from \eqref{ad-hat-f} sends $U$-sheaves to $U^{\prime}$-sheaves, and we have an adjunction
\begin{equation}\label{ad-hat-f-sh} 
L^{U}\hat f^{*}\colon  \Sh^{U^{\prime}}_{\bC}(X^{\prime})\leftrightarrows  \Sh^{U}_{\bC}(X)\cocolon \hat f_{*}\ .
\end{equation}
Moreover,  $L^{U}\hat f^{*}$ preserves small  limits.
\end{lem}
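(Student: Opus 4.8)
The plan is to follow the same pattern as the proofs of \cref{qrgioergrewwergwgregrweg} and of the preceding lemma, reducing everything to the behaviour of the left adjoint $\hat f^{*}$ from \eqref{ad-hat-f} on local equivalences. Recall that $\hat f^{*}$ is precomposition with the poset morphism $f(-)\colon \cP_{X}\to \cP_{X'}$, hence preserves \emph{all} small limits, as these are computed pointwise in the functor categories; this is also why the conclusion here is stronger than the ``cofiltered'' statement in the $\hat\phi_{*}$-version above, where the underlying functor $\phi$ only preserves cofiltered limits. First I would record the single point where the hypothesis is used: if $B\subseteq X$ is $U$-bounded, then $f(B)\times f(B)=(f\times f)(B\times B)\subseteq (f\times f)(U)=f(U)\subseteq U'$, so $f(B)$ is $U'$-bounded.

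Next I would show that $\hat f_{*}$ preserves sheaves. From the proof of \cref{qrgioergrewwergwgregrweg} we have $L^{U'}=i_{*}i^{*}$ with $i\colon\cP_{X'}^{U'\bd}\to\cP_{X'}$ fully faithful, so $i_{*}$ is fully faithful and a morphism in $\PSh_{\bC}(X')$ becomes an equivalence under $L^{U'}$ if and only if it is an equivalence when evaluated on every $U'$-bounded subset. Combined with the observation above, $\hat f^{*}$ carries $L^{U'}$-equivalences to $L^{U}$-equivalences: for $g$ an $L^{U'}$-equivalence, the morphism $\hat f^{*}g$ evaluated at a $U$-bounded $B$ is $g$ evaluated at the $U'$-bounded set $f(B)$, hence an equivalence. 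A formal adjunction argument then shows that the right adjoint $\hat f_{*}$ sends $L^{U}$-local objects to $L^{U'}$-local objects, i.e.\ $U$-sheaves to $U'$-sheaves: for a $U$-sheaf $M$ and an $L^{U'}$-equivalence $e\colon A_{1}\to A_{2}$ in $\PSh_{\bC}(X')$, the map $\Map(A_{2},\hat f_{*}M)\to\Map(A_{1},\hat f_{*}M)$ is, by adjunction, identified with $\Map(\hat f^{*}A_{2},M)\to\Map(\hat f^{*}A_{1},M)$ induced by $\hat f^{*}e$, which is an equivalence since $\hat f^{*}e$ is an $L^{U}$-equivalence and $M$ is $L^{U}$-local.

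Given this, the adjunction \eqref{ad-hat-f-sh} is obtained by restricting $\hat f^{*}\dashv\hat f_{*}$ along the inclusions of sheaf categories, exactly as for $\hat\phi_{*}$ in the previous lemma: for $N'$ in $\Sh^{U'}_{\bC}(X')$ and $M$ in $\Sh^{U}_{\bC}(X)$ one has
\[ \Map_{\Sh^{U}_{\bC}(X)}(L^{U}\hat f^{*}N',M)\simeq\Map_{\PSh_{\bC}(X)}(\hat f^{*}N',M)\simeq\Map_{\PSh_{\bC}(X')}(N',\hat f_{*}M)\simeq\Map_{\Sh^{U'}_{\bC}(X')}(N',\hat f_{*}M), \]
where the first equivalence uses \cref{qrgioergrewwergwgregrweg}, the middle one the presheaf adjunction, and the last one that $\hat f_{*}M$ is a $U'$-sheaf. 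Finally $L^{U}\hat f^{*}$ preserves small limits: $\hat f^{*}$ does by the first paragraph, limits in $\Sh^{U'}_{\bC}(X')$ are computed in $\PSh_{\bC}(X')$ since $\Sh^{U'}_{\bC}(X')$ is closed under small limits, and $L^{U}$ preserves small limits by \cref{qrgioergrewwergwgregrweg}. The only genuinely delicate step is the characterisation of $L^{U'}$-equivalences used in the second paragraph; once that is in hand, the rest is formal. (One could instead argue directly via the sheaf condition of \cref{wrgwr2twrgwergw} together with a cofinality argument comparing the $U$-bounded covering of $f^{-1}(Y')$ with the $U'$-bounded covering of $Y'$, but this is more cumbersome than the localisation argument above.)
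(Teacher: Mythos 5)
Your proof is correct, and for the main assertion it takes a genuinely different route from the paper. The paper verifies the sheaf condition for $\hat f_{*}M$ directly: it first checks that $f^{-1}(-)$ sends $U'$-covering families to $U$-covering families (using exactly your key observation that $f(B)$ is $U'$-bounded for $B$ $U$-bounded, via the adjunction \eqref{qweflkqnmwfklmqwfwefqefqfewf}), and then compares the two limits over the sieves $S_{f^{-1}(\cY')}$ and $S_{\cY'}$ by a cofinality argument ("left adjoints are final") — this is precisely the "more cumbersome" direct route you mention in your closing parenthesis. You instead exploit the reflective-localisation structure supplied by \cref{qrgioergrewwergwgregrweg}: characterise $L^{U'}$-equivalences as the maps that are equivalences on $U'$-bounded subsets (valid because $L^{U'}=i_{*}i^{*}$ with $i_{*}$ fully faithful, hence conservative), check that $\hat f^{*}$ carries them to $L^{U}$-equivalences, and transport locality through the presheaf adjunction. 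The one point you leave implicit is the identification of $U'$-sheaves with $L^{U'}$-local objects; the direction "sheaf $\Rightarrow$ local" is immediate from the adjunction of \cref{qrgioergrewwergwgregrweg}, but the direction "local $\Rightarrow$ sheaf", which you need to conclude that $\hat f_{*}M$ is actually a $U'$-sheaf, requires the standard retract argument (the unit $N\to L^{U'}N$ is an $L^{U'}$-equivalence, locality produces a retraction, and locality of $L^{U'}N$ upgrades it to an inverse). This is routine, but you should either spell it out or cite it as the standard fact about reflective localisations. The adjunction \eqref{ad-hat-f-sh} and the limit-preservation statement are handled exactly as in the paper. What your approach buys is a shorter, more formal argument avoiding sieves; what the paper's buys is an explicit covering-family statement whose proof pattern it reuses verbatim in \cref{ioiregjoifgrgwergwregwew}.
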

\begin{proof}
 By assumption, we have the following commutative diagram:
\[\xymatrix{
 \cP^{U\bd}_X\ar[r]^{f(-)}\ar[d] & \cP^{U'\bd}_{X'}\ar[d] \\
 \cP_X\ar[r]^{f(-)} & \cP_{X'}
}\]
The functor $\hat f_{*}$ in \eqref{ad-hat-f}  is given by right Kan extension along $f(-) \colon \cP_X \to \cP_{X'}$.
In the proof of  \cref{qrgioergrewwergwgregrweg}  we have seen that right Kan extension along the vertical maps in the square above produces sheaves. It then follows from the transitivity of right Kan extensions that $\hat f_{*}$ sends $U$-sheaves to $U'$-sheaves.
  
It is now clear that the  left adjoint is given by the claimed formula. 
   The last assertion follows from this formula and the fact that  
  $L^{U}$ and $\hat f^{*}$ preserve small limits.
  \end{proof}

The inverse of an entourage $V$ and the
 composition  of entourages $U,V$ of $X$ are defined by
 \begin{equation}\label{inv-ent-def}
  V^{-1}:=\{(y,x)\in X\times X\:|\: (x,y)\in V\}
  \end{equation}
  and 
 \begin{equation}\label{comp-def-ent}
  UV:=\{(x'',x)\in X\times X\:|\: (\exists x'\in X\:|\: (x'',x')\in U\wedge (x',x)\in V)\}\ .
 \end{equation}
Let $V$ be an entourage of $X$ with $\diag(X)\subseteq V$ and assume that $U^{\prime}$ is an entourage of $X$ such that $VUV^{-1}\subseteq U^{\prime}$.
 \begin{lem}\label{ioiregjoifgrgwergwregwew}
The functor $V_{*}$ from \eqref{ad-hat-V} sends $U $-sheaves to $U^{\prime}$-sheaves, and we have an adjunction \begin{equation}\label{adj-U-shrik}
 L^{U }  V^{*}\colon\Sh_{\bC}^{U^{\prime}}(X)\leftrightarrows\Sh^{U } _{\bC}(X)\cocolon  V_{*}\ .
\end{equation}
Moreover, $L^{U}  V^{*}$ preserves small limits.
 \end{lem}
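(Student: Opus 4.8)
The plan is to follow the template of the proof of \cref{fklrgbwgergwergwrgr}, with the $V$-thinning $V(-)\colon\cP_X\to\cP_X$ in the role there played by the inverse-image map $f^{-1}(-)$. The whole statement will reduce to one combinatorial observation about covering families, after which everything is formal, using \cref{qrgioergrewwergwgregrweg} and the adjunction \eqref{ad-hat-V} exactly as in the case of a map of sets. Note that, unlike in \cref{fklrgbwgergwergwrgr}, the operation $V(-)$ is not induced by a map of the underlying set, so the argument must be redone rather than quoted, even though its shape is identical.

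First I would establish the key claim: if $\cY'$ is a $U'$-covering family of $Y'$ in $\cP_X$, then $V(\cY'):=\{V(Z')\mid Z'\in\cY'\}$ is a $U$-covering family of $V(Y')$. Each member $V(Z')$ is a subset of $V(Y')$ by monotonicity of $V(-)$. Given a $U$-bounded subset $B$ of $V(Y')$, the thickening $V[B]$ is contained in $Y'$ — since $B\subseteq V(Y')$ yields $V[B]\subseteq V[V(Y')]\subseteq Y'$ by the counit of \eqref{etkjnkjenkwervwerbververvwv} — and $V[B]$ is $U'$-bounded: if $p,q\in V[B]$, choose $b_p,b_q\in B$ with $(p,b_p)\in V$ and $(q,b_q)\in V$; then $(b_p,b_q)\in U$ because $B$ is $U$-bounded, so $(p,q)\in VUV^{-1}\subseteq U'$. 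Hence $V[B]$ lies in some member $Z'$ of $\cY'$, and then $B\subseteq V(V[B])\subseteq V(Z')$ by the unit of \eqref{etkjnkjenkwervwerbververvwv} and monotonicity. This is the only place where the hypothesis $VUV^{-1}\subseteq U'$ enters, and it is the step I would be most careful about, although it is completely elementary.

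With the claim in place, I would show that $V_*$ sends $U$-sheaves to $U'$-sheaves verbatim as in \cref{fklrgbwgergwergwrgr}. Take $M$ in $\Sh^U_\bC(X)$, an element $Y'$ of $\cP_X$, and a $U'$-covering family $\cY'$ of $Y'$, and factor the sheaf-comparison morphism for $V_*M=M(V(-))$ as
\[ M(V(Y'))\longrightarrow \lim_{Z\in(S_{V(\cY')})^{\op}}M(Z)\longrightarrow \lim_{Z'\in(S_{\cY'})^{\op}}M(V(Z'))\ . \]
The first morphism is an equivalence by the $U$-sheaf condition on $M$, since $V(\cY')$ is a $U$-covering family of $V(Y')$ by the claim. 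For the second morphism, observe that the adjunction \eqref{etkjnkjenkwervwerbververvwv} restricts to an adjunction $V(-)\colon S_{\cY'}^{\op}\leftrightarrows S_{V(\cY')}^{\op}\cocolon V[-]$ (one checks $V(Z')\subseteq V(W')$ whenever $Z'\subseteq W'$, and $V[Z]\subseteq W'$ whenever $Z\subseteq V(W')$, again via the counit); since left adjoints are final, the second morphism is an equivalence. Hence the composite — which is the sheaf-comparison morphism for $V_*M$ — is an equivalence, so $V_*M$ is a $U'$-sheaf.

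Finally, the adjunction \eqref{adj-U-shrik} is obtained by composing the adjunction $(V^*,V_*)$ of \eqref{ad-hat-V} with the sheafification adjunction $(L^U,\incl)$ of \cref{qrgioergrewwergwgregrweg}, now that $V_*$ is known to preserve the full subcategories of sheaves; the left adjoint comes out as $L^U V^*$, exactly as in \cref{fklrgbwgergwergwrgr}. For the last assertion, $V^*$ is precomposition with the poset map $V[-]$ and hence preserves all small limits, while $L^U$ preserves small limits by \cref{qrgioergrewwergwgregrweg}; therefore $L^U V^*$ preserves small limits. The upshot is that there is no conceptual obstacle here: the only real work is getting the combinatorics of the first step right, and once $V(-)$ is seen to transport $U'$-covers to $U$-covers the lemma is a transcription of the preceding one.
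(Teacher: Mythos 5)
Your proof is correct and follows essentially the same route as the paper, which simply says to repeat the argument of the preceding lemma with the adjunction $V[-]\dashv V(-)$ in place of $f(-)\dashv f^{-1}(-)$; your key combinatorial step ($V[B]\subseteq Y'$ is $VUV^{-1}$-bounded, hence $B$ lies in a member of $V(\cY')$) is exactly the paper's intended argument, written out more carefully. No issues.
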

\begin{proof}
As in the proof of \cref{fklrgbwgergwergwrgr}, the second assertion is a formal consequence of the first.
To prove the first assertion, we observe that the assumptions provide the following commutative diagram:
\[\xymatrix{
 \cP^{U\bd}_X\ar[r]^{V[-]}\ar[d] & \cP^{U'\bd}_{X}\ar[d] \\
 \cP_X\ar[r]^{V[-]} & \cP_{X}
}\]
Since $V_*$ is given by right Kan extension along the thickening functor $V[-]$,  it follows as in the proof of \cref{fklrgbwgergwergwrgr} 
that $V_*$ sends $U$-sheaves to $U'$-sheaves.
\end{proof}

Let $G$ be a group, let $X$ be a $G$-set, and let $\bC$ be in $\Fun( {BG},\CL)$.
Assume that $U$ is a $G$-invariant entourage of $X$,
 i.e., that for every $g$ in $G$ and $(x,y)$ in $U$, we also have $(gx,gy) \in U$.
 We further assume that $U$ contains the diagonal of $X$.
Under this condition, the action of $G$ on $X$ preserves $U$-sheaves by \cref{fklrgbwgergwergwrgr}.
We define
\begin{equation}\label{nwerjkgwegergwger}
\Sh^{U,G}_{\bC}(X):=\lim_{BG} \Sh^{U}_{\bC}(X) \ ,
\end{equation}
where the limit is interpreted in $\CATi$.
{By \cref{qrgioergrewwergwgregrweg}, $\Sh^U_\bC(X)$ is an object of $\CL$, and the same is true for $\Sh^{U,G}_{\bC}(X)$ by \cref{lem:CL-complete}.
As a consequence of \cref{tieqorgfgregwegergw}, $\Sh^{U,G}_{\bC}(X)$ is a full subcategory of $\PSh^{G}_{\bC}(X)$.}

\begin{kor}\label{qoiwejfqowefefqfewfq}
 The adjunction \eqref{qwefwihjoiwqejfoiqwefqwefqewfwfq} induces an adjunction
 \[ L^{U,G}\colon \PSh^{G}_{\bC}(X)\leftrightarrows \Sh^{U,G}_{\bC}(X)\cocolon \incl \ . \]
 Moreover, $L^{U,G}$ preserves small limits.
\end{kor}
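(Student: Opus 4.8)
The plan is to recognise the reflective localisation $L^{U}\dashv\incl$ of \cref{qrgioergrewwergwgregrweg} as a $G$-equivariant adjunction and then to invoke the principle — already used repeatedly in \cref{wiwoerthgwrtgwregergwrg}, e.g.\ for the adjunctions \eqref{ad-hat-f}, \eqref{ad-hat-V} and \eqref{ererggrefqwfewfeqwfqef} — that passing to the limit over $BG$ turns a $G$-equivariant adjunction into an adjunction between the limits over $BG$.

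First I would verify the equivariance. Since $U$ is $G$-invariant, the $G$-action on $X$ preserves the sub-poset $\cP_{X}^{U\bd}$ of $U$-bounded subsets, so $i\colon\cP_{X}^{U\bd}\to\cP_{X}$ is a morphism of $G$-posets; consequently the restriction $i^{*}$, the right Kan extension $i_{*}$, the composite $L^{U}=i_{*}i^{*}$, and the unit $\id\to L^{U}$ are all compatible with the $G$-action. Together with the inclusion of the $G$-invariant full subcategory $\Sh^{U}_{\bC}(X)$ (invariant by \cref{fklrgbwgergwergwrgr}), this exhibits \eqref{qwefwihjoiwqejfoiqwefqwefqewfwfq} as an adjunction in $\Fun(BG,\CATi)$ between the $BG$-diagrams $\PSh_{\bC}(X)$ and $\Sh^{U}_{\bC}(X)$. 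Applying $\lim_{BG}$ yields the asserted adjunction $L^{U,G}\dashv\incl$ between $\PSh^{G}_{\bC}(X)=\lim_{BG}\PSh_{\bC}(X)$ and $\Sh^{U,G}_{\bC}(X)=\lim_{BG}\Sh^{U}_{\bC}(X)$, with $\incl$ the inclusion of $\Sh^{U,G}_{\bC}(X)$ into $\PSh^{G}_{\bC}(X)$ (cf.\ the remark following \cref{rjqorepqerfewregregwreg25}).

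For the limit-preservation claim I would argue through the evaluation functor $e$. Both $e\colon\PSh^{G}_{\bC}(X)\to\PSh_{\bC}(X)$ and $e\colon\Sh^{U,G}_{\bC}(X)\to\Sh^{U}_{\bC}(X)$ preserve small limits (the former because it is a morphism in $\CL$, the latter because small limits in $\Sh^{U,G}_{\bC}(X)$ are created by $e$, as in the proof of \cref{rjqorepqerfewregregwreg25}), and $e$ is conservative since $BG$ is connected. By naturality of the projection out of a limit, $e\circ L^{U,G}\simeq L^{U}\circ e$. Given a small diagram $M\colon\bJ\to\PSh^{G}_{\bC}(X)$, the canonical comparison $L^{U,G}(\lim_{\bJ}M)\to\lim_{\bJ}L^{U,G}(M)$ becomes, after applying $e$, the equivalence $L^{U}(\lim_{\bJ}e(M))\simeq\lim_{\bJ}L^{U}(e(M))$, which holds because $e$ and $L^{U}$ preserve small limits (the latter by \cref{qrgioergrewwergwgregrweg}); conservativity of $e$ then forces the comparison map itself to be an equivalence.

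The whole argument is bookkeeping on top of \cref{qrgioergrewwergwgregrweg,fklrgbwgergwergwrgr,rjqorepqerfewregregwreg25}; the only genuinely delicate point is the $\infty$-categorical coherence hidden in ``a $G$-equivariant adjunction descends to $BG$-limits'', and since the paper has already committed to this as a standing principle, I would simply invoke it rather than re-derive it here.
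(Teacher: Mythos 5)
Your proof is correct and is exactly the argument the paper intends: the paper states this as an immediate corollary of \cref{qrgioergrewwergwgregrweg} via its standing principle that a $G$-equivariant adjunction passes to $BG$-limits, and your verification of equivariance (using $G$-invariance of $U$ and hence of $\cP_X^{U\bd}$) together with the transfer of limit-preservation through the conservative, limit-preserving evaluation functor $e$ (as in \cref{rjqorepqerfewregregwreg25,greoigergreeg}) is the natural way to fill in those details. No gaps.
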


Let $\phi\colon  \bC\to \bC^{\prime}$ be a morphism in $\Fun(BG,\CL)$, and let $U$ be an invariant entourage of $X$.
\begin{kor}\label{ergioegrergwergwregwergwegrwr}
The adjunction \eqref{rqeoijqoerbqrgqrgqgqqqrg} induces an adjunction 
\begin{equation}\label{rbhwtibuhwibwetbweb}
L^{U,G}\hat \phi^{*,G}\colon \Sh^{U,G}_{\bC^{\prime}}(X)\leftrightarrows \Sh^{U,G}_{\bC }(X)\cocolon \hat \phi^{G}_{*}\ .
\end{equation}
Moreover, $L^{U,G}\hat \phi^{*,G} $ preserves small cofiltered limits.
\end{kor}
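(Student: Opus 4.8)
The plan is to realise the adjunction \eqref{rqeoijqoerbqrgqrgqgqqqrg} as a $G$-equivariant adjunction and then pass to the limit over $BG$. Since $\phi$ is a morphism in $\Fun(BG,\CL)$ and $U$ is a $G$-invariant entourage of $X$, every piece of data entering \eqref{rqeoijqoerbqrgqrgqgqqqrg} is defined $G$-equivariantly: the adjunction $\hat\phi^{*}\dashv\hat\phi_{*}$ from \eqref{ererggrefqwfewfeqwfqef}, the sheafification adjunction $L^{U}\dashv\incl$ from \cref{qrgioergrewwergwgregrweg}, and the full subcategories of $U$-sheaves themselves. Hence \eqref{rqeoijqoerbqrgqrgqgqqqrg} is an adjunction internal to $\Fun(BG,\CATi)$, and I would invoke the principle --- used repeatedly above --- that applying $\lim_{BG}$ carries a $G$-equivariant adjunction to an adjunction. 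Using that $\lim_{BG}\Sh^{U}_{\bC}(X)=\Sh^{U,G}_{\bC}(X)$ by \eqref{nwerjkgwegergwger} (and likewise for $\bC^{\prime}$), together with the functoriality of $\lim_{BG}$, the resulting right adjoint is $\hat\phi^{G}_{*}$ from \eqref{phi-g-g}, while the left adjoint is the composite of the inclusion $\Sh^{U,G}_{\bC^{\prime}}(X)\hookrightarrow\PSh^{G}_{\bC^{\prime}}(X)$ with $\hat\phi^{*,G}$ from \eqref{phi-g-g} and with $L^{U,G}$ from \cref{qoiwejfqowefefqfewfq}; this composite is precisely $L^{U,G}\hat\phi^{*,G}$, which yields \eqref{rbhwtibuhwibwetbweb}.

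For the statement that $L^{U,G}\hat\phi^{*,G}$ preserves small cofiltered limits I would inspect exactly this factorisation. The inclusion $\Sh^{U,G}_{\bC^{\prime}}(X)\hookrightarrow\PSh^{G}_{\bC^{\prime}}(X)$ preserves small limits, since $\Sh^{U,G}_{\bC^{\prime}}(X)$ is closed under small limits in $\PSh^{G}_{\bC^{\prime}}(X)$; this is precisely what is shown in the proof of \cref{rjqorepqerfewregregwreg25}. The functor $\hat\phi^{*,G}$ preserves small cofiltered limits, as recorded right after \eqref{phi-g-g}. Finally, $L^{U,G}$ preserves small limits by \cref{qoiwejfqowefefqfewfq}. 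A composite whose first and last factors preserve small limits and whose middle factor preserves small cofiltered limits preserves small cofiltered limits, which is the second assertion. One may also double-check this via the limit-preserving and conservative evaluation functor $e\colon\PSh^{G}_{\bC}(X)\to\PSh_{\bC}(X)$ (and its analogue for $\bC^{\prime}$) used in the proof of \cref{rjqorepqerfewregregwreg25}: the square comparing $L^{U,G}\hat\phi^{*,G}$ with $L^{U}\hat\phi^{*}$ along these evaluations commutes by functoriality of $\lim_{BG}$, $L^{U}\hat\phi^{*}$ preserves small cofiltered limits by the lemma immediately following \eqref{rqeoijqoerbqrgqrgqgqqqrg}, and conservativity of $e$ then forces the comparison map for a cofiltered limit to be an equivalence.

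The only genuinely non-formal point is the first step: one must be careful that the non-equivariant data assemble into an adjunction in $\Fun(BG,\CATi)$ --- i.e.\ that the units and counits are $G$-equivariant --- so that the ``$\lim_{BG}$ of an equivariant adjunction'' principle applies, and one must correctly identify the left adjoint obtained after passing to the limit with the named composite $L^{U,G}\hat\phi^{*,G}$. Everything else is bookkeeping with results already established; indeed this corollary is proved by the same mechanism as \cref{qoiwejfqowefefqfewfq}, with the single difference that $\hat\phi^{*}$ --- which preserves cofiltered but not all small limits --- is inserted into the chain, which is why the conclusion concerns small cofiltered limits rather than all small limits.
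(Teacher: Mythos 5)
Your proposal is correct and follows essentially the same route the paper intends: the corollary is an instance of the stated principle that a $G$-equivariant adjunction yields an adjunction after applying $\lim_{BG}$, with the left adjoint identified as $L^{U,G}\hat\phi^{*,G}$ and the preservation of small cofiltered limits inherited from the non-equivariant statement via the limit-preserving, equivalence-detecting evaluation (or, equivalently, your factorisation argument). Nothing further is needed.
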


Let $V$ be an invariant entourage of $X$ such that $\diag(X)\subseteq V$.
Then $V_{*}$ and $V^{*}$ in \eqref{adj-U-shrik} are equivariant.
Assume that $U^{\prime}$ is an invariant entourage of $X$ which in addition satisfies $VUV^{-1}\subseteq U ^{\prime}$.
\begin{kor} \label{equi-sheaffff}
The adjunction  \eqref{adj-U-shrik}
induces an adjunction  
\[ L^{ U ,G}  V^{*,G} \colon\Sh_{\bC}^{U^{\prime},G}(X) \leftrightarrows \Sh^{U,G} _{\bC}(X)\cocolon  V^{G}_{*}\ .\]
Moreover, $ L^{U^{\prime},G}V^{G}_{*}$ preserves small  limits.
\end{kor}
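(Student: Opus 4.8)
The plan is to obtain the adjunction by applying $\lim_{BG}$ to the non-equivariant adjunction \eqref{adj-U-shrik} of \cref{ioiregjoifgrgwergwregwew}, following the template of \cref{qoiwejfqowefefqfewfq,ergioegrergwergwregwergwegrwr}. Since $U$, $U^{\prime}$ and $V$ are all $G$-invariant entourages (and $\diag(X)\subseteq V$), the poset endomorphisms $V[-]$ and $V(-)$ of $\cP_X$ are $G$-equivariant, as is the inclusion $i\colon \cP_X^{U\bd}\to\cP_X$ of the $U$-bounded subsets; hence $V^{*}$, $V_{*}$ and $L^{U}=i_{*}i^{*}$ are $G$-equivariant endofunctors of the presheaf category, which is precisely the content of the sentence preceding the statement. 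The hypothesis $VUV^{-1}\subseteq U^{\prime}$ makes \cref{ioiregjoifgrgwergwregwew} applicable, so $V_{*}$ carries $U$-sheaves to $U^{\prime}$-sheaves and \eqref{adj-U-shrik} is an adjunction between the $G$-$\infty$-categories $\Sh^{U^{\prime}}_{\bC}(X)$ and $\Sh^{U}_{\bC}(X)$. As recalled in \cref{wiwoerthgwrtgwregergwrg}, $\lim_{BG}$ turns a $G$-equivariant adjunction into an adjunction; since $\Sh^{U,G}_{\bC}(X)=\lim_{BG}\Sh^{U}_{\bC}(X)$ and likewise for $U^{\prime}$, this yields the asserted adjunction $L^{U,G}V^{*,G}\colon\Sh^{U^{\prime},G}_{\bC}(X)\leftrightarrows\Sh^{U,G}_{\bC}(X)\cocolon V^{G}_{*}$, in which $L^{U,G}=\lim_{BG}L^{U}$ as in \cref{qoiwejfqowefefqfewfq} and the right adjoint $V^{G}_{*}$ is the restriction to $U$-sheaves of the functor $V^{G}_{*}$ from \eqref{v-adj-ggg}.

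For the supplementary claim, I would simply observe that $L^{U^{\prime},G}V^{G}_{*}$ is the composite $\PSh^{G}_{\bC}(X)\xrightarrow{V^{G}_{*}}\PSh^{G}_{\bC}(X)\xrightarrow{L^{U^{\prime},G}}\Sh^{U^{\prime},G}_{\bC}(X)$ of two functors that preserve small limits: the functor $V^{G}_{*}$ from \eqref{v-adj-ggg} is a right adjoint and hence preserves all limits, while $L^{U^{\prime},G}$ preserves small limits by \cref{qoiwejfqowefefqfewfq} (applied with $U^{\prime}$ in place of $U$, which is legitimate since $U^{\prime}$ is $G$-invariant). Therefore $L^{U^{\prime},G}V^{G}_{*}$ preserves small limits. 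This is the formulation worth recording: since $L^{U^{\prime},G}$ is left adjoint to a fully faithful inclusion, it restricts to the identity on $\Sh^{U^{\prime},G}_{\bC}(X)$, so on the subcategory $\Sh^{U,G}_{\bC}(X)$ the composite $L^{U^{\prime},G}V^{G}_{*}$ agrees with the right adjoint $V^{G}_{*}$ of the adjunction constructed above, and the small limits it has to preserve exist in $\Sh^{U,G}_{\bC}(X)$ by \cref{rjqorepqerfewregregwreg25}.

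I do not expect a genuine obstacle: the whole statement is the ``pass to $BG$-fixed points'' upgrade of \cref{ioiregjoifgrgwergwregwew}, structurally identical to the passage from \cref{fklrgbwgergwergwrgr} to \cref{ergioegrergwergwregwergwegrwr}. The only points that need a moment's attention are (i) checking that the $G$-invariance of all three entourages makes every functor occurring in \eqref{adj-U-shrik} descend along $BG$ — which is exactly the content of the sentence immediately preceding the statement — and (ii) attributing the two limit-preservation inputs correctly, namely that $V^{G}_{*}$ preserves limits because it is a right adjoint and that $L^{U^{\prime},G}$ preserves small limits by \cref{qoiwejfqowefefqfewfq}; neither step involves any computation.
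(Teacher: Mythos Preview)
Your proposal is correct and follows exactly the approach the paper intends: the corollary is stated without proof in the paper, being one of several routine instances of the principle announced just before \cref{qoiwejfqowefefqfewfq} that a $G$-equivariant adjunction induces an adjunction after applying $\lim_{BG}$. Your handling of the ``Moreover'' clause---factoring $L^{U^{\prime},G}V^{G}_{*}$ as a composite of a right adjoint and the limit-preserving $L^{U^{\prime},G}$ from \cref{qoiwejfqowefefqfewfq}, and noting that on $U$-sheaves this composite is just $V^{G}_{*}$ anyway---is correct and in fact more careful than the paper, which leaves the supplementary claim entirely implicit.
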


Assume that $f\colon X\to X^{\prime}$ is a map between $G$-sets, and that $U^{\prime}$ is an invariant entourage of $X^{\prime}$ such that $f(U)\subseteq U^{\prime}$.
\begin{kor} \label{iuqhfiuewvffewfqfewf}
The adjunction \eqref{ad-hat-f-sh} induces an adjunction 
\begin{equation}\label{ad-hat-f-sh-G} 
L^{U,G}\hat f^{*,G}\colon  \Sh^{G,U^{\prime}}_{\bC}(X^{\prime})\leftrightarrows  \Sh^{G,U}_{\bC}(X)\cocolon \hat f_{*}^{G}
\end{equation}
Moreover, $L^{U,G}\hat f^{*,G}$ preserves limits.
\end{kor}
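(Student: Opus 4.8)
The plan is to derive the statement from the non-equivariant \cref{fklrgbwgergwergwrgr} by passing to the limit over $BG$, exactly as the preceding \cref{qoiwejfqowefefqfewfq,ergioegrergwergwregwergwegrwr,equi-sheaffff} are obtained from their non-equivariant counterparts. First I would observe that, because $f\colon X\to X'$ is a morphism of $G$-sets and $U$, $U'$ are $G$-invariant entourages, all of the data appearing in \cref{fklrgbwgergwergwrgr} refines to a $BG$-indexed diagram: the operations $f^{-1}(-)$, $f(-)$ on posets and the induced functors $\hat f^{*}$, $\hat f_{*}$ on presheaves are natural in the $G$-actions on $X$ and $X'$; the $U$-sheafification $L^{U}$ is natural because the collection of $U$-bounded subsets is preserved by the $G$-action; and $\bC$ carries a $G$-action by hypothesis. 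Hence \cref{fklrgbwgergwergwrgr} upgrades to an adjunction $L^{U}\hat f^{*}\dashv \hat f_{*}$ between the objects $\Sh^{U'}_{\bC}(X')$ and $\Sh^{U}_{\bC}(X)$ of $\Fun(BG,\CATi)$, and, crucially, the unit and counit of this adjunction are $G$-equivariant, since they are assembled from the poset-level adjunction \eqref{qweflkqnmwfklmqwfwefqefqfewf} and the sheafification adjunction \eqref{qwefwihjoiwqejfoiqwefqwefqewfwfq}, all of which are natural in $X$ and $\bC$.

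Second I would apply $\lim_{BG}$. By the definition \eqref{nwerjkgwegergwger} one has $\lim_{BG}\Sh^{U'}_{\bC}(X') = \Sh^{U,G}_{\bC}(X')$ and $\lim_{BG}\Sh^{U}_{\bC}(X) = \Sh^{U,G}_{\bC}(X)$; since a $G$-equivariant adjunction induces an adjunction after passing to the limit over $BG$, the first step yields exactly the adjunction $L^{U,G}\hat f^{*,G}\dashv \hat f^{G}_{*}$ asserted in \eqref{ad-hat-f-sh-G}.

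Third, for the preservation of small limits I would use the evaluation functor $e$. As recorded in the proof of \cref{rjqorepqerfewregregwreg25} (via \cref{tieqorgfgregwegergw}), small limits in $\Sh^{U,G}_{\bC}(X)$ exist and are both preserved and detected by $e$, since they coincide with limits computed in $\PSh^{G}_{\bC}(X)$, i.e.\ ``underlying'' the $G$-action. Because the construction of $L^{U,G}\hat f^{*,G}$ is compatible with evaluation --- concretely $e\circ L^{U,G}\hat f^{*,G}\simeq (L^{U}\hat f^{*})\circ e$, where on the source $e$ denotes the evaluation functor attached to $X'$ --- and $L^{U}\hat f^{*}$ preserves small limits by \cref{fklrgbwgergwergwrgr}, it follows that $L^{U,G}\hat f^{*,G}$ preserves small limits as well.

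I do not anticipate a genuine obstacle here: this is a routine ``take $G$-fixed points'' argument. The only point deserving a moment's care is to ensure that the full adjunction of \cref{fklrgbwgergwergwrgr} --- and not merely its underlying pair of functors --- is $G$-equivariant, but this is immediate from the naturality in $X$ of the poset-level adjunctions from which its unit and counit are built.
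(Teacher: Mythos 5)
Your proposal is correct and matches the paper's (implicit) argument: the paper states this corollary without proof, relying on \cref{fklrgbwgergwergwrgr} together with the blanket remark that a $G$-equivariant adjunction induces an adjunction after passing to $\lim_{BG}$, and the limit-preservation follows exactly as you say, since limits in $\Sh^{U,G}_{\bC}(X)$ are detected by evaluation (as in the proof of \cref{rjqorepqerfewregregwreg25}), which commutes with $L^{U,G}\hat f^{*,G}$.
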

  
Let $X$ be a $G$-set
and $i\colon Y\to X$ be the inclusion of an invariant subset. Let $U$ be an invariant entourage of $X$ containing the diagonal and set $U_{Y}:=(Y\times Y)\cap U$.

\begin{lem}\label{sub-adj}
We have an adjunction \begin{equation}\label{wefvwlijnkjnvernvwievewrvewrvewrv}
\hat i^{*,G}\colon \Sh_{\bC}^{U,G}(X)\leftrightarrows \Sh_{\bC}^{U_{Y},G}(Y) \cocolon \hat i^{G}_{*}
\end{equation}
and the relation $\hat i^{*,G}\hat i^{G}_{*}\simeq \id$.
\end{lem}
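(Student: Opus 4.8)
The plan is to obtain the whole statement by restricting the non-equivariant presheaf adjunction \eqref{ad-hat-f} to the full subcategories of sheaves and then passing to the limit over $BG$. First I would unwind the definitions for the inclusion $i\colon Y\to X$: the image map $i(-)\colon\cP_Y\to\cP_X$ is simply the inclusion of posets (it regards a subset of $Y$ as a subset of $X$), so $\hat i^{*}$ from \eqref{ad-hat-f} is the restriction of presheaves along $\cP_Y^{\op}\hookrightarrow\cP_X^{\op}$, while $\hat i_{*}$ is precomposition with $i^{-1}(-)=(-)\cap Y\colon\cP_X\to\cP_Y$. These form an adjunction $\hat i^{*}\dashv\hat i_{*}$ already by \eqref{ad-hat-f}, and since $i$ is injective one has $i^{-1}(i(Z))=i(Z)\cap Y=Z$ for every $Z\in\cP_Y$, so the poset composite $i^{-1}(-)\circ i(-)$ is literally $\id_{\cP_Y}$; precomposing, this yields $\hat i^{*}\hat i_{*}\simeq\id$ on presheaves with no further work.

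The substantive point, and the one step I expect to require genuine care, is that $\hat i^{*}$ sends $U$-sheaves to $U_Y$-sheaves. Here I would argue as follows: given $M\in\Sh^{U}_\bC(X)$, a subset $Z\in\cP_Y$, and a $U_Y$-covering family $\cZ$ of $Z$, every $U$-bounded $B\subseteq Z$ satisfies $B\subseteq Z\subseteq Y$, hence $B\times B\subseteq(Y\times Y)\cap U=U_Y$, so $B$ is $U_Y$-bounded and therefore contained in a member of $\cZ$; thus $\cZ$ is simultaneously a $U$-covering family of $Z$ inside $\cP_X$. Moreover, because $Z\subseteq Y$, the slice posets $(\cP_X)_{/Z}$ and $(\cP_Y)_{/Z}$ coincide, so the sieve $S_\cZ$ is the same whether formed in $\cP_X$ or in $\cP_Y$. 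By the sheaf criterion of \cref{wrgwr2twrgwergw}, the sheaf condition for $\hat i^{*}M$ at $(Z,\cZ)$ is then literally the sheaf condition for $M$ at $(Z,\cZ)$, which holds. That $\hat i_{*}$ sends $U_Y$-sheaves to $U$-sheaves is immediate from \cref{fklrgbwgergwergwrgr} applied to $i$ with $i(U_Y)=U_Y\subseteq U$. Since both adjoint functors thus preserve the subcategories of sheaves, the adjunction \eqref{ad-hat-f} restricts to $\hat i^{*}\colon\Sh^{U}_\bC(X)\leftrightarrows\Sh^{U_Y}_\bC(Y)\cocolon\hat i_{*}$, and the identity $\hat i^{*}\hat i_{*}\simeq\id$ persists on sheaves. (This also refines \cref{iuqhfiuewvffewfqfewf}: in the present situation the sheafification appearing in its left adjoint is superfluous.)

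Finally I would switch on the $G$-action. As $Y$ and $U$ are $G$-invariant, so is $U_Y$, and the objects $\Sh^{U}_\bC(X)$, $\Sh^{U_Y}_\bC(Y)$ together with the functors $\hat i^{*}$, $\hat i_{*}$ and the equivalence $\hat i^{*}\hat i_{*}\simeq\id$ above are all $G$-equivariant, i.e., they live in $\Fun(BG,\CLLL)$. Applying $\lim_{BG}$, which carries a $G$-equivariant adjunction to an adjunction (as used throughout \cref{wiwoerthgwrtgwregergwrg}) and preserves identities, then produces the adjunction \eqref{wefvwlijnkjnvernvwievewrvewrvewrv} together with $\hat i^{*,G}\hat i^G_{*}\simeq\id$. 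Equivalently, since $\Sh^{U,G}_\bC(X)$ and $\Sh^{U_Y,G}_\bC(Y)$ are full subcategories of $\PSh^{G}_\bC(X)$ and $\PSh^{G}_\bC(Y)$ via the evaluation functors (see the discussion around \cref{rjqorepqerfewregregwreg25}), both assertions may be checked after evaluation and thereby reduce to the non-equivariant statements above. The one mild technical point to verify along the way is that $\hat i^{*}$ is a morphism in $\CLLL$, which holds because on presheaves it is a precomposition and hence preserves small limits, and limits of sheaves are computed in presheaves, so that $\lim_{BG}$ applies to it.
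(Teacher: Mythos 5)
Your proposal is correct and follows essentially the same route as the paper: the paper invokes \cref{iuqhfiuewvffewfqfewf} for $i$ and $U_Y$, then drops the sheafification on the left adjoint by observing (as you prove in detail) that $\hat i^{*}$ already sends $U$-sheaves to $U_Y$-sheaves, and obtains $\hat i^{*,G}\hat i^{G}_{*}\simeq\id$ from $i^{-1}(-)\circ i(-)=\id$ on $\cP_Y$ by applying $\lim_{BG}$ and restricting to sheaves. The only difference is presentational: you restrict the presheaf adjunction \eqref{ad-hat-f} directly and spell out the "obvious" sheaf-preservation step, which is a faithful expansion of the paper's argument.
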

\begin{proof}
We have $i(U_{Y})\subseteq U$ and can therefore apply 
 \cref{iuqhfiuewvffewfqfewf} to $i$ in place of $f$ and $U_{Y}$ in place of $U'$. In order to remove the application of the sheafification functor $L^{U,G}$ on the left adjoint side
it then suffices to observe that $\hat i^{*,G}$ obviously sends $U$-sheaves to $U_{Y}$-sheaves.
Since $i^{-1}(-)\circ i(-)=\id$ on $ \cP_{Y}$   we get the relation $\hat i^{*}\hat i_{*}\simeq\id$ on $\PSh_{\bC}$ which induces the desired relation $\hat i^{*,G}\hat i^{G}_{*}\simeq \id$ by applying $\lim_{BG}$ and restricting to sheaves.
 \end{proof}

Let $X$ be a $G$-set
with invariant subsets $Y$ and $Z$ such that $Y\cup Z=X$. Then we have the following inclusions:
\[\xymatrix{
	Y\cap Z\ar[r]^-{l}\ar[d]_{m}\ar[dr]^{k} & Y\ar[d]^{j} \\
	Z\ar[r]^-{i} & X
}\]
Let $U$ be an invariant entourage of $X$ containing the diagonal and consider $M$ in $\Sh^{U,G}_{\bC}(X)$. In the square below, the morphisms are the units of the adjunctions $(\hat i^{*,G},\hat i_{*}^{G})$  etc.
 
\begin{lem}[Glueing Lemma]\label{wgkwkgrewrgrg}If $(Y,Z)$ is a $U$-covering family of $X$, then we have a cartesian 
square
\begin{equation}\label{glue-sq}\xymatrix{
	M\ar[r]\ar[d] & \hat j^{G}_{*}\hat j^{*,G}M\ar[d]\\\hat i^{G}_{*}\hat i^{*,G}M\ar[r] & \hat k^{G}_{*}\hat k^{*,G}M
}\end{equation} 
 in $\Sh^{U,G}_{\bC}(X)$.
 {Moreover, the base change transformation $\hat i^{*,G} \hat j_*^G \to \hat m_*^G \hat l^{*,G}$ is an equivalence, so there is an induced equivalence  }
 \begin{equation}\label{qergqffewfqef}
  \hat k^{G}_{*}\hat k^{*,G} \simeq \hat  i^{G}_{*}\hat i^{*,G} \hat j^{G}_{*}\hat j^{*,G}\ .\
 \end{equation}
\end{lem}
\begin{proof}
The filler of the square is obtained from the equality $im=k=jl$.
The evaluation $\Sh^{U,G}_{\bC}(X)\to \Sh^{U}_{\bC}(X)$ detects cartesian squares by \cref{rgeoiergegegregvsdvsdvvdfverrf3erfer}. So it suffices to check
the assertion in the non-equivariant case. 

Since  \eqref{glue-sq}  is a square of $U$-sheaves  (this follows from  \cref{sub-adj})  it suffices to check that the evaluation of the square  \eqref{glue-sq}  at every $U$-bounded  $B$ in $\cP_{X}$ is cartesian.  Since $(Y,Z)$ is a  $U$-covering, $B$ is contained in   one of $Y$ or $Z$. We consider the case that $B\subseteq Y$ (the case $B\subseteq Z$ is analoguous).
The evaluation of \eqref{glue-sq} at $B$ is the square in $\bC$
 \begin{equation*}\label{glue-sq1}\xymatrix{
 	M(B)\ar[r]^{\simeq}\ar[d] & M(B)\ar[d] \\
 	M(Z\cap B)\ar[r]^{\simeq} & M(Z\cap B)}
 \end{equation*} 
 which is obviously cartesian.
 
 {The base change transformation is the composite
 \[ \hat i^{*,G}\hat j_*^G \to \hat i^{*,G}\hat j_*^G \hat l_*^G \hat l^{*,G} \simeq \hat i^{*,G}\hat i_*^G \hat m_*^G \hat l^{*,G} \to  \hat  m_*^G \hat l^{*,G} \]
 arising from the respective unit and counit. The counit $\hat i^{*,G}\hat i_*^G \to \id$ is an equivalence by \cref{sub-adj}. The map induced by the unit morphism is itself induced by the canonical inclusion
 \[ j^{-1}(i(B)) \subseteq l(l^{-1}(j^{-1}(i(B)))) \]
 for every $B \subseteq Y$. It is straightforward to check that these sets are equal, so the base change transformation is an equivalence. In particular, we obtain the induced equivalence
 \[ \hat k^{G}_{*}\hat k^{*,G} \simeq \hat  i^{G}_{*} \hat m_*^{G} \hat l^{*,G} \hat j^{*,G} \xleftarrow{\simeq} \hat  i^{G}_{*}\hat i^{*,G} \hat j^{G}_{*}\hat j^{*,G}\ .\qedhere\]}
\end{proof}

\subsection{Sheaves on \texorpdfstring{$G\Coarse$}{GCoarse}}\label{qerogijqiowgrfqreg}
In \cref{rhoijrthhgrewerggwerg}, we considered $\bC$-valued sheaves as a functor on pairs of a $G$-set equipped with an invariant entourage. In the present section, we get rid of the explicit choice of an entourage by equipping the $G$-set with a whole collection of such entourages called a coarse structure and considering the union of the sheaf categories  for all these coarse entourages. In this way we eventually obtain a functor of $\bC$-valued sheaves on the category $G\Coarse$ of $G$-coarse spaces.
We also discuss how the various adjunctions from the preceding section descend to the categories of sheaves.
To provide a context in which restriction/transfer functors still make sense, we introduce the notion of a coarse covering in \cref{wefgihjwiegwergrwrg} and prove a base change formula for coarse coverings in \cref{rgkoqregqregqqef}.

Let $X$ be
 {a $G$-set}.
\begin{ddd}\label{trbertheheht}
A $G$-coarse structure on $X$ is a subset $\cC_{X}$ of $\cP_{X\times X}$ satisfying the following conditions:
\begin{enumerate}
\item $\cC_{X}$ is $G$-invariant.
\item $\diag(X)\in \cC_{X}$.
\item \label{qerighioergergwgergwergwergwerg}$\cC$ is closed under forming subsets, finite unions, inverses  {(see} \eqref{inv-ent-def}) and compositions  {(see} \eqref{comp-def-ent}).
\item \label{igwoegwergergwrgrg}The sub-poset of $G$-invariants $\cC_{X}^{G}$ is cofinal in $\cC_{X}$. \qedhere
\end{enumerate}
 \end{ddd}
 The elements of $\cC_{X}$ will be called coarse entourages of $X$.
 We will often use the notation $\cC_{X}^{G,\Delta}$ for the sub-poset of  $\cC_{X}$ of invariant coarse entourages containing the diagonal.
 
 \begin{ddd}\label{thiowhwfgwrgwergwreg}
A $G$-coarse space is a pair $(X,\cC_{X})$ (usually denoted just by $X$) of  a $G$-set with a $G$-coarse structure.
\end{ddd}
The idea of a  coarse space has been introduced by {John} Roe, see e.g.~\cite[Ch.~2]{roe_lectures_coarse_geometry}.
\begin{ex}\label{ergiowghergregwreg}
 {We continue \cref{ex:ent}, assuming in addition that $X$ is $G$-set.
\begin{enumerate}
 \item The minimal coarse structure on $X$ contains precisely the subsets of $\diag(X)$. We denote the associated $G$-coarse space by $X_{min}$.
 \item The maximal coarse structure contains all entourages on $X$. We denote the associated $G$-coarse space by $X_{max}$.
 \item If $X$ carries a metric $d$ and the group $G$ acts by isometries on $X$, then the metric coarse structure on $X$ is given by
 \[ \cC_d := \{ U \subseteq X \times X \mid \exists\,r\geq 0\colon U \subseteq V_r \}\]
 with $V_{r}$ as in \eqref{qwefwokjkqowdewdeqd}.\qedhere
\end{enumerate}}
\end{ex}

\begin{ex}\label{ex:Gcan}
 {Let $G$ be a group. The canonical coarse structure on $G$ is the smallest $G$-coarse structure on $G$ containing all sets of the form $B \times B$, where $B$ is a finite subset of $G$.
 If $G$ is countable, this coarse structure coincides with the metric coarse structure induced by any choice of $G$-invariant proper metric on $G$.
 We denote the associated $G$-coarse space by $G_{can}$.}
\end{ex}

Consider two $G$-coarse spaces
$(X,\cC_{X})$ and $(X^{\prime},\cC_{X^{\prime}} )$ and an equivariant map between the underlying $G$-sets
$f \colon X\to X^{\prime}$.

\begin{ddd}
	The map $f$ is controlled if $f(\cC_{X})\subseteq \cC_{X^{\prime}}$.
\end{ddd}

The category $G\Coarse$ of $G$-coarse spaces and controlled maps is complete and cocomplete \cite[Prop.~2.18 and 2.21]{equicoarse}, and the forgetful functor to $G\Set$ preserves limits and colimits since it has a left adjoint $X\mapsto X_{min}$ and a right adjoint $X\mapsto X_{max}$.

Using precomposition with the forgetful functor $G\Coarse\to G\Set$, the functor $\PSh^G$ from \eqref{equiv-psh} induces a functor (denoted by the same symbol) \begin{equation}\label{ergwerggregwergregwgwegw}
\PSh^{G}\colon G\Coarse\times \Fun(BG,\CL)\to \CL\ .
\end{equation}
Let $X$ be  in $G\Coarse$.
 Consider the invariant (not necessarily coarse) entourage 
\begin{equation}\label{pinull}
U(\pi_{0}(X)):=\bigcup_{U\in \cC_{X}} U\ .
\end{equation}
It is an invariant equivalence relation on $X$.
\begin{ddd}\label{wetghiojweorgergwgwgegrwrg423t2} The 
 $G$-set of equivalence classes $\pi_{0}(X)$ with respect to $U(\pi_{0}(X))$ is called the set of coarse components of $X$.
\end{ddd} 

\begin{ex}
  \ \begin{enumerate}
\item For a $G$-set $X$ we have $\pi_{0}(X_{min})\cong X$ and $\pi_{0}(X_{max})\cong *$.
\item  {If $X$ carries a metric $d$, we have $\pi_0(X_d) \cong *$. If we allow metrics to take the value $\infty$, two points in $X$ lie in the same coarse component if and only if they are at a finite distance from each other.
\item Similarly, the canonical coarse structure on a group also has only one coarse component.}\qedhere
 \end{enumerate}
\end{ex}

\begin{rem}\label{t4hgiorthgetrhtrhtheht}
 If $Y$ {is a subset of of a $G$-coarse space $X$}, then
 we can consider $Y$ with the induced   coarse structure, and then take $\pi_{0}(Y)$ in the sense of \cref{wetghiojweorgergwgwgegrwrg423t2} for the trivial group.
 Alternatively, we can consider the subset \begin{equation*}\label{rvqrfwewfqewfqfe}
  \{Z\in \pi_{0}(X) \mid Z\cap Y\not=\emptyset\}
 \end{equation*}
 of $\pi_{0}(X)$. Both constructions give canonically isomorphic sets.
 The latter description shows that $\pi_{0}(Y)$ is a $G$-invariant subset of $\pi_{0}(X)$ if $Y$ is a $G$-invariant subset.
\end{rem}

 For $\bC$ in $\Fun(BG,\CL)$
we will abbreviate $\Sh^{U(\pi_{0}(X)),G}_{\bC}(X) $ (see \eqref{nwerjkgwegergwger}) by $\Sh^{\pi_{0},G}_{\bC}(X)$ and $L^{U(\pi_{0}(X)),G}$ (see \cref{qoiwejfqowefefqfewfq}) by
$L^{\pi_{0},G}$. 

If $f\colon X\to X^{\prime}$ is a morphism  {of $G$-coarse spaces},
then $f(U(\pi_{0}(X))\subseteq U(\pi_{0}(X^{\prime}))$.
As a consequence of \cref{iuqhfiuewvffewfqfewf}, we get:

\begin{kor}\label{erguiwergrwgwrgwgwr}
We have an adjunction 
\begin{equation}\label{pi0adj}
L^{\pi_{0},G}\hat f^{*,G}\colon \Sh^{\pi_{0},G}_{\bC}(X^{\prime}) \leftrightarrows  \Sh_{\bC} ^{\pi_{0},G}(X) \cocolon\hat f_{*}^{G} \end{equation}
Moreover, $L^{\pi_{0},G}\hat f^{*,G}$ preserves small limits.
\end{kor}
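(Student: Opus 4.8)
The plan is to obtain this statement as a direct instance of \cref{iuqhfiuewvffewfqfewf}, applied to the particular entourages $U := U(\pi_{0}(X))$ on $X$ and $U^{\prime} := U(\pi_{0}(X^{\prime}))$ on $X^{\prime}$ from \eqref{pinull}. Once the hypotheses of that corollary are checked, unwinding the abbreviations $\Sh^{\pi_{0},G}_{\bC}(-) = \Sh^{U(\pi_{0}(-)),G}_{\bC}(-)$ and $L^{\pi_{0},G} = L^{U(\pi_{0}(X)),G}$ turns the adjunction \eqref{ad-hat-f-sh-G} into the adjunction \eqref{pi0adj}, and likewise transports the assertion that the left adjoint preserves small limits.

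To invoke \cref{iuqhfiuewvffewfqfewf} I first verify its hypotheses. As observed after \eqref{pinull}, the entourage $U(\pi_{0}(X))$ is an invariant equivalence relation on $X$; by the same reasoning (using that $\cC_{X^{\prime}}$ is $G$-invariant) $U(\pi_{0}(X^{\prime}))$ is an invariant entourage of $X^{\prime}$, so both are of the required type. It remains to check the compatibility $f(U(\pi_{0}(X)))\subseteq U(\pi_{0}(X^{\prime}))$, which is precisely the containment recorded just above the statement. This follows from $f$ being controlled: for every $U\in\cC_{X}$ we have $f(U)\in\cC_{X^{\prime}}$, and since images commute with unions,
\[
 f(U(\pi_{0}(X))) = f\Bigl(\bigcup_{U\in\cC_{X}}U\Bigr) = \bigcup_{U\in\cC_{X}} f(U) \subseteq \bigcup_{U^{\prime}\in\cC_{X^{\prime}}} U^{\prime} = U(\pi_{0}(X^{\prime}))\ .
\]
With all hypotheses in place, \cref{iuqhfiuewvffewfqfewf} (with $U$, $U^{\prime}$ as above) yields exactly the adjunction \eqref{pi0adj} together with the limit-preservation of $L^{\pi_{0},G}\hat f^{*,G}$.

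There is no substantial obstacle here: every ingredient has already been established in the excerpt, and the only point needing an argument is the elementary set-theoretic containment displayed above, which is immediate from the definition of a controlled map. The statement is therefore bookkeeping, recording the output of \cref{iuqhfiuewvffewfqfewf} in the notation attached to the coarse-component entourage so that it is available when the functoriality of sheaves on $G\Coarse$ is assembled in the next subsections.
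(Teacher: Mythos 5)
Your proposal is correct and is essentially the paper's own argument: the paper likewise notes that $f(U(\pi_{0}(X)))\subseteq U(\pi_{0}(X^{\prime}))$ for a controlled map $f$ and then obtains \eqref{pi0adj} as a direct consequence of \cref{iuqhfiuewvffewfqfewf} applied to these entourages. Your explicit verification of the containment and of the invariance of the entourages just spells out the steps the paper leaves implicit.
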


Moreover,  using in addition the  existence of the right adjoints in   \eqref{rbhwtibuhwibwetbweb}, we obtain a
subfunctor
\begin{equation}\label{wergpok2pgegr5gwegwefv}
\Sh^{\pi_{0},G}\colon G\Coarse\times \Fun(BG,\CL)\to \CL
\end{equation}
of the functor $\PSh^G$ from \eqref{ergwerggregwergregwgwegw}.  

If $U$ and $U^{\prime}$  are invariant entourages of $X$
such that $U\subseteq U^{\prime}$, then applying \cref{equi-sheaffff} for $V=\diag(X)$ we get an inclusion
$\Sh^{U,G}_{\bC}(X)\to \Sh^{U^{\prime},G}_{\bC}(X)$.
 We define the $\infty$-category \begin{equation}\label{qewfqwefjhb1ji234rwefqewff}
\Sh_{\bC}^{G}(X):=\colim_{U\in \cC^{G}_{X}} \Sh^{U,G}_{\bC}(X)\ .
\end{equation}
The filtered colimit is interpreted in  $\CAT_{\infty}$.
 The objects of $\Sh_{\bC}^{G}(X)$ are called sheaves.
 Since  this $\infty$-category still admits 
 finite limits, it is actually
an object of $\CLL$ (\cref{wfqwoifjqiofeqfe32rqewfqewf}). 

As a consequence of \cref{iuqhfiuewvffewfqfewf} and  \cref{ergioegrergwergwregwergwegrwr}, 
we get a subfunctor \begin{equation}\label{regewgk2p5getgwreg}
\Sh^{G}\colon G\Coarse\times \Fun(BG,\CL)\to \CLL
\end{equation}
 of the functor $\Sh^{\pi_{0},G}$ from \eqref{wergpok2pgegr5gwegwefv}. 
 Note the difference in the targets in \eqref{wergpok2pgegr5gwegwefv} and \eqref{regewgk2p5getgwreg}.
  
 In general, we do not expect that for a morphism $f \colon X\to X^{\prime}$  {of $G$-coarse spaces}
 the morphism $\hat f^{G}_{*} \colon \Sh^{G}_{\bC}(X)\to  \Sh^{G}_{\bC}(X^{\prime})$ has a left adjoint. The reason is that the left adjoint in the adjunction \eqref{ad-hat-f-sh-G} explicitly depends on the entourage $U$. But we have such left adjoints  for a special sort of morphism in $G\Coarse$ called coarse coverings. 
 
Let  $f \colon X\to X^{\prime}$ be a  morphism  in $G\Coarse$.
\begin{ddd}\label{wefgihjwiegwergrwrg} The morphism 
$f$ is called a coarse covering if it satisfies the following conditions:
\begin{enumerate}
\item \label{wrkojgwergrefwrfrf} The restriction  $f_{|Y} \colon Y\to f(Y)$ to every coarse component $Y$ of $X$ is an isomorphism
 {of coarse spaces, and $f(Y)$ is a coarse component of $X'$.}
\item \label{tegwegergweg}The $G$-coarse  structure of $X$ is generated by the entourages $f^{-1}(U')\cap U(\pi_{0}(X))$ for all $U^{\prime}$ in $\cC_{X^{\prime}}$. \qedhere
\end{enumerate}
\end{ddd}

\begin{ex}
 {If $Y$ is a collection of coarse components of $X$, then the inclusion of $Y$ into $X$ is a coarse covering.}
\end{ex}

\begin{ex}\label{ergwegegegeggw}
If $W$ is a $G$-set, then the projection $W_{min}\otimes X\to X$ is a coarse covering. Here $\otimes$ is the cartesian product in $G\Coarse$ and $W_{min}$ is as in \cref{ergiowghergregwreg}.
\end{ex}

Since every coarse entourage  $U$ of $X$ is contained in  $ U(\pi_{0}(X))$ from \eqref{pinull}, we have an inclusion $\Sh^{U,G}_{\bC} (X)\subseteq \Sh_{\bC}^{\pi_{0},G}(X)$.
This explains the meaning of the word ``restricts'' in the following statement.
\begin{lem}\label{unex-left}
If $f \colon X\to X^{\prime}$ is a coarse covering, then the adjunction \eqref{pi0adj}
restricts to an adjunction
\begin{equation}\label{hfjrefewrfwrefwergf}
L^{\pi_{0},G}\hat f^{*,G}\colon \Sh_{\bC}^{G}(X')\leftrightarrows \Sh_{\bC}^{G}(X) \cocolon \hat f^{G}_{*}\ .
\end{equation}
Moreover, the left adjoint $L^{\pi_{0},G}\hat f^{*,G}$ preserves finite limits.
 \end{lem}
\begin{proof}
First we observe that the non-equivariant case implies the equivariant case by passing to the limit over $BG$. 
It then  suffices to show that $L^{\pi_{0}}\hat f^{*}$ preserves sheaves. 
Let $U'$ be in $\cC_{X'}$ and $M$ be in $\Sh_{\bC}^{U'}(X^{\prime})$. We will show that $ L^{\pi_{0}}\hat f^{*}M\in \Sh_{\bC}^{U}(X)$ for $U:=f^{-1}(U')\cap U(\pi_{0}(X))$ which is a coarse entourage of $X$ by   \cref{wefgihjwiegwergrwrg} \eqref{tegwegergweg}.  It suffices to show that $ (L^{\pi_{0}}\hat f^{*}M)_{|Y}\in \Sh_{\bC}^{U_{Y}}(Y)$ for every coarse component $Y$ of $X$,
where $U_{Y}:=U\cap (Y\times Y)$. We first note that  $(L^{\pi_{0}} \hat f^{*}M)_{|Y}\simeq (\hat f^{*}M)_{|Y}$  by the formula \eqref{ludef} for $L^{\pi_{0}}$. Since $f$ restricts to isomorphisms between coarse components (as coarse spaces), the pullback is an equivalence $\hat f^{*}_{|Y}\colon\Sh^{f(U_{Y})}_{\bC}(f(Y))\xrightarrow{\simeq} \Sh_{\bC}^{U_{Y}}(Y)$.  Since 
 $ (\hat f^{*}M)_{|Y} \simeq \hat f^{*}_{|Y} (M_{|f(Y)})$ and $f(U_{Y})=U'\cap (f(Y)\times f(Y))$, we have $M_{|f(Y)}\in \Sh^{f(U_{Y})}_{\bC}(f(Y))$. We conclude that $ (\hat f^{*}M)_{|Y}\in \Sh_{\bC}^{U_{Y}}(Y)$.
 
 The second assertion is an immediate consequence of the second assertion in \cref{erguiwergrwgwrgwgwr}.
\end{proof}

We now consider a pullback square \begin{equation}\label{coarse-pull-back}
\xymatrix{Y\ar[d]_{g}\ar[r]^{f^{\prime}}&Y^{\prime}\ar[d]^{g^{\prime}}\\X\ar[r]^{f}&X^{\prime}}
\end{equation}
in $G\Coarse$.

\begin{lem}\label{rgkoqregqregqqef}
The canonical morphism of functors
\[ L^{\pi_{0},G}\hat g^{\prime,*,G} \hat f^{G}_{*}\to \hat f^{\prime,G}_{*}L^{\pi_{0},G}\hat g^{*,G}\colon \Sh^{\pi_{0},G}_{\bC}(X)\to \Sh^{\pi_{0},G}_{\bC}(Y^{\prime}) \]
is an equivalence.
\end{lem}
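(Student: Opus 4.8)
The statement is a base-change (push-pull) compatibility for the sheafification-twisted pullback functors along the two horizontal maps of the square \eqref{coarse-pull-back}. The plan is to reduce to the non-equivariant case, then to a statement about presheaves by controlling the sheafification functors, and finally to a pointwise check on $U$-bounded subsets.

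First I would reduce to the non-equivariant case: all four categories $\Sh^{\pi_0,G}_{\bC}(-)$ are obtained by applying $\lim_{BG}$ to $BG$-diagrams of full subcategories of presheaf categories, and the evaluation functors $e\colon \Sh^{\pi_0,G}_{\bC}(-)\to \Sh^{\pi_0}_{\bC}(-)$ jointly detect equivalences (they are jointly conservative by the description in the proof of \cref{rjqorepqerfewregregwreg25}). Since all the relevant functors $\hat f^{G}_*$, $\hat g^{*,G}$, $L^{\pi_0,G}$ are the $BG$-limits of their non-equivariant counterparts, and the canonical morphism in question is the $BG$-limit of the non-equivariant canonical morphism, it suffices to prove the statement with $G$ trivial.

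Next, in the non-equivariant setting I would separate the role of sheafification. The functor $\hat f_*\colon \PSh_\bC(X)\to \PSh_\bC(X')$ is precomposition with $f^{-1}(-)$, and $\hat g^{\prime *}$, $\hat g^*$ are precompositions with the image maps $g'(-)$, $g(-)$ of posets; likewise $\hat f'_*$ is precomposition with $f^{\prime -1}(-)$. The strategy is: (i) show that on presheaves (before any sheafification) the Beck--Chevalley map $\hat g^{\prime *}\hat f_* \to \hat f'_*\hat g^*$ is already an equivalence — this is a purely combinatorial statement about the square of posets obtained by applying $\cP_{(-)}$ to \eqref{coarse-pull-back}, using that $Y = X \times_{X'} Y'$ as a set and hence $g^{\prime -1}\circ f^{-1} = f^{\prime -1}\circ g^{-1}$ and an interchange relation between images and preimages along a cartesian square of sets. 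Concretely, for $A \in \cP_X$ one checks $g'(f^{-1}(A)) = f^{\prime -1}(g(A))$ directly on elements, using that the square is a pullback of sets. (ii) Then argue that applying $L^{\pi_0}$ (which by \eqref{ludef} is $i_* i^*$, right Kan extension from $U(\pi_0)$-bounded subsets) commutes past these pullback functors in the required way. The key point is \cref{erguiwergrwgwrgwgwr}: $L^{\pi_0}\hat g^{\prime *}$ and $L^{\pi_0}\hat g^*$ are the left adjoints of $\hat g^{\prime}_*$ and $\hat g_*$ on the sheaf categories, and $\hat f_*$, $\hat f'_*$ are the right adjoints along the horizontal maps; the Beck--Chevalley transformation for the resulting (co)mates is exactly the morphism in the statement.

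I expect the main obstacle to be step (ii): verifying that the $\pi_0$-sheafification $L^{\pi_0}$ interacts correctly with $\hat f_*$ and $\hat g^*$ on the overlap where one composes a right adjoint (along $f$) with a sheafification-twisted left adjoint (along $g'$). The clean way to handle this is to note that $L^{\pi_0}M$ is determined by the restriction of $M$ to $U(\pi_0)$-bounded subsets, and that a $U(\pi_0)$-bounded subset $B$ of $Y'$ lies in a single coarse component; on that component all four maps of \eqref{coarse-pull-back} restrict to genuine pullbacks of coarse components where the pullback functors are equivalences of sheaf categories. Thus I would evaluate both sides of the asserted equivalence at a $U(\pi_0(Y'))$-bounded subset $B$, use the pointwise formula for the right Kan extension defining $L^{\pi_0}$, use that the slice $(\cP^{U\bd}_{Y'})_{/B}$ has $B$ as final object to collapse the outer $L^{\pi_0}$ applied to a sheaf, reduce to the corresponding evaluation of $\hat g^{\prime *}\hat f_* M$ and $\hat f'_* \hat g^* M$ at $B$, and finish with the set-level identity $g'(f^{-1}(A)) = f^{\prime -1}(g(A))$ from step (i) together with the fact that $M$ is already a sheaf so no further sheafification is needed on the $\hat f_*$ side. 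Collecting these observations gives the equivalence of functors claimed.
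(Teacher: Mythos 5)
Your first step---reducing to the trivial group by applying $\lim_{BG}$, and observing that on presheaves the Beck--Chevalley map $\hat g^{\prime,*}\hat f_{*}\to \hat f^{\prime}_{*}\hat g^{*}$ is already an equivalence because the square is a pullback of underlying sets---is exactly how the paper's proof begins (though note that your displayed identity $g'(f^{-1}(A))=f'^{-1}(g(A))$ for $A\in\cP_{X}$ is ill-typed; the identity actually needed is $f^{-1}(g'(Z'))=g(f'^{-1}(Z'))$ for $Z'\in\cP_{Y'}$). The genuine gap is in your step (ii), the interaction with $L^{\pi_{0}}$. Testing the map on $U(\pi_{0}(Y'))$-bounded subsets $B$ is legitimate (both sides are $\pi_{0}$-sheaves), and the left-hand side does collapse to $M(f^{-1}(g'(B)))$. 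But on the right-hand side $(\hat f^{\prime}_{*}L^{\pi_{0}}\hat g^{*}M)(B)=(L^{\pi_{0}}\hat g^{*}M)(f'^{-1}(B))$, and $f'^{-1}(B)$ is in general \emph{not} contained in a single coarse component of $Y$, so the inner sheafification does not go away: by \eqref{ludef} it evaluates to $\prod_{D\in\pi_{0}(f'^{-1}(B))}M(g(D))$. Your justifications at this point---that ``all four maps restrict to genuine pullbacks of coarse components where the pullback functors are equivalences of sheaf categories'' and that ``no further sheafification is needed''---do not hold: pullback functors are equivalences component-wise only in the coarse-covering situation of \cref{wefgihjwiegwergrwrg}/\cref{unex-left}, which is not assumed in this lemma.

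What actually has to be proved, and what the bulk of the paper's proof is devoted to, is that $D\mapsto g(D)$ is a well-defined bijection $\pi_{0}(f'^{-1}(B))\to\pi_{0}(g(f'^{-1}(B)))$; combined with the $\pi_{0}$-sheaf property of $M$, which rewrites $M(f^{-1}(g'(B)))=M(g(f'^{-1}(B)))\simeq\prod_{C\in\pi_{0}(g(f'^{-1}(B)))}M(C)$, this identifies the two sides. Both well-definedness and injectivity of this map use the characterisation of the entourages of a pullback in $G\Coarse$ (two points of $Y$ are close iff their images in $X$ and in $Y'$ are close); set-level cartesianness plus sheafness of $M$, which is all your argument uses, cannot suffice. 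Indeed, take $X=\{a,b\}_{max}$, $X'=Y'=*$ and $Y=\{a,b\}_{min}$ with $g$ the identity map of sets: the square commutes and is a pullback of sets, every presheaf on $X_{max}$ is a $\pi_{0}$-sheaf, so your step (ii) would apply verbatim, yet at $B=*$ the comparison map is the restriction $M(\{a,b\})\to M(\{a\})\times_{M(\emptyset)}M(\{b\})$, which is not an equivalence in general. So the cartesianness of \eqref{coarse-pull-back} in $G\Coarse$ must enter through exactly this $\pi_{0}$-bijection, and that step is missing from your proof.
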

\begin{proof}
The non-equivariant case implies the equivariant case by passing to the limit over $BG$.

Using that the underlying square of sets is cartesian, 
we have for  $Z^{\prime}$ in $\cP_{Y^{\prime}}$ the relation $f^{-1}(g^{\prime}(Z') )=g(f^{\prime,-1}(Z'))$. This immediately implies that the canonical morphism
 \[ \hat g^{\prime,*} \hat f_{*}\to \hat f^{\prime}_{*} \hat g^{*}\colon \PSh_{\bC}(X)\to \PSh_{\bC}(Y^{\prime}) \]
is an equivalence. The morphism in question is given by
\[ L^{\pi_{0}}\hat g^{\prime,*} \hat f_{*}\simeq L^{\pi_{0}} \hat f^{\prime}_{*} \hat g^{*}\to L^{\pi_{0}} \hat f^{\prime}_{*} L^{\pi_{0}}\hat g^{*}\xleftarrow{\simeq} \hat f^{\prime}_{*} L^{\pi_{0}}\hat g^{*}\ ,\]
where for the last equivalence we employ the fact that $\hat f^{\prime}_{*}$ preserves $\pi_{0}$-sheaves by \cref{fklrgbwgergwergwrgr}.
So it remains to show that $L^{\pi_{0}} \hat f^{\prime}_{*} \hat g^{*}M \to L^{\pi_{0}} \hat f^{\prime}_{*} L^{\pi_{0}}\hat g^{*}M$ is an equivalence for every  
$M$ in $\Sh^{\pi_{0}}_{\bC}(Y)$.
 Using formula \eqref{ludef} for $L^{\pi_{0}}$, its evaluation on a subset $Z'$ in $\cP_{Y^{\prime}}$ is given by the morphism
\begin{equation}\label{vergl-canmap}
\prod_{D' \in \pi_{0}(Z')} M(g(f^{\prime,-1}(D'))) \to \prod_{D'\in \pi_{0}(Z')} \prod_{D\in \pi_{0}(f^{\prime,-1}(D'))} M(g(D)))\ ,
\end{equation}
which in the factor $D'$ is induced by the restrictions along the embeddings $g(D)\to g(f^{\prime,-1}(D'))$ for all $D$ in $ \pi_{0}(f^{\prime,-1}(D'))$. Using that $M$ is a $\pi_{0}$-sheaf, we can rewrite the domain  of the map in the form  
\begin{equation}\label{vergl-canmap1}
\prod_{D' \in \pi_{0}(Z')} \prod_{C \in \pi_{0}(g(f^{\prime,-1}(D')))} M(C)\to  \prod_{D' \in \pi_{0}(Z')}  \prod_{D \in \pi_{0}(f^{\prime,-1}(D'))} M(g(D))\ .
\end{equation}
 We now argue that for fixed $D'$ in $\pi_0(Z')$  the map
\begin{equation}\label{weroijowegrgewregw}
 \pi_0(f^{\prime,-1}(D')) \to  \pi_{0}(g(f^{\prime,-1}(D'))),\quad D \mapsto g(D)
\end{equation} 
is a well-defined bijection. Let us first show that $g(D)$ is a coarse component of $g(f^{\prime,-1}(D'))$ whenever $D$ is a coarse component of $f^{\prime,-1}(D')$. Suppose $d$ is a point in $D$ and $x$ is a point in $g(f^{\prime,-1}(D'))$ such that $\{(g(d),x)\} \in \cC_X$. Then $x = g(y)$ for some point $y$ in $f^{\prime,-1}(D')$. Since $D'$ is coarsely connected  we have $\{(f^\prime(d),f^\prime(y))\} \in \cC_{Y'}$. We conclude that $\{(d,y)\} \in \cC_Y$ by the characterisation of the entourages in a pullback in $G\Coarse$. Hence $y\in D$ and thus $x\in g(D)$.

 Since the map \eqref{weroijowegrgewregw} is surjective by definition, we only have to check injectivity.
Let $D_0$ and $D_1$ be in $\pi_0(f^{\prime,-1}(D'))$ such that $g(D_0) = g(D_1)$. Then there are points $z_0$ in $D_0$ and $z_1$ in $D_1$ such that $\{(g(z_0),g(z_1))\} \in \cC_X$. Since we also have $\{(f^{\prime}(z_{0}),f^{\prime}(z_{1}))\}\in \cC_{Y^{\prime}}$, it follows that $\{(z_{0},z_{1})\}\in \cC_{Y}$, again by the characterisation of entourages in a pullback in $G\Coarse$.

 This implies that the morphism \eqref{vergl-canmap1} is an equivalence.
 \end{proof}
 
Let $i \colon Y\to X$ be the inclusion of a subspace  in $G\Coarse$. As a consequence of \cref{sub-adj} we get:

 \begin{kor}\label{rqkjgqregergwgwregweg}
 We have an adjunction
 \begin{equation}\label{fvsnejkvwevrevwv}
\hat i^{*,G}\colon \Sh^{G}_{\bC}(X)\leftrightarrows \Sh^{G}_{\bC}(Y)\cocolon \hat i^{G}_{*}
\end{equation}
 and the relation  $\hat i^{G,*}\hat i^{G}_{*}\simeq \id$.
 \end{kor}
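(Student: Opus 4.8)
The plan is to deduce the result by \emph{restricting the presheaf-level adjunction to sheaf subcategories}, using \cref{sub-adj} and \cref{fklrgbwgergwergwrgr} to verify that both functors preserve the relevant full subcategories. Recall from \cref{wiwoerthgwrtgwregergwrg} that applying $\lim_{BG}$ to the adjunction \eqref{ad-hat-f} for the inclusion $i$ produces an adjunction $\hat i^{*,G}\colon\PSh^{G}_{\bC}(X)\leftrightarrows\PSh^{G}_{\bC}(Y)\cocolon\hat i^{G}_{*}$ which already satisfies $\hat i^{*,G}\hat i^{G}_{*}\simeq\id$, because $i^{-1}(-)\circ i(-)=\id$ on $\cP_{Y}$. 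Recall also that, by construction \eqref{qewfqwefjhb1ji234rwefqewff}, the category $\Sh^{G}_{\bC}(X)$ is the full subcategory of $\PSh^{G}_{\bC}(X)$ spanned by the presheaves that are $U$-sheaves for some invariant coarse entourage $U$ of $X$ (a filtered colimit of full-subcategory inclusions being again a full-subcategory inclusion), and similarly for $Y$.

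It then remains to check two preservation statements. On the one hand, $\hat i^{*,G}$ sends a $U$-sheaf on $X$ to a $U_{Y}$-sheaf on $Y$, where $U_{Y}:=(Y\times Y)\cap U$ belongs to $\cC^{G}_{Y}$; this is the elementary observation already invoked in the proof of \cref{sub-adj}, and it shows that $\hat i^{*,G}$ maps $\Sh^{G}_{\bC}(X)$ into $\Sh^{G}_{\bC}(Y)$. On the other hand, if $V$ is an invariant coarse entourage of $Y$, then --- since $Y$ carries the coarse structure induced from $X$ and $\cC_{X}$ is closed under subsets (\cref{trbertheheht}) --- $V$ is itself an invariant coarse entourage of $X$; as $i(V)=V$, \cref{fklrgbwgergwergwrgr} (applied non-equivariantly and then passed to $\lim_{BG}$) shows that $\hat i^{G}_{*}$ sends a $V$-sheaf on $Y$ to a $V$-sheaf on $X$, so that $\hat i^{G}_{*}$ maps $\Sh^{G}_{\bC}(Y)$ into $\Sh^{G}_{\bC}(X)$, compatibly with the functoriality \eqref{regewgk2p5getgwreg}. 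Since a full subcategory that is stable under both members of an adjoint pair inherits the adjunction together with any relation between the two functors, this simultaneously yields the adjunction \eqref{fvsnejkvwevrevwv} and the identity $\hat i^{G,*}\hat i^{G}_{*}\simeq\id$.

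A structurally parallel alternative, matching the proof of \cref{unex-left}, would be to lift the entourage-wise adjunctions of \cref{sub-adj} directly through the colimit \eqref{qewfqwefjhb1ji234rwefqewff}: the assignment $U\mapsto U_{Y}$ is a cofinal functor $\cC^{G}_{X}\to\cC^{G}_{Y}$ (every invariant coarse entourage of $Y$ is contained in some $U_{Y}$, by the two facts used above, and $\cC^{G}_{X}$ is filtered), so taking $\colim_{U\in\cC^{G}_{X}}$ of the adjunctions $\Sh^{U,G}_{\bC}(X)\leftrightarrows\Sh^{U_{Y},G}_{\bC}(Y)$ gives \eqref{fvsnejkvwevrevwv}. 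There the one point needing care is that the entourage-wise left and right adjoints assemble into a single diagram of \emph{adjoint pairs}, which again holds because both $\hat i^{*,G}$ and $\hat i^{G}_{*}$ are restrictions of a fixed pair of presheaf-level functors and all transition maps are full-subcategory inclusions. In either formulation there is no essential difficulty; the only step I would take care to spell out is the preservation property of $\hat i^{G}_{*}$ --- that an invariant coarse entourage of the subspace $Y$ is literally an invariant coarse entourage of $X$.
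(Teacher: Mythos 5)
Your proposal is correct and follows essentially the paper's route: the paper obtains this corollary directly from \cref{sub-adj} by passing to the union over invariant entourages in \eqref{qewfqwefjhb1ji234rwefqewff}, which is exactly your argument (in both formulations), resting on the same observations that $\hat i^{*,G}$ sends $U$-sheaves to $U_{Y}$-sheaves, that invariant coarse entourages of the subspace $Y$ are invariant coarse entourages of $X$, and that $i^{-1}(-)\circ i(-)=\id$ on $\cP_{Y}$ gives the relation already at the presheaf level. The only difference is that you spell out the preservation checks and the cofinality of $U\mapsto U_{Y}$ that the paper leaves implicit.
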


We again consider the pullback square \eqref{coarse-pull-back}. If   
$g^{\prime}$ is a coarse covering, then  $g$ is a coarse covering, too  {(see \cite[Lem.~2.11]{coarsetrans})}. Similarly, if $g^{\prime}$ is the inclusion of a subspace, then so is $g$.

 \begin{kor} \label{ropgkpwoegrewgwregwgregw} If $g^{\prime}$ is a coarse covering or   an inclusion of a subspace, then
the canonical morphism of functors
\[ L^{\pi_{0},G}\hat g^{\prime,*,G} \hat f^{G}_{*}\to \hat f^{\prime,G}_{*}L^{\pi_{0},G}\hat g^{*,G} \colon \Sh^{ G}_{\bC}(X)\to \Sh^{ G}_{\bC}(Y^{\prime}) \]
is an equivalence.
\end{kor}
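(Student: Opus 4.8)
The plan is to deduce this from \cref{rgkoqregqregqqef}, which establishes the same base-change equivalence on the larger sheaf categories $\Sh^{\pi_{0},G}_{\bC}$, by checking that under the hypothesis on $g^{\prime}$ the construction restricts to the subfunctor $\Sh^{G}_{\bC}$ of $\Sh^{\pi_{0},G}_{\bC}$. Recall that $\Sh^{G}_{\bC}(X)=\bigcup_{U\in\cC^{G}_{X}}\Sh^{U,G}_{\bC}(X)$ is a full subcategory of $\Sh^{\pi_{0},G}_{\bC}(X)$, since $U\subseteq U(\pi_{0}(X))$ for every $U$ in $\cC_{X}$, and that $\Sh^{G}$ is a subfunctor of $\Sh^{\pi_{0},G}$ for the functoriality via the $\hat f^{G}_{*}$ by \eqref{regewgk2p5getgwreg}. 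No reduction to the non-equivariant case is needed, as \cref{rgkoqregqregqqef} and the auxiliary results below are already equivariant.

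First I would verify that both composites $L^{\pi_{0},G}\hat g^{\prime,*,G}\hat f^{G}_{*}$ and $\hat f^{\prime,G}_{*}L^{\pi_{0},G}\hat g^{*,G}$ carry $\Sh^{G}_{\bC}(X)$ into $\Sh^{G}_{\bC}(Y^{\prime})$. The functors $\hat f^{G}_{*}$ and $\hat f^{\prime,G}_{*}$ preserve the subfunctor $\Sh^{G}$ by \eqref{regewgk2p5getgwreg}. For the vertical legs I would argue by cases. If $g^{\prime}$ is a coarse covering, then so is its pullback $g$ by \cite[Lem.~2.11]{coarsetrans}, and \cref{unex-left}, applied to $g^{\prime}$ and to $g$, shows that $L^{\pi_{0},G}\hat g^{\prime,*,G}$ maps $\Sh^{G}_{\bC}(X^{\prime})$ into $\Sh^{G}_{\bC}(Y^{\prime})$ and that $L^{\pi_{0},G}\hat g^{*,G}$ maps $\Sh^{G}_{\bC}(X)$ into $\Sh^{G}_{\bC}(Y)$. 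If instead $g^{\prime}$ is the inclusion of a subspace, then so is $g$, and by \cref{rqkjgqregergwgwregweg} the functors $\hat g^{\prime,*,G}$ and $\hat g^{*,G}$ already preserve $\Sh^{G}$; here the sheafification $L^{\pi_{0},G}$ is moreover harmless, because $\hat g^{*,G}M$ lies in some $\Sh^{U_{Y},G}_{\bC}(Y)$ with $U_{Y}$ in $\cC_{Y}$ by \cref{sub-adj}, hence is already a $\pi_{0}$-sheaf, so that $L^{\pi_{0},G}\hat g^{*,G}M\simeq\hat g^{*,G}M$, and similarly for $g^{\prime}$.

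Given this, I would observe that the canonical transformation between the two composites --- constructed exactly as in the proof of \cref{rgkoqregqregqqef}, from the presheaf-level identity $\hat g^{\prime,*,G}\hat f^{G}_{*}\simeq\hat f^{\prime,G}_{*}\hat g^{*,G}$ together with the unit $\id\to L^{\pi_{0},G}$ and the fact that $\hat f^{\prime,G}_{*}$ preserves $\pi_{0}$-sheaves --- is natural, and evaluated on an object of $\Sh^{G}_{\bC}(X)$ coincides with the transformation of \cref{rgkoqregqregqqef}. That transformation is an equivalence in $\Sh^{\pi_{0},G}_{\bC}(Y^{\prime})$; since both its source and target lie in the full subcategory $\Sh^{G}_{\bC}(Y^{\prime})$ by the previous paragraph, it is already an equivalence in $\Sh^{G}_{\bC}(Y^{\prime})$. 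This proves the corollary.

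The only input that is not purely formal is the stability of coarse coverings, and of subspace inclusions, under pullback, which is quoted in the excerpt just before the statement; the remainder is bookkeeping about which of \cref{unex-left} and \cref{rqkjgqregergwgwregweg} applies to which leg of the square. I therefore do not expect a genuine obstacle; the one point requiring a little care is the observation that $\pi_{0}$-sheafification is trivial on the subspace-inclusion side, so that the restricted composites really do land in $\Sh^{G}_{\bC}(Y^{\prime})$ and not merely in $\Sh^{\pi_{0},G}_{\bC}(Y^{\prime})$.
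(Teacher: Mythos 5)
Your proposal is correct and follows essentially the same route as the paper: the paper also deduces the corollary from \cref{rgkoqregqregqqef}, using \cref{unex-left} in the covering case and \cref{rqkjgqregergwgwregweg} together with the observation that $L^{\pi_{0}}$ can be dropped in the subspace-inclusion case. Your write-up just spells out the bookkeeping (fullness of $\Sh^{G}_{\bC}\subseteq\Sh^{\pi_{0},G}_{\bC}$, pullback stability of coverings and inclusions) that the paper's two-line proof leaves implicit.
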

\begin{proof}
If $g$ is a coarse covering, then we use \cref{rgkoqregqregqqef} together with \cref{unex-left}.
If $g$ is an inclusion, then we use \cref{rqkjgqregergwgwregweg} and the fact that we can drop the application of $L^{\pi_{0}}$.
\end{proof}

Let $X$ be a $G$-coarse space,
and let $V$ be a $G$-invariant coarse entourage on $X$ containing the diagonal.
Note that {for every coarse entourage $U$ of $X$}, we also have $VUV^{-1}\in \cC_{X}$ by \cref{trbertheheht}~\eqref{qerighioergergwgergwergwergwerg}.
As a consequence of \cref{equi-sheaffff}, we get:

 \begin{kor}\label{egiweogwergrgwgwerg}
 The functor $ V_{*}^{G}$ from \eqref{v-adj-ggg} restricts to an endofunctor of
 $\Sh^{G}_{\bC}(X)$.
 \end{kor}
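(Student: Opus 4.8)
The plan is to use that $\Sh^{G}_{\bC}(X)$, defined in \eqref{qewfqwefjhb1ji234rwefqewff} as the filtered colimit $\colim_{U\in\cC^{G}_{X}}\Sh^{U,G}_{\bC}(X)$, is a full subcategory of $\PSh^{G}_{\bC}(X)$. Indeed, each $\Sh^{U,G}_{\bC}(X)$ is a full subcategory of $\PSh^{G}_{\bC}(X)$ (as observed in the proof of \cref{rjqorepqerfewregregwreg25}), the transition maps $\Sh^{U,G}_{\bC}(X)\hookrightarrow\Sh^{U',G}_{\bC}(X)$ for $U\subseteq U'$ are the inclusions noted just before \eqref{qewfqwefjhb1ji234rwefqewff}, and a filtered colimit of fully faithful functors is fully faithful; so $\Sh^{G}_{\bC}(X)$ is identified with the full subcategory of $\PSh^{G}_{\bC}(X)$ spanned by the presheaves that lie in $\Sh^{U,G}_{\bC}(X)$ for some $U\in\cC^{G}_{X}$. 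Since $V^{G}_{*}$ is already an endofunctor of $\PSh^{G}_{\bC}(X)$ by \eqref{v-adj-ggg}, it will suffice to check that $V^{G}_{*}$ carries this full subcategory into itself; its effect on morphisms is then automatic.

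To see this, I would take $M$ in $\Sh^{U,G}_{\bC}(X)$ for some $U\in\cC^{G}_{X}$. First I would replace $U$ by $U\cup\diag(X)$, which is again in $\cC^{G}_{X}$ (closure under finite unions, and $\diag(X)\in\cC_{X}$ by \cref{trbertheheht}) and still has $M$ as a sheaf because $U\subseteq U\cup\diag(X)$; thus I may assume $U\in\cC^{G,\Delta}_{X}$. Then I would set $U':=VUV^{-1}$: by \cref{trbertheheht} \eqref{qerighioergergwgergwergwergwerg} this lies in $\cC_{X}$; it is $G$-invariant because $V$ and $U$ are; and it contains $\diag(X)$ because $\diag(X)\subseteq V$ and $\diag(X)\subseteq U$. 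Hence $U'\in\cC^{G,\Delta}_{X}$ and $VUV^{-1}\subseteq U'$, so \cref{equi-sheaffff} applies directly and tells us that $V^{G}_{*}$ sends $\Sh^{U,G}_{\bC}(X)$ into $\Sh^{U',G}_{\bC}(X)\subseteq\Sh^{G}_{\bC}(X)$. In particular $V^{G}_{*}M$ lies in $\Sh^{G}_{\bC}(X)$, as desired.

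I do not expect any real obstacle: the statement is essentially a bookkeeping consequence of \cref{equi-sheaffff}, and the coarse structure axioms of \cref{trbertheheht} enter only to confirm that $VUV^{-1}$ is again an invariant coarse entourage containing the diagonal. The one step deserving care is the reduction to an entourage containing the diagonal — needed so that the hypotheses of \cref{equi-sheaffff}, which include $\diag(X)\subseteq U$, are satisfied — together with the elementary observation that $\Sh^{G}_{\bC}(X)$ is a union of full subcategories of $\PSh^{G}_{\bC}(X)$, which is precisely what makes "restricts to an endofunctor" meaningful and lets us verify it on objects alone.
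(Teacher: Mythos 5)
Your proof is correct and takes essentially the same route as the paper, which likewise treats the corollary as an immediate consequence of \cref{equi-sheaffff} after observing that $VUV^{-1}\in\cC_{X}$ by \cref{trbertheheht} \eqref{qerighioergergwgergwergwergwerg} (with the identification of $\Sh^{G}_{\bC}(X)$ as a union of full subcategories of $\PSh^{G}_{\bC}(X)$ left implicit). One minor remark: \cref{equi-sheaffff} does not in fact require $\diag(X)\subseteq U$ (only $\diag(X)\subseteq V$), so your replacement of $U$ by $U\cup\diag(X)$ is harmless but unnecessary.
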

  
 Let $f \colon X\to X^{\prime}$ be a morphism in $G\Coarse$ and 
 let $V'$ be a $G$-invariant coarse entourage on $X'$ containing the diagonal.
Then we define the entourage $V :=f^{-1}(V^{\prime})\cap U(\pi_{0}(X))$ {on}  $X$  (see \eqref{pinull}).
If $f$ is a coarse covering, then $V$  {is also a $G$-invariant coarse entourage containing the diagonal}.

\begin{lem}\label{wergiowggwergwergreg}
If $f$ is a coarse covering, then we have an equivalence of functors
\[ L^{\pi_{0},G}\hat f^{*,G}   V^{\prime,G}_{*}\simeq  V^{G}_{*}  L^{\pi_{0},G}\hat f^{*,G}\colon \Sh^{G}_{\bC}(X^{\prime})\to  \Sh^{G}_{\bC}(X )\ .\]
\end{lem}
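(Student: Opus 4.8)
The plan is to reduce to the non-equivariant statement and then to compare both composites after evaluation on $\pi_0$-bounded subsets of $X$ (meaning $U(\pi_0(X))$-bounded ones). As in the proofs of \cref{unex-left,rgkoqregqregqqef}, the equivariant case follows from the non-equivariant one by applying $\lim_{BG}$: all functors in sight arise this way, and the evaluation $\Sh^{\pi_0,G}_\bC(X)\to\Sh^{\pi_0}_\bC(X)$ detects equivalences. So from now on I work with $f\colon X\to X'$ a coarse covering of ordinary coarse spaces. Both composites take values in $\Sh^{\pi_0}_\bC(X)$, since $\Sh_\bC(X)=\colim_{U}\Sh^{U}_\bC(X)$ sits inside $\Sh^{U(\pi_0(X))}_\bC(X)=\Sh^{\pi_0}_\bC(X)$. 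Hence it suffices to produce a comparison morphism between the two composites and to show that it is an equivalence after evaluation on $\pi_0$-bounded subsets of $X$, because a $\pi_0$-sheaf $N$ satisfies $N(Y)\simeq\lim_{B}N(B)$ with the limit taken over the $\pi_0$-bounded $B\subseteq Y$.

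The geometric heart of the matter is the identity $f(V(B))=V'(f(B))$ for every $\pi_0$-bounded subset $B$ of $X$, together with the fact that $V(B)$ is again $\pi_0$-bounded. Indeed, such a $B$ lies in a single coarse component $D$ of $X$, and by condition (1) of \cref{wefgihjwiegwergrwrg} the restriction $\phi:=f|_D\colon D\to D':=f(D)$ is an isomorphism of coarse components, so that $D'$ is a coarse component of $X'$ containing $f(x)$ for $x\in D$. Using $V=f^{-1}(V')\cap U(\pi_0(X))$ together with $V\subseteq U(\pi_0(X))$ and $V'\subseteq U(\pi_0(X'))$ (the latter because $V'$ is a coarse entourage of $X'$, hence contained in the union of all of them), one checks that $V[\{x\}]\subseteq D$ and $V'[\{f(x)\}]\subseteq D'$ for $x\in D$, and that $V[\{x\}]=\phi^{-1}(V'[\{f(x)\}])$. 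Since $\diag(X)\subseteq V$, one has $x\in V[\{x\}]$, which forces $V(B)\subseteq D$, and for $x\in D$ we obtain
\[ x\in V(B)\ \Longleftrightarrow\ V'[\{f(x)\}]\subseteq f(B)\ \Longleftrightarrow\ f(x)\in V'(f(B))\ . \]
As $V'(f(B))\subseteq f(B)\subseteq D'$ this says $V(B)=\phi^{-1}(V'(f(B)))$, whence $f(V(B))=V'(f(B))$; and $V(B)\subseteq B$, so $V(B)$ is $\pi_0$-bounded. Running the same argument for an arbitrary subset $Y$ of $X$ yields only the inclusion $f(V(Y))\subseteq V'(f(Y))$, i.e.\ a natural transformation of poset morphisms $\cP_X\to\cP_{X'}$; by precomposition it induces a natural transformation $\hat f^{*}V'_{*}\to V_{*}\hat f^{*}$ of functors $\PSh_\bC(X')\to\PSh_\bC(X)$, and after applying $L^{\pi_0}$ and identifying $L^{\pi_0}V_{*}\hat f^{*}$ with $V_{*}L^{\pi_0}\hat f^{*}$ (the latter is already a $\pi_0$-sheaf, since $L^{\pi_0}\hat f^{*}$ lands in $\pi_0$-sheaves by \cref{erguiwergrwgwrgwgwr} and $V_{*}$ preserves $\pi_0$-sheaves because $V\subseteq U(\pi_0(X))$ is an equivalence relation, so \cref{ioiregjoifgrgwergwregwew} applies with $U=U'=U(\pi_0(X))$) this gives the canonical comparison $L^{\pi_0}\hat f^{*}V'_{*}\to V_{*}L^{\pi_0}\hat f^{*}$.

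It remains to evaluate this comparison on a $\pi_0$-bounded subset $B$ of $X$. Since $L^{\pi_0}$ does not change the value of a presheaf on a $\pi_0$-bounded subset (the final object of $(\cP_X^{U(\pi_0(X))\bd})_{/B}$ is $B$ itself, cf.\ the proof of \cref{qrgioergrewwergwgregrweg}) and since $V(B)$ is $\pi_0$-bounded, we get for $M$ in $\Sh_\bC(X')$ the chain of identifications
\[ (L^{\pi_0}\hat f^{*}V'_{*}M)(B)\simeq M(V'(f(B)))\simeq M(f(V(B)))\simeq (\hat f^{*}M)(V(B))\simeq (V_{*}L^{\pi_0}\hat f^{*}M)(B)\ , \]
all of which are instances of restriction maps of $M$ and hence compatible with the comparison morphism above. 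Thus the comparison is an equivalence on all $\pi_0$-bounded subsets of $X$; since its source and target are $\pi_0$-sheaves, it is an equivalence of presheaves, and passing back to $\lim_{BG}$ yields the asserted equivalence of functors $\Sh^G_\bC(X')\to\Sh^G_\bC(X)$.

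The main obstacle is the bookkeeping that isolates the genuinely geometric input, namely the equality $f(V(B))=V'(f(B))$ on $\pi_0$-bounded subsets (which uses that $f$ restricts to isomorphisms on coarse components and that $V$ is exactly $f^{-1}(V')$ cut down to within components); once that is in place, the reduction to $\pi_0$-bounded subsets, the invisibility of $L^{\pi_0}$ there, and the verification that both composites land in $\pi_0$-sheaves are all routine.
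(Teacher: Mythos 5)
Your proof is correct and follows the same overall strategy as the paper's: reduce to the non-equivariant case via $\lim_{BG}$, establish the geometric identity relating $f$, $V$ and $V'$ on subsets, and commute $L^{\pi_{0}}$ past $V_{*}$. The noteworthy difference is that the paper simply asserts $f(V(Y))=V'(f(Y))$ for \emph{every} $Y$ in $\cP_{X}$, whereas you only prove it for $\pi_{0}$-bounded $B$ (together with the inclusion $f(V(Y))\subseteq V'(f(Y))$ in general) and then invoke the $\pi_{0}$-sheaf condition to conclude. Your caution is in fact necessary: the blanket identity fails in general. For instance, take $X'=\{0,1\}_{max}$, $X=W_{min}\otimes X'$ with $W$ a two-element set (a coarse covering as in \cref{ergwegegegeggw}), $V'=X'\times X'$, so $V=U(\pi_{0}(X))$; for $Y$ consisting of one point from each copy one gets $V(Y)=\emptyset$ while $V'(f(Y))=X'$. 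So the equality only holds componentwise, i.e.\ on $\pi_{0}$-bounded subsets, exactly as you argue, and the discrepancy disappears only after sheafification --- which is precisely what your evaluation argument on $\pi_{0}$-bounded subsets, using that both composites land in $\pi_{0}$-sheaves, supplies. In this sense your write-up is a corrected and more careful version of the paper's own terse proof; the remaining informal points (construction of the comparison map via the unit and the universal property of $L^{\pi_{0}}$, and its compatibility with the evaluations) are routine and handled at the same level of rigour as the paper.
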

\begin{proof}
 Using that $f$ is a coarse covering, we see that
 for every {subset $Y$ of $X$}
 we have $f(V (Y))=V^{\prime}(f(Y))$. 
  Furthermore, using the explicit formulas, one checks that $L^{\pi_{0},G}$ and $V^{G}_{*}$ commute.
  This implies the chain of equivalences
\[L^{\pi_{0},G}\hat f^{*,G}  V^{\prime,G}_{*} \simeq L^{\pi_{0},G}  V^{ G}_{*}\hat f^{*,G} \simeq    V^{ G}_{*}L^{\pi_{0},G}  \hat f^{*,G}\ .\qedhere\]
\end{proof}

 Let $i \colon Y\to X$ be an inclusion of a subspace in $G\Coarse$, and let $V$ be  {a $G$-invariant coarse entourage on $X$ containing the diagonal}.
 Then we let $j \colon V(Y)\to X$ be the inclusion and set $V_{Y}:=V\cap(Y\times Y)$.
 
 \begin{lem}\label{egiwetthtrehtrhethetheth}
 We have the relation
\begin{equation}\label{egiwetthtrehtrhethethethrgwergwregwe}
\hat i^{*,G}V^{G}_{*}\simeq V_{Y,*}^{G}\hat i^{*,G} \hat j^{G}_{*}\hat j^{*,G} \colon \Sh^{G}_{\bC}(X)\to\Sh^{G}_{\bC}(Y)\ .
\end{equation}
 \end{lem}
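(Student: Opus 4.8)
The plan is to reduce to the non-equivariant case and then to identify both sides, at the level of presheaf categories, with precomposition along the \emph{same} morphism of posets; the whole statement thereby collapses to an elementary identity relating the thinning operation $V(-)$ on $\cP_{X}$ with the thinning operation $V_{Y}(-)$ on $\cP_{Y}$. For this to make sense one first checks that all five functors occurring in the assertion are defined on the indicated sheaf categories. The subset $V(Y)$ of $X$ is $G$-invariant (since $V$ and $Y$ are) and we equip it with the induced coarse structure, so that $j\colon V(Y)\to X$ is a morphism in $G\Coarse$; the adjunction $(\hat j^{*,G},\hat j^{G}_{*})$ between sheaf categories is then provided by \cref{rqkjgqregergwgwregweg} applied to $j$, and similarly for $\hat i^{*,G}$. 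The entourage $V_{Y}=V\cap(Y\times Y)$ lies in $\cC_{Y}^{G,\Delta}$ by \cref{trbertheheht}, so $V^{G}_{Y,*}$ is an endofunctor of $\Sh^{G}_{\bC}(Y)$ by \cref{egiweogwergrgwgwerg} applied to $Y$, and $V^{G}_{*}$ is an endofunctor of $\Sh^{G}_{\bC}(X)$ by \cref{egiweogwergrgwgwerg}. As in the proofs of \cref{rgkoqregqregqqef} and \cref{wergiowggwergwergreg}, it then suffices to prove the corresponding relation of functors $\Sh_{\bC}(X)\to\Sh_{\bC}(Y)$ in the non-equivariant case and to pass to the limit over $BG$, all functors in sight being restrictions of $G$-equivariant ones.

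Next I unwind both composites on the level of presheaf categories. Recall from \cref{wiwoerthgwrtgwregergwrg} that $\hat i^{*}$ is precomposition with the inclusion $\cP_{Y}\hookrightarrow\cP_{X}$, that $\hat j_{*}\hat j^{*}$ is precomposition with the map $Z\mapsto Z\cap V(Y)$ on $\cP_{X}$, and that $V_{*}$ and $V_{Y,*}$ are precomposition with the thinnings $V(-)$ on $\cP_{X}$ and $V_{Y}(-)$ on $\cP_{Y}$. Hence, for $W$ in $\cP_{Y}$, the functor $\hat i^{*}V_{*}$ is precomposition along $W\mapsto V(W)$ with the thinning computed in $X$, while $V_{Y,*}\hat i^{*}\hat j_{*}\hat j^{*}$ is precomposition along $W\mapsto V_{Y}(W)\cap V(Y)$.

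The only computation is the set identity $V(W)=V_{Y}(W)\cap V(Y)$ for every $W$ in $\cP_{Y}$. If $V[\{x\}]\subseteq W$ then $V[\{x\}]\subseteq Y$, so $x\in V(Y)$ and $x\in Y$ (as $\diag(X)\subseteq V$), and $V_{Y}[\{x\}]=V[\{x\}]\cap Y=V[\{x\}]\subseteq W$, whence $x\in V_{Y}(W)$. Conversely, if $x\in V(Y)$ then $V[\{x\}]\subseteq Y$, so $V_{Y}[\{x\}]=V[\{x\}]$, and if in addition $x\in V_{Y}(W)$ then $V[\{x\}]\subseteq W$, i.e. $x\in V(W)$. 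Therefore the two morphisms of posets $\cP_{Y}\to\cP_{X}$ identified in the previous step coincide, so the non-equivariant presheaf-level composites are the same functor; they restrict to the sheaf subcategories by the first step, and passing to $\lim_{BG}$ gives the asserted equivalence.

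The main obstacle here is quite mild and purely a matter of bookkeeping: one must make sure that $V(Y)$ is an invariant subspace (so that $j$ is legitimate), that $V_{Y}\in\cC_{Y}^{G,\Delta}$, and — most importantly — that each of the five functors genuinely restricts from the presheaf level to the small colimit categories $\Sh^{G}_{\bC}(-)=\colim_{U\in\cC^{G}_{(-)}}\Sh^{U,G}_{\bC}(-)$, which is exactly what \cref{egiweogwergrgwgwerg} and \cref{rqkjgqregergwgwregweg} are used for. One should also keep track of the compatibility of ``shapes'' that makes the identity possible: the thinning $V(-)$ of a subset of $Y$, computed in $X$, automatically lands inside $V(Y)$, which is why $V_{Y,*}\hat i^{*}\hat j_{*}\hat j^{*}$ ends up evaluating $M$ on the same subsets as $\hat i^{*}V_{*}$.
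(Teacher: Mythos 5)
Your proof is correct and follows essentially the same route as the paper: reduce to the non-equivariant case via $\lim_{BG}$, and reduce everything to the set identity $V(i(Z))=i(V_{Y}(Z))\cap V(Y)$ for $Z$ in $\cP_{Y}$, which then transfers from presheaves to sheaves. You merely spell out the unwinding of the presheaf operations and the verification of the identity in more detail than the paper does.
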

\begin{proof}
 The non-equivariant case implies the equivariant case by passing to the limit over $BG$.
 For every {subset $Z$ of $Y$}
 we have the relation
 $V(i(Z))=i(V_{Y}(Z) )\cap V(Y)$. This implies the desired relation between operations on presheaves and therefore on sheaves. 
\end{proof}

\subsection{Equivariantly small sheaves on \texorpdfstring{$G\BC$}{$G$-bornological coarse spaces}}\label{regioergrgfewgwgregrwg}
 The category $\Sh^G_{\bC}(X)$ is too large to have interesting $K$-theoretic invariants since it always admits Eilenberg swindles. To remedy this defect, we introduce the concept of equivariantly small sheaves.
We equip the coarse spaces with an additional structure, a collection of subsets (called bounded subsets) satisfying the axioms of a bornology.  The  naive idea would then be to require that  a $\bC$-valued sheaf is equivariantly small if its values on bounded subsets belong to $\bC^{\omega}$, the full subcategory of cocompact objects in $\bC$. This condition {leads} to a well-behaved coarse homology theory, but this theory does not take the ``correct'' values; more specifically, \cref{prop:coeffs-sets} would fail for this theory. However, a sufficiently equivariant version of this condition turns out to behave exactly as desired.

Let $X$ be  {a $G$-set}.
\begin{ddd}\label{wthoiwhthwgreggwregwgr}
A $G$-bornology on $X$ is a subset $\cB_{X}$ of $\cP_{X}$ satisfying the following properties:
\begin{enumerate}
\item $\cB_{X}$ is $G$-invariant.
\item $\cB_{X}$ is closed under forming subsets and finite unions.
\item\label{qergoijiogrgqgreg} $\bigcup_{B\in \cB_{X}}B=X$. \qedhere
\end{enumerate}
\end{ddd}
 
A $G$-bornological space is a pair $(X,\cB_{X})$ of a $G$-set $X$ with a $G$-bornological structure $\cB_{X}$. 
The elements of $\cB_{X}$ will be called the bounded subsets of $X$.

\begin{ex}\label{ex:born}
 {\ \begin{enumerate}
  \item On every $G$-set $X$, there are both the minimal $G$-bornology, which contains only the finite subsets of $X$, and the maximal $G$-bornology, which contains all subsets of $X$.
  \item If $d$ is a metric on $X$ and $G$ acts by isometries, the collection of metrically bounded subsets defines a $G$-bornology on $X$. \qedhere
 \end{enumerate}}
\end{ex}
   
 Consider $G$-bornological spaces $(X,\cB_{X})$ and $(X^{\prime},\cB_{X^{\prime}})$ and a  {$G$-equivariant map between the underlying $G$-sets $f\colon X\to X^{\prime}$}.

\begin{ddd}\label{rgiojrgofdewqfe}\mbox{}
\begin{enumerate}
\item $f$ is proper if $f^{-1}(\cB_{X^{\prime}})\subseteq \cB_{X}$.
\item\label{rigojowergrewgwregwergwerwg} $f$ is bornological if $f(\cB_{X})\subseteq \cB_{X^{\prime}}$. \qedhere
\end{enumerate}
\end{ddd}

We denote by $G\Born$ the category of $G$-bornological spaces and proper maps.

\begin{ex}
 {We continue \cref{ex:born}. Let $f \colon X \to X'$ be a   morphism   of $G$-sets. 
\begin{enumerate}
 \item With respect to the minimal bornology on $X$ and $X'$, $f$ is bornological, but it is only proper if it is finite-to-one.
 \item At the other extreme, $f$ is proper and bornological with respect to the maximal bornologies.
 \item If both $X$ and $X'$ carry a metric, $f$ is proper with respect to the metric bornology on $X$ and $X'$ if and only if preimages of subsets with finite diameter also have finite diameter.
 Analogously, it is bornological precisely if images of subsets with finite diameter also have finite diameter.\qedhere 
\end{enumerate}}
\end{ex}

Let $X$ be a $G$-set with a $G$-coarse structure $\cC_{X}$ and a $G$-bornology $\cB_{X}$. 
\begin{ddd}\label{rgejqieogjrgoij1o4trqq}
$\cC_{X}$ and $\cB_{X}$ are compatible if for every $B$ in $\cB_{X}$ and $V$ in $\cC_{X}$ also $V[B]\in \cB_{X}$ (see \eqref{V-thick}).
\end{ddd}

 \begin{rem}
 {If $\cC_X$ and $\cB_X$ are compatible and $U$ is a coarse entourage of $X$, then every $U$-bounded subset $B$ of $X$ is bounded in the sense of the bornology: by definition, $B$ is contained in the $U$-thickening $U[\{x\}]$ for any $x$ in $B$.}
 \end{rem}
 
\begin{ddd}\label{gjwerogijwoergwergwergweg}A $G$-bornological coarse space is a triple $(X,\cC_{X},\cB_{X})$ (usually abbreviated by the symbol $X$) of a $G$-set $X$ with a $G$-coarse structure $\cC_{X}$ and a $G$-bornological structure $\cB_{X}$ such that $\cC_{X}$ and $\cB_{X}$ are compatible.
A morphism between $G$-bornological coarse spaces is a morphism of the underlying $G$-coarse spaces which is in addition proper.
\end{ddd}

In this way we get a category $G\BC$ of $G$-bornological coarse spaces. It comes with a forgetful functor \begin{equation}\label{gwegljgoiregwrgergregewg}
G\BC\to G\Coarse\ .
\end{equation}

\begin{ex}\label{qrgioqjrgoqrqfewfeqfqewfe} 
 For  {a $G$-set} $S$
 we can consider the objects $S_{max,max}$, $S_{min,max}$ and $S_{min,min}$ in $G\BC$, where
the first index $min$ or $max$ refers to the minimal or maximal $G$-coarse structure (\cref{ergiowghergregwreg}),
and the second $min$ or $max$ refers to the minimal {or maximal $G$-bornology (\cref{ex:born}).
Note that the object $S_{max,min}$ is ill-defined unless $S$ is finite}.
We actually get a functor 
\begin{equation}\label{qreoijqoiegjoqirffewfq}
i \colon  G\Set\to G\BC\ , \quad S\mapsto S_{min,max}\ .
\end{equation}
 {Sending $S$ to $S_{max,max}$ also defines a functor (which we are less interested in), whereas sending $S$ to $S_{min,min}$ is not functorial in maps of $G$-sets.}
\end{ex}

\begin{ex}
 {Let $(X,d)$ be a metric space with an isometric $G$-action.
 Then the metric coarse structure (\cref{ergiowghergregwreg}) and the metric bornology (\cref{ex:born}) combine to define a $G$-bornological coarse space $X_d$.}
\end{ex}

\begin{ex}\label{etwgokergpoergegregegwergrg}
 {On a discrete group $G$, the canonical coarse structure from \cref{ex:Gcan} and the minimal bornology combine to define a $G$-bornological coarse space $G_{can,min}$.}
\end{ex}

\begin{rem}
By dropping condition \cref{wthoiwhthwgreggwregwgr} \eqref{qergoijiogrgqgreg}, one gets an even better category of generalised $G$-bornological coarse spaces $G\widetilde{\BC}$ which (in contrast to $G\BC$, see the discussion in \cite[Sec. 2.2]{equicoarse}) is complete and cocomplete \cite{Heiss:aa}.
As shown in this reference, $G{\BC}$ and $G\widetilde{\BC}$ yield equivalent categories of equivariant coarse homology theories.
As the present paper builds on \cite{equicoarse}, \cite{Bunke:ac} and \cite{desc} working with $G\BC$, we keep using this category also in the present paper.
\end{rem}

\begin{rem}\label{rgiogjweoirgjwergwergwrg}
The category $G\BC$ has a symmetric monoidal structure $\otimes$. For $X,X^{\prime}$ in $G\BC$ the underlying $G$-set of $X\otimes X^{\prime}$ is $X\times X^{\prime}$ with the diagonal action. Its $G$-coarse structure is generated by the entourages $U\times U^{\prime}$ for  all $U$ in $\cC_{X}$ and $U^{\prime}$ in $\cC_{X^{\prime}}$, and its $G$-bornology is generated by the sets $B\times B^{\prime}$ for all $B$ in $\cB_{X}$ and $B^{\prime}$ in $\cB^{\prime}_{X}$. The forgetful functor \eqref{gwegljgoiregwrgergregewg}  is symmetric monoidal  {with respect to} $\otimes$  on $G\BC$ and  the cartesian product on $G\Coarse$ (which we also denote by $\otimes$).
See also \cite[Ex.~2.17]{equicoarse}.
\end{rem}

 By pulling back the functors from \eqref{ergwerggregwergregwgwegw}, \eqref{wergpok2pgegr5gwegwefv} and \eqref{regewgk2p5getgwreg} along the forgetful  functor \eqref{gwegljgoiregwrgergregewg}, we obtain functors
 \[ \PSh^{G},\Sh^{\pi_{0},G}  \colon G\BC\times \Fun(BG,\CL)\to \CL\]
  and
 {\begin{align}\label{2rfjknfkjnvwevevwerv}
\Sh^{G}\colon G\BC\times \Fun(BG,\CL)&\to \CLL \\
((X,\cC_X,\cB_X),\bC) &\mapsto \Sh^G_\bC((X,\cC_X))\ .\nonumber
\end{align}}
 {We proceed to introduce a second type of morphism between $G$-bornological coarse spaces which we call coverings.
Ultimately, we are going to show that sending a $G$-bornological coarse space to its category of controlled objects is covariantly functorial on $G\BC$ and contravariantly functorial with respect to coverings.}

Consider $G$-bornological coarse spaces $X,Y$
and a map $f \colon Y\to X$ between the underlying $G$-sets.
\begin{ddd}\label{ergwergggqrgqregqergq}
$f$ is a   covering if it has the following properties:
\begin{enumerate}
\item\label{wegkowegeregwegerg} $f$ is a coarse covering (\cref{wefgihjwiegwergrwrg}).
\item $f$ is bornological (\cref{rgiojrgofdewqfe} \eqref{rigojowergrewgwregwergwerwg}).
\item \label{qweiuqhfqfeffqeqf} For every $B$ in $\cB_{Y}$  there exists a finite bound
(depending on $B$)  on  the cardinality of the fibres of the map $\pi_{0}(B)\to \pi_{0}(X)$ (\cref{t4hgiorthgetrhtrhtheht}). \qedhere
\end{enumerate}
\end{ddd}

\begin{rem}\label{rgioweogerg2143fr}
\cref{ergwergggqrgqregqergq} is equivalent to \cite[Def.~2.14]{equicoarse}.
The condition \cref{ergwergggqrgqregqergq} \eqref{qweiuqhfqfeffqeqf} is preserved by forming finite unions or taking subsets. So it suffices to check this condition on a set of generators of the bornology $\cB_{Y}$.
\end{rem}

We obtain a category $G\Bc^{\dag}$ of $G$-bornological coarse spaces and  coverings.
  It  comes with a forgetful functor
\[ G\Bc^{\dag}\to  G\Coarse\ .\]

\begin{ex}\label{egowpegrrewfrewfwrefewfref}
Let $W$ be a $G$-set and $X$ be in $G\BC$. Then the projection
\[ W_{min,min}\otimes X\to X \]
is a covering (compare with \cref{ergwegegegeggw}).
 \cref{ergwergggqrgqregqergq} essentially says that a covering is locally a map of this type.
\end{ex}

 Using the existence of the left adjoints asserted
 in \cref{erguiwergrwgwrgwgwr} and \cref{unex-left},  we can consider sheaves  on $
G\Bc^{\dag}$ as   functors
\begin{align}
 \Sh^{\pi_{0},G,\dag} &\colon G\Bc^{\dag,\op}\times \Fun(BG,\CL)\to \CL \ , \nonumber\\
   \quad \Sh^{G,\dag}&\colon G\Bc^{\dag,\op}\times \Fun(BG,\CL)\to \CLL\ .\label{ergioegergwergerwgwerg}
\end{align}

Let $X$ be a $G$-bornological space and let $H$ be a subgroup of $G$.
\begin{ddd}\label{riugfhiqeurgfqwefqwefewf}
 A subset $Y$ of $X$ is called $H$-bounded if there exists a bounded subset $B$ of $X$
 such that $Y=HB$. 
 \end{ddd}

\begin{rem} 
Note that $H$-bounded subsets are $H$-invariant by definition.

The $G$-bounded subsets of $X$
generate a $G$-bornology denoted by $G\cB_{X}$. We have $\cB_{X}\subseteq G\cB_{X}$.

If $X$ is a $G$-bornological coarse space, then $G\cB_{X}$ is again compatible with the coarse structure.
\end{rem}

 Next, we introduce the notion of equivariant smallness and show that all relevant functors on sheaves preserve equivariantly small sheaves.
While this is relatively straightforward for most of them, the proof that the transfer functors along coverings preserve equivariant smallness (\cref{qergioewgergregwergergwergwreg}) requires some effort.

If $H$ is a subgroup of $G$ and $\cC$ is a complete $\infty$-category, then we can consider 
  the natural transformation
 \begin{equation}\label{rqefoijoirgjweoigerog1}
 r^{G}_{H}\colon \lim_{BG}\to  \lim_{BH} \circ \Res_H^G \colon \Fun(BG,\cC)\to \Fun(BH,\cC)\ .
\end{equation}
We write $\res^{G}_{H} \colon G\BC\to H\BC$ for the functor which restricts the action from $G$ to $H$ (we use the same notation also for the restriction of actions on other sorts of spaces). 
For a  $G$-bornological coarse space $X$ and $\bC$ in $\Fun(BG,\CL)$ we have an equivalence $\Res^{G}_{H}\Sh_{\bC}(X)\simeq
 \Sh_{\Res^{G}_{H}\bC}(\res^{G}_{H}X)$ in $\Fun(BH,\CLL)$ (compare with \eqref{qreoijqroifwefqfew}). We will usually drop the symbol $\Res^{G}_{H}$ in front of the $\infty$-category $\bC$, but we will always write $\res^{G}_{H}$ in front of space variables.
 The transformation \eqref{rqefoijoirgjweoigerog1} thus induces a left-exact functor
 \begin{equation}\label{rqefoijoirgjweoigerog}
r^{G}_{H} \colon  \Sh^{G}_{\bC}(X)\to \Sh^{H}_{\bC}(\res^{G}_{H}X)\ .\end{equation}
Varying $X$ these functors give rise to a natural transformation
\[ r^{G}_{H} \colon \Sh^{G}_{\bC} \to \Sh^{H}_{\bC}\circ \res^{G}_{H} \]
 of functors from $G\BC$ to $\CLL$.

 Let $X$ be  {a $G$-bornological coarse space},
 and let $\bC$ be in $\Fun(BG,\CL)$. For a subgroup $H$ of $G$ and an  $H$-invariant subset $Y$ of $X$ we have a left-exact evaluation functor
 \begin{equation}\label{evevevev}
\ev_Y \colon \Sh^G_\bC( X) \xrightarrow{r^{G}_{H}} \Sh^H_\bC(\res^{G}_{H}X) \xrightarrow{\hat i^{*,H}} \Sh^H_\bC(Y) \xrightarrow{\hat p^H_*} \Sh^H_\bC(*) \simeq \bC^H\ ,
\end{equation} 
 where $i \colon Y \to \res^{G}_{H}X$   and $p \colon Y \to *$ denote the  inclusion map and the projection, respectively. If $M$ is a sheaf on $X$,  then we write $M(Y):=\ev_{Y}(M)$ for the value of $M$ on $Y$ considered as an object of  {$\bC^H$}.
  
 Let $M$ be in $\Sh_{\bC}^{G}(X)$.
\begin{ddd}\label{qreigoqrgwergwrgre}
We call $M$ equivariantly small if  {$M(Y)$ is a cocompact object of $\bC^H$}
for all subgroups $H$ of $G$ and all $H$-bounded subsets $Y$ of $X$.   \end{ddd}
 
Let $X$ be  {a $G$-bornological coarse space},
and let $\bC$ be in $\Fun(BG,\CL)$.
\begin{ddd}	\label{def:eqsm}
	 We denote by $\Sh^{G,\eqsm}_{\bC}(X)$ the full subcategory of $\Sh_{\bC}^{G}(X)$ of equivariantly small objects.
\end{ddd}

\begin{lem}\label{rqwpokopfqwfewfewfqfqefqfqwef}
	We have
	$\Sh^{G,\eqsm}_{\bC}(X)\in \Cle$.
\end{lem}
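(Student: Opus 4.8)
The plan is to verify directly the three properties required for membership in $\Cle$: existence of a zero object, existence of finite limits, and essential smallness. The first two are inherited cheaply from the fact that $\Sh^{G}_{\bC}(X)$ already lies in $\CLL$, so the real content is smallness.

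For the zero object and finite limits I would show that the inclusion $\Sh^{G,\eqsm}_{\bC}(X)\hookrightarrow\Sh^{G}_{\bC}(X)$ creates them. Recall that for every subgroup $H\le G$ and every $H$-invariant subset $Y\subseteq X$ the evaluation functor $\ev_{Y}\colon\Sh^{G}_{\bC}(X)\to\bC^{H}$ from \eqref{evevevev} is left-exact, and that $\bC^{H,\omega}\subseteq\bC^{H}$ contains the zero object and is closed under finite limits --- this last point because $\bC^{H,\omega}$ corresponds, under passage to opposite categories, to the full subcategory of compact objects of the pointed compactly generated presentable $\infty$-category $(\bC^{H})^{\op}$, and compact objects are stable under finite colimits and contain the initial object. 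Hence, if $M_{\bullet}$ is a finite diagram in $\Sh^{G,\eqsm}_{\bC}(X)$ and $M$ denotes its limit formed in $\Sh^{G}_{\bC}(X)$, then for every subgroup $H$ and every $H$-bounded $Y$ we have $M(Y)\simeq\lim\ev_{Y}(M_{\bullet})\in\bC^{H,\omega}$, so $M$ is again equivariantly small; likewise the zero sheaf is equivariantly small. It follows that $\Sh^{G,\eqsm}_{\bC}(X)$ is pointed, admits finite limits, and that the inclusion into $\Sh^{G}_{\bC}(X)$ is left-exact.

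For essential smallness I would first reduce to a single coarse entourage. By \eqref{qewfqwefjhb1ji234rwefqewff}, $\Sh^{G}_{\bC}(X)=\colim_{U\in\cC^{G}_{X}}\Sh^{U,G}_{\bC}(X)$ is a filtered colimit along fully faithful inclusions over the small poset $\cC^{G}_{X}$, so since a filtered colimit over a small index of essentially small $\infty$-categories is essentially small, it suffices to show that each $\cA_{U}:=\Sh^{U,G}_{\bC}(X)\cap\Sh^{G,\eqsm}_{\bC}(X)$ is essentially small. Fix $U\in\cC^{G}_{X}$. The crucial point is that, since $X$ lies in $G\BC$ and $\cC_{X}$ is compatible with $\cB_{X}$ (\cref{rgejqieogjrgoij1o4trqq}), every nonempty $U$-bounded subset $B$ is contained in $U[\{x\}]$ for any chosen $x\in B$, and $U[\{x\}]\in\cB_{X}$ because $\{x\}\in\cB_{X}$ (\cref{wthoiwhthwgreggwregwgr}); hence every $U$-bounded subset of $X$ is bounded. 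Applying \cref{qreigoqrgwergwrgre} with the trivial subgroup then shows that for every $M\in\cA_{U}$ the underlying presheaf $e(M)$ takes cocompact values on every $U$-bounded subset; equivalently, the restriction of $e(M)$ along $i\colon\cP_{X}^{U\bd}\hookrightarrow\cP_{X}$ factors through the full subcategory $\bC^{\omega}$, which is essentially small.

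To finish I would use that the proof of \cref{qrgioergrewwergwgregrweg} exhibits $i^{*}$ and $i_{*}$ as mutually inverse equivalences $\Sh^{U}_{\bC}(X)\simeq\Fun((\cP_{X}^{U\bd})^{\op},\bC)$, and that this equivalence is $G$-equivariant because $U$, hence the sub-poset $\cP_{X}^{U\bd}$, is $G$-invariant. Taking $\lim_{BG}$ identifies $\Sh^{U,G}_{\bC}(X)$ with $\lim_{BG}\Fun((\cP_{X}^{U\bd})^{\op},\bC)$, and under this identification $\cA_{U}$ lands inside the full subcategory of those objects whose underlying functor lies in $\cE_{0}:=\Fun((\cP_{X}^{U\bd})^{\op},\bC^{\omega})$. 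Since the $G$-action on $\bC$ is by equivalences, $\bC^{\omega}$ is a $G$-invariant full subcategory of $\bC$, so $\cE_{0}$ is a $G$-invariant full subcategory of $\Fun((\cP_{X}^{U\bd})^{\op},\bC)$, whence that subcategory is the essential image of the fully faithful functor $\lim_{BG}\cE_{0}\to\lim_{BG}\Fun((\cP_{X}^{U\bd})^{\op},\bC)$. As $\cP_{X}^{U\bd}$ is small and $\bC^{\omega}$ is essentially small, $\cE_{0}$ is essentially small, and therefore so is $\lim_{BG}\cE_{0}$, essentially small $\infty$-categories being stable under small limits. Thus $\cA_{U}$ is essentially small, and hence so is $\Sh^{G,\eqsm}_{\bC}(X)=\colim_{U}\cA_{U}$. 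I expect the only genuinely load-bearing input to be the observation that $U$-bounded subsets are automatically bounded --- this is precisely what makes equivariant smallness constrain the sheaf already on the poset $\cP_{X}^{U\bd}$ that determines it --- while the remaining difficulty is the routine but fiddly bookkeeping of the $G$-equivariance of the various equivalences and subcategory inclusions.
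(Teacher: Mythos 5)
Your proof is correct. The zero-object/finite-limits part coincides with the paper's argument: the evaluation functors $\ev_Y$ are left-exact and $\bC^{H,\omega}$ is closed under finite limits, so the equivariantly small sheaves form a left-exact full subcategory. For essential smallness you take a somewhat different route: you reduce to a fixed invariant entourage $U$ via the filtered colimit \eqref{qewfqwefjhb1ji234rwefqewff}, identify $\Sh^{U,G}_{\bC}(X)$ with $\lim_{BG}\Fun((\cP^{U\bd}_X)^{\op},\bC)$ through the $i^*/i_*$ construction of \cref{qrgioergrewwergwgregrweg}, and note that equivariantly small sheaves land in the essentially small subcategory of $\bC^{\omega}$-valued functors, before reassembling over $\cC^G_X$. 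The paper avoids the entourage-by-entourage decomposition: since the bounded subsets form a $U$-covering family of every subset for \emph{every} entourage $U$, the restriction functor $\Sh^G_\bC(X)\to\Fun^G(\cB_X^{\op},\bC)$ is fully faithful all at once, and equivariant smallness then exhibits $\Sh^{G,\eqsm}_\bC(X)$ as a full subcategory of the essentially small $\Fun^G(\cB_X^{\op},\bC^{\omega})$. Both arguments rest on the same load-bearing observation you isolate, namely that for a coarse entourage $U$ the compatibility of $\cC_X$ and $\cB_X$ makes every $U$-bounded subset bounded, so that equivariant smallness already constrains the sheaf on the data that determines it; your version simply pays the extra bookkeeping of the colimit over $\cC^G_X$ and of the equivariance of the equivalence with the functor category, which the paper's uniform restriction to $\cB_X$ renders unnecessary.
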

\begin{proof}
	 Since $\ev_Y$ preserves finite limits and finite limits of cocompact objects are cocompact, $\Sh^{G,\eqsm}_\bC(X)$ is closed under taking finite limits.
This implies that $\Sh^{G,\eqsm}_{\bC}(X)\in \CLL$. It remains to show that $\Sh^{G,\eqsm}_{\bC}(X)$ is essentially small.
 
For every {subset $Y$ of $X$},
the family $(B)_{B \in (\cB_X)_{/Y}}$ is a $U$-covering family of $Y$ for every entourage $U$.
It follows that
  the restriction functor $\Sh^G_\bC(X) \to \Fun^G(\cB_X^\op,\bC)$ is fully faithful.
	Since the values of equivariantly small sheaves on bounded subsets are cocompact, this exhibits $\Sh^{G,\eqsm}_\bC(X)$ as a full subcategory of $\Fun^G(\cB_X^\op,\bC^\omega)$. The latter $\infty$-category is essentially small, so $\Sh^{G,\eqsm}_\bC(X)$ is  essentially small, too.
\end{proof}    

Let $f\colon X\to X^{\prime}$ be a morphism {of $G$-bornological coarse spaces}.

\begin{lem}\label{eriogwetgwegregwrgregwrg}
	The morphism $\hat f^{G}_{*}$ from \eqref{hfjrefewrfwrefwergf}  preserves equivariantly small objects.
\end{lem}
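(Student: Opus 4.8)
The plan is to unwind \cref{def:eqsm} and \cref{qreigoqrgwergwrgre}: I must show that for every subgroup $H$ of $G$ and every $H$-bounded subset $Y'$ of $X'$ the object $\ev_{Y'}(\hat f^G_* M)$ of $\bC^H$ is cocompact, given that $M$ in $\Sh^G_\bC(X)$ is equivariantly small. The key point will be to identify $\ev_{Y'}(\hat f^G_* M)$ with $\ev_{f^{-1}(Y')}(M)$ and to observe that $f^{-1}(Y')$ is again $H$-bounded because $f$ is proper.

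First I would carry out the identification. The functor $\hat f^G_*$ is obtained from the presheaf pushforward $\hat f_*$ of \eqref{ad-hat-f} — precomposition along the set map $f^{-1}(-)\colon \cP_{X'}\to\cP_X$ — by restricting to sheaves and applying $\lim_{BG}$; since this construction only involves the posets $\cP_{(-)}$ it is compatible with restricting the group action, so $r^G_H(\hat f^G_* M)\simeq \hat f^H_*(r^G_H M)$ in $\Sh^H_\bC(\res^G_H X')$, where now $f$ denotes $\res^G_H f$. Plugging this into the definition \eqref{evevevev} of $\ev_{Y'}$, and using that $\hat i^{*,H}$ followed by $\hat p^H_*$ simply records the value of a sheaf on the invariant subset $Y'$ with its residual $H$-action, while $\hat f^H_*$ is precomposition with $f^{-1}(-)$, I get a natural equivalence $\ev_{Y'}(\hat f^G_* M)\simeq \ev_{f^{-1}(Y')}(M)$ in $\bC^H$; here $f^{-1}(Y')$ is an $H$-invariant subset of $X$, because $f$ is $G$-equivariant and hence $H$-equivariant.

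It remains to see that $f^{-1}(Y')$ is $H$-bounded. By \cref{riugfhiqeurgfqwefqwefewf} we may write $Y' = HB'$ with $B'$ in $\cB_{X'}$; equivariance of $f$ gives $f^{-1}(Y') = Hf^{-1}(B')$, and $f^{-1}(B')\in\cB_X$ since $f$, being a morphism in $G\BC$, is proper (\cref{gjwerogijwoergwergwergweg}, \cref{rgiojrgofdewqfe}). Hence $f^{-1}(Y')$ is $H$-bounded, so $\ev_{f^{-1}(Y')}(M)\in\bC^{H,\omega}$ by equivariant smallness of $M$, and therefore $\ev_{Y'}(\hat f^G_* M)\in\bC^{H,\omega}$ by the equivalence of the previous step. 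The main obstacle is purely bookkeeping in the second paragraph — verifying that $\hat f^G_*$ commutes with the restriction transformation $r^G_H$ and that evaluation of the pushforward on a subset agrees $H$-equivariantly with evaluation of $M$ on its preimage — and this is a formal consequence of the set-level nature of all the functors involved; once it is in place, properness of $f$ closes the argument immediately.
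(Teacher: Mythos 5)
Your proof is correct and follows exactly the paper's argument: the paper's own proof is the two-line observation that $f^{-1}(Y')$ is $H$-bounded (by properness and equivariance of $f$) and that $\hat f^G_*M(Y')\simeq M(f^{-1}(Y'))$, which is precisely what you establish, with the bookkeeping about $r^G_H$ and the evaluation functors spelled out in more detail.
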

\begin{proof} Let $M$ be in $\Sh^{G,\eqsm}_\bC(X)$.
	 If $Y'$ is an $H$-bounded subset of $X'$, then $f^{-1}(Y')$ is an $H$-bounded subset of $ X$. Consequently, $\hat f^G_*M(Y') \simeq M(f^{-1}(Y')) \in \bC^{H,\omega}$.
\end{proof}

Let $X$ be {a $G$-bornological coarse space},
and let $V$ be {be a $G$-invariant coarse entourage of $X$ containing the diagonal}.
Recall the endofunctor $ V_{*}^{G}$ of $\Sh_{\bC}^{G}(X) $ from \cref{egiweogwergrgwgwerg}. 
\begin{lem}\label{rgiwoegwergrwegwregwrg}
	The endofunctor $V_{*}^{G}$  preserves equivariantly small objects.
\end{lem}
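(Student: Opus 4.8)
The plan is to compute the value of $V^G_*M$ on an $H$-bounded subset in terms of $M$, and then to observe that the thinning operation $V(-)$ sends $H$-bounded subsets to $H$-bounded subsets.

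First I would fix a subgroup $H$ of $G$ and an $H$-bounded subset $Y$ of $X$ and unwind $\ev_Y(V^G_*M)$. Since restricting the group action does not alter the coarse structure, $V$ is an $H$-invariant coarse entourage of $\res^G_HX$ containing the diagonal, and the thinning endofunctor $V_*$ of $\PSh_\bC(X)$ — which preserves sheaves by \cref{equi-sheaffff} and restricts to $V^G_*$ after applying $\lim_{BG}$, see \cref{egiweogwergrgwgwerg} — is a morphism in $\Fun(BG,\CLL)$ whose restriction along $BH\to BG$ is again the corresponding thinning endofunctor. Naturality of $r^G_H$ from \eqref{rqefoijoirgjweoigerog1} therefore yields an equivalence $r^G_HV^G_*M\simeq V^H_*r^G_HM$. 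Unwinding the definition of $\ev_Y$ in \eqref{evevevev} (restriction to $Y$ followed by the pushforward to the point) and using that $V_*$ is precomposition with the poset map $V(-)$, I would conclude that $\ev_Y(V^G_*M)\simeq M(V(Y))$, where $V(Y)$ denotes the $V$-thinning of $Y$; note that $V(Y)$ is an $H$-invariant subset of $X$ because $V$ is $G$-invariant and $Y$ is $H$-invariant, so the right-hand side is a well-defined object of $\bC^H$.

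Next I would show that $V(Y)$ is again $H$-bounded. As $\diag(X)\subseteq V$, every $x$ lies in $V[\{x\}]$, so $V[\{x\}]\subseteq Y$ forces $x\in Y$; hence $V(Y)\subseteq Y$. Write $Y=HB$ with $B\in\cB_X$. Then $V(Y)\cap B$ belongs to $\cB_X$ because the bornology is closed under subsets. I claim $V(Y)=H\bigl(V(Y)\cap B\bigr)$: the inclusion $\supseteq$ holds since $V(Y)$ is $H$-invariant, and for $\subseteq$ any $x\in V(Y)\subseteq HB$ can be written $x=hb$ with $h\in H$ and $b\in B$, and then $b=h^{-1}x\in V(Y)$ by $H$-invariance, so $b\in V(Y)\cap B$ and $x\in H(V(Y)\cap B)$. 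Thus $V(Y)$ is $H$-bounded in the sense of \cref{riugfhiqeurgfqwefqwefewf}.

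Finally, since $M$ is equivariantly small and $V(Y)$ is $H$-bounded, $M(V(Y))\in\bC^{H,\omega}$, and by the first step this object is $\ev_Y(V^G_*M)$. As $H$ and $Y$ were arbitrary, $V^G_*M$ is equivariantly small. The only slightly delicate point is the bookkeeping in the first step — commuting $V_*$ past $r^G_H$ and identifying restriction-and-pushforward-to-a-point with evaluation at $V(Y)$ — but this is purely formal once one recalls that every functor involved is induced by a map of the underlying posets of subsets.
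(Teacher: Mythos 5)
Your proof is correct and follows essentially the same route as the paper: both reduce the claim to the identification $(V^{G}_{*}M)(Y)\simeq M(V(Y))$ in $\bC^{H}$ together with the observation that the thinning $V(Y)$ of an $H$-bounded subset is again $H$-bounded. The only difference is in presentation: the paper carries out the middle identification through its chain of sheaf-level equivalences (via \cref{egiwetthtrehtrhethetheth} and \cref{rqkjgqregergwgwregweg}) where you appeal to the poset-level descriptions of the functors, while you spell out the $H$-boundedness of $V(Y)$ in more detail than the paper, which merely asserts it.
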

\begin{proof} Let $M$ be in $\Sh^{G,\eqsm}_\bC(X)$ and assume that $Y$ is an $H$-bounded subset of $X$. We must show that $(V_{*}^{G}M)(Y) \in \bC^{H,\omega}$.
 
Note that  $V(Y)$ is also $H$-bounded. Let $i \colon Y\to \res^{G}
_{H}X$, $j \colon V(Y)\to \res^{G}_{H}X$ and $k \colon V(Y)\to Y$ denote the inclusions, and let $p \colon Y\to *$ and $q \colon V(Y)\to *$ denote  the projections. We have the following chain of equivalences
\begin{eqnarray*}
	(V_{*}^{G}M)(Y)&\stackrel{\mathrm{def}}{\simeq}&\ev_{Y}(V^{G}_{*}M)\\&\stackrel{\eqref{evevevev}}{\simeq}&  \hat p^{H}_{*} \hat i^{*,H} r^{G}_{H}V^{G}_{*}M\\&  \simeq&
 \hat p^{H}_{*} \hat i^{*,H} V^{H}_{*} r^{G}_{H}M\\&\stackrel{\small \cref{egiwetthtrehtrhethetheth}}{\simeq}&
  \hat p^{H}_{*}  V^{H}_{Y,*}  i^{*,H}\hat j^{H}_{*} \hat j^{*,H}  r^{G}_{H}M\\
 &\stackrel{j=i\circ k}{\simeq}& \hat p^{H}_{*}  V^{H}_{Y,*}  i^{*,H}\hat i^{H}_{*}\hat k^{H}_{*} \hat j^{*,H}  r^{G}_{H}M\\ 
  &\stackrel{\small \cref{rqkjgqregergwgwregweg}}{\simeq}&
    \hat p^{H}_{*}  V^{H}_{Y,*}   \hat k_{*}^{H} \hat j^{*,H}   r^{G}_{H}M\\&\stackrel{V_{Y}(Y)\cap V(Y)=V(Y)}{\simeq}&
    \hat q^{H}_{*}    \hat j^{*,H}  r^{G}_{H}M\\&\stackrel{\eqref{evevevev}}{\simeq}&
    \ev_{V(Y)}(M)\\ &\stackrel{\mathrm{def}}{\simeq}&M(V(Y))\ .
  \end{eqnarray*}
  Consequently, we have $(V^G_*M)(Y) \simeq M(V(Y)) \in \bC^{H,\omega}$.
\end{proof}

Let $\phi\colon \bC\to \bC^{\prime}$ be a morphism in $\Fun(BG,\CL)$.
\begin{lem}\label{qrkgoqrgergwrgwegwrg}
	The morphism $\hat \phi^{G}_{*}\colon  \Sh_{\bC}^{G}(X)\to \Sh_{\bC^{\prime}}^{G}(X)$ preserves equivariantly small objects.
\end{lem}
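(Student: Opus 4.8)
The statement to be proved is \cref{qrkgoqrgergwrgwegwrg}: the pushforward $\hat\phi^{G}_{*}\colon\Sh^{G}_{\bC}(X)\to\Sh^{G}_{\bC'}(X)$ preserves equivariantly small objects. The natural strategy is to reduce the claim, subgroup by subgroup and $H$-bounded subset by $H$-bounded subset, to the defining condition in \cref{qreigoqrgwergwrgre}, and then to understand how the functor $\hat\phi^{G}_{*}$ interacts with the evaluation functors $\ev_{Y}$ from \eqref{evevevev}.

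First I would observe that $\hat\phi^{G}_{*}$ is the limit over $BG$ of the $G$-equivariant functor $\hat\phi_{*}$ on presheaf categories, restricted to sheaves (adjunction \eqref{phi-g-g}, corollary \eqref{rbhwtibuhwibwetbweb}), and that $\hat\phi_{*}$ is computed by postcomposition with $\phi$ at every subset $Y\in\cP_{X}$. Consequently the entire chain of functors defining $\ev_{Y}$ — the restriction $r^{G}_{H}$ from \eqref{rqefoijoirgjweoigerog}, the pullback $\hat i^{*,H}$ along an inclusion, and the pushforward $\hat p^{H}_{*}$ to the point — commutes with $\hat\phi_{*}$, because each of these is built out of precomposition with poset maps and limits, all of which are compatible with postcomposition by the limit-preserving functor $\phi$. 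Making this precise gives a natural equivalence $\ev_{Y}\circ\hat\phi^{G}_{*}\simeq\phi^{H}\circ\ev_{Y}\colon\Sh^{G}_{\bC}(X)\to\bC'^{H}$ for every subgroup $H$ and every $H$-invariant subset $Y$, where $\phi^{H}:=\lim_{BH}\Res^{G}_{H}\phi\colon\bC^{H}\to\bC'^{H}$ is the induced functor on the $BH$-limits. (Here one uses, as in the proof of \cref{rjqorepqerfewregregwreg25}, that $\lim_{BH}$ of a full subcategory inclusion is again a full subcategory inclusion, so these identities, which are clear on presheaves, descend to sheaves and to the equivariant categories.)

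Given this compatibility, the proof concludes quickly. Let $M$ be in $\Sh^{G,\eqsm}_{\bC}(X)$, let $H\le G$ be a subgroup, and let $Y$ be an $H$-bounded subset of $X$. By \cref{qreigoqrgwergwrgre} we have $M(Y)=\ev_{Y}(M)\in\bC^{H,\omega}$. Now $\phi\colon\bC\to\bC'$ is a morphism in $\CL$, i.e.\ (by the conventions recalled at the start of \cref{egihweogergwreggwrgr}) a right adjoint functor whose left adjoint commutes with cofiltered limits; passing to opposite categories, this says that the corresponding left adjoint $\phi^{\op}$ is an $\omega$-continuous (filtered-colimit-preserving) functor between compactly generated presentable $\infty$-categories, hence preserves compact objects, and therefore $\phi$ preserves cocompact objects. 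The same is true after restricting to $H$ and passing to $\lim_{BH}$, so $\phi^{H}$ sends $\bC^{H,\omega}$ to $\bC'^{H,\omega}$. Therefore $(\hat\phi^{G}_{*}M)(Y)=\ev_{Y}(\hat\phi^{G}_{*}M)\simeq\phi^{H}(\ev_{Y}(M))=\phi^{H}(M(Y))\in\bC'^{H,\omega}$. Since $H$ and $Y$ were arbitrary, $\hat\phi^{G}_{*}M$ is equivariantly small, as required. \qed

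The only genuinely non-formal point is the identity $\ev_{Y}\circ\hat\phi^{G}_{*}\simeq\phi^{H}\circ\ev_{Y}$: one must check that $\hat\phi_{*}$ commutes past the pullback $\hat i^{*}$ along $Y\hookrightarrow X$ and past the (point-)pushforward $\hat p_{*}$, and that these commutations are $G$- (resp.\ $H$-)equivariant so that they survive $\lim_{BH}$. The commutation with $\hat i^{*}$ is immediate since both are precomposition functors in disjoint variables; the commutation with $\hat p_{*}$ (a right Kan extension along $Y\to *$) holds because $\phi$, being in $\CL$, preserves all small limits, so postcomposition with $\phi$ commutes with the pointwise limit formula for $\hat p_{*}$. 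I expect this bookkeeping to be the main — though still routine — obstacle; everything else is a direct appeal to \cref{qreigoqrgwergwrgre} and to the preservation of cocompact objects by morphisms in $\CL$.
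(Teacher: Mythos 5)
Your proof is correct and follows essentially the same route as the paper: identify $\ev_{Y}\circ\hat\phi^{G}_{*}\simeq\phi^{H}\circ\ev_{Y}$ and then use that $\phi^{H}$, being a morphism in $\CL$, preserves cocompact objects. One small remark: your detour through opposite categories ("$\omega$-continuous, hence preserves compact objects") is not a valid implication as stated, but it is also unnecessary, since preservation of cocompact objects is built directly into the definition of morphisms in $\CL$ (\cref{rgqeroqjkopqwefewfefqef}), which is exactly what the paper invokes.
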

\begin{proof}
 Note that  $\phi^H \colon \bC^H \to \bC^{\prime,H}$, being a  
  morphism in $\CL$,  preserves cocompact objects. 
  Let $M$ be  {an equivariantly small sheaf on $X$}.
  If $Y$ is an $H$-bounded subset of $X$, then $\hat \phi^{G}_{*}M(Y)\simeq \phi^{H}(M(Y))\in \bC^{\prime,H,\omega}$.
\end{proof}

As a consequence of \cref{rqwpokopfqwfewfewfqfqefqfqwef,eriogwetgwegregwrgregwrg,qrkgoqrgergwrgwegwrg} we get:
\begin{kor}
	We have a subfunctor
	\[ \Sh^{G,\eqsm}\colon G\BC\times \Fun(BG,\CL)\to \Cle \]
	of $\Sh^{G}$ from \eqref{2rfjknfkjnvwevevwerv}.
\end{kor}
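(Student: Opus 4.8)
The plan is to obtain $\Sh^{G,\eqsm}$ simply as the restriction of the functor $\Sh^{G}$ from \eqref{2rfjknfkjnvwevevwerv} to the objectwise full subcategories of equivariantly small sheaves, and to verify that this restriction is well defined on objects, well defined on morphisms, and lands in $\Cle$. Since all the substantive content is already supplied by \cref{rqwpokopfqwfewfewfqfqefqfqwef,eriogwetgwegregwrgregwrg,qrkgoqrgergwrgwegwrg}, what remains is essentially the formal $\infty$-categorical bookkeeping needed to promote ``objectwise full subcategory'' to ``subfunctor''.

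First I would dispose of the object level. For every pair $(X,\bC)$ in $G\BC\times\Fun(BG,\CL)$, \cref{rqwpokopfqwfewfewfqfqefqfqwef} shows that $\Sh^{G,\eqsm}_{\bC}(X)$ is an object of $\Cle$. As recorded in the proof of that lemma, $\Sh^{G,\eqsm}_{\bC}(X)$ is closed under finite limits in $\Sh^{G}_{\bC}(X)$, and it contains the zero sheaf, whose value on every $H$-bounded subset is the zero object of $\bC^{H}$ and hence cocompact; therefore the inclusion $\Sh^{G,\eqsm}_{\bC}(X)\hookrightarrow\Sh^{G}_{\bC}(X)$ is a left-exact functor, i.e.\ a morphism in $\CLL$.

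Next I would treat the morphism level. A morphism $(f,\phi)\colon(X,\bC)\to(X',\bC')$ in the product category factors up to canonical equivalence as $(f,\id_{\bC'})\circ(\id_{X},\phi)$, so that $\Sh^{G}(f,\phi)\simeq\hat f^{G}_{*}\circ\hat\phi^{G}_{*}$, with $\hat\phi^{G}_{*}\colon\Sh^{G}_{\bC}(X)\to\Sh^{G}_{\bC'}(X)$ and $\hat f^{G}_{*}\colon\Sh^{G}_{\bC'}(X)\to\Sh^{G}_{\bC'}(X')$. By \cref{qrkgoqrgergwrgwegwrg} the functor $\hat\phi^{G}_{*}$ preserves equivariantly small objects, and by \cref{eriogwetgwegregwrgregwrg} so does $\hat f^{G}_{*}$; therefore $\Sh^{G}(f,\phi)$ restricts to a functor $\Sh^{G,\eqsm}_{\bC}(X)\to\Sh^{G,\eqsm}_{\bC'}(X')$, which is left-exact, being the restriction of the limit-preserving functor $\Sh^{G}(f,\phi)$ to full subcategories that are closed under finite limits.

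Finally I would assemble these pointwise and morphism-level data into an honest subfunctor. Passing to the cocartesian fibration over $G\BC\times\Fun(BG,\CL)$ classified by $\Sh^{G}$ (composed with the forgetful functor to $\CATi$), the two preceding paragraphs say exactly that the full subcategory of its total space spanned by the equivariantly small sheaves is again a cocartesian fibration over the same base and that its inclusion is a map of cocartesian fibrations; straightening this inclusion yields a natural transformation $\Sh^{G,\eqsm}\to\Sh^{G}$ which is objectwise a full-subcategory inclusion and, by the object-level step, factors through $\Cle\hookrightarrow\CLL$. This assembly is the only step requiring any care; it involves no new geometric input beyond the three cited lemmas, so I expect it to be routine.
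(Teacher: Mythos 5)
Your proposal is correct and matches the paper's argument: the paper states this corollary as an immediate consequence of \cref{rqwpokopfqwfewfewfqfqefqfqwef,eriogwetgwegregwrgregwrg,qrkgoqrgergwrgwegwrg}, which is exactly the content of your object-level and morphism-level steps. The extra fibrational bookkeeping you spell out for assembling the subfunctor is the routine step the paper leaves implicit, and it is carried out correctly.
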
 

Let $X$ be  {a $G$-bornological coarse space},
and let $i \colon Y \to X$ be the inclusion of a $G$-invariant subset.

\begin{lem}\label{lem:restrict-eqsm}
	 The morphism $\hat i^{*,G}$ from \eqref{fvsnejkvwevrevwv} preserves equivariantly small objects.
\end{lem}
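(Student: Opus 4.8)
The plan is to reduce the assertion to the equivariant smallness of $M$ itself by showing that the value of $\hat i^{*,G}M$ on an $H$-bounded subset of $Y$ agrees with the value of $M$ on the same subset, now regarded inside $X$. The starting observation is bornological: since $Y$ carries the structure induced from $X$ as a subspace, one has $\cB_Y\subseteq\cB_X$, so every $H$-bounded subset $Z$ of $Y$ (say $Z=HB$ with $B\in\cB_Y$) is automatically an $H$-bounded subset of $X$. Hence $M(Z)\in\bC^{H,\omega}$ by equivariant smallness of $M$, and it remains to identify $(\hat i^{*,G}M)(Z)$ with $M(Z)$.

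To do this I would fix a subgroup $H\le G$ and an $H$-bounded subset $Z$ of $Y$, and compute $(\hat i^{*,G}M)(Z)=\ev_Z(\hat i^{*,G}M)$ by unwinding the factorisation \eqref{evevevev} of $\ev_Z$ over the space $Y$, namely $\hat p^H_*\circ\hat\iota^{*,H}\circ r^G_H$ with $\iota\colon Z\to\res^G_HY$ the inclusion. Two ingredients enter. First, $r^G_H$ commutes with restriction of sheaves to an invariant subspace, i.e.\ $r^G_H\circ\hat i^{*,G}\simeq\widehat{(\res^G_Hi)}^{*,H}\circ r^G_H$; this is forced by the construction of $r^G_H$, since levelwise on the presheaf categories this is literally the same pullback operation, so it commutes before — and hence after — forming the (co)limits over $BG$, $BH$ (the analogous compatibility with $V^G_*$ was already used in the proof of \cref{rgiwoegwergrwegwregwrg}). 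Second, pullbacks along inclusions compose: $\hat\iota^{*,H}\circ\widehat{(\res^G_Hi)}^{*,H}\simeq\hat\kappa^{*,H}$, where $\kappa\colon Z\to\res^G_HX$ is the inclusion of $Z$ directly into $X$. Chaining these, $\ev_Z(\hat i^{*,G}M)$ becomes $\hat p^H_*\hat\kappa^{*,H}r^G_HM$, which is exactly $\ev_Z(M)=M(Z)$ by \eqref{evevevev} applied to the space $X$. Since $\hat i^{*,G}M$ is a sheaf on $Y$ by \cref{rqkjgqregergwgwregweg}, this shows $\hat i^{*,G}M\in\Sh^{G,\eqsm}_\bC(Y)$.

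This argument is pure bookkeeping, and I do not expect a genuine obstacle; the only point that warrants a moment's care is compatibility (i), the interchange of group restriction with subspace restriction of sheaves, but this is immediate from the way $r^G_H$ is defined (applying $\lim_{BG}$ versus $\lim_{BH}\Res^G_H$ to the same natural construction) and is of the same flavour as the interchanges already established for $\hat f^G_*$ and $V^G_*$.
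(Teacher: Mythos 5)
Your proposal is correct and follows essentially the same route as the paper: the whole point is that an $H$-bounded subset $Z$ of $Y$ is also $H$-bounded in $X$, so that $\hat i^{*,G}M(Z)\simeq M(i(Z))\in\bC^{H,\omega}$ by equivariant smallness of $M$. The paper simply asserts this identification of values, while you spell it out by unwinding \eqref{evevevev}; that extra bookkeeping is fine but adds nothing beyond the paper's one-line argument.
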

\begin{proof}
	Any $H$-bounded subset $Z$ of $Y$ is also an $H$-bounded subset of $X$, so $\hat i^{*,G}M(Z) \simeq M(i(Z)) \in \bC^{H,\omega}$.
\end{proof}

Let $p \colon Z \to X$ be a  {a covering of $G$-bornological coarse spaces}.

\begin{prop}\label{qergioewgergregwergergwergwreg}
	The morphism $L^{\pi_{0},G}\hat p^{*,G}\colon \Sh_{\bC}^{G}(X)\to \Sh_{\bC}^{G}(Z)$
	preserves equivariantly small objects.
\end{prop}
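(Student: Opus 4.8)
The plan is to strip away the group action and the bornology in turn and to reduce the statement to a concrete componentwise computation of the value of $L^{\pi_{0},G}\hat p^{*,G}M$.

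\textbf{Step 1: reduction to the full group.} Fix a subgroup $H\leq G$ and an $H$-bounded subset $Y'$ of $Z$; one must show $(L^{\pi_{0},G}\hat p^{*,G}M)(Y')\in\bC^{H,\omega}$. By \eqref{evevevev} the functor $\ev_{Y'}$ factors through the restriction $\Sh^{G}_{\bC}(Z)\to\Sh^{H}_{\bC}(\res^{G}_{H}Z)$. Since $L^{\pi_{0}}$ and $\hat p^{*}$ are assembled from geometric operations on presheaf categories that are defined $G$-equivariantly, this restriction commutes up to natural equivalence with $L^{\pi_{0},G}\hat p^{*,G}$; moreover $\res^{G}_{H}p$ is again a covering, and $\res^{G}_{H}M$ is equivariantly small over $H$ because every subgroup of $H$ is one of $G$. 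Replacing $G$ by $H$, we may assume $H=G$ and $Y'=GB'$ for some $B'\in\cB_{Z}$.

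\textbf{Step 2: the value formula.} Recall from \cref{unex-left} that $N:=L^{\pi_{0},G}\hat p^{*,G}M$ is a $U$-sheaf on $Z$, where $U:=p^{-1}(U')\cap U(\pi_{0}(Z))$ and $U'$ is any coarse entourage of $X$ with $M\in\Sh^{U',G}_{\bC}(X)$. For an arbitrary presheaf the pointwise formula \eqref{ludef} for $L^{\pi_{0}}$ decomposes the limit over $\pi_{0}$-bounded subsets into the coarse components of $Z$, so that $N(W)\simeq\prod_{C}(\hat p^{*}M)(C\cap W)\simeq\prod_{C}M(p(C\cap W))$, the product being over the coarse components $C$ of $Z$ that meet $W$; and every $U'$-sheaf $M$ satisfies $M(V)\simeq\prod_{D}M(V\cap D)$ over the coarse components $D$ of $V$, because the finest $U'$-covering family of $V$ already splits along the coarse components. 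Specialising to $W=Y'$ and tracking the $G$-action (which permutes the coarse components and turns the products into products over $G$-orbits of coinductions) yields
\[ N(Y')\ \simeq\ \prod_{[C]}\ \Coind^{G}_{G_{C}}\bigl(M(p(C\cap Y'))\bigr)\ , \]
where $[C]$ runs over $G$-orbits of coarse components of $Z$ meeting $Y'$ and $G_{C}$ is the stabiliser of $C$; note $p(C\cap Y')$ is contained in a single coarse component of $X$ since $p$ restricts to an isomorphism on coarse components. The same formula on $X$ gives $M(p(Y'))\simeq\prod_{[D]}\Coind^{G}_{G_{D}}\bigl(M(p(Y')\cap D)\bigr)$, and this lies in $\bC^{G,\omega}$ because $p(Y')=Gp(B')$ is $G$-bounded and $M$ is equivariantly small.

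\textbf{Step 3: finiteness and conclusion.} It remains to argue that the product computing $N(Y')$ is cocompact. Two inputs combine. First, an infinite product of objects of $\bC^{G}$ is cocompact only if all but finitely many factors are zero (equivalently, an infinite coproduct in the compactly generated category $(\bC^{G})^{\op}$ is non-compact unless almost all summands vanish); applied to $M(p(Y'))$ this forces finitely many orbits $[D]$ to contribute, with $M(p(Y')\cap D)\in\bC^{G_{D},\omega}$ for those. Second, the finite-fibre axiom \cref{ergwergggqrgqregqergq} \eqref{qweiuqhfqfeffqeqf} for $p$, combined with \cref{rgioweogerg2143fr} and the observation that every $G$-orbit of coarse components of $Y'=GB'$ has a representative $C$ meeting $B'$, bounds the number of orbits $[C]$ lying over a given relevant orbit $[D]$, and lets one identify the nonzero factors $M(p(C\cap Y'))$ with appropriately bounded values of $M$, hence with cocompact objects of the relevant $\bC^{G_{C}}$. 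Consequently $N(Y')$ is a finite product of coinductions of cocompact objects, hence cocompact, i.e.\ $N(Y')\in\bC^{G,\omega}$.

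\textbf{Expected main obstacle.} The hard part is Step 3. Because the subspace $Y'$ can disconnect preimages that are coarsely connected in $Z$, the set $p(C\cap Y')$ need not be a full coarse component of $p(Y')$, so the two componentwise decompositions must be matched carefully, and one must justify that "$M$ has vanishing (resp.\ cocompact) value on a whole component" implies the same on a sub-piece. I expect the cleanest route is to first apply the Glueing Lemma \cref{wgkwkgrewrgrg} to the decomposition of $X$ into the finitely many relevant $G$-orbits of coarse components and their complement, reducing to the case where $M$ is supported on finitely many orbits of coarse components of $X$, and then to treat the case where $Y'$ is contained in a single coarse component of $Z$ separately (using \cref{lem:restrict-eqsm} and \cref{eriogwetgwegregwrgregwrg} to pass between the two). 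The finite-fibre condition on coverings is exactly what prevents the products above from being infinite; this step is the $\infty$-categorical analogue of the corresponding statement for additive categories in \cite{coarsetrans}.
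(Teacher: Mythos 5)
Your Steps 1 and 2 are broadly consistent with the paper's strategy (restrict to $H$, decompose the value of $L^{\pi_{0}}\hat p^{*,G}M$ along coarse components, and convert the $G$-permutation of components into coinductions), but Step 3 contains a genuine gap: the claim that the finite-fibre axiom \cref{ergwergggqrgqregqergq}~\eqref{qweiuqhfqfeffqeqf} ``bounds the number of orbits $[C]$ lying over a given relevant orbit $[D]$'' is false. That axiom only bounds, for each \emph{bounded} subset $B$ of $Z$, the fibres of $\pi_{0}(B)\to\pi_{0}(X)$ over single components of $X$; it says nothing about orbits. Concretely, take $G=\Z$ acting on $X=\Z_{min,max}$ by translation and $p\colon Z=G_{min,min}\otimes X_{min,max}\to X$ the covering of \cref{egowpegrrewfrewfwrefewfref}. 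Here $\pi_{0}(X)$ is a single $G$-orbit $[D]$, while the components of $Z$ are the points of $\Z\times\Z$ and form infinitely many $G$-orbits, all lying over $[D]$; moreover $Y'=Z=G(\{e\}\times X)$ is $G$-bounded. So your product over $[C]$ has infinitely many orbit-indices above one ``relevant'' $[D]$, and your first input (applying \cref{lem:products-cpt} to the orbitwise decomposition of $M(p(Y'))$, which here consists of a \emph{single} coinduction factor) detects no vanishing at all. The reason the proposition nevertheless holds in this example is that the \emph{non-equivariant} smallness of $M$ on bounded, non-invariant subsets forces $M(\{x\})\simeq 0$ for all $x$ — and it is exactly this kind of input that your bookkeeping never uses.

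This is also where the paper's proof differs essentially from yours: it first writes $Y=HB$ for a bounded $B$, uses the finite-fibre axiom to split $B$ into finitely many coarsely disjoint pieces $B_{j}$ on which $p$ is an isomorphism onto the coarse closure of the image, and then applies \cref{lem:eqsm-comps} to the \emph{bounded} sets $p(B_{j})$ in $X$ (using $M(p(B_{j}))\in\bC^{\omega}$) to discard all but finitely many components of each $B_{j}$; only after shrinking $B$ so that it meets finitely many components of $Z$ does it group components into $H$-orbits and apply the coinduction formula \cref{prop:transfer-coind}. Your proposed repair in the ``expected obstacle'' paragraph (Glueing Lemma reduction to finitely many orbits of components of $X$) does not close the gap, since in the example above $M$ is already supported over a single orbit of components of $X$. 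A secondary issue: your orbitwise formula $N(Y')\simeq\prod_{[C]}\Coind^{G}_{G_{C}}(M(p(C\cap Y')))$ over a possibly infinite set of orbits is asserted by ``tracking the $G$-action''; identifying $\lim_{BG}$ of an infinite permuted product with a product of coinductions is precisely what the chain \cref{lem:coindC}, \cref{cor:coind}, \cref{cor:coindSh}, \cref{lem:transfer-coind}, \cref{lem:orbits-components}, \cref{prop:transfer-coind} establishes in the coarsely connected case, and after the finiteness reduction only that case is needed — so you should route the argument through it rather than assert the global formula.
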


\begin{kor}
	We have a subfunctor
	\[ \Sh^{G,\dag,\eqsm}\colon G\Bc^{\dag,\op}\times \Fun(BG,\CL)\to \Cle \]
	of $\Sh^{G,\dag}$ from \eqref{ergioegergwergerwgwerg}.
\end{kor}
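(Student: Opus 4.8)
Write $N:=L^{\pi_{0},G}\hat p^{*,G}M$; by \cref{unex-left} (a covering is in particular a coarse covering) this is an object of $\Sh^{G}_{\bC}(Z)$, so the plan is to show that $N$ is equivariantly small when $M$ is, i.e.\ that $N(Y')\in\bC^{H,\omega}$ for every subgroup $H\le G$ and every $H$-bounded $Y'\subseteq Z$. First I would reduce to the case $H=G$. Fixing $H$ and $Y'$, the definition \eqref{evevevev} of $\ev_{Y'}$ shows that $N(Y')$ is computed from $r^{G}_{H}N\in\Sh^{H}_{\bC}(\res^{G}_{H}Z)$. Since the operations $L^{\pi_{0},(-)}$ and $\hat p^{*,(-)}$ are assembled from limits and right Kan extensions in the presheaf categories together with the passage $\lim_{B(-)}$, they are compatible with the restriction transformation \eqref{rqefoijoirgjweoigerog1}, so $r^{G}_{H}N\simeq L^{\pi_{0},H}\hat p^{*,H}(r^{G}_{H}M)$; moreover $\res^{G}_{H}p$ is again a covering, and $r^{G}_{H}M$ is again equivariantly small because \cref{qreigoqrgwergwrgre} is a condition ranging over all subgroups. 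It therefore suffices to show: if $M\in\Sh^{G,\eqsm}_{\bC}(X)$ and $Y'$ is a $G$-bounded subset of $Z$, then $N(Y')\in\bC^{G,\omega}$.

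The heart of the argument is then the computation of $N(Y')$. Because $N$ is in particular a $\pi_{0}$-sheaf on $Z$ and $Y'$ is $G$-invariant, its value decomposes $G$-equivariantly along coarse components, exactly as in the proof of \cref{rgkoqregqregqqef}: $N(Y')\simeq\prod_{C\in\pi_{0}(Y')}N(C)$, and since each coarse component $C$ of the subspace $Y'$ is contained in a coarse component of $Z$, hence $U(\pi_{0}(Z))$-bounded, the formula \eqref{ludef} gives $N(C)\simeq(\hat p^{*,G}M)(C)\simeq M(p(C))$. By \cref{t4hgiorthgetrhtrhtheht} the $G$-set $\pi_{0}(Y')$ is the set of coarse components of $Z$ meeting $Y'$, and $p$ sends it $G$-equivariantly onto $\pi_{0}(p(Y'))$, with $p(C)$ the coarse component of $p(Y')$ over the image of $C$ in $\pi_{0}(X)$. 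Writing $Y'=GB$ with $B\in\cB_{Z}$, one now organises $\pi_{0}(Y')$ by its $G$-orbit structure over $\pi_{0}(X)$: since $p$ is bornological, $p(Y')=G\,p(B)$ is $G$-bounded, and the finite-fibre condition \cref{ergwergggqrgqregqergq}\eqref{qweiuqhfqfeffqeqf} (which applies to $B$ by \cref{rgioweogerg2143fr}) controls, over each $G$-orbit of coarse components of $X$, both the multiplicities with which a given $M(p(C))$ occurs and the size of the relevant bounded pieces of $X$. Carrying this bookkeeping out identifies $N(Y')$ as a finite product of objects, each of which is a retract of a finite power of $\Coind^{G}_{H_{0}}\!\bigl(M(W_{0})\bigr)$ for suitable subgroups $H_{0}\le G$ and $H_{0}$-bounded subsets $W_{0}\subseteq X$ (this is the content behind \cref{prop:transfer-coind}).

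To finish, I would observe that each $M(W_{0})$ lies in $\bC^{H_{0},\omega}$ precisely because $M$ is equivariantly small in the strong sense of \cref{qreigoqrgwergwrgre}, the condition being imposed for \emph{every} subgroup; this is the exact point where the seemingly over-strong form of equivariant smallness is needed, since $W_{0}$ only carries an $H_{0}$-action and is in general not $G$-bounded. The functor $\Coind^{G}_{H_{0}}$ is the right adjoint of $\Res^{G}_{H_{0}}$; on the presentable side $\Coind^{G}_{H_{0}}$ corresponds to left Kan extension along $BH_{0}\to BG$, which preserves compact objects because restriction preserves filtered colimits, so $\Coind^{G}_{H_{0}}$ preserves cocompact objects — and this holds even when $[G:H_{0}]=\infty$, which is essential here. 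Since $\bC^{G,\omega}$ is closed under finite limits and retracts by (the proof of) \cref{rqwpokopfqwfewfewfqfqefqfqwef}, it follows that $N(Y')\in\bC^{G,\omega}$, as required.

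The main obstacle is the middle step: turning the a priori enormous — and in general infinite — product $\prod_{C\in\pi_{0}(GB)}M(p(C))$ into a finite assembly of coinductions of values of $M$ on $H_{0}$-bounded subsets of $X$. This is where the definition of a covering, and especially the uniform finiteness of the fibres of $\pi_{0}(B)\to\pi_{0}(X)$ on coarse components, is used in an essential way, and it is also the reason why equivariant smallness must be tested on all subgroups $H$ rather than on $G$ alone.
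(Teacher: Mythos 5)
Your overall architecture is the right one and matches the paper's: reduce to a fixed subgroup, decompose the transferred sheaf along coarse components, recognise the $G$-action on each orbit of components as a coinduction (this is exactly \cref{prop:transfer-coind}), and conclude using that $\coind$ preserves cocompact objects (\cref{cor:coind}) and that cocompacts are closed under finite limits and retracts. Your observation about why equivariant smallness must be imposed for all subgroups is also exactly the point.

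However, the step you yourself flag as "the main obstacle" is not closed by the ingredient you invoke, and this is a genuine gap. The finite-fibre condition \cref{ergwergggqrgqregqergq} \eqref{qweiuqhfqfeffqeqf} bounds the fibres of $\pi_{0}(B)\to\pi_{0}(X)$, but it does not bound the number of coarse components of $B$ (equivalently, the number of $G$-orbits of components of $GB$). Take $G$ trivial, $p=\id\colon X\to X$ (a covering), $X$ with infinitely many coarse components and maximal bornology, and $B=X$ bounded: the fibres of $\pi_{0}(B)\to\pi_{0}(X)$ are singletons, yet $N(B)\simeq\prod_{C\in\pi_{0}(X)}M(B\cap C)$ is an honestly infinite product, and an infinite product of cocompact objects is not cocompact in general. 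So the covering condition alone cannot "turn the infinite product into a finite assembly of coinductions". What actually makes the product finite in the paper is the equivariant smallness of $M$ itself at the trivial subgroup: since $p$ is bornological, $p(B_{j})$ is bounded, hence $M(p(B_{j}))\in\bC^{\omega}$, and \cref{lem:eqsm-comps} (which rests on \cref{lem:products-cpt}) forces $L^{\pi_{0}}\hat p^{*,G}M(B_{j}\cap C)\simeq 0$ for all but finitely many components $C$; one may therefore shrink $B$ so that it meets only finitely many coarse components of $Z$ without changing the value on $Y=HB$. Only after this vanishing step does the finite-fibre condition play its (different) role: it supplies the finite, coarsely disjoint decomposition $B=\bigcup_{j\in J}B_{j}$ with $p$ restricting to isomorphisms on coarse closures, which is what makes \cref{prop:transfer-coind} applicable after grouping the $B_{j}$ into the $H$-translated unions $B'_{s}$; the outcome is then literally a finite product of coinductions $\coind^{H}_{H_{[B'_{s}]}}\bigl(M(p(H_{[B'_{s}]}B'_{s}))\bigr)$, with no retracts of finite powers needed. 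To repair your argument you must insert this vanishing/shrinking step before the orbit bookkeeping; as written, the "finite assembly" claim fails.
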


 The proof of \cref{qergioewgergregwergergwergwreg} requires some preparation. We will first describe the behaviour of the transfer in some concrete cases, and then observe that every  covering is modelled locally on these special cases. This allows us to show that transfers along coverings also preserve equivariantly small sheaves.

 Let $H$ be a subgroup of $G$.  
 For any complete   category $\cC$ 
 the restriction and right Kan extension functors along     the inclusion functor  $j:BH\to BG$ are parts of an adjunction
 \[ \Res^{G}_{H} \colon \Fun(BG,\cC) \leftrightarrows \Fun(BH,\cC) \cocolon \Coind^{G}_{H} \ .\]
This can be applied to  $\cC:=\CL$ since
 the latter $\infty$-category is complete by \cref{lem:CL-complete}.
In the following, we consider sets  as discrete categories. For $\bC$ in $\CL$ and a set $X$
we have a canonical identification
\begin{equation}\label{wfevowjvowevwevw}
 \Fun(X,\bC)\xrightarrow{\simeq} \prod_{X}\bC 
\ ,\quad C\mapsto (C(x))_{x\in X}
\end{equation}
in $\CL$. More generally, for a $G$-set $X$ 
and $\bC$ in $\Fun(BG,\CL)$ we get an object
$\Fun(X,\bC)$ in $\Fun(BG,\CL)$ with the $G$-action by conjugation. Furthermore,
\begin{equation}\label{qgfefefqefefqfewfqefqef}
\Fun^{G}(X,\bC):=\lim_{BG}\Fun(X,\bC)  
\end{equation}
is the $\infty$-category of $G$-equivariant functors. 
The diagram \begin{equation}\label{qreoijqroifwefqfew}
\xymatrix@C=6em{
	 G\Set^{{\op}}\ar[r]^-{\Fun(-,\bC)}\ar[d]_-{{\res}_H^G} & \Fun(BG,\CL)\ar[d]^-{\Res_H^G} \\
	H\Set^{{\op}}\ar[r]^-{\Fun(-,\Res_H^G \bC)} & \Fun(BH,\CL)
}\end{equation} {commutes.}

Given a $G$-set $X$ we have an inclusion of $H$-sets
\begin{equation}\label{wergoihiorgohjwegoiwre}
i \colon \res_{H}^{G}X \to \res_{H}^{G}(G/H \times X)\ , \quad i(x):=(eH,x) .
\end{equation}
It induces the restriction functor
\begin{equation}\label{vdoijvsodvdv}
 i^* \colon \Res_H^G \Fun(G/H \times X,\bC) \to \Res_H^G \Fun(X,\bC)\ ,
\end{equation}
whose definition implicitly uses the square \eqref{qreoijqroifwefqfew}.
\begin{lem}\label{lem:coindC}  
 The adjoint
 \[\Fun(G/H \times X,\bC)\to \Coind_{H}^{G}\Res_{H}^{G}\Fun(X,\bC)\]
 of $i^{*}$ in \eqref{vdoijvsodvdv}
 is an equivalence in $\Fun(BG,\CL)$.
\end{lem}
\begin{proof}
Note that $\Res^{G}_{H}$ and $\Coind_{H}^{G}$ are the restriction and the right Kan extension functors along $j\colon BH \to BG$.
It thus suffices to show that 
the morphism $i^*$ exhibits $\Fun(G/H \times X,\bC)$  as a right Kan extension of $\Res_H^G \Fun(X,\bC)$ along the inclusion $j$.
  
  We first consider the case $X = *$.  
  It suffices to check that the functor
	\[ \Fun(G/H,\bC) \to \lim_{BG_{/j}} \Res_H^G\bC \]
	induced by $i^*$ is an equivalence.
	The  functor  $BG_{/j} \to \ho(BG_{/j}) \simeq G/H$ is an equivalence. Any choice of section $s \colon G/H \to G$ to the projection map induces an inverse equivalence $e_s \colon G/H \xrightarrow{\simeq} BG_{/j}$. Under this identification, we must show that the map
	\[ (i^* \circ s(gH))_{gH \in G/H} \colon \Res^G_{\{1\}} \Fun(G/H,\bC) \to \prod_{G/H} \Res^G_{\{1\}}\bC \]
	is an equivalence of $\infty$-categories. 
	By \eqref{wfevowjvowevwevw} we can rewrite the domain as a product over $G/H$,
	and then the functor becomes a product of equivalences, hence is an equivalence itself.
	 	
 For a general $G$-set $X$  we use the canonical equivalence
\[ \Fun(G/H \times X,\bC) \simeq \Fun(G/H, \Fun(X,\bC)) \]
in $\Fun(BG,\CL)$ in order to reduce the problem to the special case above  with  $\Fun(X,\bC)$ in place of $\bC$.  This finishes the proof of the lemma.
\end{proof}

 Let $p \colon G/H \times X \to X$ be the $G$-equivariant projection map and recall the notation $r^{G}_{H}$ from \eqref{rqefoijoirgjweoigerog1}.
 Recall further the convention that we usually drop $\Res^{G}_{H}$ in front of $\bC$.
 Let
\[ \eta\colon \Fun(X,\bC)\to \Coind_{H}^{G}\Res^{G}_{H}\Fun(X,\bC) \]
be the component of the unit of the adjunction $(\Res^{G}_{H},\Coind_{H}^{G})$ at $\Fun(X,\bC)$. We then define the functor
 \[ \coind_{H}^{G} \colon \Fun^{H}(\res^{G}_{H}X,\bC)\to \Fun^{G}(X,\bC) \]
as the composition
\[ \hspace{-0.5cm}\Fun^{H}(\res^{G}_{H}X,\bC)\simeq \lim_{BG}\Coind_{H}^{G}\Res^{G}_{H}\Fun(X,\bC)\xrightarrow{\lim_{BG}\eta_{*}}
 \lim_{BG} \Fun(X,\bC)\simeq \Fun^{G}(X,\bC)\ ,\]
 where $\eta_{*}$ is the right adjoint of $\eta$ (its existence will be shown in the proof of \cref{cor:coind} below).

 The left vertical arrow in \eqref{fsdvlksmvklvsdvadvadvadsv}   implicitly uses the square \eqref{qreoijqroifwefqfew}. 
 \begin{lem} \label{cor:coind}
	  There exists a commutative diagram
	 \begin{equation}\label{fsdvlksmvklvsdvadvadvadsv}
\xymatrix@C=4em{
		\Fun^G(G/H \times X,\bC)\ar[r]^-{p_*}\ar[d]_{r^{G}_{H}}\ar[dr]^{\simeq} & \Fun^G(X,\bC) \\
		\Fun^H(\res^{G}_{H}(G/H \times X),\bC)\ar[r]_-{\lim_{BH}(i^*)} & 
		\Fun^H(\res^{G}_{H}X,\bC)\ar[u]_-{\coind_H^G}
	} \end{equation}
	in $\CL$ whose diagonal is an equivalence.
Furthermore, the functor
	$\coind_H^G$ restricts to a functor
	\[ \coind_H^{G,\omega}\colon \Fun^H(\res^{G}_{H} X,\bC)^{\omega}\to \Fun^G(X,\bC)^{\omega} \]
	whose essential image generates the target under finite limits and retracts.
\end{lem}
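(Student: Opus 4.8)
The plan is to deduce everything from \cref{lem:coindC} together with the standard identification $\lim_{BG}\circ\Coind_H^G\simeq\lim_{BH}$ (both sides being the right Kan extension along $BH\to BG\to *$), under which $\lim_{BG}$ of the unit of $(\Res_H^G,\Coind_H^G)$ becomes the transformation $r^G_H$ of \eqref{rqefoijoirgjweoigerog1}. First I would build an explicit model for $\coind_H^G$. Write $\beta\colon\Fun(G/H\times X,\bC)\xrightarrow{\simeq}\Coind_H^G\Res_H^G\Fun(X,\bC)$ for the equivalence provided by \cref{lem:coindC}; by the triangle identity, $\beta=\Coind_H^G(i^{*})\circ\unit$. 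Since $p\circ i=\id_{\res^G_H X}$, we have $i^{*}\circ\Res_H^G(p^{*})=\id$, and naturality of the unit then gives $\unit_{\Fun(X,\bC)}=\beta\circ p^{*}$, where $p^{*}$ is the morphism induced by the projection $p\colon G/H\times X\to X$. As $p^{*}$ is restriction along the fibres of $p$, it admits a right adjoint $p_{*}$ (``product along the fibres'', which exists since $\bC$ is complete, and which is again a morphism in $\Fun(BG,\CL)$, its left adjoint $p^{*}$ preserving all limits). Hence $\unit_{\Fun(X,\bC)}$ has right adjoint $p_{*}\circ\beta^{-1}$ — this is the existence claim from the construction of $\coind_H^G$ in the excerpt — and it identifies $\coind_H^G$ with $\lim_{BG}(p_{*})\circ\delta^{-1}$, where $\delta:=\lim_{BH}(i^{*})\circ r^G_H$.

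Next I would verify the diagram. Applying $\lim_{BG}$ to $\unit_{\Fun(X,\bC)}=\beta\circ p^{*}$ and tracking the identification $\lim_{BG}\Coind_H^G\simeq\lim_{BH}$ (under which $\lim_{BG}$ of $\Coind_H^G(i^{*})$ becomes $\lim_{BH}(i^{*})$ and $\lim_{BG}$ of a unit becomes the relevant $r^G_H$), one finds that $\delta$ is identified with the composite of $\lim_{BG}(\beta)$ with the canonical equivalence $\Fun^H(\res^G_H X,\bC)\simeq\lim_{BG}\Coind_H^G\Res_H^G\Fun(X,\bC)$; in particular $\delta$ is an equivalence by \cref{lem:coindC}. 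The lower triangle of the diagram is then the definition of $\delta$, while the upper triangle reads $\coind_H^G\circ\delta=\lim_{BG}(p_{*})$, which is precisely the top arrow $p_{*}$; equivalently $\coind_H^G$ is the right adjoint of $r^G_H\colon\Fun^G(X,\bC)\to\Fun^H(\res^G_H X,\bC)$ (note $\delta\circ p^{*}=r^G_H$). This yields the commutative diagram of the statement. I expect this paragraph — faithfully matching the abstractly defined $\coind_H^G$ with the concrete functor $p_{*}$ while keeping all the Kan-extension identifications coherent — to be the main obstacle.

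Finally, the cocompactness assertions. Since $\coind_H^G\simeq p_{*}\circ\delta^{-1}$ with $\delta^{-1}$ an equivalence and $p_{*}$ a right adjoint whose left adjoint $p^{*}$ preserves cofiltered limits, $\coind_H^G$ preserves cocompact objects, which defines $\coind_H^{G,\omega}$. For the generation statement I would reduce to the case $H=\{e\}$: from $\res^G_e=\res^H_e\circ\res^G_H$ one obtains $\coind_e^G\simeq\coind_H^G\circ\coind_e^H$ by passing to right adjoints, and $\coind_e^H$ preserves cocompact objects (its left adjoint $r^H_e$ preserves cofiltered limits), so the essential image of $\coind_e^{G,\omega}$ lies in that of $\coind_H^{G,\omega}$. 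It then suffices to show that $\coind_e^G$ applied to the cocompact objects of $\Fun(X,\bC)$ generates $\Fun^G(X,\bC)^{\omega}$ under finite limits and retracts. This is the dual, along $\CL\simeq(\Prlp)^{\op}$, of the standard fact that for a compactly generated presentable $\infty$-category with $G$-action the homotopy fixed points are compactly generated by the objects induced from a set of compact generators, so that every compact object is a retract of a finite colimit of such induced objects: the right adjoint $\coind_e^G$ of the evaluation $\Fun^G(X,\bC)\to\Fun(X,\bC)$ corresponds under this duality to induction, and $\Fun^G(X,\bC)$ is an object of $\CL$ as a limit of objects of $\CL$. This completes the proof.
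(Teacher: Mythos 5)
Your proposal is correct and follows essentially the same route as the paper: the commutative diagram is obtained, exactly as in the paper's proof, from \cref{lem:coindC} together with $p\circ i=\id$ by passing to right adjoints ($\eta_*\simeq p_*\circ\beta^{-1}$) and applying $\lim_{BG}$, identifying $\lim_{BG}$ of the unit with $r^{G}_{H}$. Your handling of the generation statement — the transitivity $\coind^{G}_{\{1\}}\simeq \coind^{G}_{H}\circ\coind^{H}_{\{1\}}$ together with the fact that homotopy fixed points of a compactly generated category are compactly generated by induced objects — is the same content as the paper's diagram involving $a_{H}$, $a_{G}$, $\mathrm{can}_{H}$, $\mathrm{can}_{G}$ and its appeal to \cref{rwigowefqewffqwfwfqwef}, just phrased in the dual presentable picture (and would best be justified by citing that lemma rather than asserting the standard fact).
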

\begin{proof}
By \cref{lem:coindC} (for the lower triangle and the diagonal equivalence) and the identity $p\circ i=\id_{X}$ (for the upper triangle), we have the following commutative diagram
\begin{equation}\label{weqwefcweqfqew}\xymatrix@C=4em{
\Fun(G/H\times X,\bC)\ar[dr]^{\simeq}\ar[d]_{\unit}&\ar[l]_-{p^{*}} \Fun(X,\bC)\ar[d]^{\eta}\\
\Coind_{H}^{G}\Res_{H}^{G} \Fun(G/H\times X,\bC)\ar[r]_-{\Coind_{H}^{G}(i^{*})}&
\Coind_{H}^{G}\Res_{H}^{G} \Fun(X,\bC)}
\end{equation}
in $\Fun(BG,\CL)$.
Since $p^{*}$ has a right adjoint (given by the right Kan extension functor $p_{*}$ along $p$), also $\eta$ has one which will be denoted by $\eta_{*}$.  Passing to the right adjoints in the upper triangle in \eqref{weqwefcweqfqew} we get the diagram
\begin{equation*}\xymatrix@C=4em{
\Fun(G/H\times X,\bC)\ar[dr]^{\simeq}\ar[d]_{\unit}\ar[r]^-{p_{*}} &\Fun(X,\bC) \\
\Coind_{H}^{G}\Res_{H}^{G} \Fun(G/H\times X,\bC)\ar[r]_-{\Coind_{H}^{G}(i^{*})}&
\Coind_{H}^{G}\Res_{H}^{G} \Fun(X,\bC)\ar[u]_{\eta_{*}}}
\end{equation*}
in $\Fun(BG,\CL)$.
We now apply $\lim_{BG}$. Use that $r^{G}_{H}$ in \eqref{rqefoijoirgjweoigerog1} is given by
\begin{equation}\label{qfrufqrweufuqrifiwefwqefqewfqef}
\lim_{BG}\xrightarrow{\lim_{BG}(\unit)} \lim_{BG} \Coind_{H}^{G}\Res_{H}^{G}\simeq \lim_{BH}{\Res_H^G}\ ,
\end{equation}
and recall the square \eqref{qreoijqroifwefqfew} in order to rewrite the lower left corner. Lastly, use the definition 
$\coind_{H}^{G}:=\lim_{BG}\eta_{*}$ in order to get the square \eqref{fsdvlksmvklvsdvadvadvadsv}.

We now show the second assertion. Since  the functor $\coind_{H}^{G}$  is a morphism in $\CL$, it preserves cocompact objects and therefore restricts to  $\coind_{H}^{G,\omega}$ as asserted. 
In order to see that the  essential image of $\coind_{H}^{G,\omega}$ generates the target under finite limits and retracts, we consider the  diagram
\begin{equation}\label{wfeqoijoiqjfwefqwefqwefqwef}
\xymatrix{\Fun(X,\bC)(*_{BG})^{\omega}\ar@/^1cm/[rr]^{a_{G}}\ar@{=}[d]\ar[r]^{a_{H}}&\Fun^{H}(\res^{G}_{H}X,\bC)^{\omega}\ar[d]_{\eqref{wqefqoijfqoiwffqefefewfqefqe}}^{\simeq}\ar[r]^{\coind_{H}^{G,\omega}}&\Fun^{G}(X,\bC)^{\omega}\ar[d]_{\eqref{wqefqoijfqoiwffqefefewfqefqe}}^{\simeq}\\\Fun(X,\bC)(*_{BG})^{\omega}\ar@/_1cm/[rr]_-{\mathrm{can}_{G}}\ar[r]^-{\mathrm{can}_{H}}&\colim_{BH^{\op}}\Fun(\res^{G}_{H}X,\bC)^{\omega}\ar[r]^{!}&\colim_{BG^{\op}}\Fun(X,\bC)^{\omega}}\ .
\end{equation}
The functors $a_{H}$ and $a_{G}$ are the restrictions to cocompact objects of the right adjoints (which exist as seen in the proof of  \cref{rwigowefqewffqwfwfqwef}) of the canonical functors
\[ \hspace{-0.3cm}e_{*_{BH}} \colon \Fun^{H}(\res^{G}_{H}X,\bC)\to \Fun(X,\bC)(*_{BG})\ , \quad e_{*_{BG}} \colon \Fun^{G}(X,\bC)\to \Fun(X,\bC)(*_{BG}) \]
(instances of \eqref{geroij4oi33rgg3g34}).
  The left square   commutes  by \cref{rwigowefqewffqwfwfqwef} \eqref{tjborbjeorpbrberbertbertbrtb}, and the arrow marked by $!$ is defined such that the right square commutes. In order to show that the upper triangle commutes, we first observe\footnote{We consider the diagram
  \[\xymatrix{&BG\ar[dr]^{v_{G}}&\\{*}\ar[ur]^{u_{G}}\ar[dr]_{u_H}&&{*}\\&BH\ar[uu]^{j}\ar[ur]^{v_{H}}&}\ .\]
  Then we must observe that the morphism $v_{G,*} \to    v_{G,*} u_{G,*} u_{G}^{*} \simeq  u_{G}^{*}$ induced by the unit of the $(u_{G}^{*},u_{G,*})$-adjunction is equivalent to  the morphism $v_{G,*} \to v_{G,*} j_{*}j^{*}\simeq v_{H,*} j^{*}\to  v_{H,*} u_{H,*} u_{H}^{*}j^{*}\simeq u_{G}^{*}$ 
  induced by the units of the adjunctions $(u_{H}^{*},u_{H,*})$ and $(j^{*},j_{*})$.} 
  that \[\xymatrix{\Fun(X,\bC)(*_{BG})&\ar[l]^-{e_{*_{BH}}}\Fun^{H}(\res^{G}_{H}X,\bC)&\ar[l]^-{r^{G}_{H}}\Fun^{G}(X,\bC)\ar@/_1cm/[ll]_-{e_{*_{BG}}}}\]
  (implicitly we identify $\Fun(X,\bC)(*_{BG})$ with $\Res^{G}_{H}\Fun(X,\bC)(*_{BH})$)
 commutes in a canonical way. Then we  
  take right adjoints, use \eqref{qfrufqrweufuqrifiwefwqefqewfqef} in order to identify the right adjoint of $r^{G}_{H}$ with $\coind_{H}^{G}$,   and restrict to cocompact objects.
      
By  \cref{rwigowefqewffqwfwfqwef} \eqref{tjborbjeorpbrberbertbertbrtb1} the essential images of $\mathrm{can}_{H}$ (this is $\mathrm{can}(*_{BG})$ in the notation of  \cref{rwigowefqewffqwfwfqwef}) and $\mathrm{can}_{G}$ generate their respective targets under retracts and finite limits.  Hence the  essential image of the restriction  of $\coind_{H}^{G}$ to cocompact objects
   generates its target under finite limits and retracts, too.
\end{proof}

\begin{rem}
Let $\bC$ be in $\Fun(BG,\CL)$.
Applying \cref{cor:coind} to the case $X=*$, we get a morphism 
\begin{equation}
\coind_{H}^{G} \colon \bC^{H}\to \bC^{G}
\end{equation}
in $\CL$. This morphism restricts to a morphism \begin{equation}\label{ewrgkpowergewrgwergwre}
\coind_{H}^{G,\omega} \colon \bC^{H,\omega}\to \bC^{G,\omega}
\end{equation}
in $\Clep$
whose essential image generates the target under  retracts and finite limits.
\end{rem}

 The commutative diagram \eqref{fsdvlksmvklvsdvadvadvadsv} can be realised as a diagram of categories of sheaves:

\begin{kor}\label{cor:coindSh}
	 There exists a commutative diagram
	\begin{equation}\label{wregpokopwtglmrbletemblwtwb}
\xymatrix@C=4em{
		\Sh^G_\bC((G/H)_{min} \otimes X_{min})\ar[r]^-{\hat p^{G}_*}\ar[d]_{r^{G}_{H}} \ar[dr]^{\simeq}& \Sh^G_\bC(X_{min}) \\
		\Sh^H_\bC(\res^{G}_{H}((G/H)_{min} \otimes X_{min}))\ar[r]_-{\hat i^{*,H}} & 
		\Sh^H_\bC( \res^{G}_{H}X_{min})\ar[u]_-{\coind_H^G}
	}\ .  \end{equation}
Furthermore, the 
functor $\coind_H^G$ restricts to a functor
\[ \coind_H^{G,\omega} \colon \Sh^H_\bC(\res^{G}_{H}X_{min})^\omega \to \Sh^G_\bC(X_{min})^\omega \]
whose essential image generates  {its target} under finite limits and retracts.
\end{kor}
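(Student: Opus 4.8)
The plan is to obtain \eqref{wregpokopwtglmrbletemblwtwb} by transporting the commutative diagram \eqref{fsdvlksmvklvsdvadvadvadsv} of \cref{cor:coind} along the identification of sheaves for a minimal coarse structure with ordinary functors. I would begin with the elementary geometric input. Since both tensor factors carry the minimal coarse structure, $(G/H)_{min}\otimes X_{min}$ is nothing but $((G/H)\times X)_{min}$; the projection $p\colon(G/H)\times X\to X$ is a coarse covering by \cref{ergwegegegeggw} (so that $\hat p^{G}_{*}$ is the right adjoint furnished by \cref{unex-left}); the map $i$ is the inclusion of the $H$-invariant subspace $\{eH\}\times X\cong\res^{G}_{H}X_{min}$ (so that $\hat i^{*,H}$ makes sense and \cref{rqkjgqregergwgwregweg} applies); and $p\circ i=\id_{X}$. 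These are precisely the morphisms appearing in \cref{cor:coind}.

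The decisive step is the identification $\Sh^{K}_{\bC}(S_{min})\simeq\Fun^{K}(S,\bC)$, natural in $S$ in $K\Set$ and in $\bC$ in $\Fun(BK,\CL)$, for $K\in\{G,H\}$. For $S_{min}$ the cofinal coarse entourage is $\diag(S)$, whose bounded subsets are the singletons; hence by \cref{wrgwr2twrgwergw} a presheaf $M$ on $S$ is a sheaf precisely when $M(Y)\xrightarrow{\simeq}\prod_{y\in Y}M(\{y\})$ for every $Y$ in $\cP_{S}$, so restriction to singletons $M\mapsto(M(\{s\}))_{s\in S}$ defines an equivalence $\Sh_{\bC}(S_{min})\xrightarrow{\simeq}\Fun(S,\bC)$ in $\Fun(BK,\CL)$, and applying $\lim_{BK}$ gives the stated equivalence. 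I would then check, using the explicit descriptions of the functors involved — precomposition with the inverse-image and image maps of power sets, together with the pointwise formula for right Kan extensions along maps of discrete categories, whose comma categories are the discrete fibres — that under this identification $\hat p^{G}_{*}$ corresponds to the right Kan extension $p_{*}$, that $\hat i^{*,H}$ corresponds to $\lim_{BH}(i^{*})$, and that $r^{G}_{H}$ corresponds to $r^{G}_{H}$; for the last, both are induced on the respective limits by the transformation \eqref{rqefoijoirgjweoigerog1}, applied to the two $BG$-diagrams whose $\lim_{BG}$ are $\Sh^{G}_{\bC}(X_{min})$ and $\Fun^{G}(X,\bC)$, which match compatibly with restriction to $H$ (for $\Fun$ this is the square \eqref{qreoijqroifwefqfew}, and the analogous equivalence $\Res^{G}_{H}\Sh_{\bC}(-)\simeq\Sh_{\bC}(\res^{G}_{H}-)$ is recorded in the text preceding \eqref{rqefoijoirgjweoigerog}).

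Granting this, I would define $\coind_{H}^{G}$ on the four sheaf categories to be the functor corresponding to $\coind_{H}^{G}$ of \cref{cor:coind} under the above equivalences; then \eqref{wregpokopwtglmrbletemblwtwb} is exactly the image of \eqref{fsdvlksmvklvsdvadvadvadsv}, so its diagonal is an equivalence and both its triangles commute, by \cref{cor:coind}. For the final assertion, an equivalence of large $\infty$-categories preserves and reflects cocompact objects, hence restricts to an equivalence $\Sh^{K}_{\bC}(S_{min})^{\omega}\simeq\Fun^{K}(S,\bC)^{\omega}$; under these the claim that $\coind_{H}^{G}$ restricts to $\coind_{H}^{G,\omega}$ with essential image generating the target under finite limits and retracts is precisely the second half of \cref{cor:coind}.

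I expect the main obstacle to be the bookkeeping in the second paragraph: verifying that the pushforward $\hat p^{G}_{*}$ and the subspace restriction $\hat i^{*,H}$, defined on presheaves by precomposition with maps of posets of subsets, genuinely become the Kan-extension functor $p_{*}$ and the restriction $i^{*}$ after passing to sheaves. This is where the sheaf identity $M(Y)\simeq\prod_{y\in Y}M(\{y\})$ is used — the product being exactly the value of a right Kan extension along a map with discrete fibres — and some care with variance is needed. Everything else is formal transport along an equivalence, the genuine content having been isolated already in \cref{cor:coind} and, underneath it, \cref{lem:coindC}; alternatively, one could copy the proof of \cref{cor:coind} verbatim with $\Fun$ replaced by $\Sh$, once the sheaf analogue of \cref{lem:coindC} has been established.
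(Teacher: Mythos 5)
Your proposal is correct and follows essentially the same route as the paper: identify $\Sh^K_\bC(S_{min})\simeq\Fun^K(S,\bC)$ via evaluation at singletons (using that $(\{s\})_{s\in S}$ is a $\diag(S)$-covering family), transport the diagram \eqref{fsdvlksmvklvsdvadvadvadsv} of \cref{cor:coind} across these equivalences while checking the horizontal arrows match the sheaf-theoretic operations, and deduce the cocompactness statement from the second assertion of \cref{cor:coind}. The bookkeeping you flag in your second paragraph is exactly the step the paper compresses into "one further checks that the horizontal morphisms are given by the sheaf-theoretic operations as indicated," and your outline of how to verify it is sound.
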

\begin{proof} If $Y$ is a $G$-set,
then $(\{y\})_{y \in Y}$ is a $\diag(Y)$-{covering family} of $Y$. Consequently, in view of \eqref{qgfefefqefefqfewfqefqef},
the functor $M\mapsto (M(\{y\}))_{y\in Y}$  induces an equivalence $\Sh^G_\bC(Y_{min}) \xrightarrow{\simeq} \Fun^G(Y,\bC)$. 
We thus can replace the  functor categories in the corners of 
 \eqref{fsdvlksmvklvsdvadvadvadsv} by sheaf categories. 
 One further checks that the horizontal morphisms are given by the sheaf-theoretic operations as indicated.
 This yields 
 the diagram  \eqref{wregpokopwtglmrbletemblwtwb}. 
 The second assertion of \cref{cor:coindSh}   follows from the second assertion of \cref{cor:coind}.
\end{proof}

 Recall that $\otimes$ denotes the cartesian product in $G\Coarse$.
Let $H$ be a subgroup of $G$, and let {$Y$} be an $H$-coarse space.
We form the  $G$-coarse space $G_{min}\otimes \res^{H}_{\{1\}}Y$ with
  the  $G$-action $(g',(g,y))\mapsto (g'g,y)$. The group   
 $H$  acts   by automorphisms on the $G$-coarse space  $G_{min}\otimes \res^{H}_{\{1\}}Y$  such that
$(h,(g,y))\mapsto (gh^{-1},hy) $. 
The colimit in the following definition is interpreted in $G\Coarse$. 
\begin{ddd}\label{fivaorvrvefvsfdvfvs}
We define the $G$-coarse space
\[ G \otimes_H Y:=\colim_{BH} (G_{min}\otimes \res^{H}_{\{1\}}Y)\ .\qedhere \]
\end{ddd}

 \begin{rem}
 The underlying set of $G\otimes_{H}Y$ can be identified with the set $G\times_{H}Y$ of equivalence classes $[g,y]$ with
 $[g,y]=[gh^{-1},hy]$ for $h$ in $H$. It is equipped with the smallest
 $G$-coarse structure such that the canonical map from $G_{min}\otimes \res^{H}_{\{1\}}Y$, $(g,y)\mapsto [g,y] $, is controlled.
 
We have an adjunction
\[ G \otimes_H - \colon H\Coarse \leftrightarrows G\Coarse \cocolon \res^{G}_{H}\ . \]
As in the corresponding adjunction between $G\Set$  and $H\Set$, the $H$-equivariant inclusions
\[  Y \to \res_{H}^{G} (G \otimes_H Y)\ , \quad y\mapsto [e,y] \]
for $Y$ in $H\Coarse$ define the unit, while the $G$-equivariant multiplication maps 
\[ \mu \colon G \otimes_H \res_H^G X \to X\ , \quad [g,x]\mapsto gx \]
for $X$ in $G\Coarse$ provide the counit of the adjunction.
\end{rem}

 Let $X$ be a $G$-coarse space, 
 and let $Y$ be an $H$-invariant subspace for some subgroup $H$ of $G$. Denote by $i \colon Y \to \res^{G}_{H}X$ the inclusion map. We   consider  the composition 
\begin{equation}\label{qrg0rig0qrgqrgqrgqrg}
\iota \colon G \otimes_H Y \xrightarrow{G \otimes_H i} G \otimes_H \res^{G}_{H}X \xrightarrow{\mu} X
\end{equation} 
in $G\Coarse$.
Since   {$G\otimes_{H}i$} is an inclusion and $\mu$ is a coarse covering, both   $\hat{G \otimes_H i}^{*,G}$ and $L^{\pi_0}\hat \mu^{*,G}$ define functors on sheaves {by \cref{unex-left} and \cref{rqkjgqregergwgwregweg}, respectively.} We  consider the composition\footnote{Note that this is a slight abuse of notation since $\hat \iota^{*,G}$ is not directly obtained from $\iota$ by one of our previous constructions.}
\[ \hat \iota^{*,G} := \hat{G \otimes_H i}^{*,G} \circ L^{\pi_0}\hat \mu^{*,G}\colon \Sh^{G}_{\bC}(X)\to \Sh^{G}_{\bC}(G\otimes_{H}Y)\ .\]
Recall the notation $\ev_{Y}$ from \eqref{evevevev}.
\begin{lem}\label{lem:transfer-coind}
	 There exists a commutative diagram
	 \begin{equation}\label{qewlknmwlkvfvsdfvsdfvsdvsdv}\xymatrix@C=3em{
		\Sh^G_\bC(X)\ar[d]_-{\ev_Y}\ar[r]^-{\hat \iota^{*,G}} & 	\Sh^G_\bC(G \otimes_H Y)\ar[d]^-{\ev_{G \times_H Y}} \\
		\bC^H\ar[r]^-{\coind_H^G} & \bC^G \\		
	}\end{equation}	
\end{lem}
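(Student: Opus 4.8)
The plan is to reduce the statement to the already-established commutative diagram \eqref{wregpokopwtglmrbletemblwtwb} of \cref{cor:coindSh}, which treats precisely the case where the coarse structures are minimal and the subspace $Y$ is all of $X$ with its $H$-action. First I would identify the objects appearing in \eqref{qewlknmwlkvfvsdfvsdfvsdvsdv} with the ones in \eqref{wregpokopwtglmrbletemblwtwb} after unwinding the definition of $\ev_Y$ from \eqref{evevevev}: the left vertical map $\ev_Y$ factors as $\hat p^H_* \circ \hat i^{*,H} \circ r^G_H$, and the right vertical map $\ev_{G\times_H Y}$ factors through $r^G_G = \id$ (since $G\otimes_H Y$ carries a genuine $G$-action) as $\hat q^G_* \circ \hat{(\text{incl})}^{*,G}$, where $q$ projects $G\times_H Y$ to a point. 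So the claim is really a statement comparing $\ev_{G\times_H Y}\circ \hat\iota^{*,G}$ with $\coind_H^G \circ \ev_Y$ as functors $\Sh^G_\bC(X)\to \bC^G$.

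Next I would exploit the factorisation $\iota = \mu \circ (G\otimes_H i)$ from \eqref{qrg0rig0qrgqrgqrgqrg} together with the definition $\hat\iota^{*,G} = \hat{G\otimes_H i}^{*,G}\circ L^{\pi_0}\hat\mu^{*,G}$. The composite $L^{\pi_0}\hat\mu^{*,G}$ followed by evaluation on the invariant subset $G\times_H Y$ (which maps isomorphically onto no larger set, since $\mu$ restricted there is controlled) should, by \cref{rqkjgqregergwgwregweg} and the fact that one may drop $L^{\pi_0}$ after restricting to a subspace (as used in \cref{unex-left} and the proof of \cref{ropgkpwoegrewgwregwgregw}), be rewritten as evaluation on $G\otimes_H Y$ itself of the pullback along $G\otimes_H i$. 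Then, using the presentation $G\otimes_H Y = \colim_{BH}(G_{min}\otimes \res^H_{\{1\}}Y)$ from \cref{fivaorvrvefvsfdvfvs} and that sheaves on a minimal-coarse $G$-set are equivalent to equivariant functors (the equivalence $\Sh^G_\bC(Y_{min})\simeq \Fun^G(Y,\bC)$ recalled in the proof of \cref{cor:coindSh}), I would transport the whole square \eqref{qewlknmwlkvfvsdfvsdfvsdvsdv} into the world of equivariant functor categories. There the diagram \eqref{fsdvlksmvklvsdvadvadvadsv} of \cref{cor:coind} provides exactly the needed commutativity, once one checks that $\hat\iota^{*,G}$ corresponds under these equivalences to the composite $\lim_{BH}(i^*)\circ r^G_H$ appearing along the left-and-bottom of \eqref{fsdvlksmvklvsdvadvadvadsv}, and that $\coind_H^G$ on sheaves is the one transported from $\coind_H^G$ on functors, which is the content of \cref{cor:coindSh}.

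I expect the main obstacle to be the careful bookkeeping in the reduction to minimal coarse structures: the space $X$ is an arbitrary $G$-coarse space, not $X_{min}$, so one cannot directly apply \cref{cor:coindSh}. The resolution should be that evaluation $\ev_Y$ only sees the sheaf's behaviour on the subspace $Y$, and the relevant restriction functors factor through the minimal-coarse-structure versions; concretely, one uses that $\ev_Y = \hat p^H_*\circ\hat i^{*,H}\circ r^G_H$ and that $\hat i^{*,H}$ restricted to a subspace interacts with the coarse structure only through $U_Y = U\cap(Y\times Y)$, so that after restriction the situation becomes the one governed by \cref{cor:coindSh}. Making precise the identification of $\hat\iota^{*,G}$ with $\lim_{BH}(i^*)\circ r^G_H$ after passing through the subspace — in particular verifying that the coarse-covering pullback $L^{\pi_0}\hat\mu^{*,G}$ contributes trivially to $\ev_{G\times_H Y}$ because $G\times_H Y$ is a single coarse component's worth of data mapped isomorphically — is the step that will require the most attention, and I would handle it by an explicit pointwise computation on subsets, as in the proofs of \cref{rgkoqregqregqqef} and \cref{wergiowggwergwergreg}.
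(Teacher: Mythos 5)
There is a genuine gap, and it sits exactly where the content of the lemma lies. You assert that the coarse-covering pullback $L^{\pi_{0}}\hat\mu^{*,G}$ "contributes trivially to $\ev_{G\times_H Y}$ because $G\times_H Y$ is a single coarse component's worth of data mapped isomorphically". This is backwards: $G\times_H Y$ is not a single coarse component — its components are indexed (roughly) by $G/H$ times $\pi_0(Y)$ — and while $\mu$ is an isomorphism on each component (that is just condition (1) of a coarse covering), the evaluation $\ev_{G\times_H Y}$ of the $\pi_0$-sheafified pullback assembles the values over all of these components into a product. That product, with its twisted $G$-action, is precisely where $\coind_H^G$ comes from; if you discard it (e.g.\ by "dropping $L^{\pi_0}$ after restricting", which is only legitimate for restriction to a single coarse component as in the proof of \cref{unex-left}), the top-right composite computes something like $M(GY)$ rather than $\coind_H^G(M(Y))$, and the square does not commute. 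Relatedly, your plan to "transport the whole square into the world of equivariant functor categories" via $\Sh^G_\bC((-)_{min})\simeq\Fun^G(-,\bC)$ is not available: $X$, $Y$ and $G\otimes_H Y$ carry non-minimal coarse structures, and the fix you offer (that $\ev_Y$ only sees $Y$, interacting with the coarse structure through $U_Y$) does not remove them.

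What is missing is the device the paper uses to bridge from the actual coarse structures to the minimal one where \cref{cor:coindSh} applies: push forward along $G\otimes_H q\colon G\otimes_H Y\to (G/H)_{min}$, factor the inclusion $i$ as $Y\xrightarrow{j}\res^G_H(G\otimes_H Y)\to\res^G_H(G\otimes_H\res^G_H X)\to\res^G_H X$ to get $\hat i^{*,H}\simeq\hat j^{*,H}\hat\iota^{*,H}$ (this is \eqref{eq:transfer-coind1}), and then apply the base-change \cref{ropgkpwoegrewgwregwgregw} to the pullback square formed by $j$, $q\colon Y\to *$ and $k\colon *\to\res^G_H(G/H)_{min}$ to obtain $\hat k^{*,H}\circ\hat{G\otimes_H q}^H_*\simeq\hat q^H_*\circ\hat j^{*,H}$. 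Only after these reductions does \cref{cor:coindSh} enter — and it enters with $X=*$ and $i=k$, i.e.\ on $(G/H)_{min}$, where the minimal coarse structure genuinely holds and the pushforward $(G/H)_{min}\to *$ is identified with $\coind_H^G\circ\hat k^{*,H}$. Your outline never identifies the map $j$, the pushforward to $(G/H)_{min}$, or this pullback square, and the proposed fallback of an "explicit pointwise computation on subsets" would at best produce the underlying product of objects; it does not by itself supply the coherent $G$-object (coinduction) structure and the naturality in $M$ needed for the square \eqref{qewlknmwlkvfvsdfvsdfvsdvsdv}, which is exactly what the structured route through \eqref{wregpokopwtglmrbletemblwtwb} provides.
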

\begin{proof}
	Let ${q} \colon Y \to {*}$ denote the projection map in $H\Coarse$. We have a canonical isomorphism   $G \otimes_H * \cong {(G/H)_{min}}$. By naturality of the transformation \eqref{rqefoijoirgjweoigerog},
  we have a commutative diagram
\begin{equation}\label{wergwrgrgewgwgrwegw243tr34t2t34t}
\hspace{-1.5cm}\xymatrix@C=3em{
		\Sh^G_\bC(X)\ar[d]^{{r^{G}_{H}}}\ar@/^1cm/[rr]^-{\hat \iota^{*,G}}\ar[r]^-{L^{\pi_0,G}\hat \mu^{*,G}} & \Sh^G_\bC(G \otimes_H \res^{G}_{H}X)\ar[d]^-{r^{G}_{H}}\ar[r]^-{\hat{G \otimes_H i}^{*,G}} & \Sh^G_\bC(G \otimes_H Y)\ar[d]^-{r^{G}_{H}}\ar[r]^-{\hat{G \otimes_H {q}}^G_*} & \Sh^G_\bC({(G/H)_{min}})\ar[d]^{{r^{G}_{H}}} \\
		\Sh^H_{\bC}(\res^{G}_{H}X)\ar[r]^-{L^{\pi_0,H}\hat \mu^{*,H}} \ar@/_1cm/[rr]_{\hat \iota^{*,H}}& \Sh^H_{\bC}(\res_{H}^{G}(G \otimes_H \res^{G}_{H}X))\ar[r]^-{\hat{G \otimes_H i}^{*,H}} & \Sh^H_{\bC}(\res^{G}_{H}(G \otimes_H Y))\ar[r]^-{\hat{G \times_H {q}}^H_*} & \Sh^H_{\bC}(\res_{H}^{G}(G/H)_{min})
	}
\end{equation} 	 
	Since the inclusion $i$  can be factorised as the    composition 
	\[ Y \xrightarrow{j} \res_{H}^{G}(G \otimes_H Y) \xrightarrow{\res_{G}^{H}(G \otimes_H i)} \res_{H}^{G}(G \otimes_H \res_{H}^{G}X) \xrightarrow{\res_{H}^{G}(\mu)} \res_{H}^{G}X\ ,\]
	where $j$ is the canonical inclusion $y\mapsto [e,y]$, we have an equivalence
	\begin{equation}\label{eq:transfer-coind1}
	 \hat i^{*,H} \simeq \hat j^{*,H}\hat \iota^{*,H}\ .
	\end{equation}
	Let $k \colon * \to  \res_{H}^{G}(G/H)_{min}$ denote the  inclusion of the point $eH$. Since
	\[\xymatrix@C=3em{
		Y\ar[d]_-{j}\ar[r]^-{{q}} & {*}\ar[d]^-{k} \\
		\res_{H}^{G}(G \otimes_H Y)\ar[r]^-{G \otimes_H {q}} & (G/H)_{min} \\
	}\]
	 is a pullback in $H\Coarse$,  by \cref{ropgkpwoegrewgwregwgregw} we obtain an equivalence
	\begin{equation}\label{eq:transfer-coind2}
	 \hat k^{*,H} \circ \hat{G \otimes_H {q}}_{*}^{H} \simeq \hat {{q}}^H_* \circ \hat j^{*,H}\ .
	\end{equation}
	Combining \eqref{eq:transfer-coind1} and \eqref{eq:transfer-coind2} with the commutative diagram \eqref{wergwrgrgewgwgrwegw243tr34t2t34t}, we have the solid part of the commutative diagram
	\[\hspace{-1.5cm}\xymatrix@R=2.5em@C=3em{
		\Sh^G_\bC(X) \ar@{..}@/_1.5cm/[dd]+0{}\ar[r]^-{\hat \iota^{G,*}}\ar[d]^{r^{G}_{H}} & \Sh^G_\bC(G \otimes_H Y)\ar@/^1cm/@{..>}[rr]^{\ev_{G\times_{H}Y}}\ar[r]^-{\hat{G \otimes_H q}^G_*}\ar[d]^{r^{G}_{H}} & \Sh^G_\bC((G/H)_{min})\ar[d]^{r^{G}_{H}}\ar@{-->}[r]^{\hat p^{G}_{*}}&\Sh^{G}_{\bC}(*) \simeq \bC^G \\
		\Sh^H_\bC(\res^{G}_{H}X)\ar[r]^-{\hat \iota^{*,H}}\ar[rd]_-{\hat i^{H,*}} & \Sh^H_\bC(\res^{G}_{H}(G \otimes_H Y))\ar[r]^-{\hat{G \otimes_H  {{q}}^H_*}}\ar[d]^-{\hat j^{*,H}} & \Sh^H_\bC(\res^{G}_{H}(G/H)_{min})\ar[d]^-{\hat k^{*,H}} && \\
		& \Sh^H_\bC(Y)\ar[r]^-{{\hat q}^H_*} & \Sh^H_\bC(*) \simeq \bC^H \ar@{-->}[uur]_-{\coind_H^G}\ar@{<..}@/^.5cm/[ll]_(.8){\ev_Y}{}
	}\]
	We can extend the diagram by the dashed part by applying \cref{cor:coindSh} (with $X=*$ and $i=k$).  The combination with the dotted part (reflecting the definition of the evaluation maps) then yields the asserted square.
\end{proof}

\begin{rem}\label{etgioowrgwegreg}
Note that the diagonal map in \eqref{qewlknmwlkvfvsdfvsdfvsdvsdv} sends $M$ in $\Sh^{G}_{\bC}(X)$ to
$L^{\pi_{0},G} \hat \mu^{*,G}M(G\times_{H}Y)$ in $\bC^{G}$.
\end{rem}

Let $X$ be a $G$-coarse space, and let $Y$ be  {a coarse component of $X$}.
Denote by
\[ G_{Y} := \{ g \in G \mid gY = Y \} \]
the stabiliser of $Y$. We consider $Y$ as a $G_{Y}$-coarse subspace of $\res_{G_{Y}}^{G}X$.

\begin{lem}\label{lem:orbits-components}
	  If $X = GY$ (as sets),
	  then the multiplication map $\mu \colon G \otimes_{G_{Y}} Y \to X$ is an isomorphism {of $G$-coarse spaces}.
\end{lem}
\begin{proof}
	The morphism
	\[ G_{min}\otimes \res^{G_{Y}}_{\{1\}}Y\to X\ , \quad (g,y)\mapsto gy \]
	in $G\Coarse$ is controlled and constant along $G_{Y}$-orbits for the $G_{Y}$-action
	\[ (h,(g,y))\mapsto (gh^{-1},hy) \]
	on its domain. By the universal property of the colimit  in \cref{fivaorvrvefvsfdvfvs}  it factorizes over  the multiplication map $\mu \colon G \otimes_{G_{Y}} Y \to X$ in $G\Coarse$. The latter is surjective
	  by our assumptions.  
	  
	We now show that $\mu$ is injective.
	Consider $[g,y], [g,y']$ in $G \otimes_{G_{Y}} Y$ and suppose that $\mu([g,y]) = \mu([g',y'])$. Then $y = g^{-1}g'y'$. Let $y''$ be any point in $Y$.
	Since $Y$ is coarsely connected,   we have $\{(y,y'')\}\in \cC_{X}$ and $\{(y',y'')\} \in \cC_{X}$.    Since $\cC_{X}$ is $G$-invariant, we also have
	$g^{-1}g'\{(y',y'')\} = \{(y,g^{-1}g'y'')\}\in \cC_{X}$ and therefore $g^{-1}g'y''\in Y$.  Since $y''$ is arbitrary, we can conclude that $g^{-1}g'\in G_{Y}$, and hence 
	$[g,y] = [g',y']$.
	
	 If $U$ is a $G$-invariant entourage of $X$, then
	 we have $U = G(U \cap (Y \times Y))$. This entourage is thus  the image of $\diag(G)\times (U\cap (Y\times Y))$
	 under the composition of the projection $G\times Y\to G\times_{G_{Y}}Y$ and the   multiplication map and therefore the image of a coarse 
	  entourage of $G \otimes_{G_{Y}} Y$ under the multiplication map $\mu$.  This shows that the map $\mu$ is an isomorphism of $G$-coarse spaces.
\end{proof}

 Let $X$ be  {be a $G$-coarse space},
 and let $A$ be a coarsely connected subspace of $X$.  By $[A]$ in $\pi_{0}(X)$ we denote the coarse closure of $A$. We consider $GA$ as a $G$-coarse subspace of $X$ and $G_{[A]}A$ as a $G_{[A]}$-coarse subspace of $\res_{G_{[A]}}^{G}X$.

\begin{kor}\label{cor:coarsely-connected-orbit}
	 The multiplication map induces an isomorphism
	 \[  G \otimes_{G_{[A]}} G_{[A]}A\cong GA \ . \]
\end{kor}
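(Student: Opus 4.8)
The plan is to deduce this from \cref{lem:orbits-components}, applied to the $G$-coarse subspace $X' := GA$ of $X$ and the subset $Y := G_{[A]}A$ of $X'$; we may assume $A \neq \emptyset$, the empty case being trivial. So the first thing I would do is check that $Y$ is a coarse component of $X'$. Since the coarse structure $\cC_{X'}$ is the one induced from $\cC_X$, we have $U(\pi_0(X')) = U(\pi_0(X)) \cap (X' \times X')$, so the coarse component of $X'$ containing the coarsely connected set $A$ is $[A] \cap GA$. The inclusion $G_{[A]}A \subseteq [A] \cap GA$ is clear because $g[A] = [A]$ for $g \in G_{[A]}$. For the reverse inclusion I would take $x \in [A] \cap GA$, write $x = ga$ with $g \in G$ and $a \in A$; then $ga \in g[A]$ and $ga \in [A]$, so $g[A] \cap [A] \neq \emptyset$, whence $g[A] = [A]$ and $g \in G_{[A]}$, i.e.\ $x \in G_{[A]}A$. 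Thus $Y = [A] \cap GA$ lies in $\pi_0(X')$.

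Next I would verify the two hypotheses of \cref{lem:orbits-components}. That $X' = GY$ is immediate: $GY = G\,G_{[A]}A = GA = X'$. It remains to identify the stabiliser of $Y$ in $G$. If $g \in G$ fixes $Y$, then, using $\emptyset \neq gA \subseteq gY = Y \subseteq [A]$, we get $g[A] \cap [A] \neq \emptyset$, hence $g[A] = [A]$ and $g \in G_{[A]}$; conversely $G_{[A]}$ clearly fixes $Y$. So $G_Y = G_{[A]}$.

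With these in hand, \cref{lem:orbits-components} says that the multiplication map $\mu \colon G \otimes_{G_Y} Y \to X'$ is an isomorphism in $G\Coarse$. Since $G_Y = G_{[A]}$, $Y = G_{[A]}A$, $X' = GA$, and the $G_{[A]}$-coarse structure on $G_{[A]}A$ induced from $\res^G_{G_{[A]}}X$ agrees with the one induced from $\res^G_{G_{[A]}}(GA)$ (induced coarse structures compose, since $G_{[A]}A \subseteq GA \subseteq X$), this is exactly the claimed isomorphism $G \otimes_{G_{[A]}} G_{[A]}A \cong GA$, realised by the multiplication map.

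The only genuinely substantive points are those in the first two paragraphs: that $G_{[A]}A$ is a \emph{full} coarse component of $GA$, not merely contained in one, and the identification $G_Y = G_{[A]}$. Both are driven by the single fact that the coarse components of $X$ partition $X$ and are permuted by the $G$-action, so I expect no real obstacle; everything else is bookkeeping with induced structures and an invocation of \cref{lem:orbits-components} (together with \cref{fivaorvrvefvsfdvfvs} for the meaning of $G \otimes_{G_{[A]}}-$).
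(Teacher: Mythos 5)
Your proposal is correct and follows the same route as the paper: the paper's proof is exactly to observe that $G_{[A]}A$ is a coarse component of $GA$ and then apply \cref{lem:orbits-components} with $GA$ in place of $X$ and $G_{[A]}A$ in place of $Y$. You merely spell out the details the paper leaves implicit (that $G_{[A]}A = [A]\cap GA$ is a full component, that its stabiliser is $G_{[A]}$, and that the induced coarse structures agree), all of which are fine.
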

\begin{proof}
	 The subspace $G_{[A]}A$ of $GA$ is  a coarse component of $GA$.
		Consequently, the corollary follows from \cref{lem:orbits-components} applied to $GA$ in place of $X$ and $G_{[A]}A$ in place of $Y$.
\end{proof}

Let $p \colon Z \to X$
be a coarse covering (\cref{wefgihjwiegwergrwrg}) of $G$-coarse spaces, let $M$ be in $\Sh^G_\bC(X)$, and let $H$ be a subgroup of $G$.

\begin{prop}\label{prop:transfer-coind}
 If $B$ is a coarsely connected subset of $Z$, then
 \[ L^{\pi_0,G}\hat p^{*,G}M(HB) \simeq \coind_{H_{[B]}}^H(M(p(H_{[B]}B)))\ .\]
\end{prop}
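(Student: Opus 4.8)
The plan is to reduce the general statement to the two concrete computations already established, namely \cref{lem:transfer-coind} (which handles the case of a coarsely connected invariant subspace together with its $G\otimes_H(-)$-thickening) and \cref{cor:coarsely-connected-orbit} (which identifies the $G$-orbit of a coarsely connected set as an induced coarse space). First I would set $A := p(B)$; since $p$ is a coarse covering, its restriction to each coarse component is an isomorphism of coarse spaces, so $A$ is coarsely connected, $p$ restricts to an isomorphism $[B]\to[A]$ of coarse components, and we get $H_{[B]} = H_{[A]}$. Moreover $p$ restricts to an isomorphism $H_{[B]}B \xrightarrow{\simeq} H_{[A]}A$, hence also to an isomorphism $HB \xrightarrow{\simeq} HA$ of $H$-coarse subspaces (using that $p_{|Y}$ is an isomorphism for each coarse component $Y$). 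Applying \cref{cor:coarsely-connected-orbit} to the $H$-coarse space $\res^G_H X$ and the coarsely connected set $A$, the multiplication map gives an isomorphism $H \otimes_{H_{[A]}} H_{[A]}A \cong HA$.

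Next I would invoke \cref{lem:transfer-coind}, applied with $H$ in place of $G$, with $\res^G_H X$ in place of $X$, and with $Y := H_{[A]}A$ as the $H_{[A]}$-invariant coarsely connected subspace. That lemma produces a commutative square
\[\xymatrix@C=3em{
	\Sh^H_\bC(\res^G_H X)\ar[d]_-{\ev_{H_{[A]}A}}\ar[r]^-{\hat \iota^{*,H}} & 	\Sh^H_\bC(H \otimes_{H_{[A]}} H_{[A]}A)\ar[d]^-{\ev_{H \times_{H_{[A]}} H_{[A]}A}} \\
	\bC^{H_{[A]}}\ar[r]^-{\coind_{H_{[A]}}^H} & \bC^H \\
}\]
where $\iota$ is the composition of the inclusion with the multiplication map $\mu \colon H \otimes_{H_{[A]}} H_{[A]}A \to \res^G_H X$. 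Chasing $r^G_H M$ around this square: the left-bottom route yields $\coind_{H_{[A]}}^H(M(H_{[A]}A)) = \coind_{H_{[A]}}^H(M(p(H_{[B]}B)))$ after identifying $H_{[A]}A = p(H_{[B]}B)$ via the isomorphism of the previous paragraph; the top-right route yields $\ev_{H\times_{H_{[A]}}H_{[A]}A}(\hat\iota^{*,H} r^G_H M)$. By \cref{etgioowrgwegreg}, this last object is $L^{\pi_0,H}\hat\mu^{*,H}(r^G_H M)(H \times_{H_{[A]}} H_{[A]}A)$, and under the isomorphism $\mu\colon H\otimes_{H_{[A]}}H_{[A]}A \cong HA$ (from \cref{cor:coarsely-connected-orbit}, which implies $\mu$ is an isomorphism, so $\hat\mu^{*,H}$ is an equivalence and $L^{\pi_0,H}$ is redundant) this equals $(r^G_H M)(HA) = M(HA)$ computed as an object of $\bC^H$ — i.e.\ $\ev_{HA}$ of $r^G_H M$, which by definition of $\ev_{HA}$ (see \eqref{evevevev}) is $M(HA)$.

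Finally I would match this with the left-hand side of the proposition. Using that $p$ restricts to an isomorphism on coarse components and that $HB$, $HA$ correspond under $p$, together with \cref{ropgkpwoegrewgwregwgregw} / \cref{rgkoqregqregqqef} applied to the pullback square expressing the compatibility of $L^{\pi_0}\hat p^{*}$ with restriction to the $H$-invariant subspace $HA \subseteq X$, one obtains $\ev_{HB}(L^{\pi_0,G}\hat p^{*,G}M) \simeq L^{\pi_0,H}\hat\mu^{*,H}(r^G_H M)(H\times_{H_{[A]}}H_{[A]}A)$; concretely, restricting the covering $p$ over $HA$ produces precisely the covering $\mu\colon H\otimes_{H_{[A]}}H_{[A]}A \to HA$ up to the isomorphism above, so evaluating $L^{\pi_0,G}\hat p^{*,G}M$ on $HB$ agrees with evaluating $L^{\pi_0,H}\hat\mu^{*,H}(r^G_H M)$ on $H\times_{H_{[A]}}H_{[A]}A$. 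Combining the two computations gives $L^{\pi_0,G}\hat p^{*,G}M(HB) \simeq \coind_{H_{[B]}}^H(M(p(H_{[B]}B)))$, as desired.

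The main obstacle I anticipate is the bookkeeping in the last paragraph: carefully checking that the covering $p$, restricted over the $H$-invariant subspace $HA$ of $\res^G_H X$, is genuinely isomorphic (as a coarse covering) to the multiplication map $\mu\colon H\otimes_{H_{[A]}}H_{[A]}A \to HA$, and that the evaluation functors $\ev_{HB}$ and $\ev_{H\times_{H_{[A]}}H_{[A]}A}$ are intertwined by this isomorphism. This uses \cref{cor:coarsely-connected-orbit} for the domain and the defining property of a coarse covering (isomorphism on coarse components plus the entourage condition) to see that the two coverings have the same source, target, and structure maps; once that identification is in place, the functoriality statements \cref{ropgkpwoegrewgwregwgregw} and \cref{rgkoqregqregqqef} for pullback squares of the form "inclusion of invariant subset" vs.\ "coarse covering" do the rest.
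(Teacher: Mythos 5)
There is a genuine error in your first reduction step, and it propagates through the whole argument: you claim that $H_{[B]} = H_{[A]}$ for $A := p(B)$ and that $p$ restricts to an isomorphism $HB \xrightarrow{\simeq} HA$. Only the inclusion $H_{[B]} \subseteq H_{[A]}$ holds in general. An element $h$ of $H$ with $h[A]=[A]$ need not fix $[B]$: it may carry $[B]$ to a \emph{different} coarse component of $Z$ that also lies over $[A]$, since $p^{-1}([A])$ can consist of many components. Concretely, take $G=H=\Z/2$, let $X$ be coarsely connected with trivial $G$-action, and let $p$ be the covering $Z=\{0,1\}_{min,min}\otimes X\to X$ with $G$ swapping the two copies (cf.\ \cref{egowpegrrewfrewfwrefewfref}); for $B=\{0\}\times X$ one has $H_{[B]}=\{e\}$ but $H_{[A]}=H$, and $p_{|HB}\colon Z\to X$ is two-to-one, not an isomorphism. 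In this situation the correct answer is $\coind_{\{e\}}^{H}(M(X))\simeq M(X)\times M(X)$, whereas your chain of identifications would output $\coind_{H_{[A]}}^{H}(M(H_{[A]}A))\simeq M(X)$. So the step ``apply \cref{cor:coarsely-connected-orbit} to $A$ inside $X$ and transport along $p_{|HB}$'' cannot work: the folding of several components of $HB$ onto one component of $X$ is exactly the phenomenon the coinduction over $H_{[B]}$ is there to record, and your reduction erases it.

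Your overall toolkit is the right one, but the identifications must be made upstairs: apply \cref{cor:coarsely-connected-orbit} to $B$ inside $Z$ (with the group $H$ and stabiliser $H_{[B]}$) to get $HB\cong H\otimes_{H_{[B]}}H_{[B]}B$; use that $p$ is an isomorphism only on the single component $[B]$, hence on $H_{[B]}B$, to identify $p_{|HB}$ with the multiplication map $H\otimes_{H_{[B]}}p(H_{[B]}B)\to H(p(H_{[B]}B))$ — which is a genuine covering, in general not injective; and then apply \cref{lem:transfer-coind} with $H$ in place of $G$, $H_{[B]}$ (not $H_{[A]}$) in place of $H$, $Y=p(H_{[B]}B)$ and $\res^{G}_{H}X$ in place of $X$, together with \cref{etgioowrgwegreg} and the compatibility of $L^{\pi_{0}}\hat p^{*}$ with $r^{G}_{H}$. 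With these corrections the bookkeeping in your last paragraph does go through, and this is exactly the paper's argument.
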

\begin{proof}
	We consider the $H_{[B]}$-invariant subspace   $H_{[B]}B$ of $Z$ with the induced coarse structure as an object of $H_{[B]}\Coarse$.
 By \cref{cor:coarsely-connected-orbit}, we have an isomorphism of $H$-coarse spaces $HB \cong H \otimes_{H_{[B]}} H_{[B]}B$.
	Since $p$ is a coarse covering,  {it induces} in view of \cref{wefgihjwiegwergrwrg}.\eqref{wrkojgwergrefwrfrf}  an isomorphism $H_{[B]}B \xrightarrow{\cong} p(H_{[B]}B) = H_{[B]}p(B)$, and we can identify $p_{|HB}$ with the multiplication map
	\[ H \otimes_{H_{[B]}} p(H_{[B]}B) \to H(p(H_{[B]}B)) \ .\]
	The desired equivalence is now given by the composition
	\begin{eqnarray*}
L^{\pi_0,G}\hat p^{*,G}M(HB)&\simeq&L^{\pi_0,H}\hat p^{*,H} r^{G}_{H}M(HB)\\&\stackrel{!}{\simeq}
&  \coind_{H_{[B]}}^H(r^{G}_{H}M(p(H_{[B]}B)))\\&\simeq&  \coind_{H_{[B]}}^H( M(p(H_{[B]}B)))
\ ,
\end{eqnarray*}
where the marked equivalence is given by the filler of the square {\eqref{qewlknmwlkvfvsdfvsdfvsdvsdv}}.
Here we apply \cref{lem:transfer-coind} to $H$ in place of $G$, $H_{[B]}$ in place of $H$,  $p(H_{[B]}B)$ in place of $Y$, and
$\res^{G}_{H}X$ in place of $X$. We furthermore use   \cref{etgioowrgwegreg}.
\end{proof}

Let $\bD$ be  {a large $\infty$-category which admits small limits},
and let $(D_i)_{i \in I}$ be a family of objects in $\bD$.
\begin{lem}\label{lem:products-cpt}
	 {If $\prod_{i \in I} D_i$ is cocompact in $\bD$},
	 then $D_i \simeq 0$ for all but finitely many $i$ in $I$.
\end{lem}
\begin{proof}
 {Since $\prod_{i \in I} D_i \simeq \lim_F \prod_{i \in F} D_i$, with $F$ running over all finite subsets of $I$, is a cofiltered limit},
the identity morphism factors through some finite product:
\[ \id \colon \prod_{i \in I} D_i \to \prod_{i \in F} D_i \to \prod_{i \in I} D_i \ .\]
This implies $\id_{D_i} \simeq 0$ for all but finitely many $i$, and thus $D_i \simeq 0$ for all but finitely many $i$.
\end{proof}

Let $X$ be {a $G$-bornological coarse space, let $M$ be an equivariantly small sheaf on $X$},
and let $B$ be a bounded subset of $X$.

\begin{kor}\label{lem:eqsm-comps}
	 We have $M(B \cap C) \simeq 0$ for all but finitely many  {coarse components $C$ of $X$}.
\end{kor}
\begin{proof}
	 Since $(B \cap C)_{C \in \pi_0(X)}$ is a $ {U(\pi_0)}$-covering family of $B$ and $M$ is in particular a $ {U(\pi_{0})}$-sheaf, we have an equivalence
		\[ M(B) \simeq \prod_{C \in \pi_0(X)} M(B \cap C)\ .\]
		By the equivariant smallness condition  \cref{qreigoqrgwergwrgre} (applied to the trivial subgroup), we have  $M(B)\in \bC^\omega$. Now apply \cref{lem:products-cpt}.
\end{proof}

\begin{proof}[Proof of \cref{qergioewgergregwergergwergwreg}]
Recall that we consider a covering $p\colon Z\to X$ of $G$-bornological coarse spaces.
 {Let $M$ be an equivariantly small sheaf on $X$.}
Let $H$ be a subgroup of $G$ and let $Y$ be an $H$-bounded subset of $Z$. Then we want to show that $L^{\pi_0}\hat p^{*,G} M(Y)$ is a cocompact object of $\bC^{H}$.
 We can choose a bounded subset $B$ of $Z$ such that $Y = HB$. Since $p$ is a covering (\cref{ergwergggqrgqregqergq}) there exists a finite, coarsely disjoint family $(B_{j})_{j\in J}$
of subsets of $B$ such that $B = {\bigcup_{j\in J} B_j}$ and such that the induced map on coarse closures ${p_{|[B_{j}]}\colon} [B_j] \to [p(B_j)]$ is an isomorphism of coarse spaces for every $j$ in $J$.
 We fix $j$ in $J$ for the moment.
 Then for every coarse component $C$ of $Z$ we have $B_{j}\cap C=\emptyset$ or
 an equivalence
 \[ L^{\pi_0}\hat p^{*,G}M(B_j \cap C) \simeq M(p(B_j) \cap p(C))  \ . \]
 Note that in the latter case $p_{|B_{j}\cap C}:B_{j}\cap C\to p(B_j) \cap p(C)$ is an isomorphism by \cref{wefgihjwiegwergrwrg}.\eqref{wrkojgwergrefwrfrf}.
Since $M$ is equivariantly small, 
 {\cref{lem:eqsm-comps} and condition \eqref{qweiuqhfqfeffqeqf} of \cref{ergwergggqrgqregqergq}} imply that we have $L^{\pi_0}\hat p^{*,G}M(B_j \cap C)\simeq 0$  for all but finitely many   $C$  in $\pi_{0}(Z)$.  Hence, without changing the value $M(Y)$, we can replace $B$ (and hence $Y=HB$)
 by a smaller subset which is contained in the union of finitely many coarse components of $Z$.
 In addition, we can then assume that $B_j$ is coarsely connected for every $j$ in $J$.
	
 Let $\sim$ denote the equivalence relation on $J$ such that $j \sim j'$ if there exists $h$ in $H$ with $h[B_j] = [B_{j'}]$. {We choose a set $S$   of representatives of $J/\sim$. Note that $S$ is finite.
 For every $j$ in $J$ we choose $h_{j}$ in $H$ such that  $h_j[B_j] = [B_{s}]$ with $s$ in $S$ and $s\sim j$.  For every $s$ in $S$ we then
 define  the subset
 \[ B'_{s} := \bigcup_{j\sim s} h_{j}B_{j} \] of $Y$.} {We observe that $(B_s')_{s\in S}$ is a finite family of coarsely connected subsets,   $(HB'_{s})_{s\in S}$ is a coarsely disjoint family of subsets, and that $Y=\bigcup_{s\in S} HB_{s}'$.}
 We have the following chain of equivalences in $\bC^{H}$	
 \[ L^{\pi_0}\hat p^{*,G}M(Y) \simeq \prod_{s\in S}  L^{\pi_0}\hat p^{*,G}M(HB_s') \simeq \prod_{s\in S} \coind_{H_{[B'_s]}}^H M(p(H_{[B'_s]}B'_s))\ ,\]
 where the first follows from the sheaf condition on $M$, and the second is given by 	 \cref{prop:transfer-coind}. 		Since $\coind_{H_{[B'_s]}}^H$ preserves cocompact objects by the second assertion of \cref{cor:coind}, we see that  {$L^{\pi_0}\hat p^{*,G} M(Y)$ is cocompact in $\bC^{H}$}.
 This finishes the proof of \cref{qergioewgergregwergergwergwreg}.
\end{proof}
 
 To close this section, we record some consequences of the equivariant smallness condition which will only come to bear in \cref{sec:calculations}. The common theme among them is to translate the equivariant smallness condition into a more concrete cocompactness property for sheaves on some specific $G$-bornological coarse space.
 
 \begin{rem}\label{egiojwotgwegergwgrwregwrg}
 If $Y$ is a $G$-coarse space admitting a maximal entourage $U$, e.g, $Y=X_{min}$ for a $G$-set $X$, then we have
 $\Sh^{G}_{\bC}(Y)= \Sh^{U,G}_{\bC}(Y)\in {\CL}$ by \cref{qrgioergrewwergwgregrweg}.
 Consequently, we can consider cocompact objects in $\Sh^{G}_{\bC}(Y)$.
\end{rem}

 Let $X$ be  {a set}.
 Recall the bornological coarse space $X_{min,max}$ from \cref{qrgioqjrgoqrqfewfeqfqewfe}.
For $x$ in $X$, let $i_x \colon \{x\} \to X$ denote the inclusion map.
 
\begin{lem}\label{lem:eqsm-minmax}
	 There exists a commutative diagram \begin{equation}\label{eqrlwkenlweergegw}
\xymatrix{
	 \Sh^{\eqsm}_\bC(X_{min,max})\ar[r] & \Sh_\bC(X_{min,max})\ar[d]^-{M\mapsto (M(\{x\})_{x}} \\  
	 \coprod_X \bC^\omega\ar[u]^-{\sum_{x \in X} \hat i_{x,*}}\ar[r] & \prod_X \bC \ .
	}
\end{equation}
	in which both vertical maps are equivalences. In particular,
	 \begin{equation}\label{ergggrgewrg243g3rverv2rf3f}
 \Sh^{\eqsm}_\bC(X_{min,max}) \simeq \Sh_\bC(X_{min,max})^\omega\ .
\end{equation}
\end{lem}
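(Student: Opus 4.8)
The plan is to construct the two vertical equivalences directly, using that $X_{min,max}$ has a maximal entourage so that sheaves on it are honest $U$-sheaves in $\CLLL$. First I would identify the sheaf category. Since $X_{min}$ carries the maximal (in fact the minimal) coarse structure on the set $X$, and every subset is bounded, the poset of bounded subsets is all of $\cP_X$ and the covering family $(\{x\})_{x\in X}$ refines every covering family of $X$. Arguing as in the proof of \cref{cor:coindSh}, evaluation at singletons gives an equivalence $\Sh_\bC(X_{min,max}) \xrightarrow{\simeq} \Fun(X,\bC) \simeq \prod_X \bC$, where the last identification is \eqref{wfevowjvowevwevw}. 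This is the right-hand vertical map, and it is manifestly an equivalence; the inverse sends a family $(C_x)_x$ to the sheaf $Y \mapsto \prod_{x\in Y} C_x$.

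Next I would match up the two subcategories. Under the equivalence $\Sh_\bC(X_{min,max})\simeq\prod_X\bC$, the global-sections/evaluation functor $\ev_Y$ on a subset $Y$ corresponds to the finite product $\prod_{x\in Y} C_x$ over the members of $Y$ (using that coarse components of $X_{min}$ are singletons, so $\pi_0(Y) = Y$). For the equivariant smallness condition with trivial group, we only need to test on bounded subsets $Y$, i.e. on arbitrary subsets of $X$; a sheaf $M \leftrightarrow (C_x)_x$ is equivariantly small iff $\prod_{x\in Y} C_x \in \bC^\omega$ for every $Y\subseteq X$. Taking $Y = X$ and applying \cref{lem:products-cpt} to $\bD = \bC \in \CLLL$, we get that $C_x \simeq 0$ for all but finitely many $x$; conversely, if only finitely many $C_x$ are nonzero and each is cocompact, then every finite product $\prod_{x\in Y}C_x$ is a finite limit of cocompact objects, hence cocompact. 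Thus under the right vertical equivalence, $\Sh^{\eqsm}_\bC(X_{min,max})$ corresponds precisely to the full subcategory of $\prod_X\bC$ of families that are eventually zero and pointwise cocompact, which is exactly the image of $\coprod_X \bC^\omega \hookrightarrow \prod_X \bC$. This identifies the left vertical map $\coprod_X \bC^\omega \to \Sh^{\eqsm}_\bC(X_{min,max})$ — which sends a finitely supported family $(C_x)_x$ to $\sum_{x\in X}\hat i_{x,*}C_x$ — with an equivalence, and makes the square commute by construction (the composite along the top followed by the left inverse recovers evaluation at singletons).

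Finally, for the displayed consequence \eqref{ergggrgewrg243g3rverv2rf3f}: since $X_{min}$ has a maximal entourage, $\Sh_\bC(X_{min,max}) \simeq \prod_X\bC$ lies in $\CLLL$ (\cref{egiojwotgwegergwgrwregwrg}), so we may speak of its cocompact objects, and under the equivalence with $\prod_X\bC$ these are exactly the finitely-supported pointwise-cocompact families by \cref{lem:products-cpt} again (a product is cocompact iff almost all factors vanish and the rest are cocompact). Hence $\Sh_\bC(X_{min,max})^\omega$ is identified with the same subcategory as $\Sh^{\eqsm}_\bC(X_{min,max})$, giving the asserted equivalence. The only mild subtlety — and the step I would be most careful about — is checking that the evaluation functor $\ev_{\{x\}}$ really becomes projection to the $x$-th factor compatibly with the maps $\hat i_{x,*}$, i.e. that the concrete description of objects as $Y\mapsto\prod_{x\in Y}C_x$ is compatible with all the functoriality; this is routine given the explicit formula \eqref{ludef} for the sheafification and the fact that all covering families are refined by the singleton cover, but it is where the bookkeeping lives.
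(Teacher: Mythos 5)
Your proposal is correct and follows essentially the same route as the paper: identify $\Sh_\bC(X_{min,max})$ with $\prod_X\bC$ via evaluation at singletons, characterise equivariantly small sheaves as finitely supported families of cocompact objects, and match this with $\coprod_X\bC^\omega$ viewed as the cocompact objects of $\prod_X\bC$. The only minor caveat is that \cref{lem:products-cpt} alone gives just the vanishing of almost all factors, not the full statement that $(\prod_X\bC)^\omega\simeq\coprod_X\bC^\omega$; for that you should invoke \eqref{wqefqoijfqoiwffqefefewfqefqe} (i.e.\ \cref{rwigowefqewffqwfwfqwef} for a discrete groupoid), exactly as the paper does, or supply the routine adjunction argument for the projections and zero-extended inclusions.
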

\begin{proof}
	The upper horizontal morphism is the canonical inclusion. The lower horizontal morphism is the composition  
\[ \coprod_{X}\bC^{\omega}\stackrel{\eqref{wqefqoijfqoiwffqefefewfqefqe}}{\simeq} (\prod_{X}\bC)^{\omega}\hookrightarrow  \prod_{X}\bC\ . \]
	The right vertical morphism is an equivalence in view of the sheaf condition for $M$ since $(\{x\})_{x\in X}$ is a $\diag(X)$-covering family of $X$.
	
	Note that $X$ is a bounded subset of $X_{min,max}$.
	It follows from the definition of equivariant smallness and \cref{lem:eqsm-comps} that $M$ in $\Sh_\bC(X_{min,max})$ is equivariantly small if and only if $M(\{x\}) \in \bC^\omega$ for all $x$ in $X$ and $M(\{x\}) \simeq 0$ for all but finitely many $x$ in $X$.  This implies that the left vertical morphism is well-defined and an equivalence, too.
The up-right-down composition in 	\eqref{eqrlwkenlweergegw} is canonically equivalent to the lower horizontal map.
	
Since the lower horizontal morphism is equivalent to the inclusion of the cocompact objects of its target, the same also applies to the upper horizontal map. 
This shows the second assertion.	
\end{proof}

 Let $X$ be  {a $G$-set}
 and consider the embedding \eqref{wergoihiorgohjwegoiwre} for $H:=\{1\}$.
 It induces an embedding  of bornological coarse spaces 
\[ \res^{G}_{\{1\}}X_{min,max} \to \res^{G}_{\{1\}}(G_{min,min} \otimes X_{min,max})\ .\]
  The diagonal map in 
\eqref{wregpokopwtglmrbletemblwtwb}  provides an equivalence \begin{equation}\label{reoijiojoiervevwev}
\theta \colon \Sh^G_\bC(G_{min,min} \otimes X_{min,max}) \xrightarrow{\simeq} \Sh_\bC (\res^{G}_{\{1\}}X_{min,max})\ .
\end{equation} 
\begin{lem}\label{lem:eqsm-Gminmax}
	The equivalence $\theta$ restricts to an equivalence
	\begin{equation}\label{rewgpogoegergergwergewgergewrg}
\theta^{\eqsm} \colon \Sh^{G,\eqsm}_\bC(G_{min,min} \otimes X_{min,max}) \xrightarrow{\simeq} \Sh^\eqsm_\bC (\res^{G}_{\{1\}}X_{min,max})\ .
\end{equation} 
In particular, we have an equivalence \begin{equation}\label{3roihfeoirhjo3f123feffqewfq}
 \Sh^{G,\eqsm}_\bC(G_{min,min} \otimes X_{min,max})\simeq  \Sh^{G}_\bC(G_{min,min} \otimes X_{min,max})^{\omega}\ .
\end{equation}
\end{lem}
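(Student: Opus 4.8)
The plan is to exploit that $\theta$ is already known to be an equivalence, so that it is enough to check that $\theta$ carries equivariantly small sheaves to equivariantly small sheaves and back. Since $G_{min,min}\otimes X_{min,max}$ carries the minimal coarse structure, it has the maximal entourage $\diag(G)\times\diag(X)$; hence both $\Sh^G_\bC(G_{min,min}\otimes X_{min,max})$ and, by \cref{egiojwotgwegergwgrwregwrg}, $\Sh_\bC(\res^G_{\{1\}}X_{min,max})$ lie in $\CLLL$, and $\theta$ restricts to an equivalence on cocompact objects. Combined with \eqref{ergggrgewrg243g3rverv2rf3f}, this already yields both \eqref{rewgpogoegergergwergewgergewrg} and \eqref{3roihfeoirhjo3f123feffqewfq} once we verify: for $M$ in $\Sh^G_\bC(G_{min,min}\otimes X_{min,max})$, $M$ is equivariantly small if and only if $\theta(M)$ is (equivariantly) small. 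Throughout I will use, as in the proofs of \cref{lem:eqsm-minmax} and \cref{cor:coindSh}, that a sheaf on a space with minimal coarse structure is determined by its values on singletons via $M(Y)\simeq\prod_{p\in Y}M(\{p\})$, that $\theta=\hat i^{*}\circ r^G_{\{1\}}$ for the inclusion $i\colon x\mapsto(e,x)$, so that $\theta(M)(\{x\})\simeq M(\{(e,x)\})$, and that the action of $g\in G$ gives an equivalence $M(\{(g,x)\})\simeq M(\{(e,g^{-1}x)\})$ up to the automorphism of $\bC$ defined by $g$.

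For the forward direction, suppose $M$ is equivariantly small. Evaluating the smallness condition for the trivial subgroup on the bounded subsets $\{e\}\times\{x\}$ and $\{e\}\times X$ of $G_{min,min}\otimes X_{min,max}$ gives $\theta(M)(\{x\})\simeq M(\{(e,x)\})\in\bC^{\omega}$ for every $x$, and $\prod_{x\in X}\theta(M)(\{x\})\simeq M(\{e\}\times X)\in\bC^{\omega}$, whence $\theta(M)(\{x\})\simeq 0$ for all but finitely many $x$ by \cref{lem:products-cpt}. By \cref{lem:eqsm-minmax}, $\theta(M)$ is small.

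For the converse, suppose $N:=\theta(M)$ is small, so by \cref{lem:eqsm-minmax} there is a finite set $X_{0}\subseteq X$ with $N(\{x\})\in\bC^{\omega}$ for all $x$ and $N(\{x\})\simeq 0$ for $x\notin X_{0}$. Let $H\leq G$ and let $Y=HB$ be an $H$-bounded subset with $B$ bounded, so $B\subseteq F\times B_{X}$ for some finite $F\subseteq G$ and some $B_{X}\subseteq X$. Decompose $Y$ into its $H$-orbits $O$, which are all free since $G$ acts freely on the $G$-coordinate of $G\times X$. By the sheaf condition for the covering of $Y$ by its $H$-orbits, $M(Y)\simeq\prod_{O}M(O)$ in $\bC^{H}$, and by \cref{prop:transfer-coind} applied with $p=\id$ — which is a coarse covering, with $L^{\pi_0,G}\hat\id^{*,G}M\simeq M$ since $M$ is a $\pi_0$-sheaf, and whose coarsely connected subsets are singletons because the coarse structure is minimal — one obtains $M(O)\simeq\coind_{\{1\}}^{H}(M(\{p_{O}\}))$ for any chosen point $p_{O}=(g,x)\in O$. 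Now $g^{-1}x\in F^{-1}B_{X}$, this invariant is constant along $O$, and at most $|F|$ orbits $O\subseteq Y$ realise a given invariant; since $M(\{p_{O}\})\simeq N(\{g^{-1}x\})$ vanishes unless $g^{-1}x\in X_{0}$, only finitely many $O$ have $M(\{p_{O}\})\not\simeq 0$, and for the rest $M(O)\simeq\coind_{\{1\}}^{H}(0)\simeq 0$. Hence $M(Y)$ is a finite product of objects $\coind_{\{1\}}^{H}(M(\{p_{O}\}))$ with $M(\{p_{O}\})\in\bC^{\omega}$; as $\coind_{\{1\}}^{H}$ preserves cocompact objects by \cref{cor:coind} and finite limits of cocompact objects are cocompact, $M(Y)\in\bC^{H,\omega}$.

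The crux is the converse direction: a priori $M(Y)\simeq\prod_{p\in Y}M(\{p\})$ is an infinite product of non-zero cocompact objects of $\bC$, hence not cocompact in $\bC$; the resolution is to regroup it, via the $H$-orbit decomposition and \cref{prop:transfer-coind}, into a finite product of objects coinduced from the trivial subgroup, which are cocompact in $\bC^{H}$ by \cref{cor:coind} even though their underlying objects of $\bC$ are not. The genuinely fiddly point is the finiteness bookkeeping — that an $H$-bounded set meets only finitely many $H$-orbits on which $M$ is non-zero — which rests on $N$ being supported on a finite set of points and on the invariant map $(g,x)\mapsto g^{-1}x$ having fibres of size at most $|F|$ over an $H$-bounded set.
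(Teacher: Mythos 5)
Your argument is correct, and it runs on the same engine as the paper's proof --- identify the value of $M$ on an $H$-invariant piece of $Y=HB$ with a coinduction $\coind_{\{1\}}^{H}$ of a value of $\theta(M)$, then use that $\coind_{\{1\}}^{H}$ preserves cocompact objects (\cref{cor:coind}) --- but the decomposition is genuinely different. You split $Y$ into its individual (free) $H$-orbits and apply \cref{prop:transfer-coind} with $p=\id$, which produces an a priori infinite product $M(Y)\simeq\prod_{O}M(O)$ and therefore forces the finiteness bookkeeping: the finite support $X_{0}$ of $\theta(M)$ (via \cref{lem:eqsm-minmax} and \cref{lem:products-cpt}) together with the counting of orbits realising a given invariant $g^{-1}x$. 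The paper instead regroups the orbits into finitely many blocks: after arranging $B=\bigcup_{f\in F}\{f\}\times X_{f}$ with $F\to G/H$ injective, each block $Z=H(\{f\}\times X_{f})$ is identified with $H\otimes_{\{1\}}(X_{f})_{min,max}$ and \cref{lem:transfer-coind} gives $M(Z)\simeq\coind_{\{1\}}^{H}(\theta(M)(X_{f}))$ directly; since every subset of $X_{min,max}$ is bounded, $\theta(M)(X_{f})\in\bC^{\omega}$ outright, so no support-finiteness or counting is needed and only finite products ever occur. What your route buys is that it avoids the coset-merging step needed to make $F\to G/H$ injective; what it costs is the infinite orbitwise product --- which should be justified exactly as in \cref{lem:eqsm-comps}, i.e.\ by the $\pi_{0}$-sheaf condition for the coarsely disjoint family of orbits --- and the extra counting argument. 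Your treatment of the easy direction (evaluation on $\{e\}\times\{x\}$ and $\{e\}\times X$, then \cref{lem:products-cpt}) and your deduction of \eqref{3roihfeoirhjo3f123feffqewfq} from \eqref{rewgpogoegergergwergewgergewrg} together with \eqref{ergggrgewrg243g3rverv2rf3f} agree with the paper, which obtains the easy direction slightly more abstractly from \cref{lem:restrict-eqsm}.
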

\begin{proof}
The second assertion of the lemma follows from the first combined with \eqref{reoijiojoiervevwev} and the second assertion of \cref{lem:eqsm-minmax}.

We now show the first assertion.
The equivalence $\theta$   has a decomposition
\begin{align*} \Sh^{G}_{\bC}(G_{min,min}\otimes X_{min,max})&\xrightarrow{r^{G}_{\{1\}}} \Sh_{\bC}(\res^{G}_{\{1\}} (G_{min,min}\otimes X_{min,max}))\\&\xrightarrow{\hat i^{*}}  \Sh_{\bC}(\res^{G}_{\{1\}} X_{min,max})\ .\end{align*}
The morphism $r^{G}_{\{1\}}$ obviously  preserves equivariantly small objects, and the morphism $\hat i^{*}$ preserves equivariantly small objects by 
\cref{lem:restrict-eqsm}. Hence the functor $\theta^{\eqsm}$ in \eqref{rewgpogoegergergwergewgergewrg} is well-defined.
 Since it is a restriction of an equivalence, it is clearly fully faithful.
 It remains to show that $\theta^{\eqsm}$ is essentially surjective. 
     
   Let $M$ be an object of  $\Sh^G_\bC(G_{min,min} \otimes X_{min,max})$   such that $\theta(M)$   is equivariantly small in $\Sh_\bC(\res^{G}_{\{1\}}X_{min,max})$. We claim that  then $M$ itself is equivariantly small.   
  The claim (for all $M$) immediately implies that $\theta^{\eqsm}$ is essentially surjective. 
	
	 Let $H$ be a subgroup of $G$ and $Y$ be an $H$-bounded subset of $G_{min,min} \otimes X_{min,max}$.
Then we must show that  {$M(Y)$ is cocompact in $\bC^{H}$}.

	 We can choose  a bounded subset  $B$    of $G_{min,min} \otimes X_{min,max}$ such that $Y = HB$. After replacing $B$ by a smaller subset if necessary, we can assume that there exists a finite subset $F$ of $G$  and a family $(X_{f})_{f\in F}$ of subsets of $X$  
such that the projection $F\to G/H$ is injective and 	$B = \bigcup_{f \in F} \{f\} \times X_f$.  Then $(H(\{f\} \times X_f))_{f\in F}$ is a coarsely disjoint family of  $H$-bounded subsets of $G_{min,min}\otimes X_{min,max}$   and  $Y = \bigcup_{f \in F} H(\{f\} \times X_f)$. Since $M(Y)\simeq \prod_{f\in F} 
M(H(\{f\} \times X_f))$ by the sheaf condition on $M$, it suffices to show that  {$M(H(\{f\} \times X_f))$ is cocompact in $\bC^{H}$} for every $f$ in $F$. 

 We now fix $f$ in $F$, set $A:=X_{f}$,  and consider  {the $H$-bornological coarse space} $Z:=H(\{f\} \times A)$  with the structures induced from the embedding $Z\to \res^{G}_{H}(G_{min,min} \otimes X_{min,max})$.

We have a diagram 
\[ \xymatrix{Z\ar[rr]^{\incl}&& \res^{G}_{H}(G_{min,min}\otimes X_{min,max})\\&H \otimes_{\{1\}} A_{min,max}\ar[ul]_{\cong}^{m}\ar[ur]^{\iota}&} \] in $H\BC$,
	where $\iota$ is  as in \eqref{qrg0rig0qrgqrgqrgqrg} (for $H$ in place of $G$, $\{1\}$ in place of $H$, $\res^{G}_{H}(G_{min,min}\otimes X_{min,max})$ in place of $X$, and $\{f\}\times A$ in place of $Y$),
	and $m$ is an isomorphism given by $(h,a)\mapsto (hf,ha)$.
	 By \cref{lem:transfer-coind}, we have the middle square of the following
	 commutative diagram
	 \[\xymatrix@C=3em{
	 	\Sh^H_\bC( G_{min,min} \otimes X_{min,max})\ar[d]_{r^{G}_{H}} \ar[r]^{\simeq}_{\theta}& \Sh_{\bC}(\res^{G}_{\{1\}}X_{min,max})\ar[d]^{\ev_{A}} \\
		\Sh^H_\bC(\res^{G}_{H}(G_{min,min} \otimes X_{min,max}))\ar[r]^-{\ev_{\{e\}\times A}}\ar[d]_-{\hat\iota^{*,H}} &\bC\ar[d]^-{\coind_{\{1\}}^H} \\
		\Sh^H_\bC(H \otimes_{\{1\}} A_{min,max})\ar[r]^-{\ev_{H \times A}}\ar[d]_-{\hat m^{H}_{*}}^-{\simeq} & \bC^H \\
		\Sh^H_\bC(Z) \ar[ur]_-{\ev_Z} & \\
	}\ .\]
	The upper square and the lower triangle obviously commute.
	The  {composition along the bottom left corner}
	sends $M$ in $\Sh^H_\bC( G_{min,min} \otimes X_{min,max})$ to $M(Z)$ in $\bC^{H}$.
	The filler of the diagram provides an equivalence $M(Z) \simeq \coind_{\{1\}}^{H} \theta(M)(A)$.
	 {Since $\theta(M)(A)$ is cocompact in $\bC$} by assumption and $\coind_{\{1\}}^H$ preserves cocompact objects by the second assertion of \cref{cor:coind},
	we conclude that  {$M(Z)$ is cocompact in $\bC^{H}$}.	  
	This finishes the proof of the claim and hence of the lemma.
\end{proof}

Let $X$  {be a $G$-set},
and let $\bC$ be in $\Fun(BG,\CL)$. 
The following proposition is an analogue of the second assertion of \cref{lem:eqsm-Gminmax} 
in the case where we replace the minimal  coarse structure of $G$ by the canonical one (\cref{etwgokergpoergegregegwergrg}). If $G$ is infinite, then $\Sh^G_\bC(G_{can,min} \otimes X_{min,max}) 
$ is not expected to admit all cofiltered limits, i.e.~that it only belongs to $\CLL$ instead of $\CLLL$ or even $\CL$. In particular,  it does not makes sense to consider cocompact objects therein. 
But $\PSh^G_\bC(G_{can,min} \otimes X_{min,max})$ belongs to $ \CL$ and therefore admits a good notion of cocompact objects.  
\begin{prop}\label{prop:Gcan-cpt}
	We have an inclusion
	\[ \Sh^{G,\eqsm}_{\bC}(G_{can,min} \otimes X_{min,max}) \subseteq \PSh^G_\bC(G_{can,min} \otimes X_{min,max})^{\omega}\ . \]
\end{prop}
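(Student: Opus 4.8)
The plan is to reduce the statement to cocompactness in presheaf categories on the point (i.e.\ in $\bC^{H}$) via the structure of $G_{can,min}$. First I would unpack the geometry: the $G$-bornological coarse space $G_{can,min}\otimes X_{min,max}$ has underlying $G$-set $G\times X$, coarse structure generated by entourages of the form $(B\times B)\times \diag(X)$ for $B$ a finite subset of $G$ together with the minimal coarse structure on the $X$-factor, and bornology generated by $B\times A$ with $B\subseteq G$ finite and $A\subseteq X$ arbitrary. In particular every bounded subset is contained in a set of the form $F\times X$ with $F\subseteq G$ finite, so the coarse components of such a bounded subset are of the form $\{g\}\times\{x\}$ — the coarse structure on $X$ is minimal — and the coarse components of $G_{can,min}\otimes X_{min,max}$ itself are the sets $G\times\{x\}$ (since $G$ with $\cC_{can}$ is coarsely connected, being an increasing union of $B\times B$). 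I would then use that $\PSh^G_\bC(G_{can,min}\otimes X_{min,max})\in\CL$ (as stated in the lead-up to the proposition), so cocompact objects make sense there, and use the sheafification/restriction adjunctions of Section~\ref{qerogijqiowgrfqreg} together with \cref{cor:coindSh} to identify $\Sh^G_\bC((G\times X)_{min})$ with $\Fun^G(G\times X,\bC)$.

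Next I would take $M$ in $\Sh^{G,\eqsm}_{\bC}(G_{can,min}\otimes X_{min,max})$ and exhibit it as a cocompact object of the presheaf category. The key point is that $M$ is determined by its values on the coarse components $G\times\{x\}$, and on each such component it restricts to a $G$-sheaf on $(G\times\{x\})$ with the canonical coarse structure, i.e.\ essentially a sheaf on $G_{can,min}$. By \cref{prop:transfer-coind} (applied with the covering $G_{can,min}\otimes X_{min,max}\to X_{min,max}$, or directly to the component) and \cref{cor:coind}, the value $M(HB)$ on an $H$-bounded subset is a $\coind$ of a value of $M$ on an $H_{[B]}$-bounded subset sitting inside a single coarse component; since $M$ is equivariantly small these values lie in $\bC^{H_{[B]},\omega}$, hence $M(HB)\in\bC^{H,\omega}$. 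Crucially, applying \cref{lem:eqsm-comps} (via the $\pi_0$-sheaf condition and $X$ being a bounded subset of $X_{min,max}$, so $G\times\{x\}$ plays the role of the coarse components), $M(G\times\{x\})\simeq 0$ for all but finitely many $x\in X$. So $M$ is supported, up to equivalence, on finitely many components.

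I would then verify cocompactness directly. Write $M$ as a finite product (over the finitely many components $x$ on which it is nonzero) of its restrictions $M_x$ to those components, each $M_x$ a sheaf on a single copy of $G_{can,min}$ valued in $\bC$. A finite limit (here product) of cocompact objects is cocompact, so it suffices to show each $M_x$ is cocompact in $\PSh^G_\bC(G_{can,min})$. For this I would use that $G_{can,min}$ is the increasing union of its bounded subspaces $B_{min,max}$ ($B\subseteq G$ finite), that $M_x$ restricted to each $B$ is a sheaf on $B_{min,max}$ which is equivariantly small hence cocompact in $\Sh_\bC(B_{min,max})$ (by \cref{lem:eqsm-minmax} and its equivariant analogue, or by \cref{lem:eqsm-Gminmax} in the relevant case), and that the presheaf category on $G_{can,min}$ is a cofiltered limit (on right adjoints) / filtered colimit (on left adjoints) along these bounded pieces, under which a compatible system of cocompact objects assembles to a cocompact object — this uses that $\PSh^G_\bC\in\CL$ and the compatibility of cocompact objects with the relevant (co)limits, exactly as in \cref{lem:eqsm-Gminmax}. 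The main obstacle I expect is the bookkeeping in this last step: one must be careful that cocompactness in $\PSh^G$ (equivalently compactness in the presentable category $\Prlp$-side) is genuinely detected component-by-component and compatibly with restricting to bounded subsets of $G$, since the coarse structure on $G$ is the canonical one rather than the minimal one, so $\Sh^G_\bC(G_{can,min}\otimes X_{min,max})$ itself is not presentable and one really has to argue inside the ambient presheaf category $\PSh^G_\bC$. Once the reduction to a single component and then to a finite bounded subset of $G$ is in place, the conclusion follows from the already-established \cref{lem:eqsm-minmax} and the fact that $\coind$ preserves cocompact objects.
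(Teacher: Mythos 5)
There is a genuine gap, and it sits at the heart of your reduction: the claim that $M(G\times\{x\})\simeq 0$ for all but finitely many $x$, so that $M$ splits as a \emph{finite} product of restrictions to coarse components. \cref{lem:eqsm-comps} only gives, for each fixed bounded subset $B$ (say $B=F'\times X$ with $F'\subseteq F$ finite), a finite set $X_{F'}$ with $M(F'\times\{x\})\simeq 0$ for $x\notin X_{F'}$. But this exceptional set is not $G$-invariant: by equivariance $M(hF'\times\{x\})\simeq M(F'\times\{h^{-1}x\})$, so the $U$-bounded sets on which $M$ can be nonzero sweep out the $G$-invariant set $G(F\times X_0)$, which meets the component $G\times\{x\}$ for \emph{every} $x$ in the (typically infinite) saturation $GX_0$. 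A concrete counterexample: take $X=G$ with left translation and an equivariantly small sheaf supported on the diagonal $\{(g,g)\}$; then $M(G\times\{x\})\neq 0$ for all $x$, yet the proposition still holds. Once the finite-component reduction fails, the rest of your argument collapses, because the $\pi_0$-sheaf decomposition becomes an infinite product, and infinite products of nonzero objects are never cocompact (\cref{lem:products-cpt}); similarly, your final assembly step --- ``a compatible system of cocompact objects on the bounded pieces assembles to a cocompact object'' over the filtered exhaustion of $G_{can}$ --- is false in general without a finiteness input (a presheaf with infinitely many nonzero cocompact values restricts cocompactly to every finite piece but is not cocompact). A smaller inaccuracy: a finite subset $B$ of $G_{can}$ carries the induced coarse structure $B_{max}$, not $B_{min}$, so \cref{lem:eqsm-minmax} does not apply to it as stated; and a single component $G\times\{x\}$ is only $G_x$-invariant, so ``a sheaf on a copy of $G_{can,min}$'' needs a coinduction argument you have not supplied.

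The paper's proof locates the finiteness elsewhere, and this is the idea your proposal is missing. One fixes $U=G(F\times F)\times\diag(X)$ with $M$ a $U$-sheaf, uses \cref{lem:eqsm-comps} to produce the finite set $X_0$, and then shows that the unit $M\to i^G_*i^{*,G}M$ is an equivalence for the inclusion $i\colon \cP^{U\bd}_{G(F\times X_0)}\to\cP_{G\times X}$ of the poset of $U$-bounded subsets of the $G$-\emph{invariant} (but not finitely-many-component) subset $G(F\times X_0)$. Since $i^{*,G}$ preserves cofiltered limits, its right adjoint $i^G_*$ preserves cocompact objects, so everything reduces to cocompactness of $i^{*,G}M$ in $\Fun^G(\cP^{U\bd,\op}_{G(F\times X_0)},\bC)$; this poset is $G$-finite, and the equivariant compactness criterion \cref{lem:fun-cpt} (values cocompact in $\bC^{G_B,\omega}$ at each vertex, which is exactly equivariant smallness) closes the argument. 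Your proposal has no substitute for the right-Kan-extension step or for \cref{lem:fun-cpt}, and these cannot be replaced by a component-by-component argument.
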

For the   proof  we need the following generalisation of \cite[Prop.~5.3.4.13]{htt}.  
Let $K$ be a $G$-finite $G$-simplicial set,  {i.e., a $G$-simplicial set with only finitely many $G$-orbits of non-degenerate simplices}.
By $K^{0}$ we denote the $G$-set of vertices of $K$. 
For $k$ in $K^{0}$ we let $G_{k}$ denote the stabilizer of $k$ in $G$. Let $\bC$ be in $\Fun(BG,\CL)$.
Then we have  $\Fun^{G}(K,\bC)\in \CL$.
 Fix $M$ be in $\Fun^{G}(K,\bC)$.  
For $k$ in $K^{0}$ we consider $M(k)$ in $\bC^{G_{k}}$ in the natural way.
 \begin{lem}\label{lem:fun-cpt}
  {If $M(k)$ is cocompact in $\bC^{G_k}$} for all $k$ in $K^{0}$,
 then  {$M$ is cocompact in $\Fun^G(K,\bC)$}.
\end{lem}
\begin{proof}
	We first assume that $K$ is zero-dimensional and that {$K^0$ is a transitive $G$-set}.
	We fix a point $k$ in $K^{0}$. The diagonal equivalence in \eqref{fsdvlksmvklvsdvadvadvadsv} applied to
	$X:=*$, $H:=G_{k}$ (and using the identification $K=K^{0}\cong G/G_{k}$) provides an equivalence
	\[ \Fun^G(K,\bC) \simeq \Fun^{G_{k}}(*,\bC)\simeq \bC^{G_k}\ , \quad M\mapsto M(k)\ .\]
	 In this case, the lemma holds true for obvious reasons.
	
	 In the next step, we assume that $K=\Delta^{n}\times S$ for some transitive $G$-set $S$
	 and $n$ in $\nat$.
 We have an equivalence
	\[ \Fun^G(S \times \Delta^n,\bC) \simeq \Fun(\Delta^n, \Fun^G(S,\bC))\ .\]
	By \cite[Prop.~5.3.4.13]{htt} applied to the right-hand side of this equivalence, we have
	 {that $M$ is cocompact in $\Fun^G(S \times \Delta^n,\bC)$ if and only if
	the evaluation of $M$ at every vertex of $\Delta^n$ is cocompact in $\Fun^G(S,\bC)$}.
	By the case considered  in the first paragraph, the latter condition is implied by our assumption on $M$.
	
	Suppose now that we are given a pushout square
	\[\xymatrix{
		L'\ar[r]^-{f'}\ar[d]_-{i} & K'\ar[d]^-{j} \\
		L\ar[r]^-{f} & K
	}\]
	of $G$-simplicial sets where $i$ is a cofibration. Then the induced diagram of $\infty$-categories
	\[\xymatrix{
		\Fun(K,\bC)\ar[r]^-{j^*}\ar[d]_-{f^*} & \Fun(K',\bC)\ar[d]^-{f^{',*}} \\
		\Fun(L,\bC)\ar[r]^-{i^*} & \Fun(L',\bC)
	}\]
	is a pullback in $\CATi$, and hence also a pullback in $\Fun(BG, \CL)$.
	The restriction functors in this diagram all have right adjoints given by right Kan extension functors. These right adjoints are also  morphisms in $\Fun(BG,\CL)$.
	
	Applying $\lim_{BG}$ yields the pullback square
	\[\xymatrix{
		\Fun^G(K,\bC)\ar[r]^-{j^{*,G}}\ar[d]_-{f^{*,G}} & \Fun^G(K',\bC)\ar[d]^-{f^{',*,G}} \\
		\Fun^G(L,\bC)\ar[r]^-{i^{*,G}} & \Fun^G(L',\bC)
	}\]
	in $\CL$. Furthermore, all the functors in this square again have right adjoint morphisms in $\CL$. This implies that they all preserve cofiltered limits.
Therefore, we can apply \cite[Lem.~5.4.5.7]{htt}  to see that $M$ in $\Fun^G(K,\bC)$ is cocompact if $j^{*,G}M$,  $f^{*,G}M$, and $i^{*,G}f^{*,G}M$ are all cocompact.

 The general case follows from this observation by induction on the number of {non-degenerate} equivariant simplices in $K$.
\end{proof}

\begin{proof}[Proof of \cref{prop:Gcan-cpt}]
	Let $M$ be  {an equivariantly small sheaf on in $G_{can,min} \otimes X_{min,max}$}.
	Let $U$ be an entourage such that $M$ is a $U$-sheaf. After enlarging $U$ if necessary we may assume that $U = G(F \times F) \times \diag(X)$ for some finite subset $F$ of $G$.
	
	Let $F^{\prime}$ be a subset of $F$. Then $F'\times X$ is a bounded subset of $G_{can,min}\otimes X_{min,max}$.
	By \cref{lem:eqsm-comps} and since $X_{min,max}$ has the discrete coarse structure, there exists a finite subset $X_{F'}$ of $X$ such that
	$M(F'\times \{x\})=0$ for all $x$ in $X\setminus X_{F'}$. Since $F$ is finite, the subset $X_{0}:=\bigcup_{F'\in \cP(F)} X_{F'}$ of $X$ is again finite.
	Moreover, for every subset $F'$ of $F$ we have $M(F'\times \{x\})\simeq 0$ for all $x$ in $X\setminus X_{0}$.
	 
	We consider the $G$-invariant subset $G(F\times X_{0})$ of $G\times X$. Furthermore, by    $i \colon \cP^{U\bd}_{G(F \times X_0)} \to \cP_{G\times X}$  we denote the inclusion of the $G$-subposet  of $U$-bounded subsets of $G(F\times X_{0})$.  Then we have an adjunction
	\[ i^{*,G} \colon \PSh^G_{\bC}(G \times X)   \leftrightarrows \Fun^G(\cP^{U\bd,\op}_{G(F \times X_0)},\bC) \cocolon i_*^G\ .\]	
	We can consider $M$ as an object of $\PSh_{\bC}^{G}(G\times X)$ and claim that the unit $M\to i_{*}^{G}i^{*,G}M$ is an equivalence. By \cref{tieqorgfgregwegergw} and since $M$ is a $U$-sheaf, it suffices to show that
	$M(B)\to i_{*}^{G}i^{*,G}M(B)$ is an equivalence for every $U$-bounded subset $B$ of $G_{can,min}\otimes X_{min,max}$.
We thus must  show that the canonical morphism
	\begin{equation}\label{valknjvoiarvafvasdvadsv}
M(B)\to \lim_{B'\in ((\cP^{U\bd}_{G(F \times X_0)})_{/B})^{\op}} M(B')
\end{equation}
	is an equivalence.   
	
	Since $B$ is $U$-bounded, there exists $g$ in $G$ and a subset $F'$ of $F$ such that $B=g(F'\times \{x\})$. Any other subset of $B$ is then of the form $B'=g(F''\times \{x\})$ for some subset $F''$ of $F'$.
	 Since $M$ is a $G$-invariant sheaf,   we  have an equivalence    $M(B')\simeq   M(F''\times \{x\})$.	  	
We distinguish two cases:
\begin{enumerate}
	\item If $B\not\in   \cP^{U\bd}_{G(F \times X_0)}$, then  $x\not\in X_{0}$ and hence $M(B')\simeq 0$ for all $B'$ in $(\cP^{U\bd}_{G(F \times X_0)})_{/B}$. In this case  $M(B)\simeq 0$ and $\lim_{B'\in (\cP^{U\bd}_{G(F \times X_0)})_{/B}^{\op}} M(B')\simeq 0$.
	\item If $B\in   \cP^{U\bd}_{G(F \times X_0)}$, then $B$ is final in $(\cP^{U\bd}_{G(F \times X_0)})_{/B}$
\end{enumerate}
Thus in both cases \eqref{valknjvoiarvafvasdvadsv} is obviously an equivalence.

Above we have seen that 
  	 $M\simeq i^G_*i^{*,G}M$.
	Since the restriction functor $i^{*,G}$ preserves cofiltered limits, $i^G_*$ preserves cocompact objects.
	Hence in order to show that  {$M$ is cocompact in $\PSh_{\bC}^{G}(G\times X)$},
	 it suffices to check that  {$i^{*,G}M$ is cocompact in $\Fun^G(\cP^{U\bd,\op}_{G(F \times X_0)},\bC)$}. 
	Since $F$ and $X_{0}$ are finite, 
	  the poset $\cP^{U\bd}_{G(F \times X_0)}$ is $G$-finite.
 {Since $M(B)$ is cocompact in $\bC^{G_B}$} for every bounded subset $B$ by assumption, we indeed have   $i^{*,G}M\in  \Fun^G(\cP^{U\bd,\op}_{G(F \times X_0)},\bC)^{\omega}$  by \cref{lem:fun-cpt}.
\end{proof}

 \subsection{Localisation}\label{sec:localization}
 If $f,g \colon X \to X'$ are two morphisms between of $G$-bornological coarse spaces, then 
 one says that $f$ and $g$ are close to each other if there exists a coarse entourage $U'$ of $X'$ containing the  {pairs} $(f(x),g(x))$ for all $x$ in $X$.
 A morphism $f:X\to X'$ between $G$-bornological  coarse is a coarse equivalence  if it  admits an inverse   up to closeness. It is one of the basic principles of coarse geometry that coarsely equivalent bornological coarse spaces should be considered  as  equivalent.  So invariants of bornological coarse spaces
 should be  constructed from functors which send pairs of close morphisms to equivalent morphisms. Such functors are called coarsely invariant, see also \cref{rewkgowegrerfrewfwr}.
  The functor  $\Sh^{G,\eqsm}_\bC$  is not coarsely invariant since morphisms  in  $\Sh^{G,\eqsm}_\bC(X)$    are natural transformations between sheaves, which are local in $X$.

In the present section, we turn the functor $\Sh^{G,\eqsm}_\bC$ into a coarsely invariant functor (see \cref{sec:coarseinv} below for proofs) by performing 
Dwyer-Kan localisations  {of} its values. 
The localisation of $ \Sh^{G,\eqsm}_\bC(X)$ adds morphisms which may propagate on $X$ in a way which is controlled by the coarse entourages of $X$. In order to calculate the mapping spaces of the localisation and verify its left-exactness, we use the
 calculus of fractions formula for mapping spaces introduced in \cref{rgiojreogiergregregreg}. 
    
 Consider a $G$-coarse space $X$.
 If $U$ is {an invariant coarse entourage of $X$ containing the diagonal, and $Y$ is a subset of $X$},
 then we have $U(Y)\subseteq Y$ (see \eqref{V-thin} {for the definition of the $U$-thinning $U(-)$}).
 We consider the family of these inclusions for all $Y$ as a transformation $U(-)\to \id$ of  endofunctors of {the power set} $\cP_{X}$.
 It induces a transformation $\id \to U^{G}_{*}$ of endofunctors of $\PSh^{G}_{\bC}(X)$ which by
 \cref{egiweogwergrgwgwerg} restricts to $\Sh^{G}_{\bC}(X)$.

We form the relative   $\infty$-category  $(\Sh^{G}_{\bC}(X),W_{X})$, where the subcategory $W_{X}$ {is  {generated
 by the collection of morphisms}
\begin{equation}\label{rqwfoijqoiwefdewdqdewdqd}\{M\to  U^{G}_{*}M \mid U\in \cC_{X}^{G,\Delta}\ , M\in \Sh_{\bC}^{G}(X)\} \ ,
\end{equation} 
where $\cC_{X}^{G,\Delta}$ denotes the poset of $G$-invariant coarse entourages of $X$ which contain the diagonal.

 \begin{ddd}\label{trhwrthwgrgrewgwreg}
We define $\hat \bV^{G}_{\bC}(X)$ as the Dwyer--Kan localisation
\[ \hat \bV^{G}_{\bC}(X) := \Sh^{G}_{\bC}(X)[W_{X}^{-1}]\ .\qedhere\]
\end{ddd}

Recall \cref{wegioegfvsfggdfgs} of the notion of a localisation among left-exact $\infty$-categories.
\begin{prop}\label{qriooqergegwgrer}\mbox{}
\begin{enumerate}
 \item For $M,N$ in $\Sh^{G}_{\bC}(X)$ there is a natural equivalence of mapping spaces:
\begin{equation}\label{qwefqwfoij1111}
 \colim_{U \in \cC^{G,\Delta}_X} \Map_{\Sh^G_\bC(X)}(M, U^G_*N) \xrightarrow{\simeq} \Map_{\hat \bV_{\bC}^{G}(X)}(\ell M,\ell N)\ .
 \end{equation}
 \item\label{qriooqergegwgrer:1} {The localisation functor $\Sh^{G}_{\bC}(X)\to \hat \bV^{G}_{\bC}(X)$ presents $\hat \bV^{G}_{\bC}(X)$ as the localisation among (large) left-exact $\infty$-categories.}
\end{enumerate}
\end{prop}
\begin{proof}
 {We first establish the formula for mapping spaces in $\Sh^{G}_{\bC}(X)[W_{X}^{-1}]$ using the    version of the material in \cref{sec:fractions} for the opposite categories.
 Let $N$ be in $\Sh^{G}_{\bC}(X)$.
 The functor $\pi \colon \cC^{G,\Delta}_{X} \to \Sh^{G}_{\bC}(X)$ given by $U \mapsto U^{G}_{*}N$ is a putative calculus of fractions at $N$ since $\diag(X)$ is initial in $\cC^{G,\Delta}_{X}$ and each morphism $N \to U^G_*N$ lies in $W_X$ by definition.
 By  \cref{prop:charactrightcalculus}, it suffices to show that the functor
 \[ \cM_N \colon \Sh^{G}_{\bC}(X) \to \Spc,\quad M \mapsto \colim_{U \in \cC^{G,\Delta}_{X}} \Map_{\Sh^G_\bC(X)}(M,U^G_*N) \]
 inverts all morphisms in $W_X$.}
 
 {Consider an arbitrary invariant entourage $V$ which contains the diagonal and an object $M$ in $\Sh^G_\bC(X)$.
 Using the adjunction \eqref{v-adj-ggg}, we can rewrite the induced map
\[ \colim_{U\in \cC^{G,\Delta}_{X}} \Map_{\Sh^{G}_{\bC}(X)}(V_{*}^{G}M, U^{G}_{*}N)\to 
\colim_{U\in \cC^{G,\Delta}_{X}} \Map_{\Sh^{G}_{\bC}(X)}( M , U^{G}_{*}N)\ .\]
 in the form 
\begin{equation}\label{vasvdvsdvsdvavd}
 \colim_{U\in \cC^{G,\Delta}_{X}} \Map_{\PSh^{G}_{\bC}(X)}( U^{*,G}V_{*}^{G}M, N) \to 
 \colim_{U\in \cC^{G,\Delta}_{X}} \Map_{\PSh^{G}_{\bC}(X)}(   U^{*,G}M, N)\ .
\end{equation}}
We will now see by a cofinality argument that this morphism is an equivalence.
For every  {subset $Y$ of $X$}
we have inclusions
\[ V(U[Y])\subseteq U[Y]\subseteq  V(VU[Y])\subseteq VU[Y]\ .\]
We thus get transformations 
 \begin{equation}\label{wqdqkerjeorijvoerqfefwef}
(VU)^{*,G}  M \to   (VU)^{*,G}V_{*}^{G}M\to  U^{*,G}  M\to   U^{*,G} V_{*}^{G} M\ .
\end{equation}
Since the functor $ \cC_{X}^{G,\Delta}\to\cC_{X}^{G,\Delta}$ given by $U \mapsto VU$ is cofinal,
 and in view of \eqref{wqdqkerjeorijvoerqfefwef},  {the two-out-of-six-property for equivalences implies that \eqref{vasvdvsdvsdvavd} is an equivalence}.
 
 {Since $ \cC_{X}^{G,\Delta}$ is filtered, we can conclude from formula \eqref{qwefqwfoij1111} that the functor $\ell$ preserves finite limits.
So \cref{ih43iugu34g34g3} implies the second assertion of the proposition.}
\end{proof}

Let now $X$ be {a $G$-bornological coarse space}.
In view of \cref{rgiwoegwergrwegwregwrg}, we can consider the  {relative} left-exact $\infty$-category
$(\Sh_{\bC}^{G,\eqsm}(X),W_{X}^{\eqsm})$, where $W_{X}^{\eqsm}$ is {the subcategory {generated} by the collection of morphisms}
\begin{equation}\label{bvsfvdvdfvrgherthrtgfew}
 \{M\to  U^{G}_{*}M \mid U\in \cC_{X}^{G,\Delta}\ , M\in \Sh^{G,\eqsm}_{\bC}(X)\}\ .
\end{equation}

\begin{ddd}\label{rtheorthertherthetrhe}
 {We define $\bV^{G}_{\bC}(X)$ as the Dwyer--Kan localisation
\[ \bV^{G}_{\bC}(X) := \Sh^{G,\eqsm}_{\bC}(X)[W_{X}^{\eqsm,-1}]\ .\qedhere\]}
\end{ddd}

\begin{prop}\label{wetiwoegreergwrger}\ 
\begin{enumerate}
\item For $M,N$ in $\Sh^{G,\eqsm}_{\bC}(X)$ there is a natural equivalence of mapping spaces:
 \begin{equation}  \label{qwefqwfoij11111}
  \colim_{U \in \cC^{G,\Delta}_X} \Map_{\Sh^G_\bC(X)}(M, U^G_*N) \xrightarrow{\simeq} \Map_{\bV_{\bC}^{G}(X)}(\ell M,\ell N) \ .
 \end{equation}
\item  {The localisation functor $\Sh^{G,\eqsm}_{\bC}(X)\to \bV^{G}_{\bC}(X)$ presents $\bV^{G}_{\bC}(X)$ as the localisation among left-exact $\infty$-categories.}
\end{enumerate}
\end{prop}
\begin{proof}
 This is shown by the same argument as for \cref{qriooqergegwgrer}.
\end{proof}
The universal property of the localisation provides the marked arrow in \begin{equation}\label{arfeqrfffweqq} 
\xymatrix{\Sh^{G,\eqsm}_{\bC}(X)\ar[r]\ar[d]_{\ell}&\Sh^{G}_{\bC}(X)\ar[d]^{\ell}\\\bV_{\bC}^{G}(X)\ar[r]^{!}&\hat \bV_{\bC}^{G}(X)}\ .\end{equation}
\begin{kor}\label{regklwgrregwrgegr}
The marked arrow in \eqref{arfeqrfffweqq} is a fully faithful inclusion. 
\end{kor}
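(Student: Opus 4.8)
The plan is to show that the marked functor $\bV_{\bC}^{G}(X)\to\hat\bV_{\bC}^{G}(X)$ is fully faithful by comparing mapping spaces, using the calculus-of-fractions formulas already established in \cref{qriooqergegwgrer} and \cref{wetiwoegreergwrger}. Both the source and target are Dwyer--Kan localisations of full subcategories of presheaves, so the key point is that the inclusion $\Sh^{G,\eqsm}_{\bC}(X)\hookrightarrow\Sh^{G}_{\bC}(X)$ is already fully faithful and that this property survives the two parallel localisations because the same filtered colimit formula computes the Hom-spaces on both sides.

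First I would fix objects $M,N$ in $\Sh^{G,\eqsm}_{\bC}(X)$ and write down the two formulas. By \cref{wetiwoegreergwrger}(2), the mapping space $\Map_{\bV_{\bC}^{G}(X)}(\ell M,\ell N)$ is naturally equivalent to $\colim_{U\in\cC^{G,\Delta}_{X}}\Map_{\Sh^{G}_{\bC}(X)}(M,U^{G}_{*}N)$; note that the right-hand side is phrased in terms of the \emph{ambient} sheaf category $\Sh^{G}_{\bC}(X)$, not $\Sh^{G,\eqsm}_{\bC}(X)$. By \cref{qriooqergegwgrer}(2), applied to the same objects $M,N$ now regarded inside $\Sh^{G}_{\bC}(X)$ via the top horizontal arrow of \eqref{arfeqrfffweqq}, the mapping space $\Map_{\hat\bV_{\bC}^{G}(X)}(\ell M,\ell N)$ is naturally equivalent to exactly the same colimit $\colim_{U\in\cC^{G,\Delta}_{X}}\Map_{\Sh^{G}_{\bC}(X)}(M,U^{G}_{*}N)$. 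Here I use that $\Sh^{G,\eqsm}_{\bC}(X)\hookrightarrow\Sh^{G}_{\bC}(X)$ is fully faithful (it is a full subcategory by \cref{def:eqsm}), so the Hom-space $\Map_{\Sh^{G}_{\bC}(X)}(M,U^{G}_{*}N)$ appearing in \eqref{qwefqwfoij11111} is literally the same space as the one appearing in \eqref{qwefqwfoij}.

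Next I would check that the comparison map $\Map_{\bV_{\bC}^{G}(X)}(\ell M,\ell N)\to\Map_{\hat\bV_{\bC}^{G}(X)}(\ell M,\ell N)$ induced by the functor $!$ is, under these two identifications, the identity map on $\colim_{U}\Map_{\Sh^{G}_{\bC}(X)}(M,U^{G}_{*}N)$. This is a naturality statement: the calculus-of-fractions equivalences in both \cref{qriooqergegwgrer} and \cref{wetiwoegreergwrger} are constructed from the \emph{same} putative left calculus of fractions $(W(N),\pi)$ with $W(N)=\cC^{G,\Delta,\op}_{X}$ and $\pi(U)=U^{G}_{*}N$ (the proof of \cref{wetiwoegreergwrger} says it runs "by the same argument"), and the universal property defining $!$ in \eqref{arfeqrfffweqq} makes the square of localisation functors commute; so the induced map on mapping spaces is compatible with the comparison maps from the common colimit. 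Once the comparison is the identity, $!$ is fully faithful, which is the assertion of \cref{regklwgrregwrgegr}.

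The main obstacle I anticipate is the bookkeeping in the last step: making precise that the two instances of the fraction formula are "the same" and that the square \eqref{arfeqrfffweqq} is compatible with them at the level of mapping spaces, rather than merely up to some non-canonical equivalence. One clean way to handle this is to observe that $\ell\colon\Sh^{G}_{\bC}(X)\to\hat\bV^{G}_{\bC}(X)$ restricts, along the inclusion of equivariantly small objects, to $\ell\colon\Sh^{G,\eqsm}_{\bC}(X)\to\bV^{G}_{\bC}(X)$ (both send the generators of $W_{X}$, resp.\ $W^{\eqsm}_{X}$, to equivalences), so by the universal property of Dwyer--Kan localisation the functor $!$ is the essentially unique left-exact functor making the square commute; then fully faithfulness follows because $!$ induces an equivalence on the mapping spaces out of objects in the (dense) image of $\ell$, and every object of $\bV^{G}_{\bC}(X)$ is in that image. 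I would spell this out as: for $\ell M,\ell N$ with $M,N$ equivariantly small, the map on $\Map$-spaces induced by $!$ fits into a commuting triangle with the two fraction-formula equivalences pointing into it, hence is an equivalence; since $\ell$ is essentially surjective this suffices.
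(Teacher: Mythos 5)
Your proposal is correct and follows essentially the same route as the paper: the paper's proof likewise combines the full faithfulness of $\Sh^{G,\eqsm}_{\bC}(X)\hookrightarrow\Sh^{G}_{\bC}(X)$ with the fact that the mapping-space formulas \eqref{qwefqwfoij11111} and \eqref{qwefqwfoij1111} coincide (their naturality in $M,N$, recorded in the proof of \cref{qriooqergegwgrer}, is what handles the compatibility you spell out in your last paragraph).
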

\begin{proof}
We use that the upper horizontal arrow in \eqref{arfeqrfffweqq} is fully faithful, and that the mapping spaces in
$ \bV_{\bC}^{G}(X)$  and $\hat \bV_{\bC}^{G}(X)$  are given by the coinciding formulas \eqref{qwefqwfoij11111} and
\eqref{qwefqwfoij1111}, respectively.
\end{proof}

\begin{ex}\label{uihwieurghiwuerhgiurwegwregwregg}
 {Consider a set $X$ with the minimal coarse structure $X_{min}$.}
Then $\cC_{X}^{G,\Delta}=\{\diag_{X}\}$. Since for $M$ in $\Sh_{\bC}^{G}(X)$ the morphism
$M\to \diag_{X,*}^{G}M$ is equivalent to the   identity of $M$, the subcategory $W_{X}$ consists of equivalences. Consequently, the canonical morphism $\ell \colon \Sh^{G}_{\bC}(X)\to \hat \bV_{\bC}^{G}(X)$ is an equivalence.

Similarly, if $X$  {is a $G$-bornological coarse space which carries}
the minimal coarse structure, then $W_{X}^{\eqsm}$ consists of equivalences and
the canonical morphism $\ell \colon \Sh^{G,\eqsm}_{\bC}(X)\to  \bV_{\bC}^{G}(X)$ is an equivalence.
\end{ex}

\subsection{Functoriality of \texorpdfstring{$\bV^{G}$}{VG}}\label{vwerlivjwovwvwevewv}
In this section, we show that  {the left-exact $\infty$-category $\bV_{\bC}^{G}(X)$ introduced in \cref{sec:localization}} depends functorially on $X$ and $\bC$. 
 
If $f\colon X\to X^{\prime}$ is a morphism {of $G$-coarse spaces}
and $\bC$ is in $\Fun(BG,\CL)$, then we have the bold part of the following diagram
\begin{equation}\label{ergoijergerg}\xymatrix{\Sh^{G}_{\bC}(X) \ar[d]_{\ell_{X} }\ar[r]^{\hat f^{G}_{*} }&\Sh^{G}_{\bC}(X^{\prime}) \ar[d]^{\ell_{X^{\prime}} }\\
 \hat \bV^{G}_{\bC}(X) \ar@{..>}[r]^{f_{*} }&\hat \bV^{G}_{\bC}(X^{\prime}) }\ .
\end{equation}
 
\begin{lem}\label{iofjowefwfewfewfef}
The  morphism  $\hat f^{G}_{*} $   descends essentially uniquely to a morphism $f_{*} \colon \hat \bV^{G}_{\bC}(X)  \to \hat \bV^{G}_{\bC}(X^{\prime})$ in $\CLL$ completing the square \eqref{ergoijergerg}.
\end{lem}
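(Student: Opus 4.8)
The plan is to apply the universal property of the localisation functor $\ell_X$. Recall from \cref{qriooqergegwgrer} that $\ell_X \colon \Sh^{G}_{\bC}(X) \to \hat \bV^{G}_{\bC}(X)$ exhibits the target as the Dwyer--Kan localisation at $W_X$, and that this localisation is left-exact. Hence, to obtain the dotted arrow $f_*$ completing \eqref{ergoijergerg}, it suffices to verify that the composite $\ell_{X'} \circ \hat f^{G}_{*}$ sends every morphism in $W_X$ to an equivalence in $\hat \bV^{G}_{\bC}(X')$; the resulting factorisation $f_*$ through $\ell_X$ is then essentially unique, and it is automatically left-exact (as a functor between left-exact $\infty$-categories receiving a left-exact, essentially surjective functor from a left-exact $\infty$-category) and hence a morphism in $\CLL$.

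First I would reduce to checking the condition on the generating morphisms of $W_X$, i.e. on the morphisms $M \to U^{G}_{*}M$ for $U \in \cC_X^{G,\Delta}$ and $M \in \Sh^G_\bC(X)$; since $W_X$ is the saturation of this generating set and localisation functors invert saturated classes whenever they invert the generators, this reduction is harmless. So fix such a $U$ and $M$. The key computational step is to understand $\hat f^{G}_{*}$ applied to the morphism $M \to U^G_* M$. Here I would use the naturality relation between the pushforward along $f$ and the thinning operators. Concretely, for $Y$ in $\cP_{X'}$ one has $U[f^{-1}(Y)] \subseteq f^{-1}(U'[Y])$ whenever $f(U) \subseteq U'$, and dually a containment of the form $f^{-1}(U'(Y)) \subseteq U(f^{-1}(Y))$; choosing an invariant coarse entourage $U'$ of $X'$ containing the diagonal with $f(U) \subseteq U'$ (possible since $f$ is controlled and $\cC_{X'}^{G,\Delta}$ is cofinal in $\cC_{X'}$ by \cref{trbertheheht}), this yields a commuting triangle relating $\hat f^{G}_{*}(M \to U^G_* M)$ to the generating morphism $\hat f^{G}_{*}M \to U'^{G}_{*}\hat f^{G}_{*}M$ of $W_{X'}$. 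Since the latter lies in $W_{X'}$, $\ell_{X'}$ inverts it; a two-out-of-three argument in the triangle then shows $\ell_{X'}$ inverts $\ell_{X'}\hat f^{G}_{*}(M \to U^G_* M)$ as well.

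The main obstacle is making precise the comparison between $\hat f^{G}_{*} U^{G}_{*}$ and $U'^{G}_{*}\hat f^{G}_{*}$: these operators are built from the poset maps $V(-)$ and $f^{-1}(-)$, which do not commute on the nose but only up to the inclusions just mentioned, so one gets a zig-zag of natural transformations rather than an equality. I would handle this by passing to the calculus-of-fractions description of mapping spaces in \eqref{qwefqwfoij1111}: the morphism $\ell_{X'}\hat f^G_* M \to \ell_{X'}\hat f^G_* U^G_* M$ is an equivalence iff it induces an equivalence on $\Map_{\hat\bV^G_\bC(X')}(\ell_{X'}\hat f^G_* M, -)$, and the latter, being a filtered colimit over $\cC^{G,\Delta}_{X'}$ of presheaf mapping spaces, can be analysed by the same cofinality argument used in the proof of \cref{qriooqergegwgrer} (the functor $U' \mapsto U'' U'$ on $\cC_{X'}^{G,\Delta}$ is cofinal, allowing one to absorb the discrepancy). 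Once this is in place, the essential uniqueness of $f_*$ is immediate from the universal property, and its membership in $\CLL$ follows since left-exactness is preserved and $\hat\bV^G_\bC(X)$, $\hat\bV^G_\bC(X')$ are objects of $\CLL$ by \cref{trhwrthwgrgrewgwreg}.
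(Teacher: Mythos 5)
Your overall strategy coincides with the paper's up to the last step: reduce to the generators of $W_X$ via the universal property of the left-exact localisation, choose $U'$ in $\cC^{G,\Delta}_{X'}$ with $f(U)\subseteq U'$ (the paper takes $U'=f(U)\cup\diag(X')$), and use the inclusion $f^{-1}(U'(Y'))\subseteq U(f^{-1}(Y'))$ to produce a natural transformation $t\colon \hat f^{G}_{*}U^{G}_{*}\to U'^{G}_{*}\hat f^{G}_{*}$ together with the commuting triangle $\iota_{U'}(\hat f^{G}_{*}M)\simeq t\circ \hat f^{G}_{*}\iota_{U}(M)$. The gap is in how you conclude from this triangle. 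Knowing that the composite $t\circ \hat f^{G}_{*}\iota_{U}(M)$ lies in $W_{X'}$ tells you nothing by two-out-of-three about the factor $\hat f^{G}_{*}\iota_{U}(M)$ unless you already know that $\ell_{X'}$ inverts $t$ -- which is not given and is precisely the point at issue. The paper's remedy is to extend the triangle by one more map, $U'^{G}_{*}\hat f^{G}_{*}\iota_{U}(M)$, and to observe that in the resulting chain of three composable morphisms \emph{both} two-fold composites are generators of $W_{X'}$ (namely $\iota_{U'}(\hat f^{G}_{*}M)$ and $\iota_{U'}(\hat f^{G}_{*}U^{G}_{*}M)$); the two-out-of-\emph{six} property then forces every map in the chain, in particular $\hat f^{G}_{*}\iota_{U}(M)$, to be sent to an equivalence. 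Without this extra map (or an equivalent device), your argument does not close.

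Your proposed fallback via the fraction formula \eqref{qwefqwfoij1111} does not repair this as written: the stated test (``equivalence iff it induces an equivalence on $\Map_{\hat\bV^{G}_{\bC}(X')}(\ell_{X'}\hat f^{G}_{*}M,-)$'') is at best imprecise, and the cofinality argument from the proof of \cref{qriooqergegwgrer} does not transport verbatim, since the discrepancy you need to absorb now involves $\hat f^{G}_{*}$ interleaved with the thinning operators on two different spaces rather than thinnings on a single space; carrying this out would essentially amount to redoing the two-out-of-six chain inside the colimit. A smaller point: left-exactness of the descended functor $f_{*}$ does not follow merely from $\ell_{X}$ being left-exact and essentially surjective; it is part of the universal property of the localisation in $\CLL$ (cf.\ \cref{prop:existenceleftexactloc}), which is the correct way to see that $f_{*}$ is a morphism in $\CLL$.
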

\begin{proof}
 In view of the universal property of   Dwyer--Kan localisation{s},
 it suffices to show that $\ell_{X^{\prime}} \hat f^{G}_{*}$ sends
 {morphisms in} $W_{X}$ to equivalences in $\hat \bV^{G}_{\bC}(X^{\prime})$.
 
Let $V$ be {an invariant coarse entourage of $X$ containing the diagonal}
and set $f(V)_{\Delta}:= f(V)\cup\diag(X^{\prime})$.
 {Then $f(V)_\Delta$ is an invariant coarse entourage of $X'$ containing the diagonal}.
 {For a subset $Y^{\prime}$ of $X^{\prime}$}
we have an inclusion 
\[ f^{-1}(f(V)_{\Delta}(Y^{\prime}))\subseteq V(f^{-1}(Y^{\prime})) \]
of subsets of $X$.
The family of these inclusions for all  {subsets $Y^{\prime}$ of $X^{\prime}$}
induces a transformation 
\[ t\colon \hat f^{G}_{*}\circ   V^{G}_{*}\to  f(V)_{\Delta,*}^{G} \circ \hat f^{G}_{*}\colon  \Sh^{G}_{\bC}(X)\to  \Sh^{G}_{\bC}(X^{\prime})\ .\]
If   $M$ is  in $\Sh^{G}_{\bC}(X)$, then
we consider $\iota_{V}(M)\colon M\to  V^{G}_{*}M$ in $W_{X}$.
The morphism $\hat  f^{G}_{*}\iota_{V}(M) $
fits into the following sequence of morphisms in $\Sh^{G}_{\bC}(X^{\prime})$:
\[ \xymatrix@C=1.75em{\hat f^{G}_{*}M\ar@/^1cm/[rrr]^{!}_{\iota_{f(V)_{\Delta}}( \hat  f^{G}_{*} M)}\ar[rr]_-{\hat  f^{G}_{*}\iota_{V}(M) }&& \hat  f^{G}_{*}V^{G}_{*}M  \ar[r]^-{t}\ar@/_1cm/[rrrr]^{!}_{\iota_{f(V)_{\Delta}}(\hat  f^{G}_{*}V^{G}_{*} M)}& f(V)^{G}_{\Delta,*}\hat  f^{G}_{*}M \ar[rrr]^-{f(V)^{G}_{\Delta,*}\hat  f^{G}_{*}\iota_{V}(M) }&&&f(V)^{G}_{\Delta,*}\hat  f^{G}_{*}V^{G}_{*}M}\ .\]
 The morphisms marked by $!$ belong to $W_{X^{\prime}}$ and induce equivalences in
 $\hat \bV^{G}_{\bC}(X^{\prime})$.
 By the two-out-of-six property, we conclude that all morphisms in this diagram are sent to equivalences in $\hat \bV^{G}_{\bC}(X^{\prime})$.
 
 This shows that  $\ell_{X'} \hat  f^{G}_{*}W_{X} $  consists of equivalences in $\hat \bV^{G}_{\bC}(X^{\prime})$.
  \end{proof}
  
  Let $f\colon X\to X^{\prime}$ be a morphism {of $G$-bornological coarse spaces}.
 As a consequence of  \cref{eriogwetgwegregwrgregwrg}, \cref{regklwgrregwrgegr} and \cref{wetiwoegreergwrger} we get:
  \begin{kor}\label{qergiuqergreqgregwregwr}
  The morphism $f_{*}$ from \eqref{ergoijergerg} restricts to a morphism 
  \[ f_{*}\colon \bV^{G}_{\bC}(X)\to \bV^{G}_{\bC}(X^{\prime}) \]
 {of left-exact $\infty$-categories}.
  \end{kor}

Let $X$ be  {a $G$-coarse space},
and let $\phi\colon \bC\to \bC^{\prime}$ be a morphism in $\Fun(BG,\CL)$. Then we obtain the following diagram:
 \begin{equation}\label{ergoijergerg1}\xymatrix{\Sh^{G}_{\bC}(X) \ar[d]_{\ell_{X} }\ar[r]^{\hat \phi^{G}_{*}} &\Sh^{G}_{\bC}(X^{\prime}) \ar[d]^{\ell_{X } }\\
 \hat \bV^{G}_{\bC}(X) \ar@{..>}[r]^{\phi_{*} }&\hat \bV^{G}_{\bC^{\prime}}(X  )}\end{equation}
\begin{lem}\label{wetgowtgrgregwrg}
The morphism $\hat \phi^{G}_{*}$ descends essentially uniquely to a morphism $\phi_{*}\colon \hat \bV^{G}_{\bC}(X)\to \hat \bV^{G}_{\bC^{\prime}}(X)$
in $\CLL$
completing the square \eqref{ergoijergerg1}.
\end{lem}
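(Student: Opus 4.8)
The plan is to mimic exactly the proof of \cref{iofjowefwfewfewfef}, replacing the space-level functoriality by the coefficient-level functoriality. By the universal property of the left-exact localisation $\ell_{X}$, in order to produce $\phi_{*}$ completing \eqref{ergoijergerg1} it suffices to show that $\ell_{X}\circ \hat\phi^{G}_{*}$ sends the generators of $W_{X}$ to equivalences in $\hat\bV^{G}_{\bC^{\prime}}(X)$; essential uniqueness is then automatic from that same universal property. So first I would fix $V$ in $\cC_{X}^{G,\Delta}$ and an object $M$ in $\Sh^{G}_{\bC}(X)$, and examine the image under $\hat\phi^{G}_{*}$ of the generating morphism $\iota_{V}(M)\colon M\to V^{G}_{*}M$.

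The key observation is that $\hat\phi^{G}_{*}$ commutes with the thinning endofunctor $V^{G}_{*}$, up to canonical equivalence. Indeed, on presheaves $V^{*},V_{*}$ are defined by precomposition with the poset morphisms $V[-],V(-)$ (see \eqref{qwefew14rqefeqwf}), while $\hat\phi_{*}$ is postcomposition with $\phi$; precomposition along a functor of posets and postcomposition along $\phi$ obviously commute. Passing to $\lim_{BG}$ and restricting to sheaves (using \cref{ioiregjoifgrgwergwregwew} and \cref{egiweogwergrgwgwerg} to see both sides land in sheaves, and \cref{ergioegrergwergwregwergwegrwr} for the equivariant version) gives a canonical equivalence $\hat\phi^{G}_{*}\circ V^{G}_{*}\simeq V^{G}_{*}\circ\hat\phi^{G}_{*}$ as functors $\Sh^{G}_{\bC}(X)\to\Sh^{G}_{\bC'}(X)$. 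Under this equivalence, $\hat\phi^{G}_{*}(\iota_{V}(M))$ is identified with $\iota_{V}(\hat\phi^{G}_{*}M)\colon \hat\phi^{G}_{*}M\to V^{G}_{*}(\hat\phi^{G}_{*}M)$, which is by definition one of the generators of $W_{X}$ for the coefficient $\bC'$, hence is sent to an equivalence by $\ell_{X}$. (One has to check the evident compatibility: the transformation $\id\to V^{G}_{*}$ induced by $V(-)\to\id$ is natural in $\bC$, so that $\hat\phi^{G}_{*}$ carries $\iota_{V}$ for $\bC$ to $\iota_{V}$ for $\bC'$; this is immediate from the definitions.) Therefore $\ell_{X}\hat\phi^{G}_{*}W_{X}$ consists of equivalences, and the universal property yields the essentially unique dotted arrow $\phi_{*}$ making \eqref{ergoijergerg1} commute, a morphism in $\CLL$ since $\hat\phi^{G}_{*}$ and both localisations are.

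The main point to get right — though it is routine rather than deep — is the verification that the square $\hat\phi^{G}_{*}\circ V^{G}_{*}\simeq V^{G}_{*}\circ\hat\phi^{G}_{*}$ really is compatible with the unit transformations $\iota_{V}$, i.e.\ that it sends the unit of the adjunction \eqref{ad-hat-V} for $\bC$ (postcomposed with $\hat\phi_{*}$) to the unit for $\bC'$. This is true because the relevant units are induced entirely by the poset inclusion $V(Y)\subseteq Y$, and postcomposition with $\phi$ does not touch the poset variable; the naturality in the coefficient category is then a formal consequence. Since there is genuinely no obstruction here (in contrast to \cref{iofjowefwfewfewfef}, where the inclusion $f^{-1}(f(V)_{\Delta}(Y'))\subseteq V(f^{-1}(Y'))$ forced a two-out-of-six argument with an auxiliary entourage $f(V)_{\Delta}$), the proof is actually shorter than that of \cref{iofjowefwfewfewfef}: no enlargement of the entourage is needed because $\hat\phi^{G}_{*}$ commutes with $V^{G}_{*}$ on the nose rather than only up to a comparison transformation. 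I would simply write: ``The morphism $\hat\phi^{G}_{*}$ commutes with the endofunctors $V^{G}_{*}$ for all $V$ in $\cC^{G,\Delta}_{X}$, and carries the generating morphisms $M\to V^{G}_{*}M$ of $W_{X}$ to the generating morphisms $\hat\phi^{G}_{*}M\to V^{G}_{*}\hat\phi^{G}_{*}M$ of $W_{X}$ (for the coefficient category $\bC'$). Hence $\ell_{X}\hat\phi^{G}_{*}$ inverts $W_{X}$, and the assertion follows from the universal property of $\ell_{X}$.''
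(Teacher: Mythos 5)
Your proposal is correct and takes exactly the paper's route: the paper's entire proof is the one-line observation that $\hat\phi^{G}_{*}$ preserves the generators of the localisation, which is precisely your point that postcomposition with $\phi$ commutes with the thinning functors $V^{G}_{*}$ (precomposition on the poset variable) and with the unit transformations $\iota_{V}$, so the universal property of $\ell_{X}$ applies directly. Your additional remarks merely spell out the details the paper leaves implicit.
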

\begin{proof}
We write $W_{\bC,X}$ and $W_{\bC',X}$ for the subcategories {generated by} \eqref{rqwfoijqoiwefdewdqdewdqd} for $\bC$ and $\bC'$, respectively. Then it is obvious that 
  $\hat \phi^{G}_{*}(W_{\bC,X})\subseteq W_{\bC',X}$. This implies the assertion. 
 \end{proof}

If  $X$ is  {a $G$-bornological coarse space},
then \cref{qrkgoqrgergwrgwegwrg} implies:
\begin{kor}\label{giowegrwergwergwreg}
The morphism $\phi_{*}$  from \eqref{ergoijergerg1}  restricts to 
a morphism
\[ \phi_{*}\colon \bV^{G}_{\bC}(X)\to \bV^{G}_{\bC^{\prime}}(X)\ .\]
\end{kor}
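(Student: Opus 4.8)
The plan is to run the same argument that proves \cref{qergiuqergreqgregwregwr}, with \cref{qrkgoqrgergwrgwegwrg} (preservation of equivariant smallness by $\hat\phi^G_*$) playing the role that \cref{eriogwetgwegregwrgregwrg} plays there. The functor $\phi_*$ of \eqref{ergoijergerg1} already exists on the large categories by \cref{wetgowtgrgregwrg}, and by \cref{regklwgrregwrgegr} the category $\bV^G_\bC(X)$ sits inside $\hat\bV^G_\bC(X)$ as a full subcategory; so the whole task is to check that $\phi_*$ carries $\bV^G_\bC(X)$ into $\bV^G_{\bC'}(X)$ and that the resulting restriction is a morphism in $\Cle$, i.e.\ left-exact between small left-exact $\infty$-categories.

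First I would note that $\hat\phi^G_*$ restricts to a functor $\Sh^{G,\eqsm}_\bC(X)\to\Sh^{G,\eqsm}_{\bC'}(X)$; this is precisely \cref{qrkgoqrgergwrgwegwrg}. Next, since $\hat\phi^G_*$ is postcomposition with $\phi$ whereas each $U^G_*$ (for $U\in\cC^{G,\Delta}_X$) is precomposition with $U(-)$ on $\cP_X$, the two operations commute; hence $\hat\phi^G_*$ sends the generating morphism $M\to U^G_*M$ of $W_X^{\eqsm}$ to the generating morphism $\hat\phi^G_*M\to U^G_*\hat\phi^G_*M$ of the labelling $W_X^{\eqsm}$ for $\bC'$. (This is the same observation already invoked in the proof of \cref{wetgowtgrgregwrg}.) Composing with $\ell_X\colon\Sh^{G,\eqsm}_{\bC'}(X)\to\bV^G_{\bC'}(X)$ and using the universal property of the left-exact Dwyer--Kan localisation $\ell_X\colon\Sh^{G,\eqsm}_\bC(X)\to\bV^G_\bC(X)$ from \cref{wetiwoegreergwrger}, I obtain an essentially unique left-exact functor $\phi_*\colon\bV^G_\bC(X)\to\bV^G_{\bC'}(X)$.

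It then remains to identify this functor with the restriction of the $\phi_*$ on the hat-categories. For this I would use that $\ell_X$ is the identity on objects, so each object of $\bV^G_\bC(X)$ is $\ell_XM$ for some equivariantly small $M$; by the commuting square \eqref{ergoijergerg1} we have $\phi_*\ell_XM\simeq\ell_X\hat\phi^G_*M$ in $\hat\bV^G_{\bC'}(X)$, and $\hat\phi^G_*M$ is equivariantly small by \cref{qrkgoqrgergwrgwegwrg}, so this object lies in $\bV^G_{\bC'}(X)$. Thus $\phi_*$ restricts as claimed. That the restricted functor agrees with the one obtained above from the universal property --- and not merely up to some unspecified equivalence --- follows because the mapping spaces in $\bV^G_\bullet(X)$ and $\hat\bV^G_\bullet(X)$ are computed by the same calculus-of-fractions formula, \eqref{qwefqwfoij11111} respectively \eqref{qwefqwfoij1111}, so the fully faithful inclusions of \cref{regklwgrregwrgegr} intertwine the two descriptions. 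Left-exactness is then automatic (the restriction is the essentially unique factorisation through the left-exact localisation $\ell_X$ of the left-exact composite $\ell_X\hat\phi^G_*$), and since $\bV^G_\bC(X)$ and $\bV^G_{\bC'}(X)$ both lie in $\Cle$ by \cref{rtheorthertherthetrhe}, the restriction is indeed a morphism in $\Cle$.

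I expect no genuine obstacle: the statement is a bookkeeping argument threading together \cref{qrkgoqrgergwrgwegwrg}, \cref{wetgowtgrgregwrg}, \cref{regklwgrregwrgegr} and \cref{wetiwoegreergwrger}, exactly parallel to \cref{qergiuqergreqgregwregwr}. The only point deserving a little care is the last one --- confirming that the functor furnished by the universal property of the small localisation literally is the restriction of the one on the large categories --- which is forced by the coinciding mapping-space formulas together with the fully faithfulness statement of \cref{regklwgrregwrgegr}.
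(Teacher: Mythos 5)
Your proposal is correct and follows exactly the argument the paper intends: the corollary is stated there as an immediate consequence of \cref{qrkgoqrgergwrgwegwrg} (preservation of equivariant smallness by $\hat\phi^G_*$), combined with \cref{wetgowtgrgregwrg}, \cref{regklwgrregwrgegr} and \cref{wetiwoegreergwrger}, in complete parallel to \cref{qergiuqergreqgregwregwr}. Your extra verification that the functor from the universal property agrees with the restriction of $\phi_*$ on the hat-categories is a harmless elaboration of the same bookkeeping.
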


 Let $\tilde W_{X} $ denote the subcategory of $\Sh^{G}_{\bC}(X)$ of morphisms which are sent to equivalences by the localisation functor  $\ell_{X}$. By \cref{iofjowefwfewfewfef} and \cref{wetgowtgrgregwrg},
  the functor $(X,\bC )\mapsto \Sh^{ {G}}_{\bC}(X)$
  promotes to a functor 
    \begin{equation}\label{ergregfwrefrfwerfer}
\ell\Sh^{G}\colon G\Coarse\times \Fun(BG,\CL)\to  \mathbf{REL}^\mathrm{Lex}_{\infty,*} \ , \quad (X,\bC)\mapsto ( \Sh^{ {G}}_{\bC}(X),\tilde W_{X})\ ,\end{equation}
where $ \mathbf{REL}^\mathrm{Lex}_{\infty,*}$ is the large version of $\Rel^\mathrm{Lex}_{\infty,*}$ from \cref{rgiojreogiergregregreg}.
The large  left-exact $\infty$-categories defined in \cref{trhwrthwgrgrewgwreg} for all $X$ and $\bC$ now become the values of 
 the functor
   \begin{equation}\label{bojoijgoi3jg3g34fhat}
\hat \bV^{G} \colon  G\Coarse\times  {\Fun(BG,\CL)} \xrightarrow{\ell\Sh^{G} } \mathbf{REL}^\mathrm{Lex}_{\infty,*} \xrightarrow{\Loc}  \CLL\ ,
\end{equation}
 where $\Loc$ is the large version of the functor from \eqref{fqwepofjkqweopfdqewdq}.
 As a consequence of \cref{qergiuqergreqgregwregwr} and \cref{giowegrwergwergwreg}, we further obtain a subfunctor (the restriction along \eqref{gwegljgoiregwrgergregewg} is hidden)
  \begin{equation}\label{bojoijgoi3jg3g34f}
 \bV^{G}\colon G\BC\times \Fun(BG,\CL)   \xrightarrow{\ell\Sh^{G,\eqsm}_{\bC}}  {\Rel^\mathrm{Lex}_{\infty,*}} \xrightarrow{\Loc}  \Cle\end{equation}
 with the values prescribed by \cref{rtheorthertherthetrhe}.

Finally, consider a coarse covering $f\colon X^{\prime}\to X$ (\cref{wefgihjwiegwergrwrg})  and the square
\begin{equation}\label{qewfoijfiofefqewfqewfqewfefqffe}
\xymatrix@C=4em{\Sh_{\bC}^{G}(X )\ar[d]_{\ell_{X}}\ar[r]^{L^{\pi_{0},G}\hat f^{*,G}}&\Sh_{\bC}^{G}(X^{\prime})\ar[d]^{\ell_{X^{\prime}}}\\
\hat \bV_{\bC}^{G}(X)\ar@{..>}[r]^{f^{*}}&\hat \bV_{\bC}^{G}(X^{\prime})
}\ .
\end{equation}
\begin{lem}\label{iqerfjrfqwuef98weufeqwfqfe}\mbox{}\begin{enumerate}
\item \label{hbwrthggwgrew}
The morphism $L^{\pi_{0},G}\hat f^{*}$ descends essentially uniquely to a morphism $f^{*}$ completing the square \eqref{qewfoijfiofefqewfqewfqewfefqffe}.
\item \label{eewgergwergrewgwregwergwfw} If $f$ is a covering of bornological coarse spaces (\cref{ergwergggqrgqregqergq}),
then the  morphism $f^{*}$ (in \eqref{hbwrthggwgrew}) restricts to a morphism
\[ f^{*}\colon \bV_{\bC}^{G}(X)\to \bV_{\bC}^{G}(X^{\prime})\ .\] \end{enumerate}
\end{lem}
\begin{proof}
It follows from \cref{wergiowggwergwergreg} that $L^{\pi_{0},G}\hat f^{*,G}$ sends  $W_{X}$ to $W_{X^{\prime}}$.
This implies \eqref{hbwrthggwgrew}. \eqref{eewgergwergrewgwregwergwfw} is now an immediate consequence of \cref{qergioewgergregwergergwergwreg} and \cref{regklwgrregwrgegr}.
\end{proof}

The full contravariant functoriality of  $\bV_{\bC}^{G} $ will be part of the discussion in \cref{tiowtwrtbtwtbewbtw}.

\section{Properties of \texorpdfstring{$\bV^{G}_{\bC}$}{VGC}}\label{sec:props}

 We now study the behavior of $\bV^G_\bC$ as a functor on $G$-bornological coarse spaces. The results of this section are instrumental in showing that $\bV^G_\bC$ gives rise to a coarse homology theory upon application of a finitary localising invariant, which will be the subject of \cref{roijqreoeffqewfewfqewfefq}.
 
 {The first three subsection record elementary properties of our construction of controlled objects.
 \cref{sec:coarseinv} introduces the notion of coarse invariance, which encodes a kind of homotopy invariance on $G$-bornological coarse spaces, and shows that $\bV^G_\bC$ is coarsely invariant.
 \cref{sec:flasques} describes a special class of $G$-bornological coarse spaces called flasque spaces, and proves that the controlled objects over a flasque space admit an Eilenberg swindle.
 \cref{sec:u-cont} describes $\bV^G_\bC(X)$ as a filtered colimit whose individual stages only depend on coarse structures generated by a single entourage. We call this property $u$-continuity.}
 
 {The main non-trivial property of $\bV^G_\bC$ is a version of excision on $G$-bornological coarse spaces, which we formulate and prove in \cref{sec:excision}. The preceding \cref{sec:subspaces} prepares the proof of excision by showing that subspace inclusions induce fully faithful functors on $\bV^G_\bC$.}
 
 {Finally, \cref{sec:subspaces} introduces the free union of a collection of $G$-bornological coarse spaces and shows that the category of controlled objects over a free union is equivalent to the product of the categories of controlled objects over the individual components of the free union.}

\subsection{Coarse invariance}\label{sec:coarseinv}
Below we consider  the set $\{0,1\}$ with the trivial $G$-action. Recall \cref{qrgioqjrgoqrqfewfeqfqewfe} and the monoidal structure from \cref{rgiogjweoirgjwergwergwrg}.
 {For every $G$-bornological coarse space $X$}
the projection 
\begin{equation}\label{rfqwfqewqeq}
\{0,1\}{_{max,max}}\otimes X\to X
\end{equation} is a morphism  {of $G$-bornological coarse spaces}.

Let $\bM$ be some $\infty$-category and consider a functor  $E\colon G\BC\to \bM$. 
\begin{ddd}\label{rgiherogiergregreg}
$E$ is called coarsely invariant if the projection \eqref{rfqwfqewqeq}  induces an equivalence  
$E(\{0,1\}{_{max,max}} \otimes X)\to E(X)$ for every {$G$-bornological coarse space $X$}.
\end{ddd}

\begin{rem}\label{rewkgowegrerfrewfwr}
 We say that  two morphisms $f,g\colon X\to X^{\prime}$ in $G\BC$ are close to each other if $(f \times g)(\diag(X))$ is a coarse entourage of $X'$.
 {This is equivalent to requiring that $f$ and $g$ define a morphism of $G$-bornological coarse {spaces} $\{0,1\}_{max,{max}} \otimes X \to X'$}.
 Closeness is an equivalence relation on $\Hom_{G\BC}(X,X^{\prime})$ for all  {$G$-bornological coarse spaces} $X,X^{\prime}$ which is compatible with composition. 
 {A morphism of $G$-bornological coarse spaces} is a coarse equivalence if it can be inverted up to closeness.
     
 It is easy to see that the following assertions on $E$ are equivalent:
 \begin{enumerate}
 	\item $E$ is coarsely invariant.
	\item For every pair of close morphisms $f,g$ we have $E(f)\simeq E(g)$. \item $E$ sends coarse equivalences to equivalences. \qedhere
 \end{enumerate}
\end{rem}

\begin{ex}
 The following are examples of coarse equivalences. By \cref{rewkgowegrerfrewfwr}, these morphisms are sent to equivalences by any coarsely invariant functor.
 \begin{enumerate}
  \item For every set $X$, the projection $X_{max,max}\to *$ is a coarse equivalence.
 {The inclusion of} any point of $X$ is a coarse inverse.
  \item Consider the bornological coarse space $\R_d$ given by the set $\R$ with the bornology and coarse structure induced by the Euclidean metric.  Then the inclusion of the bornological coarse subspace $\Z$ into $\R$ is a coarse equivalence: the floor function is one choice of coarse inverse. \qedhere
 \end{enumerate}
\end{ex}
  
 \begin{lem}\label{riogogrgregergerg}
The functor
\[ \bV^{G}_{\bC} \colon  G\BC\to \Cle \]
   is coarsely invariant.
\end{lem}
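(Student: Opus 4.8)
The plan is to reduce coarse invariance of $\bV^G_\bC$ to the statement that the projection $p \colon \{0,1\}_{max,max}\otimes X \to X$ induces an equivalence in $\Cle$ after localisation, and to exhibit an explicit inverse. First I would observe that by \cref{bojoijgoi3jg3g34f} the functor $\bV^G_\bC$ is obtained from $\Sh^{G,\eqsm}_\bC$ by inverting the labelled morphisms $W_X^{\eqsm}$, and that $p$ is covered by the inclusion $\iota_0 \colon X \to \{0,1\}_{max,max}\otimes X$, $x \mapsto (0,x)$, which is a morphism in $G\BC$ with $p \circ \iota_0 = \id_X$. So it suffices to show that $\iota_{0,*} \circ p_* \simeq \id$ on $\hat\bV^G_\bC(\{0,1\}_{max,max}\otimes X)$ (the other composite being the identity already). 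Here I use \cref{regklwgrregwrgegr} to work inside $\hat\bV^G_\bC$, so that the mapping-space formula \eqref{qwefqwfoij1111} of \cref{qriooqergegwgrer} is available.

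Next, using \cref{rewkgowegrerfrewfwr}, it is enough to show that the two morphisms $\iota_0 \circ p$ and $\id$ from $\{0,1\}_{max,max}\otimes X$ to itself become equal after applying $\bV^G_\bC$; and since they are close to each other — indeed $(\iota_0 p \times \id)(\diag) \subseteq \{0,1\}\times\{0,1\} \times \diag(X)$, which is a coarse entourage of $\{0,1\}_{max,max}\otimes X$ because $\{0,1\}$ carries the maximal coarse structure — this will follow once I know that close morphisms induce the same morphism after localisation. For that I would argue directly with sheaves: given a sheaf $M$ on $\{0,1\}_{max,max}\otimes X$ and the entourage $U := (\{0,1\}\times\{0,1\})\times \diag(X) \in \cC^{G,\Delta}$, one checks at the level of presheaves that $\hat{(\iota_0 p)}^G_* M$ and $\hat{\id}^G_* M = M$ both receive canonical maps from, or map to, $U^G_* M$ compatibly with the labelled morphism $M \to U^G_* M$ of $W_X^{\eqsm}$; hence after $\ell$ they become identified. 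Concretely, for $Y \in \cP_{\{0,1\}\times X}$ one has $(\iota_0 p)^{-1}(Y) = \{0,1\}\times \{x : (0,x)\in Y\}$ and $U(Y) \subseteq Y$ with $U[(\iota_0 p)^{-1}(Y)] \subseteq$ the relevant sets, producing the two-sided comparison between $\hat{(\iota_0 p)}^G_*$ and $\id$ that is intertwined by the generator $M \to U^G_* M$.

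The main obstacle I expect is the bookkeeping of the poset-level inclusions: one must produce natural transformations $\hat{(\iota_0 p)}^G_* \to U^G_* \to \id$ (or in the appropriate direction) and verify that, after composing with $\ell_X$, the generator of the localisation sends everything to equivalences, invoking the two-out-of-six property exactly as in the proof of \cref{iofjowefwfewfewfef}. The cleanest route is probably to mimic that proof verbatim: set $f := \iota_0 p$, note $f$ is close to $\id$, pick $V \in \cC^{G,\Delta}_{\{0,1\}_{max,max}\otimes X}$ with $(f\times\id)(\diag)\subseteq V$, and check the factorisation $M \xrightarrow{} V^G_* M$ of the comparison maps through the labelled morphisms $\iota_V(M)$ and $\iota_V(\hat f^G_* M)$. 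Then passing to the eqsm-localisation via \cref{rgiwoegwergrwegwregwrg} gives the result in $\Cle$.

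\begin{proof}
By \cref{rewkgowegrerfrewfwr} it suffices to show that if $f, g \colon X \to X'$ are close morphisms in $G\BC$, then $f_* \simeq g_*$ as morphisms $\bV^G_\bC(X) \to \bV^G_\bC(X')$. By \cref{regklwgrregwrgegr} we may instead prove the corresponding statement for $\hat\bV^G_\bC$ on $G\Coarse$, and then restrict.

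So let $f,g \colon X \to X'$ be close in $G\Coarse$ and choose $V \in \cC^{G,\Delta}_{X'}$ with $(f \times g)(\diag(X)) \subseteq V$. For every $Y' \in \cP_{X'}$ and every $x \in f^{-1}(V(Y'))$ we have $f(x) \in V(Y')$, so $V[\{f(x)\}] \subseteq Y'$; since $(f(x),g(x)) \in V$ this gives $g(x) \in Y'$, i.e.\ $f^{-1}(V(Y')) \subseteq g^{-1}(Y')$. Symmetrically $g^{-1}(V(Y')) \subseteq f^{-1}(Y')$. The first family of inclusions yields a natural transformation $\hat g^G_* \to V^G_* \circ \hat f^G_*$ of functors $\Sh^G_\bC(X) \to \Sh^G_\bC(X')$, and the second yields $\hat f^G_* \to V^G_* \circ \hat g^G_*$.

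Fix $M$ in $\Sh^G_\bC(X)$. Applying the first transformation to $M$ and the second to $M$ we obtain morphisms
\[ \hat g^G_* M \to V^G_* \hat f^G_* M \xrightarrow{V^G_*(\hat f^G_* M \to V^G_* \hat g^G_* M)} (V^2)^G_* \hat g^G_* M\]
whose composite is the generator $\iota_{V^2}(\hat g^G_* M) \colon \hat g^G_* M \to (V^2)^G_* \hat g^G_* M$ in $W_{X'}$ (here $V^2 := VV \in \cC^{G,\Delta}_{X'}$ by \cref{trbertheheht}\eqref{qerighioergergwgergwergwergwerg}), and the morphism $\iota_V(\hat f^G_* M) \colon \hat f^G_* M \to V^G_* \hat f^G_* M$ factors the last map in the displayed chain after precomposition. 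All the maps of the form $\iota_W(N)$ for $W \in \cC^{G,\Delta}_{X'}$ lie in $W_{X'}$ and hence become equivalences under $\ell_{X'}$. By the two-out-of-six property applied to this chain together with the analogous chain starting from $\hat f^G_* M$, all the comparison morphisms above become equivalences in $\hat\bV^G_\bC(X')$; in particular $\ell_{X'} \hat f^G_* M \simeq \ell_{X'} \hat g^G_* M$, naturally in $M$. By the universal property of $\ell_X$ and \cref{iofjowefwfewfewfef}, this gives $f_* \simeq g_*$ as morphisms $\hat\bV^G_\bC(X) \to \hat\bV^G_\bC(X')$.

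Now let $X$ be in $G\BC$ and apply this to the two close morphisms $\iota_0 \circ p$ and $\id$ on $\{0,1\}_{max,max} \otimes X$, where $p \colon \{0,1\}_{max,max}\otimes X \to X$ is the projection \eqref{rfqwfqewqeq} and $\iota_0 \colon X \to \{0,1\}_{max,max}\otimes X$ is the inclusion $x \mapsto (0,x)$; these are close since $(\iota_0 p \times \id)(\diag) \subseteq (\{0,1\}\times\{0,1\}) \times \diag(X)$, which is a coarse entourage because $\{0,1\}_{max,max}$ has the maximal coarse structure. By \cref{eriogwetgwegregwrgregwrg} and \cref{qergiuqergreqgregwregwr} all of $p_*$, $\iota_{0,*}$ restrict to $\Cle$, and by \cref{regklwgrregwrgegr} the equivalence $(\iota_0 p)_* \simeq \id$ holds already in $\bV^G_\bC$. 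Since $p \circ \iota_0 = \id_X$, we get $\iota_{0,*} \circ p_* \simeq \id$ and $p_* \circ \iota_{0,*} = \id$, so $p_*$ is an equivalence. Hence $\bV^G_\bC$ is coarsely invariant.
\end{proof}
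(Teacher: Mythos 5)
Your proof is essentially the paper's argument: reduce to close morphisms via \cref{rewkgowegrerfrewfwr}, build the chain of comparison maps $\hat f^{G}_{*}M\to V^{G}_{*}\hat g^{G}_{*}M\to V^{2,G}_{*}\hat f^{G}_{*}M\to V^{3,G}_{*}\hat g^{G}_{*}M$ from the thinning inclusions, and conclude by two-out-of-six that $\ell_{X'}\hat f^{G}_{*}\simeq \ell_{X'}\hat g^{G}_{*}$. One small fix: to get both inclusions $f^{-1}(V(Y'))\subseteq g^{-1}(Y')$ and $g^{-1}(V(Y'))\subseteq f^{-1}(Y')$ you need $(g(x),f(x))\in V$ as well as $(f(x),g(x))\in V$, so you must choose $V$ symmetric (replace it by $V\cup V^{-1}$), exactly as the paper does; with that, your variant of working in $\hat\bV^{G}_{\bC}$ and restricting along the fully faithful inclusion of \cref{regklwgrregwrgegr} is fine, and your final paragraph about $\iota_{0}\circ p$ is redundant, since the closeness criterion of \cref{rewkgowegrerfrewfwr} already yields coarse invariance.
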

\begin{proof}
We consider a pair $f,g\colon X\to X^{\prime}$ of morphisms which are close to each other. By \cref{rewkgowegrerfrewfwr} it suffices to show 
  that $\bV^{G}_{\bC}(f)\simeq  \bV^{G}_{\bC}(g)$.
   
     {Let $Y^{\prime}$ be a subset of $X^{\prime}$ and let $V^{\prime}$ be an invariant, symmetric coarse entourage of $X^{\prime}$ containing the diagonal}
      such that  $(f {\times} g)(\diag(X))\subseteq V^{\prime}$. Then we 
  have the following  chain of inclusions of subsets of $X$:
  \[ f^{-1}(Y^{\prime})\supseteq g^{-1}(V^{\prime}(Y^{\prime}))\supseteq f^{-1}(V^{\prime,2}(Y^{\prime}))\supseteq g^{-1}(V^{\prime,3}(Y^{\prime}))\ .\]
    For $M$ in $\Sh^{G,\eqsm}_{\bC}(X)$     we then  get induced  morphisms 
\begin{equation}\label{relkjferojoerfefreferf}
\xymatrix{\hat   f^{G}_{*}M\ar@/^1cm/[rr]^{!} \ar[r]& V^{\prime,G}_{*}\hat   g^{G}_{*}M \ar[r]\ar@/_1cm/[rr]^{!}& V^{\prime,2,G}_{*}\hat    f^{G}_{*}M\ar[r]& V^{\prime,3,G}_{*}\hat   g^{G}_{*}M}\ .
\end{equation}  
The marked morphisms
are sent to  equivalences in $\bV^{G}_{\bC}(X^{\prime})$ since they belong to the set \eqref{bvsfvdvdfvrgherthrtgfew} (for $X'$ in place of $X$) {generating} $W^{\eqsm}_{X^{\prime}}$. By the  {two-out-of-six} property for equivalences,
 all  morphisms in \eqref{relkjferojoerfefreferf} are sent to equivalences, in particular, the first one.
 Consequently, the second map in the zig-zag
\begin{equation*}\label{gregreggre343454}
\xymatrix@C=4em{\hat g^{G}_{*}M\ar[r]^-{ {\iota_V( \hat {g}_*^GM)}} &V^{\prime,G}_{*}\hat  g^{G}_{*}M &\hat  f^{G}_{*}M\ar[l]}
\end{equation*}
 is an equivalence. The morphism $\iota_V( \hat {g}_*^GM)$ is also an equivalence because it belongs to  $W^{\eqsm}_{X^{\prime}}$, too. Since all the arrows above are components of natural transformations between functors evaluated at $M$, we can
conclude that $f_*$ and $g_*$ are naturally equivalent functors.
  \end{proof}

 \subsection{Flasques}\label{sec:flasques}
Let $X$ be  {a $G$-bornological coarse space}.
\begin{ddd}[{\cite[Def. 3.8]{equicoarse}}] \label{rgiojgogregrgregre}
	$X$ is flasque if it admits an endomorphism
$f\colon X\to X$ with the following properties: \begin{enumerate}
\item \label{ergvpoerigrep0gregeg} $f$ is close to $\id_{X}$.
\item\label{ergoiergergregregregr} For every  $U$ in $\cC_{X}$ we have 
$\bigcup_{n\in \nat}  {f^{n}}(U)\in \cC_{X}$.
\item \label{sgrspoguo4tt} For every  $B$  in $\cB_{X}$  there exists   $n$ in $\nat$ such that $f^{n}(X)\cap B=\emptyset$. \qedhere
\end{enumerate}
\end{ddd}

We say that $f$ implements the flasqueness of $X$.
 
 \begin{ex}\label{ex:flasques}
 {The standard example of a flasque space is $[0,\infty)$ equipped with the metric bornological coarse structure induced by the Euclidean metric.
  In this case, flasqueness is implemented by the shift map sending $x$ to $x+1$.
  More generally, every $G$-bornological coarse space of the form $X \otimes [0,\infty)$ is flasque.}
 \end{ex}
 
\begin{ddd}\label{tiowgwtrgwergregwgrg}
$X$ is pre-flasque  if it admits an endomorphism
$f \colon X\to X$  with properties \cref{rgiojgogregrgregre} \eqref{ergoiergergregregregr} and \cref{rgiojgogregrgregre} \eqref{sgrspoguo4tt}.
\end{ddd}
We say that  $f$ implements the  pre-flasqueness of $X$. 

Let $\bM$ be a semi-additive $\infty$-category (\cref{ghiowjgoergwergwergregwg}). A semi-additive category is enriched in commutative monoids, and we use the symbol $+$ in order to denote the sum of morphisms. 

Let $M$ be an object of $\bM$.

\begin{ddd}\label{efiwrog34tt34t34t34t}
  $M $ is flasque  if it admits  an endomorphism   $ S$   such that \begin{equation}\label{qewflihqweiofqwef123r123r213}
 S\simeq \id_{M}+S\ .\qedhere
\end{equation}
\end{ddd}

We again say   that $S $ implements the flasqueness of $M$.
By $\bM^{\fl}$ we denote the smallest full subcategory of $\bM$ which is closed under filtered colimits and contains all flasque objects.
   
Let $E\colon G\BC\to \bM$ be a functor.

\begin{ddd}\label{rt98uerg9rt342trg32432f}
 $E$ preserves flasqueness if it sends flasque objects of $G\BC$   to objects in $\bM^{\fl}$.
\end{ddd}

\begin{rem}\label{erguiheriwgregregwgrwegwg}
If $\bM$ is additive (\cref{fqehwfiuhiqwef}), then a flasque object is a zero object. Consequently, if $M$ is additive, then $\bM^{\fl}$ consists of zero objects. In this case, the following conditions are equivalent:
\begin{enumerate}
\item $E$ preserves flasquenss.
\item $E$ vanishes on flasques, i.e., sends flasque $G$-bornological coarse spaces to zero objects. \qedhere
\end{enumerate}
 \end{rem}

Below in \cref{ergoegergregreg} \eqref{ergoewrpgwergregrgwgregwergneuneu1}
 we will show that the   functor $\bV_{\bC}^{G}$ is  flasqueness preserving. But note that $\bV_{\bC}^{G}$ is not the final object of consideration.  We will also need to show that certain functors derived from  $\bV^{G}$ by auxiliary constructions  {in} \cref{wiofoqrefwefeqfewfqewf} are flasqueness preserving, too. In this case it is important to keep track of the morphisms implementing the flasqueness of the values of the functor. We therefore
 introduce the stronger notions
of a  functorially flasqueness and functorially pre-flasqueness preserving functor, and we verify that $\bV^{G}$ has these properties.

We let $\bEnd(\bM):=\Fun(\Delta^{1}/\partial \Delta^{1},\bM)$ be the  category of endomorphisms of objects of $\bM$.
Its objects are pairs $(M,S)$ of an object $M$ of $\bM$ and an endomorphism $S:M\to M$.
We have  a forgetful functor $\bEnd(\bM)\to \bM$ sending $(M,S)$ to $M$.
 {Note that this functor preserves colimits.}

 \begin{ddd}\label{thgiojgoiggfergrewgerewergewrgreggw}\mbox{}
 We define the $\infty$-category
	  $\widetilde{\Fl}(\bM)$ 
	  by the pullback
	  \[\xymatrix{\widetilde{\Fl}(\bM)\ar[d]\ar[rrrrr]&&&&&\bEnd(\bM)\ar[d]^{\diag}\\\bEnd(\bM)\times_{\bM}\bEnd(\bM)\ar[rrrrr]^-{((M,S),(M,T))\mapsto ((M,S),(M,\id_{M}+T\circ S) )}&&&&&\bEnd(\bM)\times_{\bM}\bEnd(\bM)}\ .\]
	 We furthermore define $\Fl(\bM)$ as the pullback
	  \[\xymatrix@C=4em{
	   \Fl(\bM)\ar[r]\ar[d] & \widetilde{\Fl}(\bM)\ar[d]^{e} \\
	   \bM\ar[r]^-{M \mapsto (M,\id)} & \bEnd(\bM)
	   }\  ,\]
	   where the functor $e$ is given by composing $\widetilde{\Fl}(\bM) \to \bEnd(\bM) \times_{\bM} \bEnd(\bM)$ with the projection onto the second factor.
 \end{ddd}
 The objects of $\widetilde{\Fl}(\bM)$ are tuples $(M,S,T,\phi)$ with an equivalence
 \[ \phi \colon S \xrightarrow{\simeq} \id_{M}+T\circ S\ .\]
 Objects in $\Fl(\bM)$ are tuples $(M,S,T,\phi,\psi)$ with $(M,S,T,\phi)$ in $\widetilde{\Fl}(\bM)$ together with an equivalence $\psi \colon T \simeq \id$.
  In particular, $S$ implements flasqueness of $M$ in this case.
 There are forgetful functors
  \begin{equation}\label{fjqwiefojqweoidjoqwedqewd}
   \widetilde{p} \colon \widetilde{\Fl}(\bM) \to \bM \quad\text{and}\quad p \colon \Fl(\bM) \to \widetilde{\Fl}(\bM) \xrightarrow{\widetilde{p}} \bM
\end{equation}
which project to the underlying object.
By \cite[Lem.~5.4.5.5]{htt}, a diagram in $\widetilde{\Fl}(\bM)$ and $\Fl(\bM)$ admits a colimit if and only if its composition with $\widetilde{p}$ or $p$ admits a colimit,
and both $\widetilde{p}$ and $p$ preserve colimits.

 \begin{ddd}\mbox{}\begin{enumerate} 
 \item Let $\preFl(G\BC)$ be the  {full subcategory} of $\bEnd(G\BC)$ consisting of pairs $(X,f)$ where $f$ implements pre-flasqueness of $X$.
 \item Let $\Fl(G\BC)$ be the full subcategory of $\preFl(G\BC)$ of those pairs $(X,f)$,  where $f$ implements flasqueness of $X$.  \qedhere\end{enumerate}
 \end{ddd}
 
 Let $\bP$ be some auxiliary $\infty$-category  ($\Fun(BG,\CL)$ in our application), and let
 $E \colon G\BC\times \bP\to \bM$ be a functor.

\begin{ddd}\label{goiijgowgergegergregewrg}
 $E $ functorially preserves pre-flasqueness if
 there exist a functor
\[ \Flrm^{\pre}(E) \colon \preFl(G\BC)\times \bP\to \widetilde{\Fl}(\bM) \]
 {and a commutative diagram}
\begin{equation}\label{grelgijeroigergregergwergrewgwreg}
\xymatrix@C=4em{\preFl(G\BC)\times \bP\ar[dr]\ar[r]^-{\Flrm^{\pre}(E)} & \widetilde{\Fl}(\bM) \ar[d]^{e}\\ & \bEnd(\bM)\ ,}
\end{equation}
where the diagonal arrow is the functor sending $((X,f),P)$ to $(E(X,P),E(f,P))$
and $e$ is the forgetful functor from \cref{thgiojgoiggfergrewgerewergewrgreggw}.
\end{ddd}

  \begin{ddd}\label{goiijgowgergegergregewrg-111}
 $E $ functorially preserves flasqueness if
 {there exists a functor
 \[ \Flrm(E) \colon \Fl(G\BC)\times \bP\to \Fl(\bM) \]
 and a commutative diagram analogous to \eqref{grelgijeroigergregergwergrewgwreg} in which we replace $\preFl(G\BC)$, $\Flrm^{\pre}(E)$ and $\widetilde{\Fl}(\bM)$ by $\Fl(G\BC)$, $\Flrm(E)$ and $\Fl(\bM)$, respectively.}
  \end{ddd}
  
 The following is obvious from the definitions.
 \begin{kor}\label{tgiowergwregwregwregeg}
 If $E$  functorially preserves flasqueness, then $E(-,P)$ preserves flasqueness for every object $P$ in $\bP$.
 \end{kor}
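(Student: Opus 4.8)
The plan is to unwind \cref{goiijgowgergegergregewrg-111} and \cref{efiwrog34tt34t34t34t}; no genuine argument is needed here, only bookkeeping. Fix an object $P$ in $\bP$, and let $X$ in $G\BC$ be flasque, say with flasqueness implemented by an endomorphism $f\colon X\to X$. Then $(X,f)$ is an object of $\Fl(G\BC)$, so the hypothesis that $E$ functorially preserves flasqueness lets us form
\[ (M,S):=\Flrm(E)\big((X,f),P\big)\ , \]
an object of $\Fl(\bM)$.

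First I would invoke the commutativity of \eqref{grelgijeroigergregergwergrewgwreg-111}: the left-hand vertical functor sends $((X,f),P)$ to $(X,P)$, the right-hand vertical functor forgets the endomorphism, and the square is filled by an equivalence, whence $M\simeq E(X,P)$ in $\bM$. Next, by \cref{thgiojgoiggfergrewgerewergewrgreggw} \eqref{qrgjofewfqefqew}, membership of $(M,S)$ in $\Fl(\bM)$ means precisely that $S\colon M\to M$ satisfies $S\simeq \id_{M}+S$, i.e.\ that $S$ implements flasqueness of $M$ in the sense of \cref{efiwrog34tt34t34t34t}. So $M\simeq E(X,P)$ is a flasque object of $\bM$, and therefore lies in $\bM^{\fl}$, since by definition $\bM^{\fl}$ is the smallest full subcategory of $\bM$ which is closed under filtered colimits and contains all flasque objects. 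As $X$ was an arbitrary flasque object of $G\BC$, this shows that $E(-,P)$ sends flasques into $\bM^{\fl}$, i.e.\ preserves flasqueness in the sense of \cref{rt98uerg9rt342trg32432f}.

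The only point requiring a modicum of care — and it is the closest thing to an obstacle — is tracking the forgetful functors in the refinement diagram \eqref{grelgijeroigergregergwergrewgwreg-111}, so as to be sure that the first component of $\Flrm(E)((X,f),P)$ really is (equivalent to) $E(X,P)$ and not some a priori different object of $\bM$; once this identification is made, the statement is immediate.
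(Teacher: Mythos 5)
Your proof is correct and matches the paper's: the paper simply records that the corollary is "obvious from the definitions," and your argument is exactly that unwinding — evaluate $\Flrm(E)$ at $((X,f),P)$, use the commutativity of the refinement square \eqref{grelgijeroigergregergwergrewgwreg-111} to identify the underlying object with $E(X,P)$, and conclude it is flasque, hence lies in $\bM^{\fl}$.
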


 {A priori, showing that a functor is functorially flasqueness preserving requires us to produce more structure than showing that it is pre-flasqueness preserving.
The next lemma asserts that this additional structure comes for free if the functor is coarsely invariant.}

\begin{lem}\label{qwrgfiqojwfgwefwefewfqwefe}
Assume:
\begin{enumerate}
\item $E$ functorially preserves pre-flasqueness.
\item For every $P$ in $\bP$ the functor $E(-,P) \colon G\BC\to\bM$ is coarsely invariant.
\end{enumerate}
Then  $E$  functorially preserves flasqueness.
\end{lem}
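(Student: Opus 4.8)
The plan is to deduce functorial flasqueness from functorial pre-flasqueness by exploiting coarse invariance to kill the endomorphism $E(f,P)$. Recall that if $(X,f)$ lies in $\Fl(G\BC)$, then in particular $f$ implements flasqueness of $X$, so $f$ is close to $\id_X$ by \cref{rgiojgogregrgregre}~\eqref{ergvpoerigrep0gregeg}. By hypothesis (2) and \cref{rewkgowegrerfrewfwr}, the coarsely invariant functor $E(-,P)$ therefore sends close morphisms to equivalent morphisms, so $E(f,P) \simeq E(\id_X,P) \simeq \id_{E(X,P)}$. Plugging this into the defining equation \eqref{fqwefiojewoifjewfewfefqwefqewfqewf} of functorial pre-flasqueness, with $(M,S) := \Flrm^{\pre}(E)((X,f),P)$, we obtain $S \simeq \id_M + E(f,P)\circ S \simeq \id_M + S$, which is exactly the condition \eqref{qewflihqweiofqwef123r123r213} that $S$ implements flasqueness of $M$. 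Hence the pair $(M,S)$ lies in $\Fl(\bM)$.

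The remaining work is to upgrade this pointwise observation to a statement about functors, i.e.\ to produce the refinement $\Flrm(E)\colon \Fl(G\BC)\times\bP\to\Fl(\bM)$ fitting into \eqref{grelgijeroigergregergwergrewgwreg-111}. The idea is to restrict the functor $\Flrm^{\pre}(E)\colon \preFl(G\BC)\times\bP\to\bEnd(\bM)$ along the fully faithful inclusion $\Fl(G\BC)\times\bP\hookrightarrow\preFl(G\BC)\times\bP$. What must be checked is that this restriction actually lands in the full subcategory $\Fl(\bM)\subseteq\bEnd(\bM)$. Since $\Fl(\bM)$ is a full subcategory (\cref{thgiojgoiggfergrewgerewergewrgreggw}~\eqref{qrgjofewfqefqew}), it suffices to verify this on objects, and that is precisely the pointwise computation of the previous paragraph: every object $((X,f),P)$ of $\Fl(G\BC)\times\bP$ is sent by $\Flrm^{\pre}(E)$ to a pair $(M,S)$ with $S\simeq\id_M+S$. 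Therefore $\Flrm^{\pre}(E)$ corestricts to a functor $\Flrm(E)\colon\Fl(G\BC)\times\bP\to\Fl(\bM)$, and the commuting square \eqref{grelgijeroigergregergwergrewgwreg-111} together with its filler is obtained by restricting the square \eqref{grelgijeroigergregergwergrewgwreg} (noting that the forgetful functor $\Fl(G\BC)\times\bP\to G\BC\times\bP$ and the forgetful functor $\Fl(\bM)\to\bM$ are the restrictions of $q\times\id_{\bP}$ and $p$, respectively).

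I do not expect a serious obstacle here; the one point requiring a modicum of care is the interaction between the equivalence $E(f,P)\simeq\id_{E(X,P)}$ and the filler equivalence of \eqref{grelgijeroigergregergwergrewgwreg}, i.e.\ making sure that the endomorphism appearing in the defining relation \eqref{fqwefiojewoifjewfewfefqwefqewfqewf} really is (equivalent to) $E(f,P)$ applied to the object $M\simeq E(X,P)$ supplied by the refinement. This is guaranteed by the compatibility built into \cref{goiijgowgergegergregewrg}: the forgetful functor $p$ sends $(M,S)$ to $M$ and the diagram \eqref{grelgijeroigergregergwergrewgwreg} identifies $M$ with $E(X,P)$ naturally, so the occurrence of $E(f,P)$ in \eqref{fqwefiojewoifjewfewfefqwefqewfqewf} is identified with the endomorphism of $E(X,P)$ induced by $f$, which is what \cref{rewkgowegrerfrewfwr} controls. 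With that identification in place, the substitution $E(f,P)\circ S\simeq S$ is legitimate and the argument goes through verbatim.
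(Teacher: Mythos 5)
Your proposal is correct and follows essentially the same route as the paper: since $f$ is close to $\id_X$, coarse invariance of $E(-,P)$ gives $E(f,P)\simeq\id$, so the pre-flasqueness relation \eqref{fqwefiojewoifjewfewfefqwefqewfqewf} collapses to \eqref{qewflihqweiofqwef123r123r213}, and the refinement $\Flrm(E)$ is obtained by (co)restricting $\Flrm^{\pre}(E)$, using that $\Fl(\bM)$ is a full subcategory of $\bEnd(\bM)$. Your extra remarks on the corestriction and the compatibility filler merely spell out what the paper leaves implicit.
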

\begin{proof}
 {By assumption, the composite
 \[ \Fl(G\BC) \times \bP \to \preFl(G\BC) \times \bP \xrightarrow{\preFl(E)} \widetilde{\Fl}(\bM) \xrightarrow{e} \bM \]
 is equivalent to the functor sending $((X,f),P)$ to $(E(X,P), E(f,P))$.
 The inclusions $\{0\} \to \{0,1\}$ and $\{1\} \to \{0,1\}$ induce a zig-zag of natural equivalences
 \[\xymatrix{
  E(X,P)\ar[d]_{E(f,P)}\ar[r]^-{\simeq} & E(X \otimes \{0,1\}_{max,max},P)\ar[d]^{E(f \sqcup \id_X,P)} & E(X,P)\ar[l]_-{\simeq}\ar[d]^{\id} \\
  E(X,P)\ar[r]^-{\simeq} & E(X \otimes \{0,1\}_{max,max},P) & E(X,P)\ar[l]_-{\simeq}
  }\]
  which induce a lift
  \[ \Flrm(E) \colon \Fl(G\BC) \times \bP \to \Fl(\bM) \]
  of $\Flrm^{\pre}(E)_{|\Fl(G\BC) \times \bP}$.
  It follows that $E$ functorially preserves pre-flasqueness.}
  \end{proof}

Let $\bC $ be in $\Fun(BG,\CL)$. Note that $\Cle$ is semi-additive by \cref{girgjowegfewfw9ef}.
 So \cref{rt98uerg9rt342trg32432f,goiijgowgergegergregewrg,goiijgowgergegergregewrg-111} apply to the functor
 \[ \bV^{G} \colon G\BC\times \Fun(BG,\CL)\to \Cle \]
 from \eqref{bojoijgoi3jg3g34f}.
In this case we use the more natural symbol $\times$ instead of $+$ for the sum of morphisms.

 \begin{lem} \label{ergoegergregreg} \mbox{}
 \begin{enumerate}
 \item \label{ergoewrpgwergregrgwgregwerg}
 The functor $\bV^{G}$ functorially  preserves pre-flasqueness. \item \label{ergoewrpgwergregrgwgregwergneuneu}
 The functor $\bV^{G} $ functorially  preserves flasqueness.
 \item\label{ergoewrpgwergregrgwgregwergneuneu1}
 The functor $\bV^{G}_{\bC}$  preserves flasqueness for every $\bC$ in $\Fun(BG,\CL)$.
 \end{enumerate}
\end{lem}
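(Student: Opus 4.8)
The plan is to prove the three assertions in order, since each builds on the previous one. The crux is assertion \eqref{ergoewrpgwergregrgwgregwerg}: we must construct the refinement $\Flrm^{\pre}(\bV^{G})$. The key observation is that pre-flasqueness of $X$ is implemented by an endomorphism $f\colon X\to X$ satisfying conditions \eqref{ergoiergergregregregr} and \eqref{sgrspoguo4tt} of \cref{rgiojgogregrgregre}, and these are exactly the properties needed to run an Eilenberg swindle. Given $(X,f)$ in $\preFl(G\BC)$ and $\bC$ in $\Fun(BG,\CL)$, the natural candidate for the swindle endomorphism $S$ of $M:=\bV^G_\bC(X)$ is the ``infinite iteration'' $\bigvee_{n\ge 1} (f^n)_*$, which should be realised at the level of sheaves and then transported through the localisation $\ell_X$. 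Concretely, for $M$ in $\Sh^{G,\eqsm}_\bC(X)$ one wants to form the object $\bigoplus_{n\ge 1}\hat{(f^n)}^G_* M$; condition \eqref{sgrspoguo4tt} guarantees that this ``sum'' evaluates on each bounded subset to a \emph{finite} product (all but finitely many summands vanish there), so it again lies in $\Sh^{G,\eqsm}_\bC(X)$, and condition \eqref{ergoiergergregregregr} guarantees that the morphisms $M\to \hat{(f^n)}^G_*M$ are compatible with the labelling, i.e.\ become morphisms in $\bV^G_\bC(X)$ after localisation. The relation $S\simeq \id_M + (f_*)\circ S$ is then the telescope identity $\bigoplus_{n\ge 1} = (\text{summand }1)\ \oplus\ f_*\big(\bigoplus_{n\ge 1}\big)$. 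To make this genuinely \emph{functorial} in $(X,f)$ and $\bC$, I would organise the construction as a functor out of $\preFl(G\BC)\times\Fun(BG,\CL)$; the cleanest route is to first build the analogous functorial swindle on $\Sh^{G,\eqsm}$ (where sums are literal finite products on bounded pieces) and then postcompose with $\ell$, using \cref{regklwgrregwrgegr} and \cref{wetiwoegreergwrger} to see that $\ell$ turns the relevant maps into equivalences as required in \eqref{fqwefiojewoifjewfewfefqwefqewfqewf}.

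After \eqref{ergoewrpgwergregrgwgregwerg} is in place, assertion \eqref{ergoewrpgwergregrgwgregwergneuneu} is immediate from \cref{qwrgfiqojwfgwefwefewfqwefe}: we have just verified that $\bV^G$ functorially preserves pre-flasqueness, and by \cref{riogogrgregergerg} the functor $\bV^G_\bC = \bV^G(-,\bC)$ is coarsely invariant for every $\bC$; the two hypotheses of \cref{qwrgfiqojwfgwefwefewfqwefe} are thus satisfied, so $\bV^G$ functorially preserves flasqueness. Finally, assertion \eqref{ergoewrpgwergregrgwgregwergneuneu1} follows from \eqref{ergoewrpgwergregrgwgregwergneuneu} by \cref{tgiowergwregwregwregeg} applied with $\bP = \Fun(BG,\CL)$ and $P = \bC$: functorial preservation of flasqueness for $\bV^G$ specialises to preservation of flasqueness for $\bV^G_\bC$ for each fixed $\bC$.

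The main obstacle will be making the Eilenberg-swindle endomorphism $S$ and the equivalence \eqref{fqwefiojewoifjewfewfefqwefqewfqewf} genuinely \emph{coherent} as a functor $\preFl(G\BC)\times\Fun(BG,\CL)\to\bEnd(\Cle)$ rather than just a pointwise construction; one has to be careful that the infinite coproduct $\bigoplus_{n\ge1}\hat{(f^n)}^G_*$ really lands in $\Sh^{G,\eqsm}_\bC(X)$ (this is where condition \eqref{sgrspoguo4tt} together with \cref{lem:eqsm-comps} enters, ensuring bounded-set evaluations are eventually zero so the coproduct is a finite product on each bounded piece and hence cocompact), that the telescope map is well-defined, and that all of this is natural in $(X,f)$. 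Once the coherent construction of $\Flrm^{\pre}(\bV^{G})$ is set up, the remaining verifications are formal manipulations with the universal property of the left-exact localisation and the mapping-space formula \eqref{qwefqwfoij11111}.
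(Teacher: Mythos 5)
Your overall strategy is the same as the paper's: for part \eqref{ergoewrpgwergregrgwgregwerg} one implements an Eilenberg swindle by the endofunctor $M\mapsto\prod_{n\in\nat}\hat f^{n,G}_{*}M$, constructed at the level of sheaves and then descended through the localisation; condition \eqref{sgrspoguo4tt} gives preservation of equivariant smallness exactly as you describe (via \cref{eriogwetgwegregwrgregwrg}, each factor is equivariantly small and only finitely many are nonzero on a given $H$-bounded subset); and parts \eqref{ergoewrpgwergregrgwgregwergneuneu} and \eqref{ergoewrpgwergregrgwgregwergneuneu1} follow formally from \cref{qwrgfiqojwfgwefwefewfqwefe} together with \cref{riogogrgregergerg}, and from \cref{tgiowergwregwregwregeg}, respectively — exactly as in the paper.

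There is, however, a concrete gap in your treatment of part \eqref{ergoewrpgwergregrgwgregwerg}: the well-definedness of the swindle before any smallness considerations. Since $\Sh^{G}_{\bC}(X)=\colim_{U\in\cC^{G}_{X}}\Sh^{U,G}_{\bC}(X)$ only has finite limits, the infinite ``sum'' has to be taken as an infinite product, and one must show it is again a sheaf for a single \emph{coarse} entourage. This is precisely where condition \eqref{ergoiergergregregregr} is needed: if $M$ is a $U$-sheaf, then $\hat f^{n,G}_{*}M$ is an $f^{n}(U)$-sheaf, and $\bigcup_{n\in\nat}f^{n}(U)$ is a coarse entourage by \eqref{ergoiergergregregregr}, so all factors lie in a common $\Sh^{V,G}_{\bC}(X)$, which belongs to $\CLLL$ and therefore admits the product. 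Your finiteness-on-bounded-subsets argument via \eqref{sgrspoguo4tt} yields cocompact values but says nothing about controlledness, so by itself it does not even produce an object of $\Sh^{G}_{\bC}(X)$. Relatedly, your stated use of condition \eqref{ergoiergergregregregr} — that ``the morphisms $M\to \hat f^{n,G}_{*}M$ are compatible with the labelling'' — does not parse, since there are no canonical such morphisms; what \eqref{ergoiergergregregregr} is actually used for, besides the existence of the product, is to check (as in the proof of \cref{iofjowefwfewfewfef}) that $\ell_{X}\circ\hat S(X,f)^{G}$ sends the generators $M\to V^{G}_{*}M$ of $W_{X}$ to equivalences, so that $S$ descends to $\hat\bV^{G}_{\bC}(X)$ and then restricts to $\bV^{G}_{\bC}(X)$ by the smallness check. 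Finally, a minor indexing point: take the product over all $n$ in $\nat$ including $n=0$; with your indexing $n\ge 1$ the telescope identity reads $S\simeq f_{*}+f_{*}\circ S$ rather than the required $S\simeq\id+f_{*}\circ S$ of \eqref{fqwefiojewoifjewfewfefqwefqewfqewf}.
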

\begin{proof}
We start with  \eqref{ergoewrpgwergregrgwgregwerg}. 
 If we apply the functor from \eqref{regewgk2p5getgwreg} (for the trivial group) to $G$-coarse spaces and coefficient categories 
 with $G$-action, then we get a functor
 \[ \Sh\colon G\Coarse\times \Fun(BG,\CL)\to \Fun(BG,\CLL)\ . \] 
  We extend it to a functor
  \[ \Flrm^{\pre}(\Sh)\colon \preFl(G\BC)\times \Fun(BG,\CL)\to \widetilde{\Fl}(\Fun(BG,\CLL))\ .\]
   such that
   \begin{equation}\label{vqlkhjklwejflkqwefewqfewfef}
   	\Flrm^{\pre}(\Sh) ((X,f),\bC):=(\Sh_{\bC}(X),\prod_{n\in \nat} \hat f^{n}_{*}, \hat f_{*} ,\hat \phi)\ ,
\end{equation}
where $\hat \phi$ is the canonical identification $\prod_{n\in \nat} \hat f^{n}_{*}\simeq \id_{\Sh_{\bC}(X)}\times \hat f_{*}\circ  \prod_{n\in \nat} \hat f^{n}_{*}$.
 We must argue that the infinite product $\hat  S(X,f):=\prod_{n\in \nat} \hat f^{n}_{*}$ of morphisms in \eqref{vqlkhjklwejflkqwefewqfewfef}  exists.
 For $M$ in $\Sh_{\bC}(X)$, by \cref{rgiojgogregrgregre} \eqref{ergoiergergregregregr} there exists $U$ in $\cC_{X}^{G}$ such that $\hat f^{n}_{*}(M)$ in $\Sh_{\bC}^{U}(X)$ for all $n$ in $\nat$.  Since $\Sh_{\bC}^{U}(X)$ belongs to $\CL$, the product $\prod_{n\in \nat} \hat f^{n}_{*}(M)$ exists in
$ \Sh_{\bC}^{U}(X)$ and hence in $\Sh_{\bC}(X)$.
 
By  composing $\Flrm^{\pre}(\Sh)$ with $\lim_{BG}$  we obtain a functor 
\[ \Flrm^{\pre}(\Sh)^{G}\colon \preFl(G\BC)\times \Fun(BG,\CL)\to \widetilde{\Fl}(\CLL)\ ,\]
\[ ((X,f),\bC)\mapsto (\Sh^{G}_{\bC}(X),\hat   S(X,f)^{G}, \hat f^{G}_{*},\hat \phi^{G})\ .\]
Consider the square
\[ \xymatrix{
	\Sh^{G}_{\bC}(X)\ar[r]^{\hat S(X,f)^{G}}\ar[d]_{\ell_{X}} & \Sh^{G}_{\bC}(X)\ar[d]^{\ell_{X}} \\
	\hat \bV^{G}_{\bC}(X)\ar@{..>}[r]^{S(X,f)} & \hat \bV^{G}_{\bC}(X)}\ .\]
Similarly as in the proof of  \cref{iofjowefwfewfewfef}, using   \cref{rgiojgogregrgregre} \eqref{ergoiergergregregregr} on $f$,   we check that
$\ell_{X} \hat   S(X,f)^{G}$ sends the generators \eqref{rqwfoijqoiwefdewdqdewdqd}  of $W_{X}$ to equivalences in $\hat \bV^{G}_{\bC}(X)$.
It follows that
$\hat S(X,f)^{G}$ descends to the desired functor
\begin{equation*}
S(X,f)\colon\hat \bV^{G}_{\bC}(X) \to \hat \bV^{G}_{\bC}(X)\ . \end{equation*}
Similarly, the functor $ \hat f^{G}_{*}$ descends to a functor $  f_{*}$,  and $\hat \phi^{G}$ descends to an equivalence $\phi^{G} \colon  S(X,f)\stackrel{\simeq}{\to}\id_{\hat \bV^{G}_{\bC}(X)}\times
 {(f_{*} \circ S(X,f))}$.
 
  It remains to show that $S(X,f)$ preserves the full subcategory $\bV^{G}_{\bC}(X)$ of $\hat \bV^{G}_{\bC}(X)$. 
  To this end, we show that $\hat   S(X,f)^{G}$ preserves the subcategory $\Sh_{\bC}^{G,\eqsm}(X)$ of $\Sh_{\bC}^{G}(X)$.
 
 Let $M$ be in $\Sh^{G,\eqsm}_{\bC}(X)$.  Consider an $H$-bounded subset  $Y$ of $X$.
 By \cref{rgiojgogregrgregre}~\eqref{sgrspoguo4tt}, there exists $n_{0}$ in $\nat$ such that   $f^{n_0}(X) \cap Y = \emptyset$. Then we have
 \[ (\hat S(X,f)^G(M))(Y) \simeq \prod_{n \leq n_0} \hat f^{n,G}_*M(Y) \ .\]
 By \cref{eriogwetgwegregwrgregwrg}, $\hat f^{n,G}_*M$ is equivariantly small for all $n$.
 Thus every factor of the product  belongs to $\bC^{H,\omega}$, and therefore the finite product, too.
We conclude that $\hat S(X,f)^G(M)\in \Sh^{G,\eqsm}_{\bC}(X)$.
 This finishes the proof of assertion \eqref{ergoewrpgwergregrgwgregwerg}. 
 
 \eqref{ergoewrpgwergregrgwgregwergneuneu} follows from
 \eqref{ergoewrpgwergregrgwgregwerg} by   \cref{qwrgfiqojwfgwefwefewfqwefe}
 since $\bV^{G}_{\bC}$ is coarsely invariant by   \cref{riogogrgregergerg}. 
 
 \eqref{ergoewrpgwergregrgwgregwergneuneu1} follows from 
  \eqref{ergoewrpgwergregrgwgregwergneuneu}  and  \cref{tgiowergwregwregwregeg}.
\end{proof}

  \subsection{\texorpdfstring{$u$-continuity}{u-continuity}}\label{sec:u-cont}
  Let $X$ be  {a $G$-bornological coarse space}.
  If $U$ is  {an invariant coarse entourage on $X$},
  then $X_{U}$ denotes the $G$-bornological coarse space obtained from $X$ by replacing the original coarse structure of $X$ by the $G$-coarse structure $\cC\langle \{U\}\rangle$ generated by $U$.
  There is a canonical morphism $X_{U}\to X$ given by the identity of the underlying sets.

Let $\bM$ be an $\infty$-category which admits all small filtered colimits.  
 Consider a functor $E \colon G\BC \to \bM$.

\begin{ddd}\label{rgiuerogergergergre}
	$E$ is $u$-continuous if the natural morphism
	\[ \colim_{U\in \cC_{X}^{G}} E(X_{U})\to E(X) \]
	is an equivalence for every {$G$-bornological coarse space} $X$.
\end{ddd}

 Let $\bC$ be in $\Fun(BG,\CL)$ and note that $\CLL$ admits small filtered colimits by (the large version of) \cref{prop:catex finitely complete}.
Recall the functor $\hat \bV^G_\bC$ from \eqref{bojoijgoi3jg3g34fhat}.
\begin{prop}\label{prop:u-continuous}
	The functor $\hat \bV^G_\bC \colon G\BC\to \CLL$ is $u$-continuous.
\end{prop}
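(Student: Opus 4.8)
The goal is to show that $\hat\bV^G_\bC$ commutes with the filtered colimit over $\cC_X^G$ that computes passage from the $X_U$ to $X$. The first step is to reduce the statement to the level of sheaf categories before localisation. Recall from \eqref{qewfqwefjhb1ji234rwefqewff} that $\Sh^G_\bC(X) = \colim_{U\in\cC^G_X}\Sh^{U,G}_\bC(X)$, the filtered colimit being computed in $\CAT_\infty$ (and landing in $\CLL$). Moreover, for a fixed invariant entourage $U$ and the morphism $X_U\to X$ one has $\Sh^{U,G}_\bC(X_U)\simeq\Sh^{U,G}_\bC(X)$ since the sheaf condition for a single entourage $U$ only depends on the $U$-bounded subsets, which are the same whether we use $\cC_X$ or $\cC\langle\{U\}\rangle$; and on $X_U$ the entourage $U$ is cofinal in $\cC^G_{X_U}$, so $\Sh^G_\bC(X_U)\simeq\Sh^{U,G}_\bC(X_U)\simeq\Sh^{U,G}_\bC(X)$. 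Assembling these equivalences as $U$ varies yields a canonical equivalence $\colim_{U\in\cC^G_X}\Sh^G_\bC(X_U)\xrightarrow{\simeq}\Sh^G_\bC(X)$ in $\CLL$ (and in $\CAT_\infty$), i.e.\ $\Sh^G_\bC$ is itself $u$-continuous. The same holds for $\Sh^{G,\eqsm}$, but for the present proposition we only need the non-small version.

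Next I would transport this through the localisation functor $\Loc$. By \cref{qriooqergegwgrer}\eqref{qriooqergegwgrer:1}, $\hat\bV^G_\bC(X)$ is the Dwyer--Kan localisation $\Sh^G_\bC(X)[W_X^{-1}]$ in $\CAT_\infty$, and similarly $\hat\bV^G_\bC(X_U)\simeq\Sh^G_\bC(X_U)[W_{X_U}^{-1}]$. The point is that Dwyer--Kan localisation commutes with filtered colimits in $\CAT_\infty$: given a filtered diagram of $\infty$-categories with labellings, the colimit of the localisations is the localisation of the colimit at the union of the labellings. Here the transition functors $\Sh^G_\bC(X_U)\to\Sh^G_\bC(X_{U'})$ for $U\subseteq U'$ send $W_{X_U}$ into $W_{X_{U'}}$ (indeed a generator $M\to V^G_*M$ with $V\in\cC^{G,\Delta}_{X_U}$ maps to $\hat f^G_* M\to f(V)^G_{\Delta,*}\hat f^G_* M$-type data as in the proof of \cref{iofjowefwfewfewfef}), and under the equivalence $\colim_U\Sh^G_\bC(X_U)\simeq\Sh^G_\bC(X)$ the colimit of the classes $W_{X_U}$ is precisely the saturation of $W_X$, because every generator $M\to U^G_*M$ of $W_X$ with $U\in\cC^{G,\Delta}_X$ comes from a generator at the stage $U$ (using that $\cC^{G,\Delta}_X$ is filtered and cofinal in $\cC^G_X$, and that a single $M$ and a single $U$ already live at some finite stage). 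Therefore
\[
\colim_{U\in\cC^G_X}\hat\bV^G_\bC(X_U)\;\simeq\;\Bigl(\colim_{U}\Sh^G_\bC(X_U)\Bigr)\bigl[(\textstyle\bigcup_U W_{X_U})^{-1}\bigr]\;\simeq\;\Sh^G_\bC(X)[W_X^{-1}]\;\simeq\;\hat\bV^G_\bC(X)\ ,
\]
and one checks this composite equivalence is the canonical comparison map, giving $u$-continuity of $\hat\bV^G_\bC$ as a functor to $\CAT_\infty$; since filtered colimits in $\CLL$ are computed in $\CAT_\infty$ (by the large version of \cref{prop:catex finitely complete}, as noted in the statement), the same holds in $\CLL$.

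\textbf{Main obstacle.}
The delicate point is the interplay of the two colimits (over $\cC^G_X$ to build $\Sh^G_\bC(X)$ and, implicitly, over the labelled diagram to build the localisation) and making sure the generators of $W_X$ are genuinely the union of the $W_{X_U}$ up to saturation --- equivalently, that no ``new'' weak equivalence of $\Sh^G_\bC(X)$ is forced by the larger coarse structure that was not already present at some finite stage $X_U$. Here one uses crucially that each generator $\iota_U(M)\colon M\to U^G_*M$ involves only a single object $M$ and a single invariant entourage $U\in\cC^{G,\Delta}_X$, both of which are detected at the stage $X_U$ (after enlarging $U$ so that $M\in\Sh^{U,G}_\bC(X)$, possible since $\cC^{G,\Delta}_X$ is cofinal in $\cC^G_X$ by \cref{trbertheheht}\eqref{igwoegwergergwrgrg}), together with the explicit calculus-of-fractions formula \eqref{qwefqwfoij1111} for the mapping spaces: it exhibits $\Map_{\hat\bV^G_\bC(X)}$ as a filtered colimit over $\cC^{G,\Delta}_X$ of mapping spaces in the $\Sh^{U,G}_\bC$, which manifestly commutes with the further filtered colimit over $\cC^G_X$, so one could alternatively prove the proposition directly by comparing mapping spaces via \eqref{qwefqwfoij1111} at $X$ and at each $X_U$ and invoking that a colimit-preserving functor is detected on mapping spaces and objects. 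I would likely present the argument in this second, more hands-on form to avoid having to set up a general ``localisation commutes with filtered colimits'' lemma.
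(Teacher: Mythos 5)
Your overall strategy coincides with the paper's: identify $\hat\bV^G_\bC(X)$ with the Dwyer--Kan localisation via \cref{qriooqergegwgrer}, commute the filtered colimit over $\cC^G_X$ with the localisation (a left adjoint), and identify the colimit of the labelled sheaf categories with $(\Sh^G_\bC(X),W_X)$, where the generators of $W_X$ are detected at finite stages. Your treatment of the labellings and your optional mapping-space argument via \eqref{qwefqwfoij1111} are both fine.

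There is, however, one incorrect step in your reduction to the sheaf level: $U$ is \emph{not} cofinal in $\cC^G_{X_U}=\cC\langle\{U\}\rangle^G$, because the generated coarse structure is closed under composition (\cref{trbertheheht}), so it contains all powers $U^{n}$, and $U^{n}\not\subseteq U$ in general; hence $\Sh^G_\bC(X_U)\not\simeq\Sh^{U,G}_\bC(X_U)$ as you claim. The conclusion $\colim_{U\in\cC^G_X}\Sh^G_\bC(X_U)\simeq\Sh^G_\bC(X)$ is still true, but it must be obtained by unfolding both sides via \eqref{qewfqwefjhb1ji234rwefqewff} and using that the diagonal $U\mapsto(U,U)$ is cofinal in the poset of pairs $\{(U,V)\mid U\in\cC^G_X,\ V\in\cC^G_{X_U}\}$ --- precisely the cofinality argument marked $iii$ in the paper's proof. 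With that repair your argument goes through. A small structural difference worth noting: because the paper rewrites $\Sh^G_\bC(X_U)$ as $\lim_{BG}$ of the non-equivariant sheaf category, it must additionally interchange $\lim_{BG}$ with the filtered colimit (\cref{lem:lim-colim-commute}, using full faithfulness of the transition functors $\Sh_\bC(X_U)\to\Sh_\bC(X_{U'})$); working, as you do, directly with the definition $\Sh^G_\bC=\colim_U\lim_{BG}\Sh^U_\bC$ legitimately avoids this interchange, at the price of having to do the pair-cofinality argument honestly instead of the per-stage identification you attempted.
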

\begin{proof}
Let $X$ be  {a $G$-bornological coarse space}.
	Filtered colimits and limits in  $\CLL$ may by computed in  $\CATi$.
	Consider the following chain of functors:
	\begin{align*}
	\colim_{U \in \cC^G_X} \hat \bV^G_\bC(X_U)
	&\simeq \colim_{U \in \cC^G_X} \left((\lim_{BG} \Sh_\bC(X_U))[W^{-1}_{X_U}]\right) \\
	&\xrightarrow{i} \left(\colim_{U \in \cC^G_X} \lim_{BG} \Sh_\bC(X_U)\right)\left[\colim_{U \in \cC^G_X} W^{-1}_{X_U}\right] \\
	&\xrightarrow{ii} \left(\lim_{BG} \colim_{U \in \cC^G_X} \Sh_\bC(X_U)\right)\left[\colim_{U \in \cC^G_X} W_{X_U}^{-1}\right] \\
	&\xrightarrow{iii} \left(\lim_{BG} \Sh_\bC(X)\right)[W^{-1}_{X}] \\
	&\simeq \hat \bV^G_\bC(X)
	\end{align*}
	The morphism marked by $i$ is an equivalence since the functor $\Loc$ in \eqref{rghvfigvsfgfdsg} is a left adjoint and therefore preserves all colimits.
	The morphism marked by $ii$ is an equivalence by \cref{lem:lim-colim-commute} 
	since $BG$ has only a single object,  {the poset of invariant coarse entourages} $\cC_{X}^{G}$ is filtered, and $\Sh(X_U) \to \Sh(X_{U'})$ is fully faithful for all $U, U'$ in $\cC^{G}_{X}$ with $ U \subseteq U'$.  
	The morphism marked by $iii$ is  an equivalence since the map
	\[ \cC^G_X \to \{ (U,V) { \in \cC^{G}_{X}\times \cC^{G}_{X}} \mid U \in \cC^G_X,\ V \in \cC^G_{X_U} \}, \quad U \mapsto (U,U) \]
	is cofinal.  
\end{proof}

\begin{lem}\label{gsnsioghetg}
The functor
$\bV_{\bC}^{G} \colon G\BC\to \Cle$ is $u$-continuous.
\end{lem}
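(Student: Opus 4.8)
The plan is to deduce the $u$-continuity of $\bV^{G}_{\bC}$ from the $u$-continuity of $\hat \bV^{G}_{\bC}$ already established in \cref{prop:u-continuous}, by checking that the equivariantly small and localisation constructions are compatible with the relevant filtered colimit. First I would record what the diagram of categories looks like: for $X$ in $G\BC$ and $U$ in $\cC^{G}_{X}$, the canonical morphism $X_{U}\to X$ induces a fully faithful inclusion $\Sh^{G,\eqsm}_{\bC}(X_{U})\to \Sh^{G,\eqsm}_{\bC}(X)$ (indeed a sheaf for the coarse structure generated by $U$ is in particular a sheaf for the larger coarse structure, and equivariant smallness is a condition on values on bounded subsets, which does not see the coarse structure; here one uses that $X$ and $X_{U}$ have the same underlying $G$-bornological space). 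Since $\bigcup_{U\in\cC^{G}_{X}}$ of the relevant entourage sets exhausts $\cC^{G}_{X}$, and since $\Sh^{G,\eqsm}_{\bC}(X)=\colim_{U\in\cC^{G}_{X}}\Sh^{G,\eqsm}_{\bC}(X_{U})$ as the diagram is a filtered union of full subcategories — this is the analogue of \eqref{qewfqwefjhb1ji234rwefqewff} restricted to equivariantly small objects, computed in $\CATi$.

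Next I would carry out the chain-of-equivalences argument exactly parallel to the proof of \cref{prop:u-continuous}, but inside $\Cle$. Concretely: filtered colimits in $\Cle$ may be computed in $\CATi$ (by the large version of \cref{prop:catex finitely complete}, $\Cle$ admits small filtered colimits and these agree with the underlying ones), and the left-exact localisation defining $\bV^{G}_{\bC}(X)=W_{X}^{\eqsm,-1}\Sh^{G,\eqsm}_{\bC}(X)$ may likewise be computed as the Dwyer--Kan localisation in $\CATi$ by \cref{wetiwoegreergwrger}. Then
\begin{align*}
\colim_{U\in\cC^{G}_{X}}\bV^{G}_{\bC}(X_{U})
&\simeq \colim_{U\in\cC^{G}_{X}}\bigl(\Sh^{G,\eqsm}_{\bC}(X_{U})[W_{X_{U}}^{\eqsm,-1}]\bigr)\\
&\simeq \Bigl(\colim_{U\in\cC^{G}_{X}}\Sh^{G,\eqsm}_{\bC}(X_{U})\Bigr)\bigl[\textstyle\colim_{U}W_{X_{U}}^{\eqsm,-1}\bigr]\\
&\simeq \Sh^{G,\eqsm}_{\bC}(X)[W_{X}^{\eqsm,-1}]\simeq \bV^{G}_{\bC}(X)\ ,
\end{align*}
where the second step uses that $\Loc$ is a left adjoint and hence commutes with filtered colimits (as in step $i$ of \cref{prop:u-continuous}), and the third step uses that $\Sh^{G,\eqsm}_{\bC}(X_{U})\to\Sh^{G,\eqsm}_{\bC}(X_{U'})$ is fully faithful for $U\subseteq U'$ together with the fact that the labellings are generated by the maps $M\to V^{G}_{*}M$ for $V\in\cC^{G,\Delta}_{X_{U}}$, so that $\colim_{U}W_{X_{U}}^{\eqsm}=W_{X}^{\eqsm}$ after the cofinality observation that $U\mapsto(U,U)$ identifies $\cC^{G}_{X}$ cofinally inside the pairs $(U,V)$ with $V\in\cC^{G}_{X_{U}}$. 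One should check that all of these identifications are compatible with the left-exact structures, so that the resulting equivalence lives in $\Cle$, not merely $\CATi$; this is automatic since $\Cle\to\CATi$ is conservative and detects the relevant colimits and localisations.

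The main obstacle I anticipate is verifying that $\Sh^{G,\eqsm}_{\bC}(X)$ really is the filtered colimit $\colim_{U}\Sh^{G,\eqsm}_{\bC}(X_{U})$ compatibly with the non-equivariantly-small version from \eqref{qewfqwefjhb1ji234rwefqewff}: one must be careful that passing to the $\eqsm$-subcategory commutes with $\colim_{U\in\cC^{G}_{X}}$ and with $\lim_{BG}$. The point is that equivariant smallness (\cref{qreigoqrgwergwrgre}) is tested on $H$-bounded subsets via the evaluation functors $\ev_{Y}$, which are computed entirely within a fixed $\Sh^{U,G}_{\bC}$ for any $U$ large enough to contain a neighbourhood of the diagonal on $Y$ — since $Y$ is bounded, any $U$ works — so the $\eqsm$ condition is stable under the transition functors and commutes with the filtered colimit over $U$. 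Once this compatibility is in place, the rest is a direct transcription of the argument for $\hat\bV^{G}_{\bC}$, and I would keep the write-up short by explicitly invoking \cref{prop:u-continuous}, \cref{wetiwoegreergwrger}, and \cref{regklwgrregwrgegr} rather than repeating the cofinality details.
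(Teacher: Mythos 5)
Your proof is correct, but it is organised differently from the paper's. The paper's own argument is a two-step deduction staying inside $\hat \bV^{G}_{\bC}(X)$: by \cref{eriogwetgwegregwrgregwrg} (applied to $X_{U}\to X$), \cref{prop:u-continuous}, and the fact that a filtered colimit of fully faithful functors is fully faithful, the canonical functor $\colim_{U}\bV^{G}_{\bC}(X_{U})\to \bV^{G}_{\bC}(X)$ is exhibited as an inclusion of full subcategories of $\hat\bV^{G}_{\bC}(X)$, and essential surjectivity is then immediate because equivariant smallness does not depend on the coarse structure. You instead re-run the colimit-versus-localisation computation of \cref{prop:u-continuous} directly at the level of the equivariantly small sheaf categories: identify $\colim_{U}\Sh^{G,\eqsm}_{\bC}(X_{U})\simeq \Sh^{G,\eqsm}_{\bC}(X)$ (again using that $\eqsm$ is a purely bornological condition), match the labellings by the same cofinality observation as step $iii$ of that proof, and commute the left-exact localisation with the filtered colimit via \cref{wetiwoegreergwrger} and the adjunction \eqref{qwefhiqef3rfqwrfewf4q}. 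Both routes hinge on the same key fact (independence of $\eqsm$ from the coarse structure); the paper's is shorter because it uses \cref{prop:u-continuous} as a black box, while yours is a little more self-contained and, as a side benefit, needs no interchange of $\lim_{BG}$ with the filtered colimit (\cref{lem:lim-colim-commute}), since $\Sh^{G,\eqsm}_{\bC}$ is by construction already a filtered union over invariant entourages. Your concluding remark that everything is detected in $\CATi$ is slightly informal, but it is covered by the references you cite (\cref{prop:catex finitely complete}, \cref{wetiwoegreergwrger}), so I see no genuine gap.
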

\begin{proof}
\cref{eriogwetgwegregwrgregwrg} applied to the morphisms $X_{U}\to X$, \cref{prop:u-continuous}, and the fact that a filtered colimit of fully faithful functors is fully faithful  implies that we have an inclusion
$\colim_{U\in \cC^{G}_{X}} \bV^{G}_{\bC}(X_{U}) \to \bV^{G}_{\bC}(X)$ of full subcategories of $\hat \bV^{G}_{\bC}(X)$.
 It is also essentially surjective since the notion of equivariant smallness is independent of the coarse structure.
\end{proof}

\subsection{Subspace inclusions}\label{rgiowegwrgrewrgwegwregw}\label{sec:subspaces}
In this section, we derive some technical results which will enter the discussion of excision for $\bV_{\bC}^{G}$ in \cref{gbiwrotwrtbwrgergeg}.

Let $i\colon Z\to X$ be the inclusion of a subspace {of a $G$-bornological coarse space $X$}.
\begin{prop}\label{prop:inclusion-fff}
	The functors $i_* \colon \hat \bV^G_\bC(Z) \to \hat \bV^G_\bC(X)$ and  $i_* \colon \bV^G_\bC(Z) \to \bV^G_\bC(X)$ are  fully faithful.
\end{prop}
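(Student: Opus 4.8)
The plan is to reduce the statement to a computation of mapping spaces using the calculus-of-fractions formulas from \cref{qriooqergegwgrer} and \cref{wetiwoegreergwrger}, together with the facts about the restriction functor $\hat i^{*,G}$ and its interaction with the thinning endofunctors established in \cref{rqkjgqregergwgwregweg} and \cref{egiwetthtrehtrhethetheth}. First I would recall that by \cref{qriooqergegwgrer}\eqref{qriooqergegwgrer:1} both $\hat \bV^G_\bC(Z)$ and $\hat \bV^G_\bC(X)$ are Dwyer--Kan localisations, so it suffices to check full faithfulness at the level of mapping spaces; the map $i_*$ is induced (by \cref{iofjowefwfewfewfef}) from $\hat i^{G}_*$, the right adjoint in the adjunction \eqref{fvsnejkvwevrevwv}. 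For $M, N$ in $\Sh^G_\bC(Z)$, the formula \eqref{qwefqwfoij1111} computes $\Map_{\hat \bV^G_\bC(Z)}(\ell M, \ell N)$ as $\colim_{V \in \cC^{G,\Delta}_Z} \Map_{\Sh^G_\bC(Z)}(M, V^G_* N)$, while $\Map_{\hat \bV^G_\bC(X)}(\ell \hat i^G_* M, \ell \hat i^G_* N)$ is $\colim_{W \in \cC^{G,\Delta}_X} \Map_{\Sh^G_\bC(X)}(\hat i^G_* M, W^G_* \hat i^G_* N)$.

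The key computation is then to identify these two colimits. Using the adjunction \eqref{fvsnejkvwevrevwv}, $\Map_{\Sh^G_\bC(X)}(\hat i^G_* M, W^G_* \hat i^G_* N) \simeq \Map_{\Sh^G_\bC(Z)}(\hat i^{*,G}\hat i^G_* M, \hat i^{*,G} W^G_* \hat i^G_* N) \simeq \Map_{\Sh^G_\bC(Z)}(M, \hat i^{*,G} W^G_* \hat i^G_* N)$, where the second equivalence uses $\hat i^{*,G}\hat i^G_* \simeq \id$ from \cref{rqkjgqregergwgwregweg}. Now \cref{egiwetthtrehtrhethetheth} gives $\hat i^{*,G} W^G_* \simeq (W_Z)^G_* \hat i^{*,G} \hat j^G_* \hat j^{*,G}$ where $j\colon W(Z) \to X$ is the inclusion of the thickening and $W_Z := W \cap (Z\times Z)$; composing with $\hat i^G_*$ and using $\hat j^{*,G}\hat i^G_*$ relations (one has $Z \subseteq W(Z)$, so there is an intermediate inclusion, and $\hat i^{*,G}\hat j^G_* \hat j^{*,G}\hat i^G_*$ simplifies to $\id$ on the common subspace) this reduces to $(W_Z)^G_* N$. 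Hence the $W$-term of the colimit over $\cC^{G,\Delta}_X$ equals $\Map_{\Sh^G_\bC(Z)}(M, (W_Z)^G_* N)$, and the assignment $W \mapsto W_Z = W \cap (Z\times Z)$ defines a cofinal functor $\cC^{G,\Delta}_X \to \cC^{G,\Delta}_Z$ (cofinality because every invariant coarse entourage of the subspace $Z$ extends to one of $X$, $Z$ carrying the induced coarse structure), so the two colimits agree compatibly with the comparison map. This proves full faithfulness of $i_*$ on $\hat \bV^G_\bC$. The case of $\bV^G_\bC$ follows because the mapping-space formula \eqref{qwefqwfoij11111} is literally the same, $\hat i^{*,G}$ and $\hat i^G_*$ both preserve equivariant smallness by \cref{lem:restrict-eqsm} and \cref{eriogwetgwegregwrgregwrg} (so $i_*$ indeed maps $\bV^G_\bC(Z)$ into $\bV^G_\bC(X)$ by \cref{qergiuqergreqgregwregwr}), and by \cref{regklwgrregwrgegr} the categories $\bV^G_\bC$ sit fully faithfully inside $\hat\bV^G_\bC$; thus full faithfulness is inherited.

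The main obstacle I anticipate is the careful bookkeeping in the middle step: correctly identifying $\hat i^{*,G} W^G_* \hat i^G_* N$ with $(W_Z)^G_* N$ using \cref{egiwetthtrehtrhethetheth}, \cref{rqkjgqregergwgwregweg} and the compatibility of the various inclusion maps $Z \hookrightarrow W(Z) \hookrightarrow X$, and then verifying that the reindexing $W \mapsto W \cap (Z \times Z)$ is genuinely cofinal and compatible with the natural comparison maps between the two filtered colimits. Everything else is a formal consequence of the localisation and adjunction formalism already set up in the excerpt.
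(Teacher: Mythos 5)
Your overall architecture (reduce to $\hat\bV^G_\bC$ via \cref{regklwgrregwrgegr}, compute both mapping spaces with the fraction formula, use cofinality of $U\mapsto U_Z=U\cap(Z\times Z)$, and compare termwise in $U$) is exactly the paper's, but the central termwise comparison is not established by your argument. The step ``$\Map_{\Sh^G_\bC(X)}(\hat i^G_*M, W^G_*\hat i^G_*N)\simeq \Map_{\Sh^G_\bC(Z)}(\hat i^{*,G}\hat i^G_*M,\hat i^{*,G}W^G_*\hat i^G_*N)$'' is not an instance of the adjunction \eqref{fvsnejkvwevrevwv}: that adjunction computes maps \emph{into} objects of the form $\hat i^G_*(-)$, and $W^G_*\hat i^G_*N$ is not in the essential image of $\hat i^G_*$ in general. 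Applying $\hat i^{*,G}$ to both arguments is mere functoriality and gives only a map; it is not an equivalence, since $\hat i^{*,G}$ is not fully faithful (only its right adjoint $\hat i^G_*$ is, by $\hat i^{*,G}\hat i^G_*\simeq\id$).

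The subsequent simplification via \cref{egiwetthtrehtrhethetheth} is also wrong in two ways: there $W(Z)$ is the $W$-\emph{thinning}, so $W(Z)\subseteq Z$ (not $Z\subseteq W(Z)$), and $\hat i^{*,G}\hat j^G_*\hat j^{*,G}\hat i^G_*$ is not the identity — it sends $N$ to $N((-)\cap W(Z))$. Concretely, for $C\subseteq Z$ one has $(\hat i^{*,G}W^G_*\hat i^G_*N)(C)\simeq N(W(C))$ with the thinning formed in $X$, while $(W^G_{Z,*}N)(C)\simeq N(W_Z(C))$ with the thinning formed in $Z$, and $W(C)\subsetneq W_Z(C)$ whenever points of $Z$ have $W$-neighbours outside $Z$; so the objects you try to identify genuinely differ. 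The true statement is only that the canonical map $\hat i^G_*U^G_{Z,*}N\to U^G_*\hat i^G_*N$ induces an equivalence on mapping spaces \emph{out of} $\hat i^G_*M$, and proving this is the actual work: the paper does it by writing the mapping spaces as limits over the twisted arrow category of $\cP_X^{\op}$ and using the subposet $\cP^U_X=\{A\mid U_Z(A\cap Z)=U(A)\cap Z\}$ together with the retraction $R(Y)=U[Y,Z]$ and the right Kan extension argument of \cref{wgioowtreg234wergregwerg,lem:tw-final,lem:twisted-kan,cor:tw-restrict}, which shows that maps out of $\hat i^G_*M$ cannot distinguish the two targets. Your proposal assumes precisely this hard step, so as written it has a genuine gap.
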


The proof requires a little preparation. 
We fix an invariant coarse entourage $U$ of $X$ which contains the diagonal.
Note that for any subset $A$ of $X$ we have $U(A)\cap Z\subseteq U_{Z}(A\cap Z)$,  where $U_Z := U \cap (Z \times Z)$ is considered as an invariant entourage of $Z$. We
consider the sub-poset 
  \begin{equation}\label{43t29t8u349t2t34t234t2t243t}
\cP^{U}_{X}:=\{A\in \cP_{X} \mid  U(A)\cap Z =U_{Z}(A\cap Z) \} 
\end{equation}
of the power set $\cP_{X}$.

\begin{lem}\label{wgioowtreg234wergregwerg}\mbox{}
	\begin{enumerate}
		\item \label{eriogwergergwggwregrgwg} If  $A,A^{\prime}$ are in $\cP^{U}_{X}$, then  we also have $A\cap A^{\prime}\in \cP^{U}_{X}$.
		\item \label{eriogwergergwggwregrgwg1} For every $Y$ in $\cP_{X}$ the  {slice} $(\cP^{U}_{X})_{Y/}$ has a unique minimal  {element $U[Y,Z]$. It satisfies  $Z \cap U[Y,Z] = Z \cap Y$.} 	\end{enumerate}
\end{lem}
\begin{proof}
\eqref{eriogwergergwggwregrgwg} follows from the fact that thinning (see \eqref{V-thin}) preserves intersections.

We now show \eqref{eriogwergergwggwregrgwg1}.
We will show that  the unique minimal element of $(\cP^{U}_{X})_{Y/}$ is given by \[ U[Y,Z] := Y \cup \bigcup_{x \in Y \cap Z, U_Z[x] \subseteq Y \cap Z }  U[x] \setminus Z \ .\]
 {Note that $Z \cap U[Y,Z] = Z \cap Y$ by definition.}
We first check that $U[Y,Z]$ belongs to the set $(\cP_{X}^{U})_{Y/}$.

By definition, we have  $Y\subseteq U[Y,Z]$.
It remains to check that
\[ U_Z(U[Y,Z] \cap Z) = U(U[Y,Z]) \cap Z\ .\]
We must show that
\[ U_Z(U[Y,Z] \cap Z)  \subseteq  U(U[Y,Z]) \cap Z  \ .\]
Let $x$ be in $U_Z(U[Y,Z] \cap Z)$. Then we have the relations  $x\in Y \cap Z$  and $U_Z[x] \subseteq Y \cap Z$. 
This implies  
\[ U[x] = U_Z[x] \cup (U[x] \setminus Z) \subseteq (Y \cap Z) \cup U[Y,Z] = U[Y,Z]\ ,\]
and hence $x\in U(U[Y,Z]) \cap Z$.
	
We now  show that $U[Y,Z]$ is the unique minimal element in $(\cP_{X}^{U})_{Y/}$.
 Suppose that $A$ is any other  element of $(\cP_{X}^{U})_{Y/}$.  
 Assume that $x$ is in $U[Y,Z]$.  We consider two cases:
  \begin{enumerate}
 \item  $x\in Y$:  Then  $x\in A$.
 \item  $x\notin Y$: Then $x\in U[x'] \setminus Z$ for some $x'$ in $Y \cap Z$ satisfying $U_Z[x'] \subseteq Y \cap Z \subseteq A\cap Z$. Since $U_Z(A\cap Z) = U(A) \cap Z$, we also have $U[x'] \subseteq A$.
	Because of  $x\in U[x']$, this implies   $x\in A$. \qedhere\end{enumerate}
\end{proof}

\begin{proof}[Proof of \cref{prop:inclusion-fff}]
 In view of \cref{regklwgrregwrgegr}, it suffices to show that the functor $i_{*} \colon \hat  \bV^G_\bC(Z) \to \hat \bV^G_\bC(X)$ is fully faithful.
 
 Let $M$ and $N$ be in $\Sh_{\bC}^{G}(Z)$. We will use the formula for mapping spaces in the localisation provided by \cref{qriooqergegwgrer}.
 The map of posets $\cC^{G,\Delta}_X \to \cC^{G,\Delta}_Z$ given by  $U \mapsto U_Z:=U\cap (Z\times Z)$ is cofinal.
 Note that $\hat i^{G}_{*}$ is fully faithful by the second assertion of \cref{rqkjgqregergwgwregweg}.
 {Therefore the} map
 \[ \Map_{\hat \bV^G_\bC(Z)}(\ell_{Z}M,\ell_{Z}N) \to \Map_{\hat\bV^G_\bC(X)}(i_* \ell_{X}M,i_*\ell_{X}N) \] induced   by $i$  
 can be identified with the map
 \[ \colim_{U \in \cC^{G,\Delta}_X} \Map_{\Sh^G_\bC( {X})}(\hat i_*^GM,\hat  i_*^G  U^{G}_{Z,*}N) \to \colim_{U \in \cC^{G,\Delta}_X} \Map_{\Sh^G_\bC(X)}(\hat  i_*^GM,  U^{G}_*\hat  i_*^GN) \]
 induced by the canonical map $\hat i_*^G U^{G}_{Z,*} N \to U_*  \hat i_*^GN$.
 We claim that the map
 \[  \Map_{\Sh^G_\bC( {X})}(\hat i_*^GM,\hat  i_*^G  U^{G}_{Z,*}N) \to  \Map_{\Sh^G_\bC(X)}(\hat  i_*^GM,  U^{G}_*\hat  i_*^GN) \]	is an equivalence for every $U$ in $\cC_{X}^{G,\Delta}$.  	
 {Since $\Sh_\bC(X)$ is a full subcategory of $\PSh_\bC(X)$,   {by \cref{tieqorgfgregwegergw}.\eqref{reigowergwrgrgwgrg}}
 it suffices to prove that
 \[ \Map_{\PSh_\bC(X)}(\hat i_*M, \hat i_*U_{Z,*}N ) \to \Map_{\PSh_\bC(X)}( \hat i_*M, U_*\hat i_*N ) \]
 is an equivalence.}
 
 \cref{wgioowtreg234wergregwerg}.{\eqref{eriogwergergwggwregrgwg1}} implies that the inclusion functor
 \[ L \colon \cP^{U,\op}_{X}\to \cP^{\op}_{X} \]
 admits a right adjoint $R$ which sends $Y$ to $U[Y,Z]$. Consequently, we obtain an induced co-Bousfield localisation
 \[ R^* \colon \Fun(\cP^{U,\op}_X,\bC) \rightleftarrows \PSh_\bC(X) \cocolon L^*\ . \]
 The map $R^*L^*\hat i_*M \to \hat i_*M$ is an equivalence since $Z \cap R(Y) = Z \cap Y$ for all $Y$, i.e., ${\hat i_{*}M}$ is a colocal object.
 Hence the map in question becomes identified with the canonical map
 \[ \Map_{\Fun(\cP^{U,\op}_X,\bC)}(L^*\hat i_*M, L^*\hat i_*U_{Z,*}N) \to \Map_{\Fun(\cP^{U,\op}_X,\bC)}(L^*\hat i_*M, L^*U_*\hat i_*N )\ . \]
 By definition of $\cP^U_X$, the map $L^*\hat i_*U_{Z,*}N \to L^*U_*\hat i_*N$ is an equivalence, so we are done.
\end{proof}

\subsection{Excision}\label{gbiwrotwrtbwrgergeg}\label{sec:excision}
 Let $\cY:=(Y_{\ell})_{\ell\in L}$ be a filtered family of invariant subsets of  {a $G$-bornological coarse space $X$}.
 The members $Y_{\ell}$ will be considered as {$G$-bornological coarse spaces}
 with the coarse and bornological structures induced from $X$.
\begin{ddd}\label{trhrthgwregwgwrgwrg}
$\cY$ is a big family if for every  {coarse entourage $U$ of $X$}
and $\ell$ in $L$ there exists $\ell'$ in $L$ such that $U[Y_{\ell}]\subseteq Y_{\ell'}$ ({see \eqref{V-thick} for the thickening construction}).
\end{ddd}
For a functor   $E\colon G\BC\to \bM$  to a cocomplete   target we set
\begin{equation}\label{ewroijiowgjoewgergergegwegw}
E(\cY):=\colim_{\ell\in L} E(Y_{\ell})\ .
\end{equation}
The family of inclusions $(Y_{\ell}\to X)_{\ell\in L}$ induces a canonical morphism 
\begin{equation}\label{ehrgjkwegrregrewgwgg}
E(\cY)\to E(X)\ .
\end{equation}

 \begin{ddd}\label{erpogpergergergegerge} A complementary pair on $X$ is a  pair $(Z,\cY)$ of an invariant subset $Z$ and a  big family $\cY$ such that
  there exists $\ell $ in $L$   with $Z\cup Y_{\ell}=X$.
\end{ddd}

\begin{ex}\label{ex:comppair}
 {Consider the bornological coarse space $\R_d$, where $d$ is the Euclidean metric. Then
 \[ \left( (-\infty,0], ([-n,\infty) )_{ n \in \nat } \right) \]
 is a complementary pair on $\R_d$.}
\end{ex}

 We can form the big family $Z\cap \cY:=(Z\cap Y_{\ell})_{\ell\in L}$ on $Z$. 
 
\begin{ddd}\label{rgoiruegoiregregregreg}
 $E$ is called excisive if for every {$G$-bornological coarse space} $X$
 with a complementary pair $(Z,\cY)$ the commutative square
\[\xymatrix{E(Z\cap \cY)\ar[r]\ar[d] & E(Z)\ar[d] \\
	E(\cY)\ar[r] & E(X)}\]
is a pushout square.
 \end{ddd}

\begin{ex}
  Consider the complementary pair from \cref{ex:comppair}. An excisive functor $E$ then gives rise to a pushout
  \[\xymatrix{
    E(( [-n,0] )_{n \in \nat })\ar[r]\ar[d] & E((-\infty,0])\ar[d] \\
    E(( [-n,\infty) )_{n \in \nat })\ar[r] & E(\R_d)}\]
  If $E$ additionally vanishes on flasque spaces (\cref{rgiojgogregrgregre}), the two outer corners of this square are trivial due to \cref{ex:flasques}. We therefore get an equivalence
  \[ E(\R_d) \simeq \Sigma E(( [-n,0] )_{n \in \nat })\ .\]
  If $E$ is also coarsely invariant (\cref{rewkgowegrerfrewfwr}), the right-hand side is equivalent to $\Sigma E(*)$.
\end{ex}

\begin{rem} We do not expect that
$\bV_{\bC}^{G} $ is excisive in the sense of   \cref{rgoiruegoiregregregreg}.
\cref{rgijrgoirejgoergergreg} below is the appropriate statement using the notion of an excisive square for left-exact categories as introduced in \cref{ugioerguoerug}.
\end{rem}

Let $E\colon G\BC\to \Cle$ be a functor. 
\begin{ddd}\label{rgoiruegoiregregregreg-modified}
$E$ is called $l$-excisive if   for every  {$G$-bornological coarse space} $X$
with a complementary pair $(Z,\cY)$ the commutative square
\[ \xymatrix{E(Z\cap \cY)\ar[r]\ar[d] & E(Z)\ar[d] \\ E(\cY)\ar[r] & E(X)} \]
is  an excisive square in $\Cle$ (\cref{ugioerguoerug}).
 \end{ddd}

Note that the composition of an $l$-excisive functor with a homological functor is excisive.

Let $\bC$ be in $\Fun(BG,\CL)$.
\begin{prop}\label{rgijrgoirejgoergergreg}  
  The functor $\bV_{\bC}^{G}\colon G\BC\to \Cle$ is $l$-excisive.
  \end{prop}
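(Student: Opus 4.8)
The plan is to reduce the statement to the assertion about $\hat \bV^{G}_{\bC}$ and, ultimately, to the Glueing Lemma for sheaves. Fix $X$ in $G\BC$ with a complementary pair $(Z,\cY)$, where $\cY=(Y_\ell)_{\ell\in L}$, and pick $\ell_0$ with $Z\cup Y_{\ell_0}=X$. First I would recall from \cref{ugioerguoerug} what it means for a square in $\Cle$ to be excisive: the relevant criterion involves the square being a pullback together with the horizontal (or vertical) cofibres being equivalences after suitable localisation/idempotent completion. So the task splits into two parts: (a) show the square of $\bV^{G}_{\bC}$'s is cartesian in $\Cle$, and (b) verify the cofibre condition.

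For part (a), I would work one step at a time. The inclusion functors $i_*$ are fully faithful by \cref{prop:inclusion-fff}, so $\bV^{G}_{\bC}(Z)$, $\bV^{G}_{\bC}(\cY)=\colim_\ell \bV^{G}_{\bC}(Y_\ell)$, and $\bV^{G}_{\bC}(Z\cap\cY)$ all embed as full subcategories of $\bV^{G}_{\bC}(X)$. The key geometric input is the Glueing Lemma \cref{wgkwkgrewrgrg}: for a sheaf $M$ on $X$ and the covering $(Z,Y_{\ell_0})$ of $X$, one has a cartesian square expressing $M$ in terms of its restrictions to $Z$, to $Y_{\ell_0}$, and to $Z\cap Y_{\ell_0}$. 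The subtlety is that this covering is a $U$-covering only for small enough $U$, whereas $\bV^{G}_{\bC}$ involves all of $\cC^{G,\Delta}_X$; this is exactly why the big family $\cY$ (rather than a single subset) is needed — one replaces $Y_{\ell_0}$ by larger and larger members $Y_\ell$ so that $(Z,Y_\ell)$ becomes a $U$-covering for the relevant $U$, and then passes to the filtered colimit over $L$. Combined with the mapping-space formula \eqref{qwefqwfoij11111} (mapping spaces in $\bV^{G}_{\bC}$ are filtered colimits over $\cC^{G,\Delta}_X$ of mapping spaces in $\Sh^{G,\eqsm}_{\bC}$), this should let me identify $\bV^{G}_{\bC}(X)$, at the level of objects and morphisms, with the pullback $\bV^{G}_{\bC}(Z)\times_{\bV^{G}_{\bC}(Z\cap\cY)}\bV^{G}_{\bC}(\cY)$ computed inside $\bV^{G}_{\bC}(X)$.

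For part (b), the cofibre/excisiveness condition in $\Cle$ amounts to checking that every object of $\bV^{G}_{\bC}(X)$ is "built" from objects coming from $Z$ and from $\cY$ in the appropriate sense, and that the localisation $W_X$ is compatible with the decomposition. Here I would again use that $\cY$ is a big family together with $u$-continuity of $\bV^{G}_{\bC}$ (\cref{gsnsioghetg}): an entourage $U\in\cC^{G,\Delta}_X$ only "sees" a bounded thickening, and for each $U$ the family $(Y_\ell)$ eventually absorbs $U[Z\cap Y_{\ell'}]$, which lets one write the relevant objects via the Glueing Lemma. I would also need that equivariant smallness is preserved under the restriction functors $\hat i^{*,G}$ and their right adjoints — this is \cref{lem:restrict-eqsm} and \cref{eriogwetgwegregwrgregwrg} — so the argument stays within $\Cle$ throughout.

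I expect the main obstacle to be part (a), specifically matching the localisation $W_X$ with the pieces: the generating morphisms $M\to U^{G}_* M$ do not restrict to subspaces entirely cleanly, and \cref{egiwetthtrehtrhethetheth} (describing $\hat i^{*,G}V^{G}_*$ in terms of $V^{G}_{Y,*}$ and a thickening inside $X$) will be the technical heart — one must show that the thinning operations on $X$, $Z$, $\cY$, and $Z\cap\cY$ are compatible up to the filtered colimit, so that the calculus-of-fractions formula for mapping spaces glues. A secondary, more bookkeeping-heavy point is to verify that the abstract notion of excisive square from \cref{ugioerguoerug} is exactly matched by the cartesian-plus-cofibre data produced above; this is where I would invoke the structural results on $\Cle$ (semi-additivity, \cref{girgjowegfewfw9ef}, and the completeness results \cref{prop:catex finitely complete,ioerjgoiegergwegergwrgwegrwerg}) to legitimise the manipulations.
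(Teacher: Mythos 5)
There is a genuine gap, and it starts with the definition you are aiming at. An excisive square in $\Cle$ (\cref{ugioerguoerug}) does \emph{not} ask the square to be cartesian: it asks (i) that the two horizontal functors be fully faithful and (ii) that the induced functor between their \emph{stable cofibres} $\Cofib^{s}(\iota_{Z})\to\Cofib^{s}(\iota_{X})$ (in the sense of \cref{rgioergieorfrgergergergergerge}) be an equivalence. Your part (a) — identifying $\bV^{G}_{\bC}(X)$ with the pullback $\bV^{G}_{\bC}(Z)\times_{\bV^{G}_{\bC}(Z\cap\cY)}\bV^{G}_{\bC}(\cY)$ — is therefore not what is required, and it is also doubtful as stated: the Glueing Lemma identifies a \emph{sheaf} on $X$ with glued data, but after inverting the morphisms $M\to V^{G}_{*}M$ the mapping spaces in $\bV^{G}_{\bC}(X)$ involve thickenings that propagate across the overlap of $Z$ and $Y_{\ell}$, so they do not decompose as a pullback of the mapping spaces of the pieces (contrast with the coarsely disjoint case in \cref{ergieorogergergergregerge}, where this does work). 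Moreover, even if cartesianness held, it would not imply condition (ii), so the actual content of the proposition would still be untouched.

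Your fully faithfulness step (via \cref{prop:inclusion-fff} and filtered colimits) is correct and matches the paper, but your part (b) is where all the difficulty lives and it is only gestured at. The paper's argument there is to construct an explicit inverse to $\bar i_{*}\colon\Cofib^{s}(\iota_{Z})\to\Cofib^{s}(\iota_{X})$: one descends the restriction functor $\hat i^{*,G}$ to $Z$ through the two localisations, first checking (using the Glueing Lemma together with the bigness of $\cY$, by choosing $Y$ so that $(Z,Y)$ is a $VUV^{-1}$-covering, and \cref{egiwetthtrehtrhethetheth}, \cref{sub-adj}, \cref{ropgkpwoegrewgwregwgregw}) that the generators $M\to V^{G}_{*}M$ of $W^{\eqsm}_{X}$ become equivalences in $\Cofib^{s}(\iota_{Z})$, and then that every morphism whose fibre lies in the essential image of $\iota_{X}$ is killed as well; finally both composites $\bar i^{*}\bar i_{*}$ and $\bar i_{*}\bar i^{*}$ are identified with the identity, the second again via the Glueing Lemma. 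None of this two-step descent and inverse-functor verification appears in your sketch, so as it stands the proposal does not prove $l$-excisiveness.
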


 The proof of \cref{rgijrgoirejgoergergreg} occupies the remainder of this section.

By \cref{eriogwetgwegregwrgregwrg},
 there is a commutative square
 \begin{equation}\label{gvrogpij3ggrgrgrfffo0i3g34g34g34g}
\xymatrix{\Sh^{G,\eqsm}_{\bC}(Z\cap \cY) \ar[r]^-{\hat \iota^{G}_{Z}}\ar[d]_{ {\hat {g}^{G}_{*}}}&\Sh^{G,\eqsm}_{\bC}(Z)  \ar[d]^{ {\hat {i}^{G}_{*}}}\\
\Sh^{G,\eqsm}_{\bC}(\cY) \ar[r]^-{\hat  \iota^{G}_{X} }&\Sh^{G,\eqsm}_{\bC}(X) }
\end{equation}
in $\Cle$, where $ {\hat {g}^{G}_{*}}$ 
is induced by the family of morphisms $(i_{\ell}\colon Z\cap Y_{\ell}\to Y_{\ell})_{\ell\in L}$ using \cref{eriogwetgwegregwrgregwrg},
and $\hat \iota^{G}_{Z}$ and $\hat \iota^{G}_{X}$ are instances of the morphism \eqref{ehrgjkwegrregrewgwgg}.
By   \cref{qergiuqergreqgregwregwr}, the square \eqref{gvrogpij3ggrgrgrfffo0i3g34g34g34g}  induces a square   \begin{equation}\label{frflkrlmgregergreger}
\xymatrix{\bV^{G}_{\bC}(Z\cap \cY) \ar[r]^-{\iota_{Z} }\ar[d]_{ {g_*}}&\bV^{G}_{\bC}(Z) \ar[d]^{i_{*} }\\
\bV^{G}_{\bC}(\cY) \ar[r]^-{\iota_{X} }&\bV^{G}_{\bC}(X) }
\end{equation} 
 in   $\Cle$.
 In order to show \cref{rgijrgoirejgoergergreg}, we must show that 
the     square \eqref{frflkrlmgregergreger} in $\Cle$
 is  excisive.
This amounts to showing that the horizontal functors are fully faithful, and that  the induced morphism between their stable cofibres (see \cref{rgioergieorfrgergergergergerge}) is an equivalence.

Note that
$\iota_{X}$ is the colimit   of the family of functors
${(\bV^{G}_{\bC}(Y_{\ell})\to \bV^{G}_{\bC}(X))_{\ell\in L}}$.
Since the members of this family are fully faithful by \cref{prop:inclusion-fff}, and a filtered colimit   of fully faithful functors is again fully faithful, we conclude that $\iota_{X}$ is fully faithful. Similarly, $\iota_{Z}$ is fully faithful.

The rest of the argument is devoted to the comparison of the stable cofibres.
We define the  {subcategory} $W_{\iota_{X}}$ of $\bV^{G}_{\bC}(X)$ to be
 {the smallest subcategory containing all morphisms with fibres in the essential image of $\iota_{X}$, which is closed under pullbacks and satisfies the two-out-of-three property}.
 {Then \cref{rgioowfwefwefw} shows that} the stable cofibre of $\iota_{X}$  {can be explicitly described as
\[ \Cofib^{s}(\iota_{X}) \simeq \tilde \Sp(\bV_{\bC}^{G}(X)[W_{\iota_{X}}^{-1}])\ .\]
We define the subcategory $W_{\iota_{Z}}$ of $\bV^{G}_{\bC}(Z)$ similarly, and observe that $\Cofib^{s}(\iota_{Z})$ admits an analogous description}.
 
We consider the following diagram
  \begin{equation}\label{gvkjvjejrfreogrjgeiorgergergeg} \xymatrix{
 \Sh^{G,\eqsm}_{\bC}(X) \ar[d]_-{ {\hat {i}^{*,G}} }\ar[r]^-{\ell_{X}}&\bV^{G}_{\bC}(X) \ar[r]^{\ell^{s}_{\iota_{X}}}\ar@{..>}[dr]_(0.35){ {\tilde {i}^{*}} }& \Cofib^{s}(\iota_{X} )\ar@{-->}[d]^-{\bar i^{*} }\\
\Sh^{G,\eqsm}_{\bC}(Z) \ar[r]_-{\ell_{Z}} &\bV^{G}_{\bC}(Z) \ar[r]_-{\ell^{s}_{\iota_{Z}}}&\Cofib^{s}(\iota_{Z} )}
\end{equation}
 in $\Cle$.
At the moment forget the dashed part.
\begin{lem}\label{geroigpergergregreg}
The 
universal property of $\ell_{X}$  provides the dotted arrow $\tilde i^{*}$.
\end{lem}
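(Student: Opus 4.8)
The goal is to produce the dotted functor $\tilde i^*\colon \bV^G_\bC(X)\to \Cofib^s(\iota_Z)$ making the left square plus the lower composition in \eqref{gvkjvjejrfreogrjgeiorgergergeg} commute, using only the universal property of the Dwyer--Kan localisation $\ell_X\colon \Sh^{G,\eqsm}_\bC(X)\to\bV^G_\bC(X)$ (which by \cref{wetiwoegreergwrger} presents $\bV^G_\bC(X)$ as $\Sh^{G,\eqsm}_\bC(X)[W_X^{\eqsm,-1}]$, and which is left-exact). So I want to apply that universal property to the composite left-exact functor
\[ \ell^s_{\iota_Z}\circ\ell_Z\circ\hat i^{*,G}\colon \Sh^{G,\eqsm}_\bC(X)\to\Cofib^s(\iota_Z)\ .\]
The only thing to check is that this composite sends every generator of the labelling $W_X^{\eqsm}$ to an equivalence; then the universal property yields $\tilde i^*$ together with the canonical equivalence $\tilde i^*\circ\ell_X\simeq \ell^s_{\iota_Z}\circ\ell_Z\circ\hat i^{*,G}$, which is exactly the commutativity of the left trapezoid in \eqref{gvkjvjejrfreogrjgeiorgergergeg}.

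\textbf{Key steps.} First, recall the generators of $W_X^{\eqsm}$: they are the morphisms $\iota_U(M)\colon M\to U^G_*M$ for $U\in\cC^{G,\Delta}_X$ and $M\in\Sh^{G,\eqsm}_\bC(X)$. Applying $\hat i^{*,G}$ and using \cref{egiwetthtrehtrhethetheth} (or more simply the identity $\hat i^{*,G}U^G_* \simeq U^Z_{Y,*}\hat i^{*,G}$ up to the harmless insertion of $\hat j^G_*\hat j^{*,G}$ that \cref{egiwetthtrehtrhethetheth} provides, together with the relation $\hat i^{*,G}\hat i^G_*\simeq\id$ of \cref{rqkjgqregergwgwregweg}), the image $\hat i^{*,G}(\iota_U(M))$ is, up to equivalence, a morphism of the form $N\to U^Z_{Z',*}N$ with $N=\hat i^{*,G}M\in\Sh^{G,\eqsm}_\bC(Z)$ and $U_Z:=U\cap(Z\times Z)\in\cC^{G,\Delta}_Z$ — i.e.\ it lies in (the image of) the labelling $W_Z^{\eqsm}$. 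Hence $\ell_Z\circ\hat i^{*,G}$ sends the generators of $W_X^{\eqsm}$ to equivalences in $\bV^G_\bC(Z)$, and therefore $\ell_Z\circ\hat i^{*,G}$ descends along $\ell_X$ to the functor $\tilde i^*$ asserted in the lemma. (This is exactly the argument already used for $\hat f^G_*$ in \cref{iofjowefwfewfewfef}, specialised to the subspace inclusion $i$, where the ``thickening'' bookkeeping is even simpler because no image sets are involved.) The further postcomposition with $\ell^s_{\iota_Z}$ is then automatic.

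\textbf{Main obstacle.} The only subtlety is making the identification $\hat i^{*,G}(M\to U^G_*M)\simeq (N\to U_Z{}^Z_*N)$ precise rather than merely pointwise: one has to verify that the natural transformation $\id\to U^G_*$ on $\Sh^{G,\eqsm}_\bC(X)$ restricts, under $\hat i^{*,G}$, to the natural transformation $\id\to U_{Z,*}^G$ on $\Sh^{G,\eqsm}_\bC(Z)$, so that generators go to generators and not merely to some formally inverted maps. This is where \cref{egiwetthtrehtrhethetheth} does the work — it supplies the coherent comparison $\hat i^{*,G}V^G_*\simeq V_{Y,*}^G\hat i^{*,G}\hat j^G_*\hat j^{*,G}$ — combined with $\hat i^{*,G}\hat i^G_*\simeq\id$ from \cref{rqkjgqregergwgwregweg}; after that, the descent is formal via the universal property of $\ell_X$ from \cref{wetiwoegreergwrger}, and everything is happening inside $\Cle$, so left-exactness of all functors involved is preserved. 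I would write the proof as: (i) apply \cref{egiwetthtrehtrhethetheht} and \cref{rqkjgqregergwgwregweg} to show $\ell_Z\hat i^{*,G}$ inverts the generators of $W_X^{\eqsm}$; (ii) invoke the universal property of $\ell_X$ (\cref{wetiwoegreergwrger}) to obtain $\tilde i^*$ with $\tilde i^*\ell_X\simeq\ell_Z\hat i^{*,G}$; (iii) postcompose with $\ell^s_{\iota_Z}$ to match the diagram \eqref{gvkjvjejrfreogrjgeiorgergergeg}.
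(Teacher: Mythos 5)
There is a genuine gap, and it sits exactly at your claimed ``Key step''. You assert that $\hat i^{*,G}(\iota_U(M))$ is, up to equivalence, a morphism of the form $N\to U^G_{Z,*}N$ with $N=\hat i^{*,G}M$, so that $\ell_Z\circ\hat i^{*,G}$ already inverts the generators of $W_X^{\eqsm}$. But \cref{egiwetthtrehtrhethetheth} gives $\hat i^{*,G}U^G_*\simeq U^G_{Z,*}\,\hat i^{*,G}\hat j^{G}_{*}\hat j^{*,G}$, and the inserted factor $\hat j^{G}_{*}\hat j^{*,G}$ (restriction to the thinning $U(Z)$) is not ``harmless'': the target is $U^G_{Z,*}M'$ with $M'=\hat i^{*,G}\hat j^G_*\hat j^{*,G}M$, which differs from $U^G_{Z,*}\hat i^{*,G}M$ precisely by the part of $M$ living on $Z\setminus U(Z)$, i.e.\ near the coarse boundary of $Z$ in $X$. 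Concretely, take $X=\Z$ (metric structure), $Z=\Z_{\geq 0}$, $\cY$ generated by $\Z_{\leq 0}$, and $M$ a skyscraper sheaf at the point $1$ with nonzero cocompact value; for $U$ of radius $2$ one computes $\hat i^{*,G}M\neq 0$ while $\hat i^{*,G}U^G_*M\simeq 0$, so $\ell_Z\hat i^{*,G}(\iota_U(M))$ is \emph{not} an equivalence in $\bV^G_\bC(Z)$. Hence your step (i) fails, and with it the whole descent: if it were true, $\hat i^{*,G}$ would descend to a functor $\bV^G_\bC(X)\to\bV^G_\bC(Z)$, whereas the lemma (deliberately) only produces a functor into $\Cofib^{s}(\iota_Z)$.

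The missing idea is that the obstruction just described is supported on $Z\cap\cY$ and is therefore killed only after passing to the stable cofibre. This is what the paper's proof does: it first uses the Glueing \cref{wgkwkgrewrgrg} with a member $Y$ of $\cY$ such that $(Z,Y)$ is a suitable covering, notes that $\ell_Z\hat i^{*,G}$ sends the $\hat j^G_*$- and $\hat k^G_*$-terms into the essential image of $\iota_Z$ (so $\ell^s_{\iota_Z}$ annihilates them), and thereby reduces to the map $\hat i^{*,G}[\hat i^G_*\hat i^{*,G}M\to\hat i^G_*\hat i^{*,G}U^G_*M]$. This map is then factored as $\hat i^{*,G}M\xrightarrow{\alpha}M'\xrightarrow{\iota_{U_Z}(M')}U^G_{Z,*}M'$: the second arrow is a genuine generator of $W_Z^{\eqsm}$ (your picture is correct only for this piece), while the first arrow $\alpha$ has fibre supported on $Y\cap U(Z)$, i.e.\ in the essential image of $\iota_Z$ (again via the Glueing Lemma, applied to the covering $(U(Z),Y\cap Z)$ of $Z$), so it becomes invertible only after applying $\ell^s_{\iota_Z}$. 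To repair your proof you must verify that the full composite $\ell^s_{\iota_Z}\ell_Z\hat i^{*,G}$ inverts the generators, which requires this boundary analysis; the universal property of $\ell_X$ alone, applied to $\ell_Z\hat i^{*,G}$, does not suffice.
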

\begin{proof} 
 It suffices to show   that the composition
\[ \tilde i^{*,G}\colon \Sh^{G,\eqsm}_{\bC}(X)\xrightarrow{\hat {i}^{*,G}} \Sh^{G,\eqsm}_{\bC}(Z)\xrightarrow{\ell_{Z}} \bV^{G}_{\bC}(Z)\xrightarrow{\ell^{s}_{\iota_{Z}}}
 \Cofib^{s}(\iota_{Z} )\]
 sends the morphisms in $W^{\eqsm}_{X}$ (see the text before \cref{rtheorthertherthetrhe}) to equivalences.
 
Let $M$ be in $ \Sh^{G,\eqsm}_{\bC}(X)$ and  {let $V$ be an invariant coarse entourage of $X$ containing the diagonal}.
 Then we consider the generator $\iota_{V}(M) \colon M\to V^{G}_{*}M$ of $W^{\eqsm}_{X}$.
 It suffices to   show that $\ell^s_{\iota_Z}\ell_Z\hat i^{*,G}\iota_V(M)$ is an equivalence.

Let $U$ be  {be an invariant coarse entourage of $X$ containing the diagonal}
such that $M\in \Sh^{U,G}_{\bC}(X)$. Using that the family $\cY$ is big,  we can choose a member
$Y$ of $\cY$ such that $(Z,Y)$ is a $VUV{^{-1}}$-covering family of $X$ (see \eqref{comp-def-ent}, \cref{rgiqjrgioqfweewfqewfqewf} {and \cref{ex-excisive-pair}}).
Since $U\subseteq VUV^{-1}$, the pair  $(Z,Y)$ is also a $U$-covering  family, and we have  $V_{*}^{G}M\in \Sh^{VUV{^{-1}},G}_{\bC}(X)$ by \cref{equi-sheaffff}.

We now use the notation introduced in connection with the Glueing \cref{wgkwkgrewrgrg}.
 By  \cref{wgkwkgrewrgrg}, the morphism $\iota_{V}(M)$  is equivalent in $\Sh^{G}_{\bC}(X)$  to the morphism \begin{equation}\label{verij93g3ggr4}
 {\hat {i}^{G}_{*}\hat {i}^{*,G} M \mathop{\times}\limits_{ \hat {k}^{G}_{*}\hat {k}^{*,G} M} \hat {j}^{G}_{*}\hat {j}^{*,G} M   \longrightarrow \hat {i}^{G}_{*}\hat {i}^{*,G}V^{G}_{*}M \mathop{\times}\limits_{ \hat {k}^{G}_{*}\hat {k}^{*,G}V^{G}_{*}M} \hat {j}^{G}_{*}\hat {j}^{*,G}V^{G}_{*}M} \ .
\end{equation}
 
 The functor $\tilde i^{*,G}$ is left-exact and therefore preserves 
the  fibre products in \eqref{verij93g3ggr4}.
The functor $\ell_{Z}  {\hat {i}^{G,*}}$ sends
objects in the image of $ {\hat {k}^{G}_{*}}$ or $ {\hat {j}^{G}_{*}}$ to objects in the image of $\iota_{Z}$ by the commutativity (justified by \cref{ropgkpwoegrewgwregwgregw} and \cref{lem:restrict-eqsm})
of the diagram 
\begin{equation}\label{wergwreggregw5}
\xymatrix{\Sh^{G,\eqsm}_{\bC}(Z\cap \cY) \ar[r]^(0.55){\hat  \iota^{G}_{Z} } &\Sh^{G,\eqsm}_{\bC}(Z) \\
\Sh^{G,\eqsm}_{\bC}(\cY) \ar[u]^{ { {\hat {g}^{*,G}}}}\ar[r]^{\hat  \iota^{G}_{X} }&\Sh^{G,\eqsm}_{\bC}(X) \ar[u]_{ {\hat {i}^{*,G}} }\\\Sh^{G,\eqsm}_{\bC}(Y)\ar[ur]_-{\hat j^{G}_{*}}\ar[u]&}\ .
\end{equation}
Finally by definition, the functor  $\ell^{s}_{\iota_{Z}}$ 
 sends  objects in the image of $ \iota_{Z}$ to zero objects.
 Using the above observations, in $\Cofib^{s}(\iota_{Z} )$
   we get an equivalence 
   \[ {\ell^s_{\iota_Z}\ell_Z\hat i^{*,G}\iota_V(M)}  \simeq \ell^{s}_{\iota_{Z}} \ell_{Z} {\hat {i}^{*,G}[\hat {i}^{G}_{*}\hat {i}^{*,G}}M\to {\hat {i}^{G}_{*}\hat {i}^{*,G}V^{G}_{*}M}]\ . \]
 Using \cref{egiwetthtrehtrhethetheth}, setting $M^{\prime}:=\hat i^{*,G} \hat h^{G}_{*} \hat h^{*,G}M$ for the inclusion $h\colon V(Z)\to X$ and  
 $V_{Z}:=V\cap (Z\times Z)$, we have an equivalence 
  $ \hat {i}^{*,G}V^{G}_{*}M\simeq   V^{G}_{Z,*}M^{\prime}$.  
 
 Using in addition the equivalence $ {\hat {i}^{*,G} \hat {i}^{G}_{*}}\simeq \id$ (\cref{sub-adj})  we 
 get a factorisation of
 $\ell^s_{\iota_Z}\ell_Z\hat i^{*,G}\iota_V(M)$ as
 \begin{equation}\label{vfkneoirqjvoevffevsvd}
\ell^{s}_{\iota_{Z}} \ell_{Z}\hat {i}^{*,G} M\xrightarrow{ {\ell^s_{\iota_Z}\ell_Z\alpha}}  \ell^{s}_{\iota_{Z}} \ell_{Z} M^{\prime}\xrightarrow{ \ell^{s}_{\iota_{Z}} \ell_{Z}\iota_{V_{Z}}(M^{\prime})}  \ell^{s}_{\iota_{Z}} \ell_{Z}V^{G}_{Z,*}M^{\prime}\ ,
\end{equation}
where  {$\alpha \colon \hat i^{*,G} M \to M'$} is induced by the unit $\id\to \hat h^{G}_{*} \hat h^{*,G}$. 

We first observe that  $ \ell^s_{\iota_Z}\ell_{Z}\iota_{V_{Z}}(M^{\prime})$ is an equivalence since $\iota_{V_{Z}}(M^{\prime})\in W^{\eqsm}_{Z}$.

We then argue that  {$\ell^s_{\iota_Z}\ell_Z\alpha$} is an equivalence, too. To this end we show  that the fibre of $ \ell_Z\alpha \colon \ell_{Z}\hat {i}^{*,G} M\to \ell_{Z} M^{\prime}$ belongs to the essential image of $\iota_{Z}$, which implies that $ \ell^{s}_{\iota_{Z}}$ sends this morphism to an equivalence.

Let $l \colon V(Z) \to Z$ denote the inclusion map. Since $h = i \circ l$ and $\hat i^G_* \hat i^{*,G} \simeq \id$, the morphism
\[ \hat i^{*,G}M \to M' \simeq \hat i^{*,G} \hat h^G_* \hat h^{*,G} M \simeq \hat l^G_* \hat l^{*,G} \hat i^{*,G} M \]
is  equivalent to the one induced by the unit $\id \to \hat l^G_* \hat l^{*,G}$.
Arguing similarly as above using \cref{wgkwkgrewrgrg} again and that $(V(Z),Y \cap Z)$ is a $U_Z$-covering family of $Z$, we have an equivalence
\[ \Fib(\hat i^{*,G}M \to \hat l^G_* \hat l^{*,G} \hat i^{*,G} M) \simeq \Fib( \hat m^G_* \hat m^{*,G} \hat i^{*,G} M \to \hat n^G_* \hat n^{*,G} \hat i^{*,G} M)\ ,\]
where $m\colon {Y} \cap Z\to Z$ and $n \colon Y \cap V(Z) \to Z$ are the inclusion maps.
The latter object obviously lies in the essential image of $\iota_Z$.
 
Since both maps in \eqref{vfkneoirqjvoevffevsvd} are equivalences, we conclude
that  {$\ell^s_{\iota_Z}\ell_Z\hat i^{*,G}\iota_V(M)$}
is an equivalence.
\end{proof}

The square \eqref{frflkrlmgregergreger} induces a morphism between stable cofibres
\[ \bar i_{*} \colon \Cofib^{s}(\iota_{Z} ) \to  \Cofib^{s}(\iota_{X} )\ .\]
We now show that it 
is an equivalence.
Our candidate for the inverse functor is the dashed arrow $\bar i^{*}$ in diagram \eqref{gvkjvjejrfreogrjgeiorgergergeg}.
We will obtain $\bar i^{*}$ using the universal property of the morphism $\ell^{s}_{\iota_{X}}$. 
To this end, we note that its target is stable. 
In order to construct $\bar i^{*}$, it therefore suffices to show the following result.

\begin{lem} \label{rgqoigqregqerg} The morphism 
$\tilde i^{*}$ from  \eqref{gvkjvjejrfreogrjgeiorgergergeg} sends the morphisms 
 in $W_{\iota_{X}}$   to
equivalences. 
\end{lem}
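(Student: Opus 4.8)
The plan is to unwind the definitions of $W_{\iota_X}$ and of the stabilised localisation $\ell^s_{\iota_Z}$ and then reduce the claim to a statement about fibres of morphisms. Recall that $W_{\iota_X}$ is generated by the morphisms in $\bV^G_\bC(X)$ whose fibre lies in the essential image of $\iota_X$. Since $\tilde i^* = \ell^s_{\iota_Z}\circ\ell_Z\circ\hat i^{*,G}$ factors through the stable, hence left-exact, functor $\ell^s_{\iota_Z}\ell_Z\hat i^{*,G}$ (note $\hat i^{*,G}$, $\ell_Z$ and $\ell^s_{\iota_Z}$ all preserve finite limits, the first by \cref{lem:restrict-eqsm} and left-exactness of restriction, the second and third by the universal properties of the respective localisations as recorded in \cref{qriooqergegwgrer} and \cref{wetiwoegreergwrger}), it suffices to show that $\tilde i^*$ sends every generator of $W_{\iota_X}$ to an equivalence. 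A left-exact functor into a stable category sends a morphism to an equivalence if and only if it sends its fibre to a zero object; so it suffices to show that $\tilde i^*$ annihilates every object in the essential image of $\iota_X$.

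Concretely, I would take $N$ in $\bV^G_\bC(\cY)$ and show $\tilde i^*(\iota_X N)\simeq 0$, i.e.\ $\ell^s_{\iota_Z}\ell_Z\hat i^{*,G}(\iota_X N)\simeq 0$. Using that $\iota_X$ is the filtered colimit of the subspace inclusions $\bV^G_\bC(Y_\ell)\to\bV^G_\bC(X)$, and that $\hat i^{*,G}$, $\ell_Z$ and $\ell^s_{\iota_Z}$ all preserve filtered colimits (the first is a restriction functor, the others are localisations), we reduce to $N$ lying in the image of some single $\bV^G_\bC(Y_\ell)\to\bV^G_\bC(X)$. Now the key geometric input is the commutative square \eqref{wergwreggregw5}: the composite $\hat i^{*,G}\circ\hat\iota^G_X$ agrees with $\hat\iota^G_Z\circ\hat g^{*,G}$ (this is \cref{ropgkpwoegrewgwregwgregw} applied to the pullback square of subspaces $Z\cap Y_\ell\to Y_\ell$, $Z\to X$, combined with \cref{lem:restrict-eqsm}), so $\hat i^{*,G}$ carries the relevant sheaf on $Y_\ell$ into the essential image of $\hat\iota^G_Z\colon\Sh^{G,\eqsm}_\bC(Z\cap\cY)\to\Sh^{G,\eqsm}_\bC(Z)$, hence after applying $\ell_Z$ into the essential image of $\iota_Z\colon\bV^G_\bC(Z\cap\cY)\to\bV^G_\bC(Z)$. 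But $\ell^s_{\iota_Z}$ is precisely the stabilised localisation at the labelling $W_{\iota_Z}$ whose defining property makes it kill objects in the essential image of $\iota_Z$; therefore $\ell^s_{\iota_Z}\ell_Z\hat i^{*,G}(\iota_X N)\simeq 0$, as desired.

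Assembling these observations, the proof is: a generator of $W_{\iota_X}$ has fibre of the form $\iota_X N$; $\tilde i^*$ is left-exact with stable target, so it sends that generator to an equivalence iff it sends $\iota_X N$ to zero; by the colimit reduction and the square \eqref{wergwreggregw5} we have $\ell_Z\hat i^{*,G}(\iota_X N)$ in the essential image of $\iota_Z$; and $\ell^s_{\iota_Z}$ vanishes there by construction. This handles all generators, and since the morphisms inverted by a left-exact localisation are saturated (two-out-of-six, closure under composition and cofibres), $\tilde i^*$ inverts all of $W_{\iota_X}$.

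The main obstacle I anticipate is the careful bookkeeping needed to justify that the various reductions are legitimate: that $\hat i^{*,G}$, $\ell_Z$, and $\ell^s_{\iota_Z}$ genuinely commute with the filtered colimit defining $\iota_X$ (for $\ell_Z$ and $\ell^s_{\iota_Z}$ this uses that the localisation functors $\Loc$ are left adjoints, as in the proof of \cref{prop:u-continuous}), and that the essential image of $\iota_Z$ is stable under the relevant finite limits so that ``fibre in the essential image of $\iota_X$'' is correctly transported to ``fibre in the essential image of $\iota_Z$'' after applying $\ell_Z\hat i^{*,G}$. The geometric content — the compatibility square \eqref{wergwreggregw5} — is already available from \cref{ropgkpwoegrewgwregwgregw}, so the work is organisational rather than conceptual.
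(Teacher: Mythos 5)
Your proof is correct and follows essentially the paper's own argument: reduce to the generators of $W_{\iota_X}$, use left-exactness of $\tilde i^{*}$ together with stability of $\Cofib^{s}(\iota_{Z})$ to reduce to showing that $\tilde i^{*}$ annihilates objects in the essential image of $\iota_{X}$, lift such an object to a sheaf over (a member of) $\cY$, and push it through the square \eqref{wergwreggregw5} (via \cref{ropgkpwoegrewgwregwgregw} and \cref{lem:restrict-eqsm}) into the essential image of $\iota_{Z}$, which $\ell^{s}_{\iota_{Z}}$ kills by construction. The only cosmetic differences from the paper are that you reduce to a single $Y_{\ell}$ rather than working with $\Sh^{G,\eqsm}_{\bC}(\cY)$ directly, and you lift only the fibre (not the morphism $f$ itself) to the sheaf level, taking care—as you implicitly do with ``the relevant sheaf on $Y_{\ell}$''—that $\hat i^{*,G}$ is applied to a sheaf representative rather than to $\iota_{X}N$ itself.
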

\begin{proof}
  Let $f$ be a morphism in $\bV^{G}_{\bC}(X)$ such that 
$\Fib(f)$ belongs to the essential image of $\iota_{X}$.
Since $\ell_{X}$ is left-exact,  {there exists} by \cref{ih43iugu34g34g3}
a morphism $\tilde f$ in $\Sh^{G,\eqsm}_{\bC}(X)$ such that $\ell_{X}(\tilde f) $ is equivalent to $f$.
In the following, we use the commutativity of the established  part of the diagram \eqref{gvkjvjejrfreogrjgeiorgergergeg}.
We  have an equivalence
 \[ {\tilde i^{*}}(f)\simeq \ell^{s}_{\iota^{G}_{Z}}\ell^{G}_{Z}{\hat {i}^{*,G}}(\tilde f)\ .\]
By the assumption on $f$,   there   exists an object   $P$   of
$\Sh^{G,\eqsm}_{\bC}(\cY)$ such that
\[ \iota_{X}\ell_{\cY}(P)\simeq  \Fib(f) \ .\]
On the one hand, since $\tilde i^{*}$ is left-exact, we have the equivalence  
\[ {\tilde i^{*}}\iota_{X}\ell_{\cY}(P)\simeq   {\tilde i^{*}}  \Fib(f)\simeq  \Fib( {\tilde i^{*}}f) \ .\]
On the other hand, we have an equivalence
\begin{eqnarray*}
 \tilde i^{*}\iota_{X}\ell_{\cY}(P) &\stackrel{\eqref{ergoijergerg}}{\simeq} &{\tilde i^{*}}\ell_{X}\hat \iota^{G}_{X} (P) \\
&\stackrel{{ \eqref{gvkjvjejrfreogrjgeiorgergergeg}}}{\simeq} & \ell^{s}_{\iota_{Z}}\ell_{Z}{\hat {i}^{*,G}}\hat \iota^{G}_{X}(P) \\
&\stackrel{\eqref{wergwreggregw5}}{\simeq}& \ell^{s}_{\iota_{Z}}\ell_{Z}\hat \iota^{G}_{Z}{\hat g^{*,G}}(P) \\
&\stackrel{\eqref{ergoijergerg}}{\simeq}&\ell^{s}_{\iota_{Z}}  \iota_{Z}\ell_{Z}{\hat g^{*,G}}(P) \\
&\stackrel{ }{\simeq} &0\ .
\end{eqnarray*}
 We now use that $\Cofib^{s}(\iota_{Z} )$  is stable in order to conclude that $\tilde i^{*}(f) $ is an equivalence.
 \end{proof}

\begin{proof}[Proof of \cref{rgijrgoirejgoergergreg}]
From \cref{rgqoigqregqerg} we get a further factorisation 
\begin{equation}\label{gergpojoij34t0934jt34t34t}
\bar i^{*} \colon \Cofib^{s}(\iota_{X}) \to \Cofib^{s}(\iota_{Z} )\ ,
\end{equation}
of $\tilde i^{*}$, namely
the dashed arrow in \eqref{gvkjvjejrfreogrjgeiorgergergeg}.
By construction, we have the equivalence
\[ \bar i^{*}\circ \bar i_{*}\simeq \id_{ {\Cofib^{s}(\iota_{Z})} } \ .\]
The transformation
$\id_{\Sh^{G}_{\bC}(X)}\to  {\hat {i}^{G}_{*} \hat {i}^{*,G}}$ furthermore induces a transformation
\[ \id_{\Cofib^{s}(\iota_{Z})}\to  {\bar{i}_{*}  \bar{i}^{*}}\ .\]
It remains to  show that the latter is an equivalence.
Let $M$ be an object of $\Sh_{\bC}^{G }(X)$.
We can choose {an invariant entourage $U$ of $X$ containing the diagonal}
such that {$M$ is a $U$-sheaf on $X$}
and a member $Y$ of $\cY$ such that $(Y,Z)$ is a $U$-covering family of $X$.
 By \cref{wgkwkgrewrgrg}, we have an equivalence 
\[ M\simeq {\hat {i}^{G}_{*}\hat {i}^{*,G}(M)\times_{\hat {k}^{G}_{*}\hat {k}^{*,G}(M)} \hat {j}^{G}_{*}\hat {j}^{*,G}(M)}\ .\]
We must show that $\ell^{s}_{\iota_{X}}  \ell_{X}$ sends the   projection
\[ {\hat {i}^{G}_{*}\hat {i}^{*,G}(M)\times_{\hat {k}^{G}_{*}\hat {k}^{*,G}(M)} \hat {j}^{G}_{*}\hat {j}^{*,G}(M)\to \hat {i}^{G}_{*}\hat {i}^{*,G}(M)} \]
to an equivalence in $ {\Cofib^{s}(\iota_{X})} $. This is clear since $\ell^{s}_{\iota^{G}_{X}}  \ell^{G}_{X}$ preserves  fibre products    and sends the objects
$ {\hat {j}^{G}_{*}\hat {j}^{*,G}(M)}$ and $ {\hat {k}^{G}_{*}\hat {k}^{*,G}(M)}$ (which belong to the essential image of $\hat  \iota^{G}_{X}$)  to zero objects in $ {\Cofib^{s}(\iota_{X})} $.
\end{proof}

\subsection{Strong additivity}\label{oijqrogrgrgrgrewgwergrwegrwg}
Let $X$ be  {a $G$-bornological coarse space},
and let $(Y,Z)$ be a partition of $X$ into invariant subsets. 
 We say that $(Y,Z)$ is a coarsely disjoint decomposition if 
 $Y$ and $Z$ are both unions of collections of coarse components.

Let $E\colon G\BC\to \bM$ be a functor with target an $\infty$-category admitting an initial object $\emptyset$.    

\begin{ddd}\label{rgiowergerwgregwergergwerg}
$E$ is  $\pi_{0}$-excisive if for every {$G$-bornological coarse space} $X$
with a coarsely disjoint decomposition $(Y,Z)$ the square
\[\xymatrix{
 \emptyset\ar[r]\ar[d] & E(Z)\ar[d] \\
 E(Y)\ar[r] & E(X)
}\]
is a pushout square.
\end{ddd}

Note that $\CLL$ and $\Cle$ are pointed  {by the}
category $0:=\Delta^{0}$. Let $\bC$ be in $\Fun(BG,\CLL)$.  
\begin{lem}\label{ergieorogergergergregerge}
The functors $\Sh^{G}_{\bC}$, $\Sh^{G,\eqsm}_{\bC}$,  and $\bV^{G}_{\bC}$ are $\pi_{0}$-excisive.
\end{lem}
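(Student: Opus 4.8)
The plan is to verify $\pi_0$-excision for each of the three functors by reducing everything to the level of presheaves, where the coarsely disjoint decomposition splits into a literal product. First I would recall the setup: for $X$ in $G\BC$ with a coarsely disjoint decomposition $(Y,Z)$, the key combinatorial fact is that $[Y]\cap[Z]=\emptyset$ means that for every coarse entourage $U$ of $X$ (which we may take $G$-invariant and diagonal-containing, by cofinality), a $U$-bounded set lies entirely inside $Y$ or entirely inside $Z$; moreover $\pi_0(X)$ is the disjoint union of the $G$-invariant subsets $\pi_0(Y)$ and $\pi_0(Z)$ (using \cref{t4hgiorthgetrhtrhtheht}). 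Consequently, for every $U$ in $\cC_X^{G,\Delta}$, restriction along the two inclusions induces an equivalence $\Sh^U_\bC(X)\simeq \Sh^{U_Y}_\bC(Y)\times \Sh^{U_Z}_\bC(Z)$ in $\CLLL$: a $U$-sheaf on $X$ is determined by its evaluations on $U$-bounded sets, which are partitioned according to whether they live in $Y$ or $Z$, and any pair of sheaves glues because the union $(Y,Z)$ is a $U$-covering family (\cref{wgkwkgrewrgrg}, with the fiber product over $\hat k^G_*\hat k^{*,G}M$ trivial since $Y\cap Z=\emptyset$ and $M(\emptyset)\simeq 0$). Passing to the colimit over $U\in\cC_X^{G,\Delta}$ — which is compatible with the analogous colimits on $Y$ and $Z$ by cofinality of the maps $U\mapsto U_Y$, $U\mapsto U_Z$ — and then to $\lim_{BG}$, we obtain $\Sh^G_\bC(X)\simeq \Sh^G_\bC(Y)\times \Sh^G_\bC(Z)$ in $\CLL$.

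Next I would observe that in a pointed $\infty$-category with finite limits, a product decomposition $\bD\simeq \bD_1\times\bD_2$ is exactly the statement that the square
\[
\xymatrix{0\ar[r]\ar[d] & \bD_2\ar[d]\\ \bD_1\ar[r] & \bD}
\]
is a pushout in $\Cle$ (respectively $\CLL$): the two inclusions $\bD_1\to\bD$, $\bD_2\to\bD$ are the fully faithful ``component'' embeddings, and $\bD$ is generated by their images under finite limits — indeed every object of $\bD$ is the product (a finite limit) of its two components. This is precisely the description of pushouts along fully faithful functors in these categories; a pushout $\bD_1\sqcup_0\bD_2$ in $\Cle$ is computed as the full subcategory of $\bD_1\times\bD_2$ generated under finite limits by the images of $\bD_1$ and $\bD_2$, which is all of $\bD_1\times\bD_2$. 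So the product decomposition of $\Sh^G_\bC$ immediately gives $\pi_0$-excision for $\Sh^G_\bC$.

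For $\Sh^{G,\eqsm}_\bC$ the same argument goes through once I check that the equivalence $\Sh^G_\bC(X)\simeq\Sh^G_\bC(Y)\times\Sh^G_\bC(Z)$ restricts to an equivalence on equivariantly small objects. This is where \cref{lem:restrict-eqsm} and \cref{eriogwetgwegregwrgregwrg} come in: the inclusions $Y\to X$ and $Z\to X$ are $G$-invariant subspace inclusions, so $\hat i^{*,G}$ and $\hat j^{*,G}$ preserve equivariant smallness, and conversely a sheaf $M$ on $X$ whose restrictions to $Y$ and $Z$ are equivariantly small is itself equivariantly small — because any $H$-bounded subset $W$ of $X$, being $H$-invariant and (after intersecting with components) coarsely bounded, is contained in $Y$ or in $Z$, hence $M(W)$ equals the corresponding restriction evaluated on $W$. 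Thus $\Sh^{G,\eqsm}_\bC(X)\simeq\Sh^{G,\eqsm}_\bC(Y)\times\Sh^{G,\eqsm}_\bC(Z)$ in $\Cle$, giving $\pi_0$-excision there. Finally, for $\bV^G_\bC$ I would note that on a coarsely disjoint decomposition the localisation decouples: $\cC_X^{G,\Delta}$ maps cofinally to $\cC_Y^{G,\Delta}\times\cC_Z^{G,\Delta}$, the thinning operator $U^G_*$ respects the product decomposition (thinning is computed componentwise since $U$-bounded sets lie in one piece), and hence the generating set $W_X^\eqsm$ corresponds to $W_Y^\eqsm\times W_Z^\eqsm$ under the product equivalence. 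Since localisation commutes with finite products of labelled left-exact $\infty$-categories, we get $\bV^G_\bC(X)=W_X^{\eqsm,-1}\Sh^{G,\eqsm}_\bC(X)\simeq \bV^G_\bC(Y)\times\bV^G_\bC(Z)$, and $\pi_0$-excision follows as before.

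The main obstacle I anticipate is the bookkeeping in the $\bV^G_\bC$ case: one must be careful that the Dwyer–Kan localisation (equivalently the $\CLL$- or $\Cle$-localisation, by \cref{qriooqergegwgrer} and \cref{wetiwoegreergwrger}) genuinely commutes with the product, i.e. that inverting $W_Y^\eqsm$ in one factor and $W_X^\eqsm$-relations don't secretly mix the factors via some zig-zag passing through a sheaf supported on both $Y$ and $Z$. This is ruled out precisely by the calculus-of-fractions formula \eqref{qwefqwfoij11111}: $\Map_{\bV^G_\bC(X)}(\ell M,\ell N)=\colim_{U}\Map_{\Sh^G_\bC(X)}(M,U^G_*N)$, and both $M\mapsto M$ and $U^G_*$ are componentwise, so the mapping space splits as a product of the mapping spaces in $\bV^G_\bC(Y)$ and $\bV^G_\bC(Z)$ whenever $M,N$ decompose — and the mixed terms vanish since $M(W)\simeq 0$ for $W$ meeting the wrong component. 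Once this is spelled out, the three statements are uniform corollaries of one product decomposition lemma.
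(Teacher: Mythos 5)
Your overall strategy coincides with the paper's: decompose the sheaf categories along $(Y,Z)$ using the Glueing Lemma \cref{wgkwkgrewrgrg}, identify the coproduct in $\Cle$ with the product via semi-additivity (\cref{girgjowegfewfw9ef} --- you essentially re-derive this fact; you could simply cite it), and control the localisation with the mapping-space formula \eqref{qwefqwfoij11111}. The only real difference is the packaging of the $\bV^{G}_{\bC}$ step: the paper does not prove that localisation commutes with products of labelled left-exact $\infty$-categories. Instead it applies its excision result \cref{rgijrgoirejgoergergreg} to the complementary pair $(Z,(Y))$ to obtain an excisive square, and then upgrades it to a coproduct decomposition by checking essential surjectivity (your observation as well) and fully faithfulness, the latter via \cref{prop:inclusion-fff} plus the vanishing of the cross mapping spaces $\Map_{\Sh^{G}_{\bC}(X)}(\hat i^{G}_{Y,*}M,V^{G}_{*}\hat i^{G}_{Z,*}N)\simeq *$ for sheaves of disjoint support --- which is exactly the content of your ``main obstacle'' paragraph. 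Your route can be made to work (note that the generators of $W^{\eqsm}_{X}$ only give pairs with the \emph{same} entourage on both pieces; pad with the diagonal to see that $(f,\id)$ and $(\id,g)$ are also generators), but it ultimately reduces to the same mapping-space computation, so the two arguments are essentially the same.

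There is, however, one step that is false as written: in the $\Sh^{G,\eqsm}_{\bC}$ part you claim that an $H$-bounded subset $W$ of $X$ is contained in $Y$ or in $Z$. $H$-boundedness is a bornological condition (\cref{riugfhiqeurgfqwefqwefewf}), not a coarse one, and a bounded subset may meet both pieces: take $X=\{0,1\}_{min,max}$ with $Y=\{0\}$, $Z=\{1\}$; this is a coarsely disjoint decomposition, yet the whole space is bounded. What is true (and what you use correctly in the first part) is that $U$-bounded subsets, for $U$ a coarse entourage, lie in a single coarse component. The fix is immediate and is what the paper's proof amounts to: $W\cap Y$ and $W\cap Z$ are again $H$-bounded, the pair $(W\cap Y,W\cap Z)$ is a $U$-covering family of $W$ for every coarse entourage $U$, so the sheaf condition gives $M(W)\simeq M(W\cap Y)\times M(W\cap Z)$, and a finite limit of cocompact objects is cocompact (cf.\ the argument in the proof of \cref{rqwpokopfqwfewfewfqfqefqfqwef}). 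With this correction the product equivalence restricts to equivariantly small objects and your argument goes through.
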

\begin{proof} We consider  {a $G$-bornological coarse space} $X$
with a coarsely disjoint decomposition $(Y,Z)$.    

We first consider the case of  $\Sh^{G}_{\bC}$.
We have an isomorphism  of posets
\[ s\colon \cP_{X}\xrightarrow{\cong} \cP_{Y}\times \cP_{Z} \ ,\quad B\mapsto (B\cap Y,B\cap Z)\ .\]
 Since $\bC$ is left-exact, we have a product functor  $\times\colon \bC\times \bC\to \bC$. It induces the functor denoted by the same symbol in
 \begin{equation}\label{regvervlknweklvwevewv} 
 \xymatrix{ \Sh^{G}_{\bC}(Y)\times \Sh^{G}_{\bC}(Z)\ar@{..>}[r]^-{m}\ar[d]^{\times}&\Sh^{G}_{\bC}(X)\ar[d]\\\lim_{BG}\Fun(\cP_{Y}^{\op}\times \cP_{Z}^{\op},\bC)\ar[r]^-{s^{*}}&\lim_{BG}\PSh_{\bC}(X)}\ ,
  \end{equation}
  where  the dotted arrow $m$ is obtained by checking that the {composition along the lower left corner}
  takes values in the subcategory of sheaves. As a consequence of \cref{wgkwkgrewrgrg}, we see 
 that the functor   $m$   is an equivalence with inverse 
   induced by the restrictions of  sheaves along $\cP_{Y}\to \cP_{X}$ and $\cP_{Z}\to \cP_{X}$.
  We now use that $\CLL$  is semi-additive (the $\CLL$-version of \cref{girgjowegfewfw9ef}) in order to get the first equivalence in
 \begin{equation}\label{f3fmfpko4f3f34f3f} \Sh_{\bC}^{G}(Y)\sqcup  \Sh_{\bC}^{G}(Z) \xrightarrow{\simeq}  \Sh_{\bC}^{G}(Y)\times  \Sh_{\bC}^{G}(Z) \xrightarrow{m,\simeq}  \Sh_{\bC}^{G}(X) \ , \end{equation}
  showing that $\Sh^{G}_{\bC}$ is $\pi_{0}$-excisive.

In order to get the result for $\Sh^{G,\eqsm}_{\bC}$ we must check that $m$ preserves equivariantly small sheaves.  
 But this is clear since (by an inspection of  \cref{qreigoqrgwergwrgre}) the  square
   \[\xymatrix{\Sh_{\bC}^{G,\eqsm}(Y)\sqcup  \Sh_{\bC}^{G,\eqsm}(Z) \ar[r]\ar[d]& \Sh_{\bC}^{G,\eqsm}(X)\ar[d]\\\Sh_{\bC}^{G}(Y)\sqcup  \Sh_{\bC}^{G}(Z) \ar[r]^-{m}& \Sh_{\bC}^{G}(X)}\]
   is a pullback.

 The family $(Y)$   consisting of the single member $Y$ is a big family (\cref{trhrthgwregwgwrgwrg}). Then $(Z,(Y))$ is a complementary pair (\cref{erpogpergergergegerge}).
By  \cref{rgijrgoirejgoergergreg} and using that $\bV^{G}_{\bC}(\emptyset)\simeq 0$  and $\bV_{\bC}^{G}(Y)\simeq \bV^{G}_{\bC}((Y))$,
we obtain an excisive square \[\xymatrix{0\ar[r]\ar[d]&\bV_{\bC}^{G}(Z) \ar[d]^{i_{Z,*}}\\\bV^{G}_{\bC}(Y)\ar[r]^{i_{Y,*}}&\bV^{G}_{\bC}(X)} \]
in  $\Cle$, where   $i_{Y}\colon Y\to X$ and $i_{Z}\colon Z\to X$  are the inclusions. 
 It induces a morphism
 \begin{equation}\label{trwhiowtherhwehwerh}
 \bV_{\bC}^{G}(Y)\sqcup \bV_{\bC}^{G}(Z) \to  \bV_{\bC}^{G}(X)  \ .
\end{equation}
We want to show that this morphism is an equivalence in $\Cle$. To this end
we show that this morphism is fully faithful and essentially surjective. 
 
 The map \eqref{trwhiowtherhwehwerh} is the localisation of \eqref{f3fmfpko4f3f34f3f}. 
 This shows that  \eqref{trwhiowtherhwehwerh} is essentially surjective.
  
 We already know from    \cref{prop:inclusion-fff} that the   components  $i_{Y,*} $ and $i_{Z,*}$ of the morphism  \eqref{trwhiowtherhwehwerh} are   fully faithful.
 It remains to show that if $  M $ is in  $  \Sh_{\bC}^{G}(Y)$ and $N$ is in $  \Sh_{\bC}^{G}(Z)$, then
\[ \Map_{\bV_{\bC}^{G}(X)}(i_{Y,*}\ell_{Y}M,i_{Z,*}\ell_{Y}N)\simeq *\ .\]
 This is obvious  from the  formula \eqref{qwefqwfoij11111}, noting that 
 \[ \Map_{\Sh_{\bC}^{G}(X)}(\hat i^{G}_{Y,*}M,V^{G}_{*}\hat i^{G}_{Z,*}N)\simeq * \]
 for all  {invariant coarse entourages $V$ of $X$ which contain the diagonal},
 since the sheaves $\hat i^{G}_{Y,*}M$ and $V^{G}_{*}\hat i^{G}_{Z,*}N$ have disjoint support.
\end{proof}

 Consider a $\pi_{0}$-excisive functor $E\colon G\BC\to \bM$.
 Let $X$ be  {a $G$-bornological coarse space}
 with a coarsely disjoint  {decomposition} $(Y,Z)$.
 Then we can define a projection map as the composition 
\begin{equation}\label{ergioregjewrgrgwregwergwergr}
p_{Y}\colon E(X)\stackrel{\pi_{0}-\text{exc.}}{\simeq} E(Y)\sqcup E(Z) \xrightarrow{q_{Y}} E(Y)\ ,
\end{equation}
 where $q_{Y}$ is classified by the morphisms $\id_{E(Y)}\colon E(Y)\to E(Y)$ and $0\colon E(Z)\to E(Y)$. 

Let $(X_{i})_{i\in I}$ be a family  {of $G$-bornological coarse spaces}.
 
 \begin{ddd}[{\cite[Ex.2.16]{equicoarse}}]  \label{fqiofwefewfeqwfeqwf}
 We define the free union
$\bigsqcup_{i\in I}^{\free}X_{i}$
to be the following $G$-bornological coarse space:
\begin{enumerate}
\item The  underlying $G$-set  is the disjoint union of $G$-sets $\bigsqcup_{i\in I}X_{i}$.
\item  \label{r3fqu9z3fpu8934f13314f1} The coarse structure   is generated by entourages $\bigcup_{i\in I} U_{i}$ for all families $(U_{i})_{i\in I}$, where $U_{i}$ is in $\cC_{X_{i}}$   for every $i$ in $I$.
\item \label{r3fqu9z3fpu8934f13314f} The bornology is generated by the set
$\{B\mid B\in \cB_{X_i}, i\in I\}$ of subsets of $\bigsqcup_{i\in I}X_{i}$. \qedhere
\end{enumerate}
\end{ddd}

\begin{rem}
Note that in general the free union $\bigsqcup_{i\in I}^{\free}X_{i}$ of the family $(X_{i})_{i\in I}$ differs from the coproduct $\coprod_{i}X_{i}$ in $G\BC$. The underlying $G$-sets of the coproduct and the free union coincide. But
the coarse structure of the coproduct
is generated by the set $\{U\mid  U\in \cC_{X_{i}}, i\in I \}$. It is in general smaller than the coarse structure of the free union. Furthermore, the bornology of the coproduct is generated by the sets $\bigcup_{i\in I} B_{i}$ for all families $(B_{i})_{i\in I}$ with $B_{i}$ in $\cB_{X_{i}}$. In general, this bornology is bigger than the one of the free union. The identity of the underlying sets is a morphism
$\coprod_{i\in I}X_{i}\to \bigsqcup_{i\in I}^{\free}X_{i}$ in $G\BC$.
 \end{rem}

If  $E\colon G\BC\to \bM$ is  $\pi_{0}$-excisive, then for
 every $i_{0}$ in $I$ we have the coarsely disjoint decomposition of $\bigsqcup_{i\in I}^{\free}X_{i}$ into the invariant subsets $X_{i_0}$ and 
$\bigsqcup_{i\in I\setminus\{i_{0}\}}X_{i}$,  and therefore a projection (see \eqref{ergioregjewrgrgwregwergwergr}) 
\[ p_{i_{0}}\colon E(\bigsqcup_{i\in I}^{\free}X_{i})\to E( X_{i_{0}})\ .\]
The family of projections $(p_{i})_{i\in I}$ provides a map
\begin{equation}\label{weiowwgwtgrgrw}
E(\bigsqcup_{i\in I}^{\free}X_{i})\xrightarrow{(p_{i})_{i\in I}} \prod_{i\in I} E(X_{i})\ .
\end{equation}

Consider a  pointed  $\infty$-category $\bM$ admitting  all products indexed by sets and a  $\pi_{0}$-excisive functor
 $E\colon G\BC\to \bM$.
\begin{ddd}[{\cite[Ex.~3.12]{equicoarse}}] \label{wergiugewrgergergerwgwregw}
$E$ is strongly additive if the maps \eqref{weiowwgwtgrgrw} are equivalences for all   families $(X_{i})_{i\in I}$  {of $G$-bornological coarse spaces}.
\end{ddd}

Let $\bC$ be in $\Fun(BG,\CL)$. By \cref{ergieorogergergergregerge} and  \cref{prop:catex finitely complete} (and its $\CLL$-version), \cref{wergiugewrgergergerwgwregw} applies to the functors $\Sh_{\bC}^{G}$, $\Sh^{G,\eqsm}_{\bC}$, and $\bV^{G}_{\bC}$.
 \begin{prop}\label{ergiowjoetgergreergwergwreg}
 The functors $\Sh_{\bC}^{G}$, $\Sh^{G,\eqsm}_{\bC}$, and $\bV^{G}_{\bC}$ are strongly additive.
 \end{prop}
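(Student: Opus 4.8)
The plan is to establish strong additivity for the three functors in turn, reducing each case to the previous one, and in each step reducing to a concrete computation on the level of presheaves or sheaves of bounded subsets. Throughout, let $(X_i)_{i\in I}$ be a family in $G\BC$ and write $X:=\bigsqcup_{i\in I}^{\free}X_i$. Observe first that for every bounded subset $B$ of $X$, the definition of the free union (in particular \cref{fqiofwefewfeqwfeqwf} \eqref{r3fqu9z3fpu8934f13314f}) shows that $B$ is contained in $X_{i}$ for a single $i$ in $I$; more generally, if $H$ is a subgroup of $G$ and $Y$ is an $H$-bounded subset of $X$, then $Y$ lies inside $X_i$ for some $i$ (since the generators of $\cB_X$ live in single summands and $Y=HB$ for such a generator $B$). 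This locality of boundedness is what drives everything.

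First I would treat $\Sh^G_{\bC}$. An entourage of $X$ that contains the diagonal is, up to enlarging, of the form $\bigcup_{i\in I}U_i$ with $U_i\in\cC^{G,\Delta}_{X_i}$ by \cref{fqiofwefewfeqwfeqwf} \eqref{r3fqu9z3fpu8934f13314f1}; for such a $U$, a subset of $X$ is $U$-bounded if and only if it is $U_i$-bounded inside a single $X_i$. Hence the poset $\cP^{U\bd}_{X}$ of $U$-bounded subsets is the disjoint union (as a poset with no relations between the pieces) of the posets $\cP^{U_i\bd}_{X_i}$. Since $\Sh^{U,G}_{\bC}(X)$ is, by the argument in \cref{rjqorepqerfewregregwreg25} and the sheafification \cref{qrgioergrewwergwgregrweg}, equivalent to the limit over $BG$ of $\Fun(\cP^{U\bd,\op}_X,\bC)$ cut out by the $U$-sheaf condition, and a functor out of a disjoint union of posets is the same as a product of functors, one gets $\Sh^{U,G}_{\bC}(X)\simeq\prod_{i}\Sh^{U_i,G}_{\bC}(X_i)$. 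Passing to the colimit over $\cC^G_X$ (using that the cofinal $U$'s are exactly the $\bigcup_i U_i$) and using that $\CLL$ is semi-additive so that the product agrees with the coproduct (the $\CLL$-version of \cref{girgjowegfewfw9ef}), this identifies $\Sh^G_{\bC}(X)$ with $\prod_i\Sh^G_{\bC}(X_i)$; one then checks the $\pi_0$-projections $p_i$ of \eqref{ergioregjewrgrgwregwergwergr} agree with the projections onto the factors, which gives that \eqref{weiowwgwtgrgrw} is an equivalence.

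For $\Sh^{G,\eqsm}_{\bC}$ I would argue that the equivalence $\Sh^G_\bC(X)\simeq\prod_i\Sh^G_\bC(X_i)$ just constructed restricts to equivariantly small objects in both directions. A sheaf $M$ on $X$ corresponds to the family $(M_i)$ of its restrictions to the summands. If $M$ is equivariantly small, each $M_i$ is equivariantly small because the $H$-bounded subsets of $X_i$ are $H$-bounded in $X$ (using \cref{lem:restrict-eqsm}). Conversely, if every $M_i$ is equivariantly small, then for an $H$-bounded subset $Y$ of $X$, which by the locality observation lies in a single $X_i$, the sheaf condition gives $M(Y)\simeq M_i(Y)\in\bC^{H,\omega}$, so $M$ is equivariantly small. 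Hence $\Sh^{G,\eqsm}_\bC(X)\simeq\prod_i\Sh^{G,\eqsm}_\bC(X_i)$ compatibly with the $\pi_0$-projections, which is strong additivity for $\Sh^{G,\eqsm}_{\bC}$.

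Finally, for $\bV^G_{\bC}$ I would pass to localisations. The point is that the labelled category $(\Sh^{G,\eqsm}_\bC(X),W^{\eqsm}_X)$ decomposes as the product of the $(\Sh^{G,\eqsm}_\bC(X_i),W^{\eqsm}_{X_i})$: a generator $M\to U^G_*M$ of $W^{\eqsm}_X$ with $U=\bigcup_i U_i$ is, componentwise, the generator $M_i\to (U_i)^G_*M_i$, because $U(-)$ acts summand-wise (here $U(Y)$ for $Y\subseteq X_i$ equals $U_i(Y)$, and is empty outside $X_i$). Since localisation of left-exact $\infty$-categories, being a left adjoint (the $\Cle$-version of \eqref{qwefhiqef3rfqwrfewf4q}), commutes with products — more carefully, one uses that $\Cle$ is semi-additive so finite products and coproducts coincide, and for infinite index sets one checks directly via the mapping-space formula \eqref{qwefqwfoij11111} that $\Map_{\bV^G_\bC(X)}$ decomposes as the corresponding product, since the colimit over $\cC^{G,\Delta}_X$ indexed by the $U=\bigcup_iU_i$ and the mapping spaces $\Map_{\Sh^G_\bC(X)}(M,U^G_*N)\simeq\prod_i\Map_{\Sh^G_\bC(X_i)}(M_i,(U_i)^G_*N_i)$ are compatible (with $\prod_i$ of filtered colimits over the $\cC^{G,\Delta}_{X_i}$ being the filtered colimit over the product when reindexed cofinally by the diagonal as in the proof of \cref{prop:u-continuous}). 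This yields $\bV^G_\bC(X)\simeq\prod_i\bV^G_\bC(X_i)$, and tracing through the identification of the $\pi_0$-projections shows \eqref{weiowwgwtgrgrw} is an equivalence.

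The main obstacle I expect is the last step: infinite products do not in general commute with the Dwyer–Kan localisation, so one cannot simply say ``$\bV^G$ sends products to products.'' The safe route is to compute mapping spaces in $\bV^G_\bC(X)$ directly using the calculus-of-fractions formula \eqref{qwefqwfoij11111} of \cref{wetiwoegreergwrger}, and to verify that the indexing poset $\cC^{G,\Delta}_X$ can be replaced, cofinally, by $\prod_i\cC^{G,\Delta}_{X_i}$ (via $(U_i)_i\mapsto\bigcup_iU_i$), together with the fact that a product of filtered colimits of spaces, reindexed over the product of the filtered posets, computes the filtered colimit of the product — exactly the kind of cofinality argument already used for $u$-continuity. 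Combined with the product decomposition of the mapping spaces in $\Sh^G_\bC$ coming from disjoint supports (as in the last paragraph of the proof of \cref{ergieorogergergergregerge}), this gives the mapping-space identification, and essential surjectivity is inherited from the already-established equivalence $\Sh^{G,\eqsm}_\bC(X)\simeq\prod_i\Sh^{G,\eqsm}_\bC(X_i)$ together with essential surjectivity of the localisation functors.
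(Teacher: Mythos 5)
Your overall strategy is the one the paper follows: identify $\Sh^G_\bC$ of the free union with the product of the $\Sh^G_\bC(X_i)$, show this restricts to equivariantly small sheaves, and then handle $\bV^G_\bC$ by computing mapping spaces via the calculus-of-fractions formula \eqref{qwefqwfoij11111}, using cofinality of the entourages $\bigcup_i U_i$ and the fact that filtered colimits distribute over products of spaces, with essential surjectivity inherited from the $\eqsm$-case. (The paper's version of the first step uses the single block entourage $\bigsqcup_i X_i\times X_i$ and presheaf categories rather than your colimit over the cofinal $\bigcup_iU_i$; that difference is cosmetic. Your appeal to semi-additivity of $\CLL$ is beside the point, though: strong additivity is about the map \eqref{weiowwgwtgrgrw} to the product, and no identification of products with coproducts is needed or available for infinite $I$.)

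There is, however, one genuine error, and you flag it as the observation that ``drives everything'': it is not true that a bounded subset of $\bigsqcup^{\free}_{i\in I}X_i$ lies in a single summand. By \cref{fqiofwefewfeqwfeqwf} the bornology is \emph{generated} by the bounded subsets of the summands, hence is closed under finite unions across different summands; consequently a bounded set $B$, and likewise an $H$-bounded set $Y=HB$ (\cref{riugfhiqeurgfqwefqwefewf}), need only meet \emph{finitely many} of the $X_i$, not exactly one. (Your analogous claim for $U$-boundedness with $U=\bigcup_iU_i$ is correct, since that is a coarse condition; the bornological condition behaves differently.) This breaks the converse direction of your $\Sh^{G,\eqsm}_\bC$ step as written, where you conclude $M(Y)\simeq M_i(Y)$. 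The repair is exactly what the paper does: the set $I_B=\{i\in I\mid X_i\cap B\neq\emptyset\}$ is finite, $(Y\cap X_i)_{i\in I_B}$ is a coarsely disjoint family of $H$-bounded sets covering $Y$, and the $\pi_0$-sheaf condition gives $M(Y)\simeq\prod_{i\in I_B}M_{|X_i}(Y\cap X_i)$, a finite product of objects of $\bC^{H,\omega}$ and hence cocompact. With that correction your argument goes through and coincides with the paper's proof.
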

\begin{proof}
	Let $(X_{i})_{i\in I}$ be a family  {of $G$-bornological coarse spaces and let}
	$X:=\bigsqcup_{i\in I}^{\free}X_{i}$.

We start with the functor $\Sh^{G}_{\bC}$. We consider the entourage $U:=\bigsqcup_{i\in I}X_{i}\times X_{i}$ (note that $U$  {is not a coarse entourage of $X$}
in general). Then we have an equivalence
\[ \Sh^{G,U}_{\bC}(X)\simeq \prod_{i\in I} \PSh^{G}_{\bC}(X_{i}) \]
given by the family of restrictions along the family of inclusions $(X_{i}\to X)_{i\in I}$.
By an inspection of the definition of $\Sh^{G}_{\bC}$ and \cref{fqiofwefewfeqwfeqwf} \eqref{r3fqu9z3fpu8934f13314f1}
we conclude that this  equivalence  restricts to an equivalence 
\[ \Sh^{G}_{\bC}(X)\simeq \prod_{i\in I} \Sh^{G}_{\bC}(X_{i})\ .\]
Next we consider  $\Sh^{G,\eqsm}_{\bC}$. Using \cref{lem:restrict-eqsm},
we get a commutative square
 \[\xymatrix{\Sh^{G,\eqsm}_{\bC}(X)\ar[r]\ar[d]&\prod_{i\in I} \Sh^{G,\eqsm}_{\bC}(X_{i})\ar[d]\\\Sh^{G}_{\bC}(X)\ar[r]^-{\simeq}&\prod_{i\in I} \Sh^{G}_{\bC}(X_{i})}\ .\]
We must show the the upper horizontal arrow is an equivalence.
Since the vertical functors are also fully faithful, it suffices to show that it is essentially surjective.

  Let $M$ be in $\Sh^{G}_{\bC}(X)$  such that 
  {$M_{|X_{i}}$ is an equivariantly small sheaf on $X_i$ for every $i$ in $I$}.
Then we must show that  {$M$ is also equivariantly small}.
Let $H$ be a subgroup of $G$, and let $Y$ be an $H$-bounded subset of $X$.
By \cref{riugfhiqeurgfqwefqwefewf}, we can find a bounded subset $B$ of $X$ such that $Y=HB$.
In view of \cref{fqiofwefewfeqwfeqwf} \eqref{r3fqu9z3fpu8934f13314f}, the set $I_{B}:=\{i\in I\mid X_{i}\cap B\not=\emptyset\}$ is finite. In view of  \cref{fqiofwefewfeqwfeqwf} \eqref{r3fqu9z3fpu8934f13314f1}, we see that $(Y\cap X_{i})_{i\in I_{B}}$ is a coarsely disjoint family of $H$-bounded subsets such that  $Y=\bigcup_{i\in I_{B}} Y\cap X_{i}$.
We conclude that  \[ M(Y) \simeq \prod_{i \in I_B} M_{|X_i}(Y \cap X_i) \in \bC^{H,\omega}\]
since the product is finite and each factor  {is cocompact in $\bC^H$ by assumption}.

We conclude that
 \begin{equation}\label{ergiojgoiegwregwreg}
\Sh^{G,\eqsm}_{\bC}(X)\simeq \prod_{i\in I} \Sh^{G,\eqsm}_{\bC}(X_{i})\ .
\end{equation}
Finally, we consider $\bV_{\bC}^{G}$.
In order to show that \begin{equation}\label{owierjgweriogwergwergregegwgr} \bV^{G}_{\bC}(X)\to \prod_{i\in I} \bV^{G}_{\bC}(X_{i})\end{equation}
  is an equivalence, we show that this functor is fully faithful and essentially surjective. 
In fact, essential surjectivity immediately follows from the case of $\Sh^{G,\eqsm}_{\bC}$.
In order to prove fully faithfulness, we use the formula for mapping spaces provided by \cref{wetiwoegreergwrger}.
 
Let $M,N$ be  {equivariantly small sheaves on $X$}.
Then we write $M_{i},N_{i}$ for the restrictions of $M,N$ to $X_{i}$.
For an {invariant coarse entourage $V$ of $X$ which contains the diagonal},
we set $V_{i}:=V\cap (X_{i}\times X_{i})$ in $\cC^{G,\Delta}_{X_{i}}$.
Then we have the chain of equivalences
\begin{eqnarray*}
\lefteqn{\Map_{\bV^{G}_{\bC}(X)}(\ell_{X} M,\ell_{X}  N)}&&\\
&\stackrel{ \eqref{qwefqwfoij11111} }{\simeq}& \colim_{V\in \cC^{G,\Delta}_{X}} \Map_{\Sh_{\bC}^{G}(X)}(M,V^{G}_{*}N)\\
&\stackrel{\eqref{ergiojgoiegwregwreg}}{\simeq}&\colim_{V\in \cC^{G,\Delta}_{X}}  
\prod_{i\in I} \Map_{\Sh_{\bC}^{G,\eqsm}(X_{i})}(M_{i},V^{G}_{i,*}N_{i})\\&\stackrel{!}{\simeq}&\prod_{i\in I} \colim_{V_{i}\in \cC^{G,\Delta}_{X_{i}}}  
  \Map_{\Sh_{\bC}^{G,\eqsm}(X_{i})}(M_{i},V^{G}_{i,*}N_{i})\\&\stackrel{ \eqref{qwefqwfoij11111}}{\simeq}&
\prod_{i\in I} \Map_{\bV^{G}_{\bC}(X_{i})}(\ell_{X_{i}} M_{i},   \ell_{X_{i}} N_{i} )\ ,
 \end{eqnarray*}
 where for the marked equivalence we use  the definition of the coarse structure of the free union (\cref{fqiofwefewfeqwfeqwf} \eqref{r3fqu9z3fpu8934f13314f1}) and the fact that filtered colimits distribute over products in spaces  (see \cref{wergewgregwergw} and \cref{rewgoiu0erogwegwergwrgw}).
 This equivalence shows that \eqref{owierjgweriogwergwergregegwgr} is fully faithful.
\end{proof}

\section{ {Constructing coarse homology theories}}\label{wiofoqrefwefeqfewfqewf}

This section describes constructions of equivariant coarse homology theories from the 
functor $\bV_{\bC}^{G}$ studied in the previous section. The first possibility is to compose this functor with a homological functor as in \cref{qrevoiqrjoirqfcwqecq}. The resulting coarse homology  {theory} lacks the additional property of continuity  {which we introduce in \cref{geriogjergerg}}. 
In \cref{gioegwgwrgrgreggwregwrg} we therefore first apply the construction of {\em forcing continuity} in order to define  an improved functor  $\bV^{G,c}_\bC$. Another path to an equivariant coarse homology theory pursued in \cref{sec:orbit-theory}
is to  apply the  continuous version $\bV_{\bC}^{c}$ of the functor   $\bV_\bC$  to $G$-bornological coarse spaces and to apply $\colim_{BG}$ to the resulting left-exact $\infty$-categories with $G$-action.
The resulting functor  will be denoted by   $\bV^{c}_{\bC,G}$.
In \cref{sec:homol} we recall  the notion of an equivariant coarse homology theory and show  that both $\bV^{G,c}_\bC$ and $\bV^c_{\bC,G}$ induce equivariant coarse homology theories upon  composition with a homological functor. Finally, in
\cref{sec:calculations} we determine   the values of $\bV^{G,c}_\bC$ and $\bV^c_{\bC,G}$ on certain $G$-bornological coarse spaces arising from transitive $G$-set.

\subsection{Forcing continuity}\label{gioegwgwrgrgreggwregwrg}
 Let $X$ be  {a $G$-bornological coarse space},
 and let $F$ be a subset of $X$.
 
 \begin{ddd}[{\cite[Def.~5.1]{equicoarse}}]\label{def:locfin}
  $F$ is  locally finite if for every  {bounded subset $B$ of $X$}
  the set $B\cap F$ is finite.
 \end{ddd}
 We let $\cF(X)$ denote the poset of invariant locally finite subsets  of $X$.
 
 {Let $E\colon G\BC\to \bM$ be a functor with a target admitting filtered colimits.
 \begin{ddd}\label{geriogjergerg}
 $E$ is continuous if the canonical morphism
 	\[ \colim_{F \in \cF(X)} E(F) \to E(X) \]
 	is an equivalence for every  {$G$-bornological coarse space} $X$.
 \end{ddd}
 
 Let
 \[ i\colon G\BC^{\mb}\to G\BC \]
 be the inclusion of the full subcategory of $G$-bornological coarse spaces which have the minimal bornology.

 \begin{lem}\label{lem:forcects}
  The left Kan extension $E^c$ of $E \circ i$ along $i$  \begin{equation*}
 \xymatrix{G\BC^{\mb}\ar[r]^-{E \circ i}
 	 \ar[d]_-{i}&\bM \\
 G\BC\ar@{..>}[ru]_-{E^{c}}&}\ ,
 \end{equation*}
  exists and is  a continuous functor.
  The functor $E$ is continuous if and only if the canonical transformation $E^c \to E$ is an equivalence.
 \end{lem}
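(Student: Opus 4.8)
The plan is to realize $E^c$ concretely via the pointwise left Kan extension formula and then exploit the structure of the indexing category $(G\BC^{\mb})_{/X}$. First I would observe that for $X$ in $G\BC$, the comma category $i_{/X}$ has as objects the morphisms $F \to X$ in $G\BC$ with $F$ having the minimal bornology; since such a morphism is a proper controlled equivariant map and $F$ has the minimal bornology, properness forces $F \to X$ to be injective with locally finite image (every bounded subset of $X$ pulls back to a bounded, hence finite, subset of $F$, and $F$ is covered by finite subsets). Conversely every invariant locally finite subset $F \subseteq X$, equipped with the induced coarse structure and the minimal bornology, defines such an object. Thus $\cF(X)$ (with the subspace structures) is cofinal in $i_{/X}$, and moreover $\cF(X)$ is filtered since a finite union of invariant locally finite subsets is again invariant and locally finite. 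Since $\bM$ admits filtered colimits, the pointwise formula
\[ E^c(X) = \colim_{(F \to X) \in i_{/X}} E(F) \simeq \colim_{F \in \cF(X)} E(F) \]
exists, so $E^c$ is well-defined as the left Kan extension, and by construction $E^c(X) \simeq E^c i (X) = E i (X) = E(X)$ for $X$ already in $G\BC^{\mb}$ (the identity $X \to X$ is then final in $\cF(X)$).

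Next I would verify that $E^c$ is continuous in the sense of \cref{geriogjergerg}. For a general $X$ in $G\BC$, the locally finite invariant subsets of $X$ already have the minimal bornology, so $\cF(X)$ as a subposet of the $G\BC$-slice over $X$ agrees with $\cF$ of the space $X$ itself; more precisely, for $F$ in $\cF(X)$ the bornological coarse space $F$ and its own poset of invariant locally finite subsets $\cF(F)$ have $F$ as final object (since $F$ has the minimal bornology, $F$ is itself locally finite in $F$), whence $E^c(F) \simeq E(F)$. One then has a canonical cofinal inclusion relating $\cF(X)$ to the diagram computing $E^c$, giving
\[ \colim_{F \in \cF(X)} E^c(F) \simeq \colim_{F \in \cF(X)} E(F) \simeq E^c(X)\ , \]
which is exactly the continuity condition. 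The one subtlety here is confirming that the indexing is the same on both sides — that is, that $\cF$ of a locally finite space is just its poset of invariant locally finite subsets with final object the whole space — which is immediate from \cref{def:locfin} once one notes a space with minimal bornology is locally finite in itself.

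Finally, for the last clause I would argue both directions. If the canonical transformation $E^c \to E$ is an equivalence, then $E \simeq E^c$ is continuous by the previous paragraph. Conversely, if $E$ is continuous, then for every $X$ the map $\colim_{F \in \cF(X)} E(F) \to E(X)$ is an equivalence; but the left-hand side is precisely $E^c(X)$ by the pointwise formula, and one checks the comparison map $E^c(X) \to E(X)$ supplied by the universal property of the Kan extension (equivalently, the counit) agrees with the canonical colimit map, so $E^c(X) \to E(X)$ is an equivalence for all $X$, i.e. $E^c \to E$ is an equivalence of functors.

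The main obstacle I anticipate is the bookkeeping in the cofinality claim: one must check carefully that $\cF(X)$, viewed as a full subposet of the comma category $i_{/X}$ via the subspace structures, is genuinely cofinal — i.e. that every object $F \to X$ of $i_{/X}$ admits a map to an object coming from an invariant locally finite subset, and that the relevant slice categories are weakly contractible (which here follows from filteredness, since the image of $F$ under a proper map out of a minimal-bornology space is invariant locally finite, and any two such are dominated by their union). Everything else is a formal manipulation of pointwise Kan extension formulas and the universal property, together with the already-established fact that minimal-bornology spaces are fixed by the construction.
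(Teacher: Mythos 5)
Your argument is correct and is essentially the paper's proof: you identify $E^{c}(X)$ with $\colim_{F\in\cF(X)}E(F)$ via cofinality of $\cF(X)\to (G\BC^{\mb})_{/X}$ together with filteredness of $\cF(X)$, deduce continuity of $E^{c}$ from the fact that $F$ is final in $\cF(F)$ for a minimal-bornology space $F$, and obtain the ``if and only if'' from the pointwise description of the canonical transformation $E^{c}\to E$. One small slip: properness does not force a morphism $F\to X$ out of a minimal-bornology space to be injective (its fibres are merely finite), but this is harmless, since your cofinality argument via the image $f(F)$ -- which is invariant and locally finite and receives a map from $F$ over $X$ -- does not use injectivity.
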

 \begin{proof}
  The left Kan extension of $E \circ i$ exists if and only if the colimit
  \[ \colim_{Y\in (G\BC^{\mb})_{/X}}E(Y) \]
 exists in $\bM$ for all $G$-bornological coarse spaces $X$.
 It is easy to see that the functor
 \[ \cF(X)\to (G\BC^{\mb})_{/X},\quad F \mapsto (F \to X) \]
 is cofinal. Since $\cF(X)$ is filtered and filtered colimits in $\bM$ exist by assumption, we conclude that $E^{c}$ exists.
 Furthermore, we have an equivalence  \begin{equation}\label{wergwergwegwregwrewg}
E^{c}(X)\simeq \colim_{F \in \cF(X)} E(F) \ .
\end{equation} 
The canonical transformation $E^c \to E$ is given pointwise by the canonical morphism
 \begin{equation*}
E^{c}(X)\stackrel{\eqref{wergwergwegwregwrewg}}{\simeq}  \colim_{F \in \cF(X)} E(F) \to E(X)\ .
\end{equation*} 
  Hence $E$ is continuous if and only if  $E^{c}\to E$ is an equivalence.
 Since $i$ is fully faithful, we have $E^c \circ i \simeq E \circ i$. It follows that $(E^c)^c \simeq E^c$, so $E^c$ is continuous. 
 \end{proof}

 \begin{ddd}\label{gerklgjerlgergergergergerg}
 	We say that the functor $E^{c}$ is obtained from $E$ by forcing continuity.
 \end{ddd}
 
In the following, we show that if a functor $E^{c}$ is obtained from $E$ by forcing continuity, then it inherits various properties from $E$.

Recall   \cref{rgiherogiergregreg} of coarse invariance.
\begin{lem}\label{etgiowetgregrwegergwr}
If $E$ is coarsely invariant, then $E^{c}$ is coarsely invariant.
\end{lem}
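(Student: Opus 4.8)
The plan is to use the explicit colimit formula \eqref{wergwergwegwregwrewg} for $E^{c}$ together with the characterisation of coarse invariance from \cref{rewkgowegrerfrewfwr}, namely that it suffices to show $E^{c}(f)\simeq E^{c}(g)$ for any two close morphisms $f,g\colon X\to X'$ in $G\BC$. First I would reduce to a statement about locally finite subsets: a morphism $f\colon X\to X'$ in $G\BC$ restricts, for every $F$ in $\cF(X)$, to a proper map $F\to X'$, but $f(F)$ need not be locally finite in $X'$. However, since $f$ is proper and $F$ is locally finite and invariant, $f(F)$ is contained in some invariant locally finite subset of $X'$ — indeed one checks directly that $f(F)$ itself is locally finite: for $B'$ in $\cB_{X'}$ the set $f^{-1}(B')$ is bounded, so $f^{-1}(B')\cap F$ is finite, hence $B'\cap f(F)\subseteq f(f^{-1}(B')\cap F)$ is finite. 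Thus $f$ induces a map of posets $\cF(X)\to \cF(X')$, $F\mapsto f(F)$ (or rather a cofinal-up-to-the-indexing functor), compatible with the restrictions of $E$.

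Next I would observe that if $f,g\colon X\to X'$ are close, then for every $F$ in $\cF(X)$ the two restricted maps $f|_F, g|_F\colon F\to X'$ (landing in $f(F)\cup g(F)$, which is again locally finite and invariant) are close in $G\BC$: closeness is witnessed by $(f\times g)(\diag(X))\in \cC_{X'}$, and intersecting with $(f(F)\cup g(F))\times(f(F)\cup g(F))$ shows the restrictions are close as maps into that subspace. Since $E$ is coarsely invariant, $E$ applied to these two restrictions agree up to the canonical equivalence. Passing to the colimit over $F\in\cF(X)$ using \eqref{wergwergwegwregwrewg} then yields $E^{c}(f)\simeq E^{c}(g)$, which by \cref{rewkgowegrerfrewfwr} proves that $E^{c}$ is coarsely invariant.

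The main obstacle I anticipate is bookkeeping the functoriality cleanly: one must exhibit the comparison $E^{c}(f)\simeq E^{c}(g)$ as induced by a natural transformation of diagrams over $\cF(X)$, and to do so one needs the homotopy between $E$ applied to close maps to be natural in the subspace. A clean way around this is to note that, by the definition of the left Kan extension $E^{c}$ as a functor on all of $G\BC$, the value $E^{c}(f)$ is already assembled from the restrictions $E(f|_F)$ in a functorial way; so it is enough to produce, for each $F$, the equivalence $E(f|_F)\simeq E(g|_F)$ coherently — and this coherence is exactly what \cref{rgiherogiergregreg} (coarse invariance of $E$, via the universal projection $\{0,1\}_{max,max}\otimes -\to -$) provides, since the homotopy between close maps factors through that single universal morphism and is therefore automatically natural. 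Alternatively, and perhaps most economically, I would simply invoke that $\cF(X)$ is cofinal in $(G\BC^{\mb})_{/X}$ (shown in the proof of \cref{lem:forcects}) and deduce coarse invariance of $E^{c}$ formally from the fact that left Kan extension along $i\colon G\BC^{\mb}\to G\BC$ preserves the property of inverting close pairs — since $i$ sends close maps in $G\BC^{\mb}$ to close maps in $G\BC$ and every close pair in $G\BC$ is detected on locally finite subspaces. Either route is short; the only genuine content is the elementary verification that $f$ proper forces $f(F)$ locally finite, and that closeness restricts to subspaces.
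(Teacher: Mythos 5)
Your reduction via \cref{rewkgowegrerfrewfwr} and the elementary verifications (closeness restricts to subspaces, $f(F)$ is locally finite when $f$ is proper) are fine, but the decisive step — passing from the pointwise equivalences $E(f|_{F})\simeq E(g|_{F})$ to $E^{c}(f)\simeq E^{c}(g)$ — is exactly the coherence problem you yourself flag, and neither of your proposed fixes closes it as stated. Saying that the homotopy ``factors through the single universal morphism and is therefore automatically natural'' is not yet an argument: to use it you must either (i) already know that $E^{c}$ inverts the projection $\{0,1\}_{max,max}\otimes X\to X$, which is precisely the statement being proved, or (ii) actually carry out the comparison at the level of $\cF(X)$-indexed diagrams, i.e.\ compare the two cocones $F\mapsto (E(F)\to E^{c}(X'))$ through the cylinder diagram $F\mapsto \{0,1\}_{max,max}\otimes F$, viewed in the slice over $X'$ via the map assembled from $f$ and $g$ (controlled exactly because $f$ and $g$ are close), and then identify $E(j_{0}^{F})$ with $E(j_{1}^{F})$ naturally in $F$ by inverting the pointwise equivalence $E(p_{F})$ in the functor category. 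That can be done, but it is the whole content of the lemma and it is absent from your sketch. Your ``most economical'' alternative — that left Kan extension along $i$ ``preserves the property of inverting close pairs'' because closeness is detected on locally finite subspaces — is not a formal fact; it is a restatement of the lemma.

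The paper sidesteps the coherence issue entirely by verifying \cref{rgiherogiergregreg} directly: it suffices to show that $E^{c}(\{0,1\}_{max,max}\otimes X)\to E^{c}(X)$ is an equivalence, and the single geometric input is that the subsets $\{0,1\}\times F$ for $F$ in $\cF(X)$ are cofinal in $\cF(\{0,1\}_{max,max}\otimes X)$ (a subset of the cylinder is locally finite if and only if its two slices are). Combined with \eqref{wergwergwegwregwrewg} this rewrites $E^{c}(\{0,1\}_{max,max}\otimes X)$ as $\colim_{F\in\cF(X)}E(\{0,1\}_{max,max}\otimes F)$, and the projections $\{0,1\}_{max,max}\otimes F\to F$ form an honest natural transformation of $\cF(X)$-diagrams which coarse invariance of $E$ turns into a pointwise, hence colimitwise, equivalence — no coherence has to be supplied by hand. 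Note also that your closing assessment misplaces the content: neither ``$f(F)$ is locally finite'' nor ``closeness restricts to subspaces'' enters the paper's proof, and neither closes the gap in yours; the essential point is the cofinality statement above (or, on your route, the explicit cocone comparison through the cylinder diagram).
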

\begin{proof}
 Let $X$ be {a $G$-bornological coarse space}.
 We must show that the projection $\{0,1\}_{max,max}\otimes X\to X$ induces an equivalence
 \[ E^{c}(\{0,1\}_{max,max}\otimes X)\to E^{c}(X)\ .\]
The collection of   subsets
$\{0,1\}\times F$ of $\{0,1\}\times X$ for   $F$ in $\cF(X)$ is cofinal  in $\cF(\{0,1\}_{max,max}\otimes X)$. Therefore,  we get the second equivalence in the following chain:
\begin{eqnarray*}
E^{c}(\{0,1\}_{max,max}\otimes X)&\stackrel{\eqref{wergwergwegwregwrewg}}{\simeq}&\colim_{F^{\prime}\in\cF(\{0,1\}_{max,max}\otimes X)  } E( F^{\prime} )\\
&\simeq& \colim_{F\in\cF(X)  }E(\{0,1\}_{max,max}\otimes F)\\
&\stackrel{!}{\simeq}&  \colim_{F\in\cF(X)  } E( F)\\&\stackrel{\eqref{wergwergwegwregwrewg}}{\simeq}& E^{c}(X)\ .
\end{eqnarray*}
 The equivalence marked by $!$   follows from the coarse invariance of $E$.
\end{proof}

Recall \cref{rgiuerogergergergre} of  $u$-continuity.
\begin{lem}\label{rewgoijerg9rwup29igwg}
 If $E$ is $u$-continuous, then $E^{c}$ is $u$-continuous.
\end{lem}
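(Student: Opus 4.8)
The plan is to imitate the proof of \cref{etgiowetgregrwegergwr} verbatim, replacing the use of coarse invariance of $E$ by $u$-continuity of $E$ at the relevant step. Let $X$ be in $G\BC$ and let $U$ be in $\cC_X^G$; recall that $X_U$ has the same underlying $G$-set and bornology as $X$, so that in particular $\cF(X_U)=\cF(X)$ as posets, and for $F$ in $\cF(X)$ the induced space $F_U$ (i.e.\ $F$ with the restriction of the coarse structure generated by $U$) is exactly $(X_U)$ restricted to $F$. This identification is what lets the colimits over $\cF$ and over $\cC^G$ be interchanged.

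First I would write the chain of equivalences, exactly parallel to the one in the proof of \cref{etgiowetgregrwegergwr}:
\begin{eqnarray*}
\colim_{U\in \cC^{G}_{X}} E^{c}(X_{U})
&\stackrel{\eqref{wergwergwegwregwrewg}}{\simeq}& \colim_{U\in \cC^{G}_{X}}\colim_{F\in\cF(X_{U})} E(F_{U})\\
&\simeq& \colim_{U\in \cC^{G}_{X}}\colim_{F\in\cF(X)} E(F_{U})\\
&\simeq& \colim_{F\in\cF(X)}\colim_{U\in \cC^{G}_{X}} E(F_{U})\\
&\stackrel{!}{\simeq}& \colim_{F\in\cF(X)} E(F)\\
&\stackrel{\eqref{wergwergwegwregwrewg}}{\simeq}& E^{c}(X)\ ,
\end{eqnarray*}
where the third equivalence uses that filtered colimits commute with filtered colimits (both $\cC_X^G$ and $\cF(X)$ are filtered), the equivalence marked by $!$ uses the $u$-continuity of $E$ applied to each $F$ together with the cofinality of the map $\cC_{F}^{G}\to \cC_{X}^{G}$ induced by intersection (so that $\colim_{U\in\cC_X^G}E(F_U)\simeq \colim_{V\in\cC_F^G}E(F_V)\simeq E(F)$), and the first and last equivalences are the defining formula \eqref{wergwergwegwregwrewg} for $E^c$. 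One should also note, as in \cref{etgiowetgregrwegergwr}, that the canonical morphism $\colim_{U} E^c(X_U)\to E^c(X)$ agrees with this composite, so that establishing the composite is an equivalence proves $u$-continuity of $E^c$.

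The only genuinely technical point — and hence the step I would expect to require the most care — is the $!$-equivalence: one has to check that the colimit over $\cC_X^G$ of $E(F_U)$ computes $E(F)$ for a locally finite invariant $F$. This is where the cofinality of $\cC_F^G\hookrightarrow \cC_X^G$ (via $U\mapsto U\cap(F\times F)$, already used in the proof of \cref{gsnsioghetg}) and the hypothesis of $u$-continuity of $E$ enter; everything else is a routine rearrangement of filtered colimits, justified by the fact that $\CLL$ (and $\Cle$) admit filtered colimits computed in $\CATi$ (as used in \cref{prop:u-continuous}), or more generally by the standing assumption that $\bM$ admits filtered colimits. So the proof reads:

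\begin{proof}
Let $X$ be in $G\BC$ and $U$ in $\cC_X^G$. The spaces $X_U$ and $X$ have the same underlying $G$-set and bornology, so $\cF(X_U)=\cF(X)$, and for $F$ in $\cF(X)$ we write $F_U$ for $F$ equipped with the coarse structure induced from $X_U$. For fixed $F$ the map $\cC_F^G\to \cC_X^G$, $U\mapsto U\cap(F\times F)$, is cofinal, so the $u$-continuity of $E$ gives $\colim_{U\in\cC_X^G}E(F_U)\simeq \colim_{V\in\cC_F^G}E(F_V)\simeq E(F)$. Using the formula \eqref{wergwergwegwregwrewg} for $E^c$ and that filtered colimits commute with filtered colimits, we obtain the chain of equivalences
\[ \colim_{U\in\cC^{G}_{X}}E^{c}(X_{U})\simeq \colim_{U}\colim_{F\in\cF(X)}E(F_{U})\simeq \colim_{F\in\cF(X)}\colim_{U}E(F_{U})\simeq \colim_{F\in\cF(X)}E(F)\simeq E^{c}(X)\ ,\]
and one checks that the composite agrees with the canonical morphism $\colim_{U}E^{c}(X_{U})\to E^{c}(X)$. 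Hence $E^{c}$ is $u$-continuous.
\end{proof}
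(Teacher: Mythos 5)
There is a genuine gap at the step you yourself single out as the technical heart of the argument, namely the equivalence marked by $!$. You claim that $\colim_{U\in\cC^{G}_{X}}E(F_{U})\simeq\colim_{V\in\cC^{G}_{F}}E(F_{V})$ follows from the cofinality of the map induced by intersection, $U\mapsto U\cap(F\times F)$ (which, incidentally, goes from $\cC^{G}_{X}$ to $\cC^{G}_{F}$, not the other way around). But the diagram $U\mapsto E(F_{U})$ over $\cC^{G}_{X}$ is \emph{not} the restriction along this map of the diagram $V\mapsto E(F_{V})$ over $\cC^{G}_{F}$: by your own definition, $F_{U}$ carries the coarse structure induced from $X_{U}$, i.e.\ the restriction to $F$ of the structure generated by $U$ on all of $X$, and this is in general strictly larger than the structure generated by $U\cap(F\times F)$ on $F$ — for instance $(U\circ U)\cap(F\times F)$ contains pairs of points of $F$ linked by a $U$-chain passing through $X\setminus F$, which need not lie in any composition of $U\cap(F\times F)$ with itself. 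So cofinality of the intersection map does not identify the two colimits. The comparison in the other direction does not help either: along the inclusion $\cC^{G}_{F}\hookrightarrow\cC^{G}_{X}$ the two diagrams do agree, but that inclusion is not cofinal, since an entourage $V\subseteq F\times F$ can never contain a general $U$ in $\cC^{G}_{X}$.

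The paper closes exactly this gap by using $u$-continuity of $E$ a \emph{second} time, applied to each of the spaces $F_{X_{U}}$ (not only to $F=F_{X}$): this rewrites $E(F_{X_{U}})$ as $\colim_{V\in\cC^{G}_{F_{X_{U}}}}E(F_{V})$, and then one checks that the map from the pair poset $\{(U,V)\mid U\in\cC^{G}_{X},\ V\in\cC^{G}_{F_{X_{U}}}\}$ to $\cC^{G}_{F_{X}}$, $(U,V)\mapsto V$, is cofinal, which collapses the double colimit to $\colim_{V\in\cC^{G}_{F_{X}}}E(F_{V})\simeq E(F)$. Your overall strategy (rewrite via \eqref{wergwergwegwregwrewg}, use $\cF(X_{U})=\cF(X)$, interchange the filtered colimits, then treat each $F$ separately) is the same as the paper's, and the conclusion of the $!$-step is true; but the justification you give for it is insufficient and needs to be replaced by this two-stage application of $u$-continuity together with the cofinality of the pair-poset map. (A small side remark: the cofinality of $U\mapsto U\cap(F\times F)$ that you invoke appears in the proof of \cref{prop:inclusion-fff}, not in \cref{gsnsioghetg}, but this is immaterial to the main issue.)
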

\begin{proof}
Let $X$ be  {a $G$-bornological coarse space}.
We must show that the canonical morphism
\[ \colim_{U\in \cC^{G}_{X}} E^{c}(X_{U})\to E^{c}(X) \]
is an equivalence. 
This follows from the following chain of equivalences: 
\begin{eqnarray*} \colim_{U\in \cC^{G}_{X}} E^{c} (X_{U})&\stackrel{\eqref{wergwergwegwregwrewg}}{\simeq}&
\colim_{U\in \cC^{G}_{X}} \colim_{F\in\cF(X) } E(F_{X_{U}})\\
&\simeq&  \colim_{F\in\cF(X) } \colim_{U\in \cC^{G}_{X}} E(F_{X_{U}})\\
&\stackrel{!}{\simeq}&   \colim_{F\in\cF(X) } \colim_{U\in \cC^{G}_{X}} \colim_{V\in \cC^{G}_{F_{X_{U}}}}E(F_{V})\\&\stackrel{!!}{\simeq}&
 \colim_{F\in\cF(X) } \colim_{V\in \cC^{G}_{F_{X}}}  E(F_{V})\\
&\stackrel{!}{\simeq}&  \colim_{F\in\cF(X) }     E(F_{X} )\\
&\stackrel{\eqref{wergwergwegwregwrewg}}{\simeq}& E^{c}(X)\ .
\end{eqnarray*}
Here $F_{X_{U}}$ denotes the $G$-bornological coarse space $F$ with the structure induced from $X_{U}$. 
For the equivalences marked with $!$ we use that $E$ is $u$-continuous. For $!!$ we use that
\[ \{(U,V)\:|\: U\in \cC_{X}^{G}\ , V\in \cC^{G}_{F_{X_{U}}}\}\to \cC_{F_{X}}^{G}\ , \quad (U,V)\mapsto V\]
 is cofinal.
\end{proof}

 Let $\bM$ be a semi-additive $\infty$-category  which in addition admits all small filtered colimits, and consider a 
 functor  $E\colon G\BC\to \bM$. 
 Recall  \cref{rt98uerg9rt342trg32432f}  of a flasqueness-preserving functor. 

\begin{lem}\label{ergoiuer9gowergergreg}
If $E$ preserves flasqueness, then so does $E^{c}$.
\end{lem}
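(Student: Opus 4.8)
The plan is to reduce the statement for $E^c$ to the corresponding statement for $E$, exploiting the formula $E^c(X) \simeq \colim_{F \in \cF(X)} E(F)$ from \eqref{wergwergwegwregwrewg} and the fact that $\bM^{\fl}$ is closed under filtered colimits by definition. Let $X$ be in $G\BC$ be flasque, and let $f\colon X\to X$ implement its flasqueness as in \cref{rgiojgogregrgregre}. The key difficulty is that a locally finite invariant subset $F$ of $X$ need not be preserved by $f$, so we cannot directly say that the subspaces $F$ appearing in the colimit are themselves flasque. The standard way around this (and I expect this to be the main obstacle to write carefully) is to enlarge each $F$ to the $f$-invariant locally finite subset $\tilde F := \bigcup_{n \in \nat} f^n(F)$ and check that $\tilde F$ is again locally finite and invariant, and that $f$ restricts to an endomorphism of $\tilde F$ implementing its flasqueness.

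First I would verify that $\tilde F = \bigcup_{n\in\nat} f^n(F)$ is locally finite: given $B$ in $\cB_X$, property \cref{rgiojgogregrgregre} \eqref{sgrspoguo4tt} gives some $n_0$ with $f^{n_0}(X) \cap B = \emptyset$, hence $f^n(X) \cap B = \emptyset$ for all $n \geq n_0$ (since $f^n(X) \subseteq f^{n_0}(X)$), so $\tilde F \cap B = \bigcup_{n < n_0} f^n(F) \cap B$ is a finite union of finite sets. It is invariant because each $f^n(F)$ is (as $f$ is equivariant and $F$ is invariant), and it visibly satisfies $f(\tilde F) \subseteq \tilde F$. The properties \cref{rgiojgogregrgregre} \eqref{ergvpoerigrep0gregeg}, \eqref{ergoiergergregregregr} and \eqref{sgrspoguo4tt} for $f|_{\tilde F}\colon \tilde F \to \tilde F$ (with the structures induced from $X$) follow immediately from the corresponding properties of $f$ on $X$ and the containment $\tilde F \subseteq X$; so $\tilde F$ is flasque and $E(\tilde F) \in \bM^{\fl}$.

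Next I would observe that the subposet $\cF^f(X) \subseteq \cF(X)$ of $f$-invariant locally finite invariant subsets is cofinal in $\cF(X)$: every $F$ in $\cF(X)$ is contained in $\tilde F \in \cF^f(X)$, and $\cF^f(X)$ is closed under the (finite, hence filtered) unions needed to check cofinality of a subposet of a filtered poset. Therefore
\[ E^c(X) \simeq \colim_{F \in \cF(X)} E(F) \simeq \colim_{F \in \cF^f(X)} E(F)\ . \]
Each term $E(F)$ with $F \in \cF^f(X)$ lies in $\bM^{\fl}$ by the previous paragraph, and $\bM^{\fl}$ is closed under filtered colimits by its definition (it is the smallest full subcategory closed under filtered colimits containing all flasque objects). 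Hence $E^c(X) \in \bM^{\fl}$, which is exactly the assertion that $E^c$ preserves flasqueness. I would write this up as follows.

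\begin{proof}
Let $X$ be a flasque object of $G\BC$, and let $f\colon X\to X$ implement its flasqueness. For $F$ in $\cF(X)$ set $\tilde F:=\bigcup_{n\in\nat}f^{n}(F)$. Since $f$ is equivariant and $F$ is invariant, every $f^{n}(F)$ is invariant, hence so is $\tilde F$, and clearly $f(\tilde F)\subseteq \tilde F$. If $B$ is in $\cB_{X}$, then by \cref{rgiojgogregrgregre} \eqref{sgrspoguo4tt} there is $n_{0}$ in $\nat$ with $f^{n_{0}}(X)\cap B=\emptyset$; as $f^{n}(X)\subseteq f^{n_{0}}(X)$ for $n\ge n_{0}$, we get $f^{n}(F)\cap B=\emptyset$ for all $n\ge n_{0}$, so $\tilde F\cap B=\bigcup_{n<n_{0}}(f^{n}(F)\cap B)$ is finite. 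Thus $\tilde F\in\cF(X)$. Equipping $\tilde F$ with the structures induced from $X$, the restriction $f|_{\tilde F}\colon\tilde F\to\tilde F$ satisfies \cref{rgiojgogregrgregre} \eqref{ergvpoerigrep0gregeg}, \eqref{ergoiergergregregregr} and \eqref{sgrspoguo4tt} because $f$ does and $\tilde F\subseteq X$. Hence $\tilde F$ is flasque, and $E(\tilde F)\in\bM^{\fl}$.

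Let $\cF^{f}(X)$ be the subposet of $\cF(X)$ consisting of those $F$ with $f(F)\subseteq F$. By the construction above, every $F$ in $\cF(X)$ is contained in $\tilde F\in\cF^{f}(X)$, and $\cF^{f}(X)$ is closed under finite unions; hence $\cF^{f}(X)$ is cofinal in the filtered poset $\cF(X)$. Using \eqref{wergwergwegwregwrewg} we obtain
\[ E^{c}(X)\simeq\colim_{F\in\cF(X)}E(F)\simeq\colim_{F\in\cF^{f}(X)}E(F)\ . \]
For each $F$ in $\cF^{f}(X)$ the space $F$ is flasque (with $f|_{F}$ implementing its flasqueness, by the same argument as for $\tilde F$), so $E(F)\in\bM^{\fl}$. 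Since $\bM^{\fl}$ is closed under filtered colimits, we conclude $E^{c}(X)\in\bM^{\fl}$. Therefore $E^{c}$ preserves flasqueness.
\end{proof}
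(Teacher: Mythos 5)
Your proof is correct and follows essentially the same route as the paper: enlarge each $F$ in $\cF(X)$ to $\tilde F=\bigcup_{n}f^{n}(F)$, check that $\tilde F$ is a flasque locally finite invariant subset, use cofinality (the paper phrases this via the cofinal endofunctor $F\mapsto\tilde F$ rather than your cofinal subposet $\cF^{f}(X)$, which amounts to the same thing), and conclude by closure of $\bM^{\fl}$ under filtered colimits. The only step you assert without justification is that each $f^{n}(F)\cap B$ is finite; this follows since $f$, being a morphism in $G\BC$, is proper, so $f^{-n}(B)$ is bounded and $f^{n}(F)\cap B\subseteq f^{n}(F\cap f^{-n}(B))$ is finite.
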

\begin{proof}
 Assume that $X$ is a flasque {$G$-bornological coarse space}
 with flasqueness implemented by $f\colon X\to X$.
If $F$ is {an invariant locally finite subset of $X$},
then also  {$\tilde F:=\bigcup_{n\in \nat} f^{n}(F)$ is invariant and locally finite}.
Furthermore, $\tilde F$ is flasque with flasqueness implemented by the restriction $f|_{\tilde F}$. 
 The map $\cF(X)\to \cF(X)$, $F\mapsto \tilde F$ is cofinal. This fact provides the second equivalence in
\[ E^{c}(X)\stackrel{\eqref{wergwergwegwregwrewg}}{\simeq}  \colim_{F\in \cF(X)} E(F)\simeq \colim_{F\in \cF(X)} E(\tilde F)\ .\]
Since $E(\tilde F)$ belongs  to $\bM^{\fl}$  for every $F$  in $\cF(X)$,   $\bM^{\fl}$  is closed under filtered colimits, and since the poset $\cF(X)$ of invariant locally finite subsets is filtered,
also $E^{c}(X)$ belongs to $\bM^{\fl}$.
 \end{proof}

  Let $\bP$ be some $\infty$-category and
$E\colon G\BC\times \bP\to  \bM$ be a  functor with $\bM$ as above.
 We let $E^{c}$ denote the functor obtained from $E$ by {forcing continuity in the first variable}.
Recall  \cref{goiijgowgergegergregewrg} and \cref{goiijgowgergegergregewrg-111} of a functorially (pre-)flasqueness preserving functor.
\begin{lem}\label{goijergoirgergregregwerwergw}
\mbox{}
\begin{enumerate}
\item \label{egopergergwergwegwerg}
If $E$ functorially preserves pre-flasqueness, then so does
$E^{c}$. 
\item\label{gfoiqjgoqiwqqfewfqewfq} If $E$ functorially preserves  flasqueness, then so does
$E^{c}$. 
\end{enumerate}
\end{lem}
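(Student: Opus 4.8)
The plan is to obtain the refined functors $\Flrm^{\pre}(E^{c})$ and $\Flrm(E^{c})$ as left Kan extensions of $\Flrm^{\pre}(E)$ and $\Flrm(E)$ along the inclusion of the minimal-bornology pairs, reducing their pointwise values to filtered colimits over the locally finite $f$-invariant subspaces by means of the saturation trick already used in the proof of \cref{ergoiuer9gowergergreg}.

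First I would fix notation: write $\preFl(G\BC^{\mb})\subseteq\preFl(G\BC)$ and $\Fl(G\BC^{\mb})\subseteq\Fl(G\BC)$ for the full subcategories of those pairs $(Y,g)$ with $Y$ in $G\BC^{\mb}$, and let $j$ denote either inclusion. Since $i\colon G\BC^{\mb}\to G\BC$ is fully faithful we have $E^{c}\circ i\simeq E\circ i$, so the restriction of $\Flrm^{\pre}(E)$ along $j\times\id_{\bP}$ already refines $E^{c}$ over minimal-bornology pairs; I would then define $\Flrm^{\pre}(E^{c})$ to be the left Kan extension of this restriction along $j\times\id_{\bP}$, and $\Flrm(E^{c})$ analogously from $\Flrm(E)$. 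It then remains to check that these Kan extensions exist, land in $\bEnd(\bM)$ resp.\ $\Fl(\bM)$, satisfy the refinement diagrams \eqref{grelgijeroigergregergwergrewgwreg} resp.\ \eqref{grelgijeroigergregergwergrewgwreg-111}, and in the pre-flasque case satisfy \eqref{fqwefiojewoifjewfewfefqwefqewfqewf}.

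The geometric heart is the saturation construction. For $(X,f)$ in $\preFl(G\BC)$ I would consider the sub-poset $\cF^{f}(X)\subseteq\cF(X)$ of invariant locally finite subsets with $f(F)\subseteq F$, and verify: for $F$ in $\cF(X)$ the set $\tilde F:=\bigcup_{n\in\nat}f^{n}(F)$ again lies in $\cF^{f}(X)$ — invariance and $f$-invariance are immediate, and local finiteness follows because \cref{rgiojgogregrgregre} \eqref{sgrspoguo4tt} bounds the $n$ contributing to any fixed bounded set, with properness of $f$ handling the remaining finitely many terms; for $F$ in $\cF^{f}(X)$ the pair $(F,f|_{F})$ lies in $\preFl(G\BC^{\mb})$, the conditions of \cref{rgiojgogregrgregre} for $f|_{F}$ being inherited from $f$ by intersecting with $F\times F$ and $F$, and moreover $(F,f|_F)$ lies in $\Fl(G\BC^{\mb})$ when $f$ is close to $\id_{X}$. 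Then I would prove the two cofinality statements: $\cF^{f}(X)$ is filtered and the inclusion $\cF^{f}(X)\hookrightarrow\cF(X)$ is final (the comma category over $F_{0}$ is the up-set of $F\supseteq F_{0}$, with least element $\widetilde{F_{0}}$); and the functor $F\mapsto(F,f|_{F},\incl_{F})$ from $\cF^{f}(X)$ into $(\preFl(G\BC^{\mb}))_{/(X,f)}$ (resp.\ $(\Fl(G\BC^{\mb}))_{/(X,f)}$) is final, the point being that a morphism $(Y,g,h)\to(F,f|_{F},\incl_{F})$ is exactly an inclusion $h(Y)\subseteq F$ (compatibility with the endomorphisms is forced by $hg=fh$), that $h(Y)$ is invariant and locally finite because $Y$ has the minimal bornology and $h$ is proper, so the comma category is the up-set of $F\supseteq\widetilde{h(Y)}$, with least element $\widetilde{h(Y)}$.

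With these in hand I would conclude as follows. By the pointwise formula and the finality of $F\mapsto(F,f|_{F},\incl_{F})$,
\[ \Flrm^{\pre}(E^{c})((X,f),P)\ \simeq\ \colim_{F\in\cF^{f}(X)}\Flrm^{\pre}(E)((F,f|_{F}),P)\ ,\]
a filtered colimit; it exists in $\bEnd(\bM)$ since $\bM$ has filtered colimits and the transition maps of this diagram are induced by the inclusions $F\hookrightarrow F'$ and therefore act compatibly with the endomorphisms on both source and target, exactly as the maps $\hat f^{n}_{*}$ and $\hat h_{*}$ in the construction of \cref{ergoegergregreg}; the analogous colimit for $\Flrm(E^{c})$ stays in $\Fl(\bM)$ because finite biproducts commute with filtered colimits. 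Forgetting the endomorphisms — an operation which preserves filtered colimits — and using finality of $\cF^{f}(X)\hookrightarrow\cF(X)$ together with \eqref{wergwergwegwregwrewg}, the underlying object is $\colim_{F\in\cF(X)}E(F,P)\simeq E^{c}(X,P)$, naturally in $(X,f)$ and $P$, so \eqref{grelgijeroigergregergwergrewgwreg} and \eqref{grelgijeroigergregergwergrewgwreg-111} hold. For the remaining clause of \eqref{egopergergwergwegwerg} I would take the filtered colimit of the equivalences $S_{F}\simeq\id_{M_{F}}+E(f|_{F},P)\circ S_{F}$: since the square built from $\incl_{F}$ and $f|_{F},f$ commutes one has $E^{c}(f,P)\simeq\colim_{F}E(f|_{F},P)$, and since biproducts and composition commute with filtered colimits this yields $S\simeq\id_{M}+E^{c}(f,P)\circ S$. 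Part \eqref{gfoiqjgoqiwqqfewfqewfq} is the same argument with $\Flrm$, $\Fl$, $\Fl(\bM)$ in place of $\Flrm^{\pre}$, $\preFl$, $\bEnd(\bM)$, where now landing in $\Fl(\bM)$ already encodes the flasqueness equivalence. The step I expect to be most delicate is the finality of the functor into the slice $(\preFl(G\BC^{\mb}))_{/(X,f)}$ — recognising that its comma categories are up-sets of $\cF^{f}(X)$ with least elements — together with checking that the resulting filtered colimit genuinely lies inside $\bEnd(\bM)$, resp.\ $\Fl(\bM)$; the rest is routine manipulation of filtered colimits through the semi-additive structure.
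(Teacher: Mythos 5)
Your proposal is correct and takes essentially the same route as the paper: the refinement of $E^{c}$ is obtained as a left Kan extension of the restriction of $\Flrm^{\pre}(E)$ (resp.\ $\Flrm(E)$) along the inclusion of minimal-bornology pairs, and the needed cofinality is exactly the saturation trick $F\mapsto\bigcup_{n\in\nat}f^{n}(F)$ from the proof of \cref{ergoiuer9gowergergreg}, which the paper cites directly for the forgetful functor on slice categories while you repackage it via the intermediate poset $\cF^{f}(X)$. The concluding step — passing the relation $S\simeq\id+E(f,P)\circ S$ through the filtered colimit — also matches the paper's argument, with your writeup merely supplying a bit more detail on why the values stay in $\bEnd(\bM)$ resp.\ $\Fl(\bM)$.
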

\begin{proof}
We  give the argument for \eqref{egopergergwergwegwerg}.
 {Define $\Flrm^{\pre}(E)^c$ as the following left Kan extension:
\[\xymatrix@C=9em{
 \preFl(G\BC^{\mb})\times \bP\ar[r]^-{\Flrm^{\pre}(E)\circ (\Flrm^{\pre}(i)\times \id_{\bP})}\ar[d]_-{\Flrm^{\pre}(i)\times \id_{\bP}} & \widetilde{\Fl}(\bM) \\
  \preFl (G\BC)\times \bP\ar[ur]_-{\Flrm^{\pre}(E)^c} &
}\]
We let $\Phi \colon \preFl(G\BC^{\mb})\times \bP \to \bEnd(\bM)$ denote the functor which sends $((X,f),P)$ to $(E(X,P), E(f,P))$.
Since the functor $e \colon \widetilde{\Fl}(\bM) \to \bEnd(\bM)$ from \cref{thgiojgoiggfergrewgerewergewrgreggw} preserves colimits, the pointwise formula for left Kan extensions implies that the composite $\Phi^c := e \circ \Flrm^{\pre}(E)^c$ is a left Kan extension of the composite functor
\[ \preFl(G\BC^{\mb})\times \bP \xrightarrow{\Flrm^{\pre}(E)\circ (\Flrm^{\pre}(i)\times \id_{\bP})} \Fl(\bM) \xrightarrow{e} \End(\bM)\ .\]
This implies that $\Phi^c$ is a left Kan extension of $\Phi \circ (\Flrm^{\pre}(i) \times \id_{\bP})$ because $\Phi \simeq e \circ \Flrm^{\pre}(E)$.
We have to show that $\Phi^c \colon \preFl(G\BC) \times \bP \to \bEnd(\bM)$ is equivalent to the functor $\Psi$ sending $((X,f),P)$ to $(E^c(X,P), E^c(f,P))$.}

We consider the following diagram
\[ {\xymatrix{
 &\preFl(G\BC^{\mb})\times \bP\ar[dd]\ar[dl]_-{\Flrm^{\pre}(i)\times \id_{\bP}\hphantom{xx}} \ar[dr]^-{\hphantom{xxxx}\Phi \circ (\Flrm^{\pre}(i)\times \id_{P})}&\\
 \preFl (G\BC)\times \bP\ar[dd]_{q}\ar@{..>}[rr]^-(.3){\Phi^c}&& \bEnd(\bM)\ar[dd]^-{u} \\
 &G\BC^{\mb}\times \bP\ar[dr]^-{\hphantom{xx}E\circ ( i\times  \id_{P})}\ar[dl]_-{i\times \id_{P}}&\\
 G\BC\times \bP\ar@{..>}[rr]^-{E^{c}}&&\bM}} \]
 where the dotted arrows are defined as left Kan extensions, respectively.
The universal property of the  left Kan extension  $ {F^c}$ and the fact that $ {u}$ preserves colimits provides a natural transformation
\[  {u \circ \Phi^{c}} \to E^{c}\circ q\ . \]
We must check that this transformation is an equivalence. In view of the pointwise formula for the left Kan extensions, this amounts to showing that for every $P$ in $\bP$ and $(X,f)$ in $\preFl(G\BC)$ the morphism
\begin{eqnarray*}
\lefteqn{\colim_{((F,g)\to (X,f))\in \preFl(G\BC^{\mb})_{/(X,f)}} {u(\Phi}((F,g),P))}\hspace{5cm}&&\\&\to& \colim_{(F\to X)\in (G\BC^{\mb})_{/X}} E(F,P)\end{eqnarray*} is an equivalence.  The argument given in the proof of  \cref{ergoiuer9gowergergreg} shows that
the functor $q$ induces a cofinal functor
\[ \Fl^{\pre}(G\BC^{\mb})_{/(X,f)} \to  (G\BC^{\mb})_{/X}\ .  \]
Consequently, we can rewrite the morphism in question in the form \begin{align*}
 \colim_{ ((F,g)\to (X,f))   \in \preFl(G\BC^{\mb})_{/(X,f)}} & {u(\Phi}(E)((F,g),P)) \\
 &\hspace{-2cm}\to \colim_{ ((F,g)\to (X,f)) \in \preFl(G\BC^{\mb})_{/(X,f)}} E(q(F,g),P)\ .
\end{align*}
This is an equivalence since
 \[ {u \circ \Phi} \simeq E\circ (q\times \id_{\bP})\ .\]
 Consequently, we also have $u \circ \Phi^c \simeq E^c \circ q$.
Since $\Phi \circ (\Flrm^{\pre}(i) \times \id_\bP)$ and $\Psi \circ (\Flrm^{\pre}(i) \times \id_\bP)$ are equivalent, the universal property of the left Kan extension also provides a natural transformation $\Phi^c \to \Psi$.
As $u$ is conservative, the preceding argument shows that this transformation is an equivalence.

Assertion \eqref{gfoiqjgoqiwqqfewfqewfq} is shown analogously.
 \end{proof}

Let $\bM$ be a pointed $\infty$-category admitting finite coproducts,  
small filtered colimits and  small  products. 
Recall   \cref{rgiowergerwgregwergergwerg} and  \cref{wergiugewrgergergerwgwregw} of $\pi_{0}$-excisiveness and strong additivity. 

\begin{lem} \label{ifuewfioqewfwefewfewqf}\mbox{}
\begin{enumerate} \item
 \label{ergioewrjgowregregwrgregwreg}
If $E$ is $\pi_{0}$-excisive, then so is $E^{c}$.
 \item   \label{ergioewrjgowregregwrgregwreg1} If $E$   is strongly additive and filtered colimits distribute over products in $\bM$ (see \cref{wergewgregwergw}), then $E^{c}$ is also strongly additive.  
\end{enumerate} 
\end{lem}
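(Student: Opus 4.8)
The plan is to prove both assertions of \cref{ifuewfioqewfwefewfewqf} by reducing statements about $E^c$ to statements about $E$ via the formula $E^c(X) \simeq \colim_{F \in \cF(X)} E(F)$ from \eqref{wergwergwegwregwrewg}, together with cofinality arguments on the indexing posets of locally finite subsets. Throughout I will use that $\cF(X)$ is filtered, that filtered colimits commute with finite limits and (under the hypothesis in \eqref{ergioewrjgowregregwrgregwreg1}) with products, and that for an invariant subset $Y \subseteq X$ the poset $\cF(Y)$ can be identified with the subposet of $\cF(X)$ of locally finite subsets contained in $Y$.

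For assertion \eqref{ergioewrjgowregregwrgregwreg}, let $X$ be in $G\BC$ with a coarsely disjoint decomposition $(Y,Z)$. First I would observe that the map $\cF(Y) \times \cF(Z) \to \cF(X)$, $(F,F') \mapsto F \sqcup F'$, is cofinal: any locally finite invariant $F'' \subseteq X$ decomposes as $(F'' \cap Y) \sqcup (F'' \cap Z)$ with both pieces locally finite and invariant, since $Y$ and $Z$ are invariant and being locally finite is inherited by subsets. Next, for each such $F \sqcup F'$, the pair $(F \sqcup F' \cap Y) = F$ is a coarsely disjoint decomposition of $F \sqcup F'$ into the invariant subsets $F$ and $F'$ (coarse disjointness is inherited from that of $(Y,Z)$ because the closure operator only shrinks on subspaces with induced structure, or more directly because the relevant closures of $F$, $F'$ are contained in $[Y]$, $[Z]$). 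Hence, since $E$ is $\pi_0$-excisive, each square
\[ \xymatrix{ \emptyset \ar[r]\ar[d] & E(F') \ar[d] \\ E(F) \ar[r] & E(F \sqcup F') } \]
is a pushout. Taking the filtered colimit over $\cF(Y) \times \cF(Z)$ and using that $\emptyset$, $E(Y) = \colim_F E(F)$, $E(Z) = \colim_{F'} E(F')$, and $E(X) = \colim_{F \sqcup F'} E(F \sqcup F')$ are all obtained as colimits — and that colimits commute with colimits — yields that the square for $E^c$ with $(Y,Z)$ is a pushout. (In detail one writes $\colim_{\cF(Y)\times\cF(Z)}$ as an iterated colimit and uses that a colimit of pushout squares is a pushout square.)

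For assertion \eqref{ergioewrjgowregregwrgregwreg1}, let $(X_i)_{i \in I}$ be a family in $G\BC$ and $X := \bigsqcup^{\free}_{i \in I} X_i$. The key combinatorial point is that the map $\prod_{i \in I} \cF(X_i) \to \cF(X)$, $(F_i)_i \mapsto \bigsqcup_i F_i$, is cofinal: any invariant locally finite $F \subseteq X$ satisfies $F = \bigsqcup_i (F \cap X_i)$, and each $F \cap X_i$ is invariant and locally finite in $X_i$; one checks using \cref{fqiofwefewfeqwfeqwf} \eqref{r3fqu9z3fpu8934f13314f1} and \eqref{r3fqu9z3fpu8934f13314f} that $\bigsqcup_i (F \cap X_i)$ is indeed locally finite in $X$, so $F$ lies below an element of the image, and conversely any $\bigsqcup_i F_i$ with $F_i \in \cF(X_i)$ is locally finite in $X$ because a bounded subset of $X$ meets only finitely many $X_i$. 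Moreover $\bigsqcup^\free_i F_i$, as a $G$-bornological coarse space, is just $\bigsqcup^\free$ of the $F_i$ with their induced structures, so by strong additivity of $E$ we have $E(\bigsqcup^\free_i F_i) \xrightarrow{\simeq} \prod_i E(F_i)$ compatibly in $(F_i)_i$. Now compute
\[ E^c(X) \simeq \colim_{F \in \cF(X)} E(F) \simeq \colim_{(F_i)_i \in \prod_i \cF(X_i)} E\bigl(\textstyle\bigsqcup^\free_i F_i\bigr) \simeq \colim_{(F_i)_i} \prod_{i \in I} E(F_i) \simeq \prod_{i \in I} \colim_{F_i \in \cF(X_i)} E(F_i) \simeq \prod_{i \in I} E^c(X_i)\ , \]
where the first and last equivalences are \eqref{wergwergwegwregwrewg}, the second is the cofinality just established, the third is strong additivity of $E$, and the fourth uses that $\prod_i \cF(X_i)$ is filtered and that filtered colimits distribute over $I$-indexed products in $\bM$ (the hypothesis of \eqref{ergioewrjgowregregwrgregwreg1}), exactly as in the marked step of the proof of \cref{ergiowjoetgergreergwergwreg}. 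One should also check that this equivalence is the canonical map \eqref{weiowwgwtgrgrw} for $E^c$, which follows by naturality since the projection $p_i$ for $E^c$ is the colimit of the projections $p_i$ for $E$ on the $\cF$-level.

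The main obstacle I anticipate is the bookkeeping in the cofinality arguments — specifically verifying carefully that $\bigsqcup^\free_i F_i$ is locally finite in $X$ when each $F_i$ is locally finite in $X_i$ (this uses precisely the free-union bornology of \cref{fqiofwefewfeqwfeqwf} \eqref{r3fqu9z3fpu8934f13314f}, namely that bounded sets of $X$ live inside finitely many $X_i$), and dually that $\cF(X) \to \prod_i \cF(X_i)$-preimages behave well; and, for \eqref{ergioewrjgowregregwrgregwreg}, confirming that coarse disjointness of $(F, F')$ inside $F \sqcup F'$ really does follow from that of $(Y,Z)$ — since the $\pi_0$-closure is computed with respect to the induced coarse structure on $F \sqcup F'$, which has at most as many coarse components identifications as $X$, this is routine but must be stated. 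Once these cofinalities are in place, both assertions are formal consequences of commuting colimits, as above.
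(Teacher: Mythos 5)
Your proposal is correct and follows essentially the same route as the paper: reduce to $E$ via the formula $E^c(X)\simeq\colim_{F\in\cF(X)}E(F)$, use the identification of $\cF$ of a coarsely disjoint partition (resp.\ of a free union) with a product of the $\cF$'s of the pieces, apply $\pi_0$-excisiveness (resp.\ strong additivity) of $E$ levelwise, and commute the filtered colimit past the pushout (resp.\ past the product, using the distributivity hypothesis). The only cosmetic difference is that for assertion (1) the paper indexes over $\cF(X)$ and uses cofinality of the projections $F\mapsto F\cap Y$, while you index over $\cF(Y)\times\cF(Z)$ via the union map; since that map is in fact an isomorphism of posets, the two formulations coincide (and note the minor slip where you wrote $E(Y)=\colim_F E(F)$ for what should be $E^c(Y)$).
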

\begin{proof}
We first show \eqref{ergioewrjgowregregwrgregwreg}.
Let $X$ be  {a $G$-bornological coarse space}
with a coarsely disjoint  {decomposition} $(Y,Z)$ into invariant subsets.
 For every  {invariant locally finite subset $F$ of $X$},
 we get a partition $(F\cap Y,F\cap Z)$ of $F$ into coarsely disjoint invariant subsets.
 Since $E$ is $\pi_{0}$-excisive, we conclude that
 \[ E(F)\simeq E(F\cap Y)\sqcup E(F\cap Z)\ .\]
 The formula \eqref{wergwergwegwregwrewg} implies  the equivalence
\[ E^{c}(X)\simeq E^c(Y)\sqcup E^c( Z) \]
by taking the colimit over {the poset $\cF(X)$ of invariant locally finite subsets}. Here we use that the projection  $\cF(X)\to \cF(Y)$, $F\mapsto F\cap Y$  is  cofinal in order to get the equivalence
\[ \colim_{F\in \cF(X)} E(F\cap Y)\simeq \colim_{F^{\prime}\in \cF(Y)} E(F^{\prime}) \]
(and similarly for $Z$).
  
We now  show \eqref{ergioewrjgowregregwrgregwreg1}.
Let $(X_{i})_{i\in I}$ be a family  {of $G$-bornological coarse spaces}
and set $X := \bigsqcup_{i\in I}^{\free}X_{i}$  (\cref{fqiofwefewfeqwfeqwf}). 
 Then a subset $F$ of $X$ is locally finite if and only if $F\cap X_{i}$ is a locally finite subset of $X_i$ for every $i$ in $I$.
Hence we have an isomorphism of posets $\cF(X)\cong \prod_{i\in I} \cF(X_{i})$ given by $F\mapsto (F\cap X_{i})_{i\in I}$.
It gives the first equivalence in the following chain
\begin{equation}\label{rfrfwerfrefrrwrefrfrw}\colim_{F\in \cF(X)}\prod_{i\in I}E ( {F\cap X_{i}})\simeq \colim_{(F_{i})_{i}\in \prod_{i\in I}\cF(X_{i})}\prod_{i\in I}E ( F_{i})
\simeq  \prod_{i\in I} \colim_{F_{i}\in \cF(X_{i})} E(F_{i})\ ,
\end{equation} 
while the second follows from the assumption that filtered colimits distribute over products in $\bM$.

We have an isomorphism $F\cong \bigsqcup_{i\in I}^{\free} (F\cap X_{i})$ in $G\BC$. Using the assumption that $E$ is strongly additive,  
we get the marked equivalence in the following commutative diagram
\[\xymatrix{
\colim_{F\in\cF(X)}E(F)\ar[r]_-{ \eqref{wergwergwegwregwrewg}}^-\simeq\ar[d]_{!}^\simeq&E^{c}(X)\ar[dd]\\
\colim_{F\in \cF(X)}\prod_{i\in I} E ( F\cap X_{i} )\ar[d]_{\eqref{rfrfwerfrefrrwrefrfrw}}^\simeq&\\
\prod_{i\in I}\colim_{F_i\in \cF(X_i)}E(F_{i})\ar[r]_-{ \eqref{wergwergwegwregwrewg}}^-{\simeq}&\prod_{i\in I}E^{c}(X_i)
}\ .\]
We conclude that the right vertical morphism is an equivalence as desired. This implies Assertion \eqref{ergioewrjgowregregwrgregwreg1}.
\end{proof}

 We  consider a functor $E\colon G\BC\to\bM$.
 Recall the notion of excisiveness from \cref{rgoiruegoiregregregreg}  and the notion of $l$-excisiveness (for $\bM=\Cle$) from \cref{rgoiruegoiregregregreg-modified}.
  \begin{lem}\label{egiweogergergewrg}
  If $E$ is  excisive or $l$-excisive, then the same is true for $E^{c}$.
  \end{lem}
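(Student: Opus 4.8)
The statement to prove is \cref{egiweogergergewrg}: if $E\colon G\BC\to\bM$ is excisive (\cref{rgoiruegoiregregregreg}) or $l$-excisive (\cref{rgoiruegoiregregregreg-modified}), then $E^{c}$ has the same property. The plan is to reduce the claim, as in the proofs of \cref{etgiowetgregrwegergwr} and \cref{ifuewfioqewfwefewfewqf}, to a cofinality argument showing that the relevant square for $X$ is a filtered colimit of the corresponding squares for the locally finite subsets $F$ in $\cF(X)$, and then invoke the fact that in $\bM$ (respectively in $\Cle$, using \cref{prop:catex finitely complete,ioerjgoiegergwegergwrgwegrwerg}) filtered colimits commute with finite colimits, and in the left-exact setting that a filtered colimit of excisive squares is again excisive.

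First I would fix $X$ in $G\BC$ with a complementary pair $(Z,\cY)$, where $\cY=(Y_{\ell})_{\ell\in L}$ is a big family (\cref{trhrthgwregwgwrgwrg}) with some $\ell_{0}$ satisfying $Z\cup Y_{\ell_{0}}=X$. For each $F$ in $\cF(X)$, the pair $(Z\cap F,\ \cY\cap F)$ with $\cY\cap F:=(Y_{\ell}\cap F)_{\ell\in L}$ is a complementary pair on $F$ (the big family condition is inherited since any entourage of $F$ is induced from one of $X$, and $Z\cup Y_{\ell_{0}}=X$ gives $(Z\cap F)\cup(Y_{\ell_{0}}\cap F)=F$). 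The key input is the cofinality statement already used in \cref{lem:forcects}: $\cF(X)\to(G\BC^{\mb})_{/X}$ is cofinal, together with the observations (analogous to the proof of \cref{ifuewfioqewfwefewfewqf}) that $\cF(X)\to\cF(Z)$, $F\mapsto Z\cap F$, and $\cF(X)\to\cF(Y_{\ell})$, $F\mapsto Y_{\ell}\cap F$, are cofinal, and that $\cF(Z\cap\cY)$ is computed compatibly. Using formula \eqref{wergwergwegwregwrewg} for $E^{c}$ on each corner, the square
\[
\xymatrix{E^{c}(Z\cap\cY)\ar[r]\ar[d]&E^{c}(Z)\ar[d]\\ E^{c}(\cY)\ar[r]&E^{c}(X)}
\]
is identified with $\colim_{F\in\cF(X)}$ of the squares
\[
\xymatrix{E(Z\cap F\cap\cY)\ar[r]\ar[d]&E(Z\cap F)\ar[d]\\ E((\cY\cap F))\ar[r]&E(F)}\ ,
\]
each of which is a pushout (resp.\ excisive in $\Cle$) by the hypothesis on $E$ applied to $F$ with the complementary pair $(Z\cap F,\cY\cap F)$. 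Here one must check that the double colimit $\colim_{F}\colim_{\ell}E(Y_{\ell}\cap F)$ rearranges to $\colim_{\ell}\colim_{F}E(Y_{\ell}\cap F)\simeq E^{c}(\cY)$, which is routine since both index categories are filtered, and similarly for $Z\cap\cY$.

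For the excisive case ($\bM$ cocomplete), the conclusion is immediate: a filtered colimit of pushout squares is a pushout square. For the $l$-excisive case ($\bM=\Cle$), one needs that a filtered colimit of excisive squares in $\Cle$ (\cref{ugioerguoerug}) is excisive; this follows since filtered colimits in $\Cle$ are computed in $\Cati$ (by the $\CLL$-version of \cref{prop:catex finitely complete}), fully faithful functors are closed under filtered colimits, the stabilisation functor $\tilde\Sp$ commutes with filtered colimits, and cofibres in a stable $\infty$-category commute with filtered colimits—so the defining conditions of an excisive square (horizontal arrows fully faithful, induced map on stable cofibres an equivalence) are each preserved. I expect the main obstacle to be the bookkeeping needed to justify that the pushout (resp.\ excisive) square for $X$ is genuinely the levelwise filtered colimit of those for the $F$'s: this requires carefully reindexing the nested colimits over $\cF(X)$ and over $L$, and—in the $l$-excisive case—verifying that "excisive square in $\Cle$" is closed under filtered colimits, which depends on the interaction of $\tilde\Sp$, localisation, and filtered colimits and may warrant an explicit lemma.
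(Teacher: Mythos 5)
Your proposal is correct and follows essentially the same route as the paper: restrict the complementary pair $(Z,\cY)$ to each $F$ in $\cF(X)$, apply excision (resp.\ $l$-excision) of $E$ there, and pass to the filtered colimit over $\cF(X)$ using \eqref{wergwergwegwregwrewg} and the cofinality of $F\mapsto F\cap Z$, $F\mapsto F\cap Y_{\ell}$. The only difference is that the closure of excisive squares in $\Cle$ under filtered colimits, which you re-derive by hand, is already available as \cref{rfiorjgfoqfwewfefewfqfe}, so you could simply cite it.
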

\begin{proof}
 Let $X$ be  {a $G$-bornological coarse space}
 with a complementary pair $(Z,\cY)$. Then for every  {invariant locally finite subset $F$ of $X$}
 we get a complementary pair 
 $(F \cap Z, F \cap \cY)$ on $F$. Hence
 \begin{equation}\label{rgvlpgbergbgbpblgebegbegrbegb}
\xymatrix{E^{c} (Z\cap \cY)\ar[r]\ar[d]&E^{c}(\cY)\ar[d]\\E^{c}(Z)\ar[r]&E^{c}(X)}
\end{equation}
 is the colimit over {all invariant locally finite subsets $F$ of $X$}
 of the following pushout squares in $\bM$ (or excisive squares in $\Cle$, see \cref{ugioerguoerug}):
 \[\xymatrix{E (F\cap Z\cap \cY)\ar[r]\ar[d]&E (F\cap \cY)\ar[d]\\E (F\cap Z)\ar[r]&E (F_{{X}})}\]     
 Here we use  the assumption that $E$ is excisive or $l$-excisive, respectively. Since a filtered colimit of pushout squares (or excisive squares in $\Cle$)  is again a pushout square  (or excisive square in $\Cle$, see \cref{rfiorjgfoqfwewfefewfqfe}) we conclude 
  that \eqref{rgvlpgbergbgbpblgebegbegrbegb} is a pushout  square (or an excisive square in $\Cle$).
\end{proof}

We now apply the construction of forcing continuity to the functor $\bV_{\bC}^{G}$
introduced in \cref{rtheorthertherthetrhe}. It is the evaluation at $\bC$ of the two-variable version $\bV^{G}$ from \cref{bojoijgoi3jg3g34f}.

\begin{ddd}\label{iowergergwegr}
 {We define
 \[ \bV^{G,c}\colon G\BC\times \Fun(BG,\CL)\to \Cle \]
 as the functor obtained from $\bV^{G}$ by forcing continuity in the first variable.}
\end{ddd}
{We write $\bV_{\bC}^{G,c}$ for the evaluation of this functor at a fixed object $\bC$ in $\Fun(BG,\CL)$.}

\begin{kor}\label{regwergergergrgwgregwregwregwergwreg}
The functor $\bV_{\bC}^{G,c}$ is
\begin{enumerate}
	\item \label{thwoierjoiwegjergg}coarsely invariant  (\cref{rgiherogiergregreg}),
	\item \label{thwoierjoiwegjergg1}$u$-continuous (\cref{rgiuerogergergergre}),
	\item \label{thwoierjoiwegjergg2} $l$-excisive (\cref{rgoiruegoiregregregreg-modified}),
	\item \begin{enumerate} \item flasqueness preserving (\cref{rt98uerg9rt342trg32432f}),
		\item  functorially pre-flasqueness preserving (\cref{goiijgowgergegergregewrg}),
		\item  \label{thwoierjoiwegjergg3} functorially  flasqueness preserving (\cref{goiijgowgergegergregewrg-111}),
	\end{enumerate}
	\item \begin{enumerate}
		\item $\pi_{0}$-excisive (\cref{rgiowergerwgregwergergwerg}),
		\item strongly additive (\cref{wergiugewrgergergerwgwregw}) and
	\end{enumerate}
	\item\label{wgwijretgowergfwefwref} continuous (\cref{geriogjergerg}).
\end{enumerate}
\end{kor}
\begin{proof}
 Coarse invariance holds by \cref{riogogrgregergerg,etgiowetgregrwegergwr}, $u$-continuity by \cref{gsnsioghetg,rewgoijerg9rwup29igwg}, and excision by \cref{rgijrgoirejgoergergreg,egiweogergergewrg}. The claims about the preservation of flasqueness are contained in \cref{ergoegergregreg,ergoiuer9gowergergreg,goijergoirgergregregwerwergw,goijergoirgergregregwerwergw}.
	$\pi_0$-excision and strong additivity follow from \cref{ergiowjoetgergreergwergwreg,ergieorogergergergregerge,ifuewfioqewfwefewfewqf}. Finally, continuity  follows from \cref{lem:forcects}.
\end{proof}

 {For future reference, we also record the following fact.}
Let $X$ be  {a $G$-bornological coarse space},
and let $\bC$ be in $\Fun(BG,\CL)$.

\begin{lem}\label{qerguigwregergwrgeg}
The canonical morphism $\bV_{\bC}^{G,c}(X)\to \bV_{\bC}^{G}(X)$ is fully faithful.
\end{lem}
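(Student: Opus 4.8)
The statement to prove is that the canonical morphism $\bV_{\bC}^{G,c}(X)\to \bV_{\bC}^{G}(X)$ is fully faithful. Recall from \cref{lem:forcects} and \cref{gerklgjerlgergergergergerg} that $\bV_{\bC}^{G,c}$ is obtained from $\bV_{\bC}^{G}$ by forcing continuity in the space variable, so by \eqref{wergwergwegwregwrewg} we have
\[ \bV_{\bC}^{G,c}(X)\simeq \colim_{F\in \cF(X)} \bV_{\bC}^{G}(F)\ , \]
and the canonical morphism $\bV_{\bC}^{G,c}(X)\to \bV_{\bC}^{G}(X)$ is induced by the family of morphisms $\bV_{\bC}^{G}(i_{F})\colon \bV_{\bC}^{G}(F)\to \bV_{\bC}^{G}(X)$ for the inclusions $i_{F}\colon F\to X$ of invariant locally finite subsets. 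The plan is to show first that each $\bV_{\bC}^{G}(i_{F})$ is fully faithful, and then to invoke the stability of fully faithfulness under filtered colimits.

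\textbf{Key steps.} First I would observe that for $F$ in $\cF(X)$ the inclusion $i_{F}\colon F\to X$ is the inclusion of a subspace in $G\BC$ (the coarse and bornological structures on $F$ are those induced from $X$), so \cref{prop:inclusion-fff} applies and tells us directly that $i_{F,*}\colon \bV_{\bC}^{G}(F)\to \bV_{\bC}^{G}(X)$ is fully faithful. (Here I should double-check that the morphism $\bV_{\bC}^{G}(i_{F})$ appearing in the colimit diagram agrees with the functor $i_{F,*}$ of \cref{qergiuqergreqgregwregwr}; this is immediate since both are the specialisations of the functoriality constructed in \cref{iofjowefwfewfewfef} and \cref{qergiuqergreqgregwregwr}.) Second, since $\cF(X)$ is a filtered poset and a filtered colimit of fully faithful functors in $\Cle$ (computed in $\CATi$) is again fully faithful --- the same fact used in the proof of \cref{gsnsioghetg} and in the proof of \cref{rgijrgoirejgoergergreg} --- the induced morphism $\colim_{F\in \cF(X)}\bV_{\bC}^{G}(F)\to \bV_{\bC}^{G}(X)$ is fully faithful. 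Combining with the identification $\bV_{\bC}^{G,c}(X)\simeq \colim_{F\in \cF(X)}\bV_{\bC}^{G}(F)$ finishes the argument.

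\textbf{Main obstacle.} There is essentially no deep obstacle here: the statement is a formal consequence of \cref{prop:inclusion-fff} together with the standard fact that filtered colimits of fully faithful functors are fully faithful. The only point requiring a little care is the bookkeeping identifying the canonical comparison map $\bV_{\bC}^{G,c}(X)\to \bV_{\bC}^{G}(X)$ --- which by definition comes from the universal property of the left Kan extension along $i\colon G\BC^{\mb}\to G\BC$ in \cref{lem:forcects} --- with the map induced on colimits by the subspace-inclusion functors $i_{F,*}$. This follows because the cofinal reindexing of \cref{lem:forcects} expresses $\bV_{\bC}^{G,c}(X)$ as the colimit over $\cF(X)$ of $\bV_{\bC}^{G}$ evaluated on the subspaces $F$, with the canonical transformation being induced pointwise by the inclusions $F\to X$, exactly as in \eqref{wergwergwegwregwrewg}.
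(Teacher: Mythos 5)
Your proof is correct and is essentially identical to the paper's: both identify $\bV_{\bC}^{G,c}(X)\simeq\colim_{F\in\cF(X)}\bV_{\bC}^{G}(F)$ via \eqref{wergwergwegwregwrewg}, apply \cref{prop:inclusion-fff} to each subspace inclusion $F\to X$, and conclude using that a filtered colimit of fully faithful functors is fully faithful.
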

\begin{proof}
Let $F$ be {an invariant, locally finite subset of $X$}.
By \cref{prop:inclusion-fff}, the inclusion $F\to X$ induces a fully faithful functor
\[ \bV_{\bC}^{G}(F)\to \bV_{\bC}^{G}(X)\ .\]
Since a filtered colimit  of fully faithful functors is fully faithful, we conclude that
\[ \bV_{\bC}^{G,c}(X)\stackrel{\eqref{wergwergwegwregwrewg}}{\simeq}\colim_{F\in \cF(X)} \bV_{\bC}^{G}(F)\to \bV_{\bC}^{G}(X) \]
 is fully faithful.
\end{proof}

\subsection{The orbit theory}\label{sec:orbit-theory}
The second construction of a coarse homology theory from our categories of controlled objects  starts from $\bV^{c}_\bC=\bV^{\{e\},c}_{\bC}$ given by the case of \cref{iowergergwegr} for the trivial group. If we apply this functor 
 to $G$-bornological coarse spaces and {an object $\bC$ of $\Fun(BG,\CL)$}, then by functoriality  its values become left-exact $\infty$-categories with $G$-action.  In \cref{qoirejoqierjgqergrefqerwfewf}, we then define  the functor $\bV^{c}_G$  by taking  $G$-orbits, i.e.~{by} composing with the functor $\colim_{BG}$. In this section we show that $\bV^{c}_G$ inherits most of the properties established for $\bV^{c}_\bC$ in \cref{regwergergergrgwgregwregwregwergwreg}.

Let $M$ be a cocomplete $\infty$-category, $\bP$ some auxiliary $\infty$-category, and consider a functor $E\colon \BC\times \bP\to \bM$. 
  
  \begin{ddd}\label{thklwhwhtthwgregr}
  We define the functor $E_{G}$ as the composition  \begin{align}\label{rthiowrthrthwhwgwergwrgw}
   G\BC&\times \Fun(BG,\bP)\to \Fun(BG,\BC)\times  \Fun(BG,\bP)  \\  &\xrightarrow{E}\Fun(BG\times BG,\bM)\xrightarrow{\diag_{BG}^{*}} \Fun(BG,\bM)\xrightarrow{\colim_{BG}} \bM  \ . \qedhere\end{align}
  \end{ddd}
 
 Let $\ev\colon \Fun(BG,\bM)\to \bM$ denote the evaluation functor (see \eqref{werwregfwreggregwrweggwergwerg}).
For $P$ in  $\Fun(BG,\bP)$
write $E_{P,G}$ for the specialisation of $E_{G}$ at $P$. The functor $E_{P,G}$ inherits various coarse properties from $E(-,\ev(P))$. 

Recall  \cref{rgiherogiergregreg} of the notion of coarse invariance.
\begin{lem}\label{etgiowetgregrwegergwr-colim}
 If $E(-,\ev(P))$ is coarsely invariant, then $E_{P,G}$ is coarsely invariant.
\end{lem}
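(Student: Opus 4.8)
The statement asserts that if $E(-,\ev(P))\colon G\BC\to\bM$ is coarsely invariant, then so is the functor $E_{P,G}$ obtained from $E_G$ in \cref{thklwhwhtthwgregr} by specialising at $P\in\Fun(BG,\bP)$. The key point is that coarse invariance is tested on a single morphism in $G\BC$, namely the projection $\{0,1\}_{max,max}\otimes X\to X$ of \eqref{rfqwfqewqeq}, and that the construction $E\mapsto E_G$ is assembled from $E$ by postcomposition and precomposition with functors that are, in the relevant variable, merely functorial operations (diagram categories, restriction along $\diag_{BG}$, and $\colim_{BG}$), none of which interact with the $G\BC$-variable. So the plan is to show that coarse invariance is inherited step by step through the composition \eqref{rthiowrthrthwhwgwergwrgw}.

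First I would fix $X$ in $G\BC$ and form the projection $\pi\colon\{0,1\}_{max,max}\otimes X\to X$ in $G\BC$; we must show $E_{P,G}(\pi)$ is an equivalence. Unwinding \cref{thklwhwhtthwgregr}, $E_{P,G}(X)$ is $\colim_{BG}$ of the functor $BG\to\bM$ obtained by viewing $X$ (via $G\BC\to\Fun(BG,\BC)$, forgetting the equivariant structure into a $BG$-diagram) together with $P$ as a $BG$-diagram, feeding the pair into $E$, restricting along $\diag_{BG}\colon BG\to BG\times BG$, and taking the colimit. The morphism $\pi$ is equivariant, so it induces a morphism of $BG$-diagrams in $\BC$, and hence, after applying $E(-,-)$ and $\diag_{BG}^*$, a morphism of $BG$-diagrams in $\bM$. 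The crucial observation is that at the unique object $*_{BG}$, this morphism of diagrams evaluates to $E(\pi_{*_{BG}},\ev(P))$, where $\pi_{*_{BG}}$ is exactly the underlying morphism of the equivariant projection, i.e., the projection $\{0,1\}_{max,max}\otimes X\to X$ regarded in $\BC$ with its (forgotten) $G$-action. More precisely, the composite $G\BC\to\Fun(BG,\BC)$ sends $\{0,1\}_{max,max}\otimes X$ to the $BG$-diagram whose value at $*_{BG}$ is $\{0,1\}_{max,max}\otimes X$ viewed in $\BC$, and $\pi$ to the natural transformation of $BG$-diagrams given at $*_{BG}$ by the (non-equivariant) projection. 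Since $E(-,\ev(P))$ is coarsely invariant, this projection is sent by $E(-,\ev(P))$ to an equivalence in $\bM$; that is, the morphism of $BG$-diagrams $\diag_{BG}^*E(\text{-},P)$ applied to $\pi$ is a levelwise (hence objectwise at $*_{BG}$, which is all there is) equivalence in $\bM$.

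The remaining step is to conclude that $\colim_{BG}$ takes this to an equivalence. Since $BG$ has a single object, a morphism of functors $BG\to\bM$ is an equivalence precisely when its value at $*_{BG}$ is an equivalence, and $\colim_{BG}$ preserves equivalences of diagrams; therefore $E_{P,G}(\pi)=\colim_{BG}$ of an equivalence of $BG$-diagrams is an equivalence in $\bM$. By \cref{rgiherogiergregreg}, this proves $E_{P,G}$ is coarsely invariant. I do not expect any genuine obstacle here: the only mildly delicate bookkeeping is to check that the underlying morphism at $*_{BG}$ of the image of $\pi$ under $G\BC\to\Fun(BG,\BC)$ is literally the non-equivariant projection $\{0,1\}_{max,max}\otimes X\to X$ (with forgotten action), which is immediate from how the functor $G\BC\to\Fun(BG,\BC)$ is defined by forgetting; once this is noted, the argument is a one-line reduction to the coarse invariance hypothesis plus the triviality that a $BG$-indexed colimit of an equivalence is an equivalence. (Compare the entirely parallel arguments in \cref{etgiowetgregrwegergwr} and \cref{etgiowetgregrwegergwr-colim}'s forcing-continuity analogue.)
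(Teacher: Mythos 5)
Your proposal is correct and follows essentially the same route as the paper's proof: both reduce to the observation that the underlying (non-equivariant) projection is sent to an equivalence by the coarse invariance of $E(-,\ev(P))$, that equivalences in $\Fun(BG,\bM)$ are detected by evaluation at the unique object of $BG$, and that applying $\colim_{BG}$ then yields the desired equivalence. The extra bookkeeping you spell out about unwinding the composition in \cref{thklwhwhtthwgregr} is exactly what the paper's shorter argument leaves implicit.
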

\begin{proof}
 Let $X$ be  {a $G$-bornological coarse space}.
 We must show that the projection
 \[ \{0,1\}_{max,max}\otimes X\to X \]
 induces an equivalence
 \[ E_{P,G}(\{0,1\}_{max,max}\otimes X)\to E_{P,G}(X)\ .\]
 Since $E(-,\ev(P))$ is coarsely invariant, and equivalences in $\Fun(BG,\bM)$ are detected  by the evaluation functor 
 $\ev\colon \Fun(BG,\bM)\to \bM$, the projection induces an equivalence 
 \[ E(\{0,1\}_{max,max}\otimes X,P)\to E(X,P) \]
 in $\Fun(BG,\bM)$. Applying $\colim_{BG}$, we get the desired equivalence.
 \end{proof}

 Recall \cref{rgiuerogergergergre} of  $u$-continuity.
\begin{lem}\label{rewgoijerg9rwup29igwg-colim}
 If $E(-,\ev(P))$ is $u$-continuous, then $E_{P,G}$ is $u$-continuous.
\end{lem}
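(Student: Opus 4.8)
The plan is to follow the same template as the proof of \cref{etgiowetgregrwegergwr-colim}: peel off the final $\colim_{BG}$ in the definition of $E_{P,G}$, reduce the resulting statement to a pointwise one via the conservative evaluation functor $\ev\colon \Fun(BG,\bM)\to \bM$, and then appeal to the $u$-continuity hypothesis for $E(-,\ev(P))$. Concretely, I would first fix $X$ in $G\BC$ and write $E'_{P}\colon G\BC\to \Fun(BG,\bM)$ for the functor obtained from the composition \eqref{rthiowrthrthwhwgwergwrgw} by omitting the last step $\colim_{BG}$ and specialising the $\bP$-variable at $P$, so that $E_{P,G}=\colim_{BG}\circ E'_{P}$.

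Next I would observe that $\colim_{BG}$, being a colimit functor, commutes with the filtered colimit over $\cC_{X}^{G}$; this reduces the claim to showing that the canonical map $\colim_{U\in \cC^{G}_{X}} E'_{P}(X_{U})\to E'_{P}(X)$ is an equivalence in $\Fun(BG,\bM)$. Since filtered colimits in $\Fun(BG,\bM)$ are computed pointwise and $\ev$ is conservative, it suffices to check this after applying $\ev$. Here I would compute $\ev\circ E'_{P}\simeq E(e(-),\ev(P))$, where $e\colon G\BC\to \BC$ forgets the $G$-action: this comes from unwinding \eqref{rthiowrthrthwhwgwergwrgw} together with the fact that $\diag_{BG}$ sends the object of $BG$ to $(*_{BG},*_{BG})$. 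I would also record that $e(X_{U})=e(X)_{U}$, since $U$ is already $G$-invariant so that the $G$-coarse structure generated by $U$ forgets to the ordinary coarse structure generated by $U$.

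Under these identifications the morphism to be checked becomes $\colim_{U\in \cC^{G}_{X}} E(e(X)_{U},\ev(P))\to E(e(X),\ev(P))$. By \cref{trbertheheht} \eqref{igwoegwergergwrgrg} the sub-poset $\cC_{X}^{G}$ is cofinal in $\cC_{e(X)}=\cC_{X}$, so this map agrees with $\colim_{U\in \cC_{e(X)}} E(e(X)_{U},\ev(P))\to E(e(X),\ev(P))$, which is an equivalence precisely because $E(-,\ev(P))$ is $u$-continuous (\cref{rgiuerogergergergre} for the trivial group applied to $e(X)$). This finishes the argument.

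I do not expect a genuine obstacle: the proof is formal, built from the conservativity of $\ev$, the commutation of $\colim_{BG}$ with filtered colimits, and cofinality of $\cC_{X}^{G}$ in $\cC_{X}$. The only place that warrants care is the natural identification $\ev\circ E'_{P}\simeq E(e(-),\ev(P))$, i.e.\ tracking the two copies of $BG$ and the restriction along $\diag_{BG}$ correctly; the rest is the same bookkeeping already used in \cref{etgiowetgregrwegergwr-colim}.
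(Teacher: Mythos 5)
Your proposal is correct and follows essentially the same route as the paper's proof: reduce to an equivalence in $\Fun(BG,\bM)$ before applying $\colim_{BG}$, check it pointwise (the paper phrases this as "colimits in $\Fun(BG,\bM)$ are formed pointwise", which is your conservativity-of-$\ev$ step), and conclude via cofinality of $\cC^{G}_{X}$ in $\cC_{X}$ together with the $u$-continuity of $E(-,\ev(P))$. Your extra care about $e(X_{U})=e(X)_{U}$ and the identification $\ev\circ E'_{P}\simeq E(e(-),\ev(P))$ is just making explicit what the paper leaves implicit.
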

\begin{proof}
Let $X$ be  {a $G$-bornological coarse space}.
We must show that the canonical morphism
\[ \colim_{U\in \cC^{G}_{X}} E_{P,G}(X_{U})\to E_{P,G}(X) \]
is an equivalence.
Since {$E(-,\ev(P))$} is $u$-continuous, {the sub-poset of invariant coarse entourages} $\cC^{G}_{X}$ is cofinal in $\cC_{X}$ by \cref{trbertheheht}~\eqref{igwoegwergergwrgrg}, and 
{the forgetful functor $\Fun(BG,\bM) \to \bM$} preserves colimits,
we have an equivalence
\[ \colim_{U\in \cC^{G}_{X}} E(X_{U},P)\xrightarrow{\simeq} E(X,P) \]
in $\Fun(BG,\bM)$.  Applying $\colim_{BG}$, we get the desired equivalence.
 \end{proof}

 Recall \cref{rgoiruegoiregregregreg} of excisiveness and  \cref{rgoiruegoiregregregreg-modified} of $l$-excisiveness (for $\bM=\Cle$).
\begin{lem}\label{egiweogergergewrg-colim}
 If $E(-,\ev(P))$ is excisive or $l$-excisive, then so is $E_{P,G}$.
\end{lem}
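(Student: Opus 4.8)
The plan is to mimic the proofs of \cref{etgiowetgregrwegergwr-colim} and \cref{rewgoijerg9rwup29igwg-colim}: reduce the excisiveness (or $l$-excisiveness) of $E_{P,G}$ to that of $E(-,\ev(P))$ by using that $\colim_{BG}$ preserves the relevant colimit-type diagrams, together with the fact that (co)limits in $\Fun(BG,\bM)$ are computed pointwise.

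First I would treat the excisive case ($\bM$ cocomplete). Let $X$ be in $G\BC$ with a complementary pair $(Z,\cY)$, where $\cY=(Y_\ell)_{\ell\in L}$. Since $E(-,\ev(P))$ is excisive and since, by definition of $E_{G}$ in \cref{thklwhwhtthwgregr}, $E(X,P)$ is the underlying object of a diagram in $\Fun(BG,\bM)$ whose equivalences and pushout squares are detected pointwise by the evaluation functor $\ev\colon \Fun(BG,\bM)\to \bM$, the square
\[
\xymatrix{E(Z\cap \cY,P)\ar[r]\ar[d] & E(Z,P)\ar[d] \\ E(\cY,P)\ar[r] & E(X,P)}
\]
is a pushout square in $\Fun(BG,\bM)$; here I use that $E(\cY,P)\simeq\colim_{\ell\in L} E(Y_\ell,P)$ and that colimits in $\Fun(BG,\bM)$ are formed pointwise, so that the formation of $E(\cY,P)$ is compatible with passing to $\colim_{BG}$. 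Applying the colimit-preserving functor $\diag_{BG}^{*}$ followed by $\colim_{BG}$ yields that
\[
\xymatrix{E_{P,G}(Z\cap \cY)\ar[r]\ar[d] & E_{P,G}(Z)\ar[d] \\ E_{P,G}(\cY)\ar[r] & E_{P,G}(X)}
\]
is a pushout square in $\bM$, which is exactly $\pi_0$\nobreakdash-... i.e.\ excisiveness of $E_{P,G}$.

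For the $l$-excisive case ($\bM=\Cle$) the same scheme applies, but with ``pushout square'' replaced throughout by ``excisive square in $\Cle$'' in the sense of \cref{ugioerguoerug}. One needs that $\colim_{BG}\colon \Fun(BG,\Cle)\to \Cle$ preserves excisive squares; since $\Cle$ is cocomplete (\cref{ioerjgoiegergwegergwrgwegrwerg}) and $\colim_{BG}$ is a colimit, this follows from the stability of excisive squares in $\Cle$ under colimits (as recorded in \cref{rfiorjgfoqfwewfefewfqfe} and already used in \cref{egiweogergergewrg}), together with the pointwise computation of colimits and the pointwise detection of excisive squares in $\Fun(BG,\Cle)$. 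The only genuinely delicate point — and thus the main obstacle — is to confirm that excisiveness of squares in $\Cle$ is indeed detected pointwise in $\Fun(BG,\Cle)$ and preserved by the formation of the big family colimit $E(\cY,P)=\colim_{\ell} E(Y_\ell,P)$; this is precisely the content of the facts about $\Cle$-valued functor categories used in \cref{gbiwrotwrtbwrgergeg} (fully faithfulness is preserved by filtered colimits, stable cofibres commute with filtered colimits), so once those are invoked the argument closes exactly as in the excisive case.
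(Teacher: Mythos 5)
Your proof is correct and follows essentially the same route as the paper: the square is pointwise a pushout (resp.\ excisive square) since $E(-,\ev(P))$ is excisive (resp.\ $l$-excisive), and applying $\colim_{BG}$ yields the claim, with the $l$-excisive case settled by \cref{rfiorjgfoqfwewfefewfqfe} applied to the groupoid $BG$ — exactly the lemma the paper cites. The worry in your last paragraph about ``pointwise detection'' of excisive squares is unnecessary: one only needs that pointwise excisive squares become excisive after $\colim_{BG}$, which is precisely \cref{rfiorjgfoqfwewfefewfqfe}.
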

\begin{proof}
Let $X$ be {a $G$-bornological coarse space}
with a complementary pair $(Z,\cY)$. 
 	By  the assumption on $E$, the square
  \begin{equation*}\label{krgwejfewjfllwkeffwefefwef-colim}
		 \xymatrix{
		 	 E(  \cY\cap Z,P)\ar[r]\ar[d]&  E(Z,P)\ar[d] \\
		 	  E( \cY,P)\ar[r]&  E(X,P)
 }\end{equation*}
 is  a pushout square (or excisive square in the case $\bM=\Cle$) in $\Fun(BG,\bM)$.
 Applying $\colim_{BG}$ produces the desired   pushout  square (or excisive square in the case $\bM=\Cle$ by \cref{rfiorjgfoqfwewfefewfqfe})
 \begin{equation*}\label{krgwejfewjfllwkeffwefefwef-colim-1}
	 \xymatrix{
	 	 E_{P,G}(  \cY\cap Z)\ar[r]\ar[d]&  E_{P,G}(Z)\ar[d] \\
	 	 E_{P,G}( \cY)\ar[r]&   E_{P,G}(X)
	}\end{equation*}
in $\bM$.
\end{proof}

Recall \cref{rt98uerg9rt342trg32432f} of a  flasqueness preserving functor and  \cref{goiijgowgergegergregewrg-111} of a  functorially flasqueness preserving functor.
\begin{lem}\label{egoerpgrgwegwegwreg}
If $E$ is functorially flasqueness preserving, then $E_{G}$ is  functorially flasqueness preserving.
\end{lem}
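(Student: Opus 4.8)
The statement asserts that if $E\colon \BC \times \bP \to \bM$ functorially preserves flasqueness, then so does $E_G \colon G\BC \times \Fun(BG,\bP) \to \bM$. Recall from \cref{goiijgowgergegergregewrg-111} that functorial preservation of flasqueness means $E$ refines to a functor $\Flrm(E)\colon \Fl(\BC) \times \bP \to \Fl(\bM)$ sitting over $E$ via the forgetful functors, and similarly we must produce $\Flrm(E_G)\colon \Fl(G\BC) \times \Fun(BG,\bP) \to \Fl(\bM)$. The plan is to construct $\Flrm(E_G)$ by running the same three-step composition \eqref{rthiowrthrthwhwgwergwrgw} that defines $E_G$, but upgraded to the level of pairs-with-endomorphisms, and then to check that the output lands in $\Fl(\bM)$ rather than merely $\bEnd(\bM)$.

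First I would observe that if $(X,f)$ is in $\Fl(G\BC)$, then the underlying data gives $X$ in $\Fun(BG,\BC)$ (via the first functor in \eqref{rthiowrthrthwhwgwergwrgw}) together with the endomorphism $f$, which is automatically $G$-equivariant; moreover $f$ still implements flasqueness of $X$ viewed in $\BC$ after forgetting the $G$-action, because the conditions in \cref{rgiojgogregrgregre} are conditions on the underlying (non-equivariant) bornological coarse space. Hence the assignment $(X,f) \mapsto (X,f)$ defines a functor $\Fl(G\BC) \to \Fun(BG,\Fl(\BC))$. Composing with $\Fun(BG,\Flrm(E))$ (and the diagonal $\diag_{BG}^{*}$, applied to the $\Fl(\bM)$-valued functor obtained from $\Flrm(E)$) yields a functor $\Fl(G\BC) \times \Fun(BG,\bP) \to \Fun(BG,\Fl(\bM))$, landing over $E$ composed in the analogous way. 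The key point is then to apply $\colim_{BG}$: one must know that a colimit over $BG$ of a diagram in $\Fl(\bM)$, i.e. of an object $M \in \Fun(BG,\bM)$ together with a $G$-equivariant endomorphism $S$ satisfying $S \simeq \id_M + S$, yields an object of $\Fl(\bM)$, i.e. $\colim_{BG}$ of $S$ satisfies $\colim_{BG}S \simeq \id_{\colim_{BG}M} + \colim_{BG}S$. This holds because $\colim_{BG}\colon \Fun(BG,\bM) \to \bM$ is a functor of semi-additive $\infty$-categories (it preserves the commutative-monoid enrichment), so it sends the relation $S \simeq \id_M + S$ in $\Fun(BG,\bM)$ to the analogous relation in $\bM$; here $\id_{\colim_{BG}M} \simeq \colim_{BG}(\id_M)$. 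Thus $\Flrm(E_G) := \colim_{BG} \circ \diag_{BG}^{*} \circ (\Fun(BG,\Flrm(E))) \circ (\text{first functor})$ takes values in $\Fl(\bM)$.

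Finally I would check the compatibility square: the forgetful functors $\Fl(G\BC) \to G\BC$, $\Fun(BG,\bP) \to \Fun(BG,\bP)$, and $\Fl(\bM) \to \bM$ intertwine $\Flrm(E_G)$ with $E_G$ up to the required equivalence, which follows immediately from the corresponding compatibility square for $\Flrm(E)$ over $E$ (diagram \eqref{grelgijeroigergregergwergrewgwreg-111}) by applying the three functors $\Fun(BG,-)$, $\diag_{BG}^{*}$, and $\colim_{BG}$, all of which are compatible with the forgetful functors $\Fl(-) \to (-)$ and $\bEnd(-)\to(-)$. The only mild subtlety — and the step I expect to require the most care — is verifying that $\colim_{BG}$, as a functor between pre-additive (indeed semi-additive) $\infty$-categories, genuinely preserves the additive structure on morphism spaces closely enough that the equivalence $S \simeq \id_M + S$ is carried along functorially; this is where one invokes that $\colim_{BG}\colon \Fun(BG,\bM)\to\bM$ is a left adjoint between semi-additive $\infty$-categories and hence a morphism of $\CMon$-enriched (equivalently $\mathbb{E}_\infty$-monoidal on mapping objects) $\infty$-categories, so it commutes with finite sums of morphisms and with identities. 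Everything else is a formal manipulation of the defining composition.
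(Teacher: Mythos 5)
Your proposal is correct and follows essentially the same route as the paper: lift the defining composition \eqref{rthiowrthrthwhwgwergwrgw} to the level of flasque pairs (using that an equivariant flasqueness-implementing endomorphism restricts to one on the underlying object) and then apply $\colim_{BG}$, which preserves the relation $S\simeq \id+S$ because it is an additive (coproduct-preserving) functor between semi-additive $\infty$-categories. You in fact spell out the two small points the paper's one-diagram proof leaves implicit, so nothing is missing.
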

\begin{proof}
 By assumption, $E$ has an extension $\Flrm(E) \colon \Fl(\BC)\times \bP\to \Fl(\bM)$.  Let
 	\[ E_{\mathrm{eq}} \colon G\BC \times \Fun(BG,\bP) \to \Fun(BG,\bM) \]
 	be the composition of the first three morphisms in \eqref{rthiowrthrthwhwgwergwrgw} 
	such that $E_G \simeq \colim_{BG} E_{\mathrm{eq}}$.
	 {Then $E_{\mathrm{eq}}$ induces a functor
	\[ \Phi_{\mathrm{eq}} \colon \Fl(G\BC) \times \Fun(BG,\bP) \to \Fun(BG,\bEnd(\bM)) \]
	sending $((X,f),P)$ to $(E_{\mathrm{eq}}(X,P),E_{\mathrm{eq}}(f,P))$.}
	We define $\Flrm(E)_{\mathrm{eq}}$ similarly. We get the following diagram 
\[ {\xymatrix@C=3em{
 	\Fl(G\BC) \times \Fun(BG,\bP)\ar[dr]_{\Phi_{\mathrm{eq}}}\ar[r]^-{\Flrm(E)_{\mathrm{eq}}} & 
 	\Fun(BG, \Fl(\bM))\ar[r]^-{\colim_{BG}}\ar[d] & \Fl(\bM)\ar[d] \\
	&\Fun(BG,\bEnd(\bM))\ar[r]^-{\colim_{BG}}&\bEnd(\bM)
 }}\ ,\]
 where the left part commutes since $\Flrm(E)$ witnesses that $E$ is functorially flasqueness preserving (see \eqref{grelgijeroigergregergwergrewgwreg}),
 and the right part commutes since the two vertical functors preserve colimits.
 The commutativity of the outer part of the diagram shows that $E_G$ is functorially flasqueness preserving.
\end{proof}

 Recall the functor  $i \colon G\Set \to G\BC$ from \eqref{qreoijqoiegjoqirffewfq} sending  $S$ to $S_{min,max}$.
The orbit category $G\Orb$ is the full subcategory of $G\Set$ of transitive $G$-sets. 
 {We call a $G$-bornological coarse space $X$ bounded if $X$ is a bounded subset of itself}.
  Consider a functor $E' \colon G\BC\to \bM$ with a cocomplete target.

\begin{ddd}\label{ergeiguhgiergwregwergwerg}
$E'$ is called hyperexcisive if for every  {$G$-set} $W$
and bounded  {$G$-bornological coarse space} $X$
the morphism
\begin{equation}\label{fvfvqrw3v}
\colim_{(S\to W)\in G\Orb_{/W}}E'(S_{min,max}\otimes X)\to E'(W_{min,max}\otimes X)
\end{equation}
is an equivalence. 
\end{ddd}
\begin{rem}
The equivalence in \eqref{fvfvqrw3v} can be rewritten as 
\[  \coprod_{S\in W/G} E'(S_{min,max}\otimes X)  \simeq E'(W_{min,max}\otimes X)\ ,\]
i.e., a  hyperexcisive functor is  excisive for certain infinite coarsely disjoint decompositions.  This property  is  {non-trivial}
if $W/G$ is infinite, otherwise it follows from $\pi_{0}$-excisiveness (\cref{rgiowergerwgregwergergwerg}). 
 \end{rem} 
 
 Consider again the situation that $E$ is a functor $\BC \times \bP \to \bM$, and that $P$ is in $\Fun(BG,\bP)$.

\begin{lem}\label{gwgwergrwegwerg}
Assume: 
\begin{enumerate}
\item  {$E(-,\ev(P))$} is continuous (\cref{geriogjergerg}).
\item  {$E(-,\ev(P))$} is $\pi_{0}$-excisive (\cref{rgiowergerwgregwergergwerg}).
 \end{enumerate}
Then  {$E_{P,G}$} is hyperexcisive.
 \end{lem}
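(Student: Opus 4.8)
The goal is to show that $E_{P,G}$ is hyperexcisive, i.e., that for every $W$ in $G\Set$ and every bounded $X$ in $G\BC$ the canonical morphism
\[
\colim_{(S\to W)\in G\Orb_{/W}} E_{P,G}(S_{min,max}\otimes X)\to E_{P,G}(W_{min,max}\otimes X)
\]
is an equivalence. The strategy is to unwind the definition of $E_{P,G}=\colim_{BG}\circ\,\diag_{BG}^{*}\circ E(-,P)$ and reduce everything to a statement about the functor $E(-,\ev(P))\colon \BC\to\bM$ with its two hypotheses (continuity and $\pi_0$-excisiveness). Since $\colim_{BG}$ commutes with the colimit over $G\Orb_{/W}$, it suffices to prove that the corresponding morphism is an equivalence already in $\Fun(BG,\bM)$, and equivalences there are detected after evaluation. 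So one is reduced to the non-equivariant statement: for $E'':=E(-,\ev(P))\colon\BC\to\bM$ continuous and $\pi_0$-excisive, the map $\colim_{S} E''(S_{min,max}\otimes X)\to E''(W_{min,max}\otimes X)$ is an equivalence — where now $S$ ranges over orbits of the restricted $G$-set (but forgetting the group action, $W_{min,max}\otimes X$ is a bornological coarse space and the orbit decomposition just becomes a decomposition of $W$ into $G$-orbits viewed as a set).

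\textbf{Key steps.} First I would reduce to the pointwise/non-equivariant statement as above, using that $\colim_{BG}$ preserves colimits and that $\diag_{BG}^{*}$ and evaluation preserve and detect equivalences. Second, I would analyze $W_{min,max}\otimes X$: its underlying set is $W\times X$, and since $X$ is bounded, $W\times X$ decomposes as the coarsely disjoint union (indeed as a free union, using that $X$ is bounded so its bornology contributes nothing extra) $\bigsqcup^{\free}_{w\in W}(\{w\}\otimes X)$, and more generally $W_{min,max}\otimes X \cong \bigsqcup^{\free}_{S}\,(S_{min,max}\otimes X)$ over the orbits $S$ of $W$. The minimal coarse structure on $W$ is what forces the decomposition to be coarsely disjoint and in fact makes $W_{min,max}\otimes X$ a free union of the pieces $S_{min,max}\otimes X$. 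Third, I would use continuity of $E''$ to write $E''(W_{min,max}\otimes X)\simeq \colim_{F\in\cF(W_{min,max}\otimes X)} E''(F)$; because $X$ is bounded and $W$ carries the maximal bornology, the locally finite invariant subsets $F$ of $W\times X$ are precisely those contained in $F_0\times X$ for $F_0\subseteq W$ finite, so the colimit reduces to a colimit over finite subsets $F_0$ of $W$. For each finite $F_0$, the space $(F_0)_{min,max}\otimes X$ is a finite coarsely disjoint union of the $(\{w\})_{min,max}\otimes X$, so $\pi_0$-excisiveness gives $E''((F_0)_{min,max}\otimes X)\simeq \coprod_{w\in F_0}E''(\{w\}_{min,max}\otimes X)$. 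Taking the colimit over finite $F_0\subseteq W$ and using that filtered colimits of finite coproducts compute the (infinite) coproduct, one gets $E''(W_{min,max}\otimes X)\simeq\coprod_{w\in W}E''(\{w\}_{min,max}\otimes X)$, and regrouping by orbits gives $\simeq\coprod_{S\in W/G}E''(S_{min,max}\otimes X)$. Finally I would check that this identification is compatible with the canonical comparison map from $\colim_{(S\to W)\in G\Orb_{/W}}$, which amounts to observing that $\colim_{G\Orb_{/W}}$ of a functor factoring through $W/G$ (the set of orbits, a discrete category) is just the coproduct over $W/G$; this is a standard fact about the orbit category comma construction.

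\textbf{Expected main obstacle.} The conceptual content is light; the real care is bookkeeping. The step I expect to be the main obstacle is verifying cleanly that the orbit-decomposition of $W_{min,max}\otimes X$ is compatible, \emph{as a diagram over} $G\Orb_{/W}$, with both the continuity rewriting and the $\pi_0$-excision isomorphisms — i.e., that the zig-zag of equivalences I produce is natural in the maps $S\to W$ and genuinely identifies the canonical morphism \eqref{fvfvqrw3v} rather than just both sides abstractly. In particular one must be careful that the comma category $G\Orb_{/W}$ maps (cofinally, or at least inducing the same colimit) onto the discrete category $W/G$: a transitive $G$-set with an equivariant map to $W$ picks out an orbit of $W$, and this functor is final, so $\colim_{G\Orb_{/W}}$ agrees with $\coprod_{W/G}$. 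Once that cofinality is in place, the rest assembles from \cref{geriogjergerg}, \cref{rgiowergerwgregwergergwerg}, \cref{fqiofwefewfeqwfeqwf}, and the fact that filtered colimits distribute over the relevant coproducts (as used already in the proof of \cref{ergiowjoetgergreergwergwreg}).
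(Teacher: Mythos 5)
Your strategy is essentially the paper's: commute the colimits past $\colim_{BG}$, detect equivalences after evaluation, use continuity to reduce $W_{min,max}\otimes X$ (which is bounded, since $X$ is) to its finite pieces, and use $\pi_{0}$-excisiveness only for finite coarsely disjoint decompositions. The paper differs only in granularity: it never descends below the orbit level, but interposes the category $G\Set^{f}_{/W}$ of $G$-finite $G$-sets over $W$, so that continuity (every locally finite subset of $W_{min,max}\otimes X$ lies in some $R\times X$ with $R$ $G$-finite), finite $\pi_{0}$-excision ($\colim_{G\Orb_{/R}}E(S_{min,max}\otimes X)\simeq E(R_{min,max}\otimes X)$), and one cofinality statement ($\colim_{G\Orb_{/W}}\simeq\colim_{G\Set^{f}_{/W}}\colim_{G\Orb_{/R}}$) assemble into a single natural chain identifying the canonical map. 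Your route through singletons $\{w\}\times X$ proves the same thing but forces you to rerun the continuity-plus-excision argument once more for each infinite orbit when you ``regroup'', which is exactly the compatibility bookkeeping you flag as the main obstacle; the paper's choice of $G\Set^{f}_{/W}$ is what makes that bookkeeping disappear.

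Two statements in your write-up are wrong as formulated, though neither is load-bearing. First, $W_{min,max}\otimes X$ is \emph{not} the free union $\bigsqcup^{\free}_{S}(S_{min,max}\otimes X)$: since $X$ is bounded and $W$ carries the maximal bornology, all of $W\times X$ is bounded in the tensor product, whereas bounded sets of the free union meet only finitely many pieces (and the free-union coarse structure admits non-uniform families of entourages which the tensor coarse structure does not). Fortunately your argument only uses continuity plus finite $\pi_{0}$-excision, not this identification. Second, the justification of $\colim_{G\Orb_{/W}}\simeq\coprod_{W/G}$ via finality of $G\Orb_{/W}\to W/G$ ``applied to a functor factoring through $W/G$'' does not apply: the diagram $(S\to W)\mapsto E''(S_{min,max}\otimes X)$ does not factor through $W/G$ (different orbit types over the same orbit have different values). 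The correct and equally easy statement is that $G\Orb_{/W}$ decomposes as the disjoint union $\coprod_{O\in W/G}G\Orb_{/O}$, and each $G\Orb_{/O}$ has the terminal object given by the orbit inclusion $O\hookrightarrow W$, so the colimit is the coproduct over $W/G$ of the values at these inclusions; this is also how one justifies the coproduct rewriting of \eqref{fvfvqrw3v} in the remark following \cref{ergeiguhgiergwregwergwerg}. With these two points repaired, your outline fills in to a complete proof.
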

\begin{proof}
 Let $X$  {be a bounded $G$-bornological coarse space},
 and let $W$ be  {a $G$-set}.
 Let $G\Set^f$ be the full subcategory of $G\Set$ of $G$-finite $G$-sets.
Then we have a commutative diagram \begin{equation}\label{qefojoqiewfqewfqwefe}
	\hspace{-0.7cm}\xymatrix@R=2em{
	 \colim\limits_{(R \to W) \in G\Set^f_{/W}} E(R_{min,max} \otimes X,\ev(P))\ar[r] & E(W_{min,max} \otimes X,\ev(P)) \\
	 \colim\limits_{(R \to W) \in G\Set^f_{/W}} \colim\limits_{F \in \cF({\res^{G}_{\{1\}}}(R_{min,max} \otimes X))} E(F,\ev(P))\ar[r]\ar[u]^-{\simeq} & \colim\limits_{F \in \cF({\res^{G}_{\{1\}}}(W_{min,max} \otimes X) )} E(F,{\ev(P)})\ar[u]_-{\simeq}
	} \end{equation}
({we omitted the symbol $\res^{G}_{\{1\}}	$ in the first argument of $E$ in the upper line)
}
 in which both vertical maps are equivalences by continuity of $E(-,\ev(P))$.  Every locally finite subset of $W_{min,max} \otimes X$ is contained in a subset of the form $R  \times X$ for some  {$G$-finite $G$-set} $R$.
 Hence the lower horizontal arrow is induced by a cofinal functor, and is thus also an equivalence. It follows that the top horizontal arrow is an equivalence.
 We have the sequence of equivalences
	\begin{align*}
	 \colim_{(S \to W) \in G\Orb_{/W}}& E_G(S_{min,max} \otimes X,\ev(P)) \\
	 	&\simeq \colim_{BG} \colim_{(S \to W) \in G\Orb_{/W}} E(S_{min,max} \otimes X,\ev(P)) \\
	 	&\stackrel{{!}}{\simeq}  \colim_{BG} \colim_{(R \to W) \in G\Set^f_{/W}} \colim_{(S \to R) \in G\Orb_{/R}} E(S_{min,max} \otimes X,\ev(P)) \\
	 	&\stackrel{{!!}}{\simeq}  \colim_{BG} \colim_{(R \to W) \in G\Set^f_{/W}} E(R_{min,max} \otimes X,\ev(P)) \\
	 	&\stackrel{{!!!}}{\simeq} E_G(W_{min,max} \otimes X,\ev(P))\ ,
	\end{align*}
where the  equivalence marked by $!$  follows from a cofinality consideration, the   equivalence marked by $!!$ uses $\pi_0$-excisiveness of $E(-,\ev(P))$, and the equivalence
marked by $!!!$ is the upper horizontal equivalence in \eqref{qefojoqiewfqewfqwefe}.
\end{proof}

\begin{ddd}\label{qoirejoqierjgqergrefqerwfewf}
 We define the functor
 \[ \bV^{c}_{G}\colon G\BC\times \Fun(BG,\CL)\to \Cle \]
 by  applying \cref{thklwhwhtthwgregr} to $\bV^{c}$ (the functor from \cref{iowergergwegr} in the case of trivial $G$).
\end{ddd}

{In other words, $\bV^c_G$ sends a $G$-bornological coarse space $X$ and an object $\bC$ in $\Fun(BG,\CL)$ to the left-exact $\infty$-category $\colim_{BG} \bV^c_\bC(X)$, where $G$ acts  {on both $X$ and $\bC$.} }
As in the case of the fixed point theory, we write $\bV^{c}_{\bC,G}$ for the evaluation of this functor at a fixed object $\bC$ in $\Fun(BG,\CL)$.}

\begin{kor}\label{wthgiowergergwergwerglabel}
The functor $\bV_{\bC,G}^{c}$ is
\begin{enumerate}
	\item coarsely invariant, 
	\item $u$-continuous,
	\item $l$-excisive,
	\item flasqueness preserving and
	\item hyperexcisive.
\end{enumerate}
\end{kor}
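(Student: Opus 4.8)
\textbf{Proof plan for \cref{wthgiowergergwergwerglabel}.}
The strategy is to deduce each property of $\bV^{c}_{\bC,G}$ from the corresponding property of the non-equivariant functor $\bV^{c}_{\bC}$ (equivalently, $\bV^{c}$ regarded as a functor $\BC\times\Fun(B\{1\},\CL)\to\Cle$ evaluated at $\bC$) by applying the general transfer lemmas from \cref{thklwhwhtthwgregr} onward. Recall that $\bV^{c}_{G}$ is, by \cref{mainddd} \eqref{qoirejoqierjgqergrefqerwfewf}, obtained from $\bV^{c}$ by the construction $E\mapsto E_{G}$ of \cref{thklwhwhtthwgregr}, so for fixed $\bC$ in $\Fun(BG,\CL)$ we have $\bV^{c}_{\bC,G}=(\bV^{c})_{\bC,G}$ with $\bV^{c}$ in the role of $E$ and $\bC$ in the role of $P$; its image under the evaluation functor $\ev\colon\Fun(BG,\CL)\to\CL$ is the underlying left-exact $\infty$-category $\bC$ (with trivial action), so $(\bV^{c})(-,\ev(\bC))=\bV^{c}_{\bC}$. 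Thus the input we need is precisely that $\bV^{c}_{\bC}$ has the properties (1)–(6) listed in \cref{regwergergergrgwgregwregwregwergwreg}, applied with the trivial group; in particular $\bV^{c}_{\bC}$ is coarsely invariant, $u$-continuous, $l$-excisive, (functorially) flasqueness preserving, $\pi_{0}$-excisive, strongly additive, and continuous.

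First I would handle items (1)–(3). Coarse invariance of $\bV^{c}_{\bC,G}$ follows from coarse invariance of $\bV^{c}_{\bC}=\bV^{c}(-,\ev(\bC))$ by \cref{etgiowetgregrwegergwr-colim}. Likewise $u$-continuity follows by \cref{rewgoijerg9rwup29igwg-colim}, and $l$-excisiveness follows by \cref{egiweogergergewrg-colim} (in the case $\bM=\Cle$), each applied with $P=\bC$. For item (4), flasqueness preservation, I would first note that $\bV^{G}$ functorially preserves flasqueness by \cref{ergoegergregreg} \eqref{ergoewrpgwergregrgwgregwergneuneu}; specialising to the trivial group, $\bV^{c}$ functorially preserves flasqueness as well (the operation of forcing continuity preserves this by \cref{goijergoirgergregregwerwergw} \eqref{gfoiqjgoqiwqqfewfqewfq}). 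Then \cref{egoerpgrgwegwegwreg} shows $(\bV^{c})_{G}=\bV^{c}_{G}$ functorially preserves flasqueness, and \cref{tgiowergwregwregwregeg} yields that $\bV^{c}_{\bC,G}$ preserves flasqueness for every fixed $\bC$.

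The last item, hyperexcisiveness, is the one requiring a genuine (if short) argument rather than a one-line invocation, and it is where I expect the only real subtlety. The plan is to apply \cref{gwgwergrwegwerg}: that lemma says that if $E(-,\ev(P))$ is continuous and $\pi_{0}$-excisive, then $E_{P,G}$ is hyperexcisive. Here $E=\bV^{c}$, $P=\bC$, and $E(-,\ev(\bC))=\bV^{c}_{\bC}$ is continuous by construction (it is obtained by forcing continuity, \cref{lem:forcects}) and $\pi_{0}$-excisive by \cref{regwergergergrgwgregwregwregwergwreg} (applied with the trivial group, via \cref{ergiowjoetgergreergwergwreg} and \cref{ifuewfioqewfwefewfewqf} \eqref{ergioewrjgowregregwrgregwreg}). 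Hence \cref{gwgwergrwegwerg} applies and gives hyperexcisiveness of $\bV^{c}_{\bC,G}$. The only point to check carefully is the bookkeeping that $\ev(\bC)$ is indeed the trivial‑action left-exact $\infty$-category underlying $\bC$, so that the hypotheses of \cref{gwgwergrwegwerg} about $E(-,\ev(P))$ are exactly the already‑established non-equivariant properties of $\bV^{c}$; once this identification is in place, all six assertions follow formally from the cited lemmas.
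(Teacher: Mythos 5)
Your proposal is correct and follows essentially the same route as the paper: deduce items (1)–(4) from \cref{regwergergergrgwgregwregwregwergwreg} together with the transfer lemmas \cref{etgiowetgregrwegergwr-colim}, \cref{rewgoijerg9rwup29igwg-colim}, \cref{egiweogergergewrg-colim} and \cref{egoerpgrgwegwegwreg}, and obtain hyperexcisiveness from continuity and $\pi_{0}$-excisiveness of $\bV^{c}_{\bC}$ via \cref{gwgwergrwegwerg}. The only differences are cosmetic (you unpack the flasqueness and $\pi_0$-excisiveness citations slightly more than the paper does), so nothing further is needed.
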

\begin{proof}
 Coarse invariance holds by \cref{regwergergergrgwgregwregwregwergwreg} \eqref{thwoierjoiwegjergg} and \cref{etgiowetgregrwegergwr-colim}, $u$-continuity  by \cref{regwergergergrgwgregwregwregwergwreg} \eqref{thwoierjoiwegjergg1}   and \cref{rewgoijerg9rwup29igwg-colim}, and excision by \cref{regwergergergrgwgregwregwregwergwreg} \eqref{thwoierjoiwegjergg2} and \cref{egiweogergergewrg-colim}. The functor $\bV_{\bC,G}^c$ is flasqueness preserving by \cref{regwergergergrgwgregwregwregwergwreg} \eqref{thwoierjoiwegjergg3} and \cref{egoerpgrgwegwegwreg}, and hyperexcisive by \cref{ergieorogergergergregerge}, \cref{regwergergergrgwgregwregwregwergwreg} \eqref{wgwijretgowergfwefwref}, and \cref{gwgwergrwegwerg}.
\end{proof}

\subsection{Coarse homology theories from homological functors}\label{sec:homol}

In the preceeding  \cref{gioegwgwrgrgreggwregwrg} and \cref{sec:orbit-theory}
we introduced the functors $\bV_{\bC}^{G,c}$ and $\bV^{c}_{\bC,G}$.
In the present section, we first recall the notion 
of a coarse homology theory in \cref{erguiheriwgregregwgrwegwg1}. We then show that postcomposing 
the functors above with a homological functor (see \cref{qrevoiqrjoirqfcwqecq}) produces equivariant coarse homology theories.

Consider a cocomplete stable $\infty$-category $\bM$, and let $E \colon G\BC\to \bM$ be a functor.
Recall \cref{rgiojgogregrgregre} of a flasque $G$-bornological coarse space.

\begin{ddd}\label{wergiowreggergwegreg}
We say that $E$ vanishes on flasques if $E$ sends flasque $G$-bornological coarse spaces to zero objects.
\end{ddd}

Since $\bM$ is stable, by   \cref{erguiheriwgregregwgrwegwg} this condition on $E$ is actually equivalent to the condition that $E$ is flasqueness preserving.
 
\begin{ddd}[{\cite[Def.~3.10]{equicoarse}}] \label{erguiheriwgregregwgrwegwg1}
 $E$ is called an equivariant coarse homology theory if it is
\begin{enumerate}
\item coarsely invariant (\cref{rgiherogiergregreg}),
\item excisive (\cref{rgoiruegoiregregregreg}),
\item $u$-continuous (\cref{rgiuerogergergergre}), and
\item vanishes on flasques (\cref{wergiowreggergwegreg}).
 \qedhere
\end{enumerate}
\end{ddd}

If $E$ is a coarse homology theory, then it may additionally be
\begin{enumerate}
\item continuous (\cref{geriogjergerg}),
\item strongly additive (\cref{wergiugewrgergergerwgwregw}) (where we must assume that $\bM$ has set-indexed products),
\item strong (\cref{rgiorejgiowergwergwregwergregw}  {below}) or
\item  {hyperexcisive (\cref{ergeiguhgiergwregwergwerg}).}
\end{enumerate}

In the following, we recall the notion of strongness.
In  \cite[{Sec.~4.1}]{equicoarse} we have constructed a universal equivariant coarse homology theory 
\[ \Yo^{s} \colon G\BC\to G\Sp\cX\ .\]
It has the universal property that  precomposition by $\Yo^{s}$ induces an equivalence between the $\infty$-category of $\bM$-valued coarse homology theories (considered as a subcategory of $\Fun(G\BC,\bM)$), and the $\infty$-category $\Fun^{\colim}(G\Sp\cX,\bM)$ of colimit preserving functors from $G\Sp\cX$ to $\bM$ for any cocomplete stable $\infty$-category $\bM$.

Let $X$ be {a $G$-bornological coarse space}.
\begin{ddd}[{\cite[Def.~4.17]{equicoarse}}] \label{hgiofgregt43teg}
We call $X$ weakly flasque if
it admits an endomorphism $f \colon X\to X$ such that
\begin{enumerate} \item  \label{rgoihgiogregregregergre}  $\Yo^{s}(f)\simeq \id_{\Yo^{s}(X)}$.
\item$f$ implements pre-flasqueness of  $X$ (see \cref{tiowgwtrgwergregwgrg}).
  \qedhere
\end{enumerate}
\end{ddd}
We say that $f$ implements weak flasqueness of $X$.
\begin{rem}
	For a flasque space (\cref{rgiojgogregrgregre}), we require $f$ to be close to the identity. Weak flasqueness replaces this by assumption \eqref{rgoihgiogregregregergre}.
	Since $\Yo^{s}$ is coarsely invariant,
	a flasque $G$-bornological coarse space is weakly flasque.
\end{rem}

 Let $E \colon G\BC\to \bM$ be an equivariant coarse homology theory.
 
 \begin{ddd}[{\cite[Def.~ 4.18]{equicoarse}}]\label{rgiorejgiowergwergwregwergregw}
 We call $E$ strong if it annihilates weakly flasque bornological coarse spaces.
 \end{ddd}

\begin{rem}
The condition of $E$ being strong is important if one wants to construct equivariant homology theories from equivariant coarse homology theories by precomposing with the cone functor {\cite[Sec.~9]{equicoarse}}. 
Strongness of $E$ implies homotopy invariance of the composition.
 We refer to \cite[Sec.~11.3]{equicoarse} for more details.
 \end{rem}
 
 The remainder of this section combines the results of preceding sections to show that both $\bV^{G,c}_\bC$ and $\bV^c_{\bC,G}$ induce equivariant coarse homology theories.}
 
 {We consider a functor
\[ \Homol \colon \Cle\to \bM\ .\]
Recall the notion of a homological functor from \cref{qrevoiqrjoirqfcwqecq}.}
\begin{lem}\label{wregiowergewrgregwrgwregwre}
  If  $\Homol$ is homological, then it annihilates flasques.
\end{lem}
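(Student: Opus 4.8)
The statement to prove is that a homological functor $\Homol \colon \Cle \to \bM$ annihilates flasques, meaning it sends flasque left-exact $\infty$-categories (in the sense of \cref{efiwrog34tt34t34t34t}) to zero objects.

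The plan is to exploit the Eilenberg-swindle-type structure of a flasque object $M$ in $\Cle$, namely an endomorphism $S \colon M \to M$ with $S \simeq \id_M + S$ (using that $\Cle$ is semi-additive, so $\Hom$-spaces carry a commutative monoid structure and $+$ makes sense). First I would apply $\Homol$ to this equivalence. Since $\Homol$ is homological, by the remark just before the lemma (equation \eqref{rfoijqwfoiejfojoiewfqwefqfqewfqewf}) it is additive, so $\Homol(S) \simeq \Homol(\id_M + S) \simeq \Homol(\id_M) + \Homol(S) = \id_{\Homol(M)} + \Homol(S)$ as endomorphisms of $\Homol(M)$ in the stable (hence in particular additive) $\infty$-category $\bM$. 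This exhibits $\Homol(M)$ as a flasque object of $\bM$ in the sense of \cref{efiwrog34tt34t34t34t}, with flasqueness implemented by $\Homol(S)$.

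Next I would use that $\bM$ is stable. In a stable (equivalently, additive) $\infty$-category, an object $N$ admitting an endomorphism $T$ with $T \simeq \id_N + T$ must be a zero object: subtracting $T$ (which is possible since $\pi_0 \End(N)$ is an abelian group, not merely a commutative monoid) gives $0 \simeq \id_N$, hence $N \simeq 0$. This is exactly the content recorded in \cref{erguiheriwgregregwgrwegwg}, which states that if $\bM$ is additive then $\bM^{\fl}$ consists of zero objects. Applying this to $N = \Homol(M)$ and $T = \Homol(S)$ yields $\Homol(M) \simeq 0$, which is the desired conclusion.

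There is no real obstacle here; the only point requiring a small amount of care is making sure the additivity equation \eqref{rfoijqwfoiejfojoiewfqwefqfqewfqewf} applies to the pair of morphisms $\id_M, S \colon M \to M$ (it does, since it is stated for arbitrary parallel morphisms in $\Cle$), and that the passage from "flasque in the semi-additive sense" to "zero object" genuinely uses stability of $\bM$ rather than mere semi-additivity. Both are already available from the cited results, so the proof is a two-line deduction. I would write it as: apply $\Homol$ to $S \simeq \id_M + S$, invoke additivity to get $\Homol(S) \simeq \id_{\Homol(M)} + \Homol(S)$, and conclude $\Homol(M) \simeq 0$ by stability of $\bM$ (\cref{erguiheriwgregregwgrwegwg}).
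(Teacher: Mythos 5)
Your proof is correct and follows essentially the same route as the paper: apply $\Homol$ to $S\simeq \id+S$, use the additivity \eqref{rfoijqwfoiejfojoiewfqwefqfqewfqewf} of a homological functor to get $\Homol(S)\simeq \id_{\Homol(M)}+\Homol(S)$, and conclude $\Homol(M)\simeq 0$ from stability of $\bM$ via \cref{erguiheriwgregregwgrwegwg}. No gaps.
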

\begin{proof}
Let $(\bC,S)$ be in $\Fl(\Cle)$ (see \cref{thgiojgoiggfergrewgerewergewrgreggw}).
Since $\Homol$ is homological and therefore additive, the relation $S\simeq \id_{\bC} +S$ implies $\Homol(S)\simeq \id_{\Homol(\bC)}+\Homol(S)$. Since $\bM$ is stable and therefore additive, this in turn  implies that $\Homol(\bC)\simeq 0$ (\cref{erguiheriwgregregwgrwegwg}).
\end{proof}

We fix some  functor
\[ \bV\colon G\BC\to \Cle \]
and consider the composition
\[ \Homol\bV := \Homol\circ \bV \colon G\BC\to \bM\ .\]}

\begin{lem}\label{rgoiwgwrgrgwrgg}Assume:
\begin{enumerate}
	\item $\bV$  is
\begin{enumerate} 
\item coarsely invariant (\cref{rgiherogiergregreg}),
\item $l$-excisive (\cref{rgoiruegoiregregregreg-modified}), 
\item $u$-continuous (\cref{rgiuerogergergergre}), and
 \item preserves flasques (\cref{rt98uerg9rt342trg32432f}).
\end{enumerate}
\item $\Homol$ is homological (\cref{qrevoiqrjoirqfcwqecq}).
\end{enumerate}
Then the functor $\Homol\bV \colon G\BC\to \bM$ is an equivariant coarse homology theory (\cref{erguiheriwgregregwgrwegwg1}).
\end{lem}
\begin{proof}
First note that the target category $\bM$ is stable and cocomplete  since it is the target of a homological functor.
We  show that the functor $\Homol\bV$ has the properties   listed in  \cref{erguiheriwgregregwgrwegwg1}.
 
 $\Homol\bV$ is coarsely invariant since $\bV$ is so.
 
 $\Homol\bV$ is excisive since $\bV$ (being $l$-excisive) sends  complementary pairs to excisive squares in $\Cle$, and $\Homol$ (being homological) sends these squares to pushout squares. At this point, we also employ that $\Homol$ (being homological) preserves filtered colimits in order to justify the equivalence
$\Homol\bV(\cY)\simeq \Homol(\bV(\cY))$ for every big family $\cY$ on {a $G$-bornological coarse space}
(see \eqref{ewroijiowgjoewgergergegwegw} for notation).

 $\Homol\bV$ is $u$-continuous since $\bV$ is $u$-continuous   by assumption, and $\Homol$ preserves filtered colimits.

 $\Homol\bV$ vanishes on flasques since $\bV$ preserves flasques   by assumption, and $\Homol$ annihilates flasques by  \cref{wregiowergewrgregwrgwregwre}. 
 \end{proof}

We now discuss the additional properties a coarse homology could have.
   
\begin{lem}\label{regoiwepgergwergwergwregw}
We retain the assumptions of  \cref{rgoiwgwrgrgwrgg}.
\begin{enumerate}
\item \label{rlfijqfoewewfefewfewfewfewfq} If $\bV$ is continuous, then so is $\Homol\bV$ (\cref{geriogjergerg}).
\item \label{wegrwergwregregrgw} If $\bV$ is hyperexcisive, then so is $\Homol\bV$ (\cref{ergeiguhgiergwregwergwerg}).
\item  \label{rlfijqfoewewfefewfewfewfewfq1} Assume:
\begin{enumerate}
\item $\bV$ is strongly additive (\cref{wergiugewrgergergerwgwregw}).
\item $\bM$ admits set-indexed products.
\item $\Homol$ preserves set-indexed products.
\end{enumerate}
Then $\Homol\bV$ is strongly additive.
\end{enumerate}
\end{lem}
\begin{proof} 
 \eqref{rlfijqfoewewfefewfewfewfewfq} and \eqref{wegrwergwregregrgw} 
  follow from the fact that $\Homol$ preserves filtered colimits.
  \eqref{rlfijqfoewewfefewfewfewfewfq1}  {is} obvious.
 \end{proof}

We finally discuss the condition of being strong, see \cref{rgiorejgiowergwergwregwergregw}. 
Recall \cref{goiijgowgergegergregewrg} of a functorially pre-flasqueness preserving functor. Since in the following the auxiliary $\infty$-category $\bP$ satisfies $\bP\simeq *$, we just say that $\bV$ is  pre-flasqueness preserving.

\begin{lem}\label{rgioerwgergregwrgwergrewg}
We retain the assumptions of   \cref{rgoiwgwrgrgwrgg}.
If $\bV$ is   pre-flasqueness preserving, then 
$\Homol\bV$ is strong.
\end{lem}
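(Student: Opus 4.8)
The plan is to run the same argument scheme already used in \cref{rgoiwgwrgrgwrgg} and \cref{regoiwepgergwergwergwregw}: transport the relevant property from $\bV$ to $\Homol\bV$ via the homological properties of $\Homol$. Here the property to transport is strongness, i.e.\ vanishing on weakly flasque $G$-bornological coarse spaces. So let $X$ be in $G\BC$ and suppose $f\colon X\to X$ implements weak flasqueness of $X$ in the sense of \cref{hgiofgregt43teg}; in particular $f$ implements pre-flasqueness of $X$, and $\Yo^{s}(f)\simeq \id_{\Yo^{s}(X)}$.

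First I would use that $\bV$ is pre-flasqueness preserving: applying the extension $\Flrm^{\pre}(\bV)$ (with trivial auxiliary category $\bP\simeq *$) to the pair $(X,f)$ produces an object $(M,S):=\Flrm^{\pre}(\bV)(X,f)$ of $\bEnd(\Cle)$ with $M\simeq \bV(X)$ and an equivalence
\[ S\simeq \id_{M}\times (\bV(f)\circ S) \]
coming from \eqref{fqwefiojewoifjewfewfefqwefqewfqewf}. Next I would apply the homological functor $\Homol$. Since $\Homol$ is additive by \eqref{rfoijqwfoiejfojoiewfqwefqfqewfqewf} (a consequence of \cref{roijewrogwergregwregwregw}), the above equivalence yields
\[ \Homol(S)\simeq \id_{\Homol(M)}+\Homol(\bV(f))\circ \Homol(S) \]
in $\bM$. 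At this point the key input is that $\Homol\bV(f)\simeq \id_{\Homol\bV(X)}$. This follows because $\Homol\bV$ is an equivariant coarse homology theory by \cref{rgoiwgwrgrgwrgg}, hence factors through $\Yo^{s}$ by the universal property of $\Yo^{s}$ recalled after \cref{hgiofgregt43teg}; since $\Yo^{s}(f)\simeq \id_{\Yo^{s}(X)}$, the induced endomorphism $\Homol\bV(f)$ is homotopic to the identity of $\Homol\bV(X)=\Homol(M)$. Substituting this in, the displayed equivalence reduces to
\[ \Homol(S)\simeq \id_{\Homol(M)}+\Homol(S)\,, \]
so $(\Homol(M),\Homol(S))$ is an object of $\Fl(\bM)$ in the sense of \cref{efiwrog34tt34t34t34t}. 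Since $\bM$ is stable, hence additive, \cref{erguiheriwgregregwgrwegwg} forces $\Homol(M)\simeq 0$, i.e.\ $\Homol\bV(X)\simeq 0$.

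The only mild subtlety—and the step I expect to require the most care—is justifying cleanly that $\Homol\bV(f)$ is homotopic to $\id_{\Homol\bV(X)}$ from $\Yo^{s}(f)\simeq \id_{\Yo^{s}(X)}$, i.e.\ that the factorisation of $\Homol\bV$ through $\Yo^{s}$ really makes the endomorphism induced by $f$ the one induced by $\Yo^{s}(f)$. This is immediate from the universal property (precomposition with $\Yo^{s}$ is an equivalence onto $\Fun^{\colim}(G\Sp\cX,\bM)$), but it is worth spelling out so that the identification of $\Homol(S)$'s defining relation is unambiguous. Once that is in place, everything else is the by-now routine combination of additivity of homological functors with the characterisation of flasque objects in an additive $\infty$-category, exactly as in the proof of \cref{wregiowergewrgregwrgwregwre}. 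I would end the proof there.

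\begin{proof}
Let $X$ be in $G\BC$ and let $f\colon X\to X$ implement weak flasqueness of $X$ (\cref{hgiofgregt43teg}); in particular $f$ implements pre-flasqueness of $X$, so that $(X,f)$ is an object of $\preFl(G\BC)$. Since $\bV$ is pre-flasqueness preserving, applying $\Flrm^{\pre}(\bV)$ (with trivial auxiliary category) yields an object
\[ (M,S):=\Flrm^{\pre}(\bV)(X,f) \]
of $\bEnd(\Cle)$ with $M\simeq \bV(X)$ together with an equivalence
\[ S\simeq \id_{M}\times(\bV(f)\circ S) \]
by \eqref{fqwefiojewoifjewfewfefqwefqewfqewf}. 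Applying $\Homol$ and using that $\Homol$ is additive by \eqref{rfoijqwfoiejfojoiewfqwefqfqewfqewf}, we obtain
\[ \Homol(S)\simeq \id_{\Homol(M)}+\Homol(\bV(f))\circ\Homol(S) \]
in $\bM$.

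By \cref{rgoiwgwrgrgwrgg}, the functor $\Homol\bV$ is an equivariant coarse homology theory. By the universal property of $\Yo^{s}$, it therefore factors, up to equivalence, through a colimit-preserving functor $G\Sp\cX\to\bM$ precomposed with $\Yo^{s}$; under this identification the endomorphism $\Homol\bV(f)$ of $\Homol\bV(X)\simeq\Homol(M)$ is the one induced by $\Yo^{s}(f)$. Since $f$ implements weak flasqueness, $\Yo^{s}(f)\simeq\id_{\Yo^{s}(X)}$, and hence $\Homol(\bV(f))\simeq\id_{\Homol(M)}$. Substituting this into the previous equivalence gives
\[ \Homol(S)\simeq\id_{\Homol(M)}+\Homol(S)\,, \]
so that $\bigl(\Homol(M),\Homol(S)\bigr)\in\Fl(\bM)$ in the sense of \cref{efiwrog34tt34t34t34t}. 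Since $\bM$ is stable, and therefore additive, \cref{erguiheriwgregregwgrwegwg} implies $\Homol\bV(X)\simeq\Homol(M)\simeq 0$. Hence $\Homol\bV$ annihilates weakly flasque $G$-bornological coarse spaces, i.e.\ it is strong (\cref{rgiorejgiowergwergwregwergregw}).
\end{proof}
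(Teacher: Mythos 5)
Your proof is correct and follows essentially the same route as the paper's: extract the endomorphism $S$ of $\bV(X)$ from pre-flasqueness preservation, apply additivity of $\Homol$, use that $\Homol\bV$ is a coarse homology theory (hence factors through $\Yo^{s}$) to identify $\Homol\bV(f)\simeq\id$, and conclude via flasqueness in the additive stable category $\bM$. The extra care you take in spelling out the factorisation through $\Yo^{s}$ is exactly the point the paper handles in one sentence, so nothing is missing.
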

\begin{proof}
We assume that $X$ is  {a $G$-bornological coarse space}
and that $f \colon X\to X$ implements weak flasqueness (\cref{hgiofgregt43teg}).
  Since $\bV$ is pre-flasqueness preserving, we have an endofunctor
$S \colon \bV(X)\to \bV(X)$ such that
\[ \id_{\bV(X)}+\bV(f)\circ S\simeq S\ .\]
We apply $\Homol$ and use \eqref{rfoijqwfoiejfojoiewfqwefqfqewfqewf} in order to conclude that
\begin{equation}\label{greoijoi34fefe}
\id_{\Homol\bV(X)}+\Homol\bV(f)\circ \Homol(S)\simeq \Homol(S)\ .
\end{equation}
 Since by assumption on $(X,f)$ we have an equivalence $\Yo^{s}(f)\simeq \id_{\Yo^{s}(X)}$, and since $\Homol\bV$ is a coarse homology theory, we have $\Homol\bV(f)
 \simeq \id_{\Homol\bV(X)}$. Hence \eqref{greoijoi34fefe} yields an equivalence
 \[ \id_{\Homol\bV(X)}+  \Homol(S)\simeq \Homol(S)\ ,\]
 which implies that  $\Homol\bV(X)\simeq 0$ since $\bM$ is stable and hence additive (see \cref{erguiheriwgregregwgrwegwg}).
 \end{proof}

 Let $\Homol \colon \Cle\to \bM$ be a functor, and  let $\bC$ be in $\Fun(BG,\CL)$.
Recall the functor $\bV^{G,c}_{\bC}$ from \cref{iowergergwegr}.
\begin{kor}\label{weigowgregwrgrggregwgregwreg}
If $\Homol$ is a homological functor, then $\Homol\bV^{G,c}_{\bC}$ is an equivariant coarse homology theory which in addition is
\begin{enumerate}
\item \label{ergioierjogwergreg432r} strong,
\item \label{ergioierjogwergreg432r1} continuous and
\item  \label{ergioierjogwergreg432r3} strongly additive (provided $\bM$ admits set-indexed products and $\Homol$ preserves set-indexed products).  \end{enumerate}
\end{kor}
\begin{proof}
The first assertion follows from \cref{rgoiwgwrgrgwrgg} together with \cref{regwergergergrgwgregwregwregwergwreg}.
For \eqref{ergioierjogwergreg432r} we use \cref{regwergergergrgwgregwregwregwergwreg} and \cref{rgioerwgergregwrgwergrewg}.
For \eqref{ergioierjogwergreg432r1} and \eqref{ergioierjogwergreg432r3} we use \cref{regwergergergrgwgregwregwregwergwreg} and \cref{regoiwepgergwergwergwregw}.
 \end{proof}

Recall the functor $\bV^{c}_{G,\bC}$ from \cref{qoirejoqierjgqergrefqerwfewf}.
 \begin{kor}\label{qerkgioerggwergergregwe}
 If $\Homol$ is a homological functor, then $\Homol\bV^{c}_{G,\bC}$ is an equivariant coarse homology theory which  in addition is
\begin{enumerate}
\item hyperexcisive and
\item strong.
\end{enumerate}
\end{kor}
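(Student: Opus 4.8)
The final statement to prove is \cref{qerkgioerggwergergregwe}: for a homological functor $\Homol$, the composition $\Homol\bV^{c}_{G,\bC}$ is an equivariant coarse homology theory which is moreover hyperexcisive and strong.

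The plan is to deduce everything from the properties of $\bV^{c}_{\bC,G}$ recorded in \cref{wthgiowergergwergwerglabel} together with the general machinery for homological functors developed in \cref{rgoiwgwrgrgwrgg}, \cref{regoiwepgergwergwergwregw}, and \cref{rgioerwgergregwrgwergrewg}. First I would observe that $\bV^{c}_{\bC,G}$ satisfies all four hypotheses of \cref{rgoiwgwrgrgwrgg}: it is coarsely invariant, $l$-excisive, $u$-continuous, and flasqueness preserving, each of which is one of the bullets in \cref{wthgiowergergwergwerglabel}. Since $\Homol$ is homological by assumption, \cref{rgoiwgwrgrgwrgg} immediately gives that $\Homol\bV^{c}_{\bC,G}$ is an equivariant coarse homology theory in the sense of \cref{erguiheriwgregregwgrwegwg1}. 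This handles the main claim.

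For the two additional properties, I would invoke the refinements in \cref{regoiwepgergwergwergwregw} and \cref{rgioerwgergregwrgwergrewg}, whose hypotheses are exactly those of \cref{rgoiwgwrgrgwrgg} (already verified). For hyperexcisiveness: \cref{wthgiowergergwergwerglabel} states that $\bV^{c}_{\bC,G}$ is hyperexcisive, so \cref{regoiwepgergwergwergwregw}~\eqref{wegrwergwregregrgw} yields that $\Homol\bV^{c}_{\bC,G}$ is hyperexcisive. For strongness: the functor $\bV^{c}_{\bC,G}$ is obtained by applying the orbit (colimit) construction \cref{thklwhwhtthwgregr} to $\bV^{c}_{\bC}$, which is functorially pre-flasqueness preserving; by \cref{egoerpgrgwegwegwreg} functorial pre-flasqueness preservation is inherited under the colimit construction, so $\bV^{c}_{\bC,G}$ is pre-flasqueness preserving. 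Then \cref{rgioerwgergregwrgwergrewg} applies and gives that $\Homol\bV^{c}_{\bC,G}$ is strong. Here I should double-check the notational point that in \cref{rgioerwgergregwrgwergrewg} the auxiliary category $\bP$ is trivial, so "pre-flasqueness preserving" is the correct specialisation of "functorially pre-flasqueness preserving" for the fixed coefficient $\bC$; this is the one place where a small bookkeeping argument — tracing that pre-flasqueness preservation of $\bV^{c}$ (with $\bC$ fixed) survives both forcing continuity (\cref{goijergoirgergregregwerwergw}~\eqref{egopergergwergwegwerg}) and the orbit construction (\cref{egoerpgrgwegwegwreg}) — is needed rather than a pure citation.

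The argument is therefore essentially a bookkeeping exercise assembling already-proven facts, and there is no genuine obstacle. If anything, the only delicate point is making sure the chain of inheritance of "functorially pre-flasqueness preserving" through the auxiliary constructions of \cref{wiofoqrefwefeqfewfqewf} is cited correctly so that \cref{rgioerwgergregwrgwergrewg} is legitimately applicable to $\bV^{c}_{\bC,G}$; this is routine given \cref{goijergoirgergregregwerwergw} and \cref{egoerpgrgwegwegwreg}, and one could alternatively note that $\Homol\bV^{c}_{\bC,G}$ being strong also follows formally once one knows $\Homol\bV^{c}_{\bC}$ is strong (by \cref{weigowgregwrgrggregwgregwreg}~\eqref{ergioierjogwergreg432r}) together with the fact that $\colim_{BG}$ preserves the vanishing on weakly flasque spaces, since $\Yo^{s}$-triviality of $f$ is detected after applying the evaluation functor.
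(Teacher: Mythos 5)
Your argument is correct and is essentially the paper's own proof, which simply combines \cref{wthgiowergergwergwerglabel} with \cref{rgoiwgwrgrgwrgg,regoiwepgergwergwergwregw,rgioerwgergregwrgwergrewg}; your extra care in tracing pre-flasqueness preservation of $\bV^{c}_{\bC,G}$ through \cref{goijergoirgergregregwerwergw} and the orbit construction (the analogue of \cref{egoerpgrgwegwegwreg}) is exactly the bookkeeping the paper leaves implicit. Only your closing aside is suspect: $\Homol$ is not assumed to commute with $\colim_{BG}$, so strongness of $\Homol\bV^{c}_{\bC,G}$ does not follow formally from strongness of $\Homol\bV^{c}_{\bC}$, and you should keep the route via \cref{rgioerwgergregwrgwergrewg}.
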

\begin{proof}
	This follows from \cref{wthgiowergergwergwerglabel} together with \cref{rgoiwgwrgrgwrgg,regoiwepgergwergwergwregw,rgioerwgergregwrgwergrewg}.
\end{proof}

\subsection{Calculations}\label{sec:calculations}
In this section, we calculate the values of the functors $\bV^{c}_{\bC,G}$ and  $\bV^{G,c}_\bC$  (see \cref{iowergergwegr,qoirejoqierjgqergrefqerwfewf}) on certain $G$-bornological coarse spaces.
 {By restriction  from $G$-sets to $G$-orbits, the functor $G\Set \to G\BC$ from \eqref{qreoijqoiegjoqirffewfq} gives rise to the functor
  \begin{equation}\label{vervelkrvnlervrvrevev}
i \colon G\Orb\to G\BC\ , \quad S\mapsto S_{min,max}\ .
\end{equation}
\cref{efioqjwefoweweqfewfqwefqwefe} gives an intrinsic description of the functor $\bV^c_{\bC,G} \circ i$ without reference to $G$-bornological coarse spaces. 
Moreover, we show  in \cref{cor:coeffs-orbits} that $\bV^{G,c}_\bC$ gives rise to the same functor on the orbit category if we apply it to the  tensor product of the}  $G$-orbit with the $G$-bornological coarse space $G_{can,min}$ from \cref{etwgokergpoergegregegwergrg}.
This result is an essential ingredient in the proof of \cref{wefiofewqfwefqefqewf} below.

Since $\Cle$ is cocomplete by \cref{prop:catex finitely complete}, we have an adjunction
\begin{equation}\label{2rfkhriufh3iuf3f3f}
\Ind^{G} \colon \Fun(BG,\Cle)\leftrightarrows \Fun(G\Orb,\Cle) \cocolon \Res^{G}\ ,
\end{equation}
 where $\Res^{G}:=j^{*}$ is the restriction functor along $ {j \colon BG \to G\Orb}$ from \eqref{rewflkjmo34gergwegrge}, and $\Ind^{G}:=j_{!}$ is the left Kan extension functor along $j$.

 \begin{rem}\label{gikowrteglregergregwerg}
For a subgroup $H$ of $G$ we consider the  {transitive $G$-set $G/H$}. 
We then have an equivalence of categories
 $BH \xrightarrow{\simeq} BG_{/(G/H)} $ which sends the unique object $*_{{BH}}$  of $BH$ to the projection $G\to G/H$ and the element $h$ in $H=\Aut_{BH}(*_{BH})$ to the automorphism of $(G\to G/H) $ in $BG_{/(G/H)}$
 given by right-multiplication on $G$ with $h^{-1}$. If $\bC$ is in $\Fun(BG,\Cle)$, then  the pointwise formula for the left Kan extension provides the first equivalence in   \begin{equation}\label{rlijlokvjoiqwfcwefcewceq}
\Ind^{G}(\bC)(G/H)\simeq \colim_{ BG_{/(G/H)}} \bC\simeq \colim_{BH} \Res^{G}_{H}\bC\ ,
\end{equation}
 where the second equivalence uses the equivalence of index categories discussed above. 
\end{rem}

Let $(-)^{\omega} \colon \CL\to \Cle$ be the functor taking the subcategory of cocompact objects (\cref{rgiuehgieugegewgergwergwerg}). By postcomposition, it induces a functor 
\[ \Fun(BG,\CL)\to \Fun(BG,\Cle)\ , \quad \bC\mapsto \bC^{\omega}\ .\]
Let $\bC$ be in $\Fun(BG,\CL)$ and recall the definition of the functor $\bV^{c}_{\bC,G}$ from \cref{qoirejoqierjgqergrefqerwfewf}.

\begin{prop}\label{efioqjwefoweweqfewfqwefqwefe}
	There is an equivalence
	\begin{equation}\label{fvsdfvvvsav}  \Ind^G(\bC^\omega) \simeq \bV^{c}_{\bC,G} \circ i
	\end{equation}
	of functors $G\Orb \to \Cle$.
\end{prop}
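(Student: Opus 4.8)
The plan is to compare the two functors $\Ind^G(\bC^\omega)$ and $\bV^c_{\bC,G}\circ i$ directly on objects of the orbit category, then promote the pointwise equivalence to an equivalence of functors. Since $\Ind^G(\bC^\omega) = j_!(\bC^\omega)$ is a left Kan extension along $j\colon BG\to G\Orb$, and $\bV^c_{\bC,G}\circ i\colon G\Orb\to\Cle$ is a functor whose restriction along $j$ we can identify, the natural approach is: (i) construct a natural transformation $\bC^\omega \to \Res^G(\bV^c_{\bC,G}\circ i)$ in $\Fun(BG,\Cle)$, i.e.\ a comparison $\bC^\omega \to \bV^c_{\bC,G}(G_{min,max})$ that is $G$-equivariant for the two residual $G$-actions; (ii) adjoint it over the adjunction $(\Ind^G,\Res^G)$ from \eqref{2rfkhriufh3iuf3f3f} to get a transformation $\Ind^G(\bC^\omega)\to \bV^c_{\bC,G}\circ i$; (iii) check that this transformation is a pointwise equivalence by evaluating at an arbitrary orbit $G/H$, using the formula \eqref{rlijlokvjoiqwfcwefcewceq} for $\Ind^G$ on the left and the hyperexcision/strong-additivity properties of $\bV^c_{\bC,G}$ established in \cref{wthgiowergergwergwerglabel} on the right.

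\textbf{First steps.}
First I would identify $\bV^c_{\bC,G}(G/H)_{min,max} = \bV^c_{\bC,G}(i(G/H))$. By definition (\cref{mainddd}\,\eqref{qoirejoqierjgqergrefqerwfewf}), $\bV^c_{\bC,G}(X) = \colim_{BG}\bV^c_{\bC}(X)$, so I must compute $\bV^c_{\bC}((G/H)_{min,max})$ together with its residual $G$-action and then take the colimit over $BG$. Because $(G/H)_{min,max}$ has the minimal coarse structure, \cref{uihwieurghiwuerhgiurwegwregwregg} gives $\bV_{\bC}((G/H)_{min,max}) \simeq \Sh_{\bC}((G/H)_{min,max})$, and the latter, after forcing continuity, is controlled by \cref{lem:eqsm-minmax}: for a set $S$ we have $\Sh^{\eqsm}_{\bC}(S_{min,max})\simeq \Sh_{\bC}(S_{min,max})^\omega \simeq \coprod_S \bC^\omega$. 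Actually the cleaner route is to first handle the case $H=G$, i.e.\ the orbit $G$ itself: here $\bV^c_{\bC}(G_{min,max}) \simeq \Sh^{\eqsm}_{\bC}(G_{min,max})$ and \cref{lem:eqsm-Gminmax} (with $X = *$) gives $\Sh^{G,\eqsm}_\bC(G_{min,min}\otimes *_{min,max})\simeq \Sh^{\eqsm}_\bC(*) \simeq \bC$; but we want $G_{min,max}$ rather than $G_{min,min}$, and after forcing continuity these agree since the bornology does not affect equivariant smallness for $G$ acting freely. Tracking the residual $G$-action carefully, one gets $\bC^\omega$ with its given $G$-action, and taking $\colim_{BG}$ is then absorbed into the Kan extension on the other side. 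This produces the comparison map and shows it is an equivalence when evaluated at $G/G = G \in G\Orb$, hence (since $j$ hits exactly this object) shows $\Res^G$ of both sides agree.

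\textbf{Promoting to all orbits and the main obstacle.}
Having the equivalence over $j$, I would evaluate both sides at a general $G/H$. On the left, \eqref{rlijlokvjoiqwfcwefcewceq} gives $\Ind^G(\bC^\omega)(G/H)\simeq\colim_{BH}\Res^G_H\bC^\omega$. On the right, I would use that $\bV^c_{\bC,G}$ is hyperexcisive (\cref{wthgiowergergwergwerglabel}) — recall hyperexcision, via \cref{gwgwergrwegwerg}, is exactly the statement that $\bV^c_{\bC,G}(W_{min,max}\otimes X)\simeq \coprod_{S\in W/G}\bV^c_{\bC,G}(S_{min,max}\otimes X)$ for bounded $X$ — applied to $W = G/H$ and $X = *$, together with the identification $G/H = (G/H)$ as a transitive $G$-set whose only orbit is $G/H$, and \cref{cor:coind}/\cref{lem:eqsm-Gminmax}-type computations relating $\bV^c_{\bC,G}((G/H)_{min,max})$ to $\colim_{BH}\bC^\omega$ via the coinduction functor $\coind^G_H$. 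The main obstacle I expect is bookkeeping of the residual group actions and the compatibility of all these equivalences with the structure maps of the orbit category: one must check the comparison map is genuinely natural in $G\Orb$, not just a pointwise equivalence, which requires that the universal property of $\Ind^G = j_!$ is used to \emph{define} the map (so naturality is automatic) and only the pointwise check at $G/H$ is left, for which the key input is that $\bV^c_{\bC,G}\circ i$ sends $G/H$ to something equivalent to $\colim_{BH}\Res^G_H\bC^\omega$ compatibly with the unit $\bC^\omega\to\Res^G(\bV^c_{\bC,G}\circ i)$. Concretely, the hard part is verifying that the composite $\colim_{BH}\Res^G_H\bC^\omega \to \bV^c_{\bC,G}((G/H)_{min,max})$ built from the unit is an equivalence; this is where \cref{lem:eqsm-Gminmax}, \cref{cor:coindSh}, and the coinduction computations in \cref{prop:transfer-coind} do the real work, and one must ensure the forcing-continuity step (\cref{gerklgjerlgergergergergerg}) commutes with all of it, which follows from \cref{lem:forcects} and cofinality arguments as in the proof of \cref{rhqewiuhqggqfewfeqwfqef}.
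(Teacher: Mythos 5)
Your high-level strategy — define the comparison map by the universal property of $\Ind^G=j_!$ (so naturality comes for free) and then verify a pointwise equivalence — is the same in spirit as the paper's, whose proof factors $j_!\simeq \pi_!p_!$ through $G\Orb\times BG$, identifies $p_!\bC^\omega$ with the (non-equivariant) functor $\Sh^{\eqsm}_{\bC,\mathrm{eq'}}$ by an explicit identification of the slice category $p_{/(S,*_{BG})}$ with the discrete set $S$ together with \cref{lem:eqsm-minmax}, and then applies $\pi_!=\colim_{BG}$. But the tools you propose for the pointwise check at a general orbit $G/H$ do not do that job, and this is a genuine gap. Hyperexcisiveness gives nothing for a single transitive $G$-set $W$ (the index category $G\Orb_{/W}$ has $W$ as a final object, so \eqref{fvfvqrw3v} is a tautology there), and strong additivity is not among the properties established for $\bV^c_{\bC,G}$ in \cref{wthgiowergergwergwerglabel}. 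More seriously, \cref{cor:coind}, \cref{cor:coindSh}, \cref{prop:transfer-coind} and \cref{lem:eqsm-Gminmax} all concern the $G$-fixed sheaf categories $\Sh^{G}_\bC$, $\Sh^{G,\eqsm}_\bC$; these are the inputs for the \emph{other} calculation of this section (\cref{prop:coeffs-sets}, about $\bV^{G,c}_\bC(G_{can,min}\otimes(-)_{min,max})$) and never enter $\bV^c_{\bC,G}=\colim_{BG}\bV^c_\bC$, which is built from the non-equivariant functor. What you actually need at $G/H$ is an identification, compatible with the unit map, of $\colim_{BG}$ of $\bV^c_\bC(\res^G_{\{1\}}(G/H)_{min,max})\simeq\coprod_{G/H}\bC^\omega$ (this identification uses only \cref{uihwieurghiwuerhgiurwegwregwregg}, \cref{lem:eqsm-minmax} and the forcing-continuity colimit, with $G$ acting on the index set and on $\bC^\omega$) with $\colim_{BH}\Res^G_H\bC^\omega$. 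This induction-type statement is the real content of the proposition, it is not supplied by any of the results you cite, and in the paper it is exactly what the slice-category computation for $p_{/(S,*_{BG})}$ delivers.

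There are also smaller slips in your "first steps" that reflect the same confusion between the equivariant and non-equivariant constructions: the orbit $G$ corresponds to the trivial stabiliser, not to $H=G$; $\Sh^{G,\eqsm}_\bC(G_{min,min}\otimes *_{min,max})$ is equivalent to $\bC^\omega$ (cocompacts), not $\bC$; and there is no need to pass through $G_{min,min}$ or \cref{lem:eqsm-Gminmax} at all, since the functor entering $\bV^c_{\bC,G}$ is the non-equivariant $\bV^c_\bC$ evaluated at $\res^G_{\{1\}}S_{min,max}$, for which \cref{lem:eqsm-minmax} gives $\coprod_S\bC^\omega$ directly. If you replace the coinduction/hyperexcision appeals by the explicit slice-category (or, equivalently, a Shapiro-type induction) argument identifying $\colim_{BG}\coprod_{G/H}\bC^\omega$ with $\colim_{BH}\Res^G_H\bC^\omega$ naturally in $G/H$, your outline becomes essentially the paper's proof.
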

\begin{proof}
We consider the functors
\[ \Sh_{\bC,\mathrm{eq}}, \Sh^{\eqsm}_{\bC,\mathrm{eq}}, \bV^{c}_{\bC,\mathrm{eq}} \colon G\Orb\to \Fun(BG,\CLL) \]
(note that the last two actually take values in $\Cle$)
defined  as the compositions (compare with the first three morphisms in  \eqref{rthiowrthrthwhwgwergwrgw})
 \begin{align*}
  G\Orb \cong G\Orb \times * &\xrightarrow{i \times \bC} G\BC \times \Fun(BG,\CL) \\
  &\to \Fun(BG \times BG, \BC \times \CL) \\
  &\xrightarrow{\diag_{BG}^*} \Fun(BG, \BC \times \CL) \\
  &\xrightarrow{\Sh,\Sh^\eqsm,\bV^{c}} \Fun(BG, \CLL)
  \end{align*}
  {sending a transitive $G$-set $S$ to $\Sh_\bC(S_{min,max})$, $\Sh^\eqsm_\bC(S_{min,max})$ and $\bV^c_\bC(S_{min,max})$, respectively, each equipped with the conjugation action of $G$}.
   We first show that there is an equivalence \begin{equation}\label{ewrgiurhgiu34wetg}
\bV^{c}_{\bC,G}\circ i\simeq \colim_{BG} \circ \Sh^{\eqsm}_{\bC,\mathrm{eq}}\ .
\end{equation}
 Since the functor $i \colon G\Orb \to G\BC$ equips the  transitive $G$-sets with the minimal coarse structure, the localisation morphism $\ell \colon \Sh^{\eqsm}_{\bC}\to \bV_{\bC}$  induces an equivalence
 \begin{equation}\label{sfboijoiewtbrgbfgbsb}
 \Sh^\eqsm_\bC {\circ \res^{G}_{\{1\}}}\circ i \simeq \bV_\bC {\circ  \res^{G}_{\{1\}}}\circ  i
\end{equation}
(see \cref{uihwieurghiwuerhgiurwegwregwregg} and recall that we omit to write $\Res^{G}_{\{1\}}$ in front of $\bC$, but we do not omit $\res^{G}_{\{1\}}$ at the space variables).
 
 Let $S$ be  {a transitive $G$-set}.
 Using \eqref{sfboijoiewtbrgbfgbsb}, the left vertical equivalence in the square \eqref{eqrlwkenlweergegw}, and the equivalence \eqref{wergwergwegwregwrewg}, we identify the transformation
 \[ \bV^{c}_\bC(\res^{G}_{\{1\}}S_{min,max}) \to \bV_{\bC}(\res^{G}_{\{1\}}S_{min,max}) \]
 with  the canonical map
 \[ \colim_{F \subseteq S \text{ finite}} \coprod_F \bC^\omega \to \coprod_S \bC^\omega\ .\]
 Since the latter is obviously an equivalence,  we obtain the first equivalence in 
 \begin{equation}\label{qrviuhqriuvhuqievqervqv}
\bV^{c}_\bC \circ \res^{G}_{\{1\}} \circ i \simeq \bV_\bC  \circ   \res^{G}_{\{1\}}\circ i\stackrel{\eqref{sfboijoiewtbrgbfgbsb}}{\simeq}\Sh^\eqsm_\bC {\circ \res^{G}_{\{1\}}}\circ i \ .
\end{equation}
We now take the $G$-actions on $S$ and $\bC$ into account, which by functoriality induce $G$-actions on the two sides of this equivalence. We therefore have an equivalence \begin{equation}\label{evwervervwvwrwervwv}
\bV^{c}_{\bC,\mathrm{eq}}\simeq \Sh_{\bC,\mathrm{eq}}^{\eqsm}\ .
\end{equation}
Applying $\colim_{BG}$ and  \cref{qoirejoqierjgqergrefqerwfewf} of $\bV^{c}_{\bC,G}$ for the first equivalence, we get
 the chain of equivalences 
 \begin{equation}\label{trhrthehertherth}
\bV^{c}_{\bC,G} \circ i \simeq \colim_{BG}\circ  \bV^{c}_{\bC,\mathrm{eq}}  \stackrel{\eqref{evwervervwvwrwervwv}}{\simeq} \colim_{BG} \circ \Sh^{\eqsm}_{\bC,\mathrm{eq}}
\end{equation}  
showing \eqref{ewrgiurhgiu34wetg}.

The functor corresponding to $\Sh^{\eqsm}_{\bC,\mathrm{eq}}$ by the exponential law is given by the composition
 \[ \Sh^{\eqsm}_{\bC,\mathrm{eq'}} \colon G\Orb \times BG \xrightarrow{(\alpha,\bC \circ \pr)} \BC \times \Cle {\xrightarrow{\Sh^\eqsm} \Cle} \ ,\]
 where $\pr \colon G\Orb \times BG \to BG$ is the projection and $\alpha \colon G\Orb \times BG \to \BC$ is the functor sending objects $(S,*_{{BG}})$ to $\res^{G}_{\{1\}}S_{min,max}$ and morphisms $(S\xrightarrow{\phi} S',*_{BG} \xrightarrow{g} *_{BG})$ to
 \[ \res^{G}_{\{1\}}S_{min,max}\xrightarrow{\phi(i)} \res^{G}_{\{1\}}S'_{min,max}\xrightarrow{g} \res^{G}_{\{1\}}S'_{min,max} \ .\]
 Consider the functor $p := (j,\id) \colon BG \to G\Orb \times BG$ and the projection $\pi \colon G\Orb\times BG\to G\Orb$.
We then have the  following diagram (the left triangle commutes, and the two fillers $\tau$ and $\sigma$, which are  not necessarily equivalences, will be explained below):
\begin{equation*} 
 \xymatrix{BG\ar@/^1cm/[rrd]^{\bC^{\omega}}\ar[dd]^{j}\ar[dr]^{p}&\ar@{=>}[d(0.6)]^{\tau}&\\&G\Orb\times BG\ar[dl]^{\pi}\ar[r]^-{\Sh^{\eqsm}_{\bC,\mathrm{eq'}}} \ar@{=>}[d(1.1)]^{\sigma}&\Cle\\G\Orb \ar@/_1cm/[urr]_{\colim_{BG} \circ \Sh^{\eqsm}_{\bC,\mathrm{eq}}}&&}\ .
\end{equation*}
  In the following we use the subscript $!$ in order to denote left Kan extension functors. There is an obvious  canonical natural transformation $\sigma$ exhibiting $\colim_{BG} \circ \Sh^{\eqsm}_{\bC,\mathrm{eq}}$ as a left Kan extension of
 $\Sh^{\eqsm}_{\bC,\mathrm{eq'}}$ along $\pi$, i.e.,  we have an equivalence 
  \begin{equation}\label{wtrhoijqio45joirthrthwrtht}
\colim_{BG} \circ \Sh^{\eqsm}_{\bC,\mathrm{eq}}\simeq \pi_{!}   \Sh_{\bC,\mathrm{eq'}}^{\eqsm} \ .
\end{equation} 
 The functor $\alpha \circ p \colon BG \to \BC$ sends $*_{{BG}}$ to $\res^{G}_{\{1\}}G_{min,max}$ and a  morphism   $g$ in $G=\Aut_{BG}(*_{BG})$ to the conjugation by $g$ on $\res^{G}_{\{1\}}G_{min,max}$. Since conjugation fixes the identity element,
 the inclusion of the identity element $i_e \colon *  \cong \{e\} \to \res^{G}_{\{1\}} G_{min,max}$ induces a natural transformation
 \[ \tau \colon \bC^{\omega} \simeq \Sh^\eqsm_\bC(*) \xrightarrow{\hat i_{e,*}} \Sh^\eqsm_{\bC,\mathrm{eq'}}  \circ     p \]
 of functors $BG \to \Cle$.
 We claim that  it exhibits $ \Sh^\eqsm_{\bC,\mathrm{eq'}} $ as the left Kan extension of
$\bC^{\omega}$ along $p$, i.e.,     
  that $\tau$ induces an equivalence \begin{equation}\label{vweuvheujkvervewrvewvwervwevervewv}
   p_{!}\bC^{\omega} \simeq\Sh_{\bC,\mathrm{eq'}} ^{\eqsm} \ .
\end{equation}
 Since $\pi\circ p=j$ the claim implies the desired  equivalence  \eqref{fvsdfvvvsav} by
 \[ \Ind^{G}(\bC^{\omega})\simeq j_{!} \bC^{\omega}\simeq \pi_{!}p_{!}\bC^{\omega}\stackrel{\eqref{vweuvheujkvervewrvewvwervwevervewv}}{\simeq} \pi_{!}\Sh_{\bC,\mathrm{eq'}} ^{\eqsm}   \stackrel{\eqref{wtrhoijqio45joirthrthwrtht}}{\simeq}  \colim_{BG}\circ \Sh^{\eqsm}_{\bC,\mathrm{eq}} \stackrel{\eqref{ewrgiurhgiu34wetg}}{\simeq} \bV^{c}_{\bC,G}\circ i\ .\]

 It remains to show the claim. We need to check   {for each transitive $G$-set} $S$
 that the induced morphism
 \[ \tau(S) \colon \colim_{p_{/(S,*_{{BG}})}} \bC^\omega \to  \Sh^{\eqsm}_{\bC,\mathrm{eq'}}(S,*_{BG}) \simeq \Sh^\eqsm_\bC(\res^{G}_{\{1\}}S_{min,max}) \]
 is an equivalence,  where   $p_{/(S,*)}$ is a shorthand for $BG \times_{G\Orb \times BG} (G\Orb \times BG)_{/(S,*)}$, with $p$ being implicitly used in the pullback construction. 
 The set of objects of the category
  $p_{/(S,*)}$ can be identified with the set $S\times G$ such that $(s,g)$ corresponds to  the pair $(*_{BG},(G \xrightarrow{e\mapsto s} S, *_{BG} \xrightarrow{g} *_{BG}))$. A morphism $(s,g) \to (s',g')$ exists (and is then unique) if and only if $g'g^{-1}s' = s$.
  Consequently, the functor $S \to p_{/(S,*)}$ sending $s$ in $S$ to the object $({s,e})$ is an inverse to the projection functor $p/(S,*) \to S$.  This explicit inverse induces the left vertical equivalence in  the commutative  diagram
 \[\xymatrix{
  \colim_{p_{/(S,*)}} \bC^\omega\ar[r]^-{\tau(S)} & \Sh^\eqsm_\bC(\res^{G}_{\{1\}}S_{min,max}) \\
  \coprod_S \bC^\omega\ar[u]^-{\simeq}\ar[ur]_-{\hphantom{xxx}\sum_{s \in S} \hat i_{s,*}} &
 }\]
 where $i_s \colon \{s\} \to \res^{G}_{\{1\}}S_{min,max}$ denotes the inclusion maps.
 The diagonal arrow is an equivalence by \cref{lem:eqsm-minmax}, so $\tau(S)$ is an equivalence for all  {transitive $G$-sets} $S$.
 This finishes the proof of the claim.
\end{proof}

 Let $\bC$ be in $\Fun(BG,\CL)$. Recall the $G$-bornological coarse space $G_{can,min}$ from \cref{etwgokergpoergegregegwergrg}. Recall also the idempotent completion functor $\Idem$ from \eqref{ewfbjhbfjhqerfqfewfqefe}.
   
\begin{prop}\label{prop:coeffs-sets}
	There are equivalences
	\begin{eqnarray}
	\Idem \bV^{G,c}_\bC(G_{can,min} \otimes {(-)}_{min,max})  
	&\simeq& \Idem \bV^{G}_\bC(G_{can,min} \otimes {(-)}_{min,max})  \nonumber\\ &\simeq& \Sh^G_\bC({(-)}_{min})^\omega  \label{wetigowegergwgreg}
	\end{eqnarray}
	of functors $G\Set \to \Cle$.
\end{prop}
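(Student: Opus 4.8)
\textbf{Proof plan for \cref{prop:coeffs-sets}.}
The statement has two equivalences. For the first (the comparison between $\bV^{G,c}_\bC$ and $\bV^G_\bC$ on the spaces $G_{can,min}\otimes S_{min,max}$, after idempotent completion), the plan is to compare the continuous replacement with the original functor using \cref{lem:forcects} and \cref{qerguigwregergwrgeg}. Concretely, $\bV^{G,c}_\bC(G_{can,min}\otimes S_{min,max}) \simeq \colim_{F\in\cF(G_{can,min}\otimes S_{min,max})}\bV^G_\bC(F)$, where the locally finite invariant subsets $F$ are those meeting every $\{g\}\times S$ in a finite set; such an $F$ is itself a free union (over $G$-orbits of its coarse components) of spaces of the form $G_{can,min}\otimes T_{min,max}$ for finite $G$-sets $T$, up to coarse equivalence. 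Using that $\bV^G_\bC$ is coarsely invariant (\cref{riogogrgregergerg}) and strongly additive (\cref{ergiowjoetgergreergwergwreg}), the transition maps in this filtered colimit become split inclusions of full subcategories, so the colimit is a filtered union of full subcategories of $\bV^G_\bC(G_{can,min}\otimes S_{min,max})$, and by \cref{qerguigwregergwrgeg} the comparison map is fully faithful. One then checks the comparison map becomes essentially surjective after idempotent completion: every object of $\bV^G_\bC(G_{can,min}\otimes S_{min,max})$ is built from the values of an equivariantly small sheaf, which by \cref{lem:eqsm-comps} (applied orbitwise via \cref{prop:transfer-coind}) vanishes outside finitely many coarse components, hence already lives on a locally finite invariant subset up to a retract. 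I would organize this argument so that it produces directly the filtered-colimit identification needed for the second equivalence.

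For the second (and harder) equivalence, the plan is to produce a natural equivalence $\bV^G_\bC(S_{min})^\omega \simeq \Idem\bV^G_\bC(G_{can,min}\otimes S_{min,max})$, again after idempotent completion. First reduce to $S$ transitive, i.e.\ $S=G/H$, using strong additivity and $\pi_0$-excisiveness (\cref{ergiowjoetgergreergwergwreg,ergieorogergergergregerge}) on both sides: a general $G$-set is a free union of orbits, and the free union on $G_{can,min}\otimes(-)_{min,max}$ is taken apart by $\pi_0$-excisiveness since distinct orbits are coarsely disjoint. For $S=G/H$, note $\Sh^G_\bC((G/H)_{min})^\omega \simeq \Fun^G(G/H,\bC)^\omega \simeq \bC^{H,\omega}$ by the argument in the proof of \cref{cor:coindSh} together with \cref{gikowrteglregergregwerg}, while $\Ind^G(\bC^\omega)(G/H)\simeq \colim_{BH}\Res^G_H\bC^\omega$. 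So combining the second equivalence with \cref{efioqjwefoweweqfewfqwefqwefe} would require $\bV^c_{\bC,G}(i(G/H))\simeq \Idem\bV^{G,c}_\bC(G_{can,min}\otimes(G/H)_{min,max})$; but in fact the cleanest route is a direct computation of $\bV^{G,c}_\bC(G_{can,min}\otimes(G/H)_{min,max})$. Here I would exploit that $G_{can,min}\otimes(G/H)_{min,max}$ decomposes, as a $G$-coarse space, into $[G:H]$ coarsely connected components permuted by $G$ with stabiliser $H$, so by \cref{cor:coarsely-connected-orbit} it is $G\otimes_H Y$ for $Y$ the $H$-coarse space $H_{can,min}$ (with a single coarse component and minimal bornology, tensored with the point). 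Then apply the transfer/coinduction machinery: the evaluation on this space, combined with \cref{lem:transfer-coind} and \cref{prop:transfer-coind}, identifies the equivariantly small sheaves with $\coind_{\{1\}}^H$ of cocompact objects over the point, i.e.\ with $\bC^{H,\omega}$ after idempotent completion (since by \cref{ewrgkpowergewrgwergwre} the essential image of $\coind$ generates under finite limits and retracts). After forcing continuity and localising, the labelled morphisms $M\to U^G_*M$ become equivalences because the coarse structure of $H_{can,min}$ is generated by bounded $\times$ bounded entourages, which on a space of this form act trivially on sheaves up to the locally finite truncation — this is where \cref{prop:Gcan-cpt} and \cref{lem:eqsm-Gminmax} enter to control cocompactness.

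The main obstacle I anticipate is the interplay between the localisation $W_X$ and the idempotent completion in the case where $G$ is infinite: $\Sh^G_\bC((G/H)_{min})$ lies in $\CLLL$ so cocompact objects there make sense, but $\bV^{G}_\bC(G_{can,min}\otimes(G/H)_{min,max})$ may only admit finite limits, so ``$\omega$'' must be handled via the passage to $\PSh$ (\cref{prop:Gcan-cpt}) and via idempotent completion on both sides. The delicate point is showing that after inverting the coarse-propagation morphisms, the resulting left-exact category is equivalent, up to idempotent completion, to $\Sh^G_\bC((G/H)_{min})^\omega$ — equivalently, that every $U$-sheaf over $G_{can,min}\otimes(G/H)_{min,max}$ with the smallness condition is, in the localisation, a retract of (the image of) a sheaf pulled back from $(G/H)_{min}$, and that no new morphisms or splittings are created. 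I would prove this by writing down an explicit functor $\Sh^G_\bC((G/H)_{min})\to \Sh^{G,\eqsm}_\bC(G_{can,min}\otimes(G/H)_{min,max})$ via pullback along the coarse covering $G_{can,min}\otimes(G/H)_{min,max}\to (G/H)_{min}$ (using \cref{iqerfjrfqwuef98weufeqwfqfe}), checking that the composite with $\ell$ and then with the global-sections/evaluation functor realizes the desired equivalence on mapping spaces using the calculus-of-fractions formula \eqref{qwefqwfoij11111}, and that it is essentially surjective up to retracts by the transfer computation above. The naturality in $S\in G\Set$ should then be automatic from the naturality of all the constructions involved.
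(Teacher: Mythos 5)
Your construction of the central comparison functor does not work, and this is the heart of the matter. The projection $G_{can,min}\otimes(G/H)_{min,max}\to (G/H)_{min}$ is \emph{not} a coarse covering when $G$ is infinite: its coarse components are the subsets $G\times\{s\}$ (copies of $G_{can}$), and these are not mapped isomorphically onto the coarse components (single points) of $(G/H)_{min}$, so \cref{iqerfjrfqwuef98weufeqwfqfe} provides no pullback on sheaves along this map. Even a presheaf-level pullback followed by sheafification would not land in $\Sh^{G,\eqsm}_\bC$: already for $S=*$ the pullback of a cocompact object $C$ of $\bC^{G}$ is the sheaf constant along $G$, whose values on singletons (resp.\ on the $G$-bounded subset $G$) are restrictions of $C$, and restriction/evaluation does not preserve cocompactness for infinite groups; the equivariantly small sheaves corresponding to $\bC^{G,\omega}$ are the ones induced from $G_{min,min}$, not the constant ones. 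The paper's proof goes in the opposite direction: it uses the pushforward $\hat\pi^{G}_*$ along $\pi\colon G_{can}\otimes X_{min}\to X_{min}$ (not a morphism of bornological coarse spaces, but the sheaf pushforward exists), shows via \cref{prop:Gcan-cpt} that equivariantly small sheaves are cocompact presheaves so that $\hat\pi^{G}_*$ lands in $\Sh^G_\bC(X_{min})^\omega$, descends it through the localisation since $\hat\pi^G_*$ kills the morphisms $M\to V^G_*M$, proves full faithfulness by the fraction formula \eqref{qwefqwfoij11111} together with the identification $\lim_V V^{*,G}M\simeq \hat\pi^{*,G}\hat\pi^G_*M$, and obtains essential surjectivity after $\Idem$ because the composite through $\bV^G_\bC(G_{min,min}\otimes X_{min,max})$ realises $\coind^{G,\omega}_{\{1\}}$, whose image generates under finite limits and retracts (\cref{cor:coindSh}); the first equivalence then falls out of the same diagram via a factorisation through $\bV^{G,c}_\bC$, with full faithfulness of the comparison from \cref{qerguigwregergwrgeg}.

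Several supporting claims in your sketch are also incorrect or insufficient. Invariant locally finite subsets of $G_{can,min}\otimes S_{min,max}$ are images of $G_{min,min}\otimes F_{min,max}$ for finite $F\subseteq S$ under $(g,f)\mapsto(g,gf)$; for free orbits their coarse components are uniformly bounded, so they are not (coarsely equivalent to) free unions of $G_{can,min}\otimes T_{min,max}$ with $T$ a finite $G$-set. An equivariantly small sheaf does not ``vanish outside finitely many coarse components'': \cref{lem:eqsm-comps} only gives $M(B\cap C)\simeq 0$ for all but finitely many $C$ for each fixed bounded $B$; the support statement you actually need is the Kan-extension argument inside the proof of \cref{prop:Gcan-cpt}. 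The reduction to transitive $S$ via ``strong additivity and $\pi_0$-excisiveness'' also fails as stated: $G_{can,min}\otimes S_{min,max}$ is not the free union over the orbits of $S$ (its bornology is strictly larger), and $\pi_0$-excision only handles finite decompositions, so you would need continuity plus a hyperexcisiveness-type argument and then still address naturality in $S$ — which the paper avoids by arguing uniformly for arbitrary $X$ in $G\Set$. Finally, a minor slip: $G_{can,min}\otimes(G/H)_{min,max}\cong G\otimes_H \res^G_H G_{can,min}$, not $G\otimes_H H_{can,min}$.
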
 
\begin{proof}
	 Let $X$ be {a $G$-set}.
	 Denote by  $\pi \colon G_{can}\otimes X_{min}\to X_{min}$   the projection  and by  $\iota \colon G_{min}\otimes X_{min}\to G_{can}\otimes X_{min}$  the morphism  {of $G$-coarse spaces} 
	 induced by the identity of the underlying $G$-sets. Then we get the  {solid} part of the following commutative diagram: 
 \begin{equation}\label{advadvaqefasda}
\xymatrix{&\Sh_{\bC}(\res^{G}_{\{1\}} X_{min})\ar@/^0.4cm/[dr]^-{\coind_{\{1\}}^{G}}& \\
\Sh^{G}_{\bC}(G_{min} \otimes X_{min})\ar@/^0.4cm/[ur]_-{\theta,\eqref{reoijiojoiervevwev}}^{\simeq}\ar[d]_{\ell_{G_{min} \otimes X_{min }}}^{\simeq}\ar[r]^-{\hat \iota^{G}_{*}}& \Sh^{G }_{\bC}(G_{can } \otimes X_{min })\ar[d]^{\ell_{G_{can} \otimes X_{min }}} \ar[r]^-{\hat \pi^{G}_{*}}&\Sh^{G}_{\bC}( X_{min})
\\\hat \bV^{G}_{\bC}(G_{min} \otimes X_{min })\ar[r]^{\iota_*} &\hat \bV^{G}_{\bC}(G_{can} \otimes X_{min})\ar@{..>}@/_.3cm/[ur]_-{\hat p}& }\ .
\end{equation} 
The left square is a case of  \eqref{ergoijergerg}, and the  commutativity of the upper  {square}  can be reduced to the upper right  triangle in \eqref{wregpokopwtglmrbletemblwtwb}. The  arrow $\ell_{G_{min} \otimes X_{min }}$ is an equivalence by \cref{uihwieurghiwuerhgiurwegwregwregg}.

In order to get the dotted arrow $\hat p$ and the corresponding triangle, we use the universal property of $\ell_{G_{can} \otimes X_{min }} $ and the fact that $\hat \pi^{G}_{*}$ sends the morphisms $M \to V^G_*M$  {to identity morphisms}
 for every equivariantly small sheaf $M$ on $G_{can} \otimes X_{min}$
and invariant coarse entourage $V$ {of $G_{can} \otimes X_{min}$ containing the diagonal.} 

The whole diagram \eqref{advadvaqefasda} is natural  {in the $G$-set} $X$. 
	 
	By \cref{egiojwotgwegergwgrwregwrg}, we know that $\Sh^G_\bC(X_{min})$ is an object of $\CL$  and therefore has a well-defined notion of cocompact objects.
	Further note that if $G$ is not finite, then $\pi$ does not induce a morphism of $G$-bornological coarse spaces 
	from $G_{can,min}\otimes X_{min,max}$ to $X_{min,max}$ since this projection is not proper.
	We nevertheless can show that $\hat \pi_{*}^{G}$ restricts to a functor
	\begin{equation}\label{dfsboirjgioewggrqgrg}
	\hat \pi_{*}^{G,\eqsm} \colon \Sh^{G,\eqsm}_{\bC}(G_{{can,min}} \otimes X_{{min,max}})\to  \Sh^G_\bC(X_{min})^\omega\ .
	 \end{equation}
        Let $M$ be  {an equivariantly small sheaf on $G_{{can,min}} \otimes X_{{min,max}}$}.
        By \cref{prop:Gcan-cpt}, {$M$ is a cocompact object in $\PSh^G_\bC(G \times X)$}. 

	Since the functor $ \PSh^G_\bC(G  \times X ) \to \PSh^G_\bC(X )$ induced by $\pi$ is a morphism in $\CL$, it preserves cocompact objects, and we have $\hat\pi^G_*(M)\in \PSh^G_\bC(X )^{\omega}$. 
	Since the inclusion $\Sh^G_\bC(X_{min})\to \PSh^G_\bC(X )$ clearly detects cocompactness,
	 we conclude that  $\hat\pi^G_*(M) \in \Sh^G_\bC(X_{min})^{\omega}$.	
	 
	 We consider the morphism $\iota \colon G_{min,min} \otimes X_{min,max} \to G_{can,min} \otimes X_{min,max}$ in $G\BC$
	 and note that $\hat\iota^G_*$ preserves equivariantly small objects by \cref{eriogwetgwegregwrgregwrg}.
	 By restricting the diagram in \eqref{advadvaqefasda} to equivariantly small objects, using the equivalence in \eqref{ergggrgewrg243g3rverv2rf3f} for the upper corner, and the second assertion of \cref{cor:coindSh},  we get the following commutative  diagram
	 \begin{equation}\label{advadvaqefasda1} \hspace{-1cm}
\xymatrix{&\Sh_{\bC}(\res^{G}_{\{1\}} X_{min})^{\omega}\ar@/^0.4cm/[dr]^-{\coind_{\{1\}}^{G,\omega}}&\\\Sh^{G,\eqsm}_{\bC}(G_{min,min} \otimes X_{min,max })\ar@/^0.4cm/[ur]_{~~\theta^{\eqsm},\eqref{rewgpogoegergergwergewgergewrg}}^{\simeq}\ar[d]_{\simeq}\ar[r]^{\hat \iota^{G}_{*}}& \Sh^{G,\eqsm }_{\bC}(G_{can,min } \otimes X_{min ,max})\ar[d]  \ar[r]^-{\hat \pi^{G,\eqsm}_{*}}&\Sh^{G}_{\bC}( X_{min})^{\omega}
\\  \bV^{G}_{\bC}(G_{min,min } \otimes X_{min,max }) \ar[r]^{\iota_*} & \bV^{G}_{\bC}(G_{can,min} \otimes X_{min,max})\ar[ur]_-{p}&}
\end{equation}
 which is natural in {the $G$-set} $X$. 
In the next step we construct a factorisation of $\iota_{*}$ over the canonical morphism $i$ as indicated by the dotted arrow in the following diagram:
\begin{equation}\label{advadvaqefasda1ergwergwrgrwegewgwergewrg}\xymatrix{ \Sh^{G,\eqsm}_{\bC}(G_{min,min} \otimes X_{min,max })\ar[r]^{\simeq}&\bV^{G}_{\bC}(G_{min,min } \otimes X_{min,max })\ar[d]^{\iota_*} \ar@{..>}@/^3cm/[dd] \\&\bV^{G}_{\bC}(G_{can,min} \otimes X_{min,max})\\\Sh^{G,\eqsm,c}_\bC(G_{min,min} \otimes X_{min,max})\ar@{-->}[r]\ar@{-->}[uu]& \bV^{G,c}_{\bC}(G_{can,min} \otimes X_{min,max})\ar[u]_{i}}\end{equation}
We first observe that the dashed arrows exist and the diagram commutes for obvious reasons. It now suffices to show that the dashed vertical arrow is an equivalence. In order to see this note that
	every invariant locally finite subset of $G_{min,min} \otimes X_{min,max}$ is isomorphic in $G\BC$ to the image of $G_{min,min}\otimes F_{min,max}$ 
	  for some finite subset $F$ of $X$ (equipped with the trivial $G$-action) under the map $(g,f)\mapsto (g,gf)$. By the first assertion of \cref{lem:eqsm-Gminmax}
	we obtain the vertical equivalences in the  commutative diagram
	\[\xymatrix{
		\colim_{F \subseteq X \text{ finite}} \Sh^{G,\eqsm}_\bC(G_{min,min} \otimes F_{min,max})\ar[r]\ar[d]^-{\simeq} & \Sh^{G,\eqsm}_\bC(G_{min,min} \otimes X_{min,max})\ar[d]_-{\simeq} \\
		\colim_{F \subseteq X \text{ finite}} \Sh^{\eqsm}_\bC(\res^{G}_{\{1\}}F_{min,max}) \ar[r] & \Sh^{\eqsm}_\bC(\res^{G}_{\{1\}}X_{min,max}) 
	}\]
	By \cref{lem:eqsm-minmax}, the lower horizontal arrow is evidently an equivalence.  Hence the upper horizontal morphism is the desired equivalence
	\[ \Sh^{G,\eqsm,c}_\bC(G_{min,min} \otimes X_{min,max})\xrightarrow{\simeq}  \Sh^{G,\eqsm}_\bC(G_{min,min} \otimes X_{min,max})\ .\]

 The morphism $i$ in \eqref{advadvaqefasda1ergwergwrgrwegewgwergewrg} is fully faithful by  \cref{qerguigwregergwrgeg}. We  show that $p$ in \eqref{advadvaqefasda1} is fully faithful, too.
 Indeed, for  {equivariantly small sheaves $M,N$ on $G_{{can,min}} \otimes X_{{min,max}}$}
 we have the following chain of equivalences:
	\begin{eqnarray*}
		&&\Map_{\bV^G_\bC(G_{can,min} \otimes X_{min,max})}({\ell} M,{\ell} N) \\
		&\stackrel{\eqref{qwefqwfoij11111}}{\simeq}& \colim_{V \in \cC^{G,\Delta}_{G_{can} \otimes X_{min}}} \Map_{\PSh^G_\bC(G \times X)}(M,V^G_*N) \\
		 &\stackrel{\eqref{v-adj-ggg}}{\simeq}&\colim_{V \in \cC^{G,\Delta}_{G_{can} \otimes X_{min}}} \Map_{\PSh^G_\bC(G \times X)}(V^{*,G}M,N) \\
		 &\stackrel{!}{\simeq}& \Map_{\PSh^G_\bC(G \times X)}(\lim_{V \in \cC^{G,\Delta}_{G_{can} \otimes X_{min}}} V^{*,G}M,N) \\
		 &\stackrel{!!}{\simeq}& \Map_{\PSh^G_\bC(G \times X)}(\hat\pi^{*,G}\hat\pi^G_*M,N) \\
		 &\stackrel{\eqref{ad-hat-f}}{\simeq}&  \Map_{\PSh^G_\bC( X)}( \hat\pi^G_*M,\hat \pi^{G}_{*}N)  \\
		 &\stackrel{!!!}{\simeq}& \Map_{\Sh^G_\bC(X_{min})^\omega}(p(M),p(N))
	\end{eqnarray*}
	The equivalence marked by $!$
	  holds because $N$ is cocompact in $\PSh^G_\bC(G \times X)$ by \cref{prop:Gcan-cpt}
	  {(note that $\PSh^G_\bC(G \times X)\simeq \PSh^G_\bC(G_{can,min} \otimes X_{min,max})$ by convention).}
	  For the equivalence marked by $!!$ we observe that for every {subset $B$ of $G \times X$}
	 the family $(V[B])_{V \in \cC^{G,\Delta}_{G_{can} \otimes X_{min}}}$ is a $U$-covering family of $G \times \pi(B) = \pi^{-1}(\pi(B))$ for every entourage $U$ of $G_{can}\otimes X_{min}$.
	  Using that $M$ is a sheaf, it follows that the natural transformation  $V[-] \to \pi^{-1} \circ \pi(- )$ of endofunctors of  {the power set $\cP_{G\times X}$}  induces an equivalence
	\[ \hat\pi^{*,G}\hat\pi^G_*M \xrightarrow{\simeq} \lim_{V \in \cC^{G,\Delta}_{G_{can} \otimes X_{min}}} V^{*,G}M\ .\]
	Finally, the equivalence marked by $!!!$ follows from the definition of $p$.
	
We now apply the functor $\Idem$ to
 \eqref{advadvaqefasda1}, contract its middle line, and use  \eqref{3roihfeoirhjo3f123feffqewfq} in order to see that the left column consists of idempotent complete categories. Furthermore, we apply $\Idem$ to the right triangle of \eqref{advadvaqefasda1ergwergwrgrwegewgwergewrg}. The combination of the resulting two commutative diagrams yields the commutative diagram
 \begin{equation}\label{advadvaqefasda2}\hspace{-1cm}
\xymatrix{&\Sh_{\bC}(\res^{G}_{\{1\}} X_{min})^{\omega}\ar@/^0.4cm/[dr]^-{\coind_{\{1\}}^{G,\omega}}&\\\Sh^{G,\eqsm}_{\bC}(G_{min,min} \otimes X_{min,max })\ar@/^0.4cm/[ur]_{~~\theta^{\eqsm},\eqref{rewgpogoegergergwergewgergewrg}}^{\simeq}\ar[d]_{\simeq}\ar[rr]^{\hat \pi^{G,\eqsm}_{*}\circ \hat \iota^{G}_{*}}&  &\Sh^{G}_{\bC}( X_{min})^{\omega}
\\  \bV^{G}_{\bC}(G_{min,min } \otimes X_{min,max })\ar[dr]\ar[r]^{\iota_*} &\Idem  \bV^{G}_{\bC}(G_{can,min} \otimes X_{min,max})\ar[ur]_-{ \Idem( p)}&\\&  \Idem\bV^{G,c}_{\bC}(G_{can,min} \otimes X_{min,max})\ar[u]_-{\Idem(i)}& }
\end{equation} 
which is natural in  {the $G$-set} $X$.

By  \cref{iqjoigrgwegerwgwg} the functors $\Idem(p)$ and $\Idem(i)$ are again fully faithful inclusions of idempotent complete subcategories.
By  the second assertion of \cref{cor:coindSh}, the functor $\coind_{\{1\}}^{G,\omega}$ generates $\Sh^{G}_{\bC}( X_{min})^{\omega}$ under finite limits and retracts. 
 By the commutativity of \eqref{advadvaqefasda2} we can then conclude that  the functors  $\Idem(p)$ and $\Idem(p)\circ \Idem(i)$ are essentially surjective. Hence we have the  asserted natural equivalences
	\begin{eqnarray*}
	\Idem \bV^{G,c}_\bC(G_{can,min} \otimes X_{min,max})  
	&\stackrel{\Idem(i)}{\simeq}& \Idem \bV^{G}_\bC(G_{can,min} \otimes X_{min,max}) \\ &\stackrel{\Idem(p)}{\simeq}& \Sh^G_\bC(X_{min})^\omega   \ .
	\end{eqnarray*}
 \end{proof}

\begin{kor}\label{cor:coeffs-orbits}
	There are equivalences
	\begin{eqnarray}
	 \Idem  \bV^G_{\bC}(G_{can,min} \otimes (-)_{min,max})&\simeq&  
	 \Idem \bV^{G,c}_{\bC}(G_{can,min} \otimes (-)_{min,max}) \nonumber\\
	 &\simeq &\bV^{c}_{\bC,G} \circ i(-)\label{euigwrthwergergwegreg}
	\end{eqnarray}
	of functors $G\Orb \to \Cle$.
\end{kor}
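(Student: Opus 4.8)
The plan is to obtain the corollary by combining \cref{prop:coeffs-sets} with \cref{efioqjwefoweweqfewfqwefqwefe}, the only real work being a bookkeeping of naturality. \cref{prop:coeffs-sets} is an assertion about functors on $G\Set$, so restricting it along the inclusion $G\Orb\hookrightarrow G\Set$ — and observing that the functor $i\colon G\Orb\to G\BC$ of \eqref{vervelkrvnlervrvrevev} is exactly this inclusion followed by $S\mapsto S_{min,max}$ — immediately yields the first equivalence $\Idem\bV^G_\bC(G_{can,min}\otimes(-)_{min,max})\simeq\Idem\bV^{G,c}_\bC(G_{can,min}\otimes(-)_{min,max})$ of the corollary, together with a natural identification of either side with $\Sh^G_\bC((-)_{min})^\omega$. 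Thus it remains to produce a natural equivalence $\Sh^G_\bC((-)_{min})^\omega\simeq\bV^{c}_{\bC,G}\circ i$ of functors $G\Orb\to\Cle$.

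By \cref{efioqjwefoweweqfewfqwefqwefe} the target is $\Ind^G(\bC^\omega)=j_!\bC^\omega$, so I would show that $\Sh^G_\bC((-)_{min})^\omega$, restricted to $G\Orb$, is a left Kan extension of $\bC^\omega\colon BG\to\Cle$ along $j$. For a $G$-set $Y$ one has $\Sh^G_\bC(Y_{min})\simeq\Fun^G(Y,\bC)$ and, non-equivariantly, $\Sh_\bC(Y_{min})\simeq\Fun(Y,\bC)$ (compare the proof of \cref{cor:coindSh}); since $\Sh_\bC(-)$ depends only on the coarse structure, \eqref{ergggrgewrg243g3rverv2rf3f} gives $\Sh^{\eqsm}_\bC(Y_{min,max})\simeq\Sh_\bC(Y_{min})^\omega$. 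Feeding this into the equivalences that identify cocompact objects of a limit over $BG$ with the corresponding colimit over $BG$ (resp.\ $BG^{\op}$) of cocompact objects — the ones used in the proof of \cref{cor:coind} — produces a natural equivalence $\Sh^G_\bC(S_{min})^\omega\simeq\colim_{BG}\Sh^{\eqsm}_\bC(\res^G_{\{1\}}S_{min,max})$ in $S\in G\Orb$, equivariant for the conjugation action. The right-hand side is precisely the functor $\colim_{BG}\circ\Sh^{\eqsm}_{\bC,\mathrm{eq}}$ from the proof of \cref{efioqjwefoweweqfewfqwefqwefe}, and the left Kan extension diagrams constructed there (yielding $p_!\bC^\omega\simeq\Sh^{\eqsm}_{\bC,\mathrm{eq'}}$ and $\pi_!\Sh^{\eqsm}_{\bC,\mathrm{eq'}}\simeq\colim_{BG}\circ\Sh^{\eqsm}_{\bC,\mathrm{eq}}$, with $\pi\circ p=j$) identify it with $j_!\bC^\omega$. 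Chaining these equivalences gives $\Sh^G_\bC((-)_{min})^\omega\simeq\Ind^G(\bC^\omega)\simeq\bV^{c}_{\bC,G}\circ i$, which is the asserted equivalence.

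It is worth noting why $\Idem$ is absent in front of $\bV^{c}_{\bC,G}\circ i$ while present on the other two terms: for $S$ in $G\Orb$ the $\infty$-category $\Sh^G_\bC(S_{min})$ lies in $\CL$ (\cref{egiojwotgwegergwgrwregwrg}), so its full subcategory of cocompact objects $\Sh^G_\bC(S_{min})^\omega$ is closed under retracts and hence already idempotent complete, and the equivalences above transport this property to $\bV^{c}_{\bC,G}\circ i$.

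The only serious obstacle is naturality: one must verify that the identifications $\Sh^{\eqsm}_\bC(\res^G_{\{1\}}S_{min,max})\simeq\Sh_\bC(\res^G_{\{1\}}S_{min})^\omega$ and the passage from ``$\lim_{BG}$ followed by $(-)^\omega$'' to ``$\colim_{BG}$ of $(-)^\omega$'' are natural in $S\in G\Orb$ and compatible with the conjugation $G$-actions appearing in \cref{efioqjwefoweweqfewfqwefqwefe}, including the harmless reindexing between $BG$ and $BG^{\op}$. All of the needed ingredients are, however, already assembled in the proofs of \cref{prop:coeffs-sets} and \cref{efioqjwefoweweqfewfqwefqwefe}, so no genuinely new construction is required.
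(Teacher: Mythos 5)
Your proposal is correct and follows essentially the same route as the paper: restrict \cref{prop:coeffs-sets} to $G\Orb$, then identify $\Sh^G_\bC((-)_{min})^\omega$ with $\bV^{c}_{\bC,G}\circ i$ using \eqref{wqefqoijfqoiwffqefefewfqefqe}, \eqref{ergggrgewrg243g3rverv2rf3f} and the equivalences assembled in the proof of \cref{efioqjwefoweweqfewfqwefqwefe}. The only cosmetic difference is your detour through $\Ind^{G}(\bC^{\omega})\simeq j_{!}\bC^{\omega}$; the paper instead invokes \eqref{evwervervwvwrwervwv} (equivalently \eqref{ewrgiurhgiu34wetg}) directly, which renders the extra Kan-extension bookkeeping unnecessary.
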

\begin{proof}
 Let $\iota \colon BG^\op \to BG$ be the inversion functor.  We use the notation introduced in the proof of \cref{efioqjwefoweweqfewfqwefqwefe}.
 Since $\iota$ is an equivalence, we have the first equivalence in the chain 
	\begin{equation}\label{qewflkmlqkwefqwefqewfefqwefqwefqwefqwef}
\Sh^G_{\bC}((-)_{min})^\omega \simeq (\lim_{BG^\op} \iota^*\Sh_{\bC,\mathrm{eq}}(- ))^\omega\stackrel{\eqref{wqefqoijfqoiwffqefefewfqefqe}}{\simeq} \colim_{BG} \Sh_{\bC,\mathrm{eq}}(-)^\omega\ .  \end{equation} 
	 We furthermore have equivalences \begin{equation}\label{qewfiojioqefqwefqwefqef}
\Sh_{\bC,\mathrm{eq}}(-)^\omega \stackrel{\eqref{ergggrgewrg243g3rverv2rf3f}}{\simeq} \Sh^\eqsm_{\bC,\mathrm{eq}} (-)\stackrel{\eqref{evwervervwvwrwervwv}}{\simeq} \bV^{c}_{\bC,\mathrm{eq}}(-)\ .
\end{equation}	 
	 Applying $\colim_{BG}$, we get  the second equivalence in  \begin{equation}\label{rijoijioqjrfoijergoergwergegerg} \Sh^G_{\bC}((-)_{min})^\omega  \stackrel{\eqref{qewflkmlqkwefqwefqewfefqwefqwefqwefqwef}}{\simeq}
\colim_{BG}  \Sh_{\bC,\mathrm{eq}}(-)^{\omega} \simeq  \bV^{c}_{\bC,G} \circ i(-)\ .
\end{equation}
Combining the equivalence  \eqref{rijoijioqjrfoijergoergwergegerg}  with \eqref{wetigowegergwgreg},
we get \eqref{euigwrthwergergwegreg}.
\end{proof}

\section{Transfers}\label{tiowtwrtbtwtbewbtw}
In this section, we extend the construction of $\bV_{\bC}^{G}$ in order to capture not only its covariant functoriality for morphisms in $G\BC$ but also the contravariant functoriality for coverings, and the compatibility among these operations. Technically this is accomplished by extending the functor to the  $\infty$-category $G\BC_{\tr}$ of bornological coarse spaces with transfers along the inclusion
  \begin{equation*}
\iota\colon G\BC\to G\BC_{\tr}\ .
\end{equation*} 
The $\infty$-category $G\BC_{\tr}$ was introduced in \cite[Def. 2.27]{coarsetrans}
 {as the $\infty$-category of spans
\[\xymatrix@R=1em@C=1em{
 & X'\ar[dl]_{g}\ar[dr]^{f} & \\
 X & & Y}\]
 in which $X$, $X'$ and $Y$ are $G$-bornological coarse spaces, $g$ is a covering (see \cref{ergwergggqrgqregqergq}), and $f$ is a morphism of bornological coarse spaces.}
We will use the equivalent description of this category given in \cref{efbivhjiqrewvwewcewcq} below (see also \cref{rqegrgrfqefewfewfq}).

\subsection{Overview and first applications}

Let  $E\colon G\BC\to \bM$  be a functor.
\begin{ddd}\label{egioeggewrgegervsfdvfdv}
 $E$ admits transfers if there exists a functor
 \[ E_{\tr}\colon G\BC_{\tr}\to \bM \]
 and an equivalence $E_{\tr}\circ \iota\simeq E$.
\end{ddd}

\begin{rem}
Assuming that $\bM$ is cocomplete, one could consider the left Kan extension $\iota_{!}E\colon G\BC_{\tr}\to \bM$ of $E$ along $\iota$. But in general the morphism $E\to \iota^{*}\iota_{!}E$ is not an equivalence since $\iota$ is not fully faithful. So the problem of showing that $E$ admits transfers does not have such a trivial solution.
\end{rem}

 Recall the functor $\bV^{G}$ from \eqref{bojoijgoi3jg3g34f}.
 The following is the main result of this section. It will be shown in \cref{sec:transfers-proofs}.
 
  \begin{theorem}\label{ugiqergqrefwefqfqewfqef}
 There exists a  functor  $\bV^{G}_{\tr}$   such that the following diagram commutes: 
 \[\xymatrix{G\BC\times  \Fun(BG,\CL) \ar[d]_-{\iota\times \id}\ar[rr]^-{\bV^{G}}&&  \Cle\\
 G\BC_{\tr}\times  \Fun(BG,\CL)\ar[urr]_-{\bV_{\tr}^{G}, }&& }\ . \]
 In particular, $\bV^G_\bC$ admits transfers for every $\bC$ in $\Fun(BG, {\CL})$.
 \end{theorem}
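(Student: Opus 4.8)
The strategy is to build $\bV^G_{\tr}$ from the concrete model of $G\BC_{\tr}$ recalled in \cref{efbivhjiqrewvwewcewcq} (the span-like description). The key technical input already at our disposal is the pair of functorialities of $\Sh^{G,\eqsm}_{\bC}$ and $\bV^G_{\bC}$: the covariant functoriality $f_*$ for morphisms in $G\BC$ (\cref{qergiuqergreqgregwregwr}), and the contravariant functoriality $f^*$ for coverings (\cref{iqerfjrfqwuef98weufeqwfqfe}\eqref{eewgergwergrewgwregwergwfw}), together with the compatibility results \cref{rgkoqregqregqqef} and \cref{ropgkpwoegrewgwregwgregw} expressing that pullback squares along coverings give rise to Beck--Chevalley-type equivalences $L^{\pi_0,G}\hat g'^{*,G}\hat f^G_* \simeq \hat f'^{G}_*L^{\pi_0,G}\hat g^{*,G}$. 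Since a morphism in $G\BC_{\tr}$ is represented by a span whose ``wrong-way'' leg is a covering, these are precisely the pieces of data needed to define a functor out of $G\BC_{\tr}$: on objects $\bV^G_{\tr}(X,\bC) := \bV^G_{\bC}(X)$, and on a span $X \xleftarrow{p} W \xrightarrow{f} X'$ with $p$ a covering, $\bV^G_{\tr}$ assigns $f_* \circ p^*$. The functoriality of $p^*$ needs the fact (to be checked at the level of the localisation, as in \cref{iqerfjrfqwuef98weufeqwfqfe}) that $p^*$ descends from $L^{\pi_0,G}\hat p^{*,G}$ and preserves equivariant smallness, which is \cref{qergioewgergregwergergwergwreg}.

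First I would recall the explicit model of $G\BC_{\tr}$ and pin down exactly which diagrams of covering squares must be sent to commuting/cartesian configurations in $\Cle$. Then I would assemble the data: at the presheaf (resp.\ sheaf) level the assignments $f \mapsto \hat f^G_*$ and $p \mapsto L^{\pi_0,G}\hat p^{*,G}$ are already functorial, and \cref{rgkoqregqregqqef}/\cref{ropgkpwoegrewgwregwgregw} supply the coherence for composites of spans. Passing from $\Sh^{G}_{\bC}$ to $\Sh^{G,\eqsm}_{\bC}$ uses \cref{eriogwetgwegregwrgregwrg}, \cref{lem:restrict-eqsm} and \cref{qergioewgergregwergergwergwreg} to see that both legs preserve equivariantly small objects. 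Finally, descending to $\bV^G_{\bC}$ along the localisation $\ell_X$ uses \cref{iofjowefwfewfewfef} (for the $f_*$ leg) and \cref{iqerfjrfqwuef98weufeqwfqfe}\eqref{hbwrthggwgrew}\eqref{eewgergwergrewgwregwergwfw} (for the $p^*$ leg); the compatibility squares descend because $\ell_X$ is left-exact and the relevant natural transformations are built from the generators $W_X$. Naturality in $\bC$ is routine from \cref{wetgowtgrgregwrg} and \cref{giowegrwergwergwreg}.

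The main obstacle I anticipate is purely coherence-theoretic: defining $\bV^G_{\tr}$ as a genuine functor of $\infty$-categories (not merely on the homotopy category) out of $G\BC_{\tr}$, which amounts to exhibiting a coherent system of the Beck--Chevalley equivalences for all iterated composites of spans. This is exactly the kind of bookkeeping done for the linear case in \cite{coarsetrans}, and I would follow that template: construct $\bV^G_{\tr}$ first as a functor out of a suitable category of spans built from $G\Bc^{\dag}$ and $G\BC$, using the (op)lax-to-functor rectification afforded by the fact that all the comparison $2$-cells are invertible, and then restrict along the inclusion of $G\BC_{\tr}$. The factorisation $\bV^G_{\tr}\circ(\iota\times\id)\simeq \bV^G$ then holds by construction, since $\iota$ picks out the spans whose covering leg is an identity, on which $f_*\circ p^* = f_*$. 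Once the functor exists, the ``in particular'' clause is immediate by specialising at a fixed $\bC$.
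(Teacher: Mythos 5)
Your outline correctly identifies all the ingredients on objects and single morphisms — $\bV^G_{\tr}(X,\bC)=\bV^G_{\bC}(X)$, spans acting by $f_*\circ p^*$, the Beck--Chevalley equivalences of \cref{rgkoqregqregqqef} and \cref{ropgkpwoegrewgwregwgregw}, preservation of equivariant smallness via \cref{eriogwetgwegregwrgregwrg}, \cref{qrkgoqrgergwrgwegwrg}, \cref{qergioewgergregwergergwergwreg}, and descent to the localisation via \cref{iofjowefwfewfewfef}, \cref{iqerfjrfqwuef98weufeqwfqfe} — and you also correctly locate the real difficulty in the coherence problem. But your resolution of that difficulty is where the gap lies: ``construct an (op)lax functor and rectify it because all comparison $2$-cells are invertible'' is not an available step in the $\infty$-categorical setting. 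To even write down such a lax functor into $\Cle$ you would have to specify an infinite hierarchy of coherence data for all iterated composites of spans, which is exactly the content of the theorem; there is no rectification black box that converts ``invertible comparison cells, checked one composite at a time'' into a functor out of $G\BC_{\tr}$. So the proposal names the obstacle but does not overcome it.

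The paper's route is designed precisely to avoid ever writing coherence data by hand: it encodes the covariant functoriality ($\hat f^G_*$) and the contravariant functoriality along coverings ($L^{\pi_0,G}\hat p^{*,G}$) simultaneously as a single map of adequate triples of fibrations. Concretely, one observes that the unstraightened sheaf functor $s^{\pi_0}$ is both a cocartesian and a cartesian fibration (\cref{rioqgegwgregwerg}, \cref{prop:cart-cocart}), verifies that the triple $(\hat\Sh^{G},\hat\Sh^{G}_\dag,\hat\Sh^{G,\dag})$ is adequate and that ambigressive squares are preserved (\cref{cor:detect-cartesian}, \cref{lem:detect-pull-back}, \cref{lem:preservation-pull-back}, \cref{lem:sh-adequate}, \cref{riqrjhgoqgqreggwergwrgw}), and then invokes Barwick's unfurling theorem to conclude that $\burn(s)$ is a cocartesian fibration over the effective Burnside category (\cref{prop:unfurling}); straightening produces $\Sh^G_{\tr}$ on $G\Coarse_{\tr}$, after which one pulls back to $G\BC_{\tr}$, restricts to equivariantly small sheaves, and applies the labelled localisation $\Loc$ to obtain $\bV^G_{\tr}$ (\cref{egojwogergrewgwrgregwergreg}), with $\bV^G_{\tr}\circ(\iota\times\id)\simeq\bV^G$ holding by construction. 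If you want to repair your argument, you should replace the rectification step by this fibrational/unfurling construction (or an equivalent coherent span-category technology); the pointwise verifications you list then become exactly the hypotheses of Barwick's criterion rather than a separate coherence check.
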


 As a consequence of \cref{ugiqergqrefwefqfqewfqef}, we will also derive the following:
\begin{kor}\label{wergioergewrgregregwegregwregrre}
The functor $\bV_{\bC}^{G,c}$ admits transfers,  {and so does the composition $\Homol \circ \bV^{G,c}_\bC$ for any functor $\Homol \colon \Cle \to \bM$.}
\end{kor}

 {The existence of transfers is an ingredient of the proof of \cref{wefiofewqfwefqefqewf}.
 As an aside, let us already record an easier consequence of the existence of transfers: they induce change of groups-functors.}

 More precisely, let $H$ be a subgroup of $G$ and consider the left adjoint
\[ H\Set \to G\Set, \quad T \mapsto G \times_H T \]
of the canonical restriction functor $\res^G_H \colon G\Set \to H\Set$.
As explained in \cite[Sec.~4]{desc}, this functor refines to a functor
\[ \indd_H^G \colon H\BC \to G\BC \]
by sending an $H$-bornological coarse space $X$ to the $G$-bornological coarse space $\Ind_H^G(X)$ whose underlying set is the induced $G$-set and whose bornological coarse structure is given as follows:
\begin{enumerate}
 \item The bornology is generated by the images of sets of the form $\{g\} \times B$ under the multiplication map $G \times X \to G \times_H X$, where $g$ is any element of $G$ and $B$ is a bounded subset of $X$.
 \item The coarse structure is generated by the images of sets of the form $\diag(G) \times U$ under the multiplication map $G \times X \to G \times_H X$, where $U$ is {any} coarse entourage of $X$.
\end{enumerate}
The canonical map $c_{X} \colon \indd_H^G\res_H^G(X) \to X,\  {[g,x] \mapsto gx}$, given by the counit on the level of $G$-sets is not a morphism of $G$-bornological coarse spaces in general.
We do, however, have the following.

\begin{lem}
 The canonical map $c_{X} \colon \indd_H^G\res_H^G(X) \to X$ is a covering.
\end{lem}
\begin{proof}
We  have a {commutative} diagram
\[\xymatrix{
 \indd_H^G\res_H^G(X) \ar[dr]_{c_{X}} \ar[rr]_{\cong}^{([g,x]\mapsto ([g],gx)}&&\ar[dl]^{\pr_{X}}(G/H)_{min,min} \otimes X\\
  &X&} \]
in $G\BC$. By \cref{egowpegrrewfrewfwrefewfref}, the projection $\pr_{X}$ is a covering.
\end{proof}

Hence, if $E \colon G\BC \to \bM$ is a functor which admits transfers, a choice of extension $E_{\tr}$ induces a natural transformation
\[ c \colon E \to E \circ \indd_H^G \circ \res_H^G\]
given by $c=(c_{X}^{*})_{X\in G\BC}$.

\begin{rem}
Classical examples of equivariant coarse homology theories often come as families
$(E^{H})_{H}$ indexed by the subgroups of $G$ together with comparison maps $E^{G}\to E^{H}\circ \res^{G}_{H}$, where $E^{H}$ is an $H$-equivariant coarse homology theory (see e.g.~\cite[Sec.~8.5]{equicoarse} or \cite[Sec.~11]{Bunke:ad}  for equivariant coarse algebraic or coarse topological $K$-theory, respectively).

Transfers 
 for the functor $ E\colon G\BC\to \bM$ can be considered as a generalisation of this structure.
Indeed, we can define functors  $E^{H}:=E\circ \indd_{H}^{G}\colon H\BC\to \bM$ for all subgroups $H$ of $G$.  The choice of the extension $E_{\tr}$ then provides the comparison maps
$ E^{G}\to E^{H}\circ \res^{G}_{H}$.
\end{rem}

\subsection{Existence of transfers}\label{sec:transfers-proofs}
 This section is devoted to the proof of \cref{ugiqergqrefwefqfqewfqef}. It is based on
Barwick's formalism of Burnside categories \cite{Barwick:Mackey} which allows us to give a coherent description how the transfers interact with the covariant functoriality of $\bV^{G,c}_\bC$.

The composition of the  functor  $\Sh^{\pi_0,G}$ from \eqref{wergpok2pgegr5gwegwefv} with the inclusion $\CL\to \CATi$ and $\op \colon \CATi\to \CATi$   gives rise to a cocartesian fibration
\begin{equation}\label{qwefefewfqfewf}
 s^{\pi_0} \colon \hat \Sh^{\pi_{0},G}\to G\Coarse\times \Fun(BG,\CL)
\end{equation}
which classifies the functor
$(X,\bC)\mapsto \Sh_{\bC}^{\pi_{0},G}(X)^{\op}$.
 Taking the opposite category in the target is motivated by the following.
\begin{lem}\label{rioqgegwgregwerg}
 $s^{\pi_0}$ is also a cartesian fibration.
\end{lem}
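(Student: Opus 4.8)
The plan is to exhibit, for every morphism $f \colon (X,\bC) \to (X',\bC')$ in $G\Coarse \times \Fun(BG,\CL)$, a cartesian lift over any object of the fibre $\Sh^{\pi_0,G}_{\bC'}(X')^{\op}$. Unwinding the definition of $s^{\pi_0}$: the fibre over $(X,\bC)$ is $\Sh^{\pi_0,G}_{\bC}(X)^{\op}$, and the functoriality defining this cocartesian fibration is the pushforward $M \mapsto f_* M$ built from the composites $\hat f^G_*$ (for the space variable) and $\hat \phi^G_*$ (for the coefficient variable). To produce a cartesian fibration it suffices, by the dual of \cite[Prop.~2.4.2.11]{htt} (or \cite[Cor.~5.2.2.5]{htt}), to check that each such pushforward functor admits a left adjoint; the left adjoints then assemble into the cartesian transport. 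Concretely one factors a general morphism of $G\Coarse \times \Fun(BG,\CL)$ into a morphism that is the identity on the coefficient $\infty$-category and a morphism that is the identity on the space, and treats the two cases separately.

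First I would treat the coefficient direction: for $\phi \colon \bC \to \bC'$ in $\Fun(BG,\CL)$ and fixed $X$ in $G\Coarse$, \cref{ergioegrergwergwregwergwegrwr} already provides the adjunction $L^{U,G}\hat\phi^{*,G} \colon \Sh^{U,G}_{\bC'}(X) \leftrightarrows \Sh^{U,G}_{\bC}(X) \cocolon \hat\phi^G_*$ for each invariant entourage $U$, and these are compatible with the inclusions indexed by $\cC^G_X$ as $U$ varies; passing to the filtered colimit in \eqref{qewfqwefjhb1ji234rwefqewff} and recalling that the relevant transition functors are fully faithful, one obtains a left adjoint to $\hat\phi^G_*$ on $\Sh^{\pi_0,G}_{\bC}(X)$ as well (here one only needs the $\pi_0$-entourage, so no colimit subtlety arises). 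Next the space direction: for $f \colon X \to X'$ in $G\Coarse$ with trivial coefficient morphism, \cref{erguiwergrwgwrgwgwr} gives the adjunction $L^{\pi_0,G}\hat f^{*,G} \colon \Sh^{\pi_0,G}_{\bC}(X') \leftrightarrows \Sh^{\pi_0,G}_{\bC}(X) \cocolon \hat f^G_*$, which is exactly a left adjoint to the pushforward along $f$. Composing these, every morphism in the base has its pushforward functor equipped with a left adjoint.

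It remains to verify the cocycle/compatibility condition needed to invoke the straightening criterion: the left adjoints must be compatible with composition up to coherent homotopy, which is automatic once one knows the pushforwards themselves form a functor $G\Coarse \times \Fun(BG,\CL) \to \CATi$ (they do, being the opposite of the classifying functor of the cocartesian fibration $s^{\pi_0}$) and that each component functor admits a left adjoint. Then \cite[Prop.~2.4.2.11]{htt} (applied to $s^{\pi_0}$, using that it is already a cocartesian fibration and that each pushforward has a left adjoint, equivalently each transition functor of the classifying functor has a right adjoint) yields that $s^{\pi_0}$ is a cartesian fibration, with cartesian transport given by these left adjoints. The main obstacle is bookkeeping: one must be careful that $\hat f^G_*$ in the source of \eqref{qwefefewfqfewf} corresponds, after taking opposites, to the \emph{covariant} transport of the cocartesian fibration, so that "admits a left adjoint'' is precisely the condition \cite[Cor.~5.2.2.5]{htt} needs; and one must keep track of the distinction between $\Sh^{\pi_0,G}$ and $\Sh^G$ — it is essential that we work with the $\pi_0$-entourage (\cref{erguiwergrwgwrgwgwr}), since for general coarse structures the functor $\hat f^G_*$ need not have a left adjoint (as noted before \cref{wefgihjwiegwergrwrg}). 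No nontrivial entourage-indexed colimit enters because $U(\pi_0(X))$ is a single, functorially chosen entourage, so the adjunctions of \cref{erguiwergrwgwrgwgwr} and \cref{ergioegrergwergwregwergwegrwr} apply directly.
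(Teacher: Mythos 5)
Your proof is correct and follows essentially the same route as the paper: both arguments reduce the claim to the existence of left adjoints of the pushforward functors (via \cref{iuqhfiuewvffewfqfewf}/\cref{erguiwergrwgwrgwgwr} for the space direction and \cref{ergioegrergwergwregwergwegrwr} for the coefficient direction, applied with the $\pi_0$-entourage) and then invoke Lurie's criterion \cite[Cor.~5.2.2.5]{htt}. The only difference is presentational: the paper passes to the opposite fibration $s^{\pi_0,\op}$ with fibres $\Sh^{\pi_0,G}_{\bC}(X)$, whereas you work with $s^{\pi_0}$ directly and track the $\op$, and your brief detour through the entourage-indexed colimit is unnecessary, as your own parenthetical already notes.
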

\begin{proof} The opposite of  $s^{\pi_0}$, i.e., the map
\begin{equation}\label{qwefefewfqfewf111}
	s^{\pi_0,\op} \colon \hat \Sh^{\pi_{0},G,\op}\to G\Coarse^{\op}\times \Fun(BG,\CL)^{\op}
\end{equation}
is a cartesian fibration classifying $\Sh^{\pi_0,G}$
as a contravariant functor, but such that the fibre over $(X,\bC)$ is equivalent to $\Sh^{\pi_0,G}_\bC(X)$.
In this picture, the functors
$\hat f_{*}^{G}$ for morphisms $f$ in $G\Coarse$ and $\hat \phi^{G}_*$ for morphisms $\phi$ in $\Fun(BG,\CL)$  have left adjoints.  For $f$ this follows from \cref{iuqhfiuewvffewfqfewf}, and for $\phi$ we use \cref{ergioegrergwergwregwergwegrwr}. 
 By Lurie \cite[Cor.~5.2.2.5]{htt},  $s^{\pi_0,\op}$
is also a cocartesian fibration.
 In particular, its opposite $s^{\pi_0}$ is also both cartesian and cocartesian. 
\end{proof}

The subfunctor $\Sh^{G}$ (see \eqref{regewgk2p5getgwreg}) of $\Sh^{\pi_{0},G}$ gives rise to a cocartesian subfibration of $s^{\pi_0}$:
\begin{equation}\label{qfqefewfqfewfqew}
 {s \colon} \hat \Sh^{G}\to G\Coarse\times \Fun(BG,\CL)\ .
\end{equation}

For a morphism $f$  {of $G$-coarse spaces}
the induced morphism $\hat f^{G}_{*}$  on sheaves has a left adjoint only under additional conditions (e.g., if $f$ is a coarse covering, see \cref{unex-left}). Similarly, for   a morphism $\phi$ in $\Fun(BG,\CL)$ the induced morphism $\hat\phi^G_{*}$ on sheaves is a morphism in $\CLL$ and not expected to have a left adjoint, except if it is an equivalence.
 In order to capture this situation,  we will employ Barwick's effective Burnside category formalism.  
To this end we recall some terminology from \cite[Sec.~5]{Barwick:Mackey}.

\begin{ddd}\label{def:triple}
	A triple is an $\infty$-category $\bD$ together with two subcategories $\bD_\dag$ and $\bD^\dag$ of $\bD$, both of which contain the maximal Kan complex $i\bD$ of $\bD$.
\end{ddd}

Let $(\bD, \bD_\dag,\bD^\dag)$ be a triple. 
	 
 \begin{enumerate}
  	 \item The morphisms in $\bD_\dag$ are called ingressive  and will be depicted by the symbol $\hookrightarrow$.
	 \item The morphisms in $\bD^\dag$ are called egressive and will be depicted by the symbol $\twoheadrightarrow$.
 \end{enumerate}
	
	Let $(\bD, \bD_\dag,\bD^\dag)$ be a triple. 
	 
\begin{ddd} \label{regioergjwogregregregwr}
  $(\bD, \bD_\dag,\bD^\dag)$ is called adequate if every diagram \[\xymatrix{
		 & X'\ar@{->>}[d] \\
		Y\ar@{^{(}->}[r] & X
 }\]
 in $\bD$ can be completed to a pullback square   \begin{equation}\label{verv2rggf34fe}
 \xymatrix{
	Y'\ar@{^{(}->}[r]\ar@{->>}[d] & X'\ar@{->>}[d] \\
	Y\ar@{^{(}->}[r] & X
 }
 \end{equation}
 in $\bD$.
\end{ddd}

Pullback squares of the form \eqref{verv2rggf34fe} are called  ambigressive  squares.  
We will often say that a square is ambigressive in $\bD$, understanding that this refers to a given triple structure on $\bD$.

Let $G\Coarse^{\dag}$ be the  wide subcategory of $G\Coarse$ 
of coarse coverings (\cref{wefgihjwiegwergrwrg}), and 
set $G\Coarse_{\dag}:=G\Coarse$. 
Then we set 
\begin{align*}
\bD&:=G\Coarse\times \Fun(BG,\CL)\\
\bD_{\dag}&:=G\Coarse_{\dag}\times \Fun(BG,\CL)\\
\bD^{\dag}&:=G\Coarse^{\dag}\times i\Fun(BG,\CL)\ .
\end{align*}
So we have $\bD_{\dag} = \bD$, and a morphism $(f,\phi)$ in $\bD$ belongs to $ \bD^{\dag}$ if and only if $f$ is a coarse covering and $\phi$ is an equivalence.

\begin{lem} The following triples are adequate:
\begin{enumerate}
\item  $(G\Coarse,G\Coarse_{\dag},G\Coarse^{\dag} )$;
 \item $(  \Fun(BG,\CL), \Fun(BG,\CL), i\Fun(BG,\CL))$;
 \item $(\bD, \bD_\dag,\bD^\dag)$.
\end{enumerate}
\end{lem}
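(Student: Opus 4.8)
The plan is to verify the adequacy condition of \cref{regioergjwogregregregwr} separately for each of the three triples, using that pullbacks in a product are computed componentwise, and that for the second and third triples the egressive half is rather restricted.

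First I would treat the triple $(G\Coarse,G\Coarse_{\dag},G\Coarse^{\dag})$. Given an ingressive arrow $Y\hookrightarrow X$ (i.e.\ an arbitrary morphism in $G\Coarse$, since $G\Coarse_{\dag}=G\Coarse$) and an egressive arrow $X'\twoheadrightarrow X$ (i.e.\ a coarse covering), I would form the pullback $Y':=Y\times_X X'$ in $G\Coarse$; this exists because $G\Coarse$ is complete by \cite[Prop.~2.18 and 2.21]{equicoarse}. It then remains to check that the base-changed map $Y'\to Y$ is again a coarse covering. This is exactly the content of the statement recalled just before \cref{ropgkpwoegrewgwregwgregw}, namely \cite[Lem.~2.11]{coarsetrans}: coarse coverings are stable under pullback in $G\Coarse$. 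Hence the completed square is ambigressive and the triple is adequate.

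Next I would treat $(\Fun(BG,\CL),\Fun(BG,\CL),i\Fun(BG,\CL))$. Here an egressive morphism $\bC'\twoheadrightarrow\bC$ is by definition an equivalence. Given $\bD\hookrightarrow\bC$ and an equivalence $\bC'\xrightarrow{\simeq}\bC$, one may take $\bD':=\bD$ with the ingressive map $\bD\hookrightarrow\bC'$ obtained by composing with the inverse equivalence, and the egressive map $\bD'=\bD\xrightarrow{\simeq}\bD$ (transported appropriately); the resulting square is a pullback because one of its parallel pairs of edges consists of equivalences (a square with two opposite edges invertible is cartesian iff the remaining two edges are identified under that equivalence). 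I only need $\Fun(BG,\CL)$ to admit pullbacks along equivalences, which is immediate. So this triple is adequate as well.

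Finally, for $(\bD,\bD_\dag,\bD^\dag)$ with $\bD=G\Coarse\times\Fun(BG,\CL)$, a morphism is ingressive iff it is arbitrary in both components and egressive iff its $G\Coarse$-component is a coarse covering and its $\Fun(BG,\CL)$-component is an equivalence. Since limits in a product $\infty$-category are computed componentwise, I would simply combine the two preceding cases: given $(Y,\bD)\hookrightarrow(X,\bC)$ and $(X',\bC')\twoheadrightarrow(X,\bC)$, form the componentwise pullback, which by the first two paragraphs is ambigressive in each factor, hence ambigressive in $\bD$. I do not anticipate a serious obstacle here; the only nontrivial input is the pullback-stability of coarse coverings, which is already quoted in the text, so the proof is essentially a bookkeeping assembly of that fact with the trivial observations about equivalences and componentwise limits.
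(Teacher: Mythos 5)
Your proposal is correct and follows essentially the same route as the paper: the paper's proof likewise reduces everything to the existence of pullbacks in $G\Coarse$ and $\Fun(BG,\CL)$ together with the facts that a pullback of a coarse covering is a coarse covering (\cite[Lem.~2.11]{coarsetrans}) and a pullback of an equivalence is an equivalence, the product case being componentwise. Your extra detail (the explicit cartesian square in the equivalence case and the componentwise assembly) is just an expansion of that one-line argument.
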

\begin{proof} 
We use that $G\Coarse$ and $\Fun(BG,\CL)$ admit pullbacks, and that a pullback of a coarse covering or an equivalence 
is again a coarse covering  (\cite[Lem.~2.11]{coarsetrans}) or an equivalence, respectively. 
\end{proof}

\begin{lem}\label{prop:cart-cocart}
	\
	\begin{enumerate}
		\item\label{it:cart-cocart1} The  projection 
		$\hat{\Sh}^{G} \mathop{\times}\limits_{\bD} \bD_{\dag}\to\bD_{\dag} $
		is a cocartesian fibration.
		\item\label{it:cart-cocart2} The  projection 
		$\hat{\Sh}^{G} \mathop{\times}\limits_{\bD} \bD^{\dag}\to\bD^{\dag} $
		is a cartesian fibration.
	\end{enumerate}
\end{lem}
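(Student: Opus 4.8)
The statement to prove is \cref{prop:cart-cocart}: that the base-changed projections $\hat{\Sh}^{G}\times_{\bD}\bD_{\dag}\to\bD_{\dag}$ and $\hat{\Sh}^{G}\times_{\bD}\bD^{\dag}\to\bD^{\dag}$ are cocartesian, resp.\ cartesian, fibrations. The key observation is that the subfibration $s\colon\hat\Sh^{G}\to\bD = G\Coarse\times\Fun(BG,\CL)$ from \eqref{qfqefewfqfewfqew} is already a cocartesian fibration (it classifies the functor $(X,\bC)\mapsto\Sh^{G}_{\bC}(X)^{\op}$ restricted from the cocartesian fibration $s^{\pi_0}$), and cocartesian fibrations are stable under base change along arbitrary functors. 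Since $\bD_{\dag} = \bD$, assertion \eqref{it:cart-cocart1} is literally the statement that $s$ itself is cocartesian, which holds by construction of $\hat\Sh^{G}$ from the subfunctor $\Sh^{G}$ of $\Sh^{\pi_0,G}$. So the content is entirely in \eqref{it:cart-cocart2}.

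\textbf{Reduction for \eqref{it:cart-cocart2}.} First I would recall that pullbacks of cartesian fibrations along arbitrary functors are cartesian fibrations, so it suffices to show that the restriction of $\hat\Sh^{G}\to\bD$ to the subcategory $\bD^{\dag} = G\Coarse^{\dag}\times i\Fun(BG,\CL)$ of egressive morphisms is a cartesian fibration, i.e.\ that for every object of $\hat\Sh^{G}$ and every egressive morphism $(f,\phi)$ in $\bD$ with the given object as target, there is an $s$-cartesian lift lying over $(f,\phi)$. The point is that the coherent data are already in hand: for a coarse covering $f$, \cref{unex-left} provides the left adjoint $L^{\pi_0,G}\hat f^{*,G}$ to $\hat f^{G}_{*}$ on the relevant sheaf categories; for an equivalence $\phi$, the functor $\hat\phi^{G}_{*}$ is itself an equivalence, hence trivially has a (two-sided) adjoint. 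Packaging these: on the opposite fibration $s^{\pi_0,\op}$ of \eqref{qwefefewfqfewf111}, the restriction to $(G\Coarse^{\op}\times i\Fun(BG,\CL)^{\op})$-morphisms is a cartesian fibration whose cartesian edges are pushed forward by $\hat f^{G}_{*}$ and $\hat\phi^{G}_{*}$, and these have left adjoints by \cref{unex-left} and \cref{ergioegrergwergwregwergwegrwr}. Applying \cite[Cor.~5.2.2.5]{htt} exactly as in the proof of \cref{rioqgegwgregwerg}, this cartesian fibration is also cocartesian; dualising back, $\hat\Sh^{G}\times_{\bD}\bD^{\dag}\to\bD^{\dag}$ acquires cartesian edges, namely the images of those cocartesian edges under $\op$.

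\textbf{Details to check.} The one genuinely non-formal step is verifying that $\hat\Sh^{G}$ (built from $\Sh^{G}$, the \emph{colimit} $\colim_{U}\Sh^{U,G}$, rather than $\Sh^{\pi_0,G}$) is closed under the relevant operations so that the base-changed square actually lands inside $\hat\Sh^{G}$: one needs that for a coarse covering $f$ the left adjoint $L^{\pi_0,G}\hat f^{*,G}$ restricts to a functor $\Sh^{G}_{\bC}(X')\to\Sh^{G}_{\bC}(X)$, which is precisely the content of \cref{unex-left}, and that $\hat f^{G}_{*}$ preserves $\Sh^{G}$ as noted before \eqref{regewgk2p5getgwreg}; for an equivalence $\phi$ both $\hat\phi^{G}_{*}$ and its inverse preserve sheaves by \cref{ergioegrergwergwregwergwegrwr}. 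Once these are in place, the argument is a verbatim repetition of the proof of \cref{rioqgegwgregwerg} combined with the stability of (co)cartesian fibrations under base change, so I would state it as such. The main obstacle is thus not a hard estimate but the bookkeeping: one must be careful that the adequacy of the triple $(\bD,\bD_\dag,\bD^\dag)$ is exactly what guarantees the ambigressive pullbacks used implicitly when one later assembles these fibrations into the $2$-variable effective Burnside construction, but for the present proposition only the existence of cartesian/cocartesian lifts over $\bD^\dag$ and $\bD_\dag$ separately is required, and that follows formally from the adjunctions already established.
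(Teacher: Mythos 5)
Your proposal is correct and follows essentially the same route as the paper: part (1) is just the cocartesian fibration $s$ since $\bD_\dag=\bD$, and part (2) rests on \cref{unex-left} together with the restriction to equivalences in the $\Fun(BG,\CL)$-factor, combined with the formal (co)cartesian fibration/adjoint machinery of \cite[Cor.~5.2.2.5]{htt} as in \cref{rioqgegwgregwerg}. The only cosmetic difference is that the paper pulls back the cartesian fibration $s^{\pi_0}$ along $\bD^\dag$ and then observes that its cartesian lifts preserve the full subcategory $\hat\Sh^{G}$, whereas you apply the adjoint criterion directly to the restricted subfibration — the inputs and content are the same.
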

\begin{proof}
 The map in \eqref{it:cart-cocart1} is equivalent to   the cocartesian fibration $s$.
For \eqref{it:cart-cocart2} we first observe, using  \cref{rioqgegwgregwerg},
that  $\hat{\Sh}^{\pi_{0},G} \mathop{\times}\limits_{\bD} \bD^{\dag}\to\bD^{\dag} $
is  the pullback of a cartesian fibration and hence itself cartesian. We then use that the cartesian lifts of coarse coverings preserve the  category $\hat \Sh^{G}$ by  the existence of left adjoints asserted in \cref{unex-left}, and that we restricted to the maximal Kan complex in the second factor of $\bD$.
\end{proof}

We define $\hat \Sh^{G}_{\dag}$ to be the  subcategory  of cocartesian morphisms in 
\[ \hat \Sh^{G}\times_{\bD}\bD_{\dag} \to\bD_{\dag}\ .\]
Similarly, we define $\hat \Sh^{G,\dag}$ to be the subcategory  of cartesian morphisms in
\[ \hat \Sh^{G}\times_{\bD}\bD^{\dag} \to \bD^{\dag}\ .\]

\begin{prop}\label{riqrjhgoqgqreggwergwrgw}
	The triple $(\hat{\Sh}^{G}, \hat{\Sh}^{G}_\dag, \hat{\Sh}^{G,\dag})$ is adequate, and the map of triples
	\begin{equation}\label{revvewvwewervewrfwfrfrefwerfwf}
	 {(s,s_\dag,s^\dag) \colon} (\hat{\Sh}^{G}, \hat{\Sh}^{G}_\dag, \hat{\Sh}^{G,\dag}) \to (\bD, \bD_\dag,\bD^\dag)
	\end{equation} 
	preserves ambigressive   squares.
\end{prop}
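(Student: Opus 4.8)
The plan is to verify the two assertions of \cref{riqrjhgoqgqreggwergwrgw} by reducing them to the corresponding facts about the base triple $(\bD,\bD_\dag,\bD^\dag)$ together with the fibration properties established in \cref{prop:cart-cocart}. First I would recall the general principle (from \cite[Sec.~12]{Barwick:Mackey} or a direct argument) that if $q\colon \bE\to \bD$ is a functor which is a cocartesian fibration when restricted over $\bD_\dag$ and a cartesian fibration when restricted over $\bD^\dag$, and if $(\bD,\bD_\dag,\bD^\dag)$ is adequate, then defining $\bE_\dag$ to be the cocartesian morphisms over $\bD_\dag$ and $\bE^\dag$ the cartesian morphisms over $\bD^\dag$ yields an adequate triple on $\bE$, provided one can solve the relevant lifting problem: given an ingressive $Y\hookrightarrow X$ in $\bE$ lying over $j\colon s(Y)\hookrightarrow s(X)$ and an egressive $X'\twoheadrightarrow X$ lying over $g\colon s(X')\twoheadrightarrow s(X)$, one forms the ambigressive pullback square in $\bD$ completing $j$ and $g$, lifts $g$ to a cartesian arrow $\bar g\colon \tilde X'\to X'$ in $\bE$ using the cartesian fibration property over $\bD^\dag$, and then lifts the ingressive side over $s(Y')\hookrightarrow s(X')$ to a cocartesian arrow with target $\tilde X'$; one must then check the resulting square in $\bE$ is a pullback and is ambigressive.

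Concretely, unwinding the definitions in our situation: an object of $\hat\Sh^G$ over $(X,\bC)$ is a sheaf $M\in\Sh^G_\bC(X)$; an ingressive morphism covering $(f,\phi)\colon (X,\bC)\to(X',\bC')$ is one whose underlying map is $\hat f^G_*\circ \hat\phi^G_*$ applied suitably (a cocartesian arrow for $s$), and an egressive morphism covers $(f,\mathrm{id})$ with $f$ a coarse covering and is a cartesian arrow, i.e.\ essentially $L^{\pi_0,G}\hat f^{*,G}$. So given the span $M\leftarrow N$ (egressive, over a coarse covering $f'\colon X'\to X$, with $N\simeq L^{\pi_0,G}\hat f'^{*,G}M$ after base change in $\bC$, but here $\phi=\mathrm{id}$) and an ingressive $M\to P$ over $(g,\psi)$, I would complete $(g,\psi)$ and $(f',\mathrm{id})$ to the ambigressive pullback square in $\bD$: since $\bD^\dag$-morphisms are of the form (coarse covering, equivalence) and pullbacks of coarse coverings along arbitrary morphisms are coarse coverings by \cite[Lem.~2.11]{coarsetrans}, this square has the shape of \eqref{coarse-pull-back} crossed with the identity on the $\CL$-factor. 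Then the required filler sheaf over the upper-left corner $X'\times_X \tilde X$ is built by applying the cartesian lift (using \cref{unex-left}) along the pulled-back coarse covering to the sheaf $\hat g^G_*\hat\psi^G_*M$ on the appropriate space, and the verification that the resulting square of sheaf categories is cartesian and ambigressive.

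The statement that the map of triples \eqref{revvewvwewervewrfwfrfrefwerfwf} preserves ambigressive squares is then essentially built into the construction: by definition $s$ sends cocartesian arrows to $\bD_\dag$-morphisms and $s^\dag$ sends cartesian arrows to $\bD^\dag$-morphisms, and an ambigressive square in $\hat\Sh^G$ is precisely a pullback square with two ingressive and two egressive sides; since $s$ preserves these pullback squares (the base-changed square was pulled back from $\bD$ in the first place) the image is ambigressive. The main obstacle, and where I would concentrate the effort, is the compatibility lemma that makes the base change in the $G\Coarse$-direction interact correctly with the sheaf functors: one needs that the canonical comparison morphism $L^{\pi_0,G}\hat g'^{*,G}\hat f^G_*\to \hat f'^{*,G}L^{\pi_0,G}\hat g^{*,G}$ attached to a pullback square with the egressive side a coarse covering is an equivalence, which is exactly \cref{ropgkpwoegrewgwregwgregw} (via \cref{rgkoqregqregqqef} and \cref{unex-left}); feeding this into the general triple-lifting machinery shows the span completes to an honest ambigressive square in $\hat\Sh^G$, which gives adequacy, and simultaneously shows the triple map preserves ambigressive squares. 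The $\CL$-direction contributes nothing subtle since the egressive morphisms there are equivalences and every triple structure built from the maximal Kan complex is automatically adequate.
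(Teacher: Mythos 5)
Your treatment of the second assertion is where the real gap lies. You claim that preservation of ambigressive squares is ``essentially built into the construction'' because ``the base-changed square was pulled back from $\bD$ in the first place''. That reasoning is circular: an ambigressive square in $\hat{\Sh}^{G}$ is, by definition, any pullback square whose horizontal edges are cocartesian over $\bD_\dag$ and whose vertical edges are cartesian over $\bD^\dag$ --- it is not assumed to arise by base change from an ambigressive square in $\bD$. Since $s$ sends ingressives to ingressives and egressives to egressives by construction, the only nontrivial point is precisely that the image square in $G\Coarse$ is again a pullback, and a cocartesian fibration does not preserve pullbacks for formal reasons. The paper has to argue this separately (\cref{lem:preservation-pull-back}): form the actual pullback $V'=X\times_Y W$ in $G\Coarse$, obtain the comparison map $a\colon V\to V'$, choose a cocartesian lift of $a$, and use the pullback property of the given square upstairs to manufacture an inverse of $a$; that the remaining leg is a coarse covering then comes from \cite[Lem.~2.11]{coarsetrans}. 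Nothing in your proposal substitutes for this argument, and the lemma you single out as ``the main obstacle'' (\cref{ropgkpwoegrewgwregwgregw}) enters the other half of the proof (the detection of cocartesian arrows in \cref{cor:detect-cartesian}), not this one.

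On the adequacy half your plan is in the right spirit --- cartesian lift along the pulled-back coarse covering, ingressive obtained from the universal property of the given cartesian arrow, then detection lemmas --- and is essentially the strategy of \cref{lem:sh-adequate}, but two points need repair. First, your concrete description sets up the wrong lifting problem: adequacy asks to complete a \emph{cospan}, an ingressive and an egressive with the same target, whereas you describe an egressive $N\twoheadrightarrow M$ followed by an ingressive $M\hookrightarrow P$ (the common object being the target of one and the source of the other); relatedly, ``cocartesian arrow with target $\tilde X'$'' does not parse, since cocartesian lifts are specified by a source and a base morphism. Second, you suppress the coefficient direction by taking $\phi=\mathrm{id}$, but a general ingressive in $\bD$ has a nontrivial component $u$ in $\Fun(BG,\CL)$; the paper handles this by factoring through $(X,\bB)$ and assembling the four ambigressive squares of \eqref{f34f34f334f3f3ee}, and this interleaving is needed before your lifting step applies. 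Finally, the checks you defer (``the resulting square is a pullback and is ambigressive'') are exactly \cref{cor:detect-cartesian} and \cref{lem:detect-pull-back}, which carry genuine content (the base-change equivalence of \cref{ropgkpwoegrewgwregwgregw} and a mapping-space computation); a complete proof must include them.
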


 The proof of the proposition will be prepared by some intermediate results.
 We fix $\bC$ in $\Fun(BG,\CL)$ and consider the specialisation
 \begin{equation}\label{ewrervwervev23ff}
 s_\bC \colon \hat \Sh_{\bC}^{G}\to G\Coarse
\end{equation}
of  $s$
at $\bC$ in the second factor of the target. We will similarly write
\[ {s_{\bC,\dag} \colon} \hat \Sh_{\bC,\dag}^{G} \to G\Coarse \:\:\:\:\mbox{and}\:\:\:\: s_\bC^\dag \colon \hat \Sh_{\bC}^{G,\dag}\to G\Coarse^{\dag}\ . \]
Let
\begin{equation}\label{eq:square}
\xymatrix{
	M'\ar[r]^-{\phi'}\ar[d]_-{\psi} & N'\ar[d]^-{\psi^{\prime}} \\
	M\ar[r]^-{\phi} & N
}
\end{equation}
be a square in $\hat \Sh^{G}_{\bC}$.

\begin{lem}\label{cor:detect-cartesian}
	Assume:
	\begin{enumerate}
	 \item \label{wegiowgwerwgerg} The square \eqref{eq:square} covers  an ambigressive square in $G\Coarse$.
 	 \item \label{wegiowgwerwgerg1} $\phi$ is cocartesian.
	 \item\label{wegiowgwerwgerg2}  $\psi$ is cartesian.
	\end{enumerate}
	Then $\phi'$ is cocartesian if and only if $\psi^{\prime}$ is cartesian.
\end{lem}
\begin{proof} By \eqref{wegiowgwerwgerg}, the square
  covers a pullback square 
 \[\xymatrix{V\ar[r]^{g}\ar[d]^{v}&W\ar[d]^{w}\\X\ar[r]^{f} &Y}\]
 in $G\Coarse$ such that $v$ and $w$ are coarse coverings. 
 By  \eqref{wegiowgwerwgerg1} and  \eqref{wegiowgwerwgerg2},
  we get the following situation
  \begin{equation} \label{ergierjgiogrjoerijgoiqergqfqwfqefqef}\xymatrix{L^{\pi_{0}}\hat v^{*,G}M\ar[d]^{\simeq}\ar[r]^-{\text{cocart}}&\hat g_{*}^{G}L^{\pi_{0}}\hat v^{*,G} M\ar@{..>}[d]\ar[dr]_{\simeq}^{\hphantom{xxx}\cref{ropgkpwoegrewgwregwgregw}}&\\
	M'\ar[r]^-{\phi'}\ar[d]_-{\psi} & N'\ar[d]^-{\psi^{\prime}} \ar@{..>}[r]&L^{\pi_{0}}\hat w^{*,G} \hat f^{G}_{*}M\ar[d]^-{\text{cart}}\\
	M\ar[r]^-{\phi} & N\ar[r]^{\simeq}& \hat f^{G}_{*}M
}\ ,
\end{equation}
where the dotted arrows are obtained from the universal properties of the cartesian or cocartesian maps as indicated. The composition of the two dotted arrows is the comparison morphism  from \cref{ropgkpwoegrewgwregwgregw}, which has the indicated direction since the upper right triangle in 
\eqref{ergierjgiogrjoerijgoiqergqfqwfqefqef} lives in the fibre  $\Sh^{G}_{\bC}(W)^{\op}$  of $s$ over $(W,\bC)$ (recall the definition \eqref{qwefefewfqfewf} of $s^{\pi_{0}}$ which involved  {taking fibrewise opposites)}. 

If  $\phi^{\prime}$ is cocartesian or $\psi^{\prime}$ is cartesian, then one of the dotted arrows is an equivalence, and hence the other is an equivalence, too.
\end{proof}

The following assertion is a general fact about  an inner fibration in $\Cati$, but for concreteness we formulate it for $s_\bC$.
 We consider again a square of the shape \eqref{eq:square} in $\hat \Sh^{G}_{\bC}$.

\begin{lem}\label{lem:detect-pull-back}
Assume:
\begin{enumerate} 
	\item  \label{rgiojeoigwegrgwergr42r23} $\psi$ is cartesian.
	\item \label{rgiojeoigwegrgwerg} $\psi^{\prime}$ is cartesian.
	\item \label{rgiwhiogwerggwergw} The square \eqref{eq:square} covers a pullback square in $G\Coarse$.
\end{enumerate}
	Then \eqref{eq:square} is a pullback square.
\end{lem}
\begin{proof}
 We contract the notation for mapping spaces $\Map_\bE(-,-)$ to $\bE(-,-)$. We further contract $G\Coarse$ to $G\cC$. Then we have the following chain of equivalences of spaces:
	\begin{align*}
	&\hat \Sh^{G}_{\bC}(P,M') \\
	&\stackrel{(1)}{\simeq} \hat \Sh^{G}_{\bC}(P,M) \mathop{\times}\limits_{G\cC(s_\bC(P),s_\bC(M))}G\cC(s_\bC(P),s_\bC(M^{\prime})) \\
	&\stackrel{ (2)}{\simeq} \hat \Sh^{G}_{\bC}(P,M) \mathop{\times}\limits_{G\cC(s_\bC(P),s_\bC(M))} G\cC(s_\bC(P),s_\bC(M)) \mathop{\times}\limits_{G\cC(s_\bC(P),s_\bC(N))} G\cC(s_\bC(P),s_\bC(N^{\prime})) \\
	&\stackrel{ (3)}{\simeq}  \hat \Sh^{G}_{\bC}(P,M) \mathop{\times}\limits_{G\cC(s_\bC(P),s_\bC(N))} G\cC(s_\bC(P),s_\bC(N^{\prime})) \\
	&\stackrel{(4)}{\simeq} 
	\hat \Sh^{G}_{\bC}(P,M) \mathop{\times}\limits_{\hat \Sh^{G}_{\bC}(P,N)}\hat \Sh^{G}_{\bC}(P,N)
	 \mathop{\times}\limits_{G\cC(s_\bC(P),s_\bC(N))} G\cC(s_\bC(P),s_\bC(N^{\prime}))\\
	&\stackrel{(5)}{\simeq} 
	\hat \Sh^{G}_{\bC}(P,M) \mathop{\times}\limits_{\hat \Sh^{G}_{\bC}(P,N)}\hat \Sh^{G}_{\bC}(P,N^{\prime})
	\end{align*}
  The equivalence marked  by (1) expresses the assumption  that $\psi$ is  cartesian.
  The equivalence marked by (2) follows from the isomorphism
  \[ s_\bC(M^{\prime})\cong s_\bC(M) \mathop{\times}\limits_{s_\bC(N)} s_\bC(N^{\prime}) \]
  in $G\cC$ which holds by assumption \eqref{rgiwhiogwerggwergw}.
  For the equivalence marked  by (3) we just cancelled the factor $G\cC(s_\bC(P),s_\bC(M))$, and for  the equivalence  marked  by (4) we introduced the factor $\hat \Sh^{G}_{\bC}(P,N)$.
  The equivalence marked by  $(5)$ expresses the assumption that $\psi^{\prime}$ is cartesian.
  The chain of equivalences   above is natural in $P$ and   shows that $M^{\prime}\simeq M \times_{N}N^{\prime}$, i.e., that \eqref{eq:square} is a pullback square.
\end{proof}

\begin{lem}\label{lem:preservation-pull-back} 
	If \eqref{eq:square} is an ambigressive square, then its image in $G\Coarse$   is an ambigressive square.
\end{lem}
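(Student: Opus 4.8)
The statement asserts that if a square in $\hat\Sh^G_\bC$ is ambigressive, then its image in $G\Coarse$ is ambigressive. Recall that an ambigressive square in a triple is a pullback square whose two ``vertical'' legs are egressive and whose two ``horizontal'' legs are ingressive. In the triple $(\hat\Sh^G_\bC,\hat\Sh^G_{\bC,\dag},\hat\Sh^{G,\dag}_\bC)$, the ingressive maps are the cocartesian morphisms (lying over coarse coverings, which are all of $G\Coarse$), and the egressive maps are the cartesian morphisms lying over coarse coverings. So assume \eqref{eq:square} is a pullback square in $\hat\Sh^G_\bC$ with $\psi,\psi'$ cartesian over coarse coverings $v,w$ and $\phi,\phi'$ cocartesian over morphisms $f,g$ in $G\Coarse$. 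We must show that the underlying square in $G\Coarse$
\[
\xymatrix{V\ar[r]^{g}\ar[d]_{v}&W\ar[d]^{w}\\X\ar[r]^{f}&Y}
\]
is a pullback with $v,w$ coarse coverings; the latter is automatic, so the content is that the square of spaces is cartesian in $G\Coarse$.

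\textbf{Key steps.} First I would use the fact that $s_\bC\colon\hat\Sh^G_\bC\to G\Coarse$ is a cartesian and cocartesian fibration (\cref{rioqgegwgregwerg}, \cref{prop:cart-cocart}) to realise the underlying square in $G\Coarse$ as the image of \eqref{eq:square}, and observe that coarse coverings are already there, so I only need the pullback property. The natural route is to compare \eqref{eq:square} with the honest pullback: form the pullback $X\times_Y W$ in $G\Coarse$ (it exists since $G\Coarse$ is complete, and the pullback of a coarse covering along any map is a coarse covering by \cite[Lem.~2.11]{coarsetrans}), equipped with its projections $\bar v\colon X\times_Y W\to X$ and $\bar g\colon X\times_Y W\to W$. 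By the universal property there is a canonical comparison map $\theta\colon V\to X\times_Y W$ in $G\Coarse$ with $\bar v\theta=v$ and $\bar g\theta=g$; I must show $\theta$ is an isomorphism. To do this I would pull back the cartesian edge of $s_\bC$ over $\bar v$ applied to $M$, producing an object $\widetilde M$ over $X\times_Y W$, fitting into a canonical ambigressive square over the \emph{pullback} square in $G\Coarse$ with the same data $M,N,N'$; this is precisely the situation of \cref{lem:detect-pull-back}, which tells us that this canonical square is a pullback in $\hat\Sh^G_\bC$. Since \eqref{eq:square} is also a pullback in $\hat\Sh^G_\bC$ with the same cospan $M\to N\leftarrow N'$, uniqueness of pullbacks yields an equivalence $M'\simeq\widetilde M$ in $\hat\Sh^G_\bC$ compatible with all structure maps. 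Now apply $s_\bC$: both $M'$ and $\widetilde M$ lie over $G\Coarse$, and the equivalence over the identity-on-the-cospan forces $s_\bC(M')\simeq s_\bC(\widetilde M)$, i.e. $V\simeq X\times_Y W$ compatibly with the projections, so $\theta$ is an isomorphism.

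\textbf{Main obstacle.} The subtle point is ensuring that the comparison equivalence $M'\simeq\widetilde M$ in $\hat\Sh^G_\bC$ actually descends to an equivalence of the \emph{underlying coarse spaces} compatible with $v,g$ and $\bar v,\bar g$ — i.e.\ that one genuinely gets $\theta$ an isomorphism rather than merely an abstract equivalence of fibres. This requires tracking the $s_\bC$-images of the cartesian and cocartesian structure maps carefully: the equivalence $M'\simeq\widetilde M$ is a morphism in $\hat\Sh^G_\bC$, and $s_\bC$ of a cartesian morphism over an isomorphism is an isomorphism, so one must verify that the canonical comparison morphism $M'\to\widetilde M$ provided by the universal property of $\widetilde M$ as a cartesian lift lies over $\theta$ and is itself cartesian; then it is $s_\bC$-cartesian over $\theta$, and being an equivalence forces $\theta$ to be an equivalence. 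I would settle this by a diagram chase analogous to the one in the proof of \cref{cor:detect-cartesian}, factoring $V\to X\times_Y W\to X$ and using that cartesian lifts compose. The rest is formal, using only completeness of $G\Coarse$, stability of coarse coverings under pullback, and \cref{lem:detect-pull-back} together with uniqueness of limits.
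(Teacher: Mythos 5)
Your argument is correct, but it takes a genuinely different route from the paper's proof. The paper works directly with the comparison map $a\colon V\to V'=X\times_Y W$: it chooses a cocartesian lift $\alpha$ of $a$ with domain $M'$ (always available, since $s$ is a cocartesian fibration), uses the cocartesian universal property to produce maps $\bar\psi,\bar\phi'$ over the two projections, then uses the pullback property of \eqref{eq:square} to get $\beta$ with $\beta\alpha\simeq\id_{M'}$, and finally the $1$-categorical universal property of $V'$ to get $a\circ s_\bC(\beta)=\id_{V'}$; so $a$ is inverted explicitly, with no appeal to \cref{lem:detect-pull-back}. You instead build a second square over the base pullback --- the cartesian lift $\tilde\psi\colon\widetilde M\to M$ of the projection $V'\to X$ (legitimate because that projection is again a coarse covering by \cite[Lem.~2.11]{coarsetrans}, so \cref{prop:cart-cocart} applies), together with the map $\tilde\phi'\colon\widetilde M\to N'$ furnished by cartesianness of $\psi'$ --- apply \cref{lem:detect-pull-back} to see it is a pullback, and then invoke uniqueness of pullbacks. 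This works, and the point you flag as the main obstacle is settled exactly as you sketch: the comparison equivalence $u\colon M'\to\widetilde M$ satisfies $\tilde\psi u\simeq\psi$ and $\tilde\phi'u\simeq\phi'$ (the latter by the uniqueness clause of the cartesian property of $\psi'$), so $s_\bC(u)$ composes with the two projections to $v$ and $g$; since $G\Coarse$ is an ordinary category and $V'$ an honest pullback, this forces $s_\bC(u)=\theta$, and $\theta$ is then an isomorphism because any functor sends equivalences to equivalences (cartesianness of $u$ is not needed for this last step). Two cosmetic remarks: ingressive morphisms are cocartesian edges over \emph{arbitrary} morphisms of $G\Coarse$ (since $G\Coarse_\dag=G\Coarse$), not ``over coarse coverings''; and your route consumes \cref{lem:detect-pull-back} plus the existence of a cartesian lift over the pulled-back covering, whereas the paper's argument needs only the unconditionally existing cocartesian lift of $a$ --- a slightly lighter input, at the price of a more hands-on diagram chase, while your version is arguably more conceptual in that it reduces everything to ``pullbacks over base pullbacks along cartesian edges'' plus uniqueness of limits.
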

\begin{proof}
  Applying $s_\bC$ to  \eqref{eq:square}, we obtain the outer square as in the following picture:
\begin{equation}\label{qwfqwfweeqwfeqwfqe}
 \xymatrix{
  V\ar@/_0.4cm/[ddddddr]_{s_\bC(\psi)}\ar@/^0.4cm/[rrrrrrd]^{s_\bC(\phi^{\prime})}\ar@/^0.3cm/@{.>}[dr]^-{a}&&&&&& \\
  &V^{\prime}\ar@/^0.2cm/@{-->}[ul]^(.3){s_\bC(\beta)}\ar[ddddd]\ar[rrrrr]&&&&&W\ar[ddddd]^{s_\bC(\psi^{\prime})}\\
  && 	M'\ar@/^0.2cm/@{-->}@{.>}[dr]^{\alpha}\ar@/^0.5cm/[rdr]^-{\phi'}\ar@/_0.5cm/[ddr]_-{\psi\ }&&&&\\
  &&&P \ar@/^0.3cm/@{-->}[ul]^{\beta}\ar@{-->}[r]^{\bar \phi^{\prime}}\ar@{-->}[d]^{\bar \psi}& N'\ar[d]^-{\psi^{\prime}}&& \\
  &&&M\ar[r]^-{\phi } & N&&\\
  &&&&&&\\
  &X \ar[rrrrr]^{s_\bC(\phi)}&&&&& Y
  }\end{equation}
We must show that this square is ambigressive  in $ G\Coarse$. 

Let $V^{\prime}$ be the pullback $X \times_Y W$.
Then we get a uniquely determined map $a$ as indicated. 
Our task is to show that $a$ is an isomorphism.

 We  choose a cocartesian lift $\alpha$ of $a$ as indicated.
Since $\alpha$ is cocartesian, we obtain the maps $\bar\phi' $ and $\bar\psi $.
Since \eqref{eq:square} is cartesian by assumption, we also get a map $\beta$ as indicated.
We conclude that $\beta\circ \alpha\simeq \id_{M^{\prime}}$ and hence $s_\bC(\beta)\circ a =\id_{V}$. 
Since the square with upper left corner $V^{\prime}$ is cartesian, we also have $a\circ s_\bC(\beta)=\id_{V^{\prime}}$. Hence $a$ is an isomorphism.

Since \eqref{eq:square} is ambigressive, we know that $s_{\bC}(\psi^{\prime})$ is a coarse covering. By \cite[Lem.~2.11]{coarsetrans}, this implies that $s_\bC(\psi)$ is also a coarse covering. Hence the outer square in \eqref{qwfqwfweeqwfeqwfqe}
is  ambigressive.
	\end{proof}

\begin{lem}\label{lem:sh-adequate} 
The triple $(\hat{\Sh}^{G}, \hat \Sh^{G}_\dag,  \hat \Sh^{G,\dag})$ is adequate.
\end{lem}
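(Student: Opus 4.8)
By \cref{regioergjwogregregregwr}, we must show that every cospan
\[\xymatrix{
& M'\ar@{->>}[d]\\
N\ar@{^{(}->}[r] & M
}\]
in $\hat\Sh^G$, with the egressive morphism $M'\twoheadrightarrow M$ a cartesian lift of a coarse covering (over an equivalence in the $\Fun(BG,\CL)$-factor) and the ingressive morphism $N\hookrightarrow M$ an arbitrary cocartesian morphism, can be completed to an ambigressive square. The plan is to build the square fibrewise over $\bD=G\Coarse\times\Fun(BG,\CL)$: first form the pullback square in $\bD$ underneath, then lift it to $\hat\Sh^G$ in the only reasonable way, and finally use the detection lemmas already established to check that the lift has the required (co)cartesian edges.

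In detail, write the cospan in $\bD$ underneath as a coarse covering $w\colon W\twoheadrightarrow Y$ together with an ingressive morphism $(f,\phi)\colon X\to Y$ (so $X\hookrightarrow Y$ is arbitrary and $\phi$ a morphism in $\Fun(BG,\CL)$). Since $G\Coarse$ admits pullbacks and $\Fun(BG,\CL)$ does too, form the pullback $V:=X\times_Y W$ in $G\Coarse$ and the corresponding pullback in $\bD$; by \cite[Lem.~2.11]{coarsetrans} the projection $V\to X$ is again a coarse covering, so this is an ambigressive square in $\bD$. Now I would construct the lift as follows: take $M''$ to be a cartesian lift of the coarse covering $V\to X$ applied to the object $N$ over $X$ (using \cref{prop:cart-cocart} \eqref{it:cart-cocart2}, noting that $V\to X$ covers an equivalence in the second factor since pullback of an equivalence is an equivalence — here one uses that the egressive class only allows equivalences in $\Fun(BG,\CL)$). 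This gives the egressive edge $M''\twoheadrightarrow N$. For the other edge, one must produce a cocartesian morphism $M''\to M'$ over $(V\to W$ in $G\Coarse$, $\phi$ in $\Fun(BG,\CL))$; such a cocartesian lift of the ingressive morphism out of $M''$ exists by \cref{prop:cart-cocart} \eqref{it:cart-cocart1}, and by uniqueness of (co)cartesian lifts its target agrees with $M'$ over $W$. The square so obtained is ambigressive precisely when the egressive edge $M''\twoheadrightarrow N$ is cartesian (true by construction) and the remaining data assemble into a pullback square; this last point is exactly the content of \cref{lem:detect-pull-back}, whose hypotheses \eqref{rgiojeoigwegrgwegrgwergr42r23}, \eqref{rgiojeoigwegrgwerg}, \eqref{rgiwhiogwerggwergw} are met since both vertical edges are cartesian and the underlying square in $G\Coarse$ was arranged to be a pullback.

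The one genuinely delicate point is compatibility of the two lifting steps, i.e.\ checking that the cocartesian lift of the ingressive edge starting from $M''$ really has target $M'$ and not merely something equivalent to $M'$ over a different object of $\bD$; this is where one invokes that cocartesian (resp.\ cartesian) lifts are determined up to contractible choice once the source (resp.\ target) and the base morphism are fixed, together with \cref{cor:detect-cartesian} to see that the four edges of the resulting square are simultaneously (co)cartesian as required. I expect the main obstacle to be purely bookkeeping: one is juggling a cartesian fibration and a cocartesian fibration on overlapping but distinct subcategories $\bD^\dag$ and $\bD_\dag$ of $\bD$, and the square has edges of both types, so one has to be careful that the relevant lifts exist in $\hat\Sh^G$ (not just in the ambient $\hat\Sh^{\pi_0,G}$) — which is guaranteed by the existence of left adjoints from \cref{unex-left} and \cref{ropgkpwoegrewgwregwgregw}, as recorded in the proof of \cref{prop:cart-cocart}. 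Once the lift is in place, adequacy follows formally, and the second assertion of \cref{riqrjhgoqgqreggwergwrgw} (preservation of ambigressive squares by $(s,s_\dag,s^\dag)$) is immediate from the construction since the lift was chosen to cover the chosen pullback square in $\bD$.
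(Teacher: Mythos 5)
Your overall architecture is the same as the paper's: push the cospan down to $\bD$, complete it there using that $G\Coarse$ has pullbacks and that a pullback of a coarse covering is again a coarse covering, then lift fibrewise using \cref{prop:cart-cocart} and conclude with \cref{cor:detect-cartesian} and \cref{lem:detect-pull-back}. However, the step where you produce the missing ingressive edge is not justified as written. You take the cocartesian lift of $V\to W$ out of your new corner $M''$ and assert that "by uniqueness of (co)cartesian lifts its target agrees with $M'$". Uniqueness of cocartesian lifts only determines the target from the source and the base morphism; it says nothing about that target being the \emph{given} object $M'$. Concretely, your $M''$ is $L^{\pi_0}\hat g^{*,G}N$ and the pushforward of $M''$ along $V\to W$ is $\hat f'^{G}_{*}L^{\pi_0}\hat g^{*,G}N$, while $M'\simeq L^{\pi_0}\hat w^{*,G}\hat f^{G}_{*}N$; identifying these two is exactly the proper base change equivalence of \cref{rgkoqregqregqqef}/\cref{ropgkpwoegrewgwregwgregw}, i.e.\ the nontrivial input of the whole lemma, not a formal uniqueness statement. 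The correct order of construction (which is what the paper does) is the reverse: first build the comparison morphism $M''\to M'$ from the universal property of the \emph{cartesian} morphism $M'\twoheadrightarrow M$, by factoring the composite $M''\twoheadrightarrow N\hookrightarrow M$ over $V\to W\to Y$; this also guarantees that the square commutes with the given egressive edge. Only then does \cref{cor:detect-cartesian} (whose proof is where the base change is used) tell you that this edge is cocartesian, and \cref{lem:detect-pull-back} that the square is a pullback. Citing \cref{cor:detect-cartesian} at the end, as you do, does not repair the step, because that corollary presupposes an already commuting square with the given $M'$ in its corner.

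A second, smaller gap is the $\Fun(BG,\CL)$-direction, which you dismiss as bookkeeping. The ingressive class allows an arbitrary morphism $u\colon\bC\to\bB$ of coefficients, while \cref{cor:detect-cartesian} and \cref{lem:detect-pull-back} are stated for the specialisation $s_\bC$ at fixed coefficients. This is why the paper first factors the ingressive morphism as $(\id_X,u)$ followed by $(f,\id)$ (and the egressive one through $(\id,v)$ with $v$ an equivalence), decomposes the base diagram into several ambigressive squares, and lifts square by square, taking a cocartesian lift of $(\id_X,u)$ before pulling back along the covering. Your single-step construction skips this reduction, so even after fixing the main step you would still need to insert this factorisation (or argue explicitly that the equivalence $v$ lets you reduce to constant coefficients) before the detection lemmas apply.
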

\begin{proof}
 Consider the diagram \begin{equation}\label{fweiwjefkwefjwkofewfwefwff}
\xymatrix{&N^{\prime}\ar@{->>}[d]^{\chi}\\M\ar@{^{(}->}[r]^{\phi}&N}
\end{equation}
in $\hat{\Sh}^{G}$. We must show that it can be extended to an ambigressive square.

The image  of \eqref{fweiwjefkwefjwkofewfwefwff}    in $\bD=G\Coarse \times \Fun(BG,\CL)$ is a diagram of the form
\[\xymatrix{&(Y^{\prime},\bB^{\prime}) \ar@{->>}[d]^{(h,v)}\\(X,\bC)\ar@{^{(}->}[r]^{(f,u)}&(Y,\bB)}\ .\]
Here $X,Y,Y'$ are in $G\Coarse$ and    $h$ is a coarse covering. Furthermore, $\bC,\bB,\bB'$ are  in  $ \Fun(BG,\CL)$ and $v$ is an equivalence.
We decompose the diagram as follows: \begin{equation}\label{vwefwelkfwefwefewfwefwe}
\xymatrix{&&(Y^{\prime},\bB^{\prime}) \ar@{->>}[d]^{(\id,v)}\\ &&(Y^{\prime},\bB)\ar@{->>}[d]^{(h,\id)}\\(X,\bC)\ar@{^{(}->}[r]^{(\id,u)}&(X,\bB)\ar@{^{(}->}[r]^{(f,\id)}&(Y,\bB)}\ ,
\end{equation}
We   can extend 
the diagram
\[\xymatrix{&Y'\ar@{->>}[d]^{h}\\X\ar@{^{(}->}[r]^{f}&Y}\]
to a pullback square in $G\Coarse$
\[\xymatrix{Y^{\prime}\ar@{->>}[d]^{g}\ar@{^{(}->}[r]^{f^{\prime}}&Y'\ar@{->>}[d]^{h}\\X\ar@{^{(}->}[r]^{f}&Y}\ .\]
Then $g$ is also a coarse covering \cite[Lem.~2.11]{coarsetrans}. 
Hence we get the diagram 
\[\xymatrix{&&(Y^{\prime},\bB^{\prime})\ar@{->>}[d]^{(\id,v)}\\ &(X^{\prime},\bB)\ar@{^{(}->}[r]^{(f^{\prime},\id)}\ar@{->>}[d]^{(g,\id)}&(Y^{\prime},\bB)\ar@{->>}[d]^{(h,\id)}\\(X,\bC)\ar@{^{(}->}[r]^{(\id,u)}&(X,\bB)\ar@{^{(}->}[r]^{(f,\id)}&(Y,\bB)}\]
where the   square is ambigressive. 
This gives finally the following four ambigressive squares
\begin{equation}\label{f34f34f334f3f3ee}
\xymatrix@C=3.5em{(X^{\prime},\bC)\ar@{^{(}->>}[r]^{(\id,v^{-1}u)}\ar@{->>}[d]^{(\id,\id)}&(X^{\prime},\bB^{\prime})\ar@{->>}[d]^{(\id,v)}\ar@{^{(}->}[r]^{(f^{\prime},\id)}&(Y^{\prime},\bB^{\prime}) \ar@{->>}[d]^{(\id,v)}\\ (X^{\prime},\bC)\ar@{->>}[d]^{(g,\id)}\ar@{^{(}->}[r]^{(\id,u)}&(X^{\prime},\bB)\ar@{^{(}->}[r]^{(f^{\prime},\id)}\ar@{->>}[d]^{(g,\id)}&(Y^{\prime},\bB)\ar@{->>}[d]^{(h,\id)}\\(X,\bC)\ar@{^{(}->}[r]^{(\id,u)}&(X,\bB)\ar@{^{(}->}[r]^{(f,\id)}&(Y,\bB)}\ ,
\end{equation}
where for the upper left corner we use that $v$ is an equivalence.
The diagram
\eqref{fweiwjefkwefjwkofewfwefwff} can now be extended as 
\[\xymatrix{&&N^{\prime} \ar@{->>}[d]^{ {\omega_N}}\\ && {\overline{N}'}\ar@{->>}[d]^{{\overline{\chi}}}\\M \ar@/_1cm/@{^{(}..>}[rr]_{\phi}\ar@{^{(}->}[r]^{{\omega_M}} &{\overline{M}}\ar@{^{(}->}[r]^{ \overline{\phi}} &N}\]
over \eqref{vwefwelkfwefwefewfwefwe}.
The morphism ${\omega_M}$
is defined as a cocartesian lift of $(\id_{X},u)$ with domain $M$. The morphism  $\overline{\phi}$
is then obtained from the universal property of the cocartesian lift, and it is also cocartesian since $\phi$ is cocartesian.
The morphisms $\omega_N$ and $\overline{\chi}$ are obtained similarly. In particular, $\omega_N$ is an equivalence and $\overline{\chi}$ is cartesian.

We now choose a cartesian lift ${\overline{\gamma}}$ of the morphism $(g,\id)$ to obtain
 \[\xymatrix{&&N^{\prime} \ar@{->>}[d]^{{\omega_N}}\\ &{\overline{M}'}\ar@{^{(}.>}[r]^{{\overline{\phi}'}} \ar@{->>}[d]^{{\overline{\gamma}}}& {\overline{N}'}\ar@{->>}[d]^{{\overline{\chi}}}\\M  \ar@{^{(}->}[r]^{{\omega_M}} &{\overline{M}}\ar@{^{(}->}[r]^{ {\overline{\phi}}} &N}\ .\]
 The morphism ${\overline{\phi}'}$ is obtained from the universal property of ${\overline{\chi}}$ being cartesian.
 By \cref{cor:detect-cartesian}, ${\overline{\phi}'}$ is also cocartesian and therefore ingressive as indicated. By   \cref{lem:detect-pull-back}, the new square is a pullback and hence an ambigressive  square.
The same argument provides  the lower left ambigressive   square in the following diagram over \eqref{f34f34f334f3f3ee}:
 \[\xymatrix{
  {M''}\ar@{^{(}->}[r]^{\phi''}  \ar@{->>}[d]^{{\omega_{M''}}}& M' \ar@{^{(}->}[r]^{\phi'}  \ar@{->>}[d]^{{\omega_{M'}}}&N^{\prime} \ar@{->>}[d]^{{\omega_N}} \\
  {\overline{M}''} \ar@{->>}[d]\ar@{^{(}->}[r]^{{\overline{\phi}''}}&{\overline{M}'}\ar@{^{(}->}[r]^{{\overline{\phi}'}} \ar@{->>}[d]^{{\overline{\gamma}}}& {\overline{N}^{\prime}}\ar@{->>}[d]^{{\overline{\chi}}} \\
  M  \ar@{^{(}->}[r]^{{\omega_M}} & {\overline{M}}\ar@{^{(}->}[r]^{ \overline{\phi}} &N}\ .\]
For the upper part we can argue similarly, but, using the fact that $v$ is an equivalence, we can choose $\omega_{M'}$ and $\omega_{M''}$ as equivalences (which are both cartesian and cocartesian). 
The outer square is the desired extension of diagram \eqref{fweiwjefkwefjwkofewfwefwff} to an ambigressive square.
\end{proof}

\begin{proof}[Proof of \cref{riqrjhgoqgqreggwergwrgw}]
 The triple $(\hat\Sh^G,\hat\Sh^G_\dag,\hat\Sh^{G,\dag})$ is adequate by \cref{lem:sh-adequate}. It remains to show that $(s,s_\dag,s^\dag)$ preserves ambigressive squares.  
 We consider an ambigressive   square \begin{equation}\label{bretlknjmblrbrtbrtb}
\xymatrix{
	M'\ar@{^{(}->}[r]^-{\phi'}\ar@{->>}[d]_-{\psi} & N'\ar\ar@{->>}[d]^-{\psi^{\prime}} \\
	M\ar@{^{(}->}[r]^-{\phi} & N
}
\end{equation}
in $\hat{\Sh}^{G} $. Then  the projection to the first factor $G\Coarse$   of its image in $\bD$ is ambigressive   by  \cref{lem:preservation-pull-back}.
 The image  of diagram \eqref{bretlknjmblrbrtbrtb} in the second factor $\Fun(BG,\CL)$ of  $\bD$ has the form 
\[\xymatrix{\bC'\ar[r]\ar[d]_{\simeq}&\bB'\ar[d]^{\simeq}\\\bC\ar[r]&\bB}\]
and is therefore a pullback square.
Therefore, it is an ambigressive square in $\Fun(BG,\CL) $.
\end{proof}

We consider the functor $\tw \colon \Delta \to \Cat$ which associates to the poset $[n]$  in $\Delta$ the  {twisted arrow} poset
\[ \tw([n]):=\{(i,j)\in \{0,\dots,n\}^{2} \mid  i \leq j\} \]
with the order relation  $(i,j) \leq (i',j')$ if and only if  $i \leq i' \leq j' \leq j$.

Let $(\bD,\bD_\dag,\bD^\dag)$ be an adequate triple.
 Then we can consider the simplicial space
\[ i\Fun(\tw(-),\bD) \colon \Delta^{\op}\to \Spc\ . \]
For a functor $D \colon \tw([n]) \to \bD$ we write $D(i,j)$ for the value of $D$ at $(i,j)$ in $\tw([n])$.
The simplicial space $i\Fun(\tw(-),\bD)$ has a subobject $i\Fun^\times(\tw(-),\bD)$ given in degree $n$ by the subspace of those functors $D$ such that for all $0 \le i\le  k \le l \le j \le n$ the square
\[\xymatrix{
	D(i,j)\ar@{^{(}->}[r]\ar@{->>}[d] & D(k,j)\ar@{->>}[d] \\
	D(i,l)\ar@{^{(}->}[r] & D(k,l)
}\]
is an ambigressive  square in $\bD$. The simplicial space $i\Fun^\times(\tw(-),\bD)$ is a complete Segal space \cite[ {Prop.~5.9}]{Barwick:Mackey}  {(see also \cite[Thm.~2.13]{hhln:spans})}. 
 {For the next definition, we make use of the fact that $\Cati$ is a Bousfield localisation of $\Fun(\Delta^\op,\Spc)$, with $\Cati$ being equivalent to the full subcategory of complete Segal spaces.}

	\begin{ddd}\label{giroggrgergergergrege}
	 The effective Burnside $\infty$-category $\burn(\bD,\bD_\dag,\bD^\dag)$ is defined to be the $\infty$-category underlying
	 the complete Segal space $i\Fun^\times(\tw(-),\bD)$.
\end{ddd}
We refer to  {\cite[Sec.~3]{Barwick:Mackey}}
 for further details.
By \cite[Not.~3.9]{Barwick:Mackey}, there are canonical functors
\begin{equation}\label{frefklnfnrfklwefwefwefwefwef}
\bD_\dag \to \burn(\bD,\bD_\dag,\bD^\dag)\ ,\quad   \quad  (\bD^\dag)^{\op} \to \burn(\bD,\bD_\dag,\bD^\dag)\ .
\end{equation}  

\begin{rem}\label{ergiojwrgioergwergwrgregregwrg}
 Unwinding  \cref{giroggrgergergergrege}, one checks that diagrams of the form
\[\xymatrix{&D(0,1)   \ar@{^{(}->}[dr]   \ar@{->>}[dl]&\\ D(0,0)&&D(1,1)}\]
constitute the $1$-simplices in $\burn(\bD,\bD_\dag,\bD^\dag)$.
 {This also suggests why the functors in \eqref{frefklnfnrfklwefwefwefwefwef} exist}.
 \end{rem}
   
 Applying the effective Burnside category functor to $(s,s_\dag,s^\dag)$ in \eqref{revvewvwewervewrfwfrfrefwerfwf}, 
 we get a map
 \begin{equation}\label{qervqjrhvgbquierwqwvcewcwedcew}
 \burn(s) \colon \burn(\hat{\Sh}^{G}, \hat{\Sh}^{G}_\dag, \hat{\Sh}^{G,\dag}) \to  \burn(\bD,\bD_\dag,\bD^\dag)\ .
\end{equation} 
  
\begin{prop}\label{prop:unfurling}
 $\burn(s)$ is a cocartesian fibration.
\end{prop}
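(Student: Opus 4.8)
The statement is an instance of Barwick's \emph{unfurling} construction from \cite[Sec.~11]{Barwick:Mackey}. The hypotheses of that theorem are precisely: (i) an adequate triple, here $(\hat{\Sh}^{G}, \hat{\Sh}^{G}_\dag, \hat{\Sh}^{G,\dag})$; (ii) a functor of adequate triples to $(\bD,\bD_\dag,\bD^\dag)$ which preserves ambigressive squares, here $(s,s_\dag,s^\dag)$; and (iii) the condition that restricting $s$ to the ingressive subcategory $\hat\Sh^G_\dag \to \bD_\dag$ yields a cocartesian fibration, and restricting $s$ to the egressive subcategory $\hat\Sh^{G,\dag} \to \bD^\dag$ yields a cartesian fibration. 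So the plan is to verify that all three inputs are available and then invoke the unfurling theorem, which outputs exactly the assertion that $\burn(s)$ is a cocartesian fibration.

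\textbf{Key steps.} First I would recall that \cref{riqrjhgoqgqreggwergwrgw} already supplies inputs (i) and (ii): the triple $(\hat{\Sh}^{G}, \hat{\Sh}^{G}_\dag, \hat{\Sh}^{G,\dag})$ is adequate, and the map of triples $(s,s_\dag,s^\dag)$ preserves ambigressive squares. Second, I would recall that \cref{prop:cart-cocart} supplies input (iii): the projection $\hat{\Sh}^{G} \times_{\bD} \bD_\dag \to \bD_\dag$ is a cocartesian fibration, and $\hat{\Sh}^{G} \times_{\bD} \bD^\dag \to \bD^\dag$ is a cartesian fibration; and by construction $\hat\Sh^G_\dag$, respectively $\hat\Sh^{G,\dag}$, is the subcategory of cocartesian, respectively cartesian, morphisms therein. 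With all hypotheses in place, I would then cite \cite[Prop.~11.6]{Barwick:Mackey} (the unfurling proposition) applied to the adequate triple $(\hat{\Sh}^{G}, \hat{\Sh}^{G}_\dag, \hat{\Sh}^{G,\dag})$ together with the functor $(s,s_\dag,s^\dag)$; its conclusion is that $\burn(s)$ is a cocartesian fibration over $\burn(\bD,\bD_\dag,\bD^\dag)$. One should also check the mild point that $\burn(\hat{\Sh}^{G}, \hat{\Sh}^{G}_\dag, \hat{\Sh}^{G,\dag})$ and $\burn(\bD,\bD_\dag,\bD^\dag)$ are genuine $\infty$-categories, which follows from \cite[Prop.~5.9]{Barwick:Mackey} since both triples are adequate.

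\textbf{Main obstacle.} The real content has already been discharged in the preparatory lemmas: the verification in \cref{riqrjhgoqgqreggwergwrgw} that ambigressive squares in $\hat\Sh^G$ project to ambigressive squares in $\bD$ (which rested on \cref{cor:detect-cartesian}, \cref{lem:detect-pull-back}, and \cref{lem:preservation-pull-back}), and the identification in \cref{prop:cart-cocart} of the relevant (co)cartesian fibration structures. Consequently the proof of \cref{prop:unfurling} itself is essentially a bookkeeping step: match our data to the hypotheses of Barwick's unfurling theorem and quote it. The only point requiring a little care is to make sure the \emph{naming} conventions align---that $\hat\Sh^G_\dag$ and $\hat\Sh^{G,\dag}$ as we defined them (subcategories of cocartesian and cartesian morphisms) coincide with the subcategories of ``$s_\dag$-cocartesian'' and ``$s^\dag$-cartesian'' edges used in \cite{Barwick:Mackey}, which is immediate from the definitions, and that the egressive direction in the Burnside category gets the opposite variance, which is built into the second inclusion in \eqref{frefklnfnrfklwefwefwefwefwef}. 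I expect no genuine difficulty beyond this.
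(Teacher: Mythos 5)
Your reduction to Barwick's machinery is the same as the paper's, and \cref{riqrjhgoqgqreggwergwrgw} and \cref{prop:cart-cocart} are indeed the preparatory inputs. The gap is the claim that the hypotheses of the quoted theorem are \emph{precisely} your (i)--(iii). The result actually used (\cite[Thm.~12.2]{Barwick:Mackey}; the unfurling statement you call Prop.~11.6 is of the same kind) requires, beyond the existence of cocartesian lifts over ingressive morphisms and cartesian lifts over egressive ones, a compatibility condition for squares lying over ambigressive squares of the base: in the paper's notation, condition (12.2.2) demands that any square in $\hat{\Sh}^{G}$ covering an ambigressive square of $\bD$ in which $\phi'$ is ingressive, $\psi$ is egressive and $\phi$ is cocartesian is itself ambigressive. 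This is a \emph{recognition} statement about given squares, a Beck--Chevalley-type condition; it is not contained in your (i)--(iii): adequacy of $(\hat{\Sh}^{G},\hat{\Sh}^{G}_\dag,\hat{\Sh}^{G,\dag})$ only asserts that cospans admit ambigressive completions, and preservation of ambigressive squares goes in the wrong direction. The paper verifies (12.2.2) separately, by one more application of \cref{cor:detect-cartesian} and \cref{lem:detect-pull-back}, i.e.\ ultimately the base-change equivalence of \cref{ropgkpwoegrewgwregwgregw} for coarse coverings, which is the genuinely non-formal input of this proposition. One can in fact deduce the condition from (i)--(iii) (using uniqueness of (co)cartesian lifts and essential uniqueness of ambigressive completions), but that deduction is an argument of comparable length which you do not give; as written, you cite the theorem under hypotheses it does not have.

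A smaller point: in the form cited by the paper, Barwick's theorem only yields that $\burn(s)$ is an inner fibration together with the characterization of its cocartesian edges as spans whose backward leg is cartesian and whose forward leg is cocartesian; the existence of a cocartesian lift of an arbitrary edge of $\burn(\bD,\bD_\dag,\bD^\dag)$ is then an extra step, which the paper supplies from \cref{prop:cart-cocart} and the description of edges in \cref{ergiojwrgioergwergwrgregregwrg}. If you instead cite a version whose conclusion is directly ``cocartesian fibration'', you must in addition identify $\burn(\hat{\Sh}^{G},\hat{\Sh}^{G}_\dag,\hat{\Sh}^{G,\dag})$ with Barwick's unfurling of $s$, which again rests on the (12.2.2)-type condition. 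Either way, the missing verification is the same.
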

\begin{proof}
 {Our goal is to apply Barwick's criterion as presented in \cite[Thm.~3.1]{hhln:spans} to show the existence of cocartesian lifts.}
	We verify conditions  {(1) and (2) of \cite[Thm.~3.1]{hhln:spans}.}
		
	We start with condition \cite[Thm.~3.1.(2)]{hhln:spans}. Suppose that we are given a commutative square
	\begin{equation}\label{gergoij34optergergergergg}
\xymatrix{
	M'\ar@{^{(}->}[r]^-{\phi'}\ar@{->>}[d]_-{\psi} & N'\ar[d]^-{\psi^{\prime}} \\
	M\ar[r]^-{\phi} & N
	}\end{equation}
in $\hat \Sh^{G}$ whose images in $\bD$ and hence in $ G\Coarse$ are ambigressive squares, 
and such that $\phi'$ is ingressive, $\psi$ is egressive, and $\phi$ is cocartesian. We must show that the square \eqref{gergoij34optergergergergg} is  ambigressive if and only if $\phi'$
is cocartesian.

Note that $\phi'$ is always cocartesian by the definition of $\hat \Sh^{G}_{\dag}$, the morphism $\psi$ is cartesian, and $\phi$ is ingressive.
 It follows from \cref{cor:detect-cartesian} that then $\psi^{\prime}$ is also cartesian and hence egressive.
By \cref{lem:detect-pull-back}, 
square \eqref{gergoij34optergergergergg} is cartesian and hence an ambigressive square.
 This finishes the verification of  condition \cite[Thm.~3.1.(2)]{hhln:spans}.

	If $f \colon X \to Y$ is in $G\Coarse_{\dag}$  and $M$ is a preimage of $X$, there exists a cocartesian lift $\tilde f \colon M \to \hat f^{G}_{*}M$ of $f$. By definition, $\tilde f$ lies in $\hat{\Sh}^{G}_\dag$.
	 {Since restricting the domain of a cocartesian fibration to its subcategory of cocartesian morphisms also gives a cocartesian fibration (even a left fibration, see \cite[Cor.~2.4.2.5]{htt})},
	 $\tilde f$ is also cocartesian with respect to the map $\hat{\Sh}^{G}_\dag \to  G\Coarse_{\dag}$.
	This verifies condition \cite[Thm.~3.1.(1)]{hhln:spans}
	for morphisms in $ G\Coarse_{\dag}$.
	A similar argument applies to morphisms
	in $\Fun(BG,\CL)$. 
	
	\cite[Thm.~3.1]{hhln:spans} 	now asserts that an edge in $ \burn(\hat{\Sh}^{G}, \hat{\Sh}^{G}_\dag, \hat{\Sh}^{G,\dag})$ is cocartesian (with respect to~$\burn(s)$)
	if and only if it is represented by a span
	\[\xymatrix{
		& M\ar@{->>}[ld]_-{w}\ar@{^{(}->}[rd]^{f} & \\
		N & & P
	}\]
	in which $w$ is cartesian with respect to the map $\hat{\Sh}^{G,\dag} \to \bD^\dag$  and $f$ is cocartesian.
	Using    \cref{prop:cart-cocart} and the explicit description of the edges in $ \burn(\hat{\Sh}^{G}, \hat{\Sh}^{G}_\dag, \hat{\Sh}^{G,\dag}) $ given in \cref{ergiojwrgioergwergwrgregregwrg},
	we see that every morphism in  $\burn(\bD,\bD_\dag,\bD^\dag)$ has a cocartesian lift.  This shows that $\burn(s)$
	is a   cocartesian fibration
\end{proof}

We define the category $G\Coarse_{\tr}$ of $G$-coarse  spaces with transfers by
\begin{equation}\label{wergoij2oi54gwtgrgwger}
G\Coarse_{\tr}:=\burn(G\Coarse, G\Coarse_{\dag},G\Coarse^\dag )\ .
\end{equation}
An  instance of the canonical functor \eqref{frefklnfnrfklwefwefwefwefwef} applied to \eqref{wergoij2oi54gwtgrgwger} provides
\begin{equation}\label{qergiuhqegiuqgwefewfqfe}
G\Coarse\to G\Coarse_{\tr}\ .
\end{equation}
Using in addition the canonical inclusion
 {\small
 \[\Fun(BG,\CL) \to  \burn(\Fun(BG,\CL),\Fun(BG,\CL) ,i\Fun(BG,\CL))\]}
 in the second component, we get a functor \begin{equation}\label{wefweijfwejkfwefewfwefwefq}
 G\Coarse_{\tr} \times \Fun(BG,\CL)  \to \burn(\bD,\bD_\dag,\bD^\dag) \ . \end{equation} 

 	We define the functor \begin{equation}\label{ervewvervwevewrvw}\Sh^{G}_{\tr} \colon G\Coarse_{\tr} \times \Fun(BG,\CL)  \to \CAT_{\infty}
 	\end{equation}
	by restricting the cocartesian fibration $\burn(s)$
	along the functor from \eqref{wefweijfwejkfwefewfwefwefq}, applying the straightening functor, and applying $(-)^{\op}$ to the values.
	
	Note that the value  of $\Sh^{G}_{\tr}$ at $(X,\bC)$ is  the object $\Sh^{G}_{\bC}(X)$ of $\CLL$.
	By construction,
	$\Sh^{G}_{\tr}$ sends a morphism $f$ {of $G$-coarse spaces}
	to the morphism $\hat f^{G}_{*}$ in $\CLL$, 
	a coarse covering $w$
	to the morphism $L^{\pi_{0}}\hat w^{G,*}$ in $\CLL$,  and
	a morphism $\phi$ in $ \Fun(BG,\CL) $ to the morphism $\hat \phi_{*}^{G}$ in $\CLL$. This implies that 
	$\Sh^{G}_{\tr}$ actually takes values in the subcategory $\CLL$ of  $\CAT_{\infty}$, i.e., we have a functor
	 \begin{equation}\label{ervewvervwevewrvw1}
	 	\Sh^{G}_{\tr} \colon G\Coarse_{\tr} \times \Fun(BG,\CL)  \to \CLL \ .
 	\end{equation}
 Let $\gbct{G}$ be the category with the same objects as $G\BC$ and all equivariant maps which are controlled and bornological (\cref{rgiojrgofdewqfe}). Consider the wide subcategory $\gbct{G}_\dag$ of $\gbct{G}$ of those morphisms which are in addition
  proper,  and the wide  subcategory $(\gbct{G})^\dag$ of $\gbct{G}$  of coverings (\cref{ergwergggqrgqregqergq}).
Using \cite[Lem.~2.20 and 2.21]{coarsetrans}, one verifies that the triple $(\gbct{G},\gbct{G}_\dag,(\gbct{G})^\dag)$ is admissible.

\begin{ddd}\label{efbivhjiqrewvwewcewcq}
 We define
 \[ G\BC_{\tr}:=\burn(\gbct{G} ,\gbct{G}_{\dag},(\gbct{G})^{\dag})\ .\qedhere \]
 \end{ddd}

   \begin{rem}\label{rqegrgrfqefewfewfq}
One checks using \cref{ergiojwrgioergwergwrgregregwrg}  that $G\BC_{\tr}$ precisely coincides with the $\infty$-category  defined in \cite[Def.~2.27]{coarsetrans}
 and denoted there by the same symbol. 
 \end{rem}

There is an inclusion functor  {(see also \cite[Def.~2.35]{coarsetrans})}
\begin{equation}\label{ewrglkklwergwrgwregwregw}
\iota \colon G\BC \to G\BC_{\tr}
\end{equation}
which is determined by the requirement that it sends a $G$-bornological coarse space to itself and a morphism $f \colon X \to X'$ to the span $X \leftarrow \tilde X \xrightarrow{f} X'$, where $\tilde X$ is obtained from $X$ by replacing the bornology on $X$ by $f^{-1}\cB_{X'}$.

The forgetful functor $\gbct{G} \to G\Coarse$ induces a functor
\begin{equation}\label{fvqeervqvrvwevqw}
G\BC_{\tr}\to G\Coarse_{\tr}\ .
\end{equation}
such that the following diagram commutes:
\[\xymatrix@C=3.5em{
G\BC\ar[d]_{\eqref{gwegljgoiregwrgergregewg}}\ar[r]_{\eqref{ewrglkklwergwrgwregwregw}}^{\iota}&G\BC_{\tr}\ar[d]^{\eqref{fvqeervqvrvwevqw}}\\ 
G\Coarse\ar[r]^{\eqref{qergiuhqegiuqgwefewfqfe}}&G\Coarse_{\tr}} \]

If we precompose  {$\Sh^G_{\tr}$ from \eqref{ervewvervwevewrvw1} with the forgetful map 
 \eqref{fvqeervqvrvwevqw}}, we obtain a functor (we use the same symbol)
\[ \Sh^{G}_{\tr} \colon G\BC_{\tr}\times  \Fun(BG,\CL)  \to  \CLL \]
 such that $\Sh^G_{\tr} \circ (\iota \times \id) \simeq \Sh^G$.
 In particular, its value at $(X,\bC)$ is given by
 \[ \Sh^{G}_{\tr}(X,\bC)\simeq \Sh^{G}_{\bC}(X)\ .\]
 It follows from a combination of \cref{eriogwetgwegregwrgregwrg}, \cref{qrkgoqrgergwrgwegwrg}, and 
 \cref{qergioewgergregwergergwergwreg} that we have a subfunctor
 \[ \Sh^{G,\eqsm}_{\tr} \colon G\BC_{\tr}\times  \Fun(BG,\CL)  \to  \Cle \]
 with values 
 \[ \Sh^{G,\eqsm}_{\tr}(X,\bC)\simeq \Sh^{G,\eqsm}_{\bC}(X)\ .\]
 For every pair $(X,\bC)$   we have a localisation functor 
 $\ell \colon  \Sh^{G,\eqsm}_{\bC}(X)\to \bV^{G}_{\bC}(X)$  (\cref{rtheorthertherthetrhe}), and we let $\tilde W^{\eqsm}_{X,\bC}$ denote the  {subcategory} of 
 $ \Sh^{G,\eqsm}_{\bC}(X)$  {given} by the morphisms which are sent to equivalences by $\ell$.
 If $f$ is a morphism in $G\BC$, then $\hat f^{G}_{*}$ sends $\tilde W^{\eqsm}_{X,\bC}$ to $\tilde W^{\eqsm}_{X^{\prime},\bC}$
 by \cref{qergiuqergreqgregwregwr}. If $\phi$ is a morphism in  $\Fun(BG,\CL)$, then $\hat \phi^{G}_{*}$ sends  
 $  \tilde W^{\eqsm}_{X,\bC}$ to $\tilde W^{\eqsm}_{X ,\bC^{\prime}}$ by  \cref{giowegrwergwergwreg}.
 Finally, if $w \colon X\to X^{\prime}$ is a  morphism in $G\BC^{\dag}$, then $L^{\pi_{0}} \hat w^{G,*}  $ sends
 $\tilde W^{\eqsm}_{X^{\prime} ,\bC}$ to  $\tilde W^{\eqsm}_{X  ,\bC}$ by  \cref{iqerfjrfqwuef98weufeqwfqfe} \eqref{eewgergwergrewgwregwergwfw}.
 This implies that the functor $\Sh^{G,\eqsm}_{\tr}$ refines
 to a functor
 {\[ \ell \Sh^{G,\eqsm}_{\tr}\colon G\BC_{\tr}\times  \Fun(BG,\CL)  \to   \Rel_{\infty,*}^{\mathrm{Lex}} \]
 given on objects by $(X,\bC)\mapsto (\Sh^{G,\eqsm}_{\bC}(X), \tilde W^{\eqsm}_{X,\bC})$, see \cref{gjierogjeoigerg3554656}.
 We now postcompose with the Dwyer--Kan localisation functor $\Loc$ from \eqref{fqwepofjkqweopfdqewdq}.} 
 
  \begin{ddd}\label{egojwogergrewgwrgregwergreg}
 {We define the functor
   \[ \bV^G_{\tr} \colon G\BC_{\tr} \times\Fun(BG,\CL)\xrightarrow{\ell\Sh^{G,\eqsm}_{\tr}} 
 \Rel^\mathrm{Lex}_{\infty,*}\xrightarrow{\Loc} \Cle\ .\qedhere \]}
  \end{ddd}
 
 \begin{proof}[Proof of \cref{ugiqergqrefwefqfqewfqef}]
  By construction, $\bV^G_{\tr}$ satisfies
 	\[ \bV^G_{\tr} \circ (\iota \times \id) \simeq \bV^G\ .\qedhere\]
 \end{proof}
 
 {\cref{wergioergewrgregregwegregwregrre} makes an assertion about the continuous version of $\bV^G$, so we show that we can also force continuity for $\bV^G_{\tr}$}.
Let $E_{\tr} \colon G\BC_{\tr}\to \bM$ be some functor with a target which admits small filtered colimits. Let $\iota$  be the inclusion functor from \eqref{ewrglkklwergwrgwregwregw}.
Recall \cref{geriogjergerg} of a continuous functor.

\begin{ddd}
We call $E_{\tr}$ continuous if $E_{\tr}\circ \iota$ is continuous. 
\end{ddd}

In the following, we show that we can force continuity for $E_{\tr}$ in such a way that we obtain a functor $E_{\tr}^c$ satisfying $E_{\tr}^c \circ \iota \simeq (E_{\tr} \circ \iota)^c$ (see \cref{gerklgjerlgergergergergerg}).

 We consider the inclusion of the full  $\infty$-subcategory
 \[ i_{\tr} \colon G\BC^{\mb}_{\tr}\to G\BC_{\tr} \]
  of  $G$-bornological coarse spaces with the minimal bornology.  We consider the following diagram
\[\xymatrix{G\BC_{\tr}^{\mb}\ar[d]^-{i_{\tr}}\ar[rr]^-{E_{\tr}\circ i_{\tr}}\ar@{=>}[dr(0.4)]&&\bM\\G\BC_{\tr}\ar@{..>}[urr]_-{E_{\tr}^{c}}&}\ ,\]
 where $E^{c}_{\tr}$ is defined by left Kan extension. Similarly as in  \cref{gerklgjerlgergergergergerg},
we call $E_{\tr}^{c}$ the functor obtained from $E_{\tr}$ by forcing continuity. 
 
 We let $E:=E_{\tr}\circ \iota$ and 
 consider the following diagram
 \[\xymatrix{G\BC_{\tr}^{\mb}\ar[dr]^{i_{\tr}} \ar[rr]^-{E_{\tr}\circ i_{\tr}}&\ar@{=>}[d(0.6)]&\bM\ar@/^0.5cm/@{=}[dd]\\&G\BC_{\tr} \ar@{..>}[ur]_-{E_{\tr}^{c}}\\G\BC^{\mb} \ar@/^-0.5cm/[uu]^{\iota^{\mb}}\ar[rr]^{E \circ i}\ar[dr]^{i}&\ar@{=>}[d(0.6)]&M\\&G\BC \ar@/^-0.5cm/[uu]^(0.7){\iota}\ar@{..>}[ur]_-{E^{c}}&}\ ,\] 
 where $\iota^{\mb}$ is defined as the restriction of $\iota$ and
$E^{c}$ is defined by left Kan extension. The universal property of left Kan extensions provides a transformation
\begin{equation}\label{fewfoih4i3uo3rferfe}
E^{c} \to E^{c}_{\tr}\circ \iota\ .
\end{equation}

\begin{lem}\label{rhqewiuhqggqfewfeqwfqef}
Transformation \eqref{fewfoih4i3uo3rferfe} is an equivalence.
\end{lem}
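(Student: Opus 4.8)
The plan is to show that the two functors $E^c$ and $E^c_{\tr}\circ\iota$ on $G\BC$ agree by comparing their pointwise left-Kan-extension formulas. By construction $E^c$ is the left Kan extension of $E\circ i = E_{\tr}\circ\iota\circ i$ along $i\colon G\BC^{\mb}\to G\BC$, and $E^c_{\tr}$ is the left Kan extension of $E_{\tr}\circ i_{\tr}$ along $i_{\tr}\colon G\BC^{\mb}_{\tr}\to G\BC_{\tr}$. The transformation \eqref{fewfoih4i3uo3rferfe} is the canonical one coming from the universal property, so it suffices to check it is an equivalence on each object $X$ of $G\BC$. Using the cofinality observation from the proof of \cref{lem:forcects} (the poset $\cF(X)$ of invariant locally finite subsets is cofinal in $(G\BC^{\mb})_{/X}$) we have $E^c(X)\simeq \colim_{F\in\cF(X)} E(F) = \colim_{F\in\cF(X)} E_{\tr}(\iota(F))$.

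The key step is therefore to identify $E^c_{\tr}(\iota(X))$ with the same colimit. By the pointwise formula, $E^c_{\tr}(\iota(X))\simeq \colim_{(Y\to\iota(X))\in (G\BC^{\mb}_{\tr})_{/\iota(X)}} E_{\tr}(Y)$. The crucial input is an understanding of the comma category $(G\BC^{\mb}_{\tr})_{/\iota(X)}$. A morphism $Y\to \iota(X)$ in $G\BC_{\tr}$ (with $Y$ having the minimal bornology) is by \cref{ergiojwrgioergwergwrgregregwrg} a span $Y\xleftarrow{w} \tilde Y \xrightarrow{f} X$ where $w$ is a covering and $f$ is controlled and bornological; since $Y$ has the minimal bornology and $w$ is a covering (hence bornological), $\tilde Y$ also has the minimal bornology, and one can use the covering $w$ to replace such a span, up to the relevant comma-category equivalence, by one in which $w=\id$, i.e.\ by an honest morphism $Y\to X$ in $\gbct{G}$ with $Y\in G\BC^{\mb}$. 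The plan is to make precise that the functor $\cF(X)\to (G\BC^{\mb}_{\tr})_{/\iota(X)}$ sending $F$ to the span $F\xleftarrow{=}F\xrightarrow{\incl} X$ (or rather $F\to\iota(X)$ via $\iota$ on the inclusion, with the bornology bookkeeping of \eqref{ewrglkklwergwrgwregwregw}) is cofinal. Given such a cofinality statement, $E^c_{\tr}(\iota(X))\simeq \colim_{F\in\cF(X)} E_{\tr}(\iota(F)) = \colim_{F\in\cF(X)} E(F)\simeq E^c(X)$, and one checks that under these identifications the comparison map \eqref{fewfoih4i3uo3rferfe} becomes the identity, so it is an equivalence.

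The main obstacle I expect is the cofinality argument for $\cF(X)\to (G\BC^{\mb}_{\tr})_{/\iota(X)}$: unlike in \cref{lem:forcects}, the comma category is now taken in a Burnside-type $\infty$-category rather than an ordinary $1$-category, so one must argue at the level of slices of $\infty$-categories. Concretely, for a span $Y\xleftarrow{w}\tilde Y\xrightarrow{f}X$ representing an object of $(G\BC^{\mb}_{\tr})_{/\iota(X)}$ one needs to exhibit a contractible (or at least weakly contractible, by Quillen's Theorem A) space of factorisations through some $\iota(F)$ with $F\in\cF(X)$: the image $f(\tilde Y)\subseteq X$ (or its saturation under the bornology) is bounded on bounded sets in a controlled way, but $f(\tilde Y)$ need not itself be locally finite, so one must first pass to a locally finite subset $F$ containing enough of the image, exactly as in the $1$-categorical case, and then verify the relevant slice is contractible. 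The combinatorics of coverings (condition \eqref{qweiuqhfqfeffqeqf} of \cref{ergwergggqrgqregqergq}) ensures that a covering of a locally finite space can be arranged to be locally finite, which is what lets one keep the auxiliary space $\tilde Y$ in $G\BC^{\mb}_{\tr}$ while controlling it. Once this cofinality is established, the remaining bookkeeping — that the natural transformation is the canonical one and hence becomes an identity under the identifications — is routine.
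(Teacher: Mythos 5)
Your overall strategy coincides with the paper's: both reduce to comparing the pointwise left Kan extension colimits at a fixed $X$ and to proving a cofinality statement for the passage from minimal-bornology objects over $X$ to the slice in the transfer category (your functor $\cF(X)\to (G\BC^{\mb}_{\tr})_{/\iota(X)}$ factors through the paper's $\iota_{X}\colon (G\BC^{\mb})_{/X}\to(G\BC^{\mb}_{\tr})_{/X}$, and by \cref{lem:forcects} the two cofinality claims are equivalent). The problem is that this cofinality is exactly the step you leave open: you label it "the main obstacle I expect" and only sketch it, so the proposal as it stands has a genuine gap precisely where the content of the lemma lies.

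Moreover, the sketch you give misidentifies the difficulty. The right leg of any span in $G\BC_{\tr}$ is ingressive, hence proper, and the intermediate object $\tilde Y$ of a span out of a minimal-bornology space again carries the minimal bornology; consequently the image $f(\tilde Y)$ in $X$ \emph{is} automatically locally finite (the preimage of a bounded set is finite, so its image meets each bounded set in a finite set). Your stated worry that the image "need not be locally finite", together with the proposed remedy of choosing a locally finite $F$ "containing enough of the image", is therefore both unfounded and, had the worry been real, unworkable: no locally finite subset can contain a non-locally-finite one. The paper's proof turns this observation into the missing argument: it shows that the slice $(G\BC^{\mb}_{\tr})_{/X}$ is filtered by coning off any finite diagram of spans $F(k)\twoheadleftarrow F'(k)\xrightarrow{\kappa_{k}}X$ over the object $\tilde F\xleftarrow{\id}\tilde F\hookrightarrow X$ with $\tilde F:=\bigcup_{k}\kappa_{k}(F'(k))$, which is $G$-invariant and locally finite by the properness just discussed; since these cone points have identity left leg they lie in the image of $\iota_{X}$, giving cofinality. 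Also note that your intermediate claim that a span can be "replaced, up to the relevant comma-category equivalence, by one in which $w=\id$" is not literally correct; what is true (and what the filteredness argument supplies) is that every object of the slice admits a map to such an object, and that finite diagrams can be coned off by them. Without this (or an equivalent verification of weak contractibility of the relevant comma $\infty$-categories), the proof is incomplete.
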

\begin{proof}
Let $X$ be in $G\BC$. We must show that
\[ \colim_{(F\to X)\in (G\BC^{\mb})_{/X}} E_{\tr}(\iota(F))\to \colim_{(F\to X)\in (G\BC_{\tr}^{\mb})_{/X}} E_{\tr}(F) \]
is an equivalence, where the morphism is induced by the functor
\[ \iota_{X} \colon (G\BC^{\mb})_{/X} \to (G\BC_{\tr}^{\mb})_{/X} \]
between index categories induced by $\iota $.

We claim that $\iota_{X}$ is cofinal. 
We first show that the index categories are filtered.
We give the argument for  $(G\BC_{\tr}^{\mb})_{/X}$. The case of 
 $(G\BC^{\mb})_{/X}$ is similar and simpler.
 
  Let
  \[ F \colon K\to (G\BC_{\tr}^{\mb})_{/X} \]
  be a
 functor from a finite poset. We must find an extension to a functor  $F^{\triangleright}\colon K^\triangleright \to (G\BC_{\tr}^{\mb})_{/X}$.
  
  The functor $F$
  corresponds to a functor $F_+ \colon K_+ \to G\BC_{\tr}^{\mb}$, where $K_+$ is the finite poset $K$ with an additional maximal element $+$.
  By \cite[Thm.~2.18]{hhln:spans}, the functor $F_+$ in turn corresponds to a morphism of adequate triples
  \[ \hat F \colon \Tw(K_+) \to (\gbct{G} ,\gbct{G}_{\dag},(\gbct{G})^{\dag})\ , \]
  where a morphism in $\Tw(K_+)$ is ingressive if its image under the target projection $\Tw(K_+) \to K_+^\op$ is an equivalence, and egressive if its image under the source projection $\Tw(K_+) \to K_+$ is an equivalence.
  Note that $\hat F$ is a functor of ordinary categories.
  For each object $k$ of $K$, the morphism $\hat F(k,+) \to \hat F(k,k)$ is a covering.
  Using \cref{ergwergggqrgqregqergq}, it is easy to see that $\hat F(k,+)$ is equipped with the minimal bornology.
  Consider the subset
  \[ \widetilde F := \bigcup_{k \in K} \kappa_k(\hat F(k,+)) \]
  of $X$, where $\kappa_k$ denotes the morphism $\hat F(k,+) \to F(+,+) = X$.
 Since $K$ is finite, $\widetilde F$ is a $G$-invariant locally finite subset of $X$.
 
 We observe that  replacing the value of $\hat F$ at $(+,+)$ by $\widetilde F$ still defines a morphism of 
 adequate triples  $ \hat F' \colon \Tw(K_+) \to (\gbct{G} ,\gbct{G}_{\dag},(\gbct{G})^{\dag})$. This corresponds to a functor $F'\colon K\to  (G\BC_{\tr}^{\mb})_{/\widetilde F}$. We obtain the desired extension $F^{\triangleright}$ as the dashed arrow in the following
diagram
\[\xymatrix{K\ar@/^1cm/[rrrr]^{F}\ar[rr]^{ F'}\ar[dr]&&(G\BC_{\tr}^{\mb})_{/\widetilde F}\ar[rr]^{f}&&(G\BC_{\tr}^{\mb})_{/\widetilde X}\\&K^{\triangleright}\ar@{..>}[ru]\ar@{-->}[urrr]_{F^{\triangleright}}&&&}\ ,\]
where the dotted arrow exists since $(G\BC_{\tr}^{\mb})_{/\widetilde F}$ has a final object, and $f$ is given by composition with the span $\tilde F\xleftarrow{\id} \tilde F\to X$.
     \end{proof}
 
 Let $\bC$ be in $\Fun(BG,\CL)$, and let $\bV^{G}_{\tr, \bC}$ denote the specialisation of the functor  {$\bV^G_{\tr}$ from \cref{egojwogergrewgwrgregwergreg}} at $\bC$.
  We let
  \[ \bV^{G,c}_{\tr,\bC} \colon G\BC_{\tr}\to \Cle \]
  be obtained from $\bV^{G}_{\tr, \bC}$ by forcing continuity  in the sense of \cref{gerklgjerlgergergergergerg}.
 
  	\begin{proof}[Proof of \cref{wergioergewrgregregwegregwregrre}]
 		The equivalence
 		\[ \bV^{G,c}_{\tr} \circ \iota \simeq \bV^{G,c} \]
 		follows immediately from \cref{ugiqergqrefwefqfqewfqef} and \cref{rhqewiuhqggqfewfeqwfqef}.
 	\end{proof}

\section{The K-theoretic Novikov conjecture} \label{roijqreoeffqewfewfqewfefq}
 
 This section combines everything we have done so far to obtain the results promised in the introduction.
 \cref{fgoijegiorgergrgwrgregwergewrg} introduces the notion of a CP-functor and shows that the functor $\Homol  \bD_{G}$  {we will introduce} in \cref{ergoiergergergegerwgerggrerg43252} is  a CP-functor.
 \cref{iu9qweofewfqwefefqe} explains how (non-connective) algebraic $K$-theory provides a non-trivial example of the entire setup.

 {Finally having examples of CP-functors at our disposal, \cref{wreuigheiugerergegwggreg} recalls the main results of \cite{desc} and applies them to the CP-functors we have constructed.
 \cref{gwegljrgkrewgergreg,ex:spacesop}   were given in the introduction in order to stress the point that
   the  theory of the present paper provides a  unified picture   subsuming} the split injectivity results of both \cite{desc} and \cite{Bunke:aa}.
   The details will be discussed in \cref{sec:linearK,sec:A-coeffs}.

\subsection{CP-functors}\label{fgoijegiorgergrgwrgregwergewrg}
We start with recalling the notion of a CP-functor  {from} \cite{desc}.
Let $E \colon G\BC\to \bM$ be a functor. 
For every  {$G$-bornological coarse space} $X$
we can define a new functor
\begin{equation}\label{regglkml5grgwegergergwergwerg}
E_{X} \colon G\BC\to  \bM\ , \quad Y\mapsto E(X\otimes Y)
\end{equation}
called the twist of $E$ by $X$.
If $E$ is a coarse homology theory, then so is $E_{X}$, see e.g.~\cite[Sec.~10.4]{equicoarse}.
 Below, the twist $E_{G_{can,min}}$ is of particular importance (see \cref{etwgokergpoergegregegwergrg} for the definition of $G_{can,min}$).
 
Let $G\Orb$ be the orbit category of $G$.
 By restricting the functor  $i \colon G\Set \to G\BC$ from \eqref{qreoijqoiegjoqirffewfq}, we obtain the functor (compare with \eqref{vervelkrvnlervrvrevev})
\begin{equation}\label{qreoijqoiegjoqirffewfq1}
i \colon G\Orb\to G\BC\ , \quad S\mapsto S_{min,max} \ .
\end{equation}

  Let $M \colon G\Orb\to \bM$ be a functor.
 \begin{ddd}[{\cite[Def.~1.8]{desc}}]\label{bioregrvdfb}
We call $M$ a CP-functor if it satisfies the following conditions:
	\label{def:cpfunctor}
	\begin{enumerate}
		\item $\bM$ is stable, complete, cocomplete, and compactly generated.  
		\item There exists an equivariant coarse homology theory $E \colon G\BC\to \bM$ satisfying:
		\begin{enumerate}
			\item\label{iojfoiewfwefewqfqewf} $M$ is equivalent to $E_{G_{can,min}}\circ i$ (see \eqref{regglkml5grgwegergergwergwerg} and \eqref{qreoijqoiegjoqirffewfq1} for notation).
			\item $E$ is
			\begin{enumerate}
			\item  strongly additive (\cref{wergiugewrgergergerwgwregw}),
			\item  continuous (\cref{geriogjergerg}) and
			\item  admits transfers (\cref{egioeggewrgegervsfdvfdv}). \qedhere
			\end{enumerate}
		\end{enumerate}
	\end{enumerate}
\end{ddd}

 \begin{ex} \label{wrtohpwrtgrgrgregwr}
 Assume that $E \colon G\BC\to \bM$ is an equivariant coarse homology theory with a stable, complete, cocomplete, and compactly generated  target category. In addition, assume   that $E$ admits transfers, is strongly additive and continuous. Then
 the functor
 \[ E_{G_{can,min}}\circ i \colon G\Orb\to \bM \]
 is a CP-functor.
\end{ex}
  
  Let $\Homol$ be a  homological functor, and let $\bC$ be in $\Fun(BG,\CL)$.
 In view of \cref{wrtohpwrtgrgrgregwr}, \cref{weigowgregwrgrggregwgregwreg,wergioergewrgregregwegregwregrre} immediately imply:  
 
\begin{kor}\label{qroigjqioerefewfewfqf}
 Assume that $\Homol$ has a  complete  and compactly generated target category and is product-preserving.
 Then
 \[ (\Homol \bV^{G,c}_{\bC})_{G_{can,min}}\circ i \]
 is a CP-functor.
\end{kor}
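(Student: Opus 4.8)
The plan is to deduce \cref{qroigjqioerefewfewfqf} directly from \cref{weigowgregwrgrggregwgregwreg} together with \cref{wrtohpwrtgrgrgregwr}. The strategy is as follows. First I would invoke \cref{weigowgregwrgrggregwgregwreg}: since $\Homol$ is a homological functor, the composition $E:=\Homol\bV^{G,c}_{\bC}\colon G\BC\to \bM$ is an equivariant coarse homology theory which is in addition strong, continuous, strongly additive (using the hypotheses that $\bM$ admits set-indexed products and that $\Homol$ preserves set-indexed products — this is exactly where the extra assumptions on the target and on product-preservation enter), and admits transfers. In particular all four properties required in \cref{wrtohpwrtgrgrgregwr} — equivariant coarse homology theory, admits transfers, strongly additive, continuous — are now available for this $E$.

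Next I would check the standing hypotheses on the target category $\bM$ needed in \cref{wrtohpwrtgrgrgregwr} and \cref{bioregrvdfb}(1): $\bM$ must be stable, complete, cocomplete, and compactly generated. Stability and cocompleteness come for free from $\Homol$ being homological (\cref{qrevoiqrjoirqfcwqecq} \eqref{ergqfqewffedqdeqdqwed}), completeness and compact generation are part of the hypotheses of \cref{qroigjqioerefewfewfqf}. So $\bM$ satisfies all the conditions on the target required by \cref{wrtohpwrtgrgrgregwr}.

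Finally I would apply \cref{wrtohpwrtgrgrgregwr} to this $E$. Its conclusion is that the functor $E_{G_{can,min}}\circ i\colon G\Orb\to \bM$ is a CP-functor, and unwinding the notation $E=\Homol\bV^{G,c}_{\bC}$ this is precisely $(\Homol\bV^{G,c}_{\bC})_{G_{can,min}}\circ i$, as claimed. There is no genuine obstacle here: the real content has already been packaged into \cref{weigowgregwrgrggregwgregwreg} (whose proof in turn rests on \cref{regwergergergrgwgregwregwregwergwreg}, \cref{rgioerwgergregwrgwergrewg}, \cref{regoiwepgergwergwergwregw}, and \cref{wergioergewrgregregwegregwregrre}) and into the formal observation \cref{wrtohpwrtgrgrgregwr}. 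The only point demanding a moment's attention is bookkeeping: matching the list of properties verified for $\Homol\bV^{G,c}_{\bC}$ against the list demanded in \cref{def:cpfunctor}, and in particular making sure that the product-preservation hypothesis is used to secure strong additivity (all the other CP-properties do not need it). So the proof is essentially a two-line citation, and I would write it as such.

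\begin{proof}
 Set $E:=\Homol\bV^{G,c}_{\bC}$. Since $\Homol$ is homological, its target $\bM$ is stable and cocomplete (\cref{qrevoiqrjoirqfcwqecq}); by assumption $\bM$ is moreover complete and compactly generated. By \cref{weigowgregwrgrggregwgregwreg}, $E$ is an equivariant coarse homology theory which is strong, continuous, admits transfers, and — using that $\bM$ admits set-indexed products and that $\Homol$ is product-preserving — is strongly additive. Thus $E$ satisfies all the hypotheses of \cref{wrtohpwrtgrgrgregwr}, which yields that $E_{G_{can,min}}\circ i$ is a CP-functor. Since $E_{G_{can,min}}\circ i=(\Homol\bV^{G,c}_{\bC})_{G_{can,min}}\circ i$, the claim follows.
\end{proof}
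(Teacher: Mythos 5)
Your proof is correct and is exactly the paper's argument: the corollary is stated as an immediate consequence of \cref{weigowgregwrgrggregwgregwreg} "in view of" \cref{wrtohpwrtgrgrgregwr}, which is precisely the two-step citation you carry out, including the bookkeeping on the target category and the role of product-preservation for strong additivity.
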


 {Let  $\bD$   be a small left-exact $\infty$-category with $G$-action}. 
We define $\Ind^{G}(\bD) \colon G\Orb\to\Cle$ using the induction functor $\Ind^{G}$ from \eqref{2rfkhriufh3iuf3f3f}.
We furthermore consider a functor $\Homol \colon \Cle\to \bM$.
 \begin{ddd}\label{ergoiergergergegerwgerggrerg43252}
 We define the composition  \begin{equation}\label{ewbjnewoifvmlkrevcewvrvw}
  \Homol\bD_{G}:=\Homol  \circ\Ind^{G}(\bD)\colon G\Orb\to \bM\ . \qedhere
\end{equation}
\end{ddd}

\begin{theorem}\label{wefiofewqfwefqefqewf}
Assume: 
\begin{enumerate}
\item\label{giowgwegerggwergregwgwr3453252345} $\bM$ is stable, complete and cocomplete, and compactly generated.
\item\label{qeriogqwefeqwfqewfqefewfqw} $\Homol$ is a finitary localising invariant and preserves products.
 \item\label{qeriogqregqfqewfqef}  {The underlying $\infty$-category of $\bD$ is idempotent complete}.
\end{enumerate}
Then  $\Homol\bD_{G}$ is a CP-functor.
\end{theorem}
\begin{proof}
We set $\bC:=\Pro_{\omega}(\bD)$ in $\Fun(BG,\CL)$ (see \eqref{adewfqfewfqewfq}). Then \eqref{qeriogqregqfqewfqef} implies that $\bD\simeq \bC^{\omega}$.
 Combining \cref{efioqjwefoweweqfewfqwefqwefe} and \cref{cor:coeffs-orbits}, we have an equivalence
\[ \Homol\bD_{G} \simeq \Homol\circ \bV^{c}_{ \bC,G}\circ i\simeq (\Homol\Idem(\bV^{G,c}_{  \bC}))_{G_{can,min}}\circ i\]
(note that the operations of twisting by $G_{can,min}$ and postcomposing with $\Homol$ obviously commute).
Since $\Homol$ is localising by \eqref{qeriogqwefeqwfqewfqefewfqw}, we have an equivalence
\[ (\Homol \Idem (\bV^{G,c}_{\bC}))_{G_{can,min}}\circ i\simeq (\Homol \bV^{G,c}_{ \bC})_{G_{can,min}}\circ i\ .\]
This last functor is a CP-functor by \cref{qroigjqioerefewfewfqf}.
\end{proof}

We now consider the notion of hereditary CP-functors.
If $\phi \colon G\to Q$ is a surjective homomorphism of groups, then we get a functor
\[ \Res_{\phi}\colon Q\Orb\to G\Orb \]
which sends the $Q$-set $S$ to $S$ considered as a $G$-set via $\phi$.
Since $\phi$ is surjective, $S$ is still transitive as a $G$-set. 
    
  Let $M \colon G\Orb\to \bM$ be a functor. 
\begin{ddd}[{\cite[Def.~2.5]{Bunke:aa}}]\label{regiojergewergegergweggegregrwegreg}
We call $M$ a hereditary CP-functor if $M\circ \Res_{\phi}$ is a CP-functor for every  surjective group homomorphism $\phi$.
\end{ddd}
  
  Recall \cref{ergoiergergergegerwgerggrerg43252} of the functor $\Homol\bD_{G}$.
\begin{theorem}\label{rigjoqergergqregergqergreg}
We retain the assumptions of \cref{wefiofewqfwefqefqewf}. Then 
$\Homol\bD_{G}$ is a hereditary CP-functor.
\end{theorem}
\begin{proof}
Let $\phi \colon G\to Q$ be a surjective homomorphism.
Let $j^G \colon BG \to G\Orb$ and $j^Q \colon BQ \to Q\Orb$ denote the fully faithful inclusions (see \eqref{rewflkjmo34gergwegrge}). The homomorphism $\phi$ induces a map $G \to \Res_\phi(Q)$ in $G\Orb$ which defines a natural transformation $j^G \to \Res_\phi \circ j^Q \circ B\phi$ of functors $BG\to G\Orb$. The resulting natural transformation
\[ \bD \xrightarrow{\simeq} j^{G,*}j^G_!\bD \to B\phi^* j^{Q,*}\Res_{\phi}^* j^G_! \bD \]
induces an adjoint transformation
\[ j^Q_!B\phi_!\bD \to \Res_{\phi}^* j^G_!\bD\ .\]
We claim that this transformation is an equivalence. Using the pointwise formulas for left Kan extensions, we must show for every $S$ in $Q\Orb$ that the map
\[ \colim_{  (j^Q\circ B\phi )_{/S}} \bD \to \colim_{  j^G_{/\Res_\phi(S)}} \bD \]
induced by the functor
\[ (  j^Q\circ B\phi )_{/S} \to j^G_{/\Res_\phi(S)}\ , \quad  (Q \to S) \mapsto (G \xrightarrow{\phi} \Res_\phi(Q) \to \Res_\phi(S)) \]
is an equivalence. This is clear since the functor on indexing categories is an equivalence. We conclude that
\[ \Homol \bD_G \circ \Res_\phi \simeq \Homol (B\phi_!\bD)_Q\ .\]
 {Since $\Homol$ is invariant under idempotent completion and  {$\Idem$ is left adjoint to the fully faithful inclusion $\Clep \to \Cle$ and therefore commutes with the left Kan extension $(-)_{Q}$}, we have}
\[ {\Homol (B\phi_!\bD)_Q \simeq   \Homol  \Idem ((B\phi_!\bD)_Q)\simeq   \Homol (\Idem(B\phi_!\bD))_Q\ .}\]
The right-hand side is a CP-functor by \cref{wefiofewqfwefqefqewf}.
\end{proof}

\subsection{Algebraic \texorpdfstring{$K$-theory}{K-theory}}\label{iu9qweofewfqwefefqe}
 In this section we show that algebraic $K$-theory is an example of a finitary localising invariant to which  \cref{rigjoqergergqregergqergreg}  applies.
  Recall the universal finitary and stable localising invariant $\cU_{loc} \colon \stCat \to \cM_{loc}$ from \eqref{v4toi3hfio3f3f34f3f}.

\begin{ddd}\label{regiowergerregeggregw}
 The (non-connective) algebraic $K$-theory functor for stable $\infty$-categories is defined by	
\[ K^{\mathrm{st}} := \map_{\cM_{loc}}(\cU_{loc}(\Sp^{{\cop}}),\cU_{loc}(-)) \colon \stCat \to \Sp\ .\]
The (non-connective) algebraic $K$-theory functor for left-exact $\infty$-categories  is defined by
\begin{equation}\label{ewrpvoj2oip4bjoiwrgbwbb}
K:= K^{\mathrm{st}} \circ \tilde\Sp \colon \Cle \to \Sp\ .\qedhere
\end{equation}
\end{ddd}

\begin{kor}\label{ergkewgergregregergwer}
	\mbox{}\begin{enumerate}
\item \label{wetihgogergwergwegwerg} The algebraic $K$-theory functor for stable $\infty$-categories is a stable finitary localising invariant.
 \item \label{wetihgogergwergwegwerg1} The algebraic $K$-theory functor for left-exact $\infty$-categories  is a finitary localising invariant (\cref{qrevoiqrjoirqfcwqecq}).
\end{enumerate}
\end{kor}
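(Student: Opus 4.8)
The plan is to verify the two assertions of Corollary~\ref{ergkewgergregregergwer} by tracing through the relevant definitions and invoking the structural results established earlier in the paper.

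For assertion~\eqref{wetihgogergwergwegwerg}, I would argue that $K^{\mathrm{st}} = \map_{\cM_{loc}}(\cU_{loc}(\Sp^{\omega}),\cU_{loc}(-))$ inherits the desired properties from the universal localising invariant $\cU_{loc}$. Since $\cU_{loc}$ inverts Morita equivalences, preserves filtered colimits, and sends Verdier sequences to fibre sequences (this is \cite[Thm.~8.7]{MR3070515}), and since the functor $\map_{\cM_{loc}}(\cU_{loc}(\Sp^{\omega}),-)$ is exact and preserves filtered colimits (as $\cU_{loc}(\Sp^{\omega})$ is a compact object of the presentable stable $\infty$-category $\cM_{loc}$), the composite $K^{\mathrm{st}}$ again inverts Morita equivalences, preserves filtered colimits, and sends Verdier sequences to fibre sequences. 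As $\Sp$ is stable and cocomplete, this shows $K^{\mathrm{st}}$ is a stable finitary localising invariant in the sense recalled after \cref{qwefqwefewdqdqedqed}.

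For assertion~\eqref{wetihgogergwergwegwerg1}, I would apply \cref{lem:loc-vs-stloc}~\eqref{eri9gqergrqgrgreqerggqrg} directly: that lemma states that if $L$ is a stable finitary localising invariant, then $L \circ \tilde\Sp$ is a finitary localising invariant (in the sense of \cref{qrevoiqrjoirqfcwqecq}). Taking $L = K^{\mathrm{st}}$, which is a stable finitary localising invariant by part~\eqref{wetihgogergwergwegwerg}, we conclude that $K = K^{\mathrm{st}} \circ \tilde\Sp$ (as defined in \eqref{ewrpvoj2oip4bjoiwrgbwbb}) is a finitary localising invariant.

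Since the substantive input — the behaviour of $\cU_{loc}$ and the transfer of properties along $\tilde\Sp$ via \cref{lem:loc-vs-stloc} — is already in place, the proof is essentially a matter of bookkeeping; the only point requiring a moment's care is the observation that $\cU_{loc}(\Sp^{\omega})$ is compact in $\cM_{loc}$, so that corepresenting by it commutes with filtered colimits, but this is standard (it follows from the construction of $\cU_{loc}$ and the presentability of $\cM_{loc}$). I do not anticipate a genuine obstacle here; the corollary is a direct consequence of the definitions and \cref{lem:loc-vs-stloc}.
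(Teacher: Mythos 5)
Your argument is correct and follows essentially the same route as the paper: compactness of $\cU_{loc}(\Sp^{\omega})$ plus exactness of the corepresenting mapping spectrum gives that $K^{\mathrm{st}}$ inherits the properties of $\cU_{loc}$, and then \cref{lem:loc-vs-stloc}~\eqref{eri9gqergrqgrgreqerggqrg} yields the statement for $K = K^{\mathrm{st}} \circ \tilde\Sp$. No issues to report.
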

\begin{proof}
	 We note that $\cU_{loc}(\Sp^{ {\cop}})$ is compact in $\cM_{loc}$. This implies that $K^{\mathrm{st}}$ preserves filtered colimits since $\cU_{loc}$ does so. 
	Since $\cM_{loc}$ is stable, any corepresentable functor preserves finite colimits. 
	This implies the statement \eqref{wetihgogergwergwegwerg}. The statement  \eqref{wetihgogergwergwegwerg1}
	 then follows from   \eqref{wetihgogergwergwegwerg} and \cref{lem:loc-vs-stloc} \eqref{eri9gqergrqgrgreqerggqrg}.
\end{proof}

\begin{rem}
 By \cref{lem:loc-vs-stloc}  \eqref{eri9gqergrqgrgreqerggqrg}, we have $K^{\mathrm{st}}(\bC) \simeq K(\bC)$ for every stable $\infty$-category $\bC$. If $K^{\mathrm{BGT}}$ denotes the non-connective algebraic $K$-theory functor from \cite[Def.~9.6]{MR3070515}, \cite[Thm.~9.8]{MR3070515} yields an identification $K(\bC) \simeq K^{\mathrm{BGT}}(\bC^{\op}
 )$.
\end{rem}

\begin{prop}\label{rwqioqewrfeeffqefe}
The algebraic $K$-theory functor for left-exact $\infty$-categories  (\cref{regiowergerregeggregw}) preserves set-indexed products.
\end{prop}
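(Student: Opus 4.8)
The plan is to reduce the statement to the corresponding fact about the universal localising invariant $\cU_{loc}$, and then to the fact that the functor $K^{\mathrm{st}}$ is corepresented by a compact object. Since $K = K^{\mathrm{st}} \circ \tilde \Sp$ by \eqref{ewrpvoj2oip4bjoiwrgbwbb}, and since the stabilisation functor $\tilde \Sp \colon \Cle \to \stCat$ is a left adjoint (adjunction \eqref{qwefqwefewdqdqedqed}) and hence does \emph{not} obviously preserve products, the first point to address is that $\tilde \Sp$ nonetheless preserves set-indexed products. This is where I expect the main work to lie: one has to check that for a family $(\bC_i)_{i \in I}$ in $\Cle$ the natural comparison functor $\tilde \Sp(\prod_{i} \bC_i) \to \prod_i \tilde \Sp(\bC_i)$ is an equivalence. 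The product $\prod_i \bC_i$ is computed in $\Cle$ as in $\Cati$ (it is left-exact because a product of left-exact categories is left-exact, and it is small); the product $\prod_i \tilde\Sp(\bC_i)$ is likewise computed pointwise and is stable. One then uses the description of $\tilde\Sp$ as the stabilisation together with the universal property: a left-exact functor out of $\prod_i \bC_i$ into a stable category is the same as a compatible family of left-exact functors out of the $\bC_i$ (using that each projection $\prod_i \bC_i \to \bC_i$ is left-exact and that these jointly detect limits), hence factors uniquely through $\prod_i \tilde\Sp(\bC_i)$; this identifies $\prod_i \tilde\Sp(\bC_i)$ with the stabilisation of $\prod_i \bC_i$.

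Granting that $\tilde\Sp$ preserves set-indexed products, it remains to show that $K^{\mathrm{st}} \colon \stCat \to \Sp$ preserves set-indexed products. For this I would invoke the analogous property of $\cU_{loc} \colon \stCat \to \cM_{loc}$ — namely that the universal localising invariant preserves set-indexed products — which is a result of Blumberg–Gepner–Tabuada (this is where their proof that $\cM_{loc}$ has the relevant limits and that $\cU_{loc}$ is a right adjoint up to the localising-invariant constraint enters; alternatively one cites the explicit statement in \cite{MR3070515}). Since $K^{\mathrm{st}} = \map_{\cM_{loc}}(\cU_{loc}(\Sp^\omega), \cU_{loc}(-))$ and $\map_{\cM_{loc}}(\cU_{loc}(\Sp^\omega),-)$ preserves all limits (being a mapping-spectrum functor out of a fixed object in a stable $\infty$-category), the composite $K^{\mathrm{st}}$ preserves any limit that $\cU_{loc}$ preserves; in particular it preserves set-indexed products.

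Combining the two steps: for a family $(\bC_i)_{i \in I}$ in $\Cle$,
\[
K\Bigl(\prod_{i \in I} \bC_i\Bigr) \simeq K^{\mathrm{st}}\Bigl(\tilde\Sp\bigl(\textstyle\prod_i \bC_i\bigr)\Bigr) \simeq K^{\mathrm{st}}\Bigl(\prod_i \tilde\Sp(\bC_i)\Bigr) \simeq \prod_i K^{\mathrm{st}}(\tilde\Sp(\bC_i)) \simeq \prod_i K(\bC_i),
\]
and one checks that this chain of equivalences is realised by the canonical comparison map, which finishes the proof. The step I expect to be the genuine obstacle is verifying that $\tilde\Sp$ preserves products; the rest is formal once the product-preservation of $\cU_{loc}$ is cited. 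If a cleaner route is available, one could instead try to argue directly that $K \colon \Cle \to \Sp$, being $\map_{\cM_{loc}}(\cU_{loc}(\Sp^\omega), \UK(-))$ with $\UK = \cU_{loc}\circ\tilde\Sp$ the finitary localising invariant from \eqref{ihgoihjerigojgerggregre}, reduces the problem to product-preservation of $\UK$ on $\Cle$, and then prove \emph{that} — but this still ultimately rests on the same two inputs.
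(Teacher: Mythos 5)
Your overall reduction---compare $\tilde\Sp(\prod_{i}\bC_{i})$ with $\prod_{i}\tilde\Sp(\bC_{i})$ and then invoke product-preservation at the level of stable $\infty$-categories---has the same shape as the paper's proof, but both of your key inputs are wrong. First, $\tilde\Sp$ does \emph{not} preserve set-indexed products. The stabilisation is the filtered colimit $\colim(\bC\xrightarrow{\Omega}\bC\xrightarrow{\Omega}\cdots)$ of \eqref{vervoiehjeoirverveve}, and filtered colimits do not commute with infinite products: an object of $\prod_{i}\tilde\Sp(\bC_{i})$ is a family whose $i$-th member lives at some finite stage $n_{i}$ of the colimit, and since the $n_{i}$ may be unbounded in $i$ the family need not come from $\tilde\Sp(\prod_{i}\bC_{i})$; the paper explicitly warns that the map between the arguments of $\cU_{loc}$ in \eqref{wrkvjqweofweffqefewf} is not an equivalence. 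Your argument for this step is also incorrect as stated: a left-exact functor out of the \emph{product} $\prod_{i}\bC_{i}$ is not the same datum as a compatible family of left-exact functors out of the $\bC_{i}$---that would be the universal property of a coproduct. What the paper actually uses is the weaker statement \cite[Lem.~2.39]{Bunke:aa}: the comparison functor becomes an equivalence only \emph{after} applying $\cU_{loc}$.

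Second, the universal localising invariant $\cU_{loc}$ is not known (and is not expected) to preserve infinite products, and no such statement is proved by Blumberg--Gepner--Tabuada. If it did, every corepresented invariant $\map_{\cM_{loc}}(\cU_{loc}(\cA),\cU_{loc}(-))$ would preserve products, contradicting the remark in the introduction that $K$-theory is at present the only known non-trivial finitary localising invariant with this property. The genuine input at this point is the theorem of Kasprowski--Winges \cite[Theorem 1.3]{kaswin} that $K^{\mathrm{st}}$ itself commutes with infinite products of stable $\infty$-categories---a non-formal result (the stable analogue of Carlsson's theorem for rings), not a consequence of corepresentability of $K^{\mathrm{st}}$ plus limit-preservation of mapping spectra. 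So both pivotal steps of your proposal must be replaced: the first by \eqref{wrkvjqweofweffqefewf} (i.e.\ \cite[Lem.~2.39]{Bunke:aa}), the second by \cite[Theorem 1.3]{kaswin}; with those substitutions one recovers exactly the paper's chain of equivalences.
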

\begin{proof}
This result will be deduced from the corresponding statement about $K^{\mathrm{st}}$.
Let $(\bC_{i})_{i\in I}$ be a family  {of left-exact $\infty$-categories}.
	By \cite[Lem.~2.39]{Bunke:aa}\footnote{The result in the reference is stated for right-exact $\infty$-categories. We obtain the corresponding result for left-exact $\infty$-categories by considering opposites.} the canonical map \begin{equation}\label{wrkvjqweofweffqefewf}
	\cU_{loc}(\tilde \Sp(\prod_{i\in I}\bC_{i}))\to \cU_{loc}(\prod_{i\in I}\tilde \Sp(\bC_{i}))
	\end{equation}
	is an equivalence. Note that it is important here to apply $\cU_{loc}$ since the map between the arguments of $\cU_{loc}$ is not an equivalence.
	Then the canonical comparison map factors as
	\begin{align*}
	 K(\prod_{i \in I} \bC_i)\stackrel{\eqref{ewrpvoj2oip4bjoiwrgbwbb}}{ \simeq} K^{\mathrm{st}}(\tilde\Sp(\prod_{i \in I} \bC_i)) &\stackrel {\eqref{wrkvjqweofweffqefewf}}{\simeq} K^{\mathrm{st}}(\prod_{i \in I} \tilde\Sp(\bC_i)) \\ &
	\stackrel{!}{ \to} \prod_{i \in I} K^{\mathrm{st}}(\tilde\Sp(\bC_i))\stackrel {\eqref{ewrpvoj2oip4bjoiwrgbwbb}}{\simeq} \prod_{i \in I} K(\bC_i)\ ,
	\end{align*}
	where the morphism marked by $!$ is an equivalence by \cite[Theorem 1.3]{kaswin}.
 \end{proof}

 Let {$\bD$   be a small  left-exact $\infty$-category with $G$-action.}
 Applying \cref{ergoiergergergegerwgerggrerg43252}  for $\Homol=K$, we get  the functor $ K\bD_{G}$.
 
\begin{kor}\label{rgiuhreiguhgwergergrwegwergreg}
If $\bD$ is idempotent complete, then the functor $ K\bD_{G} \colon G\Orb\to \Sp$ is a hereditary CP-functor.
\end{kor}
\begin{proof}
We apply  \cref{rigjoqergergqregergqergreg} with $\Homol=K $.
The target $\Sp$ of $K$ has the required properties.
Furthermore, $K$ is homological by \cref{ergkewgergregregergwer} and preserves products by  \cref{rwqioqewrfeeffqefe}.
\end{proof}

 Let $\bC$ be in $\Fun(BG,\CL)$.
 \begin{ddd}\label{ergoierwjgowregregrwegwregwregw}
We define the coarse algebraic $K$-homology functor with coefficients in $\bC$ by
\[ K\bC\cX^{G}:=K\bV^{G,c}_{\bC} \colon G\BC\to \Sp\ .\qedhere \]
\end{ddd}

\cref{weigowgregwrgrggregwgregwreg} implies:

\begin{kor}\label{rgowekgprgrwgwrgwrgwregwerg}
 $K\bC\cX^{G}$ is an equivariant coarse homology theory  which is continuous, strong and strongly additive, and which admits transfers. \end{kor}

 \begin{ddd}\label{gfoiqerjgoergrgegwregrgregwre}
We define the functor
\[ K\bC\cX_{G}:=K \bV^{c}_{\bC,G} \colon G\BC\to \Sp\ . \qedhere \]
\end{ddd}

\cref{qerkgioerggwergergregwe} implies:

\begin{kor} \label{ergijeogergergwergergwregegw}
	$K\bC\cX_{G}$ is a strong and hyperexcisive equivariant coarse homology theory.
\end{kor}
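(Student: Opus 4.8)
The final statement to prove is \cref{ergijeogergergwergergwregegw}: the functor $K\bC\cX_{G} = K\bV^{c}_{\bC,G}$ is a strong and hyperexcisive equivariant coarse homology theory.

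\medskip

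The plan is to deduce this directly from \cref{qerkgioerggwergergregwe}, which is stated immediately above the target and asserts precisely that for any homological functor $\Homol$, the composition $\Homol\bV^{c}_{G,\bC}$ is an equivariant coarse homology theory which is in addition hyperexcisive and strong. Concretely, one applies \cref{qerkgioerggwergergregwe} with $\Homol := K$, the algebraic $K$-theory functor for left-exact $\infty$-categories from \cref{regiowergerregeggregw}. The only thing to check is that $K$ is a homological functor in the sense of \cref{qrevoiqrjoirqfcwqecq}: its target $\Sp$ is stable and cocomplete (property \eqref{ergqfqewffedqdeqdqwed}), it preserves filtered colimits, and it sends excisive squares in $\Cle$ to pushout squares. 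All three are contained in \cref{ergkewgergregregergwer} \eqref{wetihgogergwergwegwerg1}, which says $K$ is a finitary localising invariant; by \cref{qrevoiqrjoirqfcwqecq} a finitary localising invariant is in particular homological. (Strictly, \cref{qerkgioerggwergergregwe} only needs ``homological'', so one does not even need the Morita-invariance part of ``finitary localising invariant'', but it is harmless to invoke the stronger statement.)

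\medskip

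So the proof is essentially a one-line invocation: by \cref{ergkewgergregregergwer} \eqref{wetihgogergwergwegwerg1} the functor $K$ is a finitary localising invariant, hence homological in the sense of \cref{qrevoiqrjoirqfcwqecq}; applying \cref{qerkgioerggwergergregwe} to $\Homol = K$ yields that $K\bV^{c}_{\bC,G} = K\bC\cX_{G}$ is an equivariant coarse homology theory which is strong and hyperexcisive. If one wants to unwind \cref{qerkgioerggwergergregwe} itself: it combines \cref{wthgiowergergwergwerglabel} — which lists that $\bV^{c}_{\bC,G}$ is coarsely invariant, $u$-continuous, $l$-excisive, flasqueness preserving, and hyperexcisive — with \cref{rgoiwgwrgrgwrgg} (giving the coarse homology theory axioms from the first four properties together with homological-ness of $K$), \cref{regoiwepgergwergwergwregw} \eqref{wegrwergwregregrgw} (transporting hyperexcisiveness through $K$, using that $K$ preserves filtered colimits), and \cref{rgioerwgergregwrgwergrewg} (giving strongness, using that $\bV^{c}_{\bC,G}$ is pre-flasqueness preserving, which follows since it is built by applying $\colim_{BG}$ to $\bV^{c}$, and $\bV^{G}$ functorially preserves pre-flasqueness by \cref{ergoegergregreg}, a property inherited under forcing continuity by \cref{goijergoirgergregregwerwergw} and under the colimit construction by \cref{egoerpgrgwegwegwreg}).

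\medskip

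There is no real obstacle here — the statement is a corollary whose entire content has been assembled in the preceding sections, and the proof amounts to checking the hypotheses of \cref{qerkgioerggwergergregwe} for the specific choice $\Homol = K$. The only point requiring any attention is the bookkeeping that $K$, defined as $K^{\mathrm{st}} \circ \tilde\Sp$, genuinely satisfies all three clauses of \cref{qrevoiqrjoirqfcwqecq}; this is exactly \cref{ergkewgergregregergwer}, which in turn rests on \cref{lem:loc-vs-stloc} \eqref{eri9gqergrqgrgreqerggqrg} and the fact that $\cU_{loc}(\Sp^{\omega})$ is compact in $\cM_{loc}$. All of these are available, so the proof is immediate.

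\begin{proof}[Proof of \cref{ergijeogergergwergergwregegw}]
By \cref{ergkewgergregregergwer} \eqref{wetihgogergwergwegwerg1}, the algebraic $K$-theory functor $K \colon \Cle \to \Sp$ is a finitary localising invariant, and hence in particular a homological functor in the sense of \cref{qrevoiqrjoirqfcwqecq}. Applying \cref{qerkgioerggwergergregwe} with $\Homol := K$, we conclude that $K\bV^{c}_{\bC,G} = K\bC\cX_{G}$ is an equivariant coarse homology theory which is in addition hyperexcisive and strong.
\end{proof}
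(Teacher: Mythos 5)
Your proof is correct and is exactly the paper's argument: the corollary is obtained by applying \cref{qerkgioerggwergergregwe} to $\Homol = K$, with \cref{ergkewgergregregergwer} supplying that $K$ is a finitary localising invariant and hence homological. The additional unwinding of \cref{qerkgioerggwergergregwe} you sketch matches the paper's earlier lemmas and is harmless but not needed.
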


  \subsection{Split injectivity results} \label{wreuigheiugerergegwggreg}
One of the the main goals of the present paper is to produce new examples of functors on the orbit category of $G$ to which the results about split injectivity of assembly maps from  \cite{desc} can be applied. For the sake of completeness, we list these results, all shown in \cite{desc}.
In view of \cref{rgiuhreiguhgwergergrwegwergreg}, all these results apply to the functor $K\bC_{G}$ in place of $M \colon G\Orb\to \bM$ for any $\bC$ in $\Fun(BG,\Clep)$.

Apart from the assumption on the functor $M$ being a CP-functor, the theorems in \cite{desc} contain the geometric assumption  of finite decomposition complexity.  In the following, we recall the relevant definitions from  \cite[Sec.~3.1]{Bunke:ac}.

Let $U$ be an entourage on a set $X$ and consider two subsets
  $Y,Z$. Recall the definition of the thickening from \eqref{V-thick}.
 \begin{ddd}
 $Y$ and $Z$ are $U$-disjoint if $U[Y]\cap Z=\emptyset$ and $Y\cap U[Z]=\emptyset$.
 \end{ddd}

Let $X$ be a $G$-set.
\begin{ddd}
An equivariant family of subsets of $X$ is a family of subsets $(Y_{i})_{i\in I}$ indexed by a $G$-set $I$ such that $gY_{i}=Y_{gi}$ for every $g$ in $G$ and $i$ in $I$.
\end{ddd}

Let now $X$ be  {a $G$-coarse space},
and let $(Y_{i})_{i\in I}$ be an equivariant family of subsets.
\begin{ddd}
We define the $G$-coarse space $\coprod^{\sub}_{i\in I} Y_{i}$ as follows:
\begin{enumerate}
\item The underlying $G$-set of $\coprod^{\sub}_{i\in I} Y_{i}$  is $\bigsqcup_{i\in I} Y_{i}$.
\item \label{regweroiugogregwrgwgwrgwgr}
The coarse structure on $\bigsqcup_{i\in I} Y_{i}$  is the maximal one such that the family of its
subsets $(Y_{i})_{i\in I}$ is coarsely disjoint and the canonical map $\bigsqcup_{i\in I} Y_{i}\to X$  is controlled.
\qedhere
\end{enumerate}
\end{ddd}

By definition, we have a canonical morphism $\coprod^{\sub}_{i\in I} Y_{i} \to X$  of $G$-coarse spaces.
If $(Y_{i})_{i\in I}$ and $(Y^{\prime}_{i})_{i\in I}$ are two equivariant families with $Y_{i}\subseteq Y_{i}^{\prime}$ for every $i$ in $I$, then we have a morphism $\coprod^{\sub}_{i\in I} Y_{i}\to \coprod^{\sub}_{i\in I} Y^{\prime}_{i}$ {of $G$-coarse spaces}.

Recall the notion of a coarse equivalence (\cref{rewkgowegrerfrewfwr}).
\begin{ddd}
We call the family $(Y_{i})_{i\in I}$ nice if for every  $U$ in $\cC_{X}^{G}$   the canonical morphism
\[ \coprod^{\sub}_{i\in I} Y_{i}\to \coprod^{\sub}_{i\in I} U[Y_{i}] \] is a coarse equivalence.
\end{ddd}

In the following, we will consider  classes $\cC$ of $G$-coarse spaces. 

\begin{ex}
 Let $X$ be {a $G$-coarse space}.
 A subset $Y$  {of $X$}
 is bounded if $Y\times Y\in \cC_{X}$.
A $G$-coarse space $X$ is semi-bounded if every coarse component (\cref{wetghiojweorgergwgwgegrwrg423t2}) of $X$ is bounded.
We consider the class $\cS\cB$ of all semi-bounded $G$-coarse spaces.
\end{ex}

Let $\cC$ be a class of $G$-coarse spaces, and let $X$ be  a $G$-coarse space.
\begin{ddd}
We say that $X$ is decomposable  over $\cC$ if for every $U$  in $\cC_{X}^{G}$ there exist nice equivariant families $(Y_{i})_{i\in I}$ and $(Z_{j})_{j\in J}$ of subsets of $X$ such that the following conditions are satisfied:
\begin{enumerate}
\item $(Y_{i})_{i\in I}$ and $(Z_{j})_{j\in J}$ are pairwise $U$-disjoint.
\item $X=\bigcup_{i\in I}Y_{i}\cup  \bigcup_{j\in J}Z_{j}$.
\item   $\coprod^{\sub}_{i\in I} Y_{i}$ and $\coprod^{\sub}_{j\in J} Z_{j}$ belong to the class $\cC$. \qedhere
\end{enumerate}
\end{ddd}

 Let $\cC$ be a class of $G$-coarse spaces.
\begin{ddd}
The class $\cC$ is closed under decomposition if every $G$-coarse space which is decomposable over $\cC$ belongs to $\cC$.
\end{ddd}
 
\begin{ddd}
We let $G\cF\cD\cC$ be the smallest class of $G$-coarse spaces which is closed under decomposition and contains the class $\cS\cB$ of all semi-bounded $G$-coarse spaces. 
\end{ddd}

Let $X$ be a $G$-coarse space.
\begin{ddd}
$X$ has $G$-finite decomposition complexity ($G$-FDC) if it belongs to the class $G\cF\cD\cC$.
\end{ddd}

Let $\cF$ be a set of subgroups of $G$.
\begin{ddd}\label{etghoiwrththrheh}
$\cF$ is called a family of subgroups if it is  closed under
\begin{enumerate}
\item taking subgroups and
\item conjugation in $G$. \qedhere
\end{enumerate}
\end{ddd}

Let $\cF$ be a family of subgroups.
\begin{ddd} \label{ithowthwergreggwgergwrgwreg}
 By $G_{\cF}\Orb$ we denote the full subcategory of $ {G\Orb}$ of transitive $G$-sets with stabilisers in $\cF$. 
 \end{ddd}

Let $X$ be {a $G$-coarse space}.

\begin{ddd}
	$X$ has $G_{\cF}$-FDC if the $G$-coarse space $S_{min}\otimes X$ has $G$-FDC for all  {$G$-sets $S$ with stabilisers in $\cF$}.
\end{ddd}

In the following, for an $\infty$-category $\cC$ we use the standard notation $\PSh(\cC):=\Fun(\cC^{\op},\Spc)$.
 {By Elmendorf's theorem, $\PSh(G\Orb)$ is equivalent to the $\infty$-category of $G$-topological spaces (see \cite[Rem.~1.12]{desc} for further explanations)}.
We have an adjunction
\[ \Ind_{\cF} \colon \PSh(G_{\cF}\Orb)\leftrightarrows \PSh(G\Orb) \cocolon \Res_{\cF}\ ,\]
where $\Res_{\cF}$ is the restriction along the inclusion $G_{\cF}\Orb\to G\Orb$.
We let $*_{\cF}$ denote the final object of $\PSh(G_{\cF}\Orb)$.
\begin{ddd}\label{qefiuhiqewfqwefwqefefewq}
We define the classifying space of the family $\cF$ by
\[ E_{\cF}G:=\Ind_{\cF}(*_{\cF})\ .\qedhere\]
\end{ddd} 

By definition, $E_{\cF}G$ is an object of $\PSh(G\Orb)$. The object $E_{\Fin}G$ is also called the classifying space for proper actions.

\begin{ex} 
	Examples of families of subgroups are:
	\begin{enumerate}
		\item $\{1\}$ - the family consisting of the trivial subgroup.
		\item $\Fin$ - the family of all finite subgroups.
		\item $\Vcyc$ - the family of virtually cyclic subgroups.
		\item\label{giorgjerogiergreg}   $\FDC$ - the family of subgroups $V$ of $G$ such that $V_{can}$ has $V_\Fin$-FDC.
		\item \label{giorgjerogiergreg1} $\cp$ denotes the family of subgroups of $G$ generated by those subgroups $V$ such that $E_\Fin V$ (see \cref{qefiuhiqewfqwefwqefefewq} below) is a compact object of $\PSh(V\Orb)$.
		\item   $\cFDC$ denotes  the intersection of $\FDC$ and $\cp$.\qedhere 
	\end{enumerate}
\end{ex}

\begin{ddd}\label{ergoiewrgeer243tt43terwgf}
  $E_{\cF}G$ admits a finite-dimensional model if there exists a finite-dimensional $G$-$CW$-complex modelling the homotopy type of $E_{\cF}G$ in $G$-topological spaces.
\end{ddd}
 
 Let $\bM$ be a cocomplete $\infty$-category, and let $M \colon G\Orb \to \bM$ be a functor. 
Let $\cF$ and $\cF^{\prime}$ be families of subgroups  such that $\cF^{\prime}\subseteq \cF$.
\begin{ddd}\label{ergoiegererg}
	The {relative assembly map} $\As_{\cF^{\prime},M}^{\cF}$  is the morphism 
	\[\As_{\cF^{\prime},M}^{\cF}\colon \colim_{ G_{\cF^{\prime}}\Orb}M \to \colim_{ G_\cF \Orb}M \]
	in $\bM$ canonically induced by the inclusion $G_{\cF^{\prime}}\Orb\to G_{\cF}\Orb$.
	 \end{ddd}
 
 {The following are the main results of \cite{desc}}. 
 
\begin{theorem}[{\cite[Thm.~1.11]{desc}}] 
	\label{thm:main-injectivity-kor1}Assume:
	\begin{enumerate}
	\item  $M$ is a CP-functor.
	\item  One of the following conditions holds:
		\begin{enumerate}
			\item\label{thm:main-it1} $\cF$ is a subfamily of $\cFDC$ 
			such that $\Fin\subseteq \cF$.
			\item\label{thm:main-it2} $\cF$ is a subfamily of $\FDC$
			such that $\Fin\subseteq \cF$ and $G$ admits a finite-dimensional model for $E_\Fin G$.
		\end{enumerate}\end{enumerate}		
	Then the relative assembly map $\As_{\Fin,M}^\cF$ admits a left inverse.
\end{theorem}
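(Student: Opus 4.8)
The final statement, \cref{thm:main-injectivity-kor1}, is explicitly attributed to \cite[Thm.~1.11]{desc}. In the context of the present paper, it is not proved from scratch; rather, it is quoted as a known input from the companion paper \cite{desc}, and the role of the present paper is only to supply new examples of CP-functors (namely $K\bC_G$ and more generally $\Homol\bC_G$ for a product-preserving finitary localising invariant) to which it applies. So the ``proof'' here is really a pointer: the plan is to record that \cref{thm:main-injectivity-kor1} is precisely \cite[Thm.~1.11]{desc} and that the hypotheses have been set up to match verbatim, so no new argument is required. Nonetheless, let me describe how one would reconstruct or at least motivate the argument, so that the reader understands why the statement is true rather than merely where it is proved.

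The strategy in \cite{desc} is the standard ``descent'' or ``forget-control'' approach to split injectivity, adapted to the abstract framework of equivariant coarse homology theories. First I would unwind \cref{def:cpfunctor}: since $M$ is a CP-functor, there is an equivariant coarse homology theory $E\colon G\BC\to \bM$ with $M\simeq E_{G_{can,min}}\circ i$, and $E$ is strongly additive, continuous, and admits transfers. The target of the relative assembly map is $\colim_{G_\cF\Orb}M$, which by the argument recalled in the introduction can be identified, using Elmendorf's theorem and the fact that $E$ is a coarse homology theory satisfying the listed additional axioms, with the value of $E$ on a $G$-bornological coarse space built from $E_\cF G$ (the cone or a suitable ``germs at infinity'' construction over the classifying space). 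The source $\colim_{G_\Fin\Orb}M$ is identified with $E$ evaluated on the analogous space built from $E_\Fin G$. The relative assembly map then becomes a map induced by the inclusion $E_\Fin G\to E_\cF G$ of classifying spaces. One constructs a left inverse by exhibiting a transfer/splitting: the hypothesis of $G_\cF$-finite decomposition complexity (or the $\cFDC$/$\FDC$ condition together with a finite-dimensional model for $E_\Fin G$) is precisely what lets one run an induction over the decomposition complexity, at each stage using $\pi_0$-excision, strong additivity, continuity, and the existence of transfers for $E$, to produce a retraction of the assembly map onto the ``finite part''. This is the content of \cite[Sec.~3 and 4]{desc} (see also \cite{Bunke:ac}), and the hypotheses (1) and (2) of \cref{thm:main-injectivity-kor1} are exactly the two sets of geometric assumptions under which that induction terminates.

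The key steps, in order, are: (i) invoke \cref{def:cpfunctor} to replace $M$ by $E_{G_{can,min}}\circ i$ for an equivariant coarse homology theory $E$ with transfers that is strongly additive and continuous; (ii) reinterpret both colimits over orbit categories as values of $E$ on cones over $E_\Fin G$ and $E_\cF G$ respectively, so that the relative assembly map becomes the map on $E$-values induced by $E_\Fin G\to E_\cF G$; (iii) under hypothesis (2a) or (2b), use the finite decomposition complexity input to stratify the space $E_\cF G$ relative to $E_\Fin G$ and run an induction, at each step applying excision, strong additivity, continuity, and transfers for $E$ to build a partial retraction; (iv) pass to the colimit of the stages to obtain the desired left inverse.

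The main obstacle is step (iii): controlling the induction over finite decomposition complexity and verifying that the transfer maps assemble coherently into a genuine splitting. This is the technical heart of \cite{desc} and \cite{Bunke:ac}, and reproducing it would essentially mean rewriting those papers; accordingly, the honest proof here is simply the citation. I would therefore present \cref{thm:main-injectivity-kor1} with a proof consisting of the remark that it is \cite[Thm.~1.11]{desc}, together with the observation that by \cref{rgiuhreiguhgwergergrwegwergreg} (respectively \cref{wefiofewqfwefqefqewf}) the functor $K\bC_G$ (respectively $\Homol\bC_G$) is a CP-functor, so hypothesis (1) is met for these new examples, and hypothesis (2) is a condition on $G$ and $\cF$ alone, unchanged from \cite{desc}. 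No new mathematical input beyond the constructions of the present paper is needed.
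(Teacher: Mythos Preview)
Your assessment is correct: the paper does not prove this theorem but simply quotes it from \cite[Thm.~1.11]{desc}, as the section is introduced with ``For the sake of completeness, we list these results, all shown in \cite{desc}.'' Your additional sketch of the descent argument is helpful context but goes beyond what the paper itself provides.
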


\begin{theorem}[{\cite[Cor.~1.13]{desc}}]
	\label{kor:vcyc-bartels}
	If $M$ is a CP-functor, then the relative assembly map $\As_{\Fin,M}^\Vcyc$ admits a left inverse.
\end{theorem}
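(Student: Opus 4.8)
\textbf{Proof plan for \cref{kor:vcyc-bartels}.}
The statement is an immediate specialisation of \cref{thm:main-injectivity-kor1}, so the only real work is to reduce the case of the family $\Vcyc$ of virtually cyclic subgroups to an application of that theorem, i.e.\ to produce a factorisation of the relative assembly map $\As_{\Fin,M}^\Vcyc$ through an assembly map to which \cref{thm:main-injectivity-kor1} applies. The plan is to exploit the standard transitivity principle for relative assembly maps: for a chain of families $\Fin\subseteq \cF\subseteq \Vcyc$ one has a factorisation
\[ \As_{\Fin,M}^{\Vcyc}\simeq \As_{\cF,M}^{\Vcyc}\circ \As_{\Fin,M}^{\cF}\ ,\]
coming functorially from the inclusions of the index categories $G_{\Fin}\Orb\subseteq G_{\cF}\Orb\subseteq G_{\Vcyc}\Orb$ and the induced maps on colimits. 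Hence it suffices to find an intermediate family $\cF$ with $\Fin\subseteq\cF$, such that $\As_{\Fin,M}^{\cF}$ admits a left inverse by \cref{thm:main-injectivity-kor1}, and such that $\As_{\cF,M}^{\Vcyc}$ is split injective for purely group-theoretic reasons independent of the CP-hypothesis.

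The natural choice is to take $\cF$ to be the family generated by $\Fin$ together with all virtually cyclic subgroups of type I, or more precisely to argue in two steps as in Bartels' work on the passage from $\Fin$ to $\Vcyc$: first one observes that every infinite virtually cyclic group surjects onto $\Z$ or onto the infinite dihedral group with finite kernel, so that a virtually cyclic subgroup $V$ of $G$ is ``built'' out of finite subgroups in a controlled way; this is exactly the input that makes $\As_{\cF,M}^{\Vcyc}$ split injective whenever $M$ is a CP-functor, since a CP-functor is in particular an equivariant coarse homology theory twisted and restricted, and the geometry needed (a uniform FDC statement for $\Vcyc$-sets relative to $\Fin$) is automatic for virtually cyclic groups. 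Concretely I would cite the statement of \cref{thm:main-injectivity-kor1}\eqref{thm:main-it1}: the point is that $\Vcyc$ itself is a subfamily of $\cFDC$ relative to $\Fin$ — a virtually cyclic group, being (virtually) a subgroup of $\Z$ or $\Z\rtimes\Z/2$, has $\Fin$-FDC, and the relevant $\cp$-condition holds because $E_{\Fin}V$ is a compact object of $\PSh(V\Orb)$ for $V$ virtually cyclic (a finite model exists). Therefore $\Fin\subseteq\Vcyc\subseteq\cFDC$, and one can simply apply \cref{thm:main-injectivity-kor1}\eqref{thm:main-it1} with $\cF=\Vcyc$ directly, with no intermediate family needed at all.

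So the actual proof is very short: invoke \cref{rgiuhreiguhgwergergrwegwergreg} to know that $M=K\bC_{G}$ is a (hereditary) CP-functor when $M$ is of that form — but for the general statement we are handed that $M$ is a CP-functor as a hypothesis — and then verify $\Vcyc\subseteq\cFDC$. The latter verification is the only content: one checks that for every virtually cyclic $V$ the coarse space $V_{can}$ has $V_{\Fin}$-FDC (which follows from the quasi-isometry of $V$ with $\Z$ or with the infinite dihedral group, both of which manifestly have finite decomposition complexity, together with the fact that $S_{min}\otimes V_{can}$ for $S$ a $V_{\Fin}$-set decomposes appropriately), and that $E_{\Fin}V$ is compact in $\PSh(V\Orb)$, using that virtually cyclic groups admit a one- or two-dimensional finite model for $E_{\Fin}V$. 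Granting these two facts, $\As_{\Fin,M}^{\Vcyc}$ admits a left inverse by \cref{thm:main-injectivity-kor1}, which is the assertion.

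\textbf{Main obstacle.} There is essentially no analytic or categorical obstacle here; the entire difficulty is bookkeeping about families of subgroups and the FDC hierarchy, i.e.\ checking $\Vcyc\subseteq\cFDC$ carefully (the decomposition-complexity estimate for $S_{min}\otimes V_{can}$ uniformly over all $V_{\Fin}$-sets $S$, and the compactness of $E_{\Fin}V$). This is genuinely elementary but must be stated precisely; in the paper it is presumably subsumed in the quoted result \cite[Cor.~1.13]{desc}, so the cleanest presentation is to note that \cref{kor:vcyc-bartels} is literally \cite[Cor.~1.13]{desc} applied to the CP-functor $M$, with the verification $\Vcyc\subseteq\cFDC$ carried out there, and that by \cref{rgiuhreiguhgwergergrwegwergreg} it applies in particular to $K\bC_{G}$ for any $\bC$ in $\Fun(BG,\Clep)$.
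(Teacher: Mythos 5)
You should first note that the paper itself contains no proof of this statement: it is imported verbatim from the reference cited as [Cor.~1.13] of \cite{desc}, so the only internal content is that it is an instance of the axiomatic framework recalled in this section. Your final argument is exactly the intended one and matches how the cited reference obtains it: since \cref{thm:main-injectivity-kor1}\eqref{thm:main-it1} imposes conditions only on the subgroups in the family and none on $G$ itself, it suffices to check $\Vcyc\subseteq\cFDC$, i.e.\ that every virtually cyclic subgroup $V$ satisfies the $V_\Fin$-FDC condition (clear from $V$ being quasi-isometric to a point or to $\Z$, with the equivariant refinement over all $V_\Fin$-sets supplied by the FDC results of \cite{Bunke:ac}/\cite{desc}) and that $E_\Fin V$ is compact in $\PSh(V\Orb)$ (a cocompact model on $\R$ or a point). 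Two small remarks: the transitivity detour in your first paragraph is superfluous, as you yourself conclude, and the phrase that the required decomposition-complexity statement is ``automatic'' understates it slightly --- the uniform statement over all $V_\Fin$-sets $S$ is a genuine (if elementary) verification, which is carried out in the cited references rather than in the present paper.
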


\begin{theorem}[{\cite[Cor.~1.14]{desc}}]
	\label{kor:oldversion}
	Assume:
	\begin{enumerate}
		\item $M$ is a CP-functor;
		\item $G$ admits a finite-dimensional model for $E_\Fin G$;
		\item \label{it:gfinfdc} $G_{can}$ has $G_\Fin$-FDC.
	\end{enumerate}
	Then the assembly map $\As^{\All}_{\Fin,M}$ admits a left inverse.
\end{theorem}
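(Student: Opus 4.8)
The plan is to deduce this from Theorem \ref{thm:main-injectivity-kor1}\eqref{thm:main-it2}, the point being that the third hypothesis forces the family $\FDC$ to coincide with $\All$, so that the relative assembly map $\As^{\FDC}_{\Fin,M}$ treated there is literally $\As^{\All}_{\Fin,M}$.

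First I would unwind the definition of the family $\FDC$: a subgroup $V$ of $G$ lies in $\FDC$ exactly when $V_{can}$ has $V_{\Fin}$-FDC. The hypothesis \eqref{it:gfinfdc} therefore says that $G$, viewed as a subgroup of itself, belongs to $\FDC$. Since $\FDC$ is by construction a family of subgroups (\cref{etghoiwrththrheh}), it is closed under passage to subgroups and conjugation; as every subgroup of $G$ is a subgroup of a member of $\FDC$, this gives $\FDC=\All$. Consequently $G_{\FDC}\Orb = G_{\All}\Orb = G\Orb$, and the relative assembly map $\As^{\FDC}_{\Fin,M}$ of \cref{ergoiegererg} is the map $\As^{\All}_{\Fin,M}$.

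Next I would check that the hypotheses of Theorem \ref{thm:main-injectivity-kor1}\eqref{thm:main-it2} are satisfied with $\cF:=\FDC$. By assumption $M$ is a CP-functor, and $G$ admits a finite-dimensional model for $E_{\Fin}G$. The remaining condition $\Fin\subseteq\FDC$ holds because for a finite group $V$ the coarse space $V_{can}$ is bounded, hence for every transitive $V$-set $S$ the coarse space $S_{min}\otimes V_{can}$ is semi-bounded, so $V_{can}$ has $V_{\Fin}$-FDC. Theorem \ref{thm:main-injectivity-kor1} then provides a left inverse for $\As^{\FDC}_{\Fin,M}$, which by the previous paragraph is $\As^{\All}_{\Fin,M}$.

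The only step needing external input is the closure of $\FDC$ under subgroups — equivalently, that $H_{can}$ has $H_{\Fin}$-FDC whenever $H\le G$ and $G_{can}$ has $G_{\Fin}$-FDC. This is precisely part of the assertion that $\FDC$ is a family of subgroups, and it is established in the foundational material on finite decomposition complexity in \cite{Bunke:ac}; I would cite it rather than reprove it. Everything else is bookkeeping with the indexing categories of the colimits defining the assembly maps, so I expect no real obstacle.
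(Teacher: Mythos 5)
The paper does not actually prove this statement: it is quoted from \cite[Cor.~1.14]{desc} in the list of imported injectivity results, so there is no internal argument to compare yours with. Your reduction is sound and is the natural way to obtain the corollary from \cref{thm:main-injectivity-kor1}: hypothesis \eqref{it:gfinfdc} says precisely that $G$ itself belongs to $\FDC$, and since $\FDC$ is a family of subgroups in the sense of \cref{etghoiwrththrheh} it is closed under passage to subgroups, whence $\FDC=\All$ and $\As^{\FDC}_{\Fin,M}=\As^{\All}_{\Fin,M}$; applying \cref{thm:main-injectivity-kor1}\eqref{thm:main-it2} with $\cF=\FDC$ then gives the left inverse, the remaining inputs being hypotheses (1) and (2) together with $\Fin\subseteq\FDC$, which your semi-boundedness argument for $S_{min}\otimes V_{can}$ with $V$ finite does establish. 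Two minor remarks. First, the definition of $V_\Fin$-FDC quantifies over all $S$ in $V_\Fin\Set$, not only transitive $V$-sets; this is harmless because your argument (coarse components of $S_{min}\otimes V_{can}$ are the bounded sets $\{s\}\times V$) never uses transitivity, but the wording should be adjusted. Second, you correctly isolate the one genuinely external ingredient, namely that $\FDC$ is closed under subgroups (equivalently, that $H_{can}$ has $H_\Fin$-FDC whenever $H\le G$ and $G_{can}$ has $G_\Fin$-FDC); the present paper also takes this for granted by listing $\FDC$ among the families of subgroups, so citing the FDC permanence results of \cite{Bunke:ac} (or \cite{desc}) for it is appropriate rather than a gap.
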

 
Note that the finite decomposition complexity assumption is in general not easy to check. 
The following two theorems are consequences of the theorems on groups with finite decomposition complexity.

\begin{theorem}[{\cite[Thm.~2.9]{desc}}]
Assume:
\begin{enumerate}
\item $M$ is a hereditary $CP$-functor.
\item $G$ fits into an exact sequence $1\to S\to G\xrightarrow{\phi} Q\to1$ such that
\begin{enumerate}
\item $S$ is virtually solvable and has finite Hirsch length.
\item $Q$ admits a finite-dimensional model for $E_{\Fin}Q$.
\end{enumerate}\end{enumerate}
Then $\As_{\Fin,M}^{\phi^{-1}(\Fin(Q))}$ admits a left inverse.
\end{theorem}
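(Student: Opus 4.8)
The statement to be proved is a direct consequence of the already-established \cref{thm:main-injectivity-kor1} (quoted from \cite[Thm.~1.11]{desc}), once we observe how $G_\cF\Orb$ is built from $G_{\phi^{-1}(\Fin(Q))}\Orb$ and how the geometric hypotheses on $S$ and $Q$ interact with the notion of $G$-finite decomposition complexity. The overall strategy is: (1) set $\cF:=\phi^{-1}(\Fin(Q))$, the family of those subgroups $V$ of $G$ such that $\phi(V)$ is finite; (2) verify that $\cF$ is a family of subgroups containing $\Fin$; (3) show that $\cF\subseteq\FDC$ (i.e.\ every $V$ in $\cF$ has $V_{can}$ with $V_\Fin$-FDC) using the virtual solvability and finite Hirsch length of $S$; and (4) check that $G$ admits a finite-dimensional model for $E_\Fin G$, using the finite-dimensional model for $E_\Fin Q$. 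Then \cref{thm:main-injectivity-kor1}\eqref{thm:main-it2} applies with this $\cF$, since by hypothesis $M$ is a hereditary CP-functor and hence in particular a CP-functor, yielding the desired left inverse for $\As_{\Fin,M}^{\phi^{-1}(\Fin(Q))}$.

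The main content lies in step (3). First I would note that any $V$ in $\cF$ sits in an extension $1\to V\cap S\to V\xrightarrow{\phi} \phi(V)\to 1$ with $\phi(V)$ finite and $V\cap S$ virtually solvable of finite Hirsch length (these properties pass to subgroups). Thus it suffices to prove that a virtually-solvable group of finite Hirsch length, together with any finite quotient data sitting above it, has canonical coarse structure with $\Fin$-FDC; more precisely, that membership in $\FDC$ is preserved under passing to finite-index overgroups and that virtually solvable groups of finite Hirsch length lie in $\FDC$. The latter is the key geometric input: one uses that such a group admits a finite subnormal series with infinite-cyclic or finite subquotients, and that FDC is stable under such extensions — this is essentially the permanence properties of finite decomposition complexity (stability under extensions, subgroups, and finite index), which should be available from \cite{Bunke:ac} as cited. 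Combined with the hereditary CP-functor hypothesis — which is exactly what allows us to not worry about $\phi$ being surjective onto $\phi(V)$ — the chain of implications closes.

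For step (4), I would argue as follows. Since $Q$ admits a finite-dimensional $G$-$CW$ model for $E_\Fin Q$ and $S$ is virtually solvable of finite Hirsch length, $S$ is in particular a group admitting a finite-dimensional model for $E_\Fin S$ (solvable groups of finite Hirsch length have finite-dimensional classifying spaces for proper actions, by a standard argument via the Hirsch length bounding the dimension). The pullback $\phi^*(E_\Fin Q)$ is a finite-dimensional $G$-space whose stabilisers are the subgroups $V$ with $\phi(V)$ finite, i.e.\ exactly the members of $\cF$; and $E_\Fin G$ can be built from $E_\cF G$ by attaching cells indexed by the families $\Fin$ and $\cF$, with the fibres of the relevant maps modelled on $E_\Fin S$-type spaces (more precisely on classifying spaces for proper actions of the groups $V\cap S$). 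Since both ingredients are finite-dimensional, so is the resulting model for $E_\Fin G$. This is the standard ``finite-dimensionality is inherited along group extensions with finite-dimensional quotient and finite-dimensional kernel'' argument, and I would cite the relevant statement rather than reconstruct it.

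\textbf{Expected main obstacle.} The delicate point is getting the FDC bookkeeping right in step (3): FDC is a condition about \emph{equivariant} families indexed by $G$-sets and about all entourages in the canonical coarse structure, so one must be careful that the extension-stability of FDC is available in the precise equivariant form needed (with the correct family $V_\Fin$ in the statement ``$V_{can}$ has $V_\Fin$-FDC''), and that restricting from $G$ to a subgroup $V$ in $\cF$ does not lose control. I expect this to be purely a matter of quoting the permanence results of \cite{Bunke:ac} correctly and checking that ``virtually solvable of finite Hirsch length'' is exactly the hypothesis those results need; no genuinely new argument should be required, but the citations must be lined up with care, and one should double-check that the hereditary hypothesis is genuinely used (it is: it handles the surjectivity issue implicit in passing between $G$, the quotient $Q$, and the subgroups $V$ with their quotients $\phi(V)$).
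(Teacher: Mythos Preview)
The paper does not prove this theorem; it is merely quoted from \cite[Thm.~2.9]{desc} as part of a list of split injectivity results from that reference, included ``for the sake of completeness'' so that the reader sees what the new examples of CP-functors constructed in the present paper can be fed into. There is therefore no proof in the paper to compare your proposal against.

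That said, your sketch has a genuine issue worth flagging. Your proposed argument never actually uses the hereditary hypothesis: the only place you invoke it is to handle ``the surjectivity issue implicit in passing between $G$, the quotient $Q$, and the subgroups $V$ with their quotients $\phi(V)$'', but $\phi|_V\colon V\to\phi(V)$ is surjective by definition, so there is no such issue. What you have really written is an attempt to verify the hypotheses of \cref{thm:main-injectivity-kor1}\eqref{thm:main-it2} directly for $G$, using only that $M$ is a CP-functor. If that worked, the hereditary assumption in the theorem would be superfluous --- which should make you suspicious. In \cite{desc}, the hereditary hypothesis is used through an inheritance/descent principle along the surjection $\phi\colon G\to Q$ (this is what the definition of ``hereditary'' is tailored for), and the hypotheses on $S$ and $Q$ are fed into that mechanism rather than into a direct construction of a finite-dimensional $E_\Fin G$ and a verification that $\phi^{-1}(\Fin(Q))\subseteq\FDC$. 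Your steps (3) and (4) may well be salvageable with enough external input (permanence of FDC under extensions, dimension bounds for classifying spaces of virtually solvable groups of finite Hirsch length, L\"uck's construction of $E_\Fin G$ from $E_\Fin Q$ and the $E_\Fin(\phi^{-1}(F))$), but that is a substantially different argument from the one in \cite{desc}, and you should not claim to be using the hereditary hypothesis when you are not.
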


Here $\Fin(Q)$ denotes the family of finite subgroups of $Q$.

 \begin{theorem}[{\cite[Cor.~2.11]{desc}}]\label{oijfoifjoewfwejfoijoiewfqwefqwefqwefwefewfffqef}
 Assume:
 \begin{enumerate}
 \item $M$ is a hereditary CP-functor.
 \item $G$ admits a finite-dimensional $CW$-model for the classifying space $E_{\Fin}G$.
 \item  \label{rgiogerervweeververvwevrvrevgergwerg} $G$ is a finitely generated subgroup of a linear group over a commutative ring with unit or of a virtually connected Lie group.
  \end{enumerate}
Then $\As_{\Fin,M}^{\All}$ admits a left inverse.
 \end{theorem}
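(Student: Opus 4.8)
The plan is to reduce the assertion to \cref{kor:oldversion}. First, since $M$ is a hereditary CP-functor, applying \cref{regiojergewergegergweggegregrwegreg} to the identity homomorphism $\id_{G}$ shows that $M$ is a CP-functor; more generally, $M\circ\Res_{\phi}$ is a CP-functor for every surjective group homomorphism $\phi\colon G\to Q$, a fact which will be used in the structural reductions below. Hypothesis~(2) of the theorem is exactly the statement that $G$ admits a finite-dimensional model for $E_{\Fin}G$ in the sense of \cref{ergoiewrgeer243tt43terwgf}. Hence the only hypothesis of \cref{kor:oldversion} that still needs to be verified is that $G_{can}$ has $G_{\Fin}$-FDC, and this is where assumption~(3) enters.

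To establish the FDC claim --- which is the geometric core of the argument --- I would import the structure theory of the two classes of groups in question together with the standard permanence properties of finite decomposition complexity (closure under passage to subgroups and under extensions by virtually solvable groups of finite Hirsch length), in their versions relative to the family $\Fin$. If $G$ is a finitely generated subgroup of $\GL_{n}(R)$ with $R$ a commutative ring with unit, one first replaces $R$ by the subring generated by the finitely many matrix entries appearing in a generating set of $G$, which is finitely generated over $\Z$ and hence Noetherian; then the nilradical of $R$ is nilpotent, so $N:=G\cap\ker(\GL_{n}(R)\to\GL_{n}(R_{\mathrm{red}}))$ is a normal subgroup of $G$ which is nilpotent of finite Hirsch length, and the quotient $Q:=G/N$ is finitely generated and linear over the reduced Noetherian ring $R_{\mathrm{red}}$, which embeds into a finite product of finitely generated fields. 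Since finitely generated linear groups over a field have finite decomposition complexity, the permanence properties yield that $G_{can}$ has $G_{\Fin}$-FDC. If instead $G$ is a finitely generated subgroup of a virtually connected Lie group, the analogous input is that $G$ has a finite-index subgroup carrying a normal virtually solvable subgroup of finite Hirsch length with linear quotient, obtained from the radical--Levi decomposition; the same permanence argument again gives that $G_{can}$ has $G_{\Fin}$-FDC.

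With $M$ a CP-functor, a finite-dimensional model for $E_{\Fin}G$, and $G_{can}$ of $G_{\Fin}$-FDC all in place, \cref{kor:oldversion} provides a left inverse of $\As_{\Fin,M}^{\All}$, which is the desired conclusion. An alternative route, in which the hereditary hypothesis on $M$ is genuinely indispensable rather than merely convenient, runs the structural reductions of the previous paragraph through the quotient map $\phi\colon G\to Q$ and combines the hereditary CP-property of $M$ with \cite[Thm.~2.9]{desc} and the composability of relative assembly maps (see \cref{ergoiegererg}).

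The main obstacle is precisely the geometric input of the second paragraph: verifying the equivariant finite decomposition complexity of an arbitrary finitely generated linear group over a commutative ring with unit, and of an arbitrary finitely generated subgroup of a virtually connected Lie group. This rests on the structure theory recalled above --- reduction, modulo a virtually solvable kernel of finite Hirsch length, to a linear group over a field, via localisation of the coefficient ring in the first case and via the radical in the second --- and on the closure of the class of FDC groups under subgroups and under such extensions. Everything else is a formal application of the descent results already recorded in this section.
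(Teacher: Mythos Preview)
The paper does not prove this statement at all: it is listed in \cref{wreuigheiugerergegwggreg} purely as a quotation of \cite[Cor.~2.11]{desc}, with the explicit disclaimer that these split injectivity results are ``all shown in \cite{desc}'' and are reproduced here only ``for the sake of completeness.'' There is therefore no proof in the present paper to compare your proposal against.

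That said, your sketch is broadly on target as an outline of how \cite[Cor.~2.11]{desc} is obtained. A small comment: your ``first route'' via \cref{kor:oldversion} would need only that $M$ is a CP-functor, yet the theorem is stated for hereditary CP-functors. The reason is that in \cite{desc} the result is deduced from \cite[Thm.~2.9]{desc} (the extension result you cite in your ``alternative route''), which genuinely requires the hereditary hypothesis because one passes to quotient groups. So your second route is the one that actually matches the argument in \cite{desc}; the first route, as stated, would need an independent proof that $G_{can}$ has $G_{\Fin}$-FDC for all groups in assumption~(3), and it is not clear that this is established directly in the literature without going through the extension/quotient machinery.
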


 {\cref{rgigjwoiegerggwregwerg} from the introduction follows by combining \cref{oijfoifjoewfwejfoijoiewfqwefqwefqwefwefewfffqef} and \cref{rgiuhreiguhgwergergrwegwergreg}.}

\subsection{The universal property of the bounded derived category}\label{sec:linearK}
 For assembly maps associated to the algebraic $K$-theory of discrete group rings, it is customary to consider functors on the orbit category which arise from additive categories.
Formally, the construction of these functors is completely analogous to the functors $G\Orb \to \Cle$ we associate to a left-exact $\infty$-category with $G$-action, see \cref{reoifjuoiewccwec}. In the present section we will show that the additive case can be also captured  in a technical sense
as announced in \cref{gwegljrgkrewgergreg}.

Let $\bA$ be a small additive (ordinary) category.
By $\homot\bA$ we denote the localisation of
the category of bounded chain complexes $\Ch^b(\bA)$ by chain homotopy equivalences.
Then $\homot\bA$ is a stable $\infty$-category, equipped with a canonical finite coproduct-preserving functor
\begin{equation}\label{eq:canonical embedding in HA}
z_{\bA} \colon \bA\to\homot\bA
\end{equation}
sending an object of $\bA$ to the corresponding chain complex concentrated in degree zero.
The purpose of this subsection is to prove that the functor $z_{\bA}$ above is the
universal finite coproduct-preserving functor from $\bA$ to a stable $\infty$-category (\cref{thm:universal property of additive bounded derived category}).
 {As an aside, we refine this statement to provide a universal property for the bounded derived $\infty$-category of any exact category in \cref{cor:universal property of the derived category}}.
As a consequence, we show  {in \cref{thm:cocontinuity of the derived category}} that the functor $\homot{-}$ commutes with colimits indexed by small groupoids.
 This allows us to deduce in \cref{eiohjgwergerqgergergdfbsd} that the two candidates for functors on the orbit category associated to an additive category coincide.

Let $\bC$ be an $\infty$-category.
\begin{ddd}\label{fqehwfiuhiqwef}
$\bC$ is  additive  if it is semi-additive (see \cref{ghiowjgoergwergwergregwg}) and its homotopy category $\ho(\bC)$ is additive.
\end{ddd}

Given a locally small additive $\infty$-category $\bC$, we let $\cP_\Sigma(\bC)$ be the $\infty$-category of
additive presheaves on $\bC$, i.e., the $\infty$-category of  finite product-preserving functors from $\bC^\op$ to the $\infty$-category of
spaces $\Spc$. Given two additive $\infty$-categories $\bC$ and $\bD$, we denote by
$\Fun_\Sigma(\bC,\bD)$ the full subcategory of $\Fun(\bC,\bD)$ spanned
by finite coproduct-preserving functors.

Given a right-exact $\infty$-category $\bC$, we denote by
\[ \SW(\bC) := \tilde{\Sp}(\bC^\op)^\op \]
(see \cref{foiwregergegregreg}) the Spanier-Whitehead stabilisation
of $\bC$; see also \cite[Constr.~C.1.1.1]{SAG}.

Let $\bC$ be an $\infty$-category.
\begin{ddd}[{\cite[Def.~C.1.2.1]{SAG}}]
$\bC$ is called prestable if it satisfies the following conditions:
\begin{enumerate}
\item $\bC$ is right-exact.
\item The suspension functor $\Sigma \colon \bC\to \bC$ is fully faithful.
\item $\bC$ is closed under extensions: For every morphism $f \colon Y\to \Sigma Z$ in $\bC$ there exists a pullback square
\[\xymatrix{X\ar[r]^-{f'}\ar[d]&Y\ar[d]^-{f}\\0\ar[r]&\Sigma Z}\ .\]
Furthermore, this square is also a pushout.\qedhere
\end{enumerate}
\end{ddd}

Let $\bA$ be a small additive category.

\begin{prop}\label{chaincomplexesasadditivefunctors}\mbox{}
\begin{enumerate}
\item \label{qiogrfjoregergwerger} $\cP_\Sigma(\bA)$ is prestable. 
\item   $\cP_\Sigma(\bA)$ is additive.
\item  \label{ewiofjqofqwefqewfqewf}The canonical functor $\cP_\Sigma(\bA)\to\SW(\cP_\Sigma(\bA))$
is fully faithful.
\item The essential image of the functor from \eqref{ewiofjqofqwefqewfqewf} is closed under extensions.
\end{enumerate}
\end{prop}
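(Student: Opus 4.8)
The plan is to establish the four assertions in order, using the structural theory of prestable $\infty$-categories from \cite[App.~C]{SAG}, and then to observe that $\cP_\Sigma(\bA)$ is a concrete model for which the Spanier--Whitehead stabilisation reproduces $\homot\bA$. For \eqref{qiogrfjoregergwerger}, I would use that $\cP_\Sigma(\bA)$, being an $\infty$-category of additive (finite product preserving) presheaves on a small additive category, is a compactly generated prestable $\infty$-category. The quickest route is to invoke \cite[Prop.~C.1.5.7]{SAG} (or the discussion of Grothendieck prestable $\infty$-categories therein): the additive presheaf category $\cP_\Sigma(\bA)$ is a Grothendieck prestable $\infty$-category whose associated stable $\infty$-category is $\cP_\Sigma(\bA)[\Sigma^{-1}] = \SW(\cP_\Sigma(\bA))$. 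The three defining conditions of prestability — right-exactness, full faithfulness of $\Sigma$, and closure under extensions — are part of the package; right-exactness holds because $\cP_\Sigma(\bA)$ is presentable, and the other two are exactly what it means for the canonical $t$-structure on $\SW(\cP_\Sigma(\bA))$ to have $\cP_\Sigma(\bA)$ as its connective part. I expect this step to be essentially a citation.

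For additivity: $\cP_\Sigma(\bA)$ is semi-additive because finite coproducts and finite products in a product-preserving presheaf category are computed pointwise and agree pointwise (the Yoneda embedding $\bA \to \cP_\Sigma(\bA)$ preserves finite coproducts, and $\bA$ is additive, so the biproduct structure propagates); alternatively, semi-additivity is inherited from $\Spc_*$ through the finite-product-preserving presheaf construction as in \cref{girgjowegfewfw9ef}. To upgrade semi-additive to additive (\cref{fqehwfiuhiqwef}) I must check that $\ho(\cP_\Sigma(\bA))$ is additive, i.e.\ that every hom-monoid is a group. This follows because $\cP_\Sigma(\bA)$ is prestable: in a prestable $\infty$-category the shift $\Sigma$ is an equivalence onto its essential image and, more to the point, $\pi_0\Map(X,Y)$ carries a group structure since $\Map(X,Y) \simeq \Omega\Map(X,\Sigma Y)$ is a loop space — here I use that $\Sigma$ is fully faithful, so $\Map(X,Y) \xrightarrow{\simeq} \Map(\Sigma X, \Sigma Y) = \Omega^{?}\cdots$; more cleanly, for $X,Y$ in a prestable $\infty$-category the mapping space sits inside the stable $\infty$-category $\SW(\cP_\Sigma(\bA))$ via the fully faithful embedding of the next item, and mapping spaces in a stable $\infty$-category are infinite loop spaces, hence grouplike. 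So I would logically prove \eqref{ewiofjqofqwefqewfqewf} first and deduce additivity from it, or simply cite that Grothendieck prestable $\infty$-categories are additive.

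Assertions \eqref{ewiofjqofqwefqewfqewf} and (4) are the heart of the matter. The canonical functor $\cP_\Sigma(\bA) \to \SW(\cP_\Sigma(\bA))$ is the inclusion of the connective part of the canonical $t$-structure on the Spanier--Whitehead stabilisation of the Grothendieck prestable $\infty$-category $\cP_\Sigma(\bA)$; by \cite[Rem.~C.1.1.6, Prop.~C.3.2.1]{SAG} this inclusion is fully faithful and its essential image is precisely $\SW(\cP_\Sigma(\bA))_{\geq 0}$, which is by construction closed under extensions in $\SW(\cP_\Sigma(\bA))$ (the connective part of any $t$-structure is closed under extensions). Concretely, $\SW(\cP_\Sigma(\bA)) = \tilde\Sp(\cP_\Sigma(\bA)^{\op})^{\op}$ by definition, and since $\cP_\Sigma(\bA)$ is prestable the colimit defining $\tilde\Sp$ via iterated $\Omega$ (equivalently, iterated $\Sigma$ on the opposite) stabilises to a fully faithful embedding because each structure map $\Sigma$ is already fully faithful — this is the abstract reason, matching \cref{geroigergergre}. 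The main obstacle, and the only place genuine care is needed, is verifying that the essential image is closed under extensions rather than merely that $\SW$-stabilisation is fully faithful: one must produce, for a morphism $f\colon Y \to \Sigma Z$ with $Y,Z$ in the image, a pullback-and-pushout square exhibiting the fibre $X$ again in the image. I would do this by the standard $t$-structure argument: $X = \Fib(f)$ computed in $\SW(\cP_\Sigma(\bA))$ lies in $\SW(\cP_\Sigma(\bA))_{\geq 0}$ because $\Sigma Z \in \SW(\cP_\Sigma(\bA))_{\geq 1} \subseteq \SW(\cP_\Sigma(\bA))_{\geq 0}$ and connective objects are closed under fibres of maps landing in connective objects (equivalently, use the long exact sequence of homotopy objects for the $t$-structure: $\pi_i X$ vanishes for $i<0$ since $\pi_i Y$ and $\pi_{i+1}(\Sigma Z) = \pi_i Z$ do). Finally, that such fibre squares are also pushouts is automatic in any stable $\infty$-category, so it descends to the statement about $\cP_\Sigma(\bA)$ as required.
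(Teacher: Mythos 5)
Your proof is correct and follows essentially the same route as the paper, whose proof consists of citing Lurie's theory of prestable $\infty$-categories (SAG Prop.~C.1.5.7 for prestability, then Ex.~C.1.5.6 and Prop.~C.1.2.2 for the remaining assertions), which is exactly the package you invoke, only with the underlying arguments spelled out and slightly different reference numbers within SAG Appendix~C. One small caution: the blanket principle that connective objects are closed under fibres of maps landing in connective objects is false in general (e.g.\ the fibre of the zero map between connective objects); what makes your argument work, and what your long exact sequence computation actually uses, is that the target $\Sigma Z$ is $1$-connective.
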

 \begin{proof}
Assertion \eqref{qiogrfjoregergwerger} follows from  \cite[Prop.~C.1.5.7]{SAG}. The remaining assertions are then given by \cite[Ex.~C.1.5.6]{SAG} and \cite[Prop.~C.1.2.2]{SAG}. 
\end{proof}

Let $\cP_{\Sigma,f}(\bA)$ be the smallest prestable subcategory
of $\cP_\Sigma(\bA)$ containing the representable presheaves.
\begin{prop}\label{prop:bounded derived cat as functors} 
There is a canonical equivalence of $\infty$-categories
\[ \homot\bA \xrightarrow{\simeq} \SW(\cP_{\Sigma,f}(\bA)) \]
fitting into the 
 commutative diagram
  \begin{equation}\label{4hgou42oigj34gf234g34f234f}
\xymatrix{\bA\ar[r]^-{z_{\bA}}_-{\eqref{eq:canonical embedding in HA}}\ar[d]_-{\mathrm{Yoneda}}&\homot\bA   \ar[d]^{\simeq}\\\cP_{\Sigma,f}(\bA) \ar[r]&\SW(\cP_{\Sigma,f}(\bA))}
\end{equation}
\end{prop}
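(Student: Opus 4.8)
The plan is to realise both $\homot\bA$ and $\SW(\cP_{\Sigma,f}(\bA))$ as full subcategories of the stable $\infty$-category $\cD:=\SW(\cP_\Sigma(\bA))$, to match them there by a single exact functor, and then to reconstruct $\homot\bA$ from its ``connective part'' by inverting the suspension. First I would record, using \cref{chaincomplexesasadditivefunctors} together with Lurie's recognition criterion for $t$-structures \cite{SAG}, that the fully faithful, extension-closed inclusion $\cP_\Sigma(\bA)\hookrightarrow\cD$ is the connective part of a $t$-structure on $\cD$; its heart $\cB$ is the abelian category of additive presheaves of abelian groups on $\bA$, the Yoneda functor $h\colon\bA\to\cP_\Sigma(\bA)\hookrightarrow\cD$ factors through $\cB$, and it exhibits $\bA$ as a full subcategory of $\cB$ consisting of projective objects. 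The only non-formal input I shall need from this is that for $a$ in $\bA$ and every bounded object $M$ of $\cD$ the canonical map $\pi_n\Map_\cD(h(a),M)\to\Hom_\cB(h(a),\pi_nM)$ is an isomorphism for all $n\in\IZ$ (projectivity of representables, i.e.\ vanishing of higher $\Ext$).

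Next I would construct the comparison functor. Applying $h$ levelwise to a bounded chain complex of objects of $\bA$ and forming the associated total object (the iterated cofibre) gives a functor $\Ch^b(\bA)\to\cD$ which carries chain-homotopic maps to homotopic maps and mapping cones to cofibres; it therefore descends to an exact functor $\Phi\colon\homot\bA\to\cD$ with $\Phi\circ z_\bA=h$ (see \eqref{eq:canonical embedding in HA}). To see that $\Phi$ is fully faithful I would observe that, for a fixed $Y$ in $\homot\bA$, the functors $\Map_{\homot\bA}(-,Y)$ and $\Map_\cD(\Phi(-),\Phi Y)$ both send finite colimits of $\homot\bA$ to finite limits of $\Spc$, and that the brutal filtration exhibits $\homot\bA$ as the finite-colimit closure of the objects $\Sigma^n z_\bA(a)$; so it suffices to compare the two functors on $\Sigma^n z_\bA(a)$, which by desuspension reduces to comparing $\Map_{\homot\bA}(z_\bA(a),Y)\to\Map_\cD(h(a),\Phi Y)$ for all $a$ and $Y$. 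Here both sides can be computed explicitly: the left-hand $\pi_k$ is the $k$-th homology of the complex of abelian groups $n\mapsto\Hom_\bA(a,Y_n)$ (a chain map out of a complex concentrated in a single degree, modulo homotopy, is visibly described this way), and by the projectivity statement above the right-hand $\pi_k$ is $\Hom_\cB(h(a),\pi_k\Phi Y)\cong H_k\bigl(\Hom_\bA(a,Y_\bullet)\bigr)$; the comparison map realises these isomorphisms.

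I would then match the connective parts. Let $\cE\subseteq\homot\bA$ be the full subcategory of complexes homotopy equivalent to one concentrated in non-negative degrees. A direct inspection of mapping cones shows that $\Phi(\cE)$ lies in $\cP_\Sigma(\bA)$, is closed under finite colimits and extensions inside $\cD$, and contains every $h(a)$, so $\Phi(\cE)$ is a prestable subcategory of $\cP_\Sigma(\bA)$ containing the representables; by minimality of $\cP_{\Sigma,f}(\bA)$ this gives $\cP_{\Sigma,f}(\bA)\subseteq\Phi(\cE)$, while the reverse inclusion follows because $\cP_{\Sigma,f}(\bA)$ is closed under finite colimits and every bounded complex of $\bA$-objects in non-negative degrees is an iterated cone of shifts of objects of $\bA$. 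Hence $\Phi$ restricts to an equivalence $\cE\xrightarrow{\simeq}\cP_{\Sigma,f}(\bA)$ which carries $z_\bA$ to the Yoneda embedding and intertwines the suspension functors (both being restrictions of $\Sigma$ on $\cD$).

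Finally I would invert the suspension. Unwinding the definition (see \cref{foiwregergegregreg}, \cref{griooergergergreg} and \eqref{vervoiehjeoirverveve}), $\SW(\bE)\simeq\colim\bigl(\bE\xrightarrow{\Sigma}\bE\xrightarrow{\Sigma}\cdots\bigr)$, the filtered colimit being computed equivalently in $\Cati$ or in $\Cle$ by \cref{prop:catex finitely complete}. The previous step yields $\SW(\cP_{\Sigma,f}(\bA))\simeq\colim(\cE\xrightarrow{\Sigma}\cE\to\cdots)$, and since $\Sigma$ carries $\cE$ equivalently onto $\Sigma\cE\subseteq\cE$ while $\homot\bA=\bigcup_{k\ge0}\Sigma^{-k}\cE$, the functor sending the object $X$ at the $k$-th stage of this colimit to $\Sigma^{-k}X$ in $\homot\bA$ is well defined; it is essentially surjective because every bounded complex lies in some $\Sigma^{-k}\cE$, and fully faithful because the relevant filtered colimit of mapping spaces $\colim_m\Map_{\homot\bA}(\Sigma^{m-k}X,\Sigma^{m-l}Y)$ is constant with value $\Map_{\homot\bA}(\Sigma^{-k}X,\Sigma^{-l}Y)$ (each transition map is induced by the equivalence $\Sigma$ on the stable $\infty$-category $\homot\bA$, cf.\ \cite[Prop.~2.7]{Bunke:2017aa}). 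Composing the equivalences of the last two steps gives $\homot\bA\simeq\SW(\cP_{\Sigma,f}(\bA))$, and tracing the construction shows $z_\bA$ is carried to the composite $\bA\xrightarrow{\mathrm{Yoneda}}\cP_{\Sigma,f}(\bA)\to\SW(\cP_{\Sigma,f}(\bA))$, which is exactly the commutativity of \eqref{4hgou42oigj34gf234g34f234f}. I expect the main obstacle to be the hands-on verification that $\Phi$ is exact and that $\cE$ is genuinely a prestable subcategory of $\cP_\Sigma(\bA)$ — i.e.\ the explicit manipulations of mapping cones of bounded complexes — together with the correct importation of the $t$-structure and projectivity statements from \cite{SAG}.
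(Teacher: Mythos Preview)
Your strategy is correct and closely parallels the paper's, but the implementation differs in one essential respect. The paper embeds $\homot\bA$ into the bounded-below derived $\infty$-category $\cD^-(\cA)$ of the abelian category $\cA=\Fun_\Sigma(\bA^{\op},\Ab)$ via the dg-nerve of the fully faithful dg-functor $\Ch^b(\bA)\hookrightarrow\Ch^-(\cA_{\mathrm{proj}})$, and then invokes a black box (Pstr\k{a}gowski's identification $\cD^-(\cA)_{\geq 0}\simeq\cP_\Sigma(\bA)$) to transport this to $\cP_\Sigma(\bA)$. From there the paper defines $\Ch^b_{\geq 0}(\bA)_\infty$ as the smallest prestable subcategory containing the image of $z_\bA$, observes its image is $\cP_{\Sigma,f}(\bA)$, and applies $\SW$; this is exactly your final two steps. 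You instead work directly in $\SW(\cP_\Sigma(\bA))$, build $\Phi$ by totalisation, and establish full faithfulness by an explicit $\pi_*$-computation using the $t$-structure and projectivity of representables. Your route is more self-contained (no Pstr\k{a}gowski reference), and the paper in fact remarks afterwards that a direct Dold--Kan argument along Lurie's lines is another alternative.

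The one point in your proposal that genuinely needs care is the construction of $\Phi$ as a functor of $\infty$-categories. ``Applying $h$ levelwise and taking the iterated cofibre'' describes $\Phi$ on objects and perhaps on the homotopy category, but producing a functor $\homot\bA\to\cD$ requires coherent data. The paper's use of the dg-nerve is precisely the device that packages this coherently; your approach would need either a Dold--Kan style realisation functor from filtered or simplicial objects, or to fall back on the same dg-nerve. Once $\Phi$ exists, your $\pi_*$-argument for full faithfulness and your identification $\Phi(\cE)=\cP_{\Sigma,f}(\bA)$ are sound and arguably more transparent than importing the Pstr\k{a}gowski equivalence.
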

\begin{proof}
 Let $\cA := \Fun_\Sigma(\bA^\op,\Ab)$ be the abelian category of additive presheaves on $\bA$ with values in abelian groups. This is an abelian category with a set of compact projective generators provided by the image of the Yoneda embedding $y_\bA \colon \bA \to \cA$. Note that any projective in $\cA$ is a retract of a sum of representables.
 
 Let $\cD^-(\cA) := \Nerve_{\mathrm{dg}}(\Ch^-(\cA_{\mathrm{proj}}))$ be the bounded below derived category of $\cA$, where $\cA_{\mathrm{proj}}$ denotes the full subcategory of $\cA$ spanned by the projective objects. Note that $y_\bA$ induces a fully faithful functor $\Ch^b(\bA) \to \Ch^-(\cA_{\mathrm{proj}})$ of dg-categories. In view of the equivalence $\Ch^b(\bA)_\infty \simeq \Nerve_{\mathrm{dg}}(\Ch^b(\bA))$ (see \cite[Rem.~2.9]{Bunke:2017aa}), we get a fully faithful functor
 \begin{equation}\label{eq:chains-derived-inc}
  \Ch^b(\bA)_\infty \to \cD^-(\cA)\ .
 \end{equation}
 
 By \cite[Lem.~2.60]{Pstragowski}, there is an equivalence $\cD^-(\cA)_{\geq 0} \xrightarrow{\simeq} \cP_\Sigma(\bA)$\footnote{\cite{Pstragowski} formulates the result for the unbounded derived category $\cD(\cA)$, but note that the connective parts of $\cD^-(\cA)$ and $\cD(\cA)$ agree.} which fits into the following commutative diagram:
 \[\xymatrix{
  \cD^-(\cA)_{\geq 0} \ar[r]^-{\simeq} & \cP_\Sigma(\bA) \\
  \cA_{\mathrm{proj}}\ar[u] & \bA\ar[u]_-{\mathrm{Yoneda}}\ar[l]_-{y_\bA} \\
 }\]
 Define $\Ch^b_{\geq 0}(\bA)_\infty$ as the smallest prestable subcategory of $\Ch^b(\bA)_\infty$ containing the essential image of $z_\bA$. Then \eqref{eq:chains-derived-inc} induces a fully faithful functor $\Ch^b_{\geq 0}(\bA)_\infty \to \cP_\Sigma(\bA)$ whose essential image is by definition $\cP_{\Sigma,f}(\bA)$, and which fits into the following commutative diagram:
 \begin{equation}\label{qrviqjvoervqvevwevv}
\xymatrix{
  \Ch^b_{\geq 0}(\bA)_\infty \ar[r]^-{\simeq} & \cP_{\Sigma,f}(\bA) \\
  \bA\ar[u]^-{z_\bA}\ar[ru]_-{~\mathrm{Yoneda}} & \\
 }
\end{equation} 
 Since $\Ch^b(\bA)_\infty \simeq \SW(\Ch^b_{\geq 0}(\bA)_\infty)$, the proposition follows by applying $\SW$ to the upper horizontal equivalence in \eqref{qrviqjvoervqvevwevv}.
\end{proof}

 {Let $\Cadd$ denote the $\infty$-category of additive $\infty$-categories and finite coproduct-preserving functors, so $\Add_\infty$ is a full subcategory of $\Cadd$.
The functor $\homot{-} \colon \Add\to  {\stCat}$ preserves equivalences of additive categories and therefore induces a functor  (by abuse of notation denoted by the same symbol) 
\[  \Ch^{b}(-)_{\infty} \colon \Add_{\infty}\to  {\stCat}\ . \]
\begin{theorem}\label{thm:universal property of additive bounded derived category}
The functor
\[ \SW(\cP_{\Sigma,f}(-)) \colon \Cadd \to \stCat \]
is left adjoint to the forgetful functor $\stCat \to \Cadd$.
Moreover, the restriction of this functor to $\Add_\infty$ is equivalent to $\homot-$.
\end{theorem}
\begin{proof}
Let $\bB$ be an additive $\infty$-category, and let $\bC$ be a stable $\infty$-category.
Denoting by $\Fun'(\cP_{\Sigma}(\bB),\Ind_\omega(\bC))$ the $\infty$-category of small colimit-preserving functors which in addition send representable presheaves to the essential image of the canonical functor $\bC \to \Ind_\omega(\bC)$, we have the following sequence of natural equivalences:
\begin{eqnarray*}
 \Fun_{{\Cex}}(\SW(\cP_{\Sigma,f}(\bB)),\bC)
&\simeq&
 \Fun_{{\Cre}}(\cP_{\Sigma,f}(\bB),\bC)\\
 &\simeq&
  \Fun'(\cP_{\Sigma}(\bB),\Ind_{{\omega}}(\bC))\\
  &\simeq&
  \Fun_{\Sigma}(\bB,\bC)
\end{eqnarray*}
The first equivalence holds since $\bC$ is stable, and the second equivalence is clear, and the third equivalence follows from the universal property of $\cP_\Sigma(\bB)$ \cite[Prop.~5.5.8.15]{htt}.
This proves the assertion about the left adjoint, and the assertion about the restriction to $\Add_\infty$ is precisely \cref{prop:bounded derived cat as functors}.
\end{proof}}

\begin{rem}
One can prove \cref{prop:bounded derived cat as functors}
(hence also \cref{thm:universal property of additive bounded derived category})
by using directly the version of the Dold--Kan correspondence provided by
\cite[Prop.~1.3.2.23]{HA} and by adapting the proof of
\cite[Prop.~1.3.2.22]{HA} in the case where $\bA$ is an idempotent-complete
additive category, and
then by embedding $\bA$ into its idempotent completion to reach the general case.\end{rem}

Although this will not be needed in these notes,
we add the following natural consequences of
\cref{thm:universal property of additive bounded derived category},
since this gives a natural way to see bounded derived
categories which does not seem to be documented in the
published literature.

For a small exact category $\bE$, we recall that the derived category of $\cD^b(\bE)$
is the quotient of $\homot\bE$ by the thick subcategory of bounded acyclic complexes
with coefficients in $\bE$. In the case $\bA$ is a small abelian category,
seen as an exact category for which the admissible short exact sequences are all
short exact sequences, the stable $\infty$-category $\cD^b(\bA)$ simply is
the localisation of the category of bounded chain complexes of $\bA$ by quasi-isomorphisms
(so that the induced $1$-category is the usual bounded derived category).

Let $\bE$ be a small exact category and let $f\colon \homot\bE\to\bC$
be an exact functor with values in a stable $\infty$-category.

\begin{prop}\label{prop:acyclic complexes versus short exact sequences}
We assume that, for any admissible short exact sequence
\[ 0\to x\to y\to z\to 0 \]
in $\bE$, the induced commutative square
\[\xymatrix{
f(x)\ar[r]\ar[d]& f(y)\ar[d]\\
0\ar[r]& f(z)
}\]
is cocartesian. Then, for any acyclic $k$ in $\Ch^{b}(\bE)$ we have 
 $f(k)\simeq 0$.
\end{prop}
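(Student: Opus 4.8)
The plan is to reduce the claim to a statement about the stable $\infty$-category $\homot\bE$ and its Verdier quotient $\cD^b(\bE)$, using the universal property of bounded derived categories. First I would recall that an acyclic complex $k$ in $\Ch^b(\bE)$ is, by definition of $\cD^b(\bE)$, precisely a complex whose image vanishes in the quotient $\cD^b(\bE) = \homot\bE / \cA$, where $\cA$ is the thick subcategory generated by acyclic complexes. So it suffices to show that the exact functor $f \colon \homot\bE \to \bC$ factors through the localisation functor $q \colon \homot\bE \to \cD^b(\bE)$, i.e. that $f$ annihilates every object of $\cA$. Since $\bC$ is stable and $f$ is exact, the full subcategory of $\homot\bE$ consisting of those $m$ with $f(m) \simeq 0$ is a thick subcategory; hence it is enough to check that $f$ annihilates a set of generators of $\cA$, for instance the two-term acyclic complexes and the ``elementary'' acyclic complexes built from admissible short exact sequences.

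The key step is to identify which complexes one needs to kill. An admissible short exact sequence $0 \to x \to y \to z \to 0$ in $\bE$ gives rise to a complex $(\dots \to 0 \to x \to y \to z \to 0 \to \dots)$ in $\Ch^b(\bE)$, and these complexes (together with contractible complexes, which $f$ kills automatically since $f$ is a functor on the homotopy category $\homot\bE$) generate the thick subcategory of acyclic complexes: every bounded acyclic complex admits a finite filtration whose subquotients are such two-step complexes, so by thickness it lies in the subcategory generated by them. Now the hypothesis on $f$ says exactly that for each admissible short exact sequence the square
\[
\xymatrix{
f(x)\ar[r]\ar[d]& f(y)\ar[d]\\
0\ar[r]& f(z)
}
\]
is cocartesian in $\bC$. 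Since $\bC$ is stable, a cocartesian square is the same as a bicartesian square, so this square is also cartesian, which forces $f$ of the two-step acyclic complex $[x \to y \to z]$ (viewed via its image under $z_{\bE} \colon \bE \to \homot\bE$ and the cone/cofibre constructions) to be a zero object. More precisely, the complex $[x \to y \to z]$ sits in $\homot\bE$ as an iterated cofibre, and the hypothesis together with exactness of $f$ and stability of $\bC$ identifies $f([x\to y \to z])$ with the total cofibre of the above square, which is $0$.

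The main obstacle will be the bookkeeping in the filtration argument: one must carefully realize a general bounded acyclic complex $k$ in $\Ch^b(\bE)$ as an object of the thick subcategory generated by the elementary acyclic complexes coming from admissible short exact sequences, using the ``stupid'' (brutal) truncation filtration and the fact that being acyclic is detected levelwise by the cycle subobjects being admissible subobjects. In an exact category rather than an abelian category one has to be a little careful that the relevant kernels and cokernels are admissible, but this is standard (it is part of what ``acyclic'' means for complexes in an exact category, e.g. in the sense of Neeman or Keller). Once this reduction is in place, the conclusion $f(k) \simeq 0$ follows by induction on the length of the complex, each inductive step invoking exactness of $f$ to turn a levelwise admissible exact sequence into a cofibre sequence in $\bC$ and then the hypothesis plus stability of $\bC$ to see the relevant piece vanishes.
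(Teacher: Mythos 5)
The reduction you set up is fine as far as it goes: the hypothesis on $f$ is indeed equivalent to $f$ annihilating the three-term complexes $[x\to y\to z]$ attached to admissible short exact sequences (your total-cofibre identification is correct, since the mapping-cone sequences involved are degreewise split and hence genuine cofibre sequences in $\homot\bE$), and the full subcategory of objects of $\homot\bE$ killed by $f$ is thick. The genuine gap is the step where you pass from these elementary acyclic complexes to a general bounded acyclic $k$ via a ``finite filtration whose subquotients are such two-step complexes, so by thickness it lies in the subcategory generated by them''. The extensions occurring in such a filtration are admissible short exact sequences of complexes which are in general \emph{not} degreewise split; such sequences are not cofibre sequences in $\homot\bE$, so thickness gives you nothing, and exactness of $f$ does not ``turn a levelwise admissible exact sequence into a cofibre sequence in $\bC$'', as your closing sentence asserts. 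To see that this logical form is unsound and not merely in need of more care: any bounded acyclic $k$ admits a filtration by subcomplexes $F_i$ (equal to $k$ above degree $i$, to the cycle object $Z_i$ in degree $i$, and to $0$ below) whose subquotients are the contractible complexes $[Z_i\xrightarrow{=}Z_i]$; if ``subquotients killed $\Rightarrow$ total complex killed'' were valid, every exact functor would kill every acyclic complex, i.e.\ every acyclic complex would already vanish in $\homot\bE$, which is false.

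Bridging exactly this gap is the actual content of the proposition, and it is where the hypothesis (and not just exactness) must be used. The paper first proves, by induction on amplitude, that $f$ sends an \emph{arbitrary} admissible short exact sequence of bounded complexes $0\to k'\to k\to k''\to 0$ to a cocartesian square: the brutal truncations cut such a sequence into a homotopy cofibre sequence in $\homot\bE$ (truncation inclusions \emph{are} degreewise split) of admissible short exact sequences of smaller amplitude, the base case being the hypothesis. Only with this in hand does the induction on the length of an acyclic $k$ go through, using the short exact sequence of \cite[diagram (1.11.7.6)]{TT} with $k'$ the contractible cone of an isomorphism and $k''$ acyclic of smaller amplitude, so that the now-available cocartesian square yields $f(k)\simeq f(k'')$. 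Note also that your generation claim (acyclics lie in the thick subcategory generated by the elementary ones) is, up to direct summands, essentially equivalent to the proposition itself; you cannot take it as an input without an argument, and the filtration argument you propose for it is the invalid step above.
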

\begin{proof}
We first prove that, for any admissible short exact sequence
of bounded chain complexes
\[0\to k'\to k\to k''\to 0\ ,\]
the induced square
\[\xymatrix{
f(k')\ar[r]\ar[d]& f(k)\ar[d]\\
0\ar[r]& f(k'')
}\]
is cocartesian in $\bC$.
We do this by induction on the amplitude $N$ of $k$
(i.e., the biggest integer $N$ so that there are
integers $a\leq b$ with $b-a\geq N$ and the components of $k$
in degrees $a$ and $b$ are non-zero).
If  $N=0$,
this holds by assumption. If  {$N \geq 1$},
using ``troncation b\^ete'',
we see that the admissible short exact sequence above fits
in a homotopy cofibre sequence in $\homot\bE$ (written vertically below)
of admissible short exact sequences
\[\xymatrix{
0\ar[r]&k'_1\ar[r]\ar[d]&k_1\ar[r]\ar[d]&k''_1\ar[r]\ar[d]&0\\
0\ar[r]&k'\ar[r]\ar[d]&k\ar[r]\ar[d]&k''\ar[r]\ar[d]&0\\
0\ar[r]&k'_2\ar[r]&k_2\ar[r]&k''_2\ar[r]&0
}\]
where $k_i$ has amplitude $<N$ for $i=1,2$, and we conclude by induction.

Now, if $k$ is a bounded acyclic complex of amplitude $N$,
then there is an admissible short exact sequence of the form
\[ 0\to k'\to k\to k''\to 0\ ,\]
where both $k'$ and $k''$ are acyclic, $k'$ is of amplitude $1$
(i.e., $k'$ is a mapping cone of an isomorphism of objects of $\bE$),
and $k''$ is of amplitude $<N$; {see
\cite[diagram (1.11.7.6)]{TT}, for instance.} It is clear that $k'\simeq 0$ in $\homot\bE$,
and therefore, that $f(k)\simeq f(k'')$ in $\bC$.
Therefore, by induction on $N$,
we see that $f(k)\simeq 0$.
\end{proof}

Let $\bE$ be a small exact category.
\begin{kor}\label{cor:universal property of the derived category}
 For any stable $\infty$-category $\bC$,
composing with the canonical functor $\bE\to\cD^b(\bE)$ induces an equivalence
of $\infty$-categories from
the $\infty$-category of exact functors $F \colon \cD^b(\bE)\to\bC$ to
the $\infty$-category of finite coproduct-preserving functors
$f \colon \bE\to\bC$ which send each admissible short exact sequence
\[ 0\to x\to y\to z\to 0 \]
in $\bE$ to a cocartesian square
\[\xymatrix{
f(x)\ar[r]\ar[d]& f(y)\ar[d]\\
0\ar[r]& f(z)
}\]
in $\bC$.
\end{kor}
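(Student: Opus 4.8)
The plan is to deduce this corollary from \cref{thm:universal property of additive bounded derived category}, the universal property of Verdier quotients of stable $\infty$-categories, and \cref{prop:acyclic complexes versus short exact sequences}. Fix a stable $\infty$-category $\bC$ and regard $\bE$ as a small additive category. By definition $\cD^{b}(\bE)=\homot\bE/\bT$, where $\bT\subseteq\homot\bE$ is the thick subcategory generated by the bounded acyclic complexes with coefficients in $\bE$. Since $\bT$ is stable and closed under retracts, and an exact functor preserves finite colimits and retracts, an exact functor $\homot\bE\to\bC$ annihilates $\bT$ if and only if it annihilates every bounded acyclic complex with coefficients in $\bE$. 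Applying \cref{rgioowfwefwefw} to the fully faithful inclusion $\bT\hookrightarrow\homot\bE$ of stable $\infty$-categories, the quotient functor $\homot\bE\to\cD^{b}(\bE)$ exhibits $\Fun_{\Cex}(\cD^{b}(\bE),\bC)$ as the full subcategory of $\Fun_{\Cex}(\homot\bE,\bC)$ on the exact functors that kill every bounded acyclic complex with coefficients in $\bE$. The canonical functor $\bE\to\cD^{b}(\bE)$ of the statement is the composite of $z_{\bE}$ with the quotient, so the corollary amounts to the assertion that the equivalence $-\circ z_{\bE}\colon\Fun_{\Cex}(\homot\bE,\bC)\xrightarrow{\simeq}\Fun_{\Sigma}(\bE,\bC)$ of \cref{thm:universal property of additive bounded derived category} restricts to an equivalence between this full subcategory and the full subcategory of $\Fun_{\Sigma}(\bE,\bC)$ on those $f$ sending every admissible short exact sequence of $\bE$ to a cocartesian square with lower-left corner the zero object.

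Because both sides are full subcategories and the comparison functor is a restriction of an equivalence, it suffices to match objects. Given $f$ in $\Fun_{\Sigma}(\bE,\bC)$, let $F\colon\homot\bE\to\bC$ be the corresponding exact functor, so that $F\circ z_{\bE}\simeq f$; then the square attached to $f$ and an admissible short exact sequence $0\to x\to y\to z\to 0$ of $\bE$ is the image under the exact functor $F$ of the corresponding commutative square $z_{\bE}(x)\to z_{\bE}(y),\ 0\to z_{\bE}(z)$ in $\homot\bE$. If $f$ sends admissible short exact sequences to cocartesian squares, then $F$ satisfies the hypothesis of \cref{prop:acyclic complexes versus short exact sequences}, so that proposition gives $F(k)\simeq0$ for every bounded acyclic $k$, i.e.\ $F$ kills bounded acyclics.

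For the converse, suppose $F$ kills bounded acyclics and let $0\to x\xrightarrow{i}y\xrightarrow{p}z\to0$ be an admissible short exact sequence. In $\homot\bE$ there is a cofiber sequence $z_{\bE}(x)\to z_{\bE}(y)\to\Cone(z_{\bE}(i))$, and the map $p$ induces a map $\Cone(z_{\bE}(i))\to z_{\bE}(z)$ whose cone is a bounded complex with entries in $\bE$ and is acyclic, being the cone of a quasi-isomorphism. Since $F$ is exact and annihilates this cone, $F$ sends $\Cone(z_{\bE}(i))\to z_{\bE}(z)$ to an equivalence; applying $F$ to the cofiber sequence above then exhibits the square attached to $f$ on the given short exact sequence as cocartesian. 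Hence the two full subcategories correspond under $-\circ z_{\bE}$, and the corollary follows.

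The step I expect to be the main obstacle is the converse direction just sketched: one has to pin down the comparison between $\Cone(z_{\bE}(i))$ and $z_{\bE}(z)$ so that their ``difference'' is a genuinely \emph{bounded} complex with entries in $\bE$ — this is exactly what is needed for the hypothesis ``kills bounded acyclics'' to bite, rather than a stronger acyclicity assumption — keeping careful track of truncations, shifts, and signs; this is the chain-level input that \cref{prop:acyclic complexes versus short exact sequences} packages in the other direction. The remaining ingredients, namely the universal property of the Verdier quotient via \cref{rgioowfwefwefw} and \cref{thm:universal property of additive bounded derived category} itself, are already available and enter formally.
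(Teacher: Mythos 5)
Your proposal is correct and follows essentially the same route as the paper: one direction is exactly \cref{prop:acyclic complexes versus short exact sequences}, the converse is what the paper dismisses as following ``right away from the definition of $\cD^b(\bE)$'' (you spell it out via the cone comparison), and the conclusion is then \cref{thm:universal property of additive bounded derived category} combined with the universal property of the Verdier quotient. The only cosmetic remark is that the ``main obstacle'' you flag is not one: the cone of $\Cone(z_{\bE}(i))\to z_{\bE}(z)$ is, up to signs, precisely the three-term complex $[x\to y\to z]$, which is bounded with entries in $\bE$ and acyclic by the very definition of acyclicity for an admissible short exact sequence, so no quasi-isomorphism argument or careful truncation bookkeeping is needed.
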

\begin{proof}
We observe that, a functor $f \colon \homot\bE\to\bC$ sends each admissible short exact sequence
\[ 0\to x\to y\to z\to 0 \]
in $\bE$ to a cocartesian square
\[\xymatrix{
f(x)\ar[r]\ar[d]& f(y)\ar[d]\\
0\ar[r]& f(z)
}\]
if and only if it factors through $\cD^b(\bE)$: this follows right
away from the definition of $\cD^b(\bE)$ and
from \cref{prop:acyclic complexes versus short exact sequences}.
This corollary is then a direct consequence of
\cref{thm:universal property of additive bounded derived category}.
\end{proof}

\begin{rem}
\cref{cor:universal property of the derived category} explains how to compare a triangulated category equipped with
a $t$-structure with the bounded derived category of its heart:
if $\bC$ is a 
stable $\infty$-category with a $t$-structure $(\bC_{\geq 0},\bC_{\leq 0})$
whose heart is $\bA=(\bC_{\geq 0}\cap\bC_{\leq 0})$, then there is a unique
exact functor $\cD^b(\bA)\to\bC$ extending the inclusion $\bA\subset\bC$.
\end{rem}

\begin{theorem}\label{thm:cocontinuity of the derived category}
The functor 
$  \Ch^{b}(-)_{\infty}\colon \Add_\infty \to {\stCat}$
preserves colimits indexed by small groupoids.
\end{theorem}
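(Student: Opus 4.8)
The plan is to reduce the statement to the universal property established in \cref{thm:universal property of additive bounded derived category}. Observe first that it suffices to treat colimits over a small groupoid $\bI$ of diagrams $\bA_{\bullet}\colon \bI\to\Add_{\infty}$. Since $\Add_{\infty}$ is obtained from ordinary additive categories by inverting equivalences, and a groupoid-indexed colimit in $\Add_{\infty}$ can be computed after replacing the diagram by a strict one, I would first argue that $\colim_{\bI}\bA_{\bullet}$ again lands in $\Add_{\infty}$ (it is semi-additive and its homotopy category is additive, both properties being preserved by groupoid colimits by the same kind of argument used for $\Cle$ in \cref{girgjowegfewfw9ef} and \cref{ewfweijor23r23r23r32r23r23}). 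Then there is a canonical comparison functor
\[ \Phi\colon \colim_{\bI}\Ch^{b}(\bA_{\bullet})_{\infty}\to \Ch^{b}\bigl(\colim_{\bI}\bA_{\bullet}\bigr)_{\infty} \]
in $\stCat$ (the target is stable, so the colimit on the left can be taken in $\Cle$ and then automatically lands in $\stCat$, using that a groupoid colimit of stable $\infty$-categories is stable), and the goal is to show $\Phi$ is an equivalence.

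The key step is to check that $\Phi$ satisfies a universal property forcing it to be an equivalence. For an arbitrary stable $\infty$-category $\bC$ I would compute mapping spaces: by \cref{thm:universal property of additive bounded derived category}, for any additive category $\bD$ we have $\Fun_{\Cex}(\Ch^{b}(\bD)_{\infty},\bC)\simeq\Fun_{\Sigma}(\bD,\bC)$, naturally in $\bD$. Hence
\[ \Fun_{\Cex}\Bigl(\Ch^{b}\bigl(\colim_{\bI}\bA_{\bullet}\bigr)_{\infty},\bC\Bigr)\simeq \Fun_{\Sigma}\bigl(\colim_{\bI}\bA_{\bullet},\bC\bigr)\simeq \lim_{\bI^{\op}}\Fun_{\Sigma}(\bA_{\bullet},\bC)\ , \]
where the second equivalence is because $\Fun_{\Sigma}(-,\bC)$ sends colimits of additive categories to limits — here one must know that groupoid colimits in $\Add_{\infty}$ corepresent finite-coproduct-preserving functors correctly, which follows from the description of colimits in $\Cle$ and the fact that $\Add_{\infty}\to\Cle$ via $\Ch^{b}(-)_{\infty}$ is compatible with the relevant universal properties. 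On the other side,
\[ \Fun_{\Cex}\Bigl(\colim_{\bI}\Ch^{b}(\bA_{\bullet})_{\infty},\bC\Bigr)\simeq \lim_{\bI^{\op}}\Fun_{\Cex}\bigl(\Ch^{b}(\bA_{\bullet})_{\infty},\bC\bigr)\simeq\lim_{\bI^{\op}}\Fun_{\Sigma}(\bA_{\bullet},\bC)\ . \]
Tracing through the identifications, $\Phi$ induces the identity under these equivalences, so $\Phi$ is an equivalence of stable $\infty$-categories by the Yoneda lemma. Finally, since $\stCat\hookrightarrow\Cle$ preserves groupoid colimits (groupoid colimits of stable $\infty$-categories computed in $\Cle$ remain stable, as the relevant (co)limits are finite and are preserved), the statement about $\Ch^{b}(-)_{\infty}\colon\Add_{\infty}\to\Cle$ follows.

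The main obstacle I anticipate is not the Yoneda-style argument itself but the bookkeeping needed to make ``$\Fun_{\Sigma}(-,\bC)$ sends groupoid colimits in $\Add_{\infty}$ to limits'' precise: one has to pin down what colimits in $\Add_{\infty}$ are (via inverting equivalences of $1$-categorical additive categories, analogously to the constructions of colimits in $\Clep$ and $\Cle$ in \cref{ergiowergerwgergwergwreg} and \cref{ioerjgoiegergwegergwrgwegrwerg}), and verify that the canonical functor $\bA_{i}\to\colim_{\bI}\bA_{\bullet}$ is finite-coproduct-preserving and enjoys the expected universal property with respect to finite-coproduct-preserving functors into stable targets. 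An alternative, possibly cleaner route — which I would fall back on if the direct computation gets unwieldy — is to use the equivalence $\Ch^{b}(\bA)_{\infty}\simeq\SW(\cP_{\Sigma,f}(\bA))$ from \cref{prop:bounded derived cat as functors} together with the fact that both $\cP_{\Sigma,f}(-)$ and $\SW(-)=\tilde\Sp((-)^{\op})^{\op}$ commute with the appropriate colimits (the latter because $\tilde\Sp$ preserves colimits by \cref{griooergergergreg} and fully faithful functors by \cref{geroigergergre}), reducing everything to the cocontinuity of $\bA\mapsto\cP_{\Sigma,f}(\bA)$ for groupoid-indexed diagrams, which is a more familiar presheaf-theoretic statement.
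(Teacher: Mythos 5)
Your overall strategy—reduce to the universal property of \cref{thm:universal property of additive bounded derived category} and compare corepresented functors—is the right instinct, but the proof has a genuine gap at exactly the step you flag as "bookkeeping". The equivalence
\[ \Fun_{\Sigma}\bigl(\colim_{\bI}\bA_{\bullet},\bC\bigr)\;\simeq\;\lim_{\bI^{\op}}\Fun_{\Sigma}(\bA_{\bullet},\bC) \]
for a \emph{stable} $\infty$-category $\bC$ is not a formality: the colimit on the left is taken in $\Add_{\infty}$, whose universal property only tests against additive $1$-categories, whereas $\bC$ is not an object of $\Add_{\infty}$. Whether the $\Add_{\infty}$-colimit also corepresents finite-coproduct-preserving functors into additive $\infty$-categories (equivalently, whether the inclusion of additive $1$-categories into additive $\infty$-categories preserves these colimits) is precisely the non-trivial content of the theorem; your justification ("the fact that $\Add_{\infty}\to\Cle$ via $\Ch^{b}(-)_{\infty}$ is compatible with the relevant universal properties") assumes the conclusion, so the Yoneda argument is circular as written. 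The fallback via $\Ch^{b}(\bA)_{\infty}\simeq\SW(\cP_{\Sigma,f}(\bA))$ has the same problem: cocontinuity of $\bA\mapsto\cP_{\Sigma,f}(\bA)$ across the divide between the $1$-categorical colimit in $\Add_{\infty}$ and the $\infty$-categorical colimit of prestable categories is not "a more familiar presheaf-theoretic statement" but the same issue in different clothing (only the $\Ab$-valued statement, as in the paper's comparison of $\Fun_{\Sigma}(\colim_{BG}\bA^{\op},\Ab)$ with $\lim_{BG}\Fun_{\Sigma}(\bA^{\op},\Ab)$, follows from the $1$-categorical universal property).

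For comparison, the paper's proof spends essentially all of its effort on this one point. After reducing to coproducts and to groupoids of the form $BG$, it does not argue by corepresentability; instead it constructs an explicit candidate inverse to the canonical comparison $\kappa\colon\colim_{BG}\homot\bA\to\homot{\colim_{BG}\bA}$. This is done by embedding both sides via Yoneda into $\Fun_{\Sigma}(-,\Ab)$- and $\Fun(-,\Sp)$-valued presheaf categories, using the free $G$-object functor (the left adjoint of \cref{lem:eta-has-la}, identified with left Kan extension along $c_{\bA}$ in \cref{ewgi0wergwergwgwergwreg,ewgi0wergwergwgwergwreg1}) and the essential surjectivity of $c_{\bA}$ to produce a functor $b_{\bA}\colon\colim_{BG}\bA\to\colim_{BG}\homot\bA$; the universal property \cref{thm:universal property of additive bounded derived category} then upgrades $b_{\bA}$ to $b^{s}_{\bA}$ and verifies $\kappa$ and $b^{s}_{\bA}$ are mutually inverse. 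If you want to salvage your approach, you would have to supply an independent proof of the displayed corepresentability statement, which in effect amounts to redoing this construction.
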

\begin{proof}
  
 By \cref{thm:universal property of additive bounded derived category}, $\homot{-}$ factors as
 \[ \Add_\infty \xrightarrow{\incl} \Cadd \xrightarrow{\SW(\cP_{\Sigma,f}(-))} \stCat, \]
 and the second functor preserves colimits since it is a left adjoint by \cref{thm:universal property of additive bounded derived category}.
 Hence it suffices to show that $ {\incl}$
 preserves colimits indexed by small groupoids. 
 For this, it is enough to consider colimits indexed by discrete categories and groupoids of the form $BG$ for a group $G$ separately. The case of discrete index categories
 is easy and left to the reader.
 In the following, we give the details for the second case.\footnote{In the application \cref{eiohjgwergerqgergergdfbsd} below, we only need this second case.}.
 
  Let $\bA \colon BG \to \Cadd$ be a functor and assume that it takes values in $\Add_\infty$. Since $\incl$ is fully faithful, it suffices to show that $\colim_{BG}\bA$ again belongs to the full subcategory $\Add_{\infty}$.
  
 {Let $\ev \colon \Fun(BG,\Prl) \to \Prl$ be the evaluation functor and denote by $\alpha \colon \bA \to \underline{\colim_{BG} \bA}$ the canonical morphism.
 Consider the commutative square 
 \[\xymatrix{\ev(\bA)\ar[r]^-{\ev(\alpha)}\ar[d] &\colim_{BG} \bA \ar[d] \\ \ev(\cP_\Sigma(\bA)) \ar[r]^-{\ev(\alpha_{!})} &\cP_\Sigma( \colim_{BG} \bA)} \]
 whose vertical arrows are given by the Yoneda embedding.
 Since $\ev(\alpha)$ is essentially surjective, it suffices to show that the essential image of the composition along the bottom left corner is an ordinary category.}
 
 {The canonical morphism $\eta \colon \underline{\lim_{BG} \cP_\Sigma(\bA)} \to \bA$ fits} into a commutative diagram
 \[\xymatrix@C=3em@R=1em{
 \underline{\lim\limits_{BG}\cP_{\Sigma}(\bA)}\ar[dr]^-{\eta} &  \\
  & \cP_{\Sigma}(\bA) \\
 \cP_{\Sigma}(\underline{\colim\limits_{BG}\bA})\ar[uu]^-{\simeq} \ar[ur]_-{ \alpha^{*}}& }\ ,\]
 where the limit is taken in $\Prl$.
 Considering $G$-sets as $\infty$-categories with $G$-action, the equivariant projection map $p \colon G \to *$ induces a functor
  \[ p^{*} \colon \cP_\Sigma(\bA)\simeq \Fun(*,\cP_\Sigma(\bA))\to \Fun(G,\cP_\Sigma(\bA))\ . \]
  We define $p^{*,G}:=\lim_{BG}p^{*}$ and get the commutative  diagram
  \[\xymatrix@C=2.5em{
    \lim\limits_{BG}\cP_\Sigma(\bA)\ar[r]^-{p^{*,G}}\ar[d]_-{\ev(\eta)} & \lim\limits_{BG}\Fun(G,\cP_\Sigma(\bA))\ar[d]^-{\ev(\eta_G)}\ar@/^3cm/[dd]^{\simeq} \\
    \ev(\cP_\Sigma(\bA))\ar@{=}[dr]\ar[r]^-{\ev(p^{*})}&\ev(\Fun(G,\cP_\Sigma(\bA)))\ar[d]^-{i_{e}^{*}}\\&\ev(\cP_\Sigma(\bA))}\]
where $i_{e} \colon * \to G$ is the inclusion of the identity, the left square commutes by the naturality of $\eta$, and the lower triangle commutes since $i_{e}\circ \ev(p)=\id_{*}$.
The right vertical composition is an equivalence (for example by \cref{cor:coind} whose proof works for every $\infty$-category).
  This identifies $\ev(\eta)$, and hence $\ev(\alpha^*)$, with the functor $p^{*,G}$.
  
  {Since $\cP_\Sigma(\bA)$ is cocomplete},  
  the left Kan extension functor $p_!$ along $p$ exists and provides a left adjoint to $p^*$.
  The functor
  \[ p_!^G := \lim_{BG}p_{!} \colon \lim\limits_{BG}\Fun(G,\cP_\Sigma(\bA))\to \lim\limits_{BG}\cP_{\Sigma}(\bA) \]
  is then a left adjoint of $p^{*,G}$, {which is equivalent to $\ev(\alpha_!)$ by the preceding discussion}.
  
  Since $G$ is discrete, applying $p_{!}$ amounts to forming coproducts. {In particular, the essential image of the composite
  \begin{equation}\label{eq:add-colim} \Fun(G,\bA) \to \Fun(G,\cP_\Sigma(\bA)) \xrightarrow{p_!} \cP_\Sigma(\bA) \end{equation}
  is an ordinary category.
  We now apply $\lim\limits_{BG}$ to this composite.
  \cref{tieqorgfgregwegergw}~\eqref{reigowergwrgrgwgrg} implies that the essential image of the resulting functor is given by applying $\lim\limits_{BG}$ to the essential image of \eqref{eq:add-colim}.
  Since limits of ordinary categories are also ordinary categories, it follows that the essential image of the composite
  \[ \ev(\bA) \to \ev(\cP_\Sigma(\bA) \xrightarrow{\ev(\alpha_!)} \cP_\Sigma(\colim_{BG} \bA) \]
  is an ordinary category as well.}
\end{proof}

Recall that  {$j_!$} denotes the left Kan extension functor along the inclusion $ {j} \colon BG\to G\Orb$ (see \eqref{rewflkjmo34gergwegrge}).
Let $\bA$ be in $\Fun(BG,\Add_{\infty})$.

\begin{kor}\label{eiohjgwergerqgergergdfbsd}
The canonical transformation \begin{equation}\label{sadvojoiasdvasvadsv}
 {j_!}(\homot\bA)\to \homot{ {j_!}\bA)}
\end{equation}
is an equivalence.
\end{kor}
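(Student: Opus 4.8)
The plan is to deduce this from \cref{thm:cocontinuity of the derived category} by checking the canonical transformation \eqref{sadvojoiasdvasvadsv} objectwise on the orbit category. First I would make precise that \eqref{sadvojoiasdvasvadsv} is the base-change (mate) transformation attached to the strictly commuting square of restriction functors: postcomposition with $\homot{-}\colon\Add_\infty\to\Cle$ induces functors $\homot{-}$ on $\Fun(BG,-)$ and on $\Fun(G\Orb,-)$, and precomposition with $j\colon BG\to G\Orb$ (see \eqref{rewflkjmo34gergwegrge}) commutes with postcomposition, so that $\Res^G\circ\homot{-}=\homot{-}\circ\Res^G$. Using that $\Cle$ is cocomplete by \cref{ioerjgoiegergwegergwrgwegrwerg} (whence the adjunction \eqref{2rfkhriufh3iuf3f3f}) and that the left Kan extension $\Ind^G=j_!$ along $j$ likewise exists for $\Add_\infty$-valued coefficients, the mate of the above identity square with respect to the adjunctions $\Ind^G\dashv\Res^G$ is precisely the transformation \eqref{sadvojoiasdvasvadsv}. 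Since every object of $G\Orb$ is isomorphic to an orbit $G/H$ and equivalences in $\Fun(G\Orb,\Cle)$ are detected objectwise, it then suffices to check that \eqref{sadvojoiasdvasvadsv} is an equivalence after evaluation at each $G/H$.

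For a fixed subgroup $H$ of $G$, I would invoke the pointwise formula for the left Kan extension together with the identification of index categories recorded in \cref{gikowrteglregergregwerg} (see \eqref{rlijlokvjoiqwfcwefcewceq}): the value of $\Ind^G(\homot\bA)$ at $G/H$ is $\colim_{BH}\Res^G_H\homot\bA\simeq\colim_{BH}\homot{\Res^G_H\bA}$ (colimit formed in $\Cle$), the value of $\Ind^G(\bA)$ at $G/H$ is $\colim_{BH}\Res^G_H\bA$ (colimit formed in $\Add_\infty$), and hence the value of $\homot{\Ind^{G}(\bA)}$ at $G/H$ is $\homot{\colim_{BH}\Res^G_H\bA}$. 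Tracing the mate transformation through the pointwise Kan formula, its evaluation at $G/H$ becomes the canonical comparison morphism
\[ \colim_{BH}\homot{\Res^G_H\bA}\longrightarrow\homot{\colim_{BH}\Res^G_H\bA}\ ,\]
namely the comparison morphism whose invertibility is the content of \cref{thm:cocontinuity of the derived category}. As $BH$ is a small groupoid, that theorem applies and shows this morphism is an equivalence; hence \eqref{sadvojoiasdvasvadsv} is an equivalence on every orbit, and therefore an equivalence.

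I expect the main obstacle to be bookkeeping rather than conceptual: one must verify that the mate transformation, evaluated at $G/H$ via the pointwise Kan extension formula, really coincides with the comparison morphism handled by \cref{thm:cocontinuity of the derived category}, and is not merely some morphism between the same pair of objects. This comes down to unwinding the naturality of the pointwise colimit formula through the equivalence of index categories $j_{/(G/H)}\simeq BH$ from \cref{gikowrteglregergregwerg}, which is routine once the identifications are carefully set up.
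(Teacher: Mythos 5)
Your proposal is correct and follows essentially the same route as the paper: evaluate \eqref{sadvojoiasdvasvadsv} at each orbit $G/H$, identify the relevant slice category with $BH$ (as in \cref{gikowrteglregergregwerg}), and recognise the resulting map as the comparison morphism $\colim_{BH}\homot{\Res^G_H\bA}\to\homot{\colim_{BH}\Res^G_H\bA}$, which is an equivalence by \cref{thm:cocontinuity of the derived category}. The extra bookkeeping you flag (that the mate really evaluates to this comparison morphism) is exactly the step the paper also treats as routine, so there is no gap relative to the published argument.
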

\begin{proof}
For a subgroup $H$ of $G$ we have an equivalence $  BH \xrightarrow{\simeq} G\Orb_{/BH} $ which sends $*_{BH}$ to $G\to G/H$ in $G\Orb_{/BH} $ and $h$ in $H\cong \Aut_{BH}(*_{BH})$ to the right multiplication by $h$ on $G$. 
The evaluation of the transformation \eqref{sadvojoiasdvasvadsv} at $G/H$ becomes equivalent to the morphism
$\colim_{BH} \homot\bA\to \homot{\colim_{BH}\bA}$ which is an equivalence by \cref{thm:cocontinuity of the derived category}.
\end{proof}

\begin{kor}\label{eiohjgwergerqgergergdfbsdd}
 {There is an equivalence
  \[ K\Ch^{b}(\bA)_{\infty,G} \simeq K\bA_{G}\ . \]}
\end{kor}
\begin{proof}
 For any additive category $\bB$,
 the inclusion $z_{\bB} \colon \bB  \to \Ch^{b}(\bB)_{\infty}$ from  \eqref{eq:canonical embedding in HA} induces an equivalence $K(\bB) \xrightarrow{\simeq} K(\Ch^{b}(\bB)_{\infty})$ by the Gillet--Waldhausen theorem. 
Hence we have the chain of equivalences
 \[ K\Ch^{b}(\bA)_{\infty,G}\stackrel{\eqref{fwqewwedeqdwedqwdwdedqwdwedwd}}{\simeq} Kj_! \Ch^{b}(\bA)_{\infty}\stackrel{\eqref{sadvojoiasdvasvadsv}}{\simeq}
 K  \Ch^{b}(j_{!}\bA)_{\infty} \simeq K j_{!}\bA_{\infty}\stackrel{\eqref{sdavjoiwqejffvvsdvasdvadsvadsvasdvdsv}}{\simeq}  K\bA_{G} \ .\qedhere\]
\end{proof}

\subsection{\texorpdfstring{$A$-theory}{A-theory} as a \texorpdfstring{$G\Orb$-spectrum}{GOrb-spectrum}}\label{sec:A-coeffs}

In \cref{ex:spacesop}, we claimed that the functor $\bA_P \colon G\Orb \to \Sp$ from \cite{Bunke:aa} admits an equivalent description in terms of the $G\Orb$-spectra considered in this article. This section supplies a proof of this claim, see \cref{cor:A-coeffs} below.

Let us first recall the construction of the functor $\bA_P$ (see also \cite[Sec.~5.1]{Bunke:aa}). Associated to any topological space $Q$, there is the Waldhausen category $\bR(Q)$ of retractive spaces over $Q$, by which we mean the category of CW-complexes relative $Q$ which are equipped with a retraction to $Q$, and all cellular maps over and under $Q$. Its subcategory $h\bR(Q)$ of weak equivalences is given by those morphisms which are homotopy equivalences under $Q$. The assignment $Q \mapsto \bR(Q)$ defines a functor from topological spaces to Waldhausen categories via cobase change: Given a continuous map $f \colon Q \to Q'$, the induced functor $\bR(f)$ sends a retractive space $Q \leftrightarrows X$ to the retractive space $Q' \leftrightarrows X \cup_{Q} Q'$ determined by the pushout along $f$. If we require the base space to be an actual subspace of every retractive space, there is a strictly functorial choice of this construction.

Denote by $\bR_{\mathrm{f}}(Q)$ the full subcategory of finite retractive spaces over $Q$ and by $\bR_{\mathrm{fd}}(Q)$ the full subcategory of finitely dominated retractive spaces. Both of these define full subfunctors of $\bR$.

The localisation $\bR_{\mathrm{f}}(Q)[h^{-1}] := \bR_{\mathrm{f}}(Q)[h\bR_{\mathrm{f}}(Q)^{-1}]$ at the subcategory of weak equivalences defines a functor
\[ \bR_{\mathrm{f}}(-)[h^{-1}] \colon \Top \to \Cre \]
to the category of right-exact $\infty$-categories. In order to see that this functor takes values in $\Cre$   we apply \cite[Prop.~7.5.6]{Cisinski:2017} to the categories $\bR_{\mathrm{f}}(Q)^{\op}$. Here we use that  
 the opposite of a Waldhausen category whose weak equivalences satisfy the two-out-of-three axiom is naturally an $\infty$-category with fibrations and weak equivalences in the sense of \cite[Def.~7.4.12]{Cisinski:2017}. 
The same is true for the analogous localisation $\bR_{\mathrm{fd}}(-)[h^{-1}]:\Top\to \Cre$.
Applying the algebraic $K$-theory functor \eqref{ewrpvoj2oip4bjoiwrgbwbb} 
to the opposite $\infty$-category gives rise to the $A$-theory functor
\begin{equation}\label{qefvjekvefvfevsd}  {\bA \colon \Top \xrightarrow{\bR_{\mathrm{f}}(-)[h^{-1}]} \Cre \xrightarrow{(-)^\op} \Cle \xrightarrow{K} \Sp}  
\end{equation}
considered in \cite{Bunke:aa}. If $P$ is a principal $G$-bundle for some discrete group $G$, define
\[ \bA_P \colon G\Orb \to \Sp,\quad S \mapsto \bA(P \times_G S)\ .\]

 Since $\Cre$ is cocomplete, every right-exact $\infty$-category $\bC$ determines a colimit-preserving functor
\[ - \otimes \bC \colon \Spc \to \Cre\ .\]

\begin{prop}\label{prop:ret-and-tensor}
	 There exists an equivalence
	\[ \bR_{\mathrm{fd}}(-)[h^{-1}] \simeq - \otimes \Spc^\cop_*\ . \]
\end{prop}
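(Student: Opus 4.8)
The statement asserts a natural equivalence of functors $\Top \to \Cre$ between $\bR_{\mathrm{fd}}(-)[h^{-1}]$ and $-\otimes\Spc^\cop_*$. The plan is to produce such an equivalence by comparing universal properties. First I would note that both functors are colimit-preserving: the functor $-\otimes\Spc^\cop_*$ is colimit-preserving by construction (it factors through the colimit-preserving extension of a functor on $\Spc$ precomposed with the colimit-preserving functor $\Top\to\Spc$), and $\bR_{\mathrm{fd}}(-)[h^{-1}]$ is colimit-preserving essentially because it can be identified, via a version of the $K$-theoretic approach, with the connective algebraic $K$-theory input, and finitely dominated retractive spaces over a homotopy colimit assemble out of the pieces; more directly, $\bR_{\mathrm{fd}}(-)$ is the idempotent completion of $\bR_{\mathrm{f}}(-)$, and the localisation commutes with the relevant homotopy colimits. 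Both functors therefore are left Kan extended from their restriction to the terminal object $\ast$, since every space is canonically a colimit of a constant diagram of points; here one must be slightly careful and argue that $\Top\to\Spc$ identifies the relevant presheaf categories, so that it suffices to check agreement on the point.

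The second step is the computation at the point: one needs $\bR_{\mathrm{fd}}(\ast)[h^{-1}]\simeq \Spc^\cop_*$ as right-exact $\infty$-categories. A retractive space over the point is simply a based CW-complex, and the finitely dominated ones localised at weak homotopy equivalences give exactly the compact objects of $\Spc_*$; taking opposites to land in the right-exact (rather than left-exact) world yields $\Spc^{\cop}_{*}$. This is a well-known identification (Waldhausen's $A(\ast)$ computation at the categorical level, before taking $K$-theory), and I would cite or re-derive it via the universal property of $\Spc^\cop_*$ as the free right-exact $\infty$-category on one generator: $\bR_{\mathrm{f}}(\ast)[h^{-1}]$ is generated under finite colimits by $S^0$, and its idempotent completion is $\Spc^\cop_*$.

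The third step is to upgrade this pointwise equivalence to a natural equivalence of functors. Having identified both sides as left Kan extensions along $\{\ast\}\hookrightarrow\Top$ (or rather the induced map of $\infty$-categories after localising $\Top$), the value at $\ast$ determines the functor up to canonical equivalence, and the pointwise equivalence of step two propagates. Concretely, for a general space $Q$ one gets
\[ \bR_{\mathrm{fd}}(Q)[h^{-1}] \simeq \operatorname*{colim}_{\ast\to Q}\bR_{\mathrm{fd}}(\ast)[h^{-1}] \simeq \operatorname*{colim}_{\ast\to Q}\Spc^\cop_* \simeq Q\otimes\Spc^\cop_*\ ,\]
where the middle colimit is over the space $Q$ itself (the $\infty$-groupoid of points), and the outer equivalences are the two Kan extension identifications. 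Naturality in $Q$ is automatic from the functoriality of the Kan extension.

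\textbf{Main obstacle.} The technically delicate point, and the one I would expect to occupy most of the real work, is the claim that $\bR_{\mathrm{fd}}(-)[h^{-1}]$ is colimit-preserving (equivalently, left Kan extended from the point along $\Top \to \Cre$ after passing through $\Spc$). On the $K$-theory level this is the assembly/excision story for $A$-theory (Waldhausen's homotopy-colimit formula for $A(-)$), but here one needs it at the level of the underlying right-exact $\infty$-categories before applying $K$-theory, which is a stronger and more structural statement. One must check that $\bR_{\mathrm{f}}$ sends homotopy pushouts of spaces to pushouts in $\Cre$ after localisation (cobase change of retractive spaces is compatible with gluing), handle the passage from $\bR_{\mathrm{f}}$ to the idempotent-complete $\bR_{\mathrm{fd}}$ (using that idempotent completion is a left adjoint, hence colimit-preserving, as recorded in \cref{ergiowergerwgergwergwreg} and its right-exact analogue), and verify that the localisation functor $\Top\to\Spc$ does not lose information for this purpose, i.e.\ that $\bR_{\mathrm{fd}}(Q)[h^{-1}]$ depends only on the weak homotopy type of $Q$. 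This last invariance is standard (a weak equivalence $Q\to Q'$ induces an equivalence of retractive-space categories after localisation) but needs to be stated carefully so that the functor genuinely factors through $\Spc$, which is what makes the Kan-extension argument legitimate.
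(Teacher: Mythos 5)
Your overall strategy---show that both sides preserve colimits after factoring through $\Spc$, then compare values at the point---is genuinely different from the paper's, but its central step is left unproved, and that step carries essentially the whole content of the proposition. The paper never establishes directly that $Q\mapsto\bR_{\mathrm{fd}}(Q)[h^{-1}]$ is colimit-preserving; instead it identifies this category explicitly, via a chain of model-categorical comparisons (retractive spaces over $Q$ $\simeq$ cofibrant pointed spaces over $Q$ $\simeq$ pointed simplicial sets over $\Sing(Q)$ $\simeq$ right fibrations via the contravariant model structure $\simeq$ presheaves via unstraightening), with the compact objects of $\Fun(\ell(Q)^\op,\Spc_*)$ after passing to left adjoints. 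Only on that presheaf side is colimit preservation deduced, and there it is formal: $\Fun((-)^\op,\Spc_*)\colon\Spc^{\op}\to\Prr$ preserves limits, so $\ad^{-1}\Fun((-)^\op,\Spc_*)^{\cop}$ preserves colimits and sends $*$ to $\Spc_*^{\cop}$, whence \eqref{eq:fun-and-tensor}. In other words, the colimit-preservation of $\bR_{\mathrm{fd}}(-)[h^{-1}]$ is a consequence of the proposition, not an available input, and your proposal reduces the proposition to a claim of essentially the same difficulty.

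The sketch you offer for that claim does not close the gap. Colimits in $\Cre$ are not computed in $\Cati$, so ``cobase change of retractive spaces is compatible with gluing'' says nothing directly about pushouts in $\Cre$: for a homotopy pushout $Q=Q_1\cup_{Q_0}Q_2$ you would need the canonical functor from the $\Cre$-pushout of the categories $\bR_{\mathrm{fd}}(Q_i)[h^{-1}]$ to $\bR_{\mathrm{fd}}(Q)[h^{-1}]$ to be an equivalence. Essential surjectivity could plausibly be attacked by cell induction, but fully faithfulness requires control over mapping spaces in a pushout of right-exact $\infty$-categories, for which you provide no handle---this is exactly the access that the straightening identification supplies in the paper's argument. (The appeal to idempotent completion being a left adjoint only helps once the colimit statement is known for $\bR_{\mathrm{f}}$, which is the same problem; likewise ``$\bR_{\mathrm{fd}}$ is the idempotent completion of $\bR_{\mathrm{f}}$'' is an assertion in $\Cre$ that itself needs proof.) A minor further point: in the paper's notation $\cop$ denotes compact objects, not an opposite category, so $\Spc_*^{\cop}$ is already right-exact and no passage to opposites occurs at the point; the identification $\bR_{\mathrm{fd}}(*)[h^{-1}]\simeq\Spc_*^{\cop}$ you use there is correct once the notation is read this way.
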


Let $\ell \colon \Top \to \Spc$ denote the canonical functor. 

\begin{kor}\label{cor:A-and-tensor}
	There exists an equivalence of functors $\Top\to \Sp$
	\begin{equation}\label{eoiobewvfbsbsbb} \bA(-) \simeq K((\ell(-) \otimes \Spc^\cop_*)^\op)\ .
\end{equation}
\end{kor}
\begin{proof}
 By the cofinality theorem \cite[Thm.~2.30]{Bunke:aa} (see e.g.~\cite[Constr.~4.13]{Bunke:aa} for the ``mapping cylinder argument''),
 we may replace $\bR_{\mathrm{f}}(-)[h^{-1}]$ by the functor $\bR_{\mathrm{fd}}(-)[h^{-1}]$ in the definition \eqref{qefvjekvefvfevsd} of $\bA$.
 Consequently, the corollary is an immediate consequence of \cref{prop:ret-and-tensor}.
\end{proof}

 Let $P$ be a principal $G$-bundle. Recall the functor $K\bC_G \colon G\Orb \to \Sp$ associated to any left-exact $\infty$-category $\bC$ with $G$-action (see \cref{ergoiergergergegerwgerggrerg43252}).
Note that $\ell(P)$ defines an object in $\Fun(BG,\Spc)$.  We set
\begin{equation}\label{trbebertoijboireberb}
 \bC_{P}:= (\ell(P) \otimes \Spc^\cop_*)^\op
\end{equation}  in $\Fun(BG,\Cle)$.
\begin{kor}\label{cor:A-coeffs}
 There is an equivalence
 \[ \bA_P \simeq K\bC_{P,G}\ .\]
\end{kor}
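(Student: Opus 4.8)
The plan is to reduce the statement to \cref{cor:A-and-tensor} together with the defining description of $K\bC_G$ as $K\circ\Ind^{G}(\bC)$ (\cref{ergoiergergergegerwgerggrerg43252}). First I would set $\bC:=(\ell(P)\otimes\Spc^{\cop}_{*})^{\op}$ in $\Fun(BG,\Cle)$, where the $G$-action is induced from the given left $G$-action on the total space $P$ via the colimit-preserving functor $-\otimes\Spc^{\cop}_{*}\colon\Spc\to\Cre$ and the equivalence $(-)^{\op}\colon\Cre\xrightarrow{\simeq}\Cle$ of \eqref{wefkjbwejkfnkwefdewfwf1eeeded} (recall that $\Cre$, and hence $\Cle$, is cocomplete by \cref{prop:catex finitely complete}), so that the right-hand side of the corollary is $K\bC_{G}=K\circ\Ind^{G}(\bC)$. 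On the other side, $S\mapsto P\times_{G}S$ is functorial in the $G$-set $S$, giving a functor $G\Orb\to\Top$; precomposing the natural equivalence $\bA\simeq K\bigl((\ell(-)\otimes\Spc^{\cop}_{*})^{\op}\bigr)$ of functors $\Top\to\Sp$ from \cref{cor:A-and-tensor} with it yields a natural equivalence $\bA_{P}\simeq K\circ\bF$ of functors $G\Orb\to\Sp$, where
\[ \bF:=\bigl(\ell(P\times_{G}-)\otimes\Spc^{\cop}_{*}\bigr)^{\op}\colon G\Orb\to\Cle\ . \]
It therefore suffices to produce a natural equivalence $\Ind^{G}(\bC)\simeq\bF$ in $\Fun(G\Orb,\Cle)$, for then $K\bC_{G}=K\circ\Ind^{G}(\bC)\simeq K\circ\bF\simeq\bA_{P}$.

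To construct this equivalence I would restrict $\bF$ along the inclusion $j\colon BG\to G\Orb$ from \eqref{rewflkjmo34gergwegrge}. Since $j$ sends $*_{BG}$ to the $G$-orbit $G$, the $G$-equivariant isomorphism $P\times_{G}G\cong P$, $[p,x]\mapsto x^{-1}p$, identifies the residual $\Aut_{G\Orb}(G)\cong G$-action (right multiplication by $g^{-1}$ on the second factor, cf.\ \cref{gikowrteglregergregwerg}) with the original left $G$-action on $P$; applying $\ell$, $-\otimes\Spc^{\cop}_{*}$ and $(-)^{\op}$ then gives $j^{*}\bF\simeq\bC$ in $\Fun(BG,\Cle)$. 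The counit of the adjunction $(\Ind^{G},\Res^{G})=(j_{!},j^{*})$ of \eqref{2rfkhriufh3iuf3f3f} supplies a natural transformation $\Ind^{G}(\bC)=\Ind^{G}(j^{*}\bF)\to\bF$, and I would check it is an equivalence on each $G/H$ with $H\leq G$. By \cref{gikowrteglregergregwerg} the source is $\colim_{BH}\Res^{G}_{H}\bC$, and the counit is the canonical map to $\bF(G/H)=(\ell(P/H)\otimes\Spc^{\cop}_{*})^{\op}$ induced by $\bF(G\to G/H)$. Since $(-)^{\op}$ is an equivalence and $-\otimes\Spc^{\cop}_{*}$ preserves colimits, both commute with $\colim_{BH}$, whence
\[ \colim_{BH}\Res^{G}_{H}\bC\simeq\Bigl(\bigl(\colim_{BH}\Res^{G}_{H}\ell(P)\bigr)\otimes\Spc^{\cop}_{*}\Bigr)^{\op}=\bigl(\ell(P)_{hH}\otimes\Spc^{\cop}_{*}\bigr)^{\op}\ , \]
and under this identification the comparison map is $(-)^{\op}$ of $-\otimes\Spc^{\cop}_{*}$ applied to the canonical map $\ell(P)_{hH}\to\ell(P/H)$.

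Finally, I would invoke the classical fact that this last map is an equivalence: $P$, being the total space of a principal $G$-bundle, is a free $H$-space, so the projection $EH\times_{H}P\to P/H$ is a weak homotopy equivalence, and $\ell(EH\times_{H}P)\simeq\ell(P)_{hH}$. This finishes the identification $\Ind^{G}(\bC)(G/H)\xrightarrow{\simeq}\bF(G/H)$ for all $H$, hence the natural equivalence $\Ind^{G}(\bC)\simeq\bF$, and the corollary follows.

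The main obstacle is the bookkeeping of the various $G$-actions: one must verify carefully that $j^{*}\bF\simeq\bC$, i.e.\ that the residual $\Aut_{G\Orb}(G)$-action on $P\times_{G}G\cong P$ matches the original one, and that the counit of $(j_{!},j^{*})$ evaluated at $G/H$ really is the homotopy-orbits-to-strict-orbits comparison map $\ell(P)_{hH}\to\ell(P/H)$. Once these identifications are pinned down, the remaining steps only assemble standard facts about colimit-preserving functors and free actions.
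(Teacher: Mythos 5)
Your proposal is correct and follows essentially the same route as the paper: both reduce via \cref{cor:A-and-tensor} and the colimit-preservation of $-\otimes\Spc^{\cop}_{*}$ (and $(-)^{\op}$) to identifying the left Kan extension along $j$ with $S\mapsto \ell(P\times_{G}S)$, checked pointwise on $G/H$ through $BG_{/(G/H)}\simeq BH$ and the equivalence of homotopy orbits with strict orbits for the free $H$-action on the principal bundle $P$. Your packaging via the counit $j_{!}j^{*}\bF\to\bF$ (with the explicit action-matching on $P\times_{G}G\cong P$) is only a mild reorganisation of the paper's claim $j_{!}\ell(P)\simeq \ell(P\times_{G}-)$.
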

\begin{proof}
 Let $j \colon BG \to G\Orb$ be the inclusion functor (see e.g.~\eqref{rewflkjmo34gergwegrge}), and let
 \[ o_P := \ell(P \times_G -) \colon G\Orb \to \Spc\ .\]
  We claim that \begin{equation}\label{wregreiojgw0reogerfwrff}j_!\ell(P) \simeq o_P\ ,
\end{equation}
 where $j_!$ denotes the left Kan extension functor.
 By the pointwise characterisation of left Kan extensions, it suffices to check that the canonical map 
 \[ \colim_{(G \to S) \in BG_{/S}} \ell(P) \to \ell(P \times_G S) \]
 is an equivalence for {every transitive $G$-set} $S$. 
 Any choice of base point $s$ in $S$ induces an equivalence $B(G_s) \simeq BG_{/S}$
 (compare \cref{gikowrteglregergregwerg}) and an equivalence $\ell(P \times_G S) \simeq \ell(P/G_s)$.
 {The restriction of the diagram $BG_{/S} \to BG \xrightarrow{\ell(P)} \Spc$ along the equivalence $B(G_s) \simeq BG_{/S}$ is equivalent to the composite $B(G_s) \to BG \xrightarrow{\ell(P)} \Spc$. Hence}
 it suffices to check that
 \[ \colim_{B(G_s)} \ell(P) \to \ell(P/G_s) \]
 is an equivalence,
 {which follows from the fact that $P$ is a principal $G$-bundle.}
 
	 Since $- \otimes \Spc_*^\cop$ is colimit-preserving, we have
 \begin{equation}\label{wvervpowekvpowececsf}
  j_!\ell(P) \otimes \Spc^\cop_*\simeq j_! \bC_{P}^{\op} \simeq (j_{!}\bC_{P})^{\op}  \ .
\end{equation}
The assertion of the corollary now follows from the following chain of equivalences:
 \begin{align*}
  \bA_P &\stackrel{\eqref{eoiobewvfbsbsbb}}{\simeq} K \circ (o_P(-) \otimes \Spc^\cop_*)^\op \stackrel{\eqref{wregreiojgw0reogerfwrff}}{\simeq} K \circ (j_!\ell(P) \otimes\Spc^\cop_*)^\op \\
  &\stackrel{\eqref{wvervpowekvpowececsf}}{\simeq} K\circ j_{!}\bC_{P,G}\stackrel{\eqref{ewbjnewoifvmlkrevcewvrvw}}{\simeq} K\bC_{P,G} \ .\qedhere\end{align*}
\end{proof}
	
\begin{proof}[Proof of \cref{prop:ret-and-tensor}]
	Let $(\Top_{/Q})_*$ denote the category of pointed topological spaces over $Q$, equipped with the
	model structure transferred from $\Top$ via the forgetful functor $(\Top_{/Q})_* \to \Top$.
	Denote the class of weak equivalences in $(\Top_{/Q})_*$ by $W$.
	Note that $\bR(Q)$ is canonically a subcategory of the full subcategory $(\Top_{/Q})_*^c$ of cofibrant objects in $(\Top_{/Q})_*$.
	The inclusion functor induces an equivalence 
	$\bR(Q)[h\bR(Q)^{-1}] \xrightarrow{\simeq} (\Top_{/Q})_*^c[W^{-1}]$ 
	since there exists a functorial cofibrant replacement in $(\Top_{/Q})_*$ which takes values in $\bR(Q)$.
	 
	This equivalence restricts to an equivalence 
	\begin{equation}\label{eq:ret-and-top-over-Q}
	 \bR_{\mathrm{fd}}(Q)[h^{-1}] \xrightarrow{\simeq} (\Top_{/Q})_*^c[W^{-1}]^ {\cop}
	\end{equation}
	of right-exact $\infty$-categories: Every finitely dominated retractive space is compact as an object in $(\Top_{/Q})_*^c[W^{-1}]$. Conversely, consider a retractive space $B \leftrightarrows X$ which is compact in $(\Top_{/Q})_*^c[W^{-1}]$. Since every retractive space is the (homotopy) colimit of its finite subcomplexes, compactness implies that the identity on $X$ factors (up to homotopy) through a finite subcomplex of $X$. Hence $B \leftrightarrows X$ is finitely dominated.
		
	Consider $(\sSet_{/\Sing(Q)})_*$ with the projective model structure. Taking singular complexes induces an equivalence
	\begin{equation}\label{eq:top-over-Q-and-sset-over-Q}
	 (\Top_{/Q})_*^c[W^{-1}] \simeq (\sSet_{/\Sing(Q)})_*[W^{-1}]
	\end{equation}
	since all objects in $(\sSet_{/\Sing(Q)})_*$ are cofibrant.
	Note that the equivalences from \eqref{eq:ret-and-top-over-Q} and \eqref{eq:top-over-Q-and-sset-over-Q} are natural under cobase change. Hence we obtain an equivalence
	\begin{equation}\label{ret-and-sset-over-Q}
	 \bR_{\mathrm{fd}}(-)[h^{-1}] \simeq (\sSet_{/\Sing(-)})_*[W^{-1}]^{\cop}
	 \end{equation}
	of functors $\Top \to \Cre$.
	
	Consider now the functor $(\sSet_{/\Sing(-)})_*[W^{-1}]$ as a contravariant functor on topological spaces (via pullback). As in the case of $\Top_{/Q}$, equip $(\sSet_{/\Sing(Q)})_*$ with the model structure transferred from the Quillen model structure on $\sSet$ via the forgetful functor.
	Any fibrant replacement functor on $(\sSet_{/\Sing(Q)})_*$ induces a natural equivalence
	\begin{equation}\label{eq:sset-over-Q-loc-cf}
	 (\sSet_{/\Sing(Q)})_*[W^{-1}] \simeq N((\sSet_{/\Sing(Q)}))_*^{\mathrm{cf}})
	\end{equation}
	by \cite[Prop.~1.3.4.7]{HA}, where $N$ denotes the homotopy coherent nerve. Note that the cofibrant-fibrant objects in $(\sSet_{/\Sing(Q)})_*$ are precisely the Kan fibrations.
	Let $(\sSet^{\mathrm{cart}}_{/\Sing(Q)} )_*$ denote the category $(\sSet_{/\Sing(Q)})_*$ equipped with the cartesian model structure
	(see \cite[Rem.~2.1.4.12]{htt}, where it is called the contravariant model structure). This category is also contravariantly functorial via pullback. 	Since the right fibrations are precisely the cofibrant-fibrant objects in the cartesian model structure (\cite[Cor.~2.2.3.12]{htt}), and since every right fibration over a Kan complex is a Kan fibration (\cite[Lem.~2.1.3.3]{htt}), we have  
	\begin{equation}\label{eq:sset-and-spc-over-Q}
	  N((\sSet_{/\Sing(-)}^{\mathrm{cart}})_*^{\mathrm{cf}}) \simeq N((\sSet_{/\Sing(-)})_*^{\mathrm{cf}}) \simeq (\Spc_{/\ell(-)})_*\ .
	\end{equation}
	By \cite[Cor.~A.32]{GHN:2017}, there exists a functor
	\[ \Cat_\infty^\op \to \Fun(\Delta^1, \CAT) \]
	whose value at $\cC$ is given by the unstraightening equivalence $\Fun(\cC^\op,\Cati) \xrightarrow{\simeq} \Cat^{\mathrm{cart}}_{\infty/\cC}$.
	Note that the unstraightening equivalence restricts to an equivalence of full subcategories
	\[ \Fun(\cC^\op, \Spc) \xrightarrow{\simeq} \Cat^{\mathrm{rfib}}_{\infty/\cC}\ ,\]
	where $\Cat^{\mathrm{rfib}}_{\infty/\cC}$ denotes the full subcategory of cartesian fibrations whose fibres are objects in $\Spc$. Since $\Cat^{\mathrm{rfib}}_{\infty/\cC} \simeq \Spc_{/\cC}$ when $\cC$ is in $\Spc$, there is an induced natural equivalence
	\begin{equation}\label{eq:spc-over-Q-and-fun}
	 \Fun((-)^\op,\Spc_*) \simeq \Fun((-)^\op, \Spc)_* \simeq (\Spc_{/-})_*
	\end{equation}
	of functors $\Spc \to \Prr$. From \eqref{eq:sset-over-Q-loc-cf}, \eqref{eq:sset-and-spc-over-Q} and \eqref{eq:spc-over-Q-and-fun} we obtain an equivalence
	\[ (\sSet_{/\Sing(-)})_*[W^{-1}] \simeq \Fun(\ell(-)^\op, \Spc_*) \]
	of functors $\Top \to \Prr$.\footnote{This was essentially a repetition of the proof of \cite[Prop.~B.1]{ABG} emphasising the naturality of all identifications.}
	We can now pass to left adjoints and restrict to the full subfunctors on compact objects to obtain an equivalence
	\begin{equation}\label{eq:sset-over-Q-and-fun}
	 (\sSet_{/\Sing(-)})_*[W^{-1}]^{\cop} \simeq \ad^{-1}\Fun(\ell(-)^\op, \Spc_*)^{\cop}
	\end{equation}
	of functors $\Top \to \Cre$. Note that the cobase change functors provide a concrete model of the functor $(\sSet_{/\Sing(-)})_*[W^{-1}]^{\cop}$ by \cite[Prop.~5.2.4.6]{htt}. 
	Consequently, \eqref{ret-and-sset-over-Q} and \eqref{eq:sset-over-Q-and-fun} yield an equivalence
	\begin{equation}\label{eq:ret-and-fun}
	 \bR_{\mathrm{fd}}(-)[h^{-1}] \simeq \ad^{-1}\Fun(\ell(-)^\op, \Spc_*)^{\cop}	\end{equation}
	of functors $\Top \to \Cre$.

	 Finally, we note that  $\Fun((-)^\op,\Spc_*) \colon \Spc^\op \to \Prr$ is limit-preserving. Consequently, $\ad^{-1}\Fun((-)^\op,\Spc_*)$ is colimit-preserving as a functor $\Spc \to \Prl$. Therefore,
	\[ \ad^{-1}\Fun((-)^\op,\Spc_*)^{\cop} \colon \Spc \to \Cre \]
	is a colimit-preserving functor that sends the terminal object in $\Spc$ to $\Spc_*^{\cop}$. It follows that
	\begin{equation}\label{eq:fun-and-tensor}
	 \ad^{-1}\Fun((-)^\op,\Spc_*)^{\cop} \simeq - \otimes \Spc_*^{\cop}\ .
	\end{equation}
	The proposition follows by combining \eqref{eq:ret-and-fun} and \eqref{eq:fun-and-tensor}.
\end{proof}

\bibliographystyle{alpha}
\bibliography{unik}

\end{document}